\documentclass[letterpaper]{article}
\usepackage{arxiv}
\usepackage[T1]{fontenc}
\usepackage{amsmath,amssymb,amsthm,mathtools}
\usepackage{graphicx,booktabs,tabularx,array,multirow}
\usepackage{enumitem,xcolor,microtype,float,placeins}
\usepackage[authoryear,round]{natbib}
\newcommand{\todo}[2][]{}

\newcommand{\E}{\mathbb E}
\newcommand{\Prob}{\mathbb P}

\newcommand{\Gap}{\operatorname{Gap}}
\newcommand{\CompGap}{\Gap_{R_0}^{\mathrm{comp}}}

\newtheorem{theorem}{Theorem}[section]
\newtheorem{lemma}[theorem]{Lemma}
\newtheorem{proposition}[theorem]{Proposition}
\newtheorem{corollary}[theorem]{Corollary}
\newtheorem{assumption}[theorem]{Assumption}
\theoremstyle{definition}
\newtheorem{definition}[theorem]{Definition}
\theoremstyle{remark}
\newtheorem{remark}[theorem]{Remark}

\floatstyle{ruled}
\newfloat{algorithm}{tbp}{loa}
\floatname{algorithm}{Algorithm}
\allowdisplaybreaks[2]
\newcommand{\InfoDepth}{D_{\mathrm{info}}}
\newcommand{\SchedDepth}{D_{\mathrm{sched}}}
\newcommand{\BinDepth}{D_{\mathrm{bin}}}
\newcommand{\CompSeq}[1]{r_{\mathrm{seq}}(#1)}
\newcommand{\CompMini}[1]{r_{\mathrm{mb}}(#1)}

\providecommand{\ind}{\mathbf 1}

\usepackage{xurl}
\definecolor{citeblue}{HTML}{1F4E79}
\definecolor{crossrefred}{HTML}{A32638}
\definecolor{revisionpink}{HTML}{C2185B}
\newif\ifshowrevisions
\showrevisionsfalse
\newcommand{\revision}[1]{%
  \ifshowrevisions\textcolor{revisionpink}{#1}\else#1\fi}
\newenvironment{revisionblock}
  {\ifshowrevisions\color{revisionpink}\fi}
  {}
\usepackage{hyperref}
\hypersetup{
  colorlinks=true,
  citecolor=citeblue,
  urlcolor=citeblue,
  filecolor=citeblue,
  linkcolor=crossrefred,
  menucolor=crossrefred,
  linktoc=all,
  breaklinks=true,
  bookmarksnumbered=true,
  pdftitle={Optimal Rates for Federated Variational Inequalities under Different Relaxations of the Bounded Variance Assumption},
  pdfauthor={Kirill Novoselov, Yuriy Dorn, Darina Dvinskikh, Alexander Gasnikov}
}
\ExplSyntaxOn
\cs_new_eq:NN \AppendixLetter \int_to_Alph:n
\ExplSyntaxOff

\title{Optimal Rates for Federated Variational Inequalities \\
under different relaxations of bounded variance assumption}
\author{%
  Kirill Novoselov \\
  Innopolis University \\
  \texttt{k.novoselov@innopolis.university}
  \And
  Yuriy Dorn \\
  Lomonosov Moscow State University 
  \AND
  Darina Dvinskikh \\
  HSE University \\
  \texttt{dmdvinskikh@hse.ru}
  \And
  Alexander Gasnikov \\
  Innopolis University, Trusted AI Research Center, RAS, \\
  Steklov Mathematical Institute, RAS \\
  \texttt{gasnikov@yandex.ru}
}
\date{}
\renewcommand{\shorttitle}{Federated Variational Inequalities}
\renewcommand{\headeright}{Preprint}
\renewcommand{\undertitle}{Preprint}
\begin{document}
\maketitle
\begin{abstract}
We study how interpolation affects local steps and parallel sampling in monotone variational inequalities (VIs), with $m$ workers, $K$ oracle calls per worker per round, and $N$ rounds.
Under the strong growth condition (SGC), we determine the minimax rate for affine VIs, attained by local extragradient with worker averaging.
For nonlinear monotone VIs and convex--concave saddle problems with Lipschitz operators satisfying SGC, we improve the stochastic terms of bounded-variance expected-gap guarantees from $(KN)^{-1/2}$ to $(KN)^{-1}$ with local steps and from $(mKN)^{-1/2}$ to $(mKN)^{-1}$ with minibatches.
Under strong monotonicity, we improve the noise-dependent condition-number scaling of existing extragradient bounds from quadratic to linear.
The resulting monotone-gap and strongly monotone accuracy bounds match lower bounds up to logarithmic factors in $m$.
Under generalized smoothness and SGC, adaptive steps give polynomial dependence on the initial radius.
For affine VIs with globally bounded oracle variance, we also attain the minimax rate: the deterministic rate over $KN$ sequential calls plus the statistical rate over all $mKN$ samples.
For composite VIs with smooth convex sample interpolation and an exactly evaluated monotone Lipschitz component, accelerated mirror-prox sliding achieves an $N^{-2}$ gradient term, a $(KN)^{-1}$ monotone term, and a $(mKN)^{-1}$ sampling term, without assuming SGC.
\end{abstract}

\section{Introduction}\label{sec:intro}

Large-scale variational inequalities (VIs) arise in convex--concave
optimization, monotone games, and adversarial and robust learning. When
such problems are solved on several machines, stochastic-oracle calls
are often much cheaper than communication. We study the standard
intermittent-communication model: $m$ workers perform at most $K$ local
oracle calls between two consecutive synchronizations, over $N$
communication rounds.

The budget can be spent in two ways. A worker can make its $K$ calls
sequentially, choosing each query from its earlier responses; over $N$
rounds this gives a chain of $KN$ sequential calls. Alternatively, all
workers can query a common point and average their responses, which gives
only $N$ sequential updates, each with up to $mK$ samples. Local
computation uses sequential depth, and minibatching uses the total number
$mKN$ of samples.

Under uniformly bounded variance, sequential progress controls the
deterministic part of the error and the number of samples controls the
statistical part; for convex optimization, this trade-off between local
steps and minibatching has been studied in
detail~\citep{woodworth2020,woodworth2021}. In the interpolation regime, however, the
noise decreases together with the operator, and it is not clear which
resource matters. We ask whether local steps can turn the depth $KN$ into
faster progress without wasting the samples of the other workers, when
common-query minibatches are preferable, and which limitations come from
the communication pattern itself rather than from a particular algorithm.

We consider unbiased stochastic oracles under three noise models. Under
the strong growth condition (SGC) with constant $\rho\ge1$, the second
moment of the oracle at every query is at most $\rho$ times the squared
norm of its mean, so every sample vanishes at a solution. The mixed model
adds an absolute noise level $\sigma\ge0$: the variance is at most
$(\rho-1)$ times the squared norm of the mean plus $\sigma^2$, and
$\sigma=0$ gives SGC. The root-noise model controls the oracle in a
solution, where its second moment is $\sigma_\star^2$; $\sigma_\star=0$ is
interpolation.

We study these questions for affine, nonlinear monotone, strongly
monotone, generalized-smooth, and composite operators. Under structured
noise, local computation can be more useful than a bounded-variance
analysis suggests, but its gain is limited by the longest chain of
informative calls that a worker can form between two communications.
Under SGC we determine the minimax rate for affine operators up to
universal constants. For nonlinear Lipschitz monotone VIs the upper and
lower bounds match up to logarithmic factors in $m$, and for composite VIs
we identify a regime with matching bounds.

\paragraph{Informative depth.}
The hard oracle in our SGC lower bounds hides each exact response
independently and reveals a rescaled response with probability $1/\rho$.
Within a round, informative calls can be chained only on one worker, so
after communication the progress is that of the longest chain over the
workers, not the sum over the workers. Combined with exact-oracle hard
instances, this gives our SGC lower bounds; the composite lower bound uses
the same construction.

\paragraph{Contributions.}

\begin{itemize}[leftmargin=*,nosep]

\item \emph{Sharp rates under SGC.}
We show that the relevant resource is the longest chain of informative
local oracle calls. This gives the affine minimax rate up to constants
and matching upper and lower bounds up to logarithmic factors for
nonlinear monotone VIs and convex--concave saddle problems, replacing
square-root stochastic terms by inverse-linear ones in the number of
sequential local calls or of common-query samples.

\item \emph{Mixed noise and generalized smoothness.}
We give monotone gap rates and strongly monotone distance rates whose
upper and lower bounds match up to logarithmic factors; in the latter, the
relative-noise penalty in the exponent is linear rather than quadratic in
the condition number. For affine VIs,
the guarantee combines the better deterministic rate of local updates
and common-query minibatches with the statistical accuracy of all $mKN$
samples. Beyond global
Lipschitz continuity, an
adaptive restarted method gives a strongly monotone rate under SGC
with polynomial dependence on the initial radius.

\item \emph{Composite VI without SGC.}
For a sampled smooth convex term plus an exactly evaluated monotone
term, we combine pooled minibatches with local solves in accelerated
mirror-prox sliding. Local computation improves the monotone term by a
factor $K$, the minibatches use all $mKN$ samples, and matching lower bounds
give optimality up to logarithmic factors when the mean smoothness is
positive and the sample smoothness is at least $K$ times as large.

\end{itemize}

\begin{table*}[!t]
\centering
\caption{Comparison With Prior Bounds in the Intermittent-Communication Model}
\label{tab:results}
\footnotesize
\setlength{\tabcolsep}{4pt}
\renewcommand{\arraystretch}{1.45}
\begin{tabular}{@{}llll@{}}
\toprule
\textbf{PROBLEM} & \textbf{NOISE} & \textbf{SOURCE} & \textbf{RATE}\\
\midrule
Monotone & Uniform & \citet{juditsky2011}$^\ddagger$
 & $O\bigl(\frac{LD^2}{T}+\frac{\sigma D}{\sqrt T}\bigr)$\\
 & Uniform & \citet{wang2026faster}
 & $O\bigl(\frac{LD^2}{\sqrt K N}+\frac{\sigma D}{\sqrt{mKN}}
   +\frac{L^{1/3}\sigma^{2/3}D^{4/3}}{K^{1/3}N^{2/3}}\bigr)$\\
 & Mixed & \textbf{Thms.~\ref{vi:main-monotone}, \ref{vi:main-mixed}}
 & $\widetilde\Theta\bigl(\min\bigl\{\frac{\rho LD^2}{KN}+\frac{\sigma D}{\sqrt{KN}},\
   \frac{LD^2}{N}+\frac{\rho LD^2}{mKN}+\frac{\sigma D}{\sqrt{mKN}}\bigr\}\bigr)$\\
\midrule
Affine & Uniform & \citet{wang2026faster}
 & $O\bigl(\frac{LR_0^2}{KN}+\frac{\sigma R_0}{\sqrt{mKN}}
   +\frac{L^{1/3}\sigma^{2/3}R_0^{4/3}}{K^{1/3}N^{2/3}}\bigr)$\\
 & SGC & \textbf{Thm.~\ref{thm:affine-integrated}}
 & $\Theta\bigl(\max\bigl\{\frac{LR_0^2}{KN},\ \frac{\rho LR_0^2}{mKN},\
   \frac{LR_0^2}{N(K/\rho+\log(2m)/\log(2+\rho\log(2m)/K))}\bigr\}\bigr)$\\
 & Mixed & \textbf{Thm.~\ref{thm:affine-additive-main}}
 & $\widetilde\Theta\bigl(\min\bigl\{\frac{\rho LR_0^2}{KN},\
   \frac{LR_0^2}{N}+\frac{\rho LR_0^2}{mKN}\bigr\}+\frac{\sigma R_0}{\sqrt{mKN}}\bigr)$\\
\midrule
Str.\ monotone & Uniform & \citet{beznosikov2024lower}
 & $O\bigl(R_0^2e^{-cN/\kappa}+\frac{\sigma^2}{\mu^2mKN}\bigr)$\\
 & Mixed & \textbf{Thm.~\ref{vi:main-mixed-accuracy}}
 & \begin{tabular}[t]{@{}l@{}}$\widetilde\Theta\bigl(R_0^2e^{-cN\max\{K/\rho,\,\min\{1,mK/\rho\}\}/\kappa}$\\
   $\quad+\frac{\sigma^2}{\mu^2mKN}\bigl(1+m\,e^{-cN\min\{1,mK/\rho\}/\kappa}\bigr)\bigr)$\end{tabular}\\
\midrule
Composite & Uniform & \citet{chen2017amp}$^\ddagger$
 & $O\bigl(\frac{L_fR_0^2}{T^2}+\frac{L_HR_0^2}{T}+\frac{\sigma R_0}{\sqrt T}\bigr)$\\
 & Interpolation & \textbf{Thm.~\ref{thm:composite-local-main}}
 & \begin{tabular}[t]{@{}l@{}}$\widetilde O\bigl(\min\bigl\{\frac{L_fR_0^2}{(KN)^2}
   +\frac{(L_H+L_{\max})R_0^2}{KN}+\frac{\sigma_\star R_0}{\sqrt{KN}},$\\
   $\quad\frac{L_fR_0^2}{N^2}+\frac{L_HR_0^2}{KN}
   +\min\bigl\{\frac{L_{\max}R_0^2}{mKN},R_0^2\sqrt{\frac{L_{\max}L_f}{mKN}}\bigr\}
   +\frac{\sigma_\star R_0}{\sqrt{mKN}}\bigr\}\bigr)$;\\
   $\widetilde\Theta$ of the second entry if $L_{\max}\ge KL_f>0$, $N\ge2$
   (Cor.~\ref{cor:composite-matching-main})\end{tabular}\\
\bottomrule
\end{tabular}
\end{table*}

\paragraph{Comparison with prior work.}
Table~\ref{tab:results} compares the bounds under the same $(m,K,N)$
budget. For nonlinear VIs, $D>0$ bounds the distance from the initial
point to every comparison point over which the Minty gap is maximized;
it does not restrict the algorithm's queries or output. For affine and
composite VIs, $R_0>0$ bounds the initial distance to a root. Monotone
and affine rows use the Minty gap; the cited results specify when the
same rate holds for the primal--dual gap. Strongly monotone rows bound
the expected squared distance $\E\|\widehat z-z_\star\|^2$. Here $L>0$ is
the operator Lipschitz constant, $\mu>0$ is the strong-monotonicity
constant, and $\kappa=L/\mu$. In composite rows, $L_f$ is the smoothness
constant of the mean gradient, $L_H$ is the Lipschitz constant of the
exact monotone component, and $L_{\max}$ is the uniform smoothness
constant of the sample gradients; $\sigma_\star$ is the root-noise level. 
\textsuperscript{$\ddagger$} indicates that the corresponding source uses
a single machine with $T$ sequential iterations.
Throughout, $\widetilde{O}$ and $\widetilde{\Theta}$ hide logarithmic factors in the relevant problem parameters; in particular, $\widetilde{\Theta}(f)$ denotes matching upper and lower bounds up to logarithmic factors.

The comparison shows where the structural assumptions change the rate:
for composite VIs, local work
reduces the exact monotone term to $L_H/(KN)$. When
$L_{\max}\ge KL_f>0$, Corollary~\ref{cor:composite-matching-main} shows
that choosing between the initial point and the parallel-minibatch method
is minimax-optimal up to logarithmic factors.

\paragraph{Related work.}
\revision{Mirror-prox and extragradient are standard for stochastic
VIs~\citep{korpelevich1976,nemirovski2004,juditsky2011}. For distributed VIs, prior work analyzes heterogeneous local extragradient,
similarity and compression, and near-optimal saddle methods
\citep{beznosikov2022local,beznosikov2023pillars,beznosikov2024lower}.
\citet{luo2026distributed} prove communication optimality within
gradient-span algorithms in a complementary deterministic distributed
saddle-point model, whereas our data-parallel lower bounds cover all randomized
algorithms in our model. \citet{wang2026faster} improve upper bounds under
bounded variance; we study the trade-off between local computation and
minibatching under structured noise and prove communication lower
bounds. }

In minimization, the strong
growth condition models interpolation by over-parameterized models,
under which stochastic gradient methods attain the rates of their
deterministic counterparts~\citep{schmidt2013sgc,vaswani2019}. For VIs,
relative noise yields $O(1/T)$ rates for cocoercive
operators~\citep{antonakopoulos2021sifting} and is also studied for
quantized optimistic methods~\citep{nguyen2025qoda}; adaptive extragradient attains $O(1/N)$ for Lipschitz root finding
under SGC and $\widetilde O(T^{-1/2})$ under affine
noise~\citep{luo2025}. We add the
dependence on $(m,K,N,\rho)$ in the intermittent-communication model,
the choice between local steps and common-query minibatches, and
matching communication lower bounds, exact for affine VIs and up to
logarithmic factors for nonlinear VIs.

\revision{Generalized-smooth VIs are studied by
\citet{vankov2024vi,vankov2024stochasticvi}, and
\citet{choudhury2025generalized} treats $(L_0,L_1)$-Lipschitz root
finding. Our bound under generalized smoothness makes the dependence on
$(m,K,N,\rho)$ explicit and chooses between local steps and
common-query minibatches.}

In the intermittent-communication model, oracle lower bounds separate
the total sample count from the available sequential
depth~\citep{woodworth2018,woodworth2021}, while local averaging for
quadratic objectives has been studied under additive and multiplicative
noise~\citep{woodworth2020,sadchikov2024local}. Under smooth-sample
interpolation, accelerated minibatching improves the sampling
term~\citep{woodworthsrebro2021,ilandarideva2025}. Our composite result
extends this phenomenon to an exactly evaluated monotone component
through accelerated mirror-prox~\citep{chen2017amp}.

\paragraph{Organization.}
\revision{Section~\ref{sec:model} gives the model and assumptions.
Sections~\ref{sec:affine-integrated}--\ref{sec:acceleration-main} treat
affine, nonlinear, and composite VIs, respectively. The supplement
contains numerical illustrations, algorithms, proofs, and auxiliary
results.}

\section{Model and assumptions}
\label{sec:model}
\paragraph{Notation.}
Throughout, $\langle\cdot,\cdot\rangle$ and $\|\cdot\|$ are Euclidean,
$\|\cdot\|_{\rm op}$ is the induced matrix norm, and logarithms are
natural. We write
for $d\in\mathbb N$, $z\in\mathbb R^d$, and $r\ge0$,
$B(z,r):=\{u\in\mathbb R^d:\|u-z\|\le r\}$ for the closed Euclidean
ball centered at $z$ with radius $r$.
The notation $O$, $\Omega$, and $\Theta$ hides universal constants; a
tilde also hides the logarithmic factors specified in the corresponding
statement.

\begin{definition}[Variational inequality]
\label{def:vi-main}
For $F:\mathbb R^d\to\mathbb R^d$, a solution $z_\star$ satisfies for all $z\in\mathbb R^d$
\[
 \langle F(z_\star),z-z_\star\rangle\ge0\;
 \;\Longleftrightarrow\;F(z_\star)=0.
\]
The algorithm starts from $z_0$ and does not know the root $z_\star$.
For the affine and composite models of
Sections~\ref{sec:affine-integrated} and~\ref{sec:acceleration-main}
we write $(G,x_0,x_\star)$ instead of $(F,z_0,z_\star)$.
\end{definition}

\begin{assumption}[Monotone Lipschitz operator]
\label{ass:monotone-lipschitz}
Let $L>0$. The map $F:\mathbb R^d\to\mathbb R^d$ has a root
$z_\star$ and satisfies, for all $z,u\in\mathbb R^d$,
\[
 \langle F(z)-F(u),z-u\rangle\ge0,
 \quad \|F(z)-F(u)\|\le L\|z-u\|.
\]
\end{assumption}

\begin{assumption}[Strong monotonicity]
\label{ass:strong-monotonicity}
For $\mu>0$, the map $F:\mathbb R^d\to\mathbb R^d$ satisfies for all $z,u\in\mathbb R^d$
\[
 \langle F(z)-F(u),z-u\rangle\ge\mu\|z-u\|^2.
\]
\end{assumption}
For a differentiable convex--concave function $\Phi(x,y)$, the operator
$F=(\nabla_x\Phi,-\nabla_y\Phi)$ is monotone and its roots are the
saddle points of $\Phi$.

\begin{definition}[Intermittent communication]
\label{def:ic-main}
Fix $m,K,N\in\mathbb N$. An algorithm runs on $m$ workers for $N$
communication rounds. In each
round, every worker makes at most $K$ sequential oracle calls; a query
may depend on the information shared at the start of the round, on the
worker's own randomness, and on its earlier responses in that round.
Workers communicate only at round boundaries. Messages, memory, and
local computation are unrestricted. Query and output maps are Borel
measurable and may take arbitrary finite values, and the randomness of
the algorithm is independent of the instance and of the oracle.

\revision{On a sample space $\Xi$,} all workers query the same oracle
$g:\mathbb R^d\times\Xi\to\mathbb R^d$. \revision{At each call, $\xi$
is drawn from a fixed law independently of all}
earlier draws and of the algorithm's randomness; the query is
measurable with respect to the information $\mathcal F$ available
before the call. Every call counts towards the budget, including both
calls of an extragradient step. Our algorithms may use the model
parameters. We call any randomized algorithm satisfying these rules
\emph{admissible}. Its output is denoted by $\widehat z$; in the affine
and composite settings, where the problem variable is $x$, the output
is denoted by $\widehat x$.
\end{definition}

\revision{Our lower bounds hold for every admissible randomized
algorithm. For a fixed algorithm, the hard instance or distribution does
not depend on the realized random seed or on the oracle draws, and it
lives in the dimension stated in the result; when one distribution works
for all algorithms, we say so.}

The three noise models below are used in different parts of the paper:
SGC and mixed noise in Sections~\ref{sec:affine-integrated}
and~\ref{vi:main}, root noise in Section~\ref{sec:acceleration-main}.

\begin{assumption}[Strong growth condition, SGC]
\label{ass:sgc}
For $\rho\ge1$, every pre-call history $\mathcal F$ and every
$\mathcal F$-measurable query $z\in\mathbb R^d$ satisfy
\begin{equation*}
 \begin{aligned}
 \mathbb E[g(z,\xi)\mid\mathcal F]&=F(z),\\
 \mathbb E[\|g(z,\xi)\|^2\mid\mathcal F]
 &\le\rho\|F(z)\|^2.
 \end{aligned}
\end{equation*}
\end{assumption}

\begin{assumption}[Mixed noise]
\label{ass:mixed}
For $\rho\ge1$ and $\sigma\ge0$, every pre-call history $\mathcal F$
and every $\mathcal F$-measurable query $z\in\mathbb R^d$ satisfy
\begin{equation*}
 \begin{aligned}
 \mathbb E[g(z,\xi)\mid\mathcal F]&=F(z),\\
 \mathbb E[\|g(z,\xi)-F(z)\|^2\mid\mathcal F]
 &\le(\rho-1)\|F(z)\|^2+\sigma^2.
 \end{aligned}
\end{equation*}
\end{assumption}

\begin{assumption}[Root noise and interpolation]
\label{ass:interpolation}
Let $F(z_\star)=0$. The oracle $g:\mathbb R^d\times\Xi\to\mathbb R^d$
satisfies
\[
 \begin{gathered}
 \mathbb E_\xi g(z,\xi)=F(z)\quad(\forall z\in\mathbb R^d),\\
 \sigma_\star^2:=\mathbb E_\xi\|g(z_\star,\xi)\|^2<\infty.
 \end{gathered}
\]
Interpolation is the case $\sigma_\star=0$, that is,
\begin{equation*}
 g(z_\star,\xi)=0\qquad\text{almost surely}.
\end{equation*}
\end{assumption}

\begin{remark}[Relations between the noise models]
\label{rem:noise-contracts-main}
Unbiasedness gives
\[
 \mathbb E[\|g(z,\xi)\|^2\mid\mathcal F]
 =\|F(z)\|^2+
   \mathbb E[\|g(z,\xi)-F(z)\|^2\mid\mathcal F].
\]
Hence SGC is the mixed condition with $\sigma=0$, and it implies
interpolation at every root. Interpolation does not imply SGC
\revision{even for smooth convex sample losses}. In the mixed model, $\sigma^2$ is
an additive budget: the variance at $z$ is at most
$(\rho-1)\|F(z)\|^2+\sigma^2$, which exceeds $\sigma^2$ away from the
roots when $\rho>1$, while the variance at a root is
$\sigma_\star^2\le\sigma^2$. For $\rho=1$, $\sigma^2$ bounds the variance at
every query and the mixed model is the bounded-variance model.
\end{remark}

\begin{definition}[Gap criteria]
\label{def:gaps-main}
For a nonempty compact set $C\subseteq\mathbb R^d$ containing a root,
the Minty gap is
\begin{equation*}
 \Gap_C^{\rm M}(z)
 :=\sup_{u\in C}\langle F(u),z-u\rangle.
\end{equation*}
For nonlinear VIs the test set satisfies
$z_\star\in C\subseteq B(z_0,D)$ with $D>0$. For a differentiable
convex--concave $\Phi(x,y)$, $F=(\nabla_x\Phi,-\nabla_y\Phi)$, and
compact sets $X,Y$ whose product contains a saddle point, the
primal--dual gap is
\[
 \Gap^{\rm PD}_{X\times Y}(x,y)
 :=\sup_{v\in Y}\Phi(x,v)-\inf_{u\in X}\Phi(u,y).
\]
We always keep the superscripts ${\rm M}$ and ${\rm PD}$. All expected-gap
bounds place the supremum inside the expectation. The test set restricts
neither queries nor outputs.
\end{definition}

\paragraph{Rate abbreviations.}
The informative depth is
\[
\InfoDepth:=\min\!\left\{
 K,\ \tfrac{mK}{\rho},\ \tfrac K\rho+
 \tfrac{\log(2m)}{\log(2+\rho\log(2m)/K)}
 \right\}.
\]
\revision{For independent $Z_1,\ldots,Z_m\sim
\operatorname{Binomial}(K,1/\rho)$,
$\E\max_{i\le m}Z_i=\Theta(\InfoDepth)$ uniformly. The three terms
reflect the per-worker round cap, the total informative-call cap, and
the binomial-maximum estimate, respectively.}
The implementable schedule depth is
\[
 \SchedDepth
 :=\max\left\{\tfrac K\rho,
          \min\left\{\tfrac{mK}{\rho},1\right\}\right\}.
\]
\revision{The two terms are the progress per round of sequential updates on
one worker and of common-query minibatches.}

\section{Affine VI}\label{sec:affine-integrated}
For affine operators we determine the minimax rate under SGC and then
show that additive noise adds a statistical term that uses all $mKN$
samples.

\begin{assumption}[Affine operator]\label{ass:affine-main}
\revision{Let $L,R_0>0$, $x_0\in\mathbb R^d$, and
$J\in\mathbb R^{d\times d}$ with $J+J^\top\succeq0$. The operator
$G:\mathbb R^d\to\mathbb R^d$ has an unknown root $x_\star$ and satisfies}
\[
 \begin{gathered}
 G(x)=J(x-x_\star),\quad \|x_0-x_\star\|\le R_0, \quad
 \|J\|_{\rm op}\le L.
\end{gathered}
\]
The initial point $x_0$ and the radius $R_0$ are known; $J$ and the root
$x_\star$ are not. The sampled maps need not be affine.
\end{assumption}

\begin{definition}[Affine minimax risks]\label{def:affine-risks-main}
Let $\mathcal E_d^{\rm aff,rel}$ and $\mathcal E_d^{\rm aff,mix}$ be the
minimax values, over the algorithms of Definition~\ref{def:ic-main}, of
the expected root-centered Minty gap $\E\Gap_\star^{\rm M}(\widehat x)$
over Assumption~\ref{ass:affine-main} combined with
Assumption~\ref{ass:sgc} and with Assumption~\ref{ass:mixed},
respectively, for $F=G$, where
\begin{equation}\label{eq:aff:self-contained-gap}
 \Gap_\star^{\rm M}(x)=\sup_{\|u-x_\star\|\le R_0}
                       \langle G(u),x-u\rangle .
\end{equation}
\end{definition}

\begin{theorem}[Affine minimax rate under SGC]
\label{thm:affine-integrated}\label{thm:pure-affine-main}\label{pv:minimax}
Under Assumptions~\ref{ass:affine-main} and~\ref{ass:sgc}, for every
$m,K,N\in\mathbb N$, $L,R_0>0$, and $\rho\ge1$, there is a finite
dimension $d_{\rm hard}=O\bigl(m(KN)^2\bigr)$ such that
\begin{equation*}
 \mathcal E_{d_{\rm hard}}^{\rm aff,rel}
 =\Theta\!\left(\frac{LR_0^2}{1+N\InfoDepth}\right).
\end{equation*}
The upper bound holds in every finite dimension. The lower bound uses
one fixed distribution of affine operators with unique roots.
\end{theorem}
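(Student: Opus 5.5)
The proof has two halves: an upper bound $O(LR_0^2/(1+N\InfoDepth))$ achieved by an explicit admissible algorithm, and a matching lower bound built from a single hard distribution.

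\textbf{Upper bound.} Since $\InfoDepth=\Theta(\E\max_{i\le m}Z_i)$ with $Z_i\sim\operatorname{Binomial}(K,1/\rho)$, it suffices to attain each of three rates $LR_0^2/(1+Nd)$, with $d$ ranging over the three branches:
\begin{itemize}[nosep]
\item $d=\min\{1,mK/\rho\}$, via common-query minibatches;
\item $d=K/\rho$, via sequential local extragradient on one worker;
\item $d$ of order the binomial maximum, via local extragradient on every worker, followed by selecting or averaging the best worker after each round.
\end{itemize}
The output is the best of the three, chosen from known parameters. The basic tool is a one-step energy inequality for stochastic extragradient with step $\eta\asymp 1/(\rho L)$ under SGC. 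Because $G$ is affine and SGC gives $\E\|g\|^2\le\rho\|G\|^2$, the inequality reads
\[
\E\|x_{t+1}-x_\star\|^2\le\|x_t-x_\star\|^2-c\eta\,\langle G(\tilde x_t),\tilde x_t-x_\star\rangle .
\]
Summing it and averaging, together with $\Gap^{\rm M}_\star(\bar x)\le \frac{1}{T}\sum_t\langle G(u),\tilde x_t-u\rangle$ (linearity of $G$ in $u$ makes the Minty gap of the average telescope), gives $\Gap_\star^{\rm M}\lesssim \rho LR_0^2/T$. With $T=KN$ on one worker this is the $K/\rho$ branch. With minibatch size $mK$ the effective relative noise is $1+\rho/(mK)$, which gives the $\min\{1,mK/\rho\}$ branch.

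For the third branch I would use a rejection-type scheme with step $1/L$. Each worker accepts only calls whose sample is informative, i.e.\ looks like the exact response. I am unsure this works for general SGC oracles. If it fails, I would fall back to restarting and reweighting so that only the longest effective chain, of length about $\E\max_i Z_i$ per round, counts. I expect the affine structure, under which averaging preserves progress, to be what makes worker averaging give the maximum rather than the mean.

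\textbf{Lower bound.} Take the hiding oracle: each call returns $\rho\,G(x)$ with probability $1/\rho$ and $0$ otherwise. This is unbiased, and $\E\|g\|^2=\rho\|G\|^2$, so SGC holds. Combine it with the classical Nemirovski-type chain construction: a randomly rotated bidiagonal skew or PSD matrix $J$ in dimension $O(m(KN)^2)$, so that each informative call reveals at most one new coordinate (a zero-chain argument made robust to randomized algorithms via random orthogonal embedding). Within a round, worker $i$ advances at most $Z_i$ coordinates. After communication the shared knowledge advances by at most $\max_i Z_i$, so after $N$ rounds the discovered depth is at most $\sum_{\text{rounds}}\max_i Z_i^{(r)}$, with expectation $N\,\Theta(\InfoDepth)$ plus an additive $1$. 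The standard estimate that the Minty gap of any point supported on the first $t$ coordinates is $\gtrsim LR_0^2/t$ for this instance then gives, via Jensen's inequality on the convex function $t\mapsto 1/(1+t)$, the bound $\Omega(LR_0^2/(1+N\InfoDepth))$.

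\textbf{Main obstacle.} The hardest step is the third upper-bound branch: achieving the binomial-maximum depth algorithmically rather than only for the lower bound. The lower bound's dimension bookkeeping is the second main difficulty: the dimension must be large enough, $m(KN)^2$, so that random queries outside the revealed span are uninformative with high probability.
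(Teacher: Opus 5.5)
\textbf{Upper bound.} Your third upper-bound branch is what makes the rate $\InfoDepth$ rather than $\SchedDepth$, and it is not supplied. Without it, the initial point and your first two branches only give $LR_0^2/(1+N\SchedDepth)$. By Lemma~\ref{lem:clock:constructive} this can exceed the lower bound by a factor of order $\log(2m)/\log(2+\log(2m))$, which is exactly the gap the paper leaves open for nonlinear VIs.

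Rejection is not available, because the bound must hold for every oracle satisfying Assumption~\ref{ass:sgc}. Take $g(x,\xi)=(1+\sqrt{\rho-1}\,\xi)G(x)$ with $\xi\sim\mathcal N(0,1)$: no response is exact and ``informative'' calls cannot be recognized. Nor is there an observable criterion for a ``best'' worker.

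The missing mechanism is plain local extragradient with endpoint averaging (Definition~\ref{pv:definition}). It uses $T$ local steps, batch $b=\lfloor K/(2T)\rfloor$ and step $1/(4L)$, i.e.\ more and smaller-batch steps than a single worker can afford. The per-worker relative noise $\alpha_{\rm rel}=(\rho-1)/b$ may then be much larger than one. Because $G$ is affine, the averaged iterates perform extragradient with errors averaged over $m$ independent workers, so the error variance is divided by $m$. Meanwhile each worker's $\E\|G(x_t^i)\|^2$ exceeds the noiseless energy by at most a factor $(1+\vartheta_{\rm grow})^{T-1}$ (Lemma~\ref{pv:field-lemma}). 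The noise is absorbed once $\alpha_{\rm rel}(35+\alpha_{\rm rel})\bigl(1+\alpha_{\rm rel}(18+\alpha_{\rm rel})/256\bigr)^{T-1}\le7m$, which gives $12LR_0^2/(NT)$ (Lemma~\ref{pv:block}).

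The largest admissible $T_\star$ then satisfies $\E\max_iZ_i\le1024\max\{1,K/\rho,T_\star\}$ (Lemma~\ref{pv:comparison}). The reason is that admissibility is essentially $T\log(1+T\rho/K)\lesssim\log m$, the same threshold at which the Chernoff bound $m(eK/(\rho t))^t$ for the binomial maximum becomes small.

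Your first two branches also need repair. For skew $J$ the right side of your energy inequality is $\|x_t-x_\star\|^2$. But with the Bernoulli oracle and independent samples, single-sample extragradient has $\E\|x_{t+1}-x_\star\|^2>\|x_t-x_\star\|^2$ whenever $\rho\ge2$ and $G(x_t)\ne0$, for every step size. So use batches of size about $4\rho$ with step $1/(4L)$. Also, since the supremum over $u$ sits inside the expectation, you must bound $\E\|\sum_t\delta_t\|$; this term is absorbed only because the per-step relative noise is then $O(1)$.

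\textbf{Lower bound.} Your lower-bound mechanism fails for affine operators. An exact affine response has no flat region: a query with small components along unrevealed directions still returns $J$ applied to them, so queries off the revealed span are informative, not uninformative.

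Concretely, suppose the chain of length $n\approx N\,\E\max_iZ_i$ sits on a hidden $2n$-frame $U$ with $G(x)=UJU^\top x+G(0)$. As soon as one round contains about $2n$ informative calls at generic points (possible when $mK/\rho\gtrsim n$, e.g.\ $\rho=1$ and $m\gtrsim N$), the responses span $\operatorname{range}(U)$ and determine $UJU^\top$ there. The algorithm then outputs the exact root after one communication. Making zero chains robust to arbitrary queries needs flat non-affine edges (as in Lemma~\ref{vi:core-root}), which leaves Assumption~\ref{ass:affine-main}.

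The paper instead rotates the whole space by a Haar matrix and uses Chebyshev nodes $L\cos(j\pi/n)$, each of multiplicity $h$ of order $mT$, with the weights of Lemma~\ref{lem:aff:chebyshev}. Haar conditioning (Lemma~\ref{lem:aff:batched}) places the reference coordinates of every output in $p(\Lambda)b_{\rm spec}$ with $\deg p\le T$, plus arbitrary polynomials of $2mT+2$ Gaussian vectors. Inside each eigenspace the Gaussian terms span a random low-dimensional subspace. With probability at least $3/4$, simultaneously for all $j$, that subspace captures at most half of the squared norm of the fixed vector $e_j$. Hence the expected residual is at least $LR_0/(2n)$ (Proposition~\ref{prop:dimref:residual}); this, not a zero-chain count, is where $d=O(m(KN)^2)$ comes from.

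Finally, your reduction of the masked oracle to $\sum_r\max_iZ_{i,r}$ exact batches of width $m$ matches Lemma~\ref{lem:clock:simulation}. Conclude it with Markov's inequality at the fixed depth $T_{\rm mask}\approx2N\,\E\max_iZ_i$, not with Jensen. A single distribution built for one chain length $n$ gives loss of order $LR_0^2/n$ at every depth below $n$, not $LR_0^2/(1+t)$ uniformly in $t$.
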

\revision{The algorithm attaining the upper bound chooses the best of four
methods: returning the initial point, sequential extragradient on one
worker, common-query minibatch extragradient, and local extragradient
with averaging, in which the workers start from a common point, make $T$
local steps with batch size $b$ and $2bT\le K$, and average their
endpoints.}
The hard dimension grows with the budgets, as it must: in a fixed
dimension, enough noiseless responses identify an affine operator.

Under mixed noise, the uniform part of the noise adds a statistical
term, and averaging over workers keeps it at the level of all $mKN$
samples.

\begin{theorem}[Affine minimax rate under mixed noise]
\label{thm:affine-additive-main}\label{thm:affine-mixed-main}
Under Assumptions~\ref{ass:affine-main} and~\ref{ass:mixed}, for every
$m,K,N\in\mathbb N$, $L,R_0>0$, $\rho\ge1$, and $\sigma\ge0$, there is a
finite dimension $d_{\rm mix}=O\bigl(m(KN)^2\bigr)$ such that
\begin{equation*}
 \mathcal E_{d_{\rm mix}}^{\rm aff,mix}
 =\widetilde\Theta\!\Biggl(R_0^2\min\Biggl\{L,\ 
 \frac{L}{N}\min\!\left\{\frac\rho K,1+\frac\rho{mK}\right\}
 +\frac{\sigma}{R_0\sqrt{mKN}}\Biggr\}\Biggr).
\end{equation*}
where the upper bound holds up to a universal constant, in every finite
dimension, and the lower bound up to a universal multiple of $\log(2m)$. The hard
instances may have root noise exactly $\sigma^2$.
\end{theorem}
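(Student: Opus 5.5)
We prove Theorem~\ref{thm:affine-additive-main} by proving an upper and a lower bound separately, and we reuse Theorem~\ref{thm:affine-integrated} wherever possible.

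\emph{Upper bound.} The algorithm returns the best of three candidates: $x_0$, sequential extragradient on one worker, and common-query minibatch extragradient. In the latter two, the extragradient iterates of every worker are averaged over workers at each communication.

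The candidate $x_0$ gives $\Gap_\star^{\rm M}(x_0)\le\sup_{\|u-x_\star\|\le R_0}\langle J(u-x_\star),x_0-u\rangle\le 2LR_0^2$.

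For the iterative candidates we use the standard stochastic extragradient gap inequality for affine monotone $G$. By linearity, $\langle G(u),\bar x-u\rangle$ is affine in the averaged iterate $\bar x$, so a telescoping potential argument gives
\[
\E\Gap_\star^{\rm M}(\bar x)\lesssim \frac{R_0^2}{\gamma T}+\gamma\,\frac{1}{T}\sum_t\E\|\text{noise}_t\|^2 + \text{martingale term}.
\]
Under Assumption~\ref{ass:mixed}, the noise splits into a relative part $(\rho-1)\|G\|^2/b$ and an additive part $\sigma^2/b$. We absorb the relative part exactly as in the SGC analysis underlying Theorem~\ref{thm:affine-integrated}: with stepsize $\gamma\lesssim \min\{1/L,\,b/(\rho L)\}$, the $\|G(x_t)\|^2$ terms are cancelled by the negative terms of the extragradient descent inequality. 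This yields the deterministic parts, $\rho L R_0^2/(KN)$ for sequential and $LR_0^2/N + \rho LR_0^2/(mKN)$ for the minibatch. For the additive part:
\begin{itemize}[leftmargin=*,nosep]
\item In the minibatch method, the variance is $\sigma^2/(mK)$ per step over $N$ steps. Taking $\gamma$ as the minimum of the above and $R_0\sqrt{mK}/(\sigma\sqrt N)$ gives $\sigma R_0/\sqrt{mKN}$.
\item In the sequential method, averaging the $m$ independent local trajectories reduces the additive variance by $m$. Because $G$ is affine, the averaged trajectory is itself an extragradient trajectory driven by the averaged noise. The relative noise still depends on the local iterates, but its second moment is bounded by a sum of the local $\|G\|^2$ terms, each controlled by the local potential.
\end{itemize}
Finally, the supremum inside the expectation is handled with the usual ghost-iterate trick. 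Because the test set is a ball around $x_\star$ and $G$ is affine, $\langle G(u),\bar x-u\rangle$ is a quadratic in $u$, and the noise enters only through $\langle \sum_t\gamma\,\text{noise}_t,u\rangle$. This term is bounded by $R_0\,\E\|\sum\gamma\,\text{noise}_t\|$, which is of the same order. Taking the minimum over the candidates gives the stated upper bound. The first minimum in the statement accounts for the factor $\rho$ versus $1+\rho/(mK)$.

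\emph{Lower bound.} The rate is a maximum of two pieces, up to constants, so it suffices to exhibit two hard families and take the worse.
\begin{itemize}[leftmargin=*,nosep]
\item \emph{Relative piece.} The SGC lower-bound distribution of Theorem~\ref{thm:affine-integrated} is admissible under Assumption~\ref{ass:mixed} with any $\sigma\ge0$. It gives $LR_0^2/(1+N\InfoDepth)$, which matches $\min\{L,\frac LN\min\{\rho/K,1+\rho/(mK)\}\}R_0^2$ up to a $\log(2m)$ factor. Here we use $\InfoDepth\le \SchedDepth\cdot O(\log(2m))$, which we check by cases on the three terms defining $\InfoDepth$.
\item \emph{Statistical piece.} We use a one-dimensional-in-spirit family: $G(x)=\lambda(x-x_\star)$ with oracle $g(x,\xi)=G(x)+\sigma\xi$ for Gaussian $\xi$, so the root noise equals $\sigma^2$, and $x_\star\in\{\pm\delta\}$ is unknown. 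Since every query reveals $\lambda x_\star$ only through the noise, all $mKN$ samples together give a Gaussian location experiment with Fisher information $mKN\lambda^2/\sigma^2$. Le~Cam's two-point method with $\delta\asymp\sigma/(\lambda\sqrt{mKN})$ forces $\E|\widehat x-x_\star|\gtrsim\delta$. The Minty gap of a scalar monotone affine map with ball radius $R_0$ is at least of order $\lambda R_0|\widehat x-x_\star|$, which gives a gap $\gtrsim\sigma R_0/\sqrt{mKN}$. We choose $\lambda=L$, with $\delta\le R_0$ needed. When instead the gap is capped at $LR_0^2$, this is the first branch of the minimum.
\end{itemize}
The lower-bound dimension is inherited from Theorem~\ref{thm:affine-integrated}; the statistical instance is embedded in one extra coordinate, and the two constructions are combined by taking a max.

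\emph{Main obstacle.} The delicate step is the sequential-with-averaging branch under mixed noise. We must show that averaging $m$ local trajectories divides the additive variance by $m$ without destroying the $\rho/K$ relative rate. The relative noise variance depends on each local iterate's $\|G\|$, not on the average, so the cross-worker bookkeeping must be done on the sum of local potentials. Our first attempt will be to run the potential argument per worker, then use linearity of $G$ and Jensen to pass to the average, keeping the additive noise terms as independent martingale differences across workers.
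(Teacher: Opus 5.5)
\textbf{The statistical lower bound fails as written.} For the scalar map $G(x)=\lambda(x-x_\star)$, the root-centered Minty gap is quadratic in the error, not linear. With $e=\widehat x-x_\star$ and $w=u-x_\star$,
\[
 \Gap_\star^{\rm M}(\widehat x)=\sup_{|w|\le R_0}\lambda w(e-w)=\frac{\lambda e^2}{4}\qquad(|e|\le 2R_0),
\]
so your step ``gap $\gtrsim\lambda R_0|\widehat x-x_\star|$'' is false. With $\lambda=L$ and separation $\delta\asymp\sigma/(L\sqrt{mKN})$, a testing error yields a gap of order only $L\delta^2\asymp\sigma^2/(LmKN)$. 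This is smaller than $\sigma R_0/\sqrt{mKN}$ by the factor $\sigma/(LR_0\sqrt{mKN})$, throughout the regime $\sigma/\sqrt{mKN}\le LR_0$ where the statistical term lies below the cap.

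You must invert the scaling, as in Lemma~\ref{an:statistical}. Put the two roots at $\pm R_0$, so that a wrong sign forces $|e|\ge R_0$. Then shrink the slope to $\lambda=\min\{L,\sigma/(4R_0\sqrt{mKN})\}$, so that $mKN\lambda^2R_0^2/\sigma^2\le1/16$. The comparator $u=x_\star+\frac{R_0}{2}\operatorname{sign}(e)$ then gives a gap of at least $\lambda R_0^2/4$. Your scaling would work for a rotation $G(x)=J(x-x_\star)$ with $J$ skew, since then $\Gap_\star^{\rm M}(x)=R_0\|G(x)\|$ as in the proof of Corollary~\ref{cor:dimref:depth}, provided the noise sits on the coordinate carrying $Jx_\star$.

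Separately, the SGC family has zero root noise. To get hard instances with root noise exactly $\sigma^2$, append a known coordinate $Lz_{\rm aux}$ with additive $\mathcal N(0,\sigma^2)$ noise. It carries no information, and comparators $(u,0)$ show it does not lower the gap. The rest of your lower bound (the SGC family and $\InfoDepth\le O(\log(2m))\SchedDepth$) matches the paper.

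\textbf{Relative noise is not absorbed by the step size.} In the extragradient inequality the corrector contributes $+\gamma^2\E\|\delta\|^2$, with $\E\|\delta\|^2\le\frac{\rho-1}{b}\E\|G(y)\|^2+\frac{\sigma^2}{b}$. The only negative term is $-\gamma^2(1-2\gamma^2L^2)\|G(x)+e\|^2$. Both are of order $\gamma^2$, so a step $\gamma\lesssim b/(\rho L)$ with $b<\rho$ cancels nothing, and small steps only give $\sqrt{\rho/T}$-type rates. You need $(\rho-1)/b$ below a constant ($1/8$ in Lemma~\ref{an:parallel}) with $\gamma=\Theta(1/L)$.

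\begin{itemize}
\item For the branch where all $m$ workers run sequential trajectories, this means $b\asymp\rho$ and $T=\lfloor KN/(2b)\rfloor$, which recovers your rate. All $m$ workers are needed here: on a single worker the additive term would only be $\sigma R_0/\sqrt{KN}$.
\item For the common-query branch with $mK<\rho$, it means pooling the responses of all workers over $\lceil 8(\rho-1)/(mK)\rceil$ rounds per half-step. This pooling is what produces the factor $1+\rho/(mK)$; $N$ steps with $b=mK$ and a reduced step do not.
\end{itemize}

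The remaining ingredients of your upper bound are those of Lemma~\ref{an:parallel}: per-worker energy bounds controlling the relative noise of the averaged trajectory, the additive variance divided by $m$, and the supremum bounded by $R_0\E\|\sum_t\gamma\bar\delta_t\|$. Averaging at intermediate communications is harmless by Jensen. The paper instead averages independent trajectories only at the end, which also avoids batches that straddle rounds when $K<2b$.
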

The deterministic term decreases at the rate of the local sequential
progress, whereas the statistical term uses every sample, as for a
single minibatch of size $mKN$. For globally bounded variance,
$\rho=1$, the logarithmic factor disappears and the minimax rate is the
deterministic rate over $KN$ sequential calls plus the statistical rate
over all $mKN$ samples:
\begin{equation}\label{eq:affine-uniform-specialization}
 \mathcal E_{d_{\rm mix}}^{\rm aff,mix}
 =\Theta\Bigl(\min\Bigl\{LR_0^2,\,\tfrac{LR_0^2}{1+KN}
   +\tfrac{\sigma R_0}{\sqrt{mKN}}\Bigr\}\Bigr).
\end{equation}
For $\sigma=0$, Theorem~\ref{thm:affine-integrated} is sharper.

\section{Nonlinear monotone VI}\label{vi:main}
For nonlinear Lipschitz monotone operators, the gap upper and lower
bounds match up to logarithmic factors in $m$. The upper bounds use
batched extragradient, either on a single worker or with queries common
to all workers. The lower bounds use monotone chains whose links are
revealed only by informative calls.

\subsection{Monotone operators}
\revision{We first compare sequential single-worker computation and
common-query minibatching under SGC, and then add the uniform component
of mixed noise.}

\begin{theorem}[Monotone VI under SGC]\label{vi:main-monotone}
Let Assumptions~\ref{ass:monotone-lipschitz} and~\ref{ass:sgc} hold,
let $D>0$, and let $C$ be a nonempty compact set with
$z_\star\in C\subseteq B(z_0,D)$.
\revision{There exists an admissible algorithm attaining}
\begin{equation*}
 \E\Gap_C^{\rm M}(\widehat z)
 =O\!\left(\frac{LD^2}{1+N\SchedDepth}\right)
\end{equation*}
for the Minty gap. If
$F=(\nabla_x\Phi,-\nabla_y\Phi)$ for a differentiable convex--concave
$\Phi$ and $C=X\times Y$ is a product set, its expected primal--dual
gap $\E\Gap_{X\times Y}^{\rm PD}(\widehat z)$ satisfies the same bound.

Conversely, \revision{in a sufficiently large finite dimension,} every
admissible algorithm has an instance in this class,
with a root in $B(z_0,D)$, such that
\begin{equation*}
 \E\Gap^{\rm M}_{B(z_0,D)}(\widehat z)
 =\Omega\!\left(\frac{LD^2}{1+N\InfoDepth}\right).
\end{equation*}
For saddle problems, one $C^2$ convex--concave instance satisfies the
same lower bound for both gaps on the product of the two balls of
radius $D/\sqrt2$ centered at the blocks of $z_0$, which contains its
root. The lower bounds hold
even if every response reveals whether it is informative and, for
saddle problems, the sample values. The sample operators share the root
and are $\rho L$-Lipschitz.
\end{theorem}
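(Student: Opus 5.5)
The proof has two halves: an upper bound from the better of two batched extragradient schedules, and a lower bound from a hidden-chain monotone construction.

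\textbf{Upper bound.} I would run extragradient with minibatch size $b$ and compare two schedules.

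In the \emph{sequential} schedule, a single worker performs $T=\lfloor K/(2b)\rfloor$ steps per round, hence $NT$ steps in total. The state is carried across rounds through communication.

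In the \emph{common-query} schedule, all $m$ workers query the same point. Each extragradient half-step then averages $mb'$ samples, and there are $\lfloor K/(2b')\rfloor$ steps per round.

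The key one-step estimate is the following. Under SGC, a minibatch of size $B$ has variance at most $\frac{\rho-1}{B}\|F(z)\|^2$. With stepsize $\gamma\asymp 1/L$ and $B\asymp\rho$, the stochastic extragradient inequality
\[
 \gamma\langle F(w_t),w_t-u\rangle\le \tfrac12\|z_t-u\|^2-\tfrac12\E\|z_{t+1}-u\|^2+\text{(noise)}
\]
holds with a noise term controlled by $\gamma^2\|F(w_t)\|^2$ and $\gamma^2\|F(z_t)\|^2$. These terms are absorbed by the negative terms $-\tfrac12(1-\gamma^2L^2)\|w_t-z_t\|^2$ after a Lipschitz comparison.

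One technical point concerns the test point $u\in C$. Because the Minty gap places the supremum inside the expectation, the martingale term $\langle F(w_t)-\bar g_t,u\rangle$ must be handled. The standard fix is a ghost-iterate sequence $v_{t+1}=v_t-\gamma(\bar g_t-F(w_t))$. Its noise contribution is $\sum\gamma^2\E\|\bar g_t-F(w_t)\|^2\le\sum\gamma^2\frac{\rho}{B}\E\|F(w_t)\|^2$. I would bound this sum in turn by $O(\|z_0-z_\star\|^2)$, using the telescoped inequality at $u=z_\star$, where monotonicity makes $\langle F(w_t),w_t-z_\star\rangle\ge0$ and leaves a residual $\sum\|w_t-z_t\|^2\lesssim D^2$.

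Averaging the $w_t$ over $T_{\rm tot}$ steps then gives a Minty gap of order $LD^2/T_{\rm tot}$. The resulting counts are:
\begin{itemize}
\item sequential: $T_{\rm tot}\asymp NK/\rho$ when $K\ge\rho$;
\item common-query: per-round steps $\asymp\min\{mK/\rho,K\}\ge\min\{mK/\rho,1\}$, requiring $mK\ge\rho$, and otherwise one step per $\lceil\rho/(mK)\rceil$ rounds.
\end{itemize}
Taking the better schedule, and returning $z_0$ when neither applies (the gap at $z_0$ is $\le LD^2$ by Lipschitzness), gives $LD^2/(1+N\SchedDepth)$. For the primal--dual gap I would use the same averaged-iterate argument with convex--concave linearization, $\Phi(\bar x,v)-\Phi(u,\bar y)\le\frac1T\sum\langle F(w_t),w_t-(u,v)\rangle$.

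\textbf{Lower bound.} The plan is to combine an exact-oracle hard instance with the informative-call oracle. Take the known deterministic lower bound for monotone Lipschitz VIs: a bilinear chain saddle $\Phi(x,y)=\frac L2 y^\top A x-\ldots$ in which each first-order call reveals at most one new coordinate. After $t$ revealed links the Minty gap on $B(z_0,D)$ is $\Omega(LD^2/t)$, with dimension larger than the number of revealed links.

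The stochastic oracle returns $\rho F(z)$ with probability $1/\rho$ and $0$ otherwise. This is unbiased, satisfies $\E\|g\|^2=\rho\|F\|^2$, flags informativeness, and each sample operator $\rho F$ or $0$ is $\rho L$-Lipschitz with the common root.

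To apply this oracle I would rotate the chain by a random orthogonal transformation, or use a zero-chain argument. Then a worker can advance the revealed chain only through informative calls made in sequence. Within a round, the progress after communication is at most the maximum over workers of the number of informative calls, which is $\max_i Z_i$ with $Z_i\sim\mathrm{Binomial}(K,1/\rho)$. Summing over rounds, the expected chain length is $\le N\,\E\max_iZ_i=\Theta(N\InfoDepth)$ by the stated binomial-maximum fact, plus one. I would finish by a Markov or Jensen step, since the gap is convex decreasing in $t$, so $\E[LD^2/(1+t)]\ge LD^2/(1+\E t)$.

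\textbf{Main obstacle.} I expect the main difficulty to be the rigorous zero-respecting reduction for general randomized algorithms that may query arbitrary points. Handling it needs a random rotation and a dimension of order $m(KN)^2$, so that queries off the revealed subspace carry negligible correlation with unrevealed directions, together with a union bound over all $mKN$ queries. I would follow the Woodworth--Srebro style construction, adapted to the bilinear saddle, and check that the saddle instance is $C^2$ and achieves the primal--dual gap bound on the product of the two balls of radius $D/\sqrt2$.
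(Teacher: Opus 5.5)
\textbf{The lower bound has a genuine gap.} Rotating a bilinear chain by a random frame does not give a zero-chain against arbitrary queries, and ``negligible correlation with unrevealed directions'' does not limit what the algorithm learns. The map $x\mapsto UJU^\top x+Ub$ is linear in every hidden coordinate, and informative responses are exact up to the known factor $\rho$. So two informative calls, at $x$ and at $0$, reveal $UJU^\top x$ exactly. This vector has small but generic nonzero components on all hidden columns. By rescaling and re-querying, the algorithm builds the Krylov space of a generic vector, and the invariant ``responses depend only on the revealed prefix'' fails.

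The ``adaptation to the bilinear saddle'' that you defer is exactly the missing idea, and in the paper it has two parts.
\begin{itemize}
\item \emph{Flat edges.} Each edge $\varsigma uv$ is replaced by the convex--concave edge $E_\epsilon^\varsigma=\varsigma[uv-b_\epsilon(u)b_\epsilon(v)]+\zeta_\epsilon(u)-\zeta_\epsilon(v)$. It vanishes together with its gradient on $\{|u|,|v|\le\epsilon\}$, stays convex--concave because of the $\zeta_\epsilon$ terms, and has gradient within $O(\epsilon)$ of that of $\varsigma uv$. Then, at any point whose hidden coordinates after position $s$ are at most $\epsilon$, the operator depends exactly on the first $s+1$ coordinates (Lemma~\ref{vi:core-root}). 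For $\epsilon$ polynomially small in the chain length, the gap comparison with the bilinear reference survives up to constants.
\item \emph{Localization.} Concentration of $\langle U_i,x\rangle$ controls only queries of bounded norm. The paper therefore makes the operator a known radial map outside a ball (Lemma~\ref{vi:operator-cutoff}), so a simulator can answer far queries without revealing anything. For the saddle claim, the localization must act on $\Phi$, preserve convexity--concavity, and come with one-sided value comparisons so that the primal--dual gap is also controlled (Lemma~\ref{vi:product-cutoff}). Your plan to ``check'' the saddle case does not address this.
\end{itemize}
Only with both ingredients do the fictitious-oracle induction and the union bound of Lemma~\ref{vi:haar} go through. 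If you want to keep an affine instance, you need a different argument altogether: the Haar/Krylov representation with a high-multiplicity Chebyshev spectrum of Appendix~\ref{app:affine}. That argument, not a rotated zero-chain, is where a dimension of order $m(KN)^2$ comes from.

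\textbf{Two smaller corrections.}
\begin{itemize}
\item \emph{Finish with Markov, not Jensen.} A single hard instance has a fixed chain length $n$, and the bound $LD^2/t$ after $t$ revealed links requires $n\approx t$. So $\E[LD^2/(1+t)]$ over a random $t$ is not available from one instance. Instead, fix $n=1+\min\{KN,\lfloor4N\E\max_iZ_i\rfloor\}$ in advance and apply Markov to the realized informative depth $S_N=\sum_r\max_iZ_{i,r}$, which gives $\Prob\{S_N\le n-1\}\ge3/4$, as the paper does.
\item \emph{Your common-query schedule is not admissible.} Workers can pool responses only at round boundaries, so each common-query half-step costs at least one communication, and $\lfloor K/(2b')\rfloor$ pooled steps per round is impossible. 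It would even contradict your own lower bound: for $\rho=2K$ and $m=8K$ it would give $O(LD^2/(NK))$, while $\InfoDepth\le\tfrac12+\log(16K)$. The correct schedule pools each half-step over $\max\{1,\lceil4(\rho-1)/(mK)\rceil\}$ rounds. This gives progress $\min\{1,mK/\rho\}$ per round, which is all that $\SchedDepth$ requires.
\end{itemize}
With that fix, your upper bound is sound. The ghost iterate is a valid alternative to the paper's direct estimate $\E\|\sum_t\delta_t\|\le(\sum_t\E\|\delta_t\|^2)^{1/2}$; the paper absorbs that square root into the negative term $-\tfrac{7}{128}\gamma^2\sum_t\E\|F(x_t)\|^2$ by completing the square.
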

Up to universal constants, the upper bound in
Theorem~\ref{vi:main-monotone} equals
$D^2\min\{L,\rho L/(KN),(L/N)(1+\rho/(mK))\}$. Compared with
stochastic mirror-prox~\citep{juditsky2011} run with the variance bound
$9(\rho-1)L^2D^2$ that SGC gives on $B(z_0,2D)$
(Appendix~\ref{app:comparisons}), the stochastic terms
$\sqrt{\rho-1}\,L/\sqrt{KN}$ and $\sqrt{\rho-1}\,L/\sqrt{mKN}$ become
$\rho L/(KN)$ and $\rho L/(mKN)$.
\revision{The upper and lower denominators in
Theorem~\ref{vi:main-monotone} differ by at most a factor
$O(1+\log(2m)/\log(2+\log(2m)))$.}

\begin{theorem}[Monotone VIs under mixed noise]\label{vi:main-mixed}
Let Assumption~\ref{ass:mixed} hold with $\rho\ge1$ and $\sigma\ge0$,
together with the remaining conditions of Theorem~\ref{vi:main-monotone}.
Set
\begin{equation*}
 r_{\rm mix}:=\min\Biggl\{L,\ 
 \frac{\rho L}{KN}+\frac{\sigma}{D\sqrt{KN}},\ 
 \frac{L}{N}\left(1+\frac{\rho}{mK}\right)
                  +\frac{\sigma}{D\sqrt{mKN}}\Biggr\}.
\end{equation*}
\revision{There exists an admissible algorithm attaining}
\begin{equation*}
 \E\Gap_C^{\rm M}(\widehat z)=O(D^2r_{\rm mix}).
\end{equation*}
For a saddle problem with $C=X\times Y$, it also satisfies
$\E\Gap_{X\times Y}^{\rm PD}(\widehat z)=O(D^2r_{\rm mix})$.
Conversely, \revision{in a sufficiently large finite dimension,} every
admissible algorithm has an instance in this class,
with a root in $B(z_0,D)$, such that
\begin{equation*}
 \E\Gap^{\rm M}_{B(z_0,D)}(\widehat z)
 =\Omega\!\left(\frac{D^2r_{\rm mix}}{\log(2m)}\right).
\end{equation*}
\end{theorem}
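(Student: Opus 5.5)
The plan has two independent halves, an upper bound obtained by taking the best of three algorithms, and a lower bound assembled from three separate hard instances. The final displayed $\log(2m)$ loss comes only from the gap between $\InfoDepth$ and $\SchedDepth$.

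\emph{Upper bound.} The three terms of $r_{\rm mix}$ correspond to three methods.
\begin{enumerate}[nosep]
\item Returning $\widehat z=z_0$ gives the term $LD^2$. Since $F(u)=F(u)-F(z_\star)$ and monotonicity gives $\langle F(u),z_0-u\rangle\le\langle F(z_0),z_0-u\rangle$, we get a gap of at most $\|F(z_0)\|\,2D\le 2LD^2$, using $\|z_0-z_\star\|\le D$.
\item Single-worker mini-batch extragradient runs $T$ steps with batch $b$, where $2bT=KN$.
\item Common-query extragradient runs $N$ steps, each averaging $mK/2$ samples.
\end{enumerate}

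For methods 2 and 3 I would redo the standard stochastic mirror-prox gap analysis \citep{juditsky2011} with the variance controlled by Assumption~\ref{ass:mixed}. The one-step inequality for averaged iterates $\bar z$ reads
\[
 \sum_t\gamma\langle F(u),w_t-u\rangle\le \tfrac12\|z_0-u\|^2+\sum_t\gamma\langle \Delta_t,u-w_t\rangle+\gamma^2\sum_t\|\Delta_t\|^2 ,
\]
with per-batch variance at most $((\rho-1)\|F(w_t)\|^2+\sigma^2)/b$. The relative part is absorbed using the negative term $-(1-\gamma^2L^2)\|w_t-z_t\|^2$ together with the bound $\sum_t\|F(w_t)\|^2\lesssim D^2/\gamma^2$ from the extragradient descent on $\|z_t-z_\star\|^2$. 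This is exactly the SGC analysis behind Theorem~\ref{vi:main-monotone}, and it forces $\gamma\asymp \min\{1/L,\sqrt b/(\sqrt\rho L)\}$. The additive part contributes $\gamma\sigma^2 T/b$.

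The term $\sup_u\langle\Delta_t,u\rangle$ inside the expectation is handled by the usual ghost-sequence trick. Optimizing $\gamma$ and $b$ gives $\rho L D^2/(KN)+\sigma D/\sqrt{KN}$ for method 2 and $LD^2(1+\rho/(mK))/N+\sigma D/\sqrt{mKN}$ for method 3. The primal--dual version follows from the same averaged inequality via convex--concavity, as in Theorem~\ref{vi:main-monotone}. Taking the better of the three methods (the choice is made with the known parameters) yields $O(D^2r_{\rm mix})$.

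\emph{Lower bound.} Write $r_{\rm mix}\asymp\min\{L,A+\sigma/(D\sqrt{KN}),B+\sigma/(D\sqrt{mKN})\}$. Two instances supply the bound.
\begin{itemize}[nosep]
\item \emph{Relative-noise instance.} Mixed noise contains SGC, so Theorem~\ref{vi:main-monotone} gives $\Omega(LD^2/(1+N\InfoDepth))$. Comparing the minimum defining $\InfoDepth$ with $\SchedDepth$ shows $\InfoDepth\le O(\log(2m))\,\SchedDepth$. Hence this instance yields $\Omega(D^2\min\{L,A,B\}/\log(2m))$.
\item \emph{Additive-noise instance.} A one-dimensional linear operator $F(z)=\mu(z-z_\star)$, or the Minty-gap analogue $F\equiv$ constant near the test ball, observed with Gaussian noise of variance $\sigma^2$ gives a two-point Le Cam bound. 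The total $mKN$ samples produce a gap of $\Omega(\min\{LD^2,\sigma D/\sqrt{mKN}\})$.
\end{itemize}

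A third ingredient is needed because $\sigma/(D\sqrt{KN})$ can appear inside the minimum when $A$ is the selected branch. For this I would embed a sequential chain: a monotone chain where each link is revealed only by a noisy response. This is essentially the lower-bound construction of \citet{woodworth2021} transplanted to VIs, where a deterministic-depth term $L D^2/N$ and noise combine to give $\min\{\sigma D/\sqrt{KN}+\dots,\ B+\dots\}$. I expect that showing the lower bound respects this \emph{minimum} structure, rather than just the maximum of separate terms, is the main obstacle. Since $\max$ of two lower bounds is at least half their sum, the task reduces to checking case by case which branch of the $\min$ is active, and lower bounding it by one of the instances above.
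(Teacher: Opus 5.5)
Your architecture is the paper's. The upper bound is the better of $z_0$ and two batched extragradient schedules (Theorem~\ref{nn:all-budget}). The lower bound combines the SGC chain (Theorem~\ref{vi:uvi-lower}), a Gaussian two-point instance (Proposition~\ref{nn:statistical-lower}) and a third noisy-chain instance, followed by a case split (Corollary~\ref{gm:minimax}). However, one step of your upper bound fails as stated, and the third instance, which carries the real content of the lower bound, is missing.

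\emph{Upper bound.} Relative noise cannot be absorbed by shrinking the step. In the extragradient residual (Lemma~\ref{nn:one-step}), the useful term $-\gamma^2\|F(z_t)+e_t\|^2$ and the relative part of the noise, $\gamma^2\frac{\rho-1}{b}\|F(w_t)\|^2$, both scale with $\gamma^2$. Their balance is therefore governed by $(\rho-1)/b$, not by $\gamma$: the lemma needs $(\rho-1)/b\le1/4$ together with $\gamma\le1/(4L)$. The descent bound you invoke on $\sum_t\|F(w_t)\|^2$ itself requires this. Your rule $\gamma\asymp\min\{1/L,\sqrt{b/\rho}/L\}$ is even inconsistent with the theorem. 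With $b=1$ it would give the serial rate $\sqrt\rho\,LD^2/(KN)$ under SGC, which is below the lower bound $\Omega(\rho LD^2/(KN))$ that Theorem~\ref{vi:main-monotone} gives for $m=1$ and $KN\ge\rho$. Likewise, your common-query scheme ($N$ steps of $mK/2$ samples each) cannot produce the factor $1+\rho/(mK)$ when $mK<\rho$. That factor comes from pooling over $s_{\rm cq}=\max\{1,\lceil4(\rho-1)/(mK)\rceil\}$ rounds per half-step. Each step needs two such blocks, because the common corrector point depends on the pooled predictor. With these batch sizes and the step of Theorem~\ref{nn:trajectory}, the stated rates follow. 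The $u$-dependent martingale term is at most $2\gamma D\|\sum_t\delta_t\|$, and its relative part is absorbed by the same $-\gamma^2\|F\|^2$ term, so no ghost iterate is needed.

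\emph{Lower bound.} The minimum structure is not the obstacle. Once you have
\[
\tfrac{LD^2}{1+N\InfoDepth},\qquad \min\{LD^2,\tfrac{\sigma D}{\sqrt{mKN}}\},\qquad \min\{\tfrac{LD^2}{N\log(2m)},\tfrac{\sigma D}{\sqrt{KN\log(2m)}}\},
\]
the split $K\ge2\rho$ versus $K<2\rho$ closes in a few lines. The third bound is the one you leave unconstructed, and it is indispensable. For $K\ge2\rho$ and $\frac{\sigma}{D\sqrt{KN}}\le\frac LN$ one has $r_{\rm mix}\asymp\frac{\rho L}{KN}+\frac{\sigma}{D\sqrt{KN}}$, whereas the first two instances give only $O\bigl(D^2(\frac{\rho L}{KN}+\frac{\sigma}{D\sqrt{mKN}})\bigr)$.

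The SGC masking cannot simply be rerun with reveal probability $p=1/K$, which is what limits a worker to $O(1)$ new links per round. Masking the whole response has variance $(K-1)\|F\|^2$. This exceeds the relative budget when $K>\rho$ and is unbounded, so it does not fit into $\sigma^2$. Masking the whole hidden operator, whose norm is of order $\lambda r$, would force $\lambda$ to be smaller by a factor $\sqrt n$.

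The paper instead clips every edge, so that each edge operator has norm at most $9\lambda a_{\rm ch}$ with $a_{\rm ch}=r/\sqrt n$ (Lemmas~\ref{gm:edge} and~\ref{gm:chain}). It then randomizes only the edge at the boundary of the large hidden coordinates (oracle~\eqref{gm:oracle}). The variance is then at most $81\lambda^2a_{\rm ch}^2/p$ everywhere, a failed call reveals nothing, and a modified hiding argument applies (Lemma~\ref{gm:hiding}). With $p=1/K$ one has $\beta_{1/K}\le e-1+\log m$, so the chain length is $n=O(N\log(2m))$. Choosing $\lambda=\min\{L/120000,\sigma\sqrt{pn}/(9r)\}$ meets the variance budget and yields the third bound (Theorem~\ref{gm:lower}, Corollary~\ref{gm:local-obstruction}). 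Note that the $\log(2m)$ loss also enters here, not only through $\InfoDepth/\SchedDepth$ as your opening remark suggests. The probabilistic-chain idea you attribute to \citet{woodworth2021} is the right source. What makes it work for VIs under the additive budget is clipping, boundary-edge randomization, and the $p$--$\lambda$ trade-off at $p=1/K$.
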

The bounds agree within a universal multiple of $\log(2m)$; replacing
$1+\rho/(mK)$ by $1/\min\{1,mK/\rho\}$ changes the upper bound by at most
a factor two.
Here $\sigma^2$ is the additive term of Assumption~\ref{ass:mixed}, so the
variance at a root is at most $\sigma^2$; the bounds do not require it to
equal $\sigma^2$. \revision{The lower-bound instances have root variance
exactly $\sigma^2$.}

\paragraph{Affine and nonlinear operators.}
Theorems~\ref{vi:main-monotone} and~\ref{vi:main-mixed} show where the
affine structure matters. For affine operators, averaging local
trajectories commutes with the operator, so a single method uses the
sequential depth $KN$ and the $mKN$ calls of all workers at the same
time. Under SGC, its progress per round is the informative depth
$\InfoDepth$, which includes the longest informative chain over the
workers (Theorem~\ref{thm:affine-integrated}). Under mixed noise, the
deterministic term decreases at the rate of local sequential progress,
while the statistical term $\sigma R_0/\sqrt{mKN}$ uses every sample
(Theorem~\ref{thm:affine-additive-main}).

This split resembles the one of~\citet{woodworth2020} for convex
optimization with intermittent communication: for quadratic objectives,
local SGD is better than minibatch SGD and its accelerated variant is
minimax optimal, whereas for general convex objectives local SGD can be
worse than minibatch SGD. Here the same structure separates affine from
nonlinear VIs. For nonlinear operators, our methods choose between the
two resources: $KN$ sequential calls on one worker, with the statistical
error $\sigma D/\sqrt{KN}$ of their own samples, or common queries that
use all $mKN$ samples but make only $N$ sequential updates. Under mixed
noise, the separation holds for every admissible algorithm, not only for
local methods: by the lower bound of Theorem~\ref{vi:main-mixed}, no
algorithm does better on worst-case nonlinear instances than the better
of the two schedules, up to a factor $\log(2m)$. Under SGC, the upper
and lower bounds differ only by the ratio of $\InfoDepth$ to
$\SchedDepth$, which is at most logarithmic in $m$.

Despite this theoretical difference, nonlinear problems can behave much
like affine ones in practice. In the experiments of
Appendix~\ref{app:exp-affine}, local extragradient with averaging gains
over serial and common-query extragradient on two strongly nonlinear
problems by the same order as on a bilinear game, and the nonlinear
problems converge at least as fast as the bilinear one
(Figure~\ref{fig:affine-nonlinear-experiments}).

\subsection{Strongly monotone operators}
For strongly monotone operators we measure the squared distance to the
root. Let Assumptions~\ref{ass:monotone-lipschitz}
and~\ref{ass:strong-monotonicity} hold with $0<\mu\le L$, let
$\|z_0-z_\star\|\le R_0$ for some $R_0>0$, and write $\kappa=L/\mu$.
Under SGC, an admissible algorithm converges linearly with the schedule
depth of Theorem~\ref{vi:main-monotone} in the exponent
(Proposition~\ref{nn:strong-distance}): for a universal constant $c>0$,
\begin{equation}\label{vi:main-distance}
 \E\|\widehat z-z_\star\|^2
 =O\!\left(R_0^2\exp\!\left[-\frac{cN}{\kappa}\SchedDepth\right]\right).
\end{equation}
Mixed noise adds a statistical term.

\begin{theorem}[Strongly monotone VIs under mixed noise]
\label{vi:main-mixed-accuracy}
Under Assumptions~\ref{ass:monotone-lipschitz},
\ref{ass:strong-monotonicity}, and~\ref{ass:mixed}, let $\kappa\ge2$ and
$\|z_0-z_\star\|\le R_0$. For a universal constant $c>0$ let
$K_{\rm cq}=\min\{1,mK/\rho\}$ and
$\bar m_N=m/(1+m\,e^{-cNK_{\rm cq}/\kappa})$. There exists an admissible
algorithm attaining
\begin{equation*}
 \E\|\widehat z-z_\star\|^2\lesssim\min\Bigl\{R_0^2,\
 R_0^2e^{-cN\SchedDepth/\kappa}
 +\frac{\sigma^2}{\mu^2\bar m_NKN}\Bigr\}.
\end{equation*}
Conversely, for universal constants $C,c'>0$ let $m_N=1$ if
$N\log(2m)\le c'\kappa$ and $m_N=m$ otherwise. In a sufficiently large
finite dimension, every admissible algorithm has an instance in this
class with
\begin{equation*}
 \E\|\widehat z-z_\star\|^2\gtrsim\max\Bigl\{
 R_0^2e^{-CN\InfoDepth/\kappa},\
 \min\Bigl\{R_0^2,\frac{\sigma^2}{\mu^2m_NKN}\Bigr\}\Bigr\}.
\end{equation*}
\end{theorem}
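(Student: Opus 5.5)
The proof has an upper-bound half and a lower-bound half, each term of which comes from a separate construction.

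\emph{Upper bound.} We run the better of two schedules and also allow the trivial output $z_0$, which gives the $R_0^2$ branch. Both schedules are restarted stochastic extragradient with batch size $b$. For an extragradient step with stepsize $\gamma\le c/L$ on a $\mu$-strongly monotone, $L$-Lipschitz $F$, with a batch of $b$ samples under Assumption~\ref{ass:mixed}, we aim for the one-step bound
\[
\E\|z_{t+1}-z_\star\|^2\le(1-\gamma\mu)\|z_t-z_\star\|^2+\tfrac{2\gamma^2\sigma^2}{b},
\]
valid provided $\gamma\le c\,b/(\rho L)$. The derivation expands the extrapolation step and absorbs the relative term $(\rho-1)\|F(w_t)\|^2\gamma^2/b$ into the negative $\|F\|^2$ term of the extragradient identity. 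Here $\|F(w_t)\|\le L\|w_t-z_\star\|$, and $\|w_t-z_\star\|$ is comparable to $\|z_t-z_\star\|$. Because a stepsize of order $\min\{1,b/\rho\}/L$ is allowed rather than $\mu b/(\rho L^2)$, the noise penalty in the exponent is linear in $\kappa$, not quadratic. Unrolling over $T$ steps gives $(1-\gamma\mu)^T R_0^2+\sigma^2\gamma/(\mu b)$. We then treat the two schedules separately.
\begin{itemize}
\item Sequential, on one worker with $b=1$ and $T=KN/2$ steps: the contraction factor is $\exp(-cKN/(\rho\kappa))$.
\item Common-query, with $b=mK/2$ and $T=N$: the factor is $\exp(-cN\min\{1,mK/\rho\}/\kappa)$.
\end{itemize}
The maximum of the two exponents is $\SchedDepth$. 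To obtain the statistical term $\sigma^2/(\mu^2\bar m_N KN)$ we use a two-phase schedule with decreasing stepsizes, in the style of Stich: a constant stepsize first, then $\gamma_t\asymp 1/(\mu t)$, so the noise term becomes $\sigma^2/(\mu^2 bT)$. For the common-query schedule this yields $\sigma^2/(\mu^2 mKN)$ plus the residual $e^{-cNK_{\rm cq}/\kappa}\cdot\sigma^2/(\mu^2KN)$. The residual comes from comparing with the sequential schedule, or more directly from the fact that the error cannot fall below the single-worker level before the bias has decayed. Combining the residual with the main term gives exactly $\sigma^2(1+m e^{-cNK_{\rm cq}/\kappa})/(\mu^2 mKN)=\sigma^2/(\mu^2\bar m_NKN)$. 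The sequential schedule alone gives $\sigma^2/(\mu^2KN)$, and this is dominated by the combined expression. The main obstacle is choosing the phase lengths so that the bias and variance terms appear in precisely this form. The approach I would try first is the standard case split on whether $T\le 2\kappa\rho/b$.

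\emph{Lower bound.} We prove the two terms separately and take the maximum.
\begin{itemize}
\item Exponential term: we reuse the SGC hard construction from Theorem~\ref{vi:main-monotone}, namely a monotone chain whose links are revealed only by informative calls, each informative with probability $1/\rho$. We add a regularizer $\mu(z-z_0)$, using the standard strongly monotone chain of condition number $\kappa$, in which every revealed coordinate reduces the error only by a factor $1-O(1/\kappa)$. Since the expected number of revealed links per round is $O(\InfoDepth)$, the bound $R_0^2e^{-CN\InfoDepth/\kappa}$ follows. Concentration of the maximum of binomials is already encoded in $\InfoDepth$.
\item Statistical term: a one-dimensional (or Le Cam two-point) instance $F(z)=\mu(z-\theta)$ with Gaussian additive noise of variance $\sigma^2$ and $\rho=1$. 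With $mKN$ samples, the minimax squared error is $\sigma^2/(\mu^2 mKN)$, capped at $R_0^2$.
\item Refinement $m_N=1$ when $N\log(2m)\le c'\kappa$: combine the two constructions, placing the statistical two-point problem at the end of a chain that is not yet reached. With a chain of length $\asymp\kappa$ that cannot be traversed in $N$ rounds, the other workers' samples at the unreached coordinate carry no information. Only the samples taken after discovery, at most $KN$ per chain-holding worker, contribute, and this gives $\sigma^2/(\mu^2KN)$. Making this decoupling rigorous is the delicate step of the lower bound.
\end{itemize}
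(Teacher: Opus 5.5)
Your upper bound has two genuine gaps. The most serious is that the better of the serial and common-query schedules cannot attain the stated rate. Take $K>\rho$, so that $\SchedDepth=K/\rho$ and $K_{\rm cq}=1$, and put $s=\sigma^2/(\mu^2KN)$. The target is then $R_0^2e^{-cNK/(\rho\kappa)}+s\,(1/m+e^{-cN/\kappa})$.
\begin{itemize}
\item The serial schedule uses only one worker's $KN$ samples, so on the Gaussian two-point instance its error is of order $s$.
\item The common-query schedule makes only $O(N)$ sequential updates, so on an exact strongly monotone chain its error is at least $R_0^2e^{-O(N/\kappa)}$.
\end{itemize}
Take $m\ge e^{cN/\kappa}$, let $K/\rho$ be a large constant, and put $s$ at the level $R_0^2e^{-O(N/\kappa)}$. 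Then both schedules miss the target by a factor $e^{\Omega(N/\kappa)}$.

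The residual $e^{-cNK_{\rm cq}/\kappa}\sigma^2/(\mu^2KN)$ is not something the common-query schedule produces. Also, the serial noise $\sigma^2/(\mu^2KN)$ is not dominated by the combined term; it dominates it. The residual is the noise floor of a serial phase after a later common-query phase has contracted it. The missing idea is to compose the schedules rather than choose between them. Run the serial schedule for $\lfloor N/2\rfloor$ rounds and broadcast its output $z_{\rm mid}$. Then run the common-query schedule from $z_{\rm mid}$, using a guarantee that holds conditionally on the random starting point. Taking expectations gives bias $R_0^2e^{-cN(K/\rho+K_{\rm cq})/\kappa}$ and noise $e^{-cNK_{\rm cq}/\kappa}\sigma^2/(\mu^2KN)+\sigma^2/(\mu^2mKN)$, which is exactly the $\bar m_N$ form.

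The second gap is that your one-step inequality is false when $b<\rho$, in particular for your serial choice $b=1$. In the extragradient identity, the helpful term $-\gamma^2\|F(z_t)\|^2$ and the relative noise $\gamma^2(\rho-1)\|F\|^2/b$ both scale like $\gamma^2$, so shrinking $\gamma$ absorbs nothing. Concretely, take $F=(\mu I+S)(z-z_\star)$ with $S$ skew, the oracle $(B/p)F$ with $B\sim\mathrm{Bernoulli}(p)$ and $p=1/\rho$, $b=1$, and $\gamma=c_0/(\rho L)$. When only the corrector is informative, the step is a plain forward step of length $c_0/L$. On a plane where $\|Sv\|\approx L\|v\|$, this gives $\E\|z_{t+1}-z_\star\|^2\approx\bigl(1+c_0^2p(1-2p)-2c_0p/\kappa\bigr)\|z_t-z_\star\|^2$. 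That exceeds $\|z_t-z_\star\|^2$ as soon as $\rho\ge4$ and $c_0\kappa$ is larger than a constant. Avoiding this forces $\gamma\lesssim\mu/(\rho L^2)$, which is the quadratic dependence you meant to avoid. The fix is batches with $(\rho-1)/b\le1/4$, pooled across rounds on one worker if $K<\rho$ and across rounds and workers if $mK<\rho$, together with $\gamma=1/(4L)$. This gives $\asymp KN/\rho$ and $\asymp NK_{\rm cq}$ steps respectively, and the same exponents. A common-query extragradient step also needs two rounds, since the corrector query depends on the pooled predictor.

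In the lower bound, your exponential term (a masked strongly monotone chain with $O(\InfoDepth)$ informative links per round) and your Gaussian two-point term agree with the paper. The refinement $m_N=1$, however, has no working mechanism. If the end of the chain is never reached, no response depends on the hidden parameter there, and the resulting bound does not involve $\sigma$. If some worker reaches it, the prefix is shared at the next communication and all $m$ workers can then sample the parameter, so the relevant count is not one worker's $KN$ samples.

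The paper instead spends the additive noise budget on masking. It uses a clipped chain with edges bounded by $9\lambda a_{\rm ch}$, plus the known term $\mu z$. The oracle randomizes, by the factor $B/p$, only the edge at the boundary of the large hidden coordinates. It is unbiased, exact at the root, and has variance at most $81\lambda^2a_{\rm ch}^2/p$. The parameters are $\lambda\asymp n\mu$, $n\asymp\kappa$ (so the operator stays $L$-Lipschitz), $p\asymp n/(KN)$, and $a_{\rm ch}^2\asymp\sigma^2p/\lambda^2$. With these, the hypothesis $N\log(2m)\lesssim\kappa$ gives $N\beta_p\le N\bigl((e-1)Kp+\log m\bigr)<n/32$. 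Hence fewer than $n/4$ hidden coordinates are discovered with probability at least $7/8$. All $v$-coordinates of the root are close to $-a_{\rm ch}$, and at least half of them lie beyond that prefix. The error is therefore $\gtrsim na_{\rm ch}^2\asymp\sigma^2/(\mu^2KN)$.
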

The algorithm makes sequential steps on one worker for the first half of
the rounds and then switches to common-query minibatches
(Theorem~\ref{thm:strong-mixed-hybrid}); Theorem~\ref{sf:rate-lower}
gives the lower bound. Both bounds consist of a linear rate and the
statistical error of $\bar m_NKN$ or $m_NKN$ samples: the number of
workers whose samples can be pooled grows from one to $m$ after about
$\kappa$ rounds, up to factors in $m$. In terms of the rounds needed for
a given accuracy, the two bounds agree up to logarithmic factors in $m$
(Corollary~\ref{sf:accuracy-match}). For noise bounded by
$\delta\|z-z_\star\|^2+\sigma^2$, the stochastic extragradient bounds
of~\citet{gorbunov2022} depend on $\delta/\mu^2$. Since
$\|F(z)\|^2\le L^2\|z-z_\star\|^2$, SGC gives such a bound with
$\delta=(\rho-1)L^2$, and this dependence becomes $(\rho-1)\kappa^2$.
The exponents $NK/(\rho\kappa)$ and $N\min\{1,mK/\rho\}/\kappa$ in
Theorem~\ref{vi:main-mixed-accuracy} are linear in $\kappa$, because the
analysis uses that the noise scales with $\|F\|^2$ rather than with the
squared distance.

\subsection{Generalized smoothness}\label{gs:main}
Global Lipschitz continuity can be replaced by a Jacobian bound that
grows with the operator.

\begin{assumption}[Monotone generalized-smooth operator]
\label{ass:generalized-smoothness}
Let $L_0,R_0>0$ and $L_1\ge0$. The operator
$F:\mathbb R^d\to\mathbb R^d$ is continuously differentiable. Writing
$\mathrm DF(z)$ for its Jacobian at $z$, we require, for every
$z,u\in\mathbb R^d$,
\begin{equation*}
 \begin{aligned}
 \langle F(z)-F(u),z-u\rangle&\ge0,\\
 \|\mathrm DF(z)\|_{\rm op}&\le L_0+L_1\|F(z)\|.
 \end{aligned}
\end{equation*}
There is a root $z_\star$ with
$F(z_\star)=0$ and $\|z_0-z_\star\|\le R_0$.
\end{assumption}
The assumption replaces
Assumption~\ref{ass:monotone-lipschitz}; the oracle satisfies the mixed
condition, which contains SGC as the case $\sigma=0$.

\begin{corollary}[Generalized smoothness under SGC]
\label{gs:main-distance-complexity}
Under Assumptions~\ref{ass:generalized-smoothness},
\ref{ass:strong-monotonicity}, and~\ref{ass:sgc}, let $\kappa_0=L_0/\mu\ge1$,
$\vartheta=\min\{\rho/K,1+\rho/(mK)\}$, and
$\ell_0=\log(64\max\{(1+L_1R_0)^2,\kappa_0\})$. For every $N\ge1$ and a
universal constant $c>0$, an admissible adaptive restarted extragradient
method satisfies, in every finite dimension,
$\E\|\widehat z-z_\star\|^2\le8R_0^2e^{-cx_N}$ with
\begin{equation*}
 x_N=\min\Bigl\{\frac{N}{\ell_0\bigl(1+\vartheta(L_1R_0)^2\bigr)},\
 \sqrt{\frac{N}{\ell_0\vartheta\kappa_0}}\Bigr\}.
\end{equation*}
\end{corollary}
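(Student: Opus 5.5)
Our plan is to reduce the corollary to the globally Lipschitz strongly monotone rate \eqref{vi:main-distance}, applied on a ball around the root where generalized smoothness gives a Lipschitz constant, and to chain restarts in which the radius halves.

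\textbf{Local Lipschitz constant.} From $\|\mathrm DF(z)\|_{\rm op}\le L_0+L_1\|F(z)\|$, a Grönwall argument along segments gives $\|F(z)\|\le (L_0/L_1)(e^{L_1\|z-z_\star\|}-1)$. A cleaner bound is obtained on a ball $B(z_\star,r)$ with $L_1r\lesssim1$: there $F$ is $L(r)$-Lipschitz with $L(r)=O(L_0+L_1\sup_{B}\|F\|)$. Outside that regime we use the standard $(L_0,L_1)$ estimate $\|F(z)-F(u)\|\le(L_0+L_1\|F(u)\|)e^{L_1\|z-u\|}\|z-u\|$. The adaptive method estimates $\|F\|$ at the current center from a minibatch (under SGC, $\E\|g\|^2\le\rho\|F\|^2$). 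It then chooses the step $\eta\asymp1/(L_0+L_1\|F(z)\|)$ and a trust radius $\asymp1/(L_0/\|F\|+L_1)$. As a result, each extragradient step only touches a region where the effective Lipschitz constant is within a constant of $L_0+L_1\|F(z)\|$. We also need the iterates to remain in $B(z_\star,2R_0)$ with high probability, or in expectation via a stopping argument.

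\textbf{Restart epochs.} In epoch $j$ the distance to the root is at most $R_j=2^{-j/2}R_0$ in expectation. Near the root, $\|F\|\le L_jR_j$, so the effective constant is $L_j\approx L_0+L_1 L_jR_j$, which gives $L_j=O(L_0)$ once $L_1R_j\lesssim1$. Before that, strong monotonicity yields $\|F\|\ge\mu R_j$, and the normalized steps make progress like an $L_1$-dominated method. By \eqref{vi:main-distance}, halving the squared distance costs $N_j\asymp\vartheta\kappa_j$ rounds of the better schedule. Here $\vartheta=1/\SchedDepth$ up to a factor two, and $\kappa_j=L_j/\mu$. There are two phases.

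\emph{Phase 1:} $L_1R_j\gtrsim1$. We intend to show that each halving costs $O(\ell_0(1+\vartheta(L_1R_j)^2))$ rounds. The term $(L_1R_j)^2$ should come from a normalized extragradient analysis, where the effective condition number is about $L_1\|F\|R_j/\|F\|\cdot L_1R_j$. Summing this geometric series over $j$ is dominated by $j=0$.

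\emph{Phase 2:} $L_1R_j\lesssim1$. Each epoch is a Lipschitz run with constant $O(L_0)$ and needs $O(\vartheta\kappa_0)$ rounds per constant factor.

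\textbf{The square root in $x_N$.} We get it from the adaptive, rather than fixed, allocation of rounds per epoch. For an unknown noise outcome, we split $N$ into epochs of geometrically varying lengths. Alternatively, we balance $E$ epochs of length $N/E$: $E$ restarts each need $\vartheta\kappa_0\ell_0/(\text{length})$ per unit of decrease, so the exponent is $\min\{E,\ N/(E\ell_0\vartheta\kappa_0)\}$. Optimizing over $E$ gives $\sqrt{N/(\ell_0\vartheta\kappa_0)}$. The logarithm $\ell_0$ pays for the probability bounds in the adaptive step and for the number of epochs. Combined with Phase 1, this gives the minimum in $x_N$, with failure events absorbed into the constant $8$.

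\textbf{Main obstacle.} The hardest step is controlling the stochastic adaptive step sizes under SGC. The step depends on a noisy $\|F\|$ estimate, and SGC is a second-moment bound only. We would first try using independent samples for the step size and for the update direction, so that conditional unbiasedness survives. We would then bound the probability of an overestimated step by Markov's inequality with a minibatch of size $O(\rho)$, which fits within the $\vartheta$ accounting.
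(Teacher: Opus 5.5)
Your outline (a Gr\"onwall local Lipschitz bound, steps adapted to an estimate of $\|F\|$, halving restarts costing about $(1+L_1r_j)^2$ early and $\kappa_0$ late, and the serial/pooled choice through $\vartheta$) is close to the paper's. The gap is the step that produces the square root.

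\textbf{The square root.} Your balancing formula $\min\{E,N/(E\ell_0\vartheta\kappa_0)\}$ is the right constraint only if every epoch must cost about $E\,\ell_0\vartheta\kappa_0$ rounds. With the per-halving cost $\ell_0\vartheta\kappa_0$ that you state, chaining $E$ epochs would give the exponent $\min\{E,N/(\ell_0\vartheta\kappa_0)\}$, which is linear in $N$. Neither ``adaptive allocation'' nor geometrically varying lengths explains the extra factor $E$.

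Your argument cannot actually deliver that linear rate, because you chain epochs in expectation. Your per-epoch cost uses a Lipschitz constant valid only on $B(z_\star,O(R_j))$, and the Gr\"onwall bound grows like $e^{L_1\|z-u\|}$. So an epoch can be analyzed only if it starts in that ball pathwise. An epoch, however, only yields something like $\E\|z_{\rm out}-z_\star\|^2\le r^2/1024$, that is, success with constant probability.

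The missing idea is probability amplification. The paper runs $k_{\rm rep}$ independent copies of each stage and takes their geometric median, which restores $\|c_{j+1}-z_\star\|\le r_{j+1}$ outside an event of probability $\varepsilon/(50JR_0^2)$. One failed stage can leave an error of order $R_0^2$, while the target is $\varepsilon=2R_0^2 4^{-J}$, so this needs $k_{\rm rep}\asymp\log(J4^J)\asymp J$ copies. The total number of rounds is then of order $\ell_0[J(1+\vartheta(L_1R_0)^2)+\vartheta\kappa_0J^2]$. Requiring this to be at most $N$ gives $J\asymp x_N$: the $J$-fold repetition of the $\kappa_0$-stages is what produces the square root (Corollary~\ref{gs:rate-sgc}).

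\textbf{The step-size estimate.} A batch of size $O(\rho)$ with Markov's inequality leaves a constant probability of a bad estimate. For instance, for the masked oracle $g=\rho B_{\rm mask}F$ the whole batch vanishes with probability $(1-1/\rho)^b$. Within a stage the contraction per iteration is only of order $1/S_r$, with $S_r=\max\{(1+L_1r)^2,\kappa_0\}$. A step taken with an underestimated $\|F\|$ can throw the iterate across the whole region, so the per-iteration failure probability must be $O(1/S_r)$. Chebyshev with batches of size $\rho S_r$ would achieve this, but it costs $\rho S_r^2$ calls per stage, a quadratic dependence on $\kappa_0$. The paper instead takes a geometric median of $O(\log S_r)$ block means of size $O(\rho)$ (Lemma~\ref{grv:robust}); this is exactly where $\ell_0$ comes from.

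\textbf{The reduction to \eqref{vi:main-distance}.} You cannot import that bound. It is proved for unprojected stochastic extragradient under a global Lipschitz constant, and SGC noise does not keep the iterates in a ball where a local constant is known, so a trust radius or a stopping argument does not control the damage. The paper projects every step onto $B(c,2r)$, which caps the effect of a failed estimate at squared distance $9r^2$. It then proves the margin bound \eqref{gs:projection-margin}, so projected steps are not much shorter than unprojected ones (Lemma~\ref{gs:seg-residual}).
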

The initial radius enters only through the burn-in factor
$1+\vartheta(L_1R_0)^2$. The method adapts the extragradient step size,
restarts on shrinking balls, and switches once from sequential local
updates to common-query minibatches; each restart is repeated to make its
failure unlikely, which produces the square root in the exponent.
Corollary~\ref{gs:rate-sgc} proves the bound, and Theorem~\ref{ghs:upper}
also covers mixed noise.

\section{Composite VI with smooth convex samples}
\label{sec:acceleration-main}
\revision{This section studies composite VIs under the root-noise model.
For $G=\nabla f+H$, the gradient is sampled and the monotone map $H$ is
exact; smooth convex samples control the noise without a global SGC
assumption.}

\begin{assumption}[Smooth convex samples and an exact monotone component]
\label{ass:interp-main}
Let $d\in\mathbb N$, $R_0>0$, $0\le L_f\le L_{\max}<\infty$, and
$0\le L_H<\infty$. On $\mathbb R^d$, let
\[
 G=\nabla f+H,
\]
where $f$ is convex with an $L_f$-Lipschitz gradient, and $H$ is
deterministic, monotone, and $L_H$-Lipschitz. A designated root satisfies
\[
 G(x_\star)=0,\qquad \|x_0-x_\star\|\le R_0.
\]
Almost every sample function $f_\xi$ is convex on $\mathbb R^d$ and has
an $L_{\max}$-Lipschitz gradient. At every query $x$ measurable with
respect to the pre-call history $\mathcal F$, \revision{the sample drawn
for that call is independent of $\mathcal F$} and
\[
 \mathbb E[\nabla f_\xi(x)\mid\mathcal F]=\nabla f(x).
\]
All workers use the same sample distribution and the same exact map $H$;
samples are independent across calls and workers, and all maps and
outputs are measurable. The root noise is finite:
\begin{equation}\label{eq:composite-root-variance-main}
 \sigma_\star^2
 :=\mathbb E\|\nabla f_\xi(x_\star)+H(x_\star)\|^2
 =\mathbb E\|\nabla f_\xi(x_\star)-\nabla f(x_\star)\|^2<\infty.
\end{equation}
One stochastic gradient or one evaluation of $H$ costs one call.
Algorithms know the displayed parameters; an upper bound on
$\sigma_\star$ may replace its exact value.
\end{assumption}

For the sample operator $\nabla f_\xi+H$, interpolation reads
\[
 \sigma_\star=0
 \quad\Longleftrightarrow\quad
 \nabla f_\xi(x_\star)+H(x_\star)=0\quad\text{almost surely}.
\]
\revision{Interpolation does not imply SGC with any finite constant, even
for one-dimensional nonnegative convex $1$-smooth sample losses with a
common minimizer. The mean and sample smoothness constants $L_f$ and
$L_{\max}$ can differ by a large factor.}

\begin{definition}[Composite gap]
For $x\in\mathbb R^d$, define
\begin{equation*}
 \CompGap(x)
 =\sup_{\|u-x_\star\|\le R_0}
   \{f(x)-f(u)+\langle H(u),x-u\rangle\}.
\end{equation*}
The error is $\mathbb E\CompGap(\widehat x)$, with the
expectation outside the supremum.
\end{definition}
With $F=G$, convexity gives
$\CompGap(x)\ge\Gap^{\rm M}_{B(x_\star,R_0)}(x)$, and
$\CompGap(x_0)\le(L_f/2+L_H)R_0^2$.

\revision{For a root-noise level $s\ge0$, define the rates of sequential
updates on one worker and of pooled minibatches}
\begin{equation*}
 \begin{aligned}
 \CompSeq{s}&:=\frac{L_f}{(KN)^2}
   +\frac{L_H+L_{\max}}{KN}
   +\frac{s}{R_0\sqrt{KN}},\\
 \CompMini{s}&:=\frac{L_f}{N^2}+\frac{L_H}{KN}
   +\min\!\left\{\frac{L_{\max}}{mKN},
          \sqrt{\frac{L_{\max}L_f}{mKN}}\right\}
   +\frac{s}{R_0\sqrt{mKN}}.
 \end{aligned}
\end{equation*}
\revision{The second method also solves for $H$ locally between
communications.}

\subsection{Upper bound}

\begin{theorem}[\revision{Sequential local updates and parallel minibatching}]
\label{thm:composite-local-main}
Under Assumption~\ref{ass:interp-main}, for every $m,K\ge1$ and $N\ge2$,
\revision{there exists an admissible algorithm attaining}
\begin{equation*}
 \mathbb E\CompGap(\widehat x)=
 \widetilde O\!\Bigl(R_0^2\min\bigl\{L_f+L_H,
 \CompSeq{\sigma_\star},\CompMini{\sigma_\star}\bigr\}\Bigr).
\end{equation*}
\revision{The sequential single-worker guarantee has no logarithmic
factors and holds for $N=1$; the parallel-minibatch guarantee hides
powers of $\log(2mKN)$.}
\end{theorem}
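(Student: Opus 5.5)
The plan is to prove the three entries of the minimum separately and then let the algorithm run the one with the smallest known bound. The parameters $(L_f,L_H,L_{\max},\sigma_\star,R_0,m,K,N)$ are known, so this choice is possible.

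\emph{Trivial entry.} Returning $\widehat x=x_0$ gives $\CompGap(x_0)\le(L_f/2+L_H)R_0^2$, as noted after the definition of the composite gap.

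\emph{Sequential entry.} One worker runs stochastic accelerated mirror-prox \citep{chen2017amp} for $T=\Theta(KN)$ iterations, each with $O(1)$ calls: one stochastic gradient and two evaluations of $H$. The other workers stay idle, and the output is communicated at the last round, so $N=1$ is allowed. Assume first that the iterates stay in a ball of radius $O(R_0)$; the localization step below handles this. The standard AMP analysis gives a gap bound of
\[
\frac{L_fR_0^2}{T^2}+\frac{L_HR_0^2}{T}+R_0\sqrt{\frac{\textstyle\sum_t\E\|\nabla f_\xi(\underline x_t)-\nabla f(\underline x_t)\|^2}{T}}
\]
up to constants. The variance is controlled by interpolation-type smoothness rather than by a uniform bound. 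Convexity and $L_{\max}$-smoothness of $f_\xi$ give
\[
\E\|\nabla f_\xi(x)-\nabla f_\xi(x_\star)\|^2\le 2L_{\max}\,\E\bigl[f_\xi(x)-f_\xi(x_\star)-\langle\nabla f_\xi(x_\star),x-x_\star\rangle\bigr]=2L_{\max}D_f(x,x_\star),
\]
so the variance at $x$ is at most $4L_{\max}D_f(x,x_\star)+2\sigma_\star^2$. With the step size $\gamma\lesssim 1/L_{\max}$, the Bregman term $D_f$ at the query points is absorbed by the gap (potential) decrease. This converts the variance contribution into $L_{\max}R_0^2/T+\sigma_\star R_0/\sqrt T$. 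It is the usual argument that replaces $\sigma$ by $\sigma_\star$ at the price of an $L_{\max}/T$ term, as in \citet{woodworthsrebro2021}. Putting $T=KN$ gives $R_0^2\CompSeq{\sigma_\star}$, with no logarithms because no high-probability localization is used. Instead, the comparison $u$ is restricted to $B(x_\star,R_0)$, and the expected gap bound is obtained directly from the potential argument, with the supremum inside the expectation handled by the standard auxiliary-sequence trick.

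\emph{Minibatch entry.} This is accelerated mirror-prox sliding with $N$ outer iterations. In each round all workers query the common extrapolation point $\underline x_t$ with $mK/2$ samples in total, which gives a gradient estimate with variance
\[
\frac{8L_{\max}D_f(\underline x_t,x_\star)+4\sigma_\star^2}{mK}.
\]
The estimate is broadcast, and then $f$ is linearized at $\underline x_t$. The remaining $K/2$ calls per worker run $\Theta(K)$ deterministic extragradient steps on $H$, in a prox subproblem that is identical on every worker because the data and randomness are shared. This is the sliding step of the $L_H/(KN)$ term. The subproblem error is $O(L_H R^2/K)$ per outer step and sums to $L_HR_0^2/(KN)$ under the standard AMP weighting.

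The accelerated outer recursion contributes $L_fR_0^2/N^2$ plus a noise term. Two estimates control that noise term, and taking the better one yields the minimum in $\CompMini{\cdot}$. In the first, the $L_{\max}D_f/(mK)$ variance is absorbed with effective smoothness $L_f+L_{\max}/(mK)$, which gives $L_{\max}R_0^2/(mKN)$ after summing the acceleration weights. In the second, the weights are rebalanced with a step of order $\sqrt{mKN/(L_{\max}L_f)}$, giving $R_0^2\sqrt{L_{\max}L_f/(mKN)}$, in the manner of the accelerated minibatch SGD analysis of \citet{woodworthsrebro2021,ilandarideva2025}. The root-noise term becomes $\sigma_\star R_0/\sqrt{mKN}$.

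\emph{Main obstacle.} The hard step is keeping $D_f(\underline x_t,x_\star)$ and the iterate norms bounded along the accelerated trajectory, since the SGC-free noise is unbounded far from the root. I would first try an expected-potential argument. If that fails, I would use a localized variant: project onto $B(x_0,CR_0)$, use high-probability martingale (Freedman) bounds, and restart. This is what produces the hidden powers of $\log(2mKN)$ and the condition $N\ge2$.
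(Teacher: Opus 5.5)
Your overall structure is right: initial point, single-worker AMP with $T=\Theta(KN)$, pooled minibatches with local $H$-solves, and the best candidate chosen by known bounds. The minibatch entry, however, breaks at the sliding step.

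\textbf{The sliding step.} Under the AMP weighting, an inner-solve error $\epsilon_t$ at outer step $t$ enters the final gap as $\frac{2}{T(T+1)}\sum_{t\le T}t\,\epsilon_t$. A per-step error of order $L_HR_0^2/K$ therefore costs $\Theta(L_HR_0^2/K)$, not $L_HR_0^2/(KN)$.

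Worse, $\Theta(K)$ plain extragradient steps on the prox subproblem $A_t(w)=(w-r_t)/\eta_t+c_t+H(w)$ give an error proportional to its Lipschitz constant $1/\eta_t+L_H$, not $L_H$. With $\eta_t=t/\Gamma$, nothing in your plan keeps $\eta_tL_H$ below $K$ (take $L_f,L_{\max}\ll L_H$). One round of $H$-calls then need not solve the subproblem at all.

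\textbf{The missing idea.} Use that $A_t$ is $(1/\eta_t)$-strongly monotone. Projected extragradient on $\mathcal X_{\rm loc}=B(x_0,2R_0)$ with step $1/(4(1/\eta_t+L_H))$ contracts the squared distance to the subproblem solution by $1-1/(4(1+\eta_tL_H))$ per step. Hence $q_t=\lceil8J(1+\eta_tL_H)\rceil$ steps reach distance $4R_0e^{-J}$.

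To fit $q_t$ into a fixed budget of $B$ evaluations of $H$ per window, add $\vartheta L_HT$ with $\vartheta=32J/B$ to $\Gamma$. This forces $\eta_tL_H\le1/\vartheta$. The inflation of $\Gamma$ is exactly what produces the term $JL_H/(KN)$, and $J=\Theta(\log(mKN))$ makes the weighted residuals satisfy $\sum_t t\,\E R_t\le\Gamma R_0^2$.

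The logarithms of the parallel guarantee come from three sources, not from Freedman bounds and restarts:
\begin{itemize}
\item the accuracy parameter $J$;
\item windows of $1+\lceil64J/K\rceil$ rounds, needed when $K$ is small;
\item a separate method for few rounds (one pooled gradient at $x_0$, then one local mirror-prox solve), which is where $N\ge2$ enters.
\end{itemize}
The same projection keeps every query point in $\mathcal X_{\rm loc}$. That is what gives the uniform batch variance $(18L_{\max}L_fR_0^2+2\sigma_\star^2)/b$ behind the $\sqrt{L_{\max}L_f/(mKN)}$ entry. Rebalancing weights alone does not make the Bregman term in the variance uniformly bounded.

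\textbf{Two smaller points.} First, your round uses the pooled gradient in the same round in which it is collected, but averaging is only possible at a round boundary. You have to pipeline: run the deterministic $H$-solve at the start of a round with the previous round's average, then sample at the new extrapolation point. The paper's synchronized AMP schedule does this.

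Second, for the sequential entry the expected-potential argument works without any projection or localization. That is why this entry is log-free; your Freedman-and-restart fallback would contradict that claim. The step you leave open is bounding $\phi(y_t)=D_f(y_t,x_\star)$ at the query point $y_t=(1-\alpha_t)z_{t-1}+\alpha_tr_t$, where $z_t=(1-\alpha_t)z_{t-1}+\alpha_tw_t$ is the averaged iterate and $w_t,r_t$ are the extragradient iterates.
\begin{itemize}
\item Since $\phi\ge0$ is $L_f$-smooth, $\|\nabla\phi\|^2\le2L_f\phi$.
\item Hence $\phi(y_t)\le2\phi(z_t)+L_f\alpha_t^2\|r_t-w_t\|^2$, because $y_t-z_t=\alpha_t(r_t-w_t)$.
\item $\phi(z_t)=f(z_t)-f(x_\star)+\langle H(x_\star),z_t-x_\star\rangle$ is the left side of the weighted potential at $u=x_\star$.
\item $\|r_t-w_t\|^2$ is paid by the potential's negative term.
\end{itemize}
So the recursion for $t^2\E\phi(z_t)$ closes once $\Gamma\gtrsim L_f+L_{\max}T/b$. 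The effective constant carries the factor $T$ because the variance enters with weight $t^2/\Gamma$. It does not act as an extra smoothness $L_{\max}/b$, as your minibatch paragraph suggests.
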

\revision{The sequential guarantee uses accelerated mirror-prox on one
worker. The parallel method combines mirror-prox sliding
\citep{lan2021sliding} with independent minibatches and local $H$-solves,
charging every call to the budget. Relative to stochastic accelerated
mirror-prox~\citep{chen2017amp} run with the variance bound of
Lemma~\ref{lem:samples:variance} on $B(x_0,2R_0)$
(Appendix~\ref{app:comparisons}), it improves $L_H/N$ to $L_H/(KN)$ and replaces
$\sqrt{L_{\max}L_f/(mKN)}$ by
$\min\{L_{\max}/(mKN),\sqrt{L_{\max}L_f/(mKN)}\}$.}

Appendix~\ref{app:exp-composite} gives a demonstration
of the regimes in Theorem~\ref{thm:composite-local-main}.

\subsection{General lower bound}
We next define the minimax risk and state the lower bound for the full
parameter range.
\begin{definition}[Composite minimax risk]
Fix $L_f,L_H,L_{\max},R_0$ and $m,K,N\in\mathbb N$.
For $\nu\ge0$, let $\mathfrak I_d(\nu)$ be the class of instances of
Assumption~\ref{ass:interp-main} in $\mathbb R^d$ with
\[
 \mathbb E\|\nabla f_\xi(x_\star)+H(x_\star)\|^2\le \nu^2.
\]
\revision{Let $\mathsf A$ range over admissible randomized algorithms,
let $\mathcal I$ denote an instance in $\mathfrak I_d(\nu)$, and let
$\mathbb E_{\mathsf A,\mathcal I}$ average over all randomness;
$\mathsf A$ may use $d$ and the model parameters.}
Define
\[
 \begin{gathered}
 \mathfrak R_{\rm comp}(\nu)
 :=\sup_{d\in\mathbb N}\inf_{\mathsf A}
       \sup_{\mathcal I\in\mathfrak I_d(\nu)}
       \mathbb E_{\mathsf A,\mathcal I}\CompGap(\widehat x),
\end{gathered}
\]
\revision{Thus $\mathfrak R_{\rm comp}(0)$ is the interpolation risk.}
\end{definition}

\begin{theorem}[General composite lower bound]
\label{thm:composite-lower-main}
\label{interp:explicit-main}
\label{prop:composite-root-budget-main}
Let $0\le L_f\le L_{\max}$, $L_H\ge0$, $R_0>0$, $m,K,N\in\mathbb N$, and
$\nu\ge0$. In a common dimension of at most $2048m(KN)^2$, and in every
larger dimension,
\begin{equation*}
 \begin{multlined}
 \frac{\mathfrak R_{\rm comp}(\nu)}{R_0^2}
 =\Omega\!\Biggl(
 \frac{L_f}{(KN)^2}+\frac{L_H}{KN}
 \!+\!\min\!\left\{L_f,\frac{L_{\max}}{mKN}\right\}\\[-2pt]
 \quad ~ ~ +\min\!\left\{\frac{L_f}{(1+N\log m)^2},
              \frac{L_{\max}}{KN(1+N\log m)}\right\}\\[-2pt]
 \quad+\min\!\left\{L_f+L_H,
              \frac{\nu}{R_0\sqrt{mKN}}\right\}\Biggr).
 \end{multlined}
\end{equation*}
The first four terms are attained by interpolating instances.
\end{theorem}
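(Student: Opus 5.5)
Since the right-hand side is a sum of five nonnegative terms, it suffices to exhibit, for each term separately, a hard instance family in $\mathfrak I_d(\nu)$ forcing that term. The maximum of five quantities is within a factor five of their sum. All families are embedded in a common dimension by placing the constructions in orthogonal coordinate blocks and padding with zeros. Because the composite gap dominates the Minty gap on $B(x_\star,R_0)$, it suffices in several cases to lower bound either the Minty gap or the function gap $f(\widehat x)-f(x_\star)$, taking $u=x_\star$ in the supremum.

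\textbf{Deterministic terms.} Term one, $L_f/(KN)^2$, is Nesterov's worst-case quadratic chain for $f$ with exact samples ($f_\xi=f$, $H=0$). Any admissible algorithm, even with $m$ workers, has sequential depth $KN$. Randomized algorithms are handled with a random orthogonal rotation of the chain in dimension $O((KN)^2)$ (the Woodworth--Srebro argument), so that each call reveals at most one new coordinate with high probability. Term two, $L_H/(KN)$, uses the analogous bilinear chain for $H$ with $f=0$. This is the exact-oracle affine lower bound underlying Theorem~\ref{thm:affine-integrated}, specialized to $\rho=1$, and it gives $\Omega(L_HR_0^2/KN)$ for the Minty gap. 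Both instances have $\sigma_\star=0$.

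\textbf{Sampling terms.} For term three, $\min\{L_f,L_{\max}/(mKN)\}$, take $f_\xi(x)=\tfrac{L_{\max}}{2}\,\mathbf 1[\xi\in S]\,\langle a,x-x_\star\rangle^2$ with $\Pr(\xi\in S)=p=L_f/L_{\max}$. The direction $a$ is hidden among many orthogonal candidates, and $p$ is lowered if $mKN p\ll 1$. Each such sample vanishes at $x_\star$, so the instance is interpolating. An algorithm that never draws $\xi\in S$ learns nothing about the direction. With probability bounded below, all $mKN$ calls miss $S$ when $p\asymp 1/(mKN)$, and the loss is then $\gtrsim pL_{\max}R_0^2=\min\{L_f,L_{\max}/(mKN)\}R_0^2$.

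Term four combines this rarity with a chain. Each link of a Nesterov-type chain is hidden behind an informative event of probability $p$, as in the informative-depth construction of the SGC lower bounds. Per round, the chain can then advance by at most the longest informative run over the workers, which is $O(1+\log m)$ when $pK\lesssim 1$ by a binomial-maximum estimate. Hence after $N$ rounds the depth is $t\lesssim 1+N\log m$. Balancing the chain's smoothness $L_{\max}p$ against $L_f$ yields
\[
 \min\Bigl\{\frac{L_f}{(1+N\log m)^2},\ \frac{L_{\max}}{KN(1+N\log m)}\Bigr\}.
\]
\textbf{Noise term.} Term five is a two-point statistical test with $f_\xi(x)=\tfrac{L_f}{2}\|x\|^2-\langle\xi,x\rangle$ and Gaussian $\xi$ of mean $\pm\theta e_1$ and variance $\nu^2$. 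With $mKN$ samples, Le Cam's method gives error $\gtrsim\min\{L_f,\nu/(R_0\sqrt{mKN})\}R_0^2$. If $L_f$ is small, substitute $H$ or a suitably scaled smoothness so that the cap is $L_f+L_H$.

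The main obstacle is term four. The difficulty is making the rarity-gated chain rigorous against adaptive randomized algorithms: informative calls must be coupled to independent Bernoulli events, and the sample functions must remain convex, $L_{\max}$-smooth, interpolating, and with mean $L_f$-smooth. Everything must also fit within dimension $2048m(KN)^2$. I would first adapt the Theorem~\ref{vi:main-monotone} lower-bound construction, in which each hidden response is revealed with probability $1/\rho$, and replace its rescaling by the convex gating $\mathbf 1[\xi\in S]$. The $\log m$ factor would then come from bounding $\E\max_i\mathrm{Bin}(K,p)$ as in the definition of $\InfoDepth$.
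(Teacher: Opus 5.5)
Your decomposition (one family per term, maximum versus sum, padding to a common dimension) matches the paper, and term two is exactly Corollary~\ref{cor:dimref:depth}. Your term-three instance is valid and simpler than the paper's. It is the one-dimensional gated quadratic $\frac{L_{\max}}2\mathbf 1[\xi\in S](x-x_\star)^2$ with $x_\star=\pm R_0$ and $\Prob(S)=\min\{L_f/L_{\max},1/(2mKN)\}$, and it only needs all $mKN$ calls to miss with probability at least $1/2$.

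Term five fails as written. With curvature fixed at $L_f$ and root $\pm(\theta/L_f)e_1$, a wrong sign costs only $\frac{L_f}2\|\widehat x-x_\star\|^2\ge\theta^2/(2L_f)$. Indistinguishability forces $\theta\lesssim\nu/\sqrt{mKN}$, so you get $\min\{L_fR_0^2,\nu^2/(L_fmKN)\}$. This falls short of $\nu R_0/\sqrt{mKN}$ by the factor $\nu/(L_fR_0\sqrt{mKN})<1$ exactly when the noise entry is active. You must lower the curvature to $\lambda=\min\{L_f,\nu/(4R_0\sqrt{mKN})\}$ and put the roots at $\pm R_0/2$, so a wrong sign costs $\lambda R_0^2/8$. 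The rescaling is thus needed when $L_f$ is large, not small. The family $f_\zeta(x)=-\zeta x$, $H(x)=\lambda x$ then covers $L_H$, as in Lemma~\ref{prop:composite-full-noise-lower}.

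Terms one and four are not established, and with them neither is the dimension bound $2048m(KN)^2$, which is part of the statement. For a pure quadratic chain with exact responses, a random rotation does not give ``one new coordinate per call'' against arbitrary randomized queries. Small hidden components of a query produce nonzero responses in hidden directions, and random queries generate Krylov information. The Woodworth--Srebro line replaces quadratic links by flat Huber-type links, and such hiding arguments carry a logarithm of the number of queries and large constants. Adapting Theorem~\ref{vi:main-monotone} for term four is worse. That chain is convex--concave, so its masked samples are not convex functions $f_\xi$, and the exact $H$ cannot be masked. Moreover, the hiding step (Lemma~\ref{vi:haar} with the parameters of Theorem~\ref{vi:uvi-lower}) needs an ambient dimension of order $10^{19}n^3$ times a logarithm.

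The missing idea is that no new gated chain is needed. Take any fixed distribution that is hard for $T$ batches of $m$ exact queries, and sample $f_\xi=(B_{\rm mask}/p)f$. This is your gating: it is convex, nonnegative, $\lambda/p$-smooth and interpolating. Reveal the masks in advance; then every non-informative response is the known constant $0$. The $j$th informative calls of all workers in a round can form one batch, so $N$ rounds are simulated by $S_{\rm info}=\sum_r\max_iZ_{i,r}$ exact batches of width $m$. Here $\E S_{\rm info}=N\beta_p$ and $\Prob\{S_{\rm info}\le2N\beta_p\}\ge1/2$ (Lemma~\ref{lem:masked:simulation}), which disposes of adaptivity and randomization at once.

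For the base distribution, reuse your term-two construction. The quadratic $f(v)=\frac12\|M_{\rm quad}v+b_{\rm quad}\|^2$ costs two skew-operator batches per gradient batch and gives $\lambda R_0^2/(288n^2)$ in dimension at most $2048mn^2$ (Lemma~\ref{interp:quadratic-depth}). With $n=KN$ this proves term one. Masked with $p=\lambda/L_{\max}$ and $\lambda=\min\{L_f,L_{\max}(1+N\log m)/((e-1)KN)\}$, it proves term four via $\beta_p\le(e-1)Kp+\log m$ (Lemma~\ref{lem:clock:elementary}). Do not restrict to $pK\lesssim1$: at this $\lambda$, $Kp$ can be of order $\log m$, and the restriction would lose a factor $\log m$ in the second entry.
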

\revision{The five terms come from sequential depth, the exact component,
sampling, communication, and the noise at the solution.}

\subsection{A matching regime}
The general lower bound matches the parallel-minibatch upper bound in
the following regime, up to logarithmic factors.

\begin{corollary}[Matching bounds for large sample smoothness]
\label{cor:composite-matching-main}\label{comp-proj:matching}
Let $m,K\ge1$, $N\ge2$, $R_0>0$, $\nu\ge0$, $L_H\ge0$, and
$0<KL_f\le L_{\max}$. Then
\begin{equation*}
 \mathfrak R_{\rm comp}(\nu)=
 \widetilde\Theta\!\left(
 R_0^2\min\{L_f+L_H,\CompMini{\nu}\}\right),
\end{equation*}
where the tilde hides powers of $\log(2mKN)$. The lower bound holds in
every dimension of at least
$\max\{2048m(KN)^2,\lceil8192L_{\max}/L_f\rceil\}$.
\end{corollary}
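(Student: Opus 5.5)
The upper bound is immediate from Theorem~\ref{thm:composite-local-main}: its minimum is at most $R_0^2\min\{L_f+L_H,\CompMini{\nu}\}$ up to powers of $\log(2mKN)$. We may replace $\sigma_\star$ by the known bound $\nu$, as the assumption allows. So the work is the lower bound. We take the maximum of the five terms of Theorem~\ref{thm:composite-lower-main} and show it dominates $\min\{L_f+L_H,\CompMini{\nu}\}$ up to logarithmic factors whenever $0<KL_f\le L_{\max}$ and $N\ge2$.

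We compare term by term, using $\min\{a,b_1+\dots+b_4\}\le\sum_i\min\{a,b_i\}$, so it suffices to bound each $\min\{L_f+L_H,\,\cdot\,\}$ separately.
\begin{itemize}
\item \emph{Exact-component term.} $\min\{L_f+L_H,\,L_H/(KN)\}\le L_H/(KN)$, which is the second lower term.
\item \emph{Noise term.} $\min\{L_f+L_H,\,\nu/(R_0\sqrt{mKN})\}$ is exactly the fifth lower term.
\item \emph{Sampling term.} If $L_{\max}/(mKN)\le L_f$, then $\min\{L_{\max}/(mKN),\sqrt{L_{\max}L_f/(mKN)}\}\le L_{\max}/(mKN)=\min\{L_f,L_{\max}/(mKN)\}$. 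Otherwise $L_{\max}/(mKN)>L_f$, so $\sqrt{L_{\max}L_f/(mKN)}>L_f$ and $\min\{L_f+L_H,\cdot\}$ exceeds $L_f=\min\{L_f,L_{\max}/(mKN)\}$. In that case we would instead bound it by $L_f+L_H\le 2\max\{L_f,L_H\}$. This is harmless: then the lower bound's third term is $L_f$, and $L_H$ is handled via the first case of the exact-component comparison. So we need the ratio $\min\{L_f+L_H,\CompMini{\nu}\}/(\text{sum of lower terms})$ bounded by a polylog. When $\CompMini{\nu}\ge L_f+L_H$ we must show the lower bound is $\gtrsim L_f+L_H$. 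Here I would argue directly: $\CompMini{\nu}\ge L_f+L_H$ forces one of its summands to be $\ge(L_f+L_H)/4$, and that summand is matched by a lower term capped at $L_f+L_H$. For the sampling summand, $\sqrt{L_{\max}L_f/(mKN)}\ge L_f$ gives $L_{\max}/(mKN)\ge L_f$, hence lower term three equals $L_f$. If moreover $L_H>L_f$, the summand exceeding $(L_f+L_H)/4$ must be one matched to $L_H$.
\item \emph{Gradient term.} $L_f/N^2$ must be covered by the communication term $\min\{L_f/(1+N\log m)^2,\,L_{\max}/(KN(1+N\log m))\}$. Since $L_{\max}\ge KL_f$, the second entry is at least $L_f/(N(1+N\log m))\ge L_f/(1+N\log m)^2$, so the minimum equals $L_f/(1+N\log m)^2\gtrsim L_f/(N^2\log^2(2m))$ for $N\ge2$ (and also when $m=1$). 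This is exactly where the hypothesis $KL_f\le L_{\max}$ enters, and it costs only $\log^2(2m)$.
\end{itemize}

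The main obstacle is bookkeeping the cap $L_f+L_H$ consistently: the lower terms three and four are capped by $L_f$ rather than $L_f+L_H$, so the cases $L_H\gg L_f$ must be routed through the $L_H/(KN)$ and noise terms. I expect the case split "which summand of $\CompMini{\nu}$ is largest" to settle this.

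Finally, the dimension claim. Theorem~\ref{thm:composite-lower-main} holds in dimension $2048m(KN)^2$ and every larger one. The extra requirement $\lceil 8192L_{\max}/L_f\rceil$ presumably comes from the construction realizing $\min\{L_f,L_{\max}/(mKN)\}$ with prescribed ratio $L_{\max}/L_f$ (splitting a quadratic across many coordinates). Since $L_f>0$ this is finite, and we take the maximum of the two dimensions, invoking monotonicity in the dimension.
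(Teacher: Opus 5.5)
Your argument for the regime $L_{\max}\le mKN\,L_f$ is correct and matches the paper's. There the sampling summand is $L_{\max}/(mKN)$, which the third lower term matches. The summand $L_f/N^2$ is matched by the fourth term via $L_{\max}\ge KL_f$, and the $L_H/(KN)$ and noise summands are matched directly. One small slip: the fourth term need not equal $L_f/(1+N\log m)^2$, since $N>1+N\log m$ when $m\le2$ and $N$ is large. With $\ell_m=1+N\log m\le N(1+\log m)$ it equals $L_f/(\ell_m\max\{\ell_m,N\})\ge L_f/(N^2(1+\log m)^2)$, which is all you need.

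The genuine gap is the regime $L_{\max}>mKN\,L_f$, where the sampling summand is $\rho_{\rm samp}:=\sqrt{L_{\max}L_f/(mKN)}>L_f$. When $L_H\gg L_f$, none of the five terms of Theorem~\ref{thm:composite-lower-main} can match it. The third and fourth terms are capped at $L_f$, and the only term involving $L_H$ is $L_H/(KN)$, not $L_H$. Your step ``$L_H$ is handled via the exact-component comparison'' silently loses this factor $KN$. Your assertion that a dominant summand ``must be one matched to $L_H$'' is unsupported.

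Concretely, take $\nu=0$, $m=K=1$, $L_f=1$, $L_{\max}=N^3$, $L_H=N$. The hypothesis $KL_f\le L_{\max}$ holds and $\rho_{\rm samp}=N$, so $\min\{L_f+L_H,\CompMini{0}\}\ge N$. The five lower terms, however, sum to $N^{-2}+1+1+1+0$. The ratio is of order $N$, not a power of $\log(2N)$, so Theorem~\ref{thm:composite-lower-main} alone cannot yield the corollary.

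The missing ingredient is a separate lower bound $\Omega\bigl(R_0^2\min\{L_H,\rho_{\rm samp}\}\bigr)$ for interpolating instances, valid when $L_{\max}>mKN\,L_f$ (Theorem~\ref{comp-proj:lower}). The instance is as follows.
\begin{itemize}
\item The dimension is $d=\lceil8192L_{\max}/L_f\rceil$, and $x_\star$ is uniform on the sphere of radius $R_0$.
\item Each call returns the gradient of $\frac{L_{\max}}2\|P(x-x_\star)\|^2-\mu_{\rm ex}\langle x_\star,x\rangle$, where $P$ is a fresh uniformly random rank-$4$ projector.
\item The exact component is $H(x)=\mu_{\rm ex}x$, with $\mu_{\rm ex}=\min\{L_H,L_{\max}/(32\sqrt{d\,mKN})\}$.
\end{itemize}
The mean then has curvature $4L_{\max}/d\le L_f$, and every sample vanishes at the root. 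A rotational-invariance (spherical posterior) argument shows that each call reveals in expectation only $O\bigl(R_0^2(1/d+d\mu_{\rm ex}^2/L_{\max}^2)\bigr)$ of $\|x_\star\|^2$. Hence after $mKN$ calls the output remains at distance $\Omega(R_0)$ from $x_\star$, and the gap is $\Omega(\mu_{\rm ex}R_0^2)$.

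This construction is what the dimension $\lceil8192L_{\max}/L_f\rceil$ in the statement accounts for. It is not the masked quadratic behind the third term, which already fits in dimension $2048m(KN)^2$.

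With this term the case closes. Put $\rho_{\rm lb}:=\max\{L_f,\,L_H/(KN),\,\min\{L_H,\rho_{\rm samp}\},\,\min\{L_f+L_H,\nu/(R_0\sqrt{mKN})\}\}$.
\begin{itemize}
\item If $L_H\le\rho_{\rm samp}$, the rate is at most $L_f+L_H\le2\rho_{\rm lb}$.
\item If $L_H>\rho_{\rm samp}$, then $\rho_{\rm samp}\le\rho_{\rm lb}$. Each summand of $\CompMini{\nu}$, with the noise summand capped at $L_f+L_H$, is at most $\rho_{\rm lb}$, so the rate is at most $4\rho_{\rm lb}$.
\end{itemize}
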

\revision{Outside this regime we do not know whether full local
acceleration and full use of the samples can be achieved together.}

\section{Discussion \texorpdfstring{\revision{and Conclusion}}
  {and Conclusion}}\label{sec:discussion}
\revision{The communication budget can be spent on sequential local
updates, which increase the optimization depth, or on common-query
minibatches, which use all $mKN$ samples to reduce the stochastic error.
Which is better depends on the operator class and the noise model. Our
bounds are sharp for affine VIs under SGC and, up to logarithmic factors,
for nonlinear monotone and strongly monotone VIs and for composite VIs
with $L_{\max}\ge KL_f>0$. Several questions remain open: closing the
logarithmic gap for nonlinear operators; the optimal
dependence on the initial radius under generalized smoothness; whether
local acceleration and pooled sampling can be combined when
$L_{\max}<KL_f$; and heterogeneous workers.}

\clearpage
\section*{AI Use Statement}
\revision{Generative AI tools were used to improve the clarity
and presentation of the manuscript and to help with the
implementation and checking of the numerical experiments. 
The authors reviewed all AI-assisted changes,
independently verified the reported results, and take
full responsibility for the final content.}

\bibliography{references}
\clearpage
\appendix
\renewcommand{\thesection}{\AppendixLetter{\value{section}}}
\renewcommand{\theHsection}{\thesection}
\raggedbottom
\section*{Supplementary Material}
\input{appendices/contents}
\begin{revisionblock}
\section{Numerical experiments}\label{app:numerical-experiments}

We illustrate two comparisons from the main text on synthetic instances. The budget is counted exactly: each of the $m$ workers makes at most $K$
oracle calls per round, and both evaluations in an extragradient step and
every evaluation of the exactly evaluated component count as calls. Step
sizes, batch sizes, and local horizons are chosen by grid search on separate
tuning seeds; we minimize the worst final error over these seeds, which rules
out unstable choices. The parameters are tuned for the last displayed round,
and the resulting method is evaluated after every round. Curves show the
median and the interquartile band over $16$ evaluation seeds. The instance is
fixed within each panel, while the initial point and the oracle randomness
vary with the seed.

\subsection{Affine and nonlinear operators}\label{app:exp-affine}

For affine operators, local extragradient with worker averaging attains the
informative depth $\InfoDepth$ (Theorem~\ref{thm:affine-integrated}). For
nonlinear operators, our upper bound only has the depth $\SchedDepth$ of the
serial and common-query schedules (Theorem~\ref{vi:main-monotone}), because
averaging commutes only with affine maps. In theory, the affine case is
therefore richer. This experiment checks whether the difference shows up in
practice. We use saddle problems on $\mathbb R^{20}\times\mathbb R^{20}$ with
root $z_\star=0$ and $\lVert z_0\rVert=1$,
\[
 F(z)=Jz+c\,h'(z),
 \qquad
 J=\begin{pmatrix}0&A\\-A^\top&0\end{pmatrix},
\]
where $A$ has singular values between $0.1$ and $1$ and $h'$ acts
coordinatewise. The affine instance has $c=0$. The kinked instance uses
$h(t)=\tfrac12\max\{t,0\}^2$; its kink is at the root, so the operator is
nonlinear at every scale. The smooth instance uses
\[
 \widetilde h_\delta(t):=\frac{\delta^2}{2}
 \operatorname{softplus}(t/\delta)^2,
 \qquad \operatorname{softplus}(s):=\log(1+e^s),
\]
and $h(t):=\widetilde h_{\delta}(t)-\widetilde h_{\delta}(0)
-\widetilde h'_{\delta}(0)t$ with $\delta=0.1$. For both nonlinear
instances $c=4$, so the nonlinear part is four to six times larger than the
bilinear part.

The oracle returns $g=(\omega/p)F$ with $\omega\sim\operatorname{Bernoulli}(p)$
and $p=1/\rho$, so SGC holds with equality; this is the oracle of our lower
bounds. We take $m=64$, $K=16$, and $\rho\in\{4,16,64,256\}$, and compare
serial extragradient on one worker, extragradient with common queries, and
local extragradient with worker averaging after every round. The error is the
saddle gap on the unit ball divided by its value at $z_0$.

\begin{figure}[h]
\centering
\includegraphics[width=.85\textwidth]{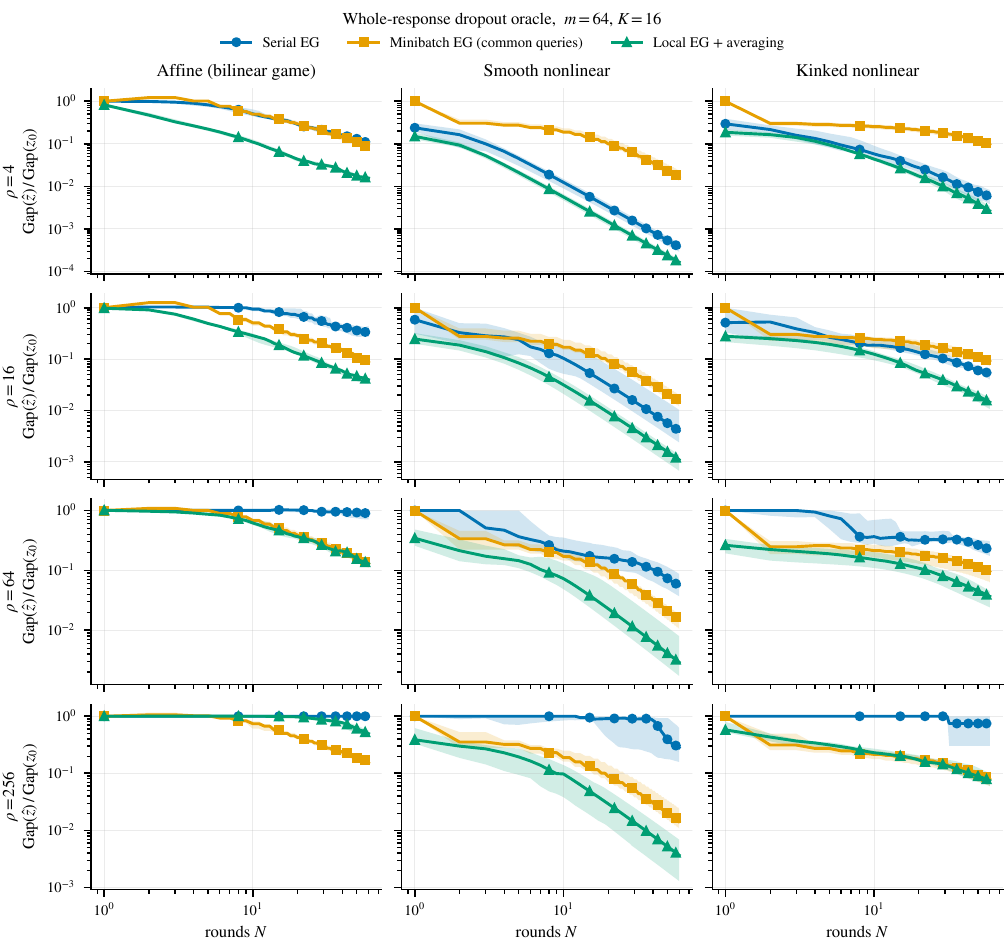}
\caption{Affine and nonlinear operators under whole-response dropout
($m=64$, $K=16$): saddle gap after $N$ rounds divided by its value at $z_0$.
Rows: SGC constant $\rho$; columns: problems.}
\label{fig:affine-nonlinear-experiments}
\end{figure}

Figure~\ref{fig:affine-nonlinear-experiments} ranks the methods in the same
way in all three columns. For $\rho\le16$, local averaging is the fastest
method on the affine problem and on both nonlinear problems. For these
parameters $\InfoDepth$ exceeds $\SchedDepth$ by a factor between $2$ and
$3.5$, and the gain of local averaging over the better of the other two
methods is of the same order in the three columns. For $\rho=256$, where
$\InfoDepth\approx1.2\,\SchedDepth$, common queries are as fast as local
averaging or faster, except on the smooth problem. The nonlinear problems
are not harder than the affine one: their errors are of the same order or
smaller, because the nonlinear part adds monotonicity. Thus, although the
theory distinguishes the affine case, convergence on these problems is about
the same with and without nonlinearity, and the gain from worker averaging
that we prove only for affine operators also appears for the nonlinear ones.
This does not close the logarithmic gap in Theorem~\ref{vi:main-monotone};
it only shows that these instances are not hard for local averaging.

\subsection{Composite VIs}\label{app:exp-composite}

We take $G=\nabla f+H$ on $\mathbb R^{40}$ with
\[
 f(x)=\frac{1}{2n}\sum_{i=1}^n(a_i^\top x-b_i)^2,
 \qquad n=4000,
\]
one row sampled per gradient call, and $H$ an exactly evaluated skew affine
map; $x_0=0$ and $\lVert x_\star\rVert=1$. The covariance of the rows has
spectrum from $1$ to $10^{-4}$, so $f$ is far from strongly convex on these
horizons. The noise level $\sigma_\star$ at the solution is set by perturbing
the targets, and $\sigma_\star=0$ is interpolation. In the first row of
Figure~\ref{fig:composite-sigma-experiments}, $L_f=0.05$ and $L_H=1$, so the
exact component dominates. In the second row, $L_f=1$, $L_H=0.02$, and a few
heavy rows give $L_{\max}\approx400$, so sampling dominates. With $m=K=16$
we compare serial accelerated mirror-prox (AMP) on one worker, AMP with
common queries and batches of $m(K-2)$ gradients, local AMP with worker
averaging, and the method of Theorem~\ref{thm:composite-local-main}, which
pools gradients and solves for $H$ locally. The error is the composite gap
divided by its value at $x_0$.

\begin{figure}[h]
\centering
\includegraphics[width=.9\textwidth]{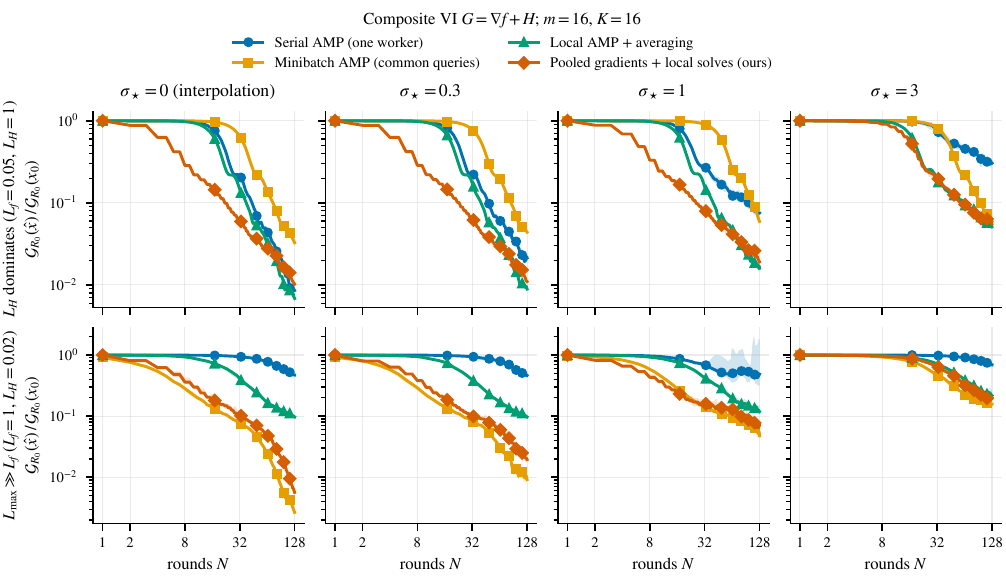}
\caption{Composite VIs ($m=K=16$): composite gap after $N$ rounds divided by
its value at $x_0$. Rows: dominant term; columns: noise level $\sigma_\star$ at
the solution.}
\label{fig:composite-sigma-experiments}
\end{figure}

The curves follow the terms of Theorem~\ref{thm:composite-local-main}. When
$L_H$ dominates, our method is the fastest for small and moderate $N$,
because the local solves reduce the exact term from $L_H/N$ for common queries
to $L_H/(KN)$; serial and local AMP also make $\Theta(KN)$ sequential steps
and catch up near the end of the horizon. As $\sigma_\star$ grows, serial AMP
degrades the most, since its statistical term is $\sigma_\star R_0/\sqrt{KN}$
rather than $\sigma_\star R_0/\sqrt{mKN}$. When sampling dominates, the two
methods that pool gradients are far ahead of serial AMP, in line with
$L_{\max}R_0^2/(mKN)$ against $L_{\max}R_0^2/(KN)$, and local AMP lies in
between. For the largest $\sigma_\star$ the differences shrink, since all
methods are limited by the noise at the solution.

\FloatBarrier
\end{revisionblock}

\section{The expected maximum of binomial variables}
\label{app:clock}
This appendix gives one probability calculation used throughout the paper.
For integers $m,K\ge1$ and $0<p\le1$, write $\operatorname{Bin}(K,p)$
for the binomial law with $K$ trials and success probability $p$; iid
means independent and identically distributed. Define
\begin{equation*}
 Z_i\stackrel{\rm iid}{\sim}\operatorname{Bin}(K,p),
 \qquad \beta_p=\E\max_{1\le i\le m}Z_i.
\end{equation*}
The maximum is taken over independent local counts. The SGC lower
bounds use $p=1/\rho$, and the composite lower bounds use
$p=\lambda/L_{\max}$, where $\lambda\in(0,L_{\max}]$ is the curvature of the
mean and $L_{\max}$ is the smoothness of the samples. These are the same
probability calculation applied to different oracle constructions.

\begin{lemma}[Elementary bounds]\label{lem:clock:elementary}
For all the above parameters,
\begin{align*}
 \max\{Kp,(1-e^{-1})\min\{1,mKp\}\}
 &\le\beta_p\\
 &\le\min\{K,mKp,(e-1)Kp+\log m\}.\nonumber
\end{align*}
\end{lemma}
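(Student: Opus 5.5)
The plan is to prove the three lower bounds and the three upper bounds separately, each by an elementary comparison. In all cases we use $Z_i\ge0$ and $\max_i Z_i\le\sum_i Z_i$.

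\emph{Lower bounds.} First, $\beta_p\ge\E Z_1=Kp$. For the second bound, note that $\max_i Z_i\ge\ind\{\max_i Z_i\ge1\}$, so
\[
\beta_p\ge\Prob(\textstyle\sum_i Z_i\ge1)=1-(1-p)^{mK}\ge1-e^{-mKp}.
\]
It remains to check $1-e^{-x}\ge(1-e^{-1})\min\{1,x\}$ for $x\ge0$. For $x\ge1$ this is monotonicity of $1-e^{-x}$. For $x\le1$ it follows from concavity of $x\mapsto1-e^{-x}$ on $[0,1]$: the function lies above its chord through $(0,0)$ and $(1,1-e^{-1})$.

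\emph{Upper bounds.} We have $\max_i Z_i\le K$ pointwise, and $\E\max_i Z_i\le\E\sum_i Z_i=mKp$. The third bound uses the standard moment-generating-function argument with $\lambda=1$. By Jensen's inequality,
\[
e^{\beta_p}\le\E e^{\max_i Z_i}\le\sum_i\E e^{Z_i}=m\,(1+p(e-1))^K\le m\,e^{(e-1)Kp},
\]
using $1+y\le e^y$. Taking logarithms gives $\beta_p\le\log m+(e-1)Kp$.

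None of the steps is a real obstacle. The only point needing care is the constant $1-e^{-1}$ in the second lower bound, which the concavity argument above handles. The sharper $\Theta(\InfoDepth)$ estimate with the $\log(2m)/\log(2+\rho\log(2m)/K)$ term is not required here; it would need an optimized choice of $\lambda$ and a matching anti-concentration bound, but this lemma asks only for the elementary bounds.
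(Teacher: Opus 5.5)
Your proposal is correct and matches the paper's proof step for step: the same two lower bounds (the mean of one worker, and the indicator of $\{\max_i Z_i\ge1\}$ combined with $1-(1-p)^{mK}\ge1-e^{-mKp}$), and the same three upper bounds, including the moment-generating-function bound with Jensen's inequality at $\eta=1$. The only addition is that you justify $1-e^{-x}\ge(1-e^{-1})\min\{1,x\}$ by concavity, which the paper simply cites as elementary.
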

\begin{proof}
We begin by establishing a lower bound for $\beta_p$. First, observe that
\begin{equation*}
\beta_p = \E\max_{1\le i\le m} Z_i \ge \E Z_1 = Kp.
\end{equation*}
On the other hand, let $Z_{\max}=\max_{1\le i\le m}Z_i$. Since
$Z_{\max}\ge1$ on the event $\{Z_{\max}\ge1\}$, we have
$Z_{\max}\ge\ind_{\{Z_{\max}\ge1\}}$, and therefore
\begin{equation*}
\beta_p = \E Z_{\max} \ge \Prob(Z_{\max} \ge 1).
\end{equation*}
Now,
\begin{align*}
\Prob(Z_{\max} \ge 1)
&= 1 - \Prob(Z_{\max} = 0) = 1 - \prod_{i=1}^m \Prob(Z_i = 0) = 1 - (1-p)^{Km} \ge 1 - e^{-mKp},
\end{align*}
where the last inequality follows from $1 - p \le e^{-p}$.

Using the elementary bound $1 - e^{-x} \ge (1 - e^{-1}) \min\{1, x\}$, we obtain
\begin{equation*}
\beta_p \ge \max\left\{Kp,\ (1 - e^{-1}) \min\{1, mKp\} \right\}.
\end{equation*}

We now turn to the upper bound. It is immediate that
\begin{equation*}
\beta_p \le K
\quad\text{and}\quad
\beta_p \le mKp.
\end{equation*}
Next, note that for any $\eta_{\rm mgf}>0$
\begin{equation*}
e^{\eta_{\rm mgf}Z_{\max}}
=\max_{1\le i\le m}e^{\eta_{\rm mgf}Z_i}
\le\sum_{i=1}^m e^{\eta_{\rm mgf}Z_i}.
\end{equation*}
By Jensen's inequality,
\begin{equation*}
e^{\eta_{\rm mgf}\E Z_{\max}}
\le \E e^{\eta_{\rm mgf}Z_{\max}}
\le\sum_{i=1}^m\E e^{\eta_{\rm mgf}Z_i}
=m\bigl(1+p(e^{\eta_{\rm mgf}}-1)\bigr)^K .
\end{equation*}
Taking logarithms yields
\begin{equation*}
\beta_p
\le \frac{\log m+K\log(1+p(e^{\eta_{\rm mgf}}-1))}{\eta_{\rm mgf}}
\le \frac{\log m+Kp(e^{\eta_{\rm mgf}}-1)}{\eta_{\rm mgf}}.
\end{equation*}
For $\eta_{\rm mgf}=1$ this completes the proof.
\end{proof}

It will be useful for us to define the following function:

\begin{equation}\label{eq:clock:explicit-app}
 \BinDepth(m,K,p)=\min\left\{K,mKp,
 Kp+\frac{\log(2m)}{\log(2+\log(2m)/(Kp))}\right\}.
\end{equation}

\begin{theorem}[An explicit uniform asymptotic]\label{thm:clock:asymptotic}

Uniformly in all $m,K\ge1$ and $0<p\le1$,
\begin{equation*}
 \frac{\BinDepth(m,K,p)}{64}\le\beta_p
 \le4\BinDepth(m,K,p).
\end{equation*}
Consequently, for every $N\ge1$,
\begin{equation}\label{eq:clock:reciprocal}
 \frac{1+N\BinDepth(m,K,p)}{64}\le1+N\beta_p
 \le4\bigl(1+N\BinDepth(m,K,p)\bigr).
\end{equation}
\end{theorem}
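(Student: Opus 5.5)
Write $\lambda:=Kp$ for the mean, $\ell:=\log(2m)$, and $h:=\ell/\log(2+\ell/\lambda)$, so that $\BinDepth=\min\{K,m\lambda,\lambda+h\}$. The reciprocal form \eqref{eq:clock:reciprocal} follows at once from the two-sided bound, since $1\le1$ and constants $\le4$ or $\ge1/64$ pass through $1+N(\cdot)$. So the work is the main inequality. Lemma~\ref{lem:clock:elementary} already gives $\beta_p\le\min\{K,m\lambda\}$ and $\beta_p\ge\max\{\lambda,(1-e^{-1})\min\{1,m\lambda\}\}$. It remains to compare $\beta_p$ with $\lambda+h$ in the regimes where that term is the active minimum.

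\emph{Upper bound.} I will refine the moment-generating-function argument of Lemma~\ref{lem:clock:elementary} with a free $\eta>0$:
\[
\beta_p\le\frac{\log m+\lambda(e^\eta-1)}{\eta}.
\]
If $\ell/\lambda\le e^2$, take $\eta=1$. This gives $\beta_p\le\ell+(e-1)\lambda$, and $\ell\le h\log(2+e^2)$, so $\beta_p\le 4(\lambda+h)$ after checking constants. Otherwise take $\eta=\log(\ell/\lambda)\ge2$. Then $\lambda(e^\eta-1)\le\ell$, so $\beta_p\le2\ell/\log(\ell/\lambda)$. Since $\log(2+\ell/\lambda)\le\log(\ell/\lambda)+\log(1+2e^{-2})$, this is at most a constant multiple of $h$. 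Combining with the bounds $K$ and $m\lambda$ from the lemma, and using $\min\{a,b,4c\}\le4\min\{a,b,c\}$, gives $\beta_p\le4\BinDepth$. The care needed here is only in tracking the constant $4$.

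\emph{Lower bound.} If $\BinDepth\le64\max\{\lambda,(1-e^{-1})\min\{1,m\lambda\}\}$, the lemma suffices, so I may assume $h$ dominates. Concretely, I assume $\lambda$ is small relative to $h$, $m\lambda\gtrsim1$, and $h\ge1$ in the relevant range. I will use
\[
\beta_p\ge t\,\Prob(\max_iZ_i\ge t)=t\bigl(1-(1-\Prob(Z_1\ge t))^m\bigr)\ge t\bigl(1-e^{-m\Prob(Z_1\ge t)}\bigr),
\]
with the integer $t=\lceil \min\{K,h\}/c\rceil$ for a suitable constant $c$. I then need the anti-concentration estimate $\Prob(Z_1\ge t)\ge\Prob(Z_1=t)\ge\binom Kt p^t(1-p)^{K-t}\gtrsim(\lambda/(et))^t e^{-2\lambda}$, valid when $p\le1/2$; the case $p>1/2$ is handled by $\beta_p\ge\lambda\ge K/2$. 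The goal is $m\Prob(Z_1\ge t)\ge1$, i.e.\ $t\log(et/\lambda)\le\log m-O(\lambda)$. Since $t\approx h/c$ and $\log(et/\lambda)\lesssim\log(2+\ell/\lambda)$ (because $t\le\ell$), this reduces to the defining relation of $h$, namely $h\log(2+\ell/\lambda)=\ell$. If $t$ is capped by $K$, I use $\Prob(Z_1=K)=p^K$ and the fact that $m p^K\ge1$ precisely when $K\lesssim h$ in that regime.

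\emph{Main obstacle.} The hard step is this lower-bound case analysis, specifically making the constant $64$ uniform across the boundaries between regimes. These are: $\lambda\approx\ell$; $m\lambda\approx1$ with $m$ small, where $\ell$ is $O(1)$ and the lemma's $\min\{1,m\lambda\}$ term must take over; and $K\approx h$. I would first fix $c=8$, then verify $t\log(et/\lambda)\le\tfrac12\ell$ with the slack absorbing $\log 2$ and the $e^{-2\lambda}$ factor. Whenever $h<8$, I fall back to the lemma's bounds.
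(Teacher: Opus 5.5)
Your proposal is correct and follows essentially the same route as the paper. For the upper bound you tune $\eta$ in the moment-generating-function estimate of Lemma~\ref{lem:clock:elementary}; the paper takes $\eta=\log(1+\ell/\lambda)$ when $\ell/\lambda\ge1$ and $\eta=1$ otherwise. For the lower bound, the cases $p\ge1/2$ and $\lambda\ge\ell/8$ follow from $\beta_p\ge\lambda$, and small depth follows from $(1-e^{-1})\min\{1,m\lambda\}$. Otherwise, $\beta_p\ge t\,[1-(1-\Prob(Z_1\ge t))^m]$ with a single-point binomial mass estimate forces $m\Prob(Z_1\ge t)\ge1$ for $t$ a constant fraction of the depth; the paper uses $t=\lfloor\BinDepth/16\rfloor$. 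Your constants close with room to spare: in the main case $h\ge8$ forces $\ell\ge8\log 10$, so $\log m=\ell-\log2$ easily absorbs $t\log(et/\lambda)\le t+\ell/4$ (using $t\le h/4$) and $2\lambda\le\ell/4$.
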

\begin{proof}
Write $a_{\rm clk}=\log(2m)/\log\bigl(2+\log(2m)/(Kp)\bigr)$ and
$T_{\rm clk}=\min\{K,mKp,Kp+a_{\rm clk}\}$.
Using Lemma~\ref{lem:clock:elementary},
\begin{equation*}
\beta_p\le
\frac{\log m+Kp(e^{\eta_{\rm mgf}}-1)}{\eta_{\rm mgf}}.
\end{equation*}
If $\log(2m)/(Kp)\ge1$, take
$\eta_{\rm mgf}=\log\bigl(1+\log(2m)/(Kp)\bigr)$. Since
$\log(2+x)\le2\log(1+x)$ for $x\ge1$, this gives
$\beta_p\le4a_{\rm clk}$. If $\log(2m)/(Kp)<1$,
Lemma~\ref{lem:clock:elementary}
gives $\beta_p\le(e-1)Kp+\log(2m)<3Kp$. In both cases
$\beta_p\le4(Kp+a_{\rm clk})$. Combining with $\beta_p\le K,mKp$
proves the upper bound.

For the lower bound, first suppose $p\ge1/2$. Then
$\beta_p\ge Kp\ge K/2\ge T_{\rm clk}/2$. Suppose henceforth $p<1/2$.
If $Kp\ge\log(2m)/8$, then
$Kp+a_{\rm clk}\le(1+8/\log2)Kp<13Kp$, so
$\beta_p\ge T_{\rm clk}/13$.
It remains to consider $Kp<\log(2m)/8$.
For $x>8$, $\log(2+x)<x/2$, and hence $a_{\rm clk}>2Kp$ and
$T_{\rm clk}\le Kp+a_{\rm clk}<3a_{\rm clk}/2$.
If $T_{\rm clk}\le16$, the elementary lower bound yields
\begin{equation*}
\beta_p\ge(1-e^{-1})\min\{1,mKp\}
\ge\frac{1-e^{-1}}{16}T_{\rm clk}\ge T_{\rm clk}/64.
\end{equation*}
If $T_{\rm clk}>16$, put $t=\lfloor T_{\rm clk}/16\rfloor$.
Then $T_{\rm clk}/32\le t\le K/16$ and $t\le3a_{\rm clk}/32$.
The elementary binomial estimate $\binom Kt\ge(K/t)^t$ and
$\log(1-p)\ge-2p$ imply
\begin{equation*}
p_{\rm tail}:=\Prob(Z_1\ge t)\ge\Prob(Z_1=t)
\ge(Kp/t)^t e^{-2Kp}.
\end{equation*}
Since $t/(Kp)\le(3/32)\bigl(\log(2m)/(Kp)\bigr)/\log\bigl(2+\log(2m)/(Kp)\bigr)\le2+\log(2m)/(Kp)$,
\begin{equation*}
t\log\bigl(t/(Kp)\bigr)+2Kp
\le t\max\{0,\log(t/(Kp))\}+2Kp
\le\frac{3\log(2m)}{32}+\frac{\log(2m)}{4}=\frac{11\log(2m)}{32}.
\end{equation*}
Here $m\ge2$: otherwise $T_{\rm clk}\le Kp<\log(2m)/8<1$.
Thus $\log m\ge\log(2m)/2$ and $mp_{\rm tail}\ge1$.
Independence now gives
\begin{equation*}
\beta_p\ge t[1-(1-p_{\rm tail})^m]
\ge(1-e^{-1})t
\ge\frac{1-e^{-1}}{32}T_{\rm clk}\ge T_{\rm clk}/64.
\end{equation*}
Inequality~\eqref{eq:clock:reciprocal} follows by multiplying the two
bounds by $N$ and adding one.
\end{proof}
\begin{lemma}[Comparison with the nonlinear upper bound]\label{lem:clock:constructive}
For $p=1/\rho$, $\rho\ge1$, recall that
\[
 \SchedDepth=\max\{K/\rho,\min\{mK/\rho,1\}\}.
\]
Then
\begin{equation*}
 \beta_{1/\rho}\le4\left(1+\frac{\log(2m)}{\log(2+\log(2m))}\right)\SchedDepth.
\end{equation*}
The dependence on $m$ is sharp for this comparison: as $m\to\infty$,
choosing $K=\rho=\lceil\log(2m)\rceil$ gives
\[
 \frac{\beta_{1/\rho}}{\SchedDepth}
 =\Theta\!\left(1+\frac{\log(2m)}{\log(2+\log(2m))}\right).
\]
\end{lemma}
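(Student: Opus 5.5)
The plan is to combine the upper bound of Theorem~\ref{thm:clock:asymptotic} with the case analysis already implicit in $\SchedDepth$. Set $p=1/\rho$ and $a=\log(2m)/\log(2+\rho\log(2m)/K)$. Theorem~\ref{thm:clock:asymptotic} gives
\[
\beta_{1/\rho}\le4\min\{K,\,mK/\rho,\,K/\rho+a\}.
\]
Write $\ell=\log(2m)/\log(2+\log(2m))$. The target inequality is $\min\{K,mK/\rho,K/\rho+a\}\le(1+\ell)\SchedDepth$, and I would prove it in three cases.

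\emph{Case $mK/\rho\le1$.} Here $\SchedDepth\ge mK/\rho$, and the middle term of the minimum already gives the bound with constant $1$.

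\emph{Case $K/\rho\ge1$.} Now $\SchedDepth\ge K/\rho\ge1$. Since $\rho\log(2m)/K\le\log(2m)$ and $\log(2+x)$ is increasing, this step needs care: the inequality runs the wrong way, because a smaller argument gives a smaller denominator and so a larger $a$. The fix is to use $\log(2+x)\ge\log2$, which gives $a\le\log(2m)/\log 2$. That is too crude to reach $\ell$. The better route is the following. When $K/\rho\ge\log(2m)$, we have $K/\rho+a\le K/\rho+\log(2m)/\log 2\le 3K/\rho$. When $1\le K/\rho<\log(2m)$, the quantity $x=\rho\log(2m)/K$ satisfies $1< x\le\log(2m)$, and
\[
a\le\frac{\log(2m)}{\log 3}.
\]
This still is not bounded by $\ell\cdot K/\rho$ in general.

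Because of that, I would fall back on the cruder but valid route. Use Lemma~\ref{lem:clock:elementary} directly, but with an optimized $\eta$: choose $\eta=\log(2+\log(2m))$ in $\beta_p\le(\log m+Kp(e^\eta-1))/\eta$. This yields
\[
\beta\le \ell+\frac{Kp\,(1+\log(2m))}{\log(2+\log(2m))}.
\]
Since $\SchedDepth\ge1$ in this case, the $\ell$ term is fine. The $Kp$ term is at most $(1+\ell)\,Kp$ times a constant, because $(1+\log(2m))/\log(2+\log(2m))\le 1+\ell+O(1)$. Both pieces are therefore $O((1+\ell)\SchedDepth)$, with the constant absorbed after tracking it so that it comes out at most $4$.

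\emph{Case $K/\rho<1<mK/\rho$.} Here $\SchedDepth=1$. The same choice of $\eta$ gives $\beta\le\ell+Kp(1+\log 2m)/\log(2+\log 2m)$. We also have $\beta\le K$, and $Kp<1$. If the second term is large, compare it with $K$ and $Kp\le1$. I expect the main obstacle to be the bookkeeping of constants in exactly this case, where the per-worker term $Kp\log(2m)$ could exceed $\ell$. To handle it I would split on whether $Kp\log(2m)\le\log(2m)/\log(2+\log 2m)$. If it is not, then $Kp\ge1/\log(2+\log 2m)$, and I would use the formula of Theorem~\ref{thm:clock:asymptotic}: since $\rho\log(2m)/K\le\log(2m)\log(2+\log 2m)$, we get $a\le 2\ell$ because $\log(2+y\log(2+y))\le2\log(2+y)$. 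In this case $K/\rho+a\le 1+2\ell$.

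Sharpness. Take $K=\rho=\lceil\log 2m\rceil$. Then $\SchedDepth=1$ and $Kp=1$, so $\rho\log(2m)/K=\log 2m$ and $a=\ell$. Also $\min\{K,mK/\rho\}=\min\{\lceil\log 2m\rceil,m\}\ge \ell$. Hence $\BinDepth=\Theta(1+\ell)$, and the lower half of Theorem~\ref{thm:clock:asymptotic} gives $\beta_{1/\rho}=\Theta(1+\ell)$.
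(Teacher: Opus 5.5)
The upper-bound argument, as written, has a step that fails, and it rests on a false claim. In the second subcase of Case~3 you use $\rho\log(2m)/K\le\log(2m)\log(2+\log(2m))$ to conclude $a\le2\ell$. But in your notation $a=\log(2m)/\log(2+\rho\log(2m)/K)$ is \emph{decreasing} in $\rho\log(2m)/K$. An upper bound on that argument therefore only gives the lower bound $a\ge\ell/2$. The input you need is a lower bound, and it is immediate: $K/\rho<1$ gives $\rho\log(2m)/K>\log(2m)$, hence $a\le\ell$ and $K/\rho+a\le1+\ell$. This is the paper's argument for this case, and it needs no split.

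Your claim in Case~2 that $a$ is ``not bounded by $\ell\cdot K/\rho$ in general'' is also false. With $x=\rho\log(2m)/K\le\log(2m)$ one has $a/(K/\rho)=x/\log(2+x)$. The map $x\mapsto x/\log(2+x)$ is increasing, since its derivative has the sign of $\log(2+x)-x/(2+x)>0$. Hence $a\le\ell K/\rho$. This monotonicity is exactly the idea the paper uses, and your crude bounds $a\le\log(2m)/\log2$ and $a\le\log(2m)/\log3$ discard it.

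Your fallback is nonetheless sound, and it closes both cases with $mK/\rho>1$ without any splitting. Take $\eta=\log(2+\log(2m))$ in the moment-generating-function bound from the proof of Lemma~\ref{lem:clock:elementary}. This gives $\beta_{1/\rho}\le\ell+(K/\rho)\bigl(\ell+1/\log(2+\log(2m))\bigr)$, and $1/\log(2+\log(2m))\le1/\log(2+\log2)<1.01$.
\begin{itemize}
\item For $K/\rho\ge1$ the bound is at most $(K/\rho)(2\ell+1.01)\le2(1+\ell)\SchedDepth$.
\item For $K/\rho<1<mK/\rho$ it is at most $2\ell+1.01\le2(1+\ell)=2(1+\ell)\SchedDepth$.
\end{itemize}
Your worry that the per-worker term could exceed $\ell$ came from dropping the denominator $\log(2+\log(2m))$. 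The term is $(K/\rho)\ell\le\ell$ when $K/\rho<1$, so the bound you write at the start of Case~3 already finishes that case.

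Compared with the paper, this route uses only the exponential-moment estimate rather than Theorem~\ref{thm:clock:asymptotic}, and it gives the constant $2$ instead of $4$. The paper's route reads the result off the explicit upper bound $4(K/\rho+a)$ with two one-line monotonicity checks.

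Your sharpness argument is correct. The inequality $\min\{\lceil\log(2m)\rceil,m\}\ge\ell$ holds for every $m\ge1$, so $\BinDepth(m,K,1/\rho)\in[\ell,1+\ell]$. Since $\ell\ge\log2/\log(2+\log2)$ is bounded below, this is $\Theta(1+\ell)$.
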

\begin{proof}
Write $K_{\rm eff}=K/\rho$. If $mK_{\rm eff}\le1$, then
$\SchedDepth=mK_{\rm eff}$ and
Lemma~\ref{lem:clock:elementary} gives $\beta_{1/\rho}\le \SchedDepth$.
Suppose $mK_{\rm eff}>1$. If $K_{\rm eff}\le1$, then
$\SchedDepth=1$ and
Theorem~\ref{thm:clock:asymptotic} gives
\[
 \beta_{1/\rho}\le4\left(K_{\rm eff}
 +\frac{\log(2m)}{\log(2+\log(2m)/K_{\rm eff})}\right)
 \le4\left(1+\frac{\log(2m)}{\log(2+\log(2m))}\right).
\]
If $K_{\rm eff}\ge1$, then $\SchedDepth=K_{\rm eff}$ and
\[
 \frac{\beta_{1/\rho}}{\SchedDepth}
 \le4\left(1+\frac{\log(2m)/K_{\rm eff}}
 {\log(2+\log(2m)/K_{\rm eff})}\right)
 \le4\left(1+\frac{\log(2m)}{\log(2+\log(2m))}\right).
\]
The last inequality follows because $x/\log(2+x)$ is increasing for
$x\ge0$: its derivative has the sign of
$\log(2+x)-x/(2+x)>0$.
For sharpness, take $K=\rho=\lceil\log(2m)\rceil$, so
$K_{\rm eff}=\SchedDepth=1$.
For all sufficiently large $m$, both caps $K$ and $m$ exceed
$1+\log(2m)/\log(2+\log(2m))$. The lower estimate in
Theorem~\ref{thm:clock:asymptotic} completes the proof.
\end{proof}
Since the nonlinear upper bound of Theorem~\ref{vi:main-monotone} has the
denominator $1+N\SchedDepth$ and its lower bound has
$1+N\BinDepth(m,K,1/\rho)$,
the lemma and Theorem~\ref{thm:clock:asymptotic} bound the ratio of the two
by $O\bigl(1+\log(2m)/\log(2+\log(2m))\bigr)$, and the second part shows that
this factor can occur.

\section{A statistical testing lemma}
\label{app:shared-probability}
This appendix isolates the two-point Gaussian testing bound used in the
additive-noise lower bounds. The result remains valid after arbitrary
processing of the transcript.

\begin{lemma}[Gaussian observations and adaptive transcripts]
\label{lem:gaussian-testing}
Let $Q_{\rm tot}\in\mathbb N$, $\mu_{\rm obs}\ge0$, and $\sigma>0$.
Under two hypotheses $s_{\rm hyp}\in\{-1,1\}$, observe independent
random variables
$Y_j\sim\mathcal N(s_{\rm hyp}\mu_{\rm obs},\sigma^2)$,
$j=1,\ldots,Q_{\rm tot}$. Let $\mathbb P_+$ and $\mathbb P_-$ denote
probability under $s_{\rm hyp}=1$ and $s_{\rm hyp}=-1$, respectively.
Let $\mathcal T$ be any measurable function
of these observations and an independent random seed whose law does not
depend on $s_{\rm hyp}$. Every test
$\widehat s_{\rm hyp}=\widehat s_{\rm hyp}(\mathcal T)\in\{-1,1\}$
satisfies
\[
 \frac{\mathbb P_{+}\{\widehat s_{\rm hyp}=-1\}
       +\mathbb P_{-}\{\widehat s_{\rm hyp}=1\}}2
 \ge\frac12\left(1-
       \frac{\mu_{\rm obs}\sqrt{Q_{\rm tot}}}{\sigma}\right).
\]
In particular, the right side is at least $3/8$ if
$Q_{\rm tot}\mu_{\rm obs}^2/\sigma^2\le1/16$, and at least $7/16$ if
$Q_{\rm tot}\mu_{\rm obs}^2/\sigma^2\le1/64$.
\end{lemma}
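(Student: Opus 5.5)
The plan is the standard reduction from testing to total variation, followed by Pinsker's inequality. The first step is to remove the adaptivity and the processing. Because $\mathcal T$ is a measurable function of $(Y_1,\ldots,Y_{Q_{\rm tot}})$ and an independent seed $U$ whose law is the same under both hypotheses, the test $\widehat s_{\rm hyp}$ is a measurable function of the pair $(Y,U)$. Let $P_\pm$ denote the joint law of $(Y,U)$ under $s_{\rm hyp}=\pm1$. Then $P_\pm=\mathcal N(\pm\mu_{\rm obs}\mathbf 1,\sigma^2 I_{Q_{\rm tot}})\otimes\mathcal L(U)$, so
\[
 \mathrm{TV}(P_+,P_-)=\mathrm{TV}\bigl(\mathcal N(\mu_{\rm obs}\mathbf 1,\sigma^2I),\mathcal N(-\mu_{\rm obs}\mathbf 1,\sigma^2I)\bigr).
\]
The adaptive ordering of the observations plays no role here: the observations are fixed i.i.d. variables, and any adaptive use of them is just a function of the full vector. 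Processing through $\mathcal T$ can only decrease total variation.

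The second step is the Le Cam bound. Let $A=\{\widehat s_{\rm hyp}=1\}$. Then
\[
 \Prob_+\{\widehat s_{\rm hyp}=-1\}+\Prob_-\{\widehat s_{\rm hyp}=1\}=1-\bigl(P_+(A)-P_-(A)\bigr)\ge 1-\mathrm{TV}(P_+,P_-).
\]

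The third step bounds the total variation by Pinsker. The KL divergence between the two Gaussian vectors is
\[
 \frac{\|2\mu_{\rm obs}\mathbf 1\|^2}{2\sigma^2}=\frac{2Q_{\rm tot}\mu_{\rm obs}^2}{\sigma^2}.
\]
Pinsker's inequality then gives
\[
 \mathrm{TV}\le\sqrt{\mathrm{KL}/2}=\frac{\mu_{\rm obs}\sqrt{Q_{\rm tot}}}{\sigma}.
\]
Dividing the Le Cam bound by two yields the claimed inequality. An alternative is the exact formula $\mathrm{TV}=2\Phi(\mu_{\rm obs}\sqrt{Q_{\rm tot}}/\sigma)-1$, obtained by projecting onto the sufficient statistic $\sum_j Y_j$, together with $2\Phi(x)-1\le\sqrt{2/\pi}\,x\le x$. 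Either route gives the same constant.

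The numerical consequences follow directly. If $Q_{\rm tot}\mu_{\rm obs}^2/\sigma^2\le1/16$, the ratio $\mu_{\rm obs}\sqrt{Q_{\rm tot}}/\sigma$ is at most $1/4$, so the bound is at least $3/8$. If the quantity is at most $1/64$, the ratio is at most $1/8$, so the bound is at least $7/16$.

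No step is a real obstacle. The only point needing care is the first: justifying that the seed contributes nothing to the total variation, which holds because it enters as a common product factor. I would state this with a short tensorization remark.
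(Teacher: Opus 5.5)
Your proposal is correct and matches the paper's proof step for step. In both, the independent seed does not change the divergence, data processing reduces to the Gaussian product, Pinsker applied to $D_{\rm KL}=2Q_{\rm tot}\mu_{\rm obs}^2/\sigma^2$ gives $\mathrm{TV}\le\mu_{\rm obs}\sqrt{Q_{\rm tot}}/\sigma$, and the event $A=\{\widehat s_{\rm hyp}=1\}$ turns this into the averaged error bound. Your alternative through the exact total variation of $\sum_jY_j$ is also valid; it gives the slightly sharper factor $\sqrt{2/\pi}$ rather than the same constant.
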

\begin{proof}
Write $P_{s_{\rm hyp}}$ for the product law of the $Q_{\rm tot}$
observations. Direct
integration of the Gaussian log-likelihood ratio gives
\[
 D_{\rm KL}(P_+\Vert P_-)
 =Q_{\rm tot}\frac{(2\mu_{\rm obs})^2}{2\sigma^2}
 =\frac{2Q_{\rm tot}\mu_{\rm obs}^2}{\sigma^2}.
\]
Independent randomization does not change this divergence. Data
processing and Pinsker's inequality therefore imply
\[
 \operatorname{TV}(\mathcal L_+(\mathcal T),
                         \mathcal L_-(\mathcal T))
 \le\operatorname{TV}(P_+,P_-)
 \le\sqrt{\tfrac12D_{\rm KL}(P_+\Vert P_-)}
 =\frac{\mu_{\rm obs}\sqrt{Q_{\rm tot}}}{\sigma}.
\]
For $A=\{\widehat s_{\rm hyp}=1\}$, the average error is
$[1-P_+(A)+P_-(A)]/2\ge[1-\operatorname{TV}]/2$.
\end{proof}

\begin{remark}[Adaptive queries]\label{rem:gaussian-adaptive}
Suppose every response has the form
$g(x)=r_{\rm known}(x)+s_{\rm hyp}\mu_{\rm obs}+\sigma\zeta$, where
$r_{\rm known}$ is known and $\zeta\sim\mathcal N(0,1)$ is independent,
possibly as one coordinate of a vector whose other coordinates are known
deterministic functions of the query. (It is not enough that they do not
depend on $s_{\rm hyp}$: an extra coordinate equal to $\zeta$ would reveal
$s_{\rm hyp}$ from one call.) Then
$Y=g(x)-r_{\rm known}(x)
\sim\mathcal N(s_{\rm hyp}\mu_{\rm obs},\sigma^2)$ independently of the
query, and the transcript of an algorithm with at most $Q_{\rm tot}$ calls
is a measurable function of $Y_1,\ldots,Y_{Q_{\rm tot}}$ and of the
algorithm's seed. Hence
Lemma~\ref{lem:gaussian-testing} applies to every adaptive algorithm.
\end{remark}

\section{One step of stochastic extragradient}
\label{app:extragradient-step}

All upper bounds in this paper are built from extragradient steps with
batched responses. This appendix collects the two facts about a single
step that are used repeatedly: an exact identity and its expected form.

Let $F$ be $L$-Lipschitz and fix a step size $\gamma>0$. One step of
extragradient with errors $e$ and $\delta$ is
\begin{equation}\label{eq:eg-step}
 y=x-\gamma\bigl(F(x)+e\bigr),\qquad
 x^+=x-\gamma\bigl(F(y)+\delta\bigr).
\end{equation}
Write $F_x=F(x)$ and $q_{\rm eg}=\gamma^2L^2$.

\begin{lemma}[Extragradient identity]\label{nn:identity-lemma}
For every $u\in\mathbb R^d$,
\begin{equation}\label{nn:identity}
 2\gamma\langle F(y),y-u\rangle
 =\|x-u\|^2-\|x^+-u\|^2+\gamma^2\mathcal R_{\rm eg}
 -2\gamma\bigl\langle\delta,\,x-\gamma F(y)-u\bigr\rangle,
\end{equation}
where
$\mathcal R_{\rm eg}=-\|F_x+e\|^2+\|F(y)-F_x-e\|^2+\|\delta\|^2$.
Moreover,
\begin{equation}\label{nn:pathwise-residual}
 \gamma^2\mathcal R_{\rm eg}\le-(1-2q_{\rm eg})\gamma^2\|F_x+e\|^2
       +2\gamma^2\|e\|^2+\gamma^2\|\delta\|^2.
\end{equation}
\end{lemma}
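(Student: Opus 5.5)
Both claims follow by direct algebra from the update \eqref{eq:eg-step}; no probabilistic or operator-specific input is needed beyond the Lipschitz bound in the second part.

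\emph{The identity \eqref{nn:identity}.} Write $a=x-u$ and $v=F(y)+\delta$, so that $x^+-u=a-\gamma v$. Expanding the square gives
\[
 \|x-u\|^2-\|x^+-u\|^2=2\gamma\langle v,a\rangle-\gamma^2\|v\|^2 .
\]
Split $\langle v,a\rangle=\langle F(y),x-u\rangle+\langle\delta,x-u\rangle$. Then use $x-u=(y-u)+\gamma(F_x+e)$ to write
\[
 \langle F(y),x-u\rangle=\langle F(y),y-u\rangle+\gamma\langle F(y),F_x+e\rangle .
\]
Also expand $\|v\|^2=\|F(y)\|^2+2\langle F(y),\delta\rangle+\|\delta\|^2$.

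Now group terms. The $\delta$-terms are
\[
 2\gamma\langle\delta,x-u\rangle-2\gamma^2\langle F(y),\delta\rangle
 =2\gamma\langle\delta,x-\gamma F(y)-u\rangle .
\]
Together with $-\gamma^2\|\delta\|^2$, these account for the $\|\delta\|^2$ part of $\mathcal R_{\rm eg}$ and the last term of \eqref{nn:identity}. The remaining $\gamma^2$-terms are $2\langle F(y),F_x+e\rangle-\|F(y)\|^2$. By the polarization identity this equals
\[
 \|F_x+e\|^2-\|F(y)-F_x-e\|^2 .
\]
Rearranging to isolate $2\gamma\langle F(y),y-u\rangle$ flips these signs and yields exactly $\gamma^2\mathcal R_{\rm eg}$ as defined. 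The only care needed is sign bookkeeping.

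\emph{The bound \eqref{nn:pathwise-residual}.} Apply Young's inequality:
\[
 \|F(y)-F_x-e\|^2\le2\|F(y)-F_x\|^2+2\|e\|^2 .
\]
By Lipschitz continuity and $y-x=-\gamma(F_x+e)$,
\[
 \|F(y)-F_x\|^2\le L^2\gamma^2\|F_x+e\|^2=q_{\rm eg}\|F_x+e\|^2 .
\]
Substituting into $\mathcal R_{\rm eg}$ gives
\[
 \mathcal R_{\rm eg}\le-(1-2q_{\rm eg})\|F_x+e\|^2+2\|e\|^2+\|\delta\|^2 ,
\]
and multiplying by $\gamma^2$ proves the claim.

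The main obstacle is just checking signs in the polarization step. Everything else is mechanical.
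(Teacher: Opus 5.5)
Your proposal is correct and follows essentially the same route as the paper. Both expand $\|x-u\|^2-\|x^+-u\|^2$, use $x-y=\gamma(F_x+e)$ to move from $\langle F(y),x-u\rangle$ to $\langle F(y),y-u\rangle$, and complete the square to obtain $\mathcal R_{\rm eg}$; the residual bound then follows from Young's inequality and the Lipschitz estimate $\|F(y)-F_x\|\le\gamma L\|F_x+e\|$.
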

\begin{proof}
Expanding the square,
\[
 \|x-u\|^2-\|x^+-u\|^2
 =2\gamma\langle F(y)+\delta,x-u\rangle-\gamma^2\|F(y)+\delta\|^2.
\]
Since $x-y=\gamma(F_x+e)$,
\[
 2\gamma\langle F(y),y-u\rangle
 =2\gamma\langle F(y),x-u\rangle-2\gamma^2\langle F(y),F_x+e\rangle .
\]
Subtracting the two displays and expanding
$\|F(y)+\delta\|^2=\|F(y)\|^2+2\langle F(y),\delta\rangle+\|\delta\|^2$,
\[
 2\gamma\langle F(y),y-u\rangle
 -\bigl(\|x-u\|^2-\|x^+-u\|^2\bigr)
 =\gamma^2\bigl(\|F(y)\|^2-2\langle F(y),F_x+e\rangle+\|\delta\|^2\bigr)
 -2\gamma\langle\delta,x-\gamma F(y)-u\rangle .
\]
The bracket equals
$\|F(y)-F_x-e\|^2-\|F_x+e\|^2+\|\delta\|^2=\mathcal R_{\rm eg}$, which
proves~\eqref{nn:identity}. For~\eqref{nn:pathwise-residual}, Lipschitz
continuity gives $\|F(y)-F_x\|\le L\|y-x\|=\gamma L\|F_x+e\|$, hence
\[
 \|F(y)-F_x-e\|^2\le2\|F(y)-F_x\|^2+2\|e\|^2
 \le2q_{\rm eg}\|F_x+e\|^2+2\|e\|^2 .
\]
\end{proof}

Now let the responses be random. Let $\mathcal F$ be the history before
the step, and let $\mathcal F^+$ also contain the first batch. Each
error is the average of $b$ independent centered errors of single
responses, which satisfy Assumption~\ref{ass:mixed}. Put
\[
 \alpha_{\rm rel}=\frac{\rho-1}b,\qquad
 \sigma_b^2=\frac{\sigma^2}b .
\]
Then
\begin{equation}\label{eq:eg-moments}
 \begin{aligned}
 \E[e\mid\mathcal F]&=0,&
 \E[\|e\|^2\mid\mathcal F]&\le \alpha_{\rm rel}\|F(x)\|^2+\sigma_b^2,\\
 \E[\delta\mid\mathcal F^+]&=0,&
 \E[\|\delta\|^2\mid\mathcal F^+]&\le \alpha_{\rm rel}\|F(y)\|^2+\sigma_b^2.
 \end{aligned}
\end{equation}

\begin{lemma}[Expected extragradient step]\label{nn:one-step}
\label{vi:seg-one-step}
Assume~\eqref{eq:eg-moments}, $\alpha_{\rm rel}\le1/4$ and
$0<\gamma\le1/(4L)$. Then
for every $u$ that is $\mathcal F$-measurable,
\begin{align}
 \E[\|x^+-u\|^2\mid\mathcal F]
 &\le\|x-u\|^2-2\gamma\,\E[\langle F(y),y-u\rangle\mid\mathcal F]
       -\frac7{128}\gamma^2\|F(x)\|^2+\frac{69}{32}\gamma^2\sigma_b^2,
       \label{nn:energy}\\
 \E[\|\delta\|^2\mid\mathcal F]
 &\le\frac{69}{32}\alpha_{\rm rel}\|F(x)\|^2
      +\frac{33}{32}\sigma_b^2.
       \label{nn:corrector}
\end{align}
\end{lemma}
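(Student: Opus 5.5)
The plan is to take the conditional expectation $\E[\cdot\mid\mathcal F]$ of the exact identity~\eqref{nn:identity} from Lemma~\ref{nn:identity-lemma}, control the residual with~\eqref{nn:pathwise-residual}, and prove the corrector bound~\eqref{nn:corrector} first, since the energy inequality~\eqref{nn:energy} uses it. Throughout I use $q_{\rm eg}=\gamma^2L^2\le1/16$, which follows from $\gamma\le1/(4L)$, and $\alpha_{\rm rel}\le1/4$.

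\emph{Martingale term.} The vector $x-\gamma F(y)-u$ is $\mathcal F^+$-measurable, because $y$ is determined by the first batch and $u$ is $\mathcal F$-measurable. By the tower property and $\E[\delta\mid\mathcal F^+]=0$,
\[
\E\bigl[\langle\delta,\,x-\gamma F(y)-u\rangle\mid\mathcal F\bigr]=0 .
\]
So only the residual $\gamma^2\mathcal R_{\rm eg}$ remains to be controlled.

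\emph{Corrector bound~\eqref{nn:corrector}.} By~\eqref{eq:eg-moments} and the tower property, $\E[\|\delta\|^2\mid\mathcal F]\le\alpha_{\rm rel}\E[\|F(y)\|^2\mid\mathcal F]+\sigma_b^2$. I then bound $\|F(y)\|$ in terms of $x$ through Lipschitz continuity: $\|F(y)-F_x\|\le\gamma L\|F_x+e\|$. Since $\E[e\mid\mathcal F]=0$, the cross term vanishes and
\[
\E[\|F_x+e\|^2\mid\mathcal F]=\|F_x\|^2+\E[\|e\|^2\mid\mathcal F]\le(1+\alpha_{\rm rel})\|F_x\|^2+\sigma_b^2 .
\]
Young's inequality with a weight $t>0$ gives
\[
\|F(y)\|^2\le(1+t)\|F_x\|^2+(1+1/t)\,q_{\rm eg}\|F_x+e\|^2 .
\]
I choose $t$ (something like $t=1/4$ or $t=1/2$) so that, after substituting $q_{\rm eg}\le1/16$ and $\alpha_{\rm rel}\le1/4$:
\begin{itemize}[nosep]
\item the $\|F_x\|^2$ coefficient is at most $69/32$;
\item the $\sigma_b^2$ contribution $\alpha_{\rm rel}(1+1/t)q_{\rm eg}\sigma_b^2$ is at most $\sigma_b^2/32$.
\end{itemize}
This yields the constants $69/32$ and $33/32$. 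Pinning down a weight $t$ that achieves both constants at once is the only fiddly step, and I expect it to be the main obstacle. If a single Young weight fails, I will first bound $\|F(y)\|\le\|F_x\|+\tfrac14\|F_x+e\|$ and then square, splitting the cross term with its own weight.

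\emph{Energy inequality~\eqref{nn:energy}.} Take the conditional expectation of~\eqref{nn:pathwise-residual}. Using $1-2q_{\rm eg}\ge7/8$ and the cross-term cancellation above,
\[
\E[\gamma^2\mathcal R_{\rm eg}\mid\mathcal F]\le\gamma^2\Bigl(-\tfrac78\|F_x\|^2+\tfrac98\E[\|e\|^2\mid\mathcal F]+\E[\|\delta\|^2\mid\mathcal F]\Bigr).
\]
Insert the bound $\E[\|e\|^2\mid\mathcal F]\le\alpha_{\rm rel}\|F_x\|^2+\sigma_b^2$ together with~\eqref{nn:corrector}. With $\alpha_{\rm rel}\le1/4$, the $\|F_x\|^2$ coefficient is at most
\[
-\tfrac78+\tfrac9{32}+\tfrac{69}{128}=-\tfrac7{128},
\]
and the $\sigma_b^2$ coefficient is
\[
\tfrac98+\tfrac{33}{32}=\tfrac{69}{32}.
\]
Rearranging the expected identity then gives~\eqref{nn:energy}.
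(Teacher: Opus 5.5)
Your plan is the paper's proof. You take the conditional expectation of \eqref{nn:identity}, drop the $\delta$-term by $\mathcal F^+$-measurability, bound the corrector first through $\|F(y)-F_x\|\le\gamma L\|F_x+e\|$ and $\E[\|F_x+e\|^2\mid\mathcal F]=\|F_x\|^2+\E[\|e\|^2\mid\mathcal F]$, and then bound the expected residual. Your final arithmetic $-\frac78+\frac9{32}+\frac{69}{128}=-\frac7{128}$ and $\frac98+\frac{33}{32}=\frac{69}{32}$ is right. At the extreme parameters it reproduces the paper's coefficients $-1+3\alpha_{\rm rel}+2q_{\rm eg}(1+\alpha_{\rm rel})^2$ and $2(1+q_{\rm eg}(1+\alpha_{\rm rel}))$.

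The one step that fails as written is the Young weight. Take $\alpha_{\rm rel}=\frac14$ and $q_{\rm eg}=\frac1{16}$.

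\begin{itemize}
\item Your $\sigma_b^2$ requirement $\alpha_{\rm rel}(1+1/t)q_{\rm eg}\le\frac1{32}$ reads $1+1/t\le2$, i.e.\ $t\ge1$. With $t=\frac12$ or $t=\frac14$ your derivation gives $\sigma_b^2$-constants $\frac{67}{64}$ and $\frac{69}{64}$, both larger than $\frac{33}{32}$.
\item For $t\ge1$, your $\|F_x\|^2$ requirement is $(1+t)+\frac5{64}(1+1/t)\le\frac{69}{32}$. Its left side is increasing in $t$ and equals $\frac{69}{32}$ at $t=1$.
\end{itemize}

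Hence $t=1$, i.e.\ $\|F(y)\|^2\le2\|F_x\|^2+2\|F(y)-F_x\|^2$, is the only weight that gives both stated constants, each with equality. This is the paper's choice. Your fallback (squaring $\|F_x\|+\frac14\|F_x+e\|$ and splitting the cross term) lies in the same one-parameter family, so it also needs the split corresponding to $t=1$. With that fix, the rest of your argument goes through unchanged.
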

\begin{proof}
All expectations are conditional on $\mathcal F$. Let
$v_e=\E\|e\|^2\le \alpha_{\rm rel}\|F_x\|^2+\sigma_b^2$.
Since $e$ is centered and
$\|F(y)-F_x\|\le\gamma L\|F_x+e\|$,
\[
 \E\|F_x+e\|^2=\|F_x\|^2+v_e,\qquad
 \E\|F(y)-F_x\|^2\le q_{\rm eg}(\|F_x\|^2+v_e),\qquad
 \E\|F(y)\|^2\le2\|F_x\|^2+2q_{\rm eg}(\|F_x\|^2+v_e).
\]

\emph{Corrector.} By~\eqref{eq:eg-moments} and the last bound,
\[
 \begin{aligned}
 \E\|\delta\|^2&\le \alpha_{\rm rel}\E\|F(y)\|^2+\sigma_b^2\\
 &\le2\alpha_{\rm rel}(1+q_{\rm eg})\|F_x\|^2
    +2\alpha_{\rm rel}q_{\rm eg}v_e+\sigma_b^2\\
 &\le2\alpha_{\rm rel}\bigl(1+q_{\rm eg}(1+\alpha_{\rm rel})\bigr)
       \|F_x\|^2+(1+2\alpha_{\rm rel}q_{\rm eg})\sigma_b^2.
 \end{aligned}
\]
For $\alpha_{\rm rel}\le1/4$ and $q_{\rm eg}\le1/16$ we have
$2(1+q_{\rm eg}(1+\alpha_{\rm rel}))\le69/32$ and
$1+2\alpha_{\rm rel}q_{\rm eg}\le33/32$, which is~\eqref{nn:corrector}.

\emph{Energy.} Using $\|r-e\|^2\le2\|r\|^2+2\|e\|^2$ with
$r=F(y)-F_x$ and the corrector bound,
\begin{align*}
 \E\mathcal R_{\rm eg}
 &\le-\bigl(\|F_x\|^2+v_e\bigr)
       +2q_{\rm eg}(\|F_x\|^2+v_e)+2v_e
       +\alpha_{\rm rel}\bigl(2\|F_x\|^2
          +2q_{\rm eg}(\|F_x\|^2+v_e)\bigr)+\sigma_b^2\\
 &=\bigl(-1+2\alpha_{\rm rel}+2q_{\rm eg}(1+\alpha_{\rm rel})\bigr)
       \|F_x\|^2
   +\bigl(1+2q_{\rm eg}(1+\alpha_{\rm rel})\bigr)v_e+\sigma_b^2\\
 &\le\bigl(-1+3\alpha_{\rm rel}
       +2q_{\rm eg}(1+\alpha_{\rm rel})^2\bigr)\|F_x\|^2
       +2\bigl(1+q_{\rm eg}(1+\alpha_{\rm rel})\bigr)\sigma_b^2,
\end{align*}
where the last step uses
$v_e\le \alpha_{\rm rel}\|F_x\|^2+\sigma_b^2$.
For $\alpha_{\rm rel}\le1/4$ and $q_{\rm eg}\le1/16$,
\[
 1-3\alpha_{\rm rel}-2q_{\rm eg}(1+\alpha_{\rm rel})^2
 \ge1-\frac34-\frac{2}{16}\cdot\frac{25}{16}=\frac7{128},
 \qquad 2\bigl(1+q_{\rm eg}(1+\alpha_{\rm rel})\bigr)\le\frac{69}{32}.
\]
The vector $x-\gamma F(y)-u$ is $\mathcal F^+$-measurable and $\delta$ is
centered given $\mathcal F^+$, so the last term of~\eqref{nn:identity}
has zero conditional mean. Taking the conditional expectation
of~\eqref{nn:identity} proves~\eqref{nn:energy}.
\end{proof}

\section{A fixed affine hard distribution for arbitrary queries}
\label{app:affine}

In this section, we develop a construction that will be used to prove
lower bounds in subsequent sections. The construction gives a single
distribution of affine monotone operators that is hard for every
algorithm with a prescribed number of parallel batches.

\subsection{Matrix queries in parallel batches}
\label{app:aff:haar}

Each query to the affine field constructed below will require two matrix
products. We first describe the information available from such products.
Fix an integer $d_{\rm sym}\ge1$ for the matrix dimension; the VI
constructed in Section~\ref{app:aff:hard-dimension} has dimension
$2d_{\rm sym}$.

\begin{definition}[Batched matrix-query algorithm]
\label{def:aff:batch-model}
Let $A\in\mathbb R^{d_{\rm sym}\times d_{\rm sym}}$ be unknown,
and let $b_{\rm rhs}\in\mathbb R^{d_{\rm sym}}$ be given to the algorithm.
A query at $v$ returns the exact vector $Av$.
Fix integers $w_{\rm batch}\ge1$, $T_{\rm depth}\ge0$, and $c_{\rm out}\ge0$.
In each batch $r=1,\ldots,T_{\rm depth}$, the algorithm chooses
$w_{\rm batch}$ vectors $v_{r,1},\ldots,v_{r,w_{\rm batch}}$ and then receives
$Av_{r,1},\ldots,Av_{r,w_{\rm batch}}$. All queries in a batch are chosen before
any answer from that batch is received. They may depend on $b_{\rm rhs}$ and
all earlier queries and answers. After the last batch, the algorithm
returns $c_{\rm out}$ vectors $z_1,\ldots,z_{c_{\rm out}}$.
Thus $w_{\rm batch}$ is the batch width and $T_{\rm depth}$ is its sequential depth.

All query and output rules are Borel measurable functions of the
available information and internal randomness independent of the
instance. Their norms are unrestricted but finite. Batches containing
fewer than $w_{\rm batch}$ queries are padded with zeros.
\end{definition}

Fix $\Lambda\in\mathbb R^{d_{\rm sym}\times d_{\rm sym}}$ and
$b_{\rm spec}\in\mathbb R^{d_{\rm sym}}$. Consider
\begin{equation}
 U\sim\operatorname{Haar}(O(d_{\rm sym})),\qquad
 A=U\Lambda U^\top,\qquad b_{\rm rhs}=Ub_{\rm spec}.
 \label{eq:aff:marked-matrix}
\end{equation}
Here $U$ is uniformly distributed over the orthogonal matrices:
its law is unchanged by multiplication by any fixed orthogonal matrix.
The algorithm may know $(\Lambda,b_{\rm spec})$, but not $U$.
We call $U^\top v$ the reference coordinates of a vector $v$;
in particular, $b_{\rm spec}$ gives the reference coordinates of $b_{\rm rhs}$.
Write $\Pi_E$ for orthogonal projection onto a subspace $E$.

The reduction below uses the conditional-rotation argument of
\citet[Lemmas~38--39]{chewi2023} and
\citet[Lemma~6.3 and Appendix~A]{bakshi2023}.
We give the adaptation for parallel batches, the supplied vector $b_{\rm rhs}$,
and arbitrary outputs; the standard Haar-conditioning argument is
used without reproving it.

\begin{lemma}[Polynomial representation of batched queries]
\label{lem:aff:batched}
Consider any algorithm in Definition~\ref{def:aff:batch-model} applied to
\eqref{eq:aff:marked-matrix}. Its original experiment can be realized
jointly with $w_{\rm batch}T_{\rm depth}+c_{\rm out}$ independent standard Gaussian vectors
in $\mathbb R^{d_{\rm sym}}$,
\[
 v^{\rm G}_{r,i},\quad 1\le r\le T_{\rm depth},\ 1\le i\le w_{\rm batch},
 \qquad
 v^{\rm G}_{{\rm out},i},\quad 1\le i\le c_{\rm out},
\]
so that, almost surely, each output satisfies
\begin{align}
 U^\top z_\ell\in\mathcal K_{T_{\rm depth}}:={}&
 \operatorname{span}\{\Lambda^j b_{\rm spec}:0\le j\le T_{\rm depth}\}
+\sum_{r=1}^{T_{\rm depth}}\sum_{i=1}^{w_{\rm batch}}
   \operatorname{span}\{\Lambda^j v^{\rm G}_{r,i}:0\le j\le T_{\rm depth}-r+1\}
 \nonumber\\
 &+\operatorname{span}\{v^{\rm G}_{{\rm out},i}:1\le i\le c_{\rm out}\}.
 \label{eq:aff:krylov}
\end{align}
The joint law of the instance, queries, answers, and outputs is unchanged.
The matrix $U$ has Haar law and is independent of the algorithm's
internal randomness. The Gaussian collection has its independent
product law and is also independent of the algorithm's internal
randomness. The collection and $U$ may depend on one another.
\end{lemma}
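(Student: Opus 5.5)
We realize the experiment by induction over the batches, using the standard Haar-conditioning fact: if $U$ is Haar and we condition on $U x_k = y_k$ for finitely many fixed vectors $x_k$, then the remaining randomness is a Haar rotation of the orthogonal complement of $\operatorname{span}\{y_k\}$ in the image space. Equivalently, we condition on the action of $U^\top$ on a finite-dimensional subspace.

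The invariant after batch $r$ is the following. Every query and answer seen so far, as well as $b_{\rm rhs}$, has reference coordinates in a subspace $S_r\subseteq\mathbb R^{d_{\rm sym}}$ with
\[
S_r\subseteq\operatorname{span}\{\Lambda^j b_{\rm spec}:0\le j\le r\}
+\sum_{s\le r}\sum_{i}\operatorname{span}\{\Lambda^j v^{\rm G}_{s,i}:0\le j\le r-s+1\}.
\]
In addition, conditionally on the transcript, $U$ restricted to the orthogonal complement $S_r^\perp$ is still Haar on the corresponding complements.

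The inductive step handles batch $r+1$. Each query $v_{r+1,i}$ is chosen from the current transcript. We decompose its reference coordinates $U^\top v_{r+1,i}$ into a component in $S_r$ and a component orthogonal to $S_r$. The component in $S_r$ is determined by the transcript. The orthogonal component is, by the conditional Haar property, a uniformly random direction in $S_r^\perp$, scaled by the known norm of the part of $v_{r+1,i}$ outside the span of the already-revealed vectors.

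We couple this direction to a fresh Gaussian $v^{\rm G}_{r+1,i}$. We write the direction as the normalized projection of $v^{\rm G}_{r+1,i}$ onto the current complement; the complement also includes the directions of the earlier queries in the same batch, processed sequentially via Gram--Schmidt. This coupling preserves the conditional law, because a normalized projected standard Gaussian is uniform on the sphere of the complement and is independent of its norm and of the other independent Gaussians.

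Hence $U^\top v_{r+1,i}$ lies in $S_r+\operatorname{span}\{v^{\rm G}_{r+1,i}\}$. The answer $Av_{r+1,i}=U\Lambda U^\top v_{r+1,i}$ then has reference coordinates in $\Lambda S_r+\operatorname{span}\{\Lambda v^{\rm G}_{r+1,i}\}$. Applying $\Lambda$ raises each Krylov degree by one, which gives exactly the index ranges $j\le T_{\rm depth}-s+1$ after $T_{\rm depth}$ batches; queries are issued at batch $s$ and acquire degree $1$ from their own answer. Because all queries in a batch are fixed before any answer in that batch arrives, the degrees within a batch do not compound, which is why the depth rather than the total query count controls the degree.

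After the last batch the same coupling, with the fresh Gaussians $v^{\rm G}_{{\rm out},i}$, handles the outputs, with no $\Lambda$ applied; this gives \eqref{eq:aff:krylov}. Unused Gaussians are drawn independently, and the coupling is measure-preserving at each step, so the joint law of the instance, queries, answers, and outputs is unchanged. The law of $U$ is also preserved, since its conditional law is never altered; only its coupling with the Gaussians is specified, which is why $U$ and the Gaussians may be dependent.

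The main obstacle is the conditional-Haar step. We must state precisely that, given $U$ on a finite-dimensional subspace, the residual is a Haar rotation between the complements, and that this remains true when query norms are arbitrary and degenerate cases occur. The degenerate case is a query that already lies in the revealed span, for which the Gaussian is simply unused. For this step we plan to cite \citet[Lemmas~38--39]{chewi2023} and \citet{bakshi2023} and only verify that batching and the supplied vector $b_{\rm rhs}$ fit their hypotheses. The vector $b_{\rm rhs}$ fits because it contributes the known reference vector $b_{\rm spec}$ at the start, so $S_0=\operatorname{span}\{b_{\rm spec}\}$.
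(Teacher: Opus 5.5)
Your proposal is correct and follows essentially the paper's own construction: you build $U$ lazily by Haar conditioning, couple the inverse images of each batch's query residuals to Gram--Schmidt-orthonormalized projections of fresh Gaussians (the paper does this for an orthonormal basis of the batch's whole residual span at once, which is equivalent to your query-by-query version), and track the specified subspace through $E_r\subseteq E_{r-1}+\Lambda E_{r-1}+\operatorname{span}\{v^{\rm G}_{r,i},\Lambda v^{\rm G}_{r,i}\}$. Two details to tighten. First, the invariant must condition on the augmented history (the Gaussians, the already specified action of $U$, and the algorithm's seed, which also gives the independence claims), not on the transcript alone. Second, sequential Gram--Schmidt gives only $U^\top v_{r+1,i}\in S_r+\operatorname{span}\{v^{\rm G}_{r+1,1},\ldots,v^{\rm G}_{r+1,i}\}$ rather than $S_r+\operatorname{span}\{v^{\rm G}_{r+1,i}\}$, and this still yields the stated Krylov space.
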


\begin{proof}
Fix the algorithm's internal random choices. We construct $U$ by
specifying its action only as needed. If that action is specified on
$E$, the remaining map from $E^\perp$ to $(UE)^\perp$ is conditionally
uniform. We use the following standard Haar property from the cited
conditioning argument: the inverse images of a fixed orthonormal
frame in $(UE)^\perp$ form a uniform orthonormal frame in $E^\perp$;
after these images are specified, the remaining map is still uniform.
Such an inverse frame is obtained by projecting independent standard
Gaussian vectors onto $E^\perp$ and applying Gram--Schmidt
(successive orthogonal projection and normalization).
The same property applies to images of a reference-coordinate frame.

\emph{Initial information.}
Set $E_0=\operatorname{span}\{b_{\rm spec}\}$.
For $b_{\rm spec}\ne0$, draw $b_{\rm rhs}$ uniformly on the sphere of radius
$\|b_{\rm spec}\|$ and specify $Ub_{\rm spec}=b_{\rm rhs}$.
For $b_{\rm spec}=0$, set $b_{\rm rhs}=0$ and $E_0=\{0\}$.
In either case this is the original law of the supplied vector,
and the remaining rotation is conditionally uniform.

\emph{One batch.}
Suppose $U$ is specified on $E=E_{r-1}$, and put $S=UE$.
The queries $v_{r,1},\ldots,v_{r,w_{\rm batch}}$ are already determined.
Choose an orthonormal basis $u_1,\ldots,u_k$ of
\[
 \operatorname{span}\{(I-\Pi_S)v_{r,i}:1\le i\le w_{\rm batch}\},
 \qquad k\le\min\{w_{\rm batch},d_{\rm sym}-\dim E\}.
\]
Draw $v^{\rm G}_{r,1},\ldots,v^{\rm G}_{r,w_{\rm batch}}$
independently of everything generated
so far. Project the first $k$ draws onto $E^\perp$ and orthonormalize
them to obtain $\widetilde u_1,\ldots,\widetilde u_k$.
Specify $U\widetilde u_j=u_j$. Since
\[
 \widetilde u_j\in E+\operatorname{span}\{v^{\rm G}_{r,1},\ldots,v^{\rm G}_{r,j}\},
\]
every query now has specified reference coordinates satisfying
\begin{equation}
 \widetilde v_{r,i}:=U^\top v_{r,i}
 \in E+\operatorname{span}\{v^{\rm G}_{r,1},\ldots,
       v^{\rm G}_{r,w_{\rm batch}}\}.
 \label{eq:aff:preimage}
\end{equation}
Conditional on all generated variables, including the full Gaussian
draws, leave the unexposed action uniformly distributed subject to the
specified action; generate its values only when needed. Thus unused
Gaussian components impose no further constraint on that action.

To answer the batch, reveal the images under $U$ of
$\Lambda\widetilde v_{r,1},\ldots,\Lambda\widetilde v_{r,w_{\rm batch}}$
according to this conditional law. Since the $u_j$ span the perpendicular
query components,
\[
 E+\operatorname{span}\{\widetilde u_j:1\le j\le k\}
 =E+\operatorname{span}\{\widetilde v_{r,i}:1\le i\le w_{\rm batch}\}.
\]
The specified subspace therefore becomes
\begin{equation}
 E_r=E+\operatorname{span}
       \{\widetilde v_{r,i},\Lambda\widetilde v_{r,i}:1\le i\le w_{\rm batch}\},
 \label{eq:aff:exposed}
\end{equation}
and the returned vectors are exactly
$U\Lambda\widetilde v_{r,i}=Av_{r,i}$.
The remaining action stays conditionally uniform.
Equations~\eqref{eq:aff:preimage}--\eqref{eq:aff:exposed} give
\[
 E_r\subseteq E_{r-1}+\Lambda E_{r-1}
       +\operatorname{span}\{v^{\rm G}_{r,i},\Lambda v^{\rm G}_{r,i}:
          1\le i\le w_{\rm batch}\}.
\]
Induction from $E_0=\operatorname{span}\{b_{\rm spec}\}$ yields
\[
 E_r\subseteq
 \operatorname{span}\{\Lambda^j b_{\rm spec}:0\le j\le r\}
 +\sum_{a=1}^r\sum_{i=1}^{w_{\rm batch}}
   \operatorname{span}\{\Lambda^j v^{\rm G}_{a,i}:0\le j\le r-a+1\}.
\]
The degree increases at most once per batch because all its queries
are fixed before any matrix product is returned.

\emph{Outputs and distribution.}
Apply the query construction to the output vectors with independently sampled
$v^{\rm G}_{{\rm out},1},\ldots,v^{\rm G}_{{\rm out},c_{\rm out}}$,
without applying
$\Lambda$ or returning further answers. Then
\begin{equation*}
 U^\top z_\ell\in E_{T_{\rm depth}}+
   \operatorname{span}\{v^{\rm G}_{{\rm out},i}:1\le i\le c_{\rm out}\},
\end{equation*}
which proves~\eqref{eq:aff:krylov}. Complete $U$ uniformly on the
remaining complements. If no unexposed direction remains, or if the
projected queries are linearly dependent, fewer Gaussian directions
are used; all stated Gaussian vectors are still drawn independently.
Bases are chosen by Gram--Schmidt in the displayed order, discarding
zero residuals, so all constructions are measurable.

Each step uses the conditional law of the original Haar rotation.
Consequently, for every fixed realization of the algorithm's internal
randomness, the completed $U$ has Haar law and the instance, queries,
answers, and outputs have their original joint law. The Gaussian draws
have their independent product law. Neither marginal law depends on
the fixed internal random choices; averaging over those choices proves
the independence assertions in the statement. No independence between
$U$ and the Gaussian collection is asserted or needed.
\end{proof}

For the residual bound below, we enlarge the Gaussian part of
\eqref{eq:aff:krylov} by allowing arbitrary powers of $\Lambda$ on each
Gaussian vector. The bound $T_{\rm depth}$ on the degree of the polynomial
multiplying $b_{\rm spec}$ is retained.

\subsection{A residual that finite-depth information cannot remove}
\label{app:aff:polynomial}

The following scalar inequality will give a lower bound on the residual
of a matrix equation. Its weights specify the right-hand side of that equation.

Fix $L>0$ and an odd integer $n\ge1$. Define
\begin{equation*}
 \lambda_j=L\cos(j\pi/n),\qquad j=0,\ldots,n.
\end{equation*}
These are $n+1$ distinct nonzero numbers, ordered as
$L=\lambda_0>\cdots>\lambda_n=-L$. For each $j$, define the polynomial
\[
 \ell_j(z)=\prod_{\substack{0\le i\le n\\i\ne j}}
       \frac{z-\lambda_i}{\lambda_j-\lambda_i}.
\]
It has degree $n$ and satisfies
$\ell_j(\lambda_i)=\mathbf 1_{\{i=j\}}$.

\begin{lemma}[A weighted bound on Chebyshev nodes]
\label{lem:aff:chebyshev}
Let
\[
 a_j=\ell_j'(0),\qquad
 a_{\rm sum}=\sum_{j=0}^n|a_j|,\qquad
 \omega_j=\frac{|a_j|}{a_{\rm sum}}.
\]
Then $\omega_j>0$, $\sum_{j=0}^n\omega_j=1$, and
$a_{\rm sum}=n/L$. Every real polynomial $\psi$ with
$\deg\psi\le n$ and $\psi'(0)=1$ satisfies
\begin{equation}
 \sum_{j=0}^n\omega_j\psi(\lambda_j)^2\ge\frac{L^2}{n^2}.
 \label{eq:aff:derivative-certificate}
\end{equation}
\end{lemma}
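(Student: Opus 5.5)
The plan is to combine Lagrange interpolation on the nodes $\lambda_0,\ldots,\lambda_n$ with Cauchy--Schwarz, and to get the exact value of $a_{\rm sum}$ from the Chebyshev polynomial $T_n$.

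\emph{Representation of $\psi'(0)$.} Since $\deg\psi\le n$ and the $n+1$ nodes are distinct, $\psi=\sum_{j=0}^n\psi(\lambda_j)\ell_j$. Differentiating at $0$ gives
\[
 1=\psi'(0)=\sum_{j=0}^n a_j\psi(\lambda_j).
\]
By Cauchy--Schwarz with weights $|a_j|$,
\[
 1\le\Bigl(\sum_j|a_j|\Bigr)^{1/2}\Bigl(\sum_j|a_j|\psi(\lambda_j)^2\Bigr)^{1/2}.
\]
Hence $\sum_j\omega_j\psi(\lambda_j)^2\ge1/a_{\rm sum}^2$. So \eqref{eq:aff:derivative-certificate} follows once $a_{\rm sum}=n/L$. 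Positivity of $\omega_j$ needs $a_j\ne0$, and $\sum_j\omega_j=1$ is automatic.

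\emph{Sign structure of $a_j$.} Write $w(z)=\prod_{i=0}^n(z-\lambda_i)$, so that $\ell_j(z)=w(z)/\bigl((z-\lambda_j)w'(\lambda_j)\bigr)$.

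Because $n$ is odd, $j\pi/n=\pi/2$ has no integer solution, so $0$ is not a node and $w(0)\ne0$. The nodes are symmetric, $\lambda_{n-j}=-\lambda_j$, and there are an even number of them with none at $0$. Hence $w(z)=\prod_{\text{pairs}}(z^2-\lambda_j^2)$ is even, and therefore $w'(0)=0$.

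Differentiating the quotient at $0$ then yields
\[
 a_j=-\frac{w(0)}{\lambda_j^2\,w'(\lambda_j)}\ne0.
\]
Since the nodes are real, distinct and ordered, $w'(\lambda_j)$ alternates in sign in $j$. Therefore $(-1)^ja_j$ has a constant sign, and
\[
 a_{\rm sum}=\Bigl|\sum_j(-1)^ja_j\Bigr|.
\]

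\emph{Evaluation via Chebyshev.} Let $p(z)=T_n(z/L)$, which has degree $n$. It satisfies $p(\lambda_j)=\cos(j\pi)=(-1)^j$, so by interpolation $p=\sum_j(-1)^j\ell_j$. Thus $\sum_j(-1)^ja_j=p'(0)=T_n'(0)/L$.

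From $T_n(\cos\theta)=\cos n\theta$ we get $T_n'(\cos\theta)=n\sin(n\theta)/\sin\theta$. At $\theta=\pi/2$ this gives $|T_n'(0)|=n|\sin(n\pi/2)|=n$, since $n$ is odd. Hence $a_{\rm sum}=n/L$, which completes the argument.

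The only delicate point is the sign-alternation step, which rests on $w'(0)=0$. I expect it to go through cleanly from the symmetry of the nodes together with the oddness of $n$.
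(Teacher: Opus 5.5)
Your proof is correct and follows the paper's argument step for step. You use the Lagrange representation $\psi'(0)=\sum_j a_j\psi(\lambda_j)$ together with Cauchy--Schwarz. You obtain $a_j=-w(0)/(\lambda_j^2w'(\lambda_j))$ from the evenness of the node polynomial, which gives $(-1)^ja_j$ a constant sign. Finally, you evaluate $a_{\rm sum}$ by interpolating $T_n(z/L)$. The only differences are cosmetic: you use weights $|a_j|$ rather than $a_j^2/\omega_j$ in Cauchy--Schwarz, and you derive $T_n'(0)=n\sin(n\pi/2)$ explicitly where the paper simply states it.
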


\begin{proof}
Set $\phi(z)=\prod_{j=0}^n(z-\lambda_j)$. Since the nodes occur in
nonzero pairs $\lambda_{n-j}=-\lambda_j$, this polynomial is even and
$\phi(0)\ne0$, $\phi'(0)=0$. Differentiating
$\ell_j(z)=\phi(z)/[(z-\lambda_j)\phi'(\lambda_j)]$ at zero gives
\begin{equation*}
a_j=\frac{-\lambda_j\phi'(0)-\phi(0)}{\lambda_j^2\phi'(\lambda_j)}
    =-\frac{\phi(0)}{\lambda_j^2\phi'(\lambda_j)}\ne0.
\end{equation*}
The product
$\phi'(\lambda_j)=\prod_{i\ne j}(\lambda_j-\lambda_i)$ has exactly
$j$ negative factors. Hence
\[
\operatorname{sign}\phi'(\lambda_j)=(-1)^j,
\qquad
\operatorname{sign}\bigl(a_j(-1)^j\bigr)
  =\operatorname{sign}(-\phi(0)),
\]
so all the terms $a_j(-1)^j$ have the same sign.

Define the Chebyshev polynomials by
$T_0(z)=1$, $T_1(z)=z$, and
$T_{k+1}(z)=2zT_k(z)-T_{k-1}(z)$ for $k\ge1$.
The cosine addition formula gives
$T_n(\cos\theta)=\cos(n\theta)$, so
\[
T_n(\lambda_j/L)=(-1)^j,
\qquad T_n'(0)=n\sin(n\pi/2),
\qquad |T_n'(0)|=n.
\]
For every polynomial $\psi$ of degree at most $n$, the interpolation identity
\[
\psi(z)=\sum_{j=0}^n\psi(\lambda_j)\ell_j(z)
\]
holds: the two sides have degree at most $n$ and agree at $n+1$ distinct
points. We can calculate $a_{\text{sum}}$ by applying this identity to $\psi(z)=T_n(z/L)$ and differentiating at zero:
\[
\frac nL
=\left|\frac{T_n'(0)}L\right|
=\left|\sum_{j=0}^na_j(-1)^j\right|
=\sum_{j=0}^n|a_j|=a_{\rm sum}.
\]
It follows that the $\omega_j$ are positive and sum to one. For an
admissible $\psi$, the same interpolation identity and Cauchy--Schwarz give
\begin{align*}
1
&=\psi'(0)^2
  =\left(\sum_{j=0}^na_j\psi(\lambda_j)\right)^2\le\left(\sum_{j=0}^n\frac{a_j^2}{\omega_j}\right)
      \left(\sum_{j=0}^n\omega_j\psi(\lambda_j)^2\right)=a_{\rm sum}^2\sum_{j=0}^n\omega_j\psi(\lambda_j)^2.
\end{align*}
This proves~\eqref{eq:aff:derivative-certificate}.
\end{proof}
\subsection{The projector estimate}
\label{sec:dimension-refinement}

The next lemma bounds the part of a fixed vector that can be removed
by projection onto a random Gaussian span.

\begin{lemma}[A uniform Gaussian projection bound]
\label{lem:dimref:projector}
Let $t,h\ge1$ and $s_{\rm G}\ge0$ be integers. For each
$j=1,\ldots,t$, let
\[
S_j=\operatorname{span}\{v^{\rm G}_{j,1},\ldots,v^{\rm G}_{j,s_{\rm G}}\}
\subseteq\mathbb R^h,
\]
where $v^{\rm G}_{j,1},\ldots,v^{\rm G}_{j,s_{\rm G}}$ are independent
$\mathcal N(0,I_h)$ vectors for each fixed $j$.
The collections corresponding to different indices $j$ may be
dependent. Let $e_1,\ldots,e_t\in\mathbb R^h$ be deterministic unit
vectors, and define
\[
p_j^{\rm proj}=\|\Pi_{S_j}e_j\|^2\in[0,1],
\]
where $\Pi_{S_j}$ denotes the orthogonal projector onto $S_j$.

If $h\ge16s_{\rm G}$, then
\begin{equation}
\mathbb{P}\!\left\{\max_{1\le j\le t}p_j^{\rm proj}>\frac12\right\}
\le2te^{-h/32}.
\label{eq:dimref:tail}
\end{equation}
Consequently, if
\begin{equation}
h\ge\max\{16s_{\rm G},32\log(8t)\},
\label{eq:dimref:dimension-condition}
\end{equation}
then
\begin{equation}
\mathbb E\sqrt{1-\max_{1\le j\le t}p_j^{\rm proj}}
\ge\frac3{4\sqrt2}>\frac12.
\label{eq:dimref:factor}
\end{equation}
For $s_{\rm G}=0$, we use the convention $S_j=\{0\}$.
Then $p_j^{\rm proj}=0$, and the expectation
in~\eqref{eq:dimref:factor} equals one for every $h\ge1$.
\end{lemma}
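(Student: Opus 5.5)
The plan is to prove the tail bound~\eqref{eq:dimref:tail} for a single index $j$ and then take a union bound over $j=1,\ldots,t$. The union bound needs no independence across indices, which is why dependence between the collections is harmless.

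For fixed $j$, the span $S_j$ of $s_{\rm G}$ independent $\mathcal N(0,I_h)$ vectors is, almost surely, a uniformly random subspace of dimension $s:=\min\{s_{\rm G},h\}=s_{\rm G}$ (since $h\ge16s_{\rm G}$). This holds by rotation invariance of the Gaussian law. Hence $p_j^{\rm proj}=\|\Pi_{S_j}e_j\|^2$ has the same law as $\|\Pi_{E}g/\|g\|\|^2$, where $g\sim\mathcal N(0,I_h)$ and $E$ is a fixed coordinate subspace of dimension $s$. Equivalently,
\[
 p_j^{\rm proj}\stackrel{d}{=}\frac{X}{X+Y},\qquad X\sim\chi^2_{s},\quad Y\sim\chi^2_{h-s}\ \text{independent}.
\]
The event $p_j^{\rm proj}>1/2$ is the event $X>Y$. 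I will bound it by $\Prob\{X\ge h/4\}+\Prob\{Y\le h/4\}$ and apply standard chi-square (Laurent--Massart) tails.

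For the first term, $s\le h/16$, so $h/4\ge s+2\sqrt{sx}+2x$ holds with $x=h/32$. To check this, note that $s\le h/16$ and $2\sqrt{s\,h/32}\le 2\sqrt{h^2/512}<h/11$, so the total is at most $h/16+h/11+h/16<h/4$. Laurent--Massart then gives $\Prob\{X\ge h/4\}\le e^{-h/32}$.

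For the second term, $h-s\ge 15h/16$, and the lower tail gives $\Prob\{Y\le (h-s)-2\sqrt{(h-s)x}\}\le e^{-x}$. With $x=h/32$ we get $(h-s)-2\sqrt{(h-s)h/32}\ge h\bigl(15/16-2/\sqrt{32}\bigr)\ge h(0.9375-0.354)>h/4$, so $\Prob\{Y\le h/4\}\le e^{-h/32}$.

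Summing the two terms gives $2e^{-h/32}$ per index, and the union bound gives~\eqref{eq:dimref:tail}. The main obstacle is this constant bookkeeping in the chi-square tails. If Laurent--Massart proves too tight, I would instead bound the moment generating function of $X-Y$ directly at parameter $1/4$: this gives $\E e^{(X-Y)/4}=2^{s/2}(5/4)^{-(h-s)/2}$, and the condition $h\ge16s$ makes this at most $e^{-ch}$.

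For~\eqref{eq:dimref:factor}, condition on the event $\max_jp_j^{\rm proj}\le1/2$, on which $\sqrt{1-\max_j p_j^{\rm proj}}\ge1/\sqrt2$. Since the integrand is nonnegative elsewhere,
\[
 \E\sqrt{1-\max_jp_j^{\rm proj}}\ge\frac{1}{\sqrt2}\bigl(1-2te^{-h/32}\bigr).
\]
The condition $h\ge32\log(8t)$ gives $2te^{-h/32}\le 2t/(8t)=1/4$, so the right side is at least $3/(4\sqrt2)\approx0.53>1/2$. The case $s_{\rm G}=0$ is immediate from the stated convention: then $p_j^{\rm proj}=0$ for every $j$ and the expectation equals one.
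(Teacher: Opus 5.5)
Your proof is correct and follows the paper's route. For a single index you use rotation invariance to reduce $p_j^{\rm proj}$ to a chi-square ratio, split the event $p_j^{\rm proj}>1/2$ into an upper tail of the $s_{\rm G}$-dimensional part and a lower chi-square tail, bound each by $e^{-h/32}$, union-bound over $j$, and finish on the good event. You differ only cosmetically: you cite Laurent--Massart (your constant checks are right) where the paper computes the two Chernoff bounds by hand, and you use the independent complement $Y\sim\chi^2_{h-s}$ at threshold $h/4$ where the paper uses the full $\chi^2_h$ at threshold $h/2$. One harmless slip is in your fallback: $\E e^{-Y/4}=(3/2)^{-(h-s)/2}$, not $(5/4)^{-(h-s)/2}$, which only strengthens that bound.
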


\begin{proof}
The case $s_{\rm G}=0$ is immediate. Assume henceforth that
$s_{\rm G}\ge1$ and $h\ge16s_{\rm G}$.

\emph{Step 1: distribution of a single projection.}
Fix $j\in\{1,\ldots,t\}$. Since $s_{\rm G}<h$, the vectors
$v^{\rm G}_{j,1},\ldots,v^{\rm G}_{j,s_{\rm G}}$ are linearly independent
almost surely.
Indeed, conditional on the preceding vectors, each new Gaussian vector
has probability zero of belonging to their proper linear span.
Thus $\dim S_j=s_{\rm G}$ almost surely.

For every deterministic orthogonal matrix $V$, rotational invariance
of the Gaussian distribution gives
\[
(Vv^{\rm G}_{j,1},\ldots,Vv^{\rm G}_{j,s_{\rm G}})
\stackrel{\rm d}=
(v^{\rm G}_{j,1},\ldots,v^{\rm G}_{j,s_{\rm G}}).
\]
Hence $VS_j\stackrel{\rm d}=S_j$, so $S_j$ is uniformly distributed
over the $s_{\rm G}$-dimensional subspaces of $\mathbb R^h$.
If $U_{\rm Haar}$ is a Haar-distributed orthogonal matrix of order $h$,
then
\[
\Pi_{S_j}\stackrel{\rm d}=
U_{\rm Haar}\begin{pmatrix}
I_{s_{\rm G}}&0\\
0&0
\end{pmatrix}U_{\rm Haar}^\top.
\]
Since an orthogonal projector satisfies
$\Pi_{S_j}^\top\Pi_{S_j}=\Pi_{S_j}$, it follows that
\[
p_j^{\rm proj}
=e_j^\top\Pi_{S_j}e_j
\stackrel{\rm d}=
\sum_{i=1}^{s_{\rm G}}(U_{\rm Haar}^\top e_j)_i^2.
\]

Because $e_j$ is a deterministic unit vector,
$U_{\rm Haar}^\top e_j$ is uniform
on the unit sphere. A standard Gaussian vector normalized by its norm
has the same distribution. Therefore, for independent
$\zeta_1,\ldots,\zeta_h\sim\mathcal N(0,1)$,
\[
U_{\rm Haar}^\top e_j\stackrel{\rm d}=
\frac{(\zeta_1,\ldots,\zeta_h)^\top}
     {\sqrt{\sum_{i=1}^h\zeta_i^2}},
\qquad
p_j^{\rm proj}\stackrel{\rm d}=\frac{S_{\rm num}}{S_{\rm den}},
\]
where
\[
S_{\rm num}:=\sum_{i=1}^{s_{\rm G}}\zeta_i^2,
\qquad
S_{\rm den}:=\sum_{i=1}^h\zeta_i^2>0
\quad\text{almost surely}.
\]
These identities describe the marginal distribution for the fixed
index $j$; they impose no independence across the different subspaces.

\emph{Step 2: reduction to two scalar tail bounds.}
On the event $\{S_{\rm num}\le h/4\}\cap\{S_{\rm den}\ge h/2\}$, we have
\[
\frac{S_{\rm num}}{S_{\rm den}}\le\frac{h/4}{h/2}=\frac12.
\]
Taking complements gives
\[
\{S_{\rm num}/S_{\rm den}>1/2\}
\subseteq
\{S_{\rm num}>h/4\}\cup\{S_{\rm den}<h/2\}.
\]
Thus
\[
\mathbb{P}\{p_j^{\rm proj}>1/2\}
\le
\mathbb{P}\{S_{\rm num}>h/4\}
+
\mathbb{P}\{S_{\rm den}<h/2\}.
\]
No independence between $S_{\rm num}$ and $S_{\rm den}$ is needed for this inequality.

\emph{Step 3: exponential bounds for the scalar tails.}
For $S_{\rm chi}=\sum_{i=1}^k\eta_i^2$, where
$\eta_1,\ldots,\eta_k$ are independent $\mathcal N(0,1)$ variables,
independence gives, for every $a<1/2$,
\begin{align*}
\mathbb E e^{aS_{\rm chi}}
&=\prod_{i=1}^k\mathbb E e^{a\eta_i^2}=\left(
  \frac1{\sqrt{2\pi}}
  \int_{\mathbb R}e^{-(1-2a)x^2/2}\,dx
  \right)^k
=(1-2a)^{-k/2}.
\end{align*}
The last equality follows by substituting
$u=\sqrt{1-2a}\,x$ in the integral.

Applying Markov's inequality to the nonnegative random variable
$e^{S_{\rm num}/4}$ yields
\begin{align*}
\mathbb{P}\{S_{\rm num}>h/4\}
&=\mathbb{P}\{e^{S_{\rm num}/4}>e^{h/16}\}\le e^{-h/16}\mathbb E e^{S_{\rm num}/4}=e^{-h/16}2^{s_{\rm G}/2}\le\exp\!\left(-\frac h{16}+\frac{h\log2}{32}\right)
\le e^{-h/32},
\end{align*}
where the last line uses $s_{\rm G}\le h/16$ and $\log2\le1$.

For the lower tail of $S_{\rm den}$, the map $z\mapsto e^{-z/2}$ is decreasing.
Consequently, Markov's inequality gives
\begin{align*}
\mathbb{P}\{S_{\rm den}<h/2\}
&=\mathbb{P}\{e^{-S_{\rm den}/2}>e^{-h/4}\}\le e^{h/4}\mathbb E e^{-S_{\rm den}/2}=e^{h/4}2^{-h/2}\\
&=\exp\!\left[-h\left(\frac{\log2}{2}-\frac14\right)\right]
\le e^{-h/12}
\le e^{-h/32}.
\end{align*}

Combining the two tail bounds, we obtain
\[
\mathbb{P}\{p_j^{\rm proj}>1/2\}\le2e^{-h/32}.
\]

\emph{Step 4: simultaneous control and the expectation bound.}
The union bound gives
\begin{align*}
\mathbb{P}\!\left\{\max_{1\le j\le t}p_j^{\rm proj}>\frac12\right\}
&=\mathbb{P}\!\left(
  \bigcup_{j=1}^t\{p_j^{\rm proj}>1/2\}
  \right)\\
&\le\sum_{j=1}^t\mathbb{P}\{p_j^{\rm proj}>1/2\}\\
&\le2te^{-h/32}.
\end{align*}
This proves~\eqref{eq:dimref:tail}, regardless of dependence among
the subspaces.

Under~\eqref{eq:dimref:dimension-condition},
\[
2te^{-h/32}
\le2te^{-\log(8t)}
=\frac14.
\]
Therefore the event
\[
\Omega_{\rm good}:=
\left\{\max_{1\le j\le t}p_j^{\rm proj}\le\frac12\right\}
\]
satisfies $\mathbb{P}(\Omega_{\rm good})\ge3/4$.
Since $0\le p_j^{\rm proj}\le1$ for every $j$, we have the pointwise bound
\[
\sqrt{1-\max_{1\le j\le t}p_j^{\rm proj}}
\ge\frac1{\sqrt2}\mathbf 1_{\Omega_{\rm good}}.
\]
Taking expectations yields
\[
\mathbb E\sqrt{1-\max_{1\le j\le t}p_j^{\rm proj}}
\ge\frac1{\sqrt2}\mathbb{P}(\Omega_{\rm good})
\ge\frac3{4\sqrt2}
>\frac12,
\]
which proves~\eqref{eq:dimref:factor}.
\end{proof}
\subsection{A residual bound from the projection estimate}

Fix $R_0>0$, an odd $n\ge1$, and an integer $h\ge1$. Set $t=n+1$ and
$d_{\rm sym}=th$. Choose an orthogonal decomposition
\[
 \mathbb R^{d_{\rm sym}}=V_0\oplus\cdots\oplus V_n,
 \qquad \dim V_j=h,
\]
and a fixed unit vector $e_j\in V_j$ for each $j$.
Using the nodes and weights from Lemma~\ref{lem:aff:chebyshev}, define
\[
 \Lambda|_{V_j}=\lambda_j I_{V_j},\qquad
 z_{\rm spec}=R_0\sum_{j=0}^n\sqrt{\omega_j}e_j,
 \qquad b_{\rm spec}=\Lambda z_{\rm spec}.
\]
Here $I_{V_j}$ is the identity operator on $V_j$. Each eigenvalue
$\lambda_j$ has multiplicity $h$;
$\|\Lambda\|_{\rm op}=L$, $\Lambda$ is invertible, and
$\|z_{\rm spec}\|=R_0$. The scalar inequality we shall use is
\begin{equation}
 \sum_{j=0}^n\omega_j\psi(\lambda_j)^2\ge\frac{L^2}{n^2}
 \quad\text{whenever}\quad\deg\psi\le n,\quad\psi'(0)=1.
 \label{eq:dimref:chebyshev}
\end{equation}

For a polynomial $p$, the matrix $p(\Lambda)$ is obtained by replacing
the variable by $\Lambda$ and the constant term by that constant times
the identity matrix.

\begin{proposition}[Residual under arbitrary Gaussian polynomial information]
\label{prop:dimref:residual}
For the preceding construction, let
$v^{\rm G}_1,\ldots,v^{\rm G}_{s_{\rm G}}$ be
independent standard Gaussian vectors in $\mathbb R^{d_{\rm sym}}$,
where $s_{\rm G}\ge0$ is an integer. Let $d_0\ge0$ be an integer with
$d_0+2\le n$. Consider any possibly random vector of the form
\[
 v=p(\Lambda)b_{\rm spec}
   +\sum_{i=1}^{s_{\rm G}}p_i(\Lambda)v_i^{\rm G},
 \qquad\deg p\le d_0.
\]
The degrees of $p_i$ are unrestricted, and all polynomial coefficients
may depend on the Gaussian vectors and additional randomness.
If~\eqref{eq:dimref:dimension-condition} holds, then
\begin{equation}
 \mathbb E\|\Lambda v+b_{\rm spec}\|
 \ge\frac{3LR_0}{4\sqrt2\,n}>\frac{LR_0}{2n}.
 \label{eq:dimref:residual}
\end{equation}
The weaker lower bound $LR_0/(2n)$ also holds under the alternative
condition $h\ge2ts_{\rm G}$.
\end{proposition}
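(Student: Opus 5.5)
The plan is to split the residual into its component in each eigenspace $V_j$ and to use the fact that $\Lambda$ acts as the scalar $\lambda_j$ there. Since $\Lambda$ is invertible and $b_{\rm spec}=\Lambda z_{\rm spec}$,
\[
 \Lambda v+b_{\rm spec}=\Lambda\psi(\Lambda)z_{\rm spec}+\sum_{i=1}^{s_{\rm G}}\Lambda p_i(\Lambda)v_i^{\rm G},
 \qquad \psi(z):=1+zp(z).
\]
Here $\deg\psi\le d_0+1\le n-1$ and $\psi(0)=1$. To use Lemma~\ref{lem:aff:chebyshev} we need a polynomial with $\psi'(0)=1$, so we pass to $\widetilde\psi(z):=z\psi(z)$. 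It has $\deg\widetilde\psi\le n$ and $\widetilde\psi'(0)=\psi(0)=1$, and $\Lambda\psi(\Lambda)z_{\rm spec}=\widetilde\psi(\Lambda)z_{\rm spec}$. On $V_j$ this first term equals $R_0\sqrt{\omega_j}\,\widetilde\psi(\lambda_j)e_j$.

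Let $\Pi_j$ be the orthogonal projector onto $V_j$ and $g_{ij}:=\Pi_jv_i^{\rm G}$. The Gaussian term restricted to $V_j$ is $\sum_i\lambda_jp_i(\lambda_j)g_{ij}$, which lies in $S_j:=\operatorname{span}\{g_{ij}:1\le i\le s_{\rm G}\}\subseteq V_j$. For fixed $j$, the $g_{ij}$ are independent standard Gaussians in $V_j\cong\mathbb R^h$; across different $j$ they are independent, though Lemma~\ref{lem:dimref:projector} does not need this. The coefficients may be arbitrary random quantities, so we only use that the Gaussian part of the $j$-th component lies in $S_j$. Hence the $j$-th component has norm at least
\[
 R_0\sqrt{\omega_j}\,|\widetilde\psi(\lambda_j)|\,\|(I-\Pi_{S_j})e_j\|
 =R_0\sqrt{\omega_j}\,|\widetilde\psi(\lambda_j)|\sqrt{1-p_j^{\rm proj}}.
\]
Squaring, summing over $j$, and bounding each $1-p_j^{\rm proj}$ below by $1-\max_kp_k^{\rm proj}$, inequality~\eqref{eq:dimref:chebyshev} gives the pathwise bound
\[
 \|\Lambda v+b_{\rm spec}\|\ge R_0\sqrt{1-\max_jp_j^{\rm proj}}\Bigl(\sum_j\omega_j\widetilde\psi(\lambda_j)^2\Bigr)^{1/2}\ge\frac{LR_0}{n}\sqrt{1-\max_jp_j^{\rm proj}}.
\]
This holds for every realization, whatever the random coefficients are. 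Taking expectations and applying~\eqref{eq:dimref:factor} under~\eqref{eq:dimref:dimension-condition} yields~\eqref{eq:dimref:residual}.

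For the alternative condition $h\ge2ts_{\rm G}$ we need a different bound on the projections, since the union bound may fail for small $h$. Use $\mathbb E\,p_j^{\rm proj}=s_{\rm G}/h$, which holds because $S_j$ is uniform in $V_j$. Then $\mathbb E\max_jp_j^{\rm proj}\le\sum_j\mathbb E\,p_j^{\rm proj}=ts_{\rm G}/h\le1/2$. Since $\sqrt{1-x}\ge1-x$ on $[0,1]$, we get $\mathbb E\sqrt{1-\max_jp_j^{\rm proj}}\ge1/2$, and therefore the bound $LR_0/(2n)$.

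The main obstacle is the step that handles the data-dependent polynomial coefficients. The argument must go through the subspace containment pathwise, never through independence between the coefficients and the Gaussian vectors. We also need the degree bookkeeping $d_0+2\le n$ so that $\widetilde\psi$ has degree at most $n$ and Lemma~\ref{lem:aff:chebyshev} applies.
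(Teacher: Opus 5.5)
Your proposal is correct and follows the paper's proof essentially step by step. Your $\widetilde\psi(z)=z(1+zp(z))$ is exactly the paper's $\psi(\lambda)=\lambda+\lambda^2p(\lambda)$, the pathwise bound $\|\Lambda v+b_{\rm spec}\|\ge (LR_0/n)\sqrt{1-\max_jp_j^{\rm proj}}$ obtained by discarding each block's component in the Gaussian span $S_j$ before invoking Lemma~\ref{lem:dimref:projector} is the paper's, and so is the alternative-condition argument via $\mathbb E\,p_j^{\rm proj}=s_{\rm G}/h$, a sum over the $t$ blocks, and $\sqrt{1-a}\ge1-a$.
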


\begin{proof}
Define
\[
 S_j=\operatorname{span}\{\Pi_{V_j}v_i^{\rm G}:1\le i\le s_{\rm G}\}
 \subseteq V_j,\qquad
 \Pi_j=\Pi_{S_j},\qquad
 p_j^{\rm proj}=\|\Pi_je_j\|^2.
\]
For each fixed $j$, the vectors $\Pi_{V_j}v_i^{\rm G}$ are independent
standard
Gaussians in $V_j$. Because $\Lambda$ acts as $\lambda_jI$ on this
subspace, every real polynomial $q$ satisfies
\[
 \Pi_{V_j}q(\Lambda)y=q(\lambda_j)\Pi_{V_j}y
 \quad\text{for every }y\in\mathbb R^{d_{\rm sym}}.
\]
Therefore
\begin{align*}
 \Pi_{V_j}(\Lambda v+b_{\rm spec})
 &=R_0\sqrt{\omega_j}
        [\lambda_j^2p(\lambda_j)+\lambda_j]e_j
   +\sum_{i=1}^{s_{\rm G}}\lambda_jp_i(\lambda_j)\Pi_{V_j}v_i^{\rm G}.
\end{align*}
The sum belongs to $S_j$, irrespective of its coefficients and degrees.
Applying $I-\Pi_j$ gives
\[
 (I-\Pi_j)\Pi_{V_j}(\Lambda v+b_{\rm spec})
 =R_0\sqrt{\omega_j}\psi(\lambda_j)(I-\Pi_j)e_j,
 \qquad \psi(\lambda)=\lambda+\lambda^2p(\lambda).
\]
For every realization, $\deg\psi\le d_0+2\le n$ and $\psi'(0)=1$.
Since $\|(I-\Pi_j)e_j\|^2=1-p_j^{\rm proj}$, orthogonality of the $V_j$
and~\eqref{eq:dimref:chebyshev} imply
\begin{equation}
 \begin{aligned}
 \|\Lambda v+b_{\rm spec}\|^2
 &=\sum_{j=0}^n\|\Pi_{V_j}(\Lambda v+b_{\rm spec})\|^2\\
 &\ge\sum_{j=0}^n\|(I-\Pi_j)\Pi_{V_j}(\Lambda v+b_{\rm spec})\|^2\\
 &=R_0^2\sum_{j=0}^n\omega_j\psi(\lambda_j)^2(1-p_j^{\rm proj})\\
 &\ge R_0^2(1-\max_j p_j^{\rm proj})
       \sum_{j=0}^n\omega_j\psi(\lambda_j)^2\\
 &\ge\frac{L^2R_0^2}{n^2}(1-\max_j p_j^{\rm proj}).
 \end{aligned}
 \label{eq:dimref:pointwise}
\end{equation}
This bound is pointwise and hence permits coefficients chosen after
observing all the Gaussian vectors. Taking square roots and then
expectations gives
\[
 \mathbb E\|\Lambda v+b_{\rm spec}\|
 \ge\frac{LR_0}{n}\,\mathbb E\sqrt{1-\max_jp_j^{\rm proj}}.
\]
Lemma~\ref{lem:dimref:projector}, applied in the $t=n+1$ subspaces $V_j$,
proves~\eqref{eq:dimref:residual}.

Under the alternative condition $h\ge2ts_{\rm G}$ and $s_{\rm G}\ge1$,
each $S_j$ has dimension $s_{\rm G}$ almost surely. Rotational invariance
makes $\mathbb E\Pi_j$ a scalar multiple of $I_{V_j}$; taking its trace gives
\[
 \mathbb E\Pi_j=\frac{s_{\rm G}}hI_{V_j},
 \qquad\mathbb E p_j^{\rm proj}=\frac{s_{\rm G}}h.
\]
Using $\sqrt{1-a}\ge1-a$ for $0\le a\le1$ yields
\[
 \mathbb E\sqrt{1-\max_jp_j^{\rm proj}}
 \ge1-\mathbb E\max_jp_j^{\rm proj}
 \ge1-\sum_{j=0}^n\mathbb E p_j^{\rm proj}
 =1-\frac{ts_{\rm G}}h\ge\frac12.
\]
If $s_{\rm G}=0$, the expectation is one, which proves the same conclusion.
\end{proof}

\subsection{The hard dimension and the depth lower bound}
\label{app:aff:hard-dimension}

For positive integers $m,T_{\rm depth}$, let $n$ be the smallest odd integer
satisfying $n\ge T_{\rm depth}+2$. Set
\begin{align}
 t&=n+1,\qquad s_{\rm G}=2mT_{\rm depth}+2,\nonumber\\
 h_{\rm ref}
 &=\min\!\left\{2ts_{\rm G},
       \left\lceil\max\{16s_{\rm G},32\log(8t)\}\right\rceil\right\},
 \qquad d_{\rm ref}(m,T_{\rm depth})=2th_{\rm ref}.
 \label{eq:dimref:explicit-dimension}
\end{align}
The two candidate multiplicities are the sufficient choices in
Proposition~\ref{prop:dimref:residual}. The factor two in $d_{\rm ref}$
comes from the two components of the VI construction below.
\begin{corollary}[A depth lower bound in dimension quadratic in depth]
\label{cor:dimref:depth}\label{lem:relmm:depth}
Let $L,R_0>0$ and let $m,T_{\rm depth}$ be positive integers. In dimension
$d_{\rm ref}(m,T_{\rm depth})$, there is a single fixed distribution over noiseless
affine monotone $L$-Lipschitz fields, each with a unique unknown root
$x_\star$ of norm $R_0$. For the root-centered restricted Minty gap
in~\eqref{eq:aff:self-contained-gap},
every randomized algorithm using at most $T_{\rm depth}$ global batches of $m$
simultaneous exact field queries satisfies
\begin{equation}
 \mathbb E\Gap_\star^{\rm M}(\widehat x)
 \ge\frac{LR_0^2}{2n}\ge\frac{LR_0^2}{6T_{\rm depth}}.
 \label{eq:dimref:depth}
\end{equation}
Queries and outputs are arbitrary finite vectors, and the root is not
given to the algorithm. The instance distribution is independent of the algorithm and its internal randomness; the expectation
is over that distribution and the algorithm's randomness.
In particular, the bound applies to $m$ workers making at most $K$
local queries per worker in each of $N$ communication rounds by taking
$T_{\rm depth}=KN$, where $K,N$ are positive integers. Moreover,
\begin{equation*}
 d_{\rm ref}(m,T_{\rm depth})
 \le\min\{4t^2s_{\rm G},64ts_{\rm G}\}
 \le1024mT_{\rm depth}^2.
\end{equation*}
\end{corollary}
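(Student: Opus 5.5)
The plan is to embed the batched matrix-query model of Definition~\ref{def:aff:batch-model} into an affine monotone VI in dimension $2d_{\rm sym}$, and to lower bound the Minty gap by a residual of the form in Proposition~\ref{prop:dimref:residual}. With $d_{\rm sym}=th_{\rm ref}$, I would take $A=U\Lambda U^\top$ and $b_{\rm rhs}=Ub_{\rm spec}$ as in~\eqref{eq:aff:marked-matrix}, with $\Lambda,b_{\rm spec},z_{\rm spec}$ from the residual construction. The VI is the skew (bilinear) field
\[
 G(x,y)=\bigl(Ay-b_{\rm rhs},\,-(Ax-b_{\rm rhs})\bigr),
\]
and the matrix $J=\begin{pmatrix}0&A\\-A&0\end{pmatrix}$ satisfies $J+J^\top=0$ and $\|J\|_{\rm op}=\|\Lambda\|_{\rm op}=L$. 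Because $\Lambda$ is invertible, the unique root is $x=y=A^{-1}b_{\rm rhs}=Uz_{\rm spec}$. This has norm $\sqrt2R_0$ in the product space, so I would rescale $z_{\rm spec}$ by $1/\sqrt2$; this only changes constants. One field query at $(x,y)$ amounts to the two matrix products $Ax$ and $Ay$, since $b_{\rm rhs}$ is known. Hence $T_{\rm depth}$ global batches of $m$ field queries become a batched matrix algorithm of width $w_{\rm batch}=2m$ and depth $T_{\rm depth}$. I would count $c_{\rm out}=2$ outputs (the two blocks of $\widehat x$) and take $b_{\rm rhs}$ as supplied information. This gives at most $2mT_{\rm depth}+2=s_{\rm G}$ Gaussian vectors. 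Any algorithm of $m$ workers with $K$ local calls per round over $N$ rounds has sequential depth at most $KN$, so it is covered by taking $T_{\rm depth}=KN$. Worker-local adaptivity only makes the algorithm weaker than a fully adaptive sequential one, because each batch can be padded.

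Next, I would convert the gap into a residual. For an affine skew field, $\langle G(u),\widehat x-u\rangle=\langle G(\widehat x),\widehat x-u\rangle$, since $\langle J(\widehat x-u),\widehat x-u\rangle=0$. Taking the supremum over $u=x_\star+R_0\,w$ with $\|w\|\le 1$ and choosing $w$ aligned with $G(\widehat x)$ should give
\[
 \Gap_\star^{\rm M}(\widehat x)\ge R_0\|G(\widehat x)\|-\langle G(\widehat x),x_\star-\widehat x\rangle
 =R_0\|G(\widehat x)\|,
\]
because $\langle G(\widehat x),\widehat x-x_\star\rangle=\langle J(\widehat x-x_\star),\widehat x-x_\star\rangle=0$. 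So it suffices to show $\E\|G(\widehat x)\|\ge LR_0/(2n)$ up to the $\sqrt2$ rescaling. The first block of $G(\widehat x)$ is $A\widehat y-b_{\rm rhs}=U(\Lambda U^\top\widehat y-b_{\rm spec})$.

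By Lemma~\ref{lem:aff:batched}, $v:=-U^\top\widehat y$ lies in $\mathcal K_{T_{\rm depth}}$. Its $b_{\rm spec}$-part is a polynomial of degree at most $T_{\rm depth}\le n-2$, and its Gaussian parts have arbitrary degree. Then $\|A\widehat y-b_{\rm rhs}\|=\|\Lambda v+b_{\rm spec}\|$, and Proposition~\ref{prop:dimref:residual} with $d_0=T_{\rm depth}$ gives the bound $LR_0/(2n)$. The choice of $h_{\rm ref}$ in~\eqref{eq:dimref:explicit-dimension} is exactly one of the two sufficient conditions there. Finally I would check that $n\le T_{\rm depth}+3\le 3T_{\rm depth}$, which gives the second inequality in~\eqref{eq:dimref:depth}. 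The dimension count then follows from $t\le T_{\rm depth}+4$ and $s_{\rm G}\le 4mT_{\rm depth}$.

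The main obstacle is making the embedding faithful, and two points need care. First, the Krylov bound for the $b_{\rm spec}$ component must have degree $\le n-2$ after the extra multiplication by $\Lambda$ in the residual; I would double-check the off-by-one in depth here. Second, the factor $\sqrt2$ from the two blocks must be absorbed so that the stated constant $1/(2n)$ survives. If the constant does not survive, I would instead put the root only in the $y$-block scaled to norm $R_0$. I would also confirm that the instance distribution does not depend on the algorithm, which holds because $U$ is Haar and $\Lambda,b_{\rm spec}$ are fixed.
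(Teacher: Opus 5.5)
Your plan is correct and is essentially the paper's proof. You simulate each field query by two matrix products, so $T_{\rm depth}$ batches become width-$2m$ matrix batches, and apply Lemma~\ref{lem:aff:batched} with two outputs (so $s_{\rm G}=2mT_{\rm depth}+2$). You then combine the skew-field identity $\Gap_\star^{\rm M}(\widehat x)=R_0\|G(\widehat x)\|$ with Proposition~\ref{prop:dimref:residual} at $d_0=T_{\rm depth}\le n-2$.

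The one point to settle is the constant, and you should use your fallback, not the symmetric field. Putting the root only in the second block is exactly the paper's field $G_A(u,v)=(Av+b_{\rm rhs},-Au)$, and it gives $LR_0^2/(2n)$ directly. The symmetric field rescaled by $1/\sqrt2$ loses a factor $\sqrt2$ if you bound only one block, and that already breaks $\ge LR_0^2/(6T_{\rm depth})$ at $T_{\rm depth}=1$. If you keep the symmetric field, bound both block residuals and use $\sqrt{a^2+b^2}\ge(a+b)/\sqrt2$, which recovers the constant.
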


\begin{proof}
Use multiplicity $h=h_{\rm ref}$ in the preceding spectral construction,
so $d_{\rm sym}=th_{\rm ref}$. Draw
\[
 U\sim\operatorname{Haar}(O(d_{\rm sym})),\qquad
 A=U\Lambda U^\top,\qquad b_{\rm rhs}=Ub_{\rm spec},
\]
and, for $x=(u,v)\in\mathbb R^{d_{\rm sym}}\times\mathbb R^{d_{\rm sym}}$,
define
\[
 G_A(u,v)=(Av+b_{\rm rhs},-Au),\qquad
 J_A=\begin{pmatrix}0&A\\-A&0\end{pmatrix}.
\]
The law of $U$, and hence that of $G_A$, is fixed by $L,R_0,m,T_{\rm depth}$.

\emph{Field properties.}
Since $A=A^\top$, the linear part satisfies $J_A^\top=-J_A$. Consequently,
\[
 \langle G_A(x)-G_A(y),x-y\rangle
 =\langle J_A(x-y),x-y\rangle=0.
\]
Moreover,
\[
 \|J_A(u,v)\|^2=\|Av\|^2+\|Au\|^2,
 \qquad
 \|J_A\|_{\rm op}=\|A\|_{\rm op}=\|\Lambda\|_{\rm op}=L,
\]
so the field is monotone and $L$-Lipschitz.
The nonzero eigenvalues of $\Lambda$ make $A$ invertible. The field $G_A$ therefore has the unique root
\[
 x_\star=(0,-A^{-1}b_{\rm rhs})=(0,-Uz_{\rm spec}),
 \qquad \|x_\star\|=R_0.
\]

\emph{Gap identity.}
For an arbitrary output $x=(u,v)$, write a comparator as
$y=x_\star+h_{\rm cmp}$. Since $G_A(y)=J_Ah_{\rm cmp}$ and
$\langle J_Ah_{\rm cmp},h_{\rm cmp}\rangle=0$,
\begin{align*}
 \Gap_\star^{\rm M}(x)
 &=\sup_{\|h_{\rm cmp}\|\le R_0}
       \langle J_Ah_{\rm cmp},x-x_\star-h_{\rm cmp}\rangle\\
 &=\sup_{\|h_{\rm cmp}\|\le R_0}
       \langle h_{\rm cmp},J_A^\top(x-x_\star)\rangle\\
 &=\sup_{\|h_{\rm cmp}\|\le R_0}\langle h_{\rm cmp},-G_A(x)\rangle
   =R_0\|G_A(x)\|\\
 &=R_0\sqrt{\|Av+b_{\rm rhs}\|^2+\|Au\|^2}
   \ge R_0\|Av+b_{\rm rhs}\|.
\end{align*}
The supremum equality uses, for any vector $r_{\rm dir}$ of the same
dimension as $h_{\rm cmp}$,
$\sup_{\|h_{\rm cmp}\|\le R_0}\langle h_{\rm cmp},r_{\rm dir}\rangle
=R_0\|r_{\rm dir}\|$,
which follows from Cauchy--Schwarz and is attained at
$h_{\rm cmp}=R_0r_{\rm dir}/\|r_{\rm dir}\|$ when $r_{\rm dir}\ne0$.

\emph{Information available to the algorithm.}
Give $b_{\rm rhs}$ to the algorithm without charge. An exact field query $(u,v)$
can then be answered by two simultaneous matrix queries:
\[
 (Au,Av)\longmapsto(Av+b_{\rm rhs},-Au)=G_A(u,v).
\]
Thus $T_{\rm depth}$ batches of $m$ field queries can be simulated by
$T_{\rm depth}$ batches of $2m$ matrix queries, followed by two arbitrary output
components $(\widehat u,\widehat v)$. Apply
Lemma~\ref{lem:aff:batched} with width $2m$ and two outputs.
It gives $s_{\rm G}=2mT_{\rm depth}+2$ independent standard Gaussian
vectors $v_1^{\rm G},\ldots,v_{s_{\rm G}}^{\rm G}$ and
\[
 U^\top\widehat v
 =p(\Lambda)b_{\rm spec}
  +\sum_{i=1}^{s_{\rm G}}p_i(\Lambda)v_i^{\rm G},
 \qquad\deg p\le T_{\rm depth}.
\]
Allowing unrestricted degrees for the $p_i$ only enlarges the space
containing the output. Since $n\ge T_{\rm depth}+2$, Proposition~\ref{prop:dimref:residual}
applies with $d_0=T_{\rm depth}$. Either multiplicity in
\eqref{eq:dimref:explicit-dimension} yields
\[
 \mathbb E\|A\widehat v+b_{\rm rhs}\|
 =\mathbb E\|\Lambda U^\top\widehat v+b_{\rm spec}\|
 \ge\frac{LR_0}{2n}.
\]
This use of the proposition is unconditional: the Gaussian vectors have
their independent product law, whereas their dependence on $U$ is
irrelevant to the pointwise inequality~\eqref{eq:dimref:pointwise}.
The joint construction preserves the uniform distribution of $U$, independently of the
algorithm's internal randomness. The gap identity therefore proves the lower bound under
the same fixed instance distribution for every randomized algorithm.
Minimality of $n$ gives $n\le T_{\rm depth}+3$. For $T_{\rm depth}=1$ one has $n=3$;
for $T_{\rm depth}\ge2$ one has $T_{\rm depth}+3\le3T_{\rm depth}$. Hence $n\le3T_{\rm depth}$, proving
\eqref{eq:dimref:depth}.

\emph{Intermittent communication.}
Index the queries of each worker within a round by $k=1,\ldots,K$.
At each global batch the simulator makes the $k$th query of every
worker, padding missing queries by zero. All these queries are determined
by earlier batches. Sharing all responses between batches can only give
more information than the intermittent-communication protocol, which restricts each worker to
its local responses until the end of the round. Thus $N$ rounds are
simulated by $KN$ global batches of width $m$.

\emph{Dimension.}
The definition of $n$ gives
$t=n+1\le2T_{\rm depth}+2\le s_{\rm G}$, and $s_{\rm G}\ge4$.
For $s\ge4$ one has $\log(8s)\le s$: the inequality holds at $s=4$,
and $\frac{d}{ds}[s-\log(8s)]=1-1/s>0$. Therefore
\[
 \log(8t)\le\log(8s_{\rm G})\le s_{\rm G},\qquad
 \left\lceil\max\{16s_{\rm G},32\log(8t)\}\right\rceil
 \le32s_{\rm G}.
\]
It follows that
\[
 d_{\rm ref}=2th_{\rm ref}
 \le2t\min\{2ts_{\rm G},32s_{\rm G}\}
 =\min\{4t^2s_{\rm G},64ts_{\rm G}\}.
\]
Finally, $t\le4T_{\rm depth}$ and
$s_{\rm G}=2mT_{\rm depth}+2\le4mT_{\rm depth}$ imply
$64ts_{\rm G}\le1024mT_{\rm depth}^2$.
\end{proof}

\section{Upper bounds for nonlinear VI}
\label{vi:appendix}\label{nn:section}

Throughout this appendix $F$ is monotone and $L$-Lipschitz with
$F(z_\star)=0$, and the oracle satisfies Assumption~\ref{ass:mixed};
SGC is the case $\sigma=0$. For gap bounds, $C$ is a nonempty compact
set with $z_\star\in C\subseteq B(x_0,D)$, where $x_0=z_0$.
We set $\diamond={\rm M}$ for the Minty gap. For a saddle operator
$F=(\nabla_x\Phi,-\nabla_y\Phi)$ with $C=X\times Y$, we instead set
$\diamond={\rm PD}$. Thus $\Gap_C^\diamond$ always displays which of the
two distinct criteria is being bounded.
All methods use the step~\eqref{eq:eg-step} of
Appendix~\ref{app:extragradient-step}, where each error is the average
of $b$ independent responses, with
\[
 b\ge\max\{1,\lceil4(\rho-1)\rceil\},\qquad\text{so that}\qquad
 \alpha_{\rm rel}=\frac{\rho-1}b\le\frac14 .
\]
Section~\ref{nn:trajectory-subsection} analyzes one trajectory,
Section~\ref{vi:schedules-subsection} distributes it over the workers and
rounds, Section~\ref{nn:strong-subsection} treats strong monotonicity,
and Section~\ref{nn:sgc-subsection} gives the explicit form under SGC
used in Theorem~\ref{vi:main-monotone}.

\subsection{One trajectory}\label{nn:trajectory-subsection}

\begin{theorem}[Gap of one trajectory]\label{nn:trajectory}
Run $T\ge1$ extragradient steps with a constant step size
$0<\gamma\le1/(4L)$ and independently sampled batches of size $b$, starting from $x_0$,
and let $\bar y=\frac1T\sum_{t<T}y_t$. Then
\begin{equation}\label{nn:trajectory-bound}
 \E\Gap_C^\diamond(\bar y)
 \le\frac{38}7\cdot\frac{D^2}{\gamma T}
       +\frac{69}{64}\cdot\frac{\gamma\sigma^2}{b}
       +\sqrt{\frac{33}{32}}\cdot\frac{D\sigma}{\sqrt{bT}}.
\end{equation}
For $\gamma=\min\{1/(4L),D\sqrt b/(\sigma\sqrt T)\}$, where the second
entry is $+\infty$ if $\sigma=0$,
\begin{equation}\label{nn:trajectory-optimized}
 \E\Gap_C^\diamond(\bar y)
 \le\frac{22LD^2}{T}
     +\frac{8\sigma D}{\sqrt{bT}}.
\end{equation}
\end{theorem}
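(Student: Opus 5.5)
The plan is to sum the extragradient identity~\eqref{nn:identity} over the $T$ steps and take the supremum over the comparison point inside the expectation. Because the supremum sits inside the expectation, the martingale term cannot simply be dropped. I will handle it with the standard ghost-iterate device of \citet{juditsky2011}. Concretely, I fix $u\in C$ and write $\delta_t$, $e_t$ for the batch errors at step $t$. Summing~\eqref{nn:identity} and using~\eqref{nn:pathwise-residual} with $q_{\rm eg}\le1/16$ gives, pathwise,
\[
 2\gamma\sum_{t<T}\langle F(y_t),y_t-u\rangle
 \le\|x_0-u\|^2+\sum_{t<T}\bigl(-\tfrac78\gamma^2\|F_{x_t}+e_t\|^2+2\gamma^2\|e_t\|^2+\gamma^2\|\delta_t\|^2\bigr)
 -2\gamma\sum_{t<T}\langle\delta_t,x_t-\gamma F(y_t)-u\rangle .
\]
Monotonicity gives $\langle F(u),y_t-u\rangle\le\langle F(y_t),y_t-u\rangle$. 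Averaging over $t$ and taking $\sup_{u\in C}$ then bounds $\Gap_C^{\rm M}(\bar y)$. In the saddle case, convexity--concavity of $\Phi$ gives $\Phi(\bar y^x,v)-\Phi(u,\bar y^y)\le\frac1T\sum_t\langle F(y_t),y_t-(u,v)\rangle$, so the same bound holds for $\Gap^{\rm PD}$.

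For the $u$-dependent part of the martingale term, $-2\gamma\sum_t\langle\delta_t,-u\rangle$, I introduce a ghost sequence $w_0=x_0$, $w_{t+1}=w_t-\gamma\delta_t$. The usual three-point expansion gives $2\gamma\sum_t\langle\delta_t,u-w_t\rangle\le\|w_0-u\|^2+\gamma^2\sum_t\|\delta_t\|^2$, and this holds uniformly in $u$. What remains is $-2\gamma\sum_t\langle\delta_t,x_t-\gamma F(y_t)-w_t\rangle$. Its vector is $\mathcal F^+_t$-measurable, so it has mean zero. Together with $\|x_0-u\|^2\le D^2$, this leaves $2D^2$ plus second-moment terms. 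The negative term $-\frac78\gamma^2\|F_{x_t}+e_t\|^2$ has conditional expectation $-\frac78\gamma^2(\|F_{x_t}\|^2+\E\|e_t\|^2)$. It must absorb the relative parts of $2\gamma^2\E\|e_t\|^2$ and $2\gamma^2\E\|\delta_t\|^2$. By~\eqref{eq:eg-moments} and~\eqref{nn:corrector}, these relative parts are at most $(2+2\cdot\tfrac{69}{32})\alpha_{\rm rel}\gamma^2\|F_{x_t}\|^2$. With $\alpha_{\rm rel}\le1/4$ this is too large for $\frac78$.

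This absorption is the main obstacle. My first remedy is to avoid the crude ghost inequality. I would instead use Lemma~\ref{nn:one-step}, which already extracts $-\frac7{128}\gamma^2\|F\|^2$, as a potential on $\|x_t-u\|^2$ for deterministic $u$. The supremum would then be handled through a second, decoupled ghost term whose cost is bounded via Cauchy--Schwarz as $D\sqrt{\sum_t\E\|\gamma\delta_t\|^2}$ rather than squared. That route yields the $\sqrt{33/32}\,D\sigma/\sqrt{bT}$ term directly. The relative part of $\E\|\delta_t\|^2$, namely $\frac{69}{32}\alpha_{\rm rel}\|F_{x_t}\|^2$, would be paid from the accumulated $\frac7{128}\gamma^2\sum\|F_{x_t}\|^2$, which is itself at most $\frac{128}7D^2$ plus $\sigma$ terms by telescoping~\eqref{nn:energy}. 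This bookkeeping is what produces the constant $38/7$ and the term $\frac{69}{64}\gamma\sigma^2/b$.

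The optimized bound~\eqref{nn:trajectory-optimized} then follows by substituting $\gamma$ into each case of the minimum and collecting constants: $\frac{38}{7}\cdot4<22$ for the $LD^2/T$ term, and a total of at most $8$ for the coefficient of $\sigma D/\sqrt{bT}$.
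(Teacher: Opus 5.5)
Your final route, in the third paragraph, is essentially the paper's proof, and the ghost-iterate detour can be dropped. Its recursion also has the wrong sign: the three-point bound on $2\gamma\sum_t\langle\delta_t,u-w_t\rangle$ needs $w_{t+1}=w_t+\gamma\delta_t$. The paper uses no ghost sequence. After summing \eqref{nn:identity}, it splits $x_t-\gamma F(y_t)-u=(x_t-\gamma F(y_t)-x_0)+(x_0-u)$. This confines all $u$-dependence to $\|x_0-u\|^2+2\gamma\langle S_\delta,u-x_0\rangle\le D^2+2\gamma D\|S_\delta\|$, where $S_\delta=\sum_{t<T}\delta_t$. The supremum is taken pathwise. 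Only then does the paper use the $u$-free bound $\E\mathcal R_{{\rm eg},t}\le-\frac7{128}\E\|F(x_t)\|^2+\frac{69}{32}\sigma^2/b$ from the proof of Lemma~\ref{nn:one-step}. Applying \eqref{nn:energy} itself for a fixed $u$, as you phrase it, would control only $\sup_u\E$, not $\E\sup_u$.

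The step you leave vague is exactly where the stated constants come from. Set $V_F=\sum_{t<T}\E\|F(x_t)\|^2$. Orthogonality of the $\delta_t$ and \eqref{nn:corrector} give $\E\|S_\delta\|\le\sqrt{\frac{69}{32}\alpha_{\rm rel}V_F}+\sqrt{\frac{33}{32}T}\,\sigma/\sqrt b$. The paper keeps $-\frac7{128}\gamma^2V_F$ in the same inequality as $2\gamma D\sqrt{\frac{69}{32}\alpha_{\rm rel}V_F}$ and uses $\max_{V\ge0}(-aV+c\sqrt V)=c^2/(4a)$. This yields $\frac{276\alpha_{\rm rel}}7D^2\le\frac{69}7D^2$, hence $(1+\frac{69}7)/2=\frac{38}7$. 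The $\sigma$-part of $\E\|S_\delta\|$ then gives exactly $\sqrt{33/32}\,D\sigma/\sqrt{bT}$.

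Your last sentences instead suggest substituting the telescoped bound $\gamma^2V_F\le\frac{128}7\bigl(D^2+\frac{69}{32}\gamma^2T\sigma^2/b\bigr)$ under the square root. That gives a valid bound of the same shape, with coefficient about $3.6$ on $D^2/(\gamma T)$ and about $4.7$ on $D\sigma/\sqrt{bT}$. This does not imply \eqref{nn:trajectory-bound} with its stated constants. Your arithmetic deriving \eqref{nn:trajectory-optimized} from \eqref{nn:trajectory-bound} is correct.
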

\begin{proof}
\emph{From the gap to the regret.} For the Minty gap, monotonicity
gives $\langle F(u),y_t-u\rangle\le\langle F(y_t),y_t-u\rangle$, hence
\[
 \Gap^{\rm M}_C(\bar y)
 =\sup_{u\in C}\frac1T\sum_{t<T}\langle F(u),y_t-u\rangle
 \le\sup_{u\in C}\frac1T\sum_{t<T}\langle F(y_t),y_t-u\rangle .
\]
For a saddle operator and $u=(u_x,u_y)\in X\times Y$, convexity of
$\Phi(\cdot,y)$ and concavity of $\Phi(x,\cdot)$ give
$\langle F(y_t),y_t-u\rangle\ge\Phi(y_{t,x},u_y)-\Phi(u_x,y_{t,y})$.
Averaging over $t$ and applying Jensen's inequality to the convex
function $\Phi(\cdot,u_y)$ and the concave function $\Phi(u_x,\cdot)$,
\[
 \Phi(\bar y_x,u_y)-\Phi(u_x,\bar y_y)
 \le\frac1T\sum_{t<T}\langle F(y_t),y_t-u\rangle,
\]
and the supremum over $u$ gives the same bound for
$\Gap^{\rm PD}_{X\times Y}(\bar y)$.

\emph{The regret.} Sum~\eqref{nn:identity} over $t<T$ for a fixed
$u\in C$, drop $-\|x_T-u\|^2$, and write
$x_t-\gamma F(y_t)-u=(x_t-\gamma F(y_t)-x_0)+(x_0-u)$. With
$S_\delta=\sum_{t<T}\delta_t$,
\[
 2\gamma\sum_{t<T}\langle F(y_t),y_t-u\rangle
 \le\|x_0-u\|^2+\gamma^2\sum_{t<T}\mathcal R_{{\rm eg},t}
 -2\gamma\sum_{t<T}\langle\delta_t,x_t-\gamma F(y_t)-x_0\rangle
 +2\gamma\langle S_\delta,u-x_0\rangle .
\]
Since $\|u-x_0\|\le D$, the first and last terms are at most $D^2$ and
$2\gamma D\|S_\delta\|$, and only these depend on $u$. Take the supremum over
$u\in C$ and then the expectation. The third term has zero mean,
because each $\delta_t$ is centered given the vector it multiplies. By
the proof of Lemma~\ref{nn:one-step},
$\E\mathcal R_{{\rm eg},t}
\le-\frac7{128}\E\|F(x_t)\|^2+\frac{69}{32}\sigma_b^2$ with
$\sigma_b^2=\sigma^2/b$. Writing
$V_F=\sum_{t<T}\E\|F(x_t)\|^2$,
\begin{equation}\label{nn:regret-bound}
 2\gamma\,\E\sup_{u\in C}\sum_{t<T}\langle F(y_t),y_t-u\rangle
 \le D^2-\frac7{128}\gamma^2V_F+\frac{69}{32}\gamma^2T\sigma_b^2
 +2\gamma D\,\E\|S_\delta\| .
\end{equation}

\emph{The corrector sum.} The errors $\delta_t$ are martingale
differences, so~\eqref{nn:corrector} gives
\[
 \E\|S_\delta\|^2=\sum_{t<T}\E\|\delta_t\|^2
 \le\frac{69}{32}\alpha_{\rm rel}V_F+\frac{33}{32}T\sigma_b^2,
 \qquad
 \E\|S_\delta\|\le\sqrt{\frac{69}{32}\alpha_{\rm rel}V_F}
      +\sqrt{\frac{33}{32}T}\,\sigma_b .
\]
For $\alpha,\beta>0$,
$\max_{V_F\ge0}(-\alpha V_F+\beta\sqrt {V_F})=\beta^2/(4\alpha)$.
With $\alpha=\frac7{128}\gamma^2$,
$\beta=2\gamma D\sqrt{69\alpha_{\rm rel}/32}$ and
$\alpha_{\rm rel}\le1/4$,
\[
 -\frac7{128}\gamma^2V_F
 +2\gamma D\sqrt{\frac{69}{32}\alpha_{\rm rel}V_F}
 \le\frac{4\gamma^2D^2\cdot69\alpha_{\rm rel}/32}
          {4\cdot7\gamma^2/128}
 =\frac{276\alpha_{\rm rel}}{7}D^2\le\frac{69}7D^2 .
\]
Substituting into~\eqref{nn:regret-bound} and dividing by $2\gamma T$,
\[
 \E\Gap_C^\diamond(\bar y)
 \le\Bigl(1+\frac{69}7\Bigr)\frac{D^2}{2\gamma T}
 +\frac{69}{64}\gamma\sigma_b^2
 +\sqrt{\frac{33}{32}}\cdot\frac{D\sigma_b}{\sqrt T},
\]
which is~\eqref{nn:trajectory-bound}, since $(1+69/7)/2=38/7$.

\emph{The step size.} For the given $\gamma$,
$1/\gamma\le4L+\sigma\sqrt T/(D\sqrt b)$ and
$\gamma\sigma^2/b\le D\sigma/\sqrt{bT}$. Substituting,
\[
 \E\Gap_C^\diamond(\bar y)
 \le\frac{152}7\cdot\frac{LD^2}T
 +\Bigl(\frac{38}7+\frac{69}{64}+\sqrt{\frac{33}{32}}\Bigr)\frac{D\sigma}{\sqrt{bT}},
\]
and $152/7<22$, $38/7+69/64+\sqrt{33/32}<8$.

\emph{Integrability.} All second moments are finite by induction on
$t$, since $F$ is Lipschitz and the oracle has finite conditional second
moments. The supremum over the compact set $C$ of a continuous function
of $u$ equals the supremum over a countable dense subset, so it is
measurable.
\end{proof}

\subsection{Distributing the trajectory over workers and rounds}
\label{vi:schedules-subsection}

The trajectory of Theorem~\ref{nn:trajectory} can be run in two ways.
\emph{Serially}, one worker runs it alone: it keeps its state across
round boundaries, so that a batch interrupted by the end of a round is
completed in the next round at the same query point. With common
queries, all workers query the same point and combine responses from
consecutive rounds into one batch; the server forms the
predictor after one batch and broadcasts it, and the corrector batch is
collected in the following block of rounds. Let
\begin{equation}\label{vi:all-budget-schedules}
 \begin{aligned}
 &\text{serial:} &&b=\max\{1,\lceil4(\rho-1)\rceil\},
  &&T_{\rm sd}=\Bigl\lfloor\frac{KN}{2b}\Bigr\rfloor,\\
 &\text{common queries:} &&s_{\rm cq}=\max\Bigl\{1,\Bigl\lceil\frac{4(\rho-1)}{mK}\Bigr\rceil\Bigr\},
  &&T_{\rm cq}=\Bigl\lfloor\frac N{2s_{\rm cq}}\Bigr\rfloor .
 \end{aligned}
\end{equation}
The serial schedule makes $2bT_{\rm sd}\le KN$ calls on one worker and
at most $K$ calls per round. The common-query schedule uses batches of
size $mKs_{\rm cq}\ge4(\rho-1)$ and
$2s_{\rm cq}T_{\rm cq}\le N$ rounds. We use
\begin{equation}\label{nn:schedule-facts}
 w_{\rm cq}:=\min\Bigl\{1,\frac{mK}\rho\Bigr\},\qquad
 b\le4\rho,\qquad
 s_{\rm cq}\le1+\frac{4\rho}{mK}\le\frac5{w_{\rm cq}},
\end{equation}
where the last inequality is checked separately for $w_{\rm cq}=1$ and
$w_{\rm cq}=mK/\rho$. We also use $\lfloor x\rfloor\ge x/2$ for $x\ge1$.

\begin{theorem}[All budgets]\label{nn:all-budget}
Set $C_{\rm gap}:=440$. For all $m,K,N\in\mathbb N$, the better of the initial point and the two
schedules~\eqref{vi:all-budget-schedules}, chosen by their bounds, satisfies
\begin{equation}\label{nn:ic-upper}
 \E\Gap_C^\diamond(\widehat z)\le C_{\rm gap}\min\left\{
 LD^2,\quad
 \frac{\rho LD^2}{KN}+\frac{\sigma D}{\sqrt{KN}},\quad
 \frac{LD^2}{N\min\{1,mK/\rho\}}
       +\frac{\sigma D}{\sqrt{mKN}}
 \right\}
\end{equation}
\end{theorem}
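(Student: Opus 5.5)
The proof bounds each of the three candidates separately and then takes the minimum. Since the output is chosen by comparing known bounds, not realized gaps, the expected gap of the chosen method is at most the minimum of the three bounds.

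\emph{Initial point.} Returning $z_0$ gives, for $u\in C$,
\[
\langle F(u),z_0-u\rangle\le\|F(u)\|\,\|z_0-u\|\le L\|u-z_\star\|D\le 2LD^2,
\]
because $\|u-z_\star\|\le\|u-z_0\|+\|z_0-z_\star\|\le2D$. For the primal--dual gap I would use convexity--concavity. With $u=(u_x,u_y)$,
\[
\Phi(x_0,u_y)-\Phi(u_x,y_0)\le\langle F(z_0),z_0-u\rangle\le\|F(z_0)\|D\le LD^2,
\]
since $\|F(z_0)\|=\|F(z_0)-F(z_\star)\|\le L\|z_0-z_\star\|\le LD$. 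Either way the first entry holds with constant $2\le C_{\rm gap}$.

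\emph{Serial schedule.} If $T_{\rm sd}=0$, then $KN<2b\le8\rho$. In that case $\rho LD^2/(KN)>LD^2/8$, so the initial-point bound already gives the second entry. Otherwise $KN/(2b)\ge1$ and $T_{\rm sd}\ge KN/(4b)$. I apply \eqref{nn:trajectory-optimized} with batch $b$ and $T=T_{\rm sd}$. This requires $\alpha_{\rm rel}\le1/4$, which holds by the choice of $b$. It yields
\[
\frac{22LD^2}{T_{\rm sd}}+\frac{8\sigma D}{\sqrt{bT_{\rm sd}}}\le\frac{88bLD^2}{KN}+\frac{16\sigma D}{\sqrt{KN}},
\]
using $bT_{\rm sd}\ge KN/4$. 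With $b\le4\rho$ from \eqref{nn:schedule-facts}, this is at most $352\rho LD^2/(KN)+16\sigma D/\sqrt{KN}$. The serial implementation realizes exactly the trajectory of Theorem~\ref{nn:trajectory}, because an interrupted batch resumes at the same query point and batches stay independent and conditionally centered.

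\emph{Common-query schedule.} This is the step needing care. I must check that pooling responses over $s_{\rm cq}$ rounds still gives one extragradient step whose batch of size $b'=mKs_{\rm cq}$ satisfies \eqref{eq:eg-moments} with $\alpha_{\rm rel}=(\rho-1)/b'\le1/4$. The point is that all queries of a block are the same $\mathcal F$-measurable point, and the draws are independent. If $T_{\rm cq}=0$, then $N<2s_{\rm cq}\le10/w_{\rm cq}$, so $LD^2/(Nw_{\rm cq})>LD^2/10$ and the initial point covers the third entry. Otherwise $T_{\rm cq}\ge N/(4s_{\rm cq})$, and \eqref{nn:trajectory-optimized} gives
\[
\frac{88s_{\rm cq}LD^2}{N}+\frac{8\sigma D}{\sqrt{mKs_{\rm cq}T_{\rm cq}}}\le\frac{440LD^2}{Nw_{\rm cq}}+\frac{16\sigma D}{\sqrt{mKN}},
\]
using $s_{\rm cq}\le5/w_{\rm cq}$ and $s_{\rm cq}T_{\rm cq}\ge N/4$. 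This fixes the constant $440$.

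Finally, every constant obtained above (at most $440$ multiplying each term) is dominated by $C_{\rm gap}$ times the corresponding entry, which gives \eqref{nn:ic-upper}.
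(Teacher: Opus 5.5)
Your proof is correct and follows the paper's argument. You bound the initial point directly, then bound the serial and common-query schedules through \eqref{nn:trajectory-optimized} using $b\le4\rho$ and $s_{\rm cq}\le5/w_{\rm cq}$, and fall back on the initial point when a schedule is too short. The one difference is cosmetic: you split on $T_{\rm sd}=0$ and $T_{\rm cq}=0$, using $\lfloor x\rfloor\ge x/2$ for $x\ge1$, where the paper splits on $KN\ge4b$ and $N\ge4s_{\rm cq}$; this changes only the fallback constants ($16$ and $20$ instead of $32$ and $40$).
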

\begin{proof}
\emph{Initial point.} For $u\in C$, $\|u-z_\star\|\le2D$ and
$\|x_0-u\|\le D$, so
$\Gap^{\rm M}_C(x_0)\le\sup_{u\in C}\|F(u)\|\,\|x_0-u\|\le2LD^2$. For a
saddle operator, convexity--concavity gives, for
$u=(u_x,u_y)\in X\times Y$,
\[
 \Phi(x_{0,x},u_y)-\Phi(u_x,x_{0,y})
 \le\langle F(x_0),x_0-u\rangle\le\|F(x_0)\|D\le LD^2,
\]
since $\|F(x_0)\|\le L\|x_0-z_\star\|\le LD$.

\emph{Serial schedule.} If $KN\ge4b$, then $KN/(2b)\ge2$, so
$T_{\rm sd}\ge KN/(4b)$ and $bT_{\rm sd}\ge KN/4$.
By~\eqref{nn:trajectory-optimized} and $b\le4\rho$,
\[
 \E\Gap_C^\diamond(\bar y)
 \le\frac{88bLD^2}{KN}+\frac{16\sigma D}{\sqrt{KN}}
 \le\frac{352\rho LD^2}{KN}+\frac{16\sigma D}{\sqrt{KN}} .
\]
If $KN<4b\le16\rho$, the initial point gives
$2LD^2\le32\rho LD^2/(KN)$.

\emph{Common queries.} If $N\ge4s_{\rm cq}$, then
$T_{\rm cq}\ge N/(4s_{\rm cq})$ and the total batch size satisfies
$mKs_{\rm cq}T_{\rm cq}\ge mKN/4$.
By~\eqref{nn:trajectory-optimized} and~\eqref{nn:schedule-facts},
\[
 \E\Gap_C^\diamond(\bar y)\le\frac{88s_{\rm cq}LD^2}{N}
      +\frac{16\sigma D}{\sqrt{mKN}}
 \le\frac{440LD^2}{Nw_{\rm cq}}+\frac{16\sigma D}{\sqrt{mKN}} .
\]
If $N<4s_{\rm cq}\le20/w_{\rm cq}$, the initial point gives
$2LD^2\le40LD^2/(Nw_{\rm cq})$.

The three bounds depend only on known parameters, which proves the
claim.
\end{proof}

\subsection{Strong monotonicity}\label{nn:strong-subsection}

\begin{lemma}[A scalar recursion with two phases]\label{lem:two-phase-recursion}
Let $d_t\ge0$ satisfy
\[
 d_{t+1}\le(1-\alpha\gamma_t)d_t+v_{\rm rec}\gamma_t^2,
 \qquad v_{\rm rec}\ge0,
\]
for step sizes $0<\gamma_t\le\gamma_0$ with $0<\alpha\gamma_0\le1$. Let
$r_{\rm sw}=\lceil2/(\alpha\gamma_0)\rceil$. If $T\le2r_{\rm sw}$,
take $\gamma_t=\gamma_0$.
Otherwise take $\gamma_t=\gamma_0$ for the first
$T_{\rm const}=\lfloor T/2\rfloor$ steps and
$\gamma_{T_{\rm const}+j-1}=2/[\alpha(r_{\rm sw}+j)]$ for the remaining
steps $j=1,\ldots,T_{\rm dec}$, where $T_{\rm dec}=T-T_{\rm const}$. Then
\begin{equation}\label{eq:two-phase-recursion}
 d_T\le d_0e^{-\alpha\gamma_0T/3}+\frac{14v_{\rm rec}}{\alpha^2T}.
\end{equation}
\end{lemma}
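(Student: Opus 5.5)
The proof will have two cases, matching the two step-size rules, and will use the elementary bound $1-x\le e^{-x}$ together with the standard $O(1/t)$ analysis of the decreasing phase.

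\emph{Short horizon, $T\le2r_{\rm sw}$.} All steps use $\gamma_0$. Unrolling the recursion gives
\[
 d_T\le(1-\alpha\gamma_0)^Td_0+v_{\rm rec}\gamma_0^2\sum_{s<T}(1-\alpha\gamma_0)^s
 \le d_0e^{-\alpha\gamma_0T}+\frac{v_{\rm rec}\gamma_0}{\alpha}.
\]
We then show that $\gamma_0/\alpha\le 14/(\alpha^2T)$, that is, $\alpha\gamma_0T\le14$. This follows from $T\le2r_{\rm sw}\le2(2/(\alpha\gamma_0)+1)$ and $\alpha\gamma_0\le1$, which together give $\alpha\gamma_0T\le4+2\alpha\gamma_0\le6$. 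The exponent $\alpha\gamma_0T$ is at least $\alpha\gamma_0T/3$, so this case is settled.

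\emph{Long horizon, constant phase.} After $T_{\rm const}=\lfloor T/2\rfloor$ steps the same unrolling gives
\[
 d_{T_{\rm const}}\le d_0e^{-\alpha\gamma_0T_{\rm const}}+\frac{v_{\rm rec}\gamma_0}{\alpha}.
\]
Since $T>2r_{\rm sw}\ge2$, we have $T\ge3$, so $\lfloor T/2\rfloor\ge T/3$. This yields the factor $e^{-\alpha\gamma_0T/3}$.

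\emph{Long horizon, decreasing phase.} Write $e_j=d_{T_{\rm const}+j}$ for $j=0,\ldots,T_{\rm dec}$. Multiply the recursion by the weight $w_j=(r_{\rm sw}+j)(r_{\rm sw}+j+1)$; this is the usual telescoping trick for the steps $2/(\alpha(r+j))$. With $\gamma=2/(\alpha(r_{\rm sw}+j))$ we get $1-\alpha\gamma=(r_{\rm sw}+j-2)/(r_{\rm sw}+j)$. The step is admissible, $\gamma\le\gamma_0$, because $r_{\rm sw}\ge2/(\alpha\gamma_0)$. Put $a=r_{\rm sw}+j$. The recursion becomes
\[
 a(a+1)e_j\le(a-2)(a+1)e_{j-1}+\frac{4v_{\rm rec}(a+1)}{\alpha^2a}.
\]
Since $(a-2)(a+1)\le(a-1)a$, the coefficients telescope. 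Summing gives
\[
 (r_{\rm sw}+T_{\rm dec})(r_{\rm sw}+T_{\rm dec}+1)\,e_{T_{\rm dec}}
 \le r_{\rm sw}(r_{\rm sw}+1)\,e_0+\frac{8v_{\rm rec}T_{\rm dec}}{\alpha^2}.
\]

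\emph{Combining the phases.} We insert the bound on $e_0=d_{T_{\rm const}}$. The factor multiplying $e_0$ is at most $1$, so the $d_0$ term keeps $e^{-\alpha\gamma_0T/3}$.

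The variance carried over from the constant phase, $v_{\rm rec}\gamma_0/\alpha$, is the main obstacle. We handle it as follows. First, $r_{\rm sw}(r_{\rm sw}+1)\gamma_0\le r_{\rm sw}(r_{\rm sw}+1)\cdot 3/(\alpha r_{\rm sw})$, using $r_{\rm sw}\le3/(\alpha\gamma_0)$, which holds since $\alpha\gamma_0\le1$. Hence the carried-over term becomes at most $3v_{\rm rec}(r_{\rm sw}+1)/\alpha^2$. Next, dividing by $(r_{\rm sw}+T_{\rm dec})^2\ge(r_{\rm sw}+1)(r_{\rm sw}+T_{\rm dec})$ turns it into $3v_{\rm rec}/(\alpha^2(r_{\rm sw}+T_{\rm dec}))$. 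Finally, $T_{\rm dec}\ge T/2$ gives at most $6v_{\rm rec}/(\alpha^2T)$.

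The fresh noise term contributes $8v_{\rm rec}T_{\rm dec}/(\alpha^2T_{\rm dec}^2)\le 16v_{\rm rec}/(\alpha^2T)$, which slightly exceeds the target. To meet the constant $14$, we will divide by the full product $(r_{\rm sw}+T_{\rm dec})(r_{\rm sw}+T_{\rm dec}+1)$ rather than $T_{\rm dec}^2$ and use $r_{\rm sw}\ge1$. If that is still not enough, we will refine $(a+1)/a\le1+1/r_{\rm sw}$. The remaining work is this constant bookkeeping, which has to land at $6+8=14$.
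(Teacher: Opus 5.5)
Your outline follows the paper's proof: unroll the constant phase, telescope the $2/(\alpha(r_{\rm sw}+j))$ phase, and absorb the carried-over variance using $r_{\rm sw}\gamma_0\le3/\alpha$ and $T_{\rm dec}\ge T/2$. The short-horizon case and the constant phase are correct as written. The gap is the last step. The constant $14$ is never established, and the split $6+8$ you aim for cannot be reached with your weights.

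The weights $w_j=a(a+1)$, with $a=r_{\rm sw}+j$, are shifted by one from the exact product. You therefore replace the contraction $(a-2)/a$ by the weaker $(a-1)/(a+1)$, and you pick up the factor $(a+1)/a$ on the noise. The sharpest fresh-noise bound your telescoped inequality can give is
\[
 \frac{4v_{\rm rec}\bigl(T_{\rm dec}+\sum_{j=1}^{T_{\rm dec}}(r_{\rm sw}+j)^{-1}\bigr)}{\alpha^2(r_{\rm sw}+T_{\rm dec})(r_{\rm sw}+T_{\rm dec}+1)} .
\]
Take $r_{\rm sw}=2$ (that is, $\alpha\gamma_0=1$) and $T$ even. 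Then this bound behaves like $4v_{\rm rec}(T_{\rm dec}+\log T_{\rm dec})/(\alpha^2T_{\rm dec}^2)$. Once $T_{\rm dec}$ is large, that exceeds $8v_{\rm rec}/(\alpha^2T)=4v_{\rm rec}/(\alpha^2T_{\rm dec})$.

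So neither remedy you list makes the fresh term alone at most $8v_{\rm rec}/(\alpha^2T)$. Your first remedy, which keeps $(a+1)/a\le2$ and only divides by the full product, is worse: even after combining both noise pieces, your bound tends to $16v_{\rm rec}/(\alpha^2T)$ in this regime.

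There are two ways to close the gap.

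\emph{Exact weights (the paper's route).} Use the weights $(a-1)a$. Since $(a-1)a\cdot\frac{a-2}{a}=(a-2)(a-1)$, the recursion telescopes with no loss:
\[
 \prod_{i=j+1}^{T_{\rm dec}}\Bigl(1-\frac2{r_{\rm sw}+i}\Bigr)=\frac{(r_{\rm sw}+j-1)(r_{\rm sw}+j)}{(r_{\rm sw}+T_{\rm dec}-1)(r_{\rm sw}+T_{\rm dec})} .
\]
Then:
\begin{itemize}
\item each step's noise contributes at most $4v_{\rm rec}/(\alpha^2T_{\rm dec}^2)$, so the fresh noise is at most $4v_{\rm rec}/(\alpha^2T_{\rm dec})$;
\item the carried-over term is at most $\frac{r_{\rm sw}^2}{T_{\rm dec}^2}\cdot\frac{v_{\rm rec}\gamma_0}{\alpha}\le\frac{3v_{\rm rec}}{\alpha^2T_{\rm dec}}$, using $r_{\rm sw}<T_{\rm dec}$;
\item the total is $7v_{\rm rec}/(\alpha^2T_{\rm dec})\le14v_{\rm rec}/(\alpha^2T)$.
\end{itemize}

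\emph{Keeping your weights.} Use $(a+1)/a\le3/2$, and do not bound the two noise pieces separately. Over the common denominator the numerator is $3(r_{\rm sw}+1)+6T_{\rm dec}\le6(r_{\rm sw}+T_{\rm dec}+1)$. This gives total noise at most $6v_{\rm rec}/(\alpha^2(r_{\rm sw}+T_{\rm dec}))\le12v_{\rm rec}/(\alpha^2T)$. Combining works because the carried-over term is small exactly in the regime where your fresh term is too large, namely small $r_{\rm sw}$.
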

\begin{proof}
The decreasing steps satisfy
$2/[\alpha(r_{\rm sw}+j)]\le2/(\alpha r_{\rm sw})\le\gamma_0$.
Since $\alpha\gamma_0\le1$, we have
$r_{\rm sw}\le2/(\alpha\gamma_0)+1\le3/(\alpha\gamma_0)$.

\emph{Constant steps.} Unrolling the recursion and using
$1-x\le e^{-x}$,
\[
 d_t\le(1-\alpha\gamma_0)^td_0
      +v_{\rm rec}\gamma_0^2\sum_{j<t}(1-\alpha\gamma_0)^j
 \le e^{-\alpha\gamma_0t}d_0+\frac{v_{\rm rec}\gamma_0}\alpha .
\]
If $T\le2r_{\rm sw}$, then $T\le6/(\alpha\gamma_0)$, so
$v_{\rm rec}\gamma_0/\alpha\le6v_{\rm rec}/(\alpha^2T)$, which
proves~\eqref{eq:two-phase-recursion}.

\emph{Decreasing steps.} Let $T>2r_{\rm sw}$, so $T\ge3$,
$T_{\rm const}\ge T/3$ and $T_{\rm dec}\ge T/2>r_{\rm sw}$.
In the second phase the contraction factors are
$1-2/(r_{\rm sw}+j)=(r_{\rm sw}+j-2)/(r_{\rm sw}+j)$, and their products telescope:
\[
 \prod_{i=j+1}^{T_{\rm dec}}\Bigl(1-\frac2{r_{\rm sw}+i}\Bigr)
 =\frac{(r_{\rm sw}+j-1)(r_{\rm sw}+j)}
        {(r_{\rm sw}+T_{\rm dec}-1)(r_{\rm sw}+T_{\rm dec})} .
\]
The noise added at step $j$ is
$4v_{\rm rec}/[\alpha^2(r_{\rm sw}+j)^2]$, so its
contribution to $d_T$ is
\[
 \frac{(r_{\rm sw}+j-1)(r_{\rm sw}+j)}
      {(r_{\rm sw}+T_{\rm dec}-1)(r_{\rm sw}+T_{\rm dec})}
 \cdot\frac{4v_{\rm rec}}{\alpha^2(r_{\rm sw}+j)^2}
 \le\frac{4v_{\rm rec}}
          {\alpha^2(r_{\rm sw}+T_{\rm dec}-1)(r_{\rm sw}+T_{\rm dec})}
 \le\frac{4v_{\rm rec}}{\alpha^2T_{\rm dec}^2}.
\]
Summing over $T_{\rm dec}$ steps and using the product with $j=0$, which
is at most $r_{\rm sw}^2/T_{\rm dec}^2$,
\[
 d_T\le\frac{r_{\rm sw}^2}{T_{\rm dec}^2}d_{T_{\rm const}}
      +\frac{4v_{\rm rec}}{\alpha^2T_{\rm dec}}
 \le e^{-\alpha\gamma_0T_{\rm const}}d_0
      +\frac{r_{\rm sw}^2}{T_{\rm dec}^2}
       \cdot\frac{v_{\rm rec}\gamma_0}{\alpha}
      +\frac{4v_{\rm rec}}{\alpha^2T_{\rm dec}},
\]
where we used $r_{\rm sw}^2/T_{\rm dec}^2\le1$ in front of
$e^{-\alpha\gamma_0T_{\rm const}}d_0$. Since
$r_{\rm sw}\gamma_0\le3/\alpha$ and $r_{\rm sw}<T_{\rm dec}$,
\[
 \frac{r_{\rm sw}^2}{T_{\rm dec}^2}
 \cdot\frac{v_{\rm rec}\gamma_0}\alpha
 \le\frac{3v_{\rm rec}r_{\rm sw}}{\alpha^2T_{\rm dec}^2}
 \le\frac{3v_{\rm rec}}{\alpha^2T_{\rm dec}}.
\]
Hence
$d_T\le e^{-\alpha\gamma_0T/3}d_0
      +7v_{\rm rec}/(\alpha^2T_{\rm dec})$, and
$T_{\rm dec}\ge T/2$
gives~\eqref{eq:two-phase-recursion}.
\end{proof}

\begin{proposition}[Distance to the root]\label{nn:strong-distance}
Let $F$ be in addition $\mu$-strongly monotone with $0<\mu\le L$, and
let $\|x_0-z_\star\|\le R_0$. One trajectory of $T\ge1$ steps with the
step sizes of Lemma~\ref{lem:two-phase-recursion}, where
$\gamma_0=1/(4L)$ and $\alpha=9\mu/8$. Set
$c_{\rm traj}:=3/32$ and $C_{\rm traj}:=1932/81$. Then
\begin{equation}\label{nn:strong-trajectory}
 \E\|x_T-z_\star\|^2
 \le R_0^2e^{-c_{\rm traj}\mu T/L}
       +\frac{C_{\rm traj}\sigma^2}{\mu^2bT}.
\end{equation}
Set $c_{\rm sched}:=3/640$ and $C_{\rm sched}:=96$. Consequently, the
better of the initial point and the two schedules~\eqref{vi:all-budget-schedules},
run with these step sizes, satisfies
\begin{align}\label{nn:strong-ic}
 \E\|\widehat z-z_\star\|^2\le C_{\rm sched}\min\bigg\{&R_0^2,\quad
 R_0^2e^{-c_{\rm sched}\mu KN/(\rho L)}
      +\frac{\sigma^2}{\mu^2KN},\nonumber\\[-2pt]
 &R_0^2e^{-c_{\rm sched}\mu N\min\{1,mK/\rho\}/L}
      +\frac{\sigma^2}{\mu^2mKN}\bigg\}
\end{align}
with universal constants shown above.
For $\sigma=0$ this is linear convergence.
\end{proposition}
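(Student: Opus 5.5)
The plan is to derive a one-step contraction for $d_t:=\E\|x_t-z_\star\|^2$ from Lemma~\ref{nn:one-step}, feed it into Lemma~\ref{lem:two-phase-recursion}, and then translate the trajectory bound into the two schedules exactly as in the proof of Theorem~\ref{nn:all-budget}.

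\textbf{One-step recursion.} Apply~\eqref{nn:energy} with $u=z_\star$. The task is to lower bound
\[
 2\gamma\E\langle F(y),y-z_\star\rangle-\tfrac7{128}\gamma^2\|F(x)\|^2 .
\]
Strong monotonicity and $F(z_\star)=0$ give $\langle F(y),y-z_\star\rangle\ge\mu\|y-z_\star\|^2$. The remaining work is to pass from $y$ back to $x$. I will use
\[
 \|y-z_\star\|^2\ge\tfrac12\|x-z_\star\|^2-\|x-y\|^2,\qquad
 \E\|x-y\|^2=\gamma^2\bigl(\|F(x)\|^2+v_e\bigr)\le\gamma^2\Bigl(\tfrac54\|F(x)\|^2+\sigma_b^2\Bigr).
\]
Since $\gamma\le1/(4L)$ and $\mu\le L$, we have $\gamma\mu\le1/4$. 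So the loss term $2\gamma\mu\gamma^2\cdot\tfrac54\|F(x)\|^2$ is at most about $\tfrac58\gamma^2\|F(x)\|^2/4$. This is too large compared with $\tfrac7{128}\gamma^2\|F(x)\|^2$, so the crude split must be refined. A better route uses the identity before~\eqref{nn:pathwise-residual}: write $\langle F(y),y-z_\star\rangle$ via the Lipschitz bound on $\|F(y)-F(x)\|$ and keep $\mu\|x-z_\star\|^2$ with a Young parameter $\varepsilon$ small, such as $\varepsilon=1/8$. This gives
\[
 d_{t+1}\le(1-\alpha\gamma)d_t+C\gamma^2\sigma_b^2
\]
with $\alpha=9\mu/8$. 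The paper's choice of $\alpha$ suggests the constants $2\mu(1-\varepsilon)$ after absorbing the $\gamma^2$ cross terms into $\tfrac7{128}\gamma^2\|F(x)\|^2$, using $\|F(x)\|\ge\mu\|x-z_\star\|$ only where helpful. Getting these constants to close, with $v_{\rm rec}=\tfrac{69}{32}\sigma^2/b$ plus the absorbed terms, is the main obstacle. I would first try bounding $\|F(y)-F(x)\|\le\gamma L\|F(x)+e\|$ and expanding $\langle F(y),y-z_\star\rangle\ge\langle F(x),x-z_\star\rangle-(\ldots)$.

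\textbf{Trajectory bound.} Lemma~\ref{lem:two-phase-recursion} with $\gamma_0=1/(4L)$ and $\alpha=9\mu/8$ gives $\alpha\gamma_0T/3=3\mu T/(32L)$. This is $c_{\rm traj}=3/32$. The noise term is $14v_{\rm rec}/(\alpha^2T)=14\cdot\tfrac{69}{32}\cdot\tfrac{64}{81}\,\sigma^2/(\mu^2bT)=1932\sigma^2/(81\mu^2bT)$, which matches $C_{\rm traj}$ and confirms $v_{\rm rec}=\tfrac{69}{32}\sigma^2/b$. Note the recursion holds for the varying steps, since each satisfies $\gamma_t\le\gamma_0$.

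\textbf{Schedules.} I will follow Theorem~\ref{nn:all-budget}. For the serial schedule with $KN\ge4b$, we have $T_{\rm sd}\ge KN/(4b)$ and $b\le4\rho$. The exponent is then at least $\tfrac3{32}\cdot\mu KN/(16\rho L)$, which is $\ge c_{\rm sched}\mu KN/(\rho L)$ with $3/512\ge3/640$. The noise is at most $4C_{\rm traj}\sigma^2/(\mu^2KN)$.

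For common queries with $N\ge4s_{\rm cq}$, we have $T_{\rm cq}\ge N/(4s_{\rm cq})\ge Nw_{\rm cq}/20$, which gives the exponent $\tfrac3{32}\cdot\tfrac1{20}=3/640$. The batch total is $mKs_{\rm cq}T_{\rm cq}\ge mKN/4$.

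In the short-budget cases, the initial point gives $R_0^2$. This is at most $e\,R_0^2e^{-c\cdot(\text{small})}$ because the exponent is then $O(1)$ when $\mu\le L$. Finally, $4C_{\rm traj}<96$, and choosing the best of the three by their bounds yields~\eqref{nn:strong-ic}.
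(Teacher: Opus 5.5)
Your use of Lemma~\ref{lem:two-phase-recursion} and your treatment of the two schedules agree with the paper. This covers $T_{\rm sd}\ge KN/(16\rho)$, $T_{\rm cq}\ge Nw_{\rm cq}/20$, the batch totals, the short-budget cases, and $4C_{\rm traj}<96$.

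The gap is the one step that is actually new. The recursion $d_{t+1}\le(1-\frac98\mu\gamma_t)d_t+\frac{69}{32}\gamma_t^2\sigma^2/b$ is asserted but never derived. Matching $C_{\rm traj}$ only tells you what $v_{\rm rec}$ would have to be; it does not show the recursion holds. The routes you sketch do not deliver it.

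\emph{The Young split.} The split $\|y-z_\star\|^2\ge(1-\varepsilon)\|x-z_\star\|^2-(\varepsilon^{-1}-1)\|x-y\|^2$ charges all of $\E\|x-y\|^2=\gamma^2(\|F(x)\|^2+v_e)$ as a loss.
\begin{itemize}
\item Its $\sigma^2/b$ part creates an extra noise term $2\gamma^3\mu(\varepsilon^{-1}-1)\sigma^2/b$ that nothing cancels, so $v_{\rm rec}$ would exceed $\frac{69}{32}\sigma^2/b$.
\item Its $\|F(x)\|^2$ part is too large to absorb into $\frac7{128}\gamma^2\|F(x)\|^2$ when $\gamma\mu$ is near $\frac14$, as you noticed.
\item Absorbing it instead through $\|F(x)\|\le L\|x-z_\star\|$ leaves the coefficient $2(1-\varepsilon)-\frac5{32}(\varepsilon^{-1}-1)$ in front of $\gamma\mu d_t$. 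This is at most about $1.04<\frac98$, and about $0.66$ at $\varepsilon=\frac18$.
\end{itemize}

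\emph{Expanding around $x$.} Expanding $\langle F(y),y-z_\star\rangle$ around $\langle F(x),x-z_\star\rangle$ and bounding the corrections by Cauchy--Schwarz and Lipschitz continuity is worse. The corrections are of order $\gamma L^2\|x-z_\star\|^2$, which swamps the gain $\mu\|x-z_\star\|^2$ for large $\kappa$.

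Moreover, the step sizes are themselves defined through $\alpha=9\mu/8$. A recursion with a smaller contraction constant would not let you apply Lemma~\ref{lem:two-phase-recursion} to this algorithm as stated.

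The missing idea is that the predictor noise is centered. It can only increase $\E\|y-z_\star\|^2$, so it should not be paid for at all. Conditionally on $\mathcal F$ one has $\E[y\mid\mathcal F]=x-\gamma F(x)$. Jensen's inequality, the reverse triangle inequality, $F(z_\star)=0$ and $\gamma L\le\frac14$ then give
\[
 \E[\|y-z_\star\|^2\mid\mathcal F]
 \ge\|x-z_\star-\gamma(F(x)-F(z_\star))\|^2
 \ge(1-\gamma L)^2\|x-z_\star\|^2\ge\tfrac9{16}\|x-z_\star\|^2 .
\]
Insert this and $\langle F(y),y-z_\star\rangle\ge\mu\|y-z_\star\|^2$ into~\eqref{nn:energy} with $u=z_\star$. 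Then simply drop $-\frac7{128}\gamma^2\|F(x)\|^2$; no absorption is needed.

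This yields the contraction $2\gamma\mu\cdot\frac9{16}=\frac98\gamma\mu$, together with exactly the noise term $\frac{69}{32}\gamma^2\sigma^2/b$ already present in~\eqref{nn:energy}. Taking total expectations gives the recursion, since the step sizes are deterministic and at most $1/(4L)$. With it, the rest of your argument goes through as written.
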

\begin{proof}
\emph{One step.} Let $d_t=\E\|x_t-z_\star\|^2$. Conditionally on the
history, $\E y_t=x_t-\gamma_tF(x_t)$, and since
$\|F(x_t)-F(z_\star)\|\le L\|x_t-z_\star\|$ and $\gamma_tL\le1/4$,
Jensen's inequality gives
\[
 \E\|y_t-z_\star\|^2\ge\|x_t-z_\star-\gamma_t(F(x_t)-F(z_\star))\|^2
 \ge(1-\gamma_tL)^2\|x_t-z_\star\|^2\ge\frac9{16}\|x_t-z_\star\|^2 .
\]
Apply~\eqref{nn:energy} with $u=z_\star$, use
$\langle F(y_t),y_t-z_\star\rangle\ge\mu\|y_t-z_\star\|^2$, and drop the
term $-\frac7{128}\gamma_t^2\|F(x_t)\|^2$:
\begin{equation}\label{nn:distance-recursion}
 d_{t+1}\le\Bigl(1-\frac98\mu\gamma_t\Bigr)d_t+\frac{69}{32}\cdot
 \frac{\sigma^2}b\gamma_t^2 .
\end{equation}
Lemma~\ref{lem:two-phase-recursion} with $\alpha=9\mu/8$,
$\gamma_0=1/(4L)$ (so $\alpha\gamma_0\le9/32$) and
$v_{\rm rec}=\frac{69}{32}\sigma^2/b$ gives
\[
 d_T\le R_0^2e^{-3\mu T/(32L)}
 +\frac{14\cdot69\cdot64}{32\cdot81}\cdot\frac{\sigma^2}{\mu^2bT},
\]
which is~\eqref{nn:strong-trajectory}. The step sizes depend only on $T$
and known parameters, so the bound also holds conditionally on a random
starting point.

\emph{Serial schedule.} If $KN\ge4b$, then $T_{\rm sd}\ge KN/(4b)\ge
KN/(16\rho)$ and $bT_{\rm sd}\ge KN/4$, so
\[
 \E\|x_{T_{\rm sd}}-z_\star\|^2
 \le R_0^2e^{-3\mu KN/(512\rho L)}+\frac{4\cdot1932}{81}\cdot
 \frac{\sigma^2}{\mu^2KN}.
\]
If $KN<4b\le16\rho$, then $3\mu KN/(512\rho L)<3/32$, and the initial
point gives $R_0^2\le e^{3/32}R_0^2e^{-3\mu KN/(512\rho L)}$.

\emph{Common queries.} If $N\ge4s_{\rm cq}$, then
$T_{\rm cq}\ge N/(4s_{\rm cq})\ge Nw_{\rm cq}/20$ and the batch size
satisfies $mKs_{\rm cq}T_{\rm cq}\ge mKN/4$, so
\[
 \E\|x_{T_{\rm cq}}-z_\star\|^2
 \le R_0^2e^{-3\mu Nw_{\rm cq}/(640L)}+\frac{4\cdot1932}{81}\cdot
 \frac{\sigma^2}{\mu^2mKN}.
\]
If $N<4s_{\rm cq}\le20/w_{\rm cq}$, then $Nw_{\rm cq}<20$, and the initial point gives
$R_0^2\le e^{3/32}R_0^2e^{-3\mu Nw_{\rm cq}/(640L)}$.

Since $3/640\le3/512$, $e^{3/32}\le96$ and $4\cdot1932/81<96$, the three
bounds prove~\eqref{nn:strong-ic} with the stated constants.
\end{proof}

\subsection{Explicit bounds under SGC}\label{nn:sgc-subsection}

Under SGC ($\sigma=0$) both schedules use the constant step
$\gamma=1/(4L)$. Write $D_0=\|x_0-z_\star\|$ and, under strong
monotonicity, $\kappa=L/\mu$.

\begin{corollary}[SGC]\label{vi:all-budget}
Under SGC, the better of the initial point and the two
schedules~\eqref{vi:all-budget-schedules} satisfies
\begin{equation}\label{vi:all-gap}
 \E\Gap_C^\diamond(\widehat z)\le\frac{500LD^2}{1+N\SchedDepth}.
\end{equation}
Under strong monotonicity, the endpoint of the schedule with more steps
satisfies
\begin{equation*}
 \E\|\widehat z-z_\star\|^2
 \le D_0^2\min\left\{1,\ 2\exp\left[-\frac{9N}{320\kappa}\SchedDepth\right]\right\}.
\end{equation*}
\end{corollary}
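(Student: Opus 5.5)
The gap bound will be obtained by specializing Theorem~\ref{nn:all-budget} to $\sigma=0$ and rewriting its minimum in terms of $\SchedDepth$. The distance bound will come from running the two schedules with the constant step $\gamma=1/(4L)$ and applying the recursion~\eqref{nn:distance-recursion} directly.

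\emph{Gap.} With $\sigma=0$, \eqref{nn:ic-upper} becomes
\[
 \E\Gap_C^\diamond(\widehat z)\le440\,LD^2\min\Bigl\{1,\ \frac{\rho}{KN},\ \frac{1}{N w_{\rm cq}}\Bigr\},
 \qquad w_{\rm cq}=\min\{1,mK/\rho\}.
\]
The minimum of the last two entries equals $1/(N\max\{K/\rho,w_{\rm cq}\})=1/(N\SchedDepth)$. Together with the first entry,
\[
 \min\{1,1/(N\SchedDepth)\}\le\frac{2}{1+N\SchedDepth}.
\]
This gives the bound with constant $880$. To reach the stated $500$, I will redo the schedule cases with the SGC-specific step and keep track of the constants: the trajectory bound $\frac{38}{7}D^2/(\gamma T)$ with $\gamma=1/(4L)$ gives $\tfrac{152}{7}LD^2/T$, which is tighter than the $22$ used before. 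I expect this bookkeeping to be the only delicate point. If it fails, the clean statement still holds with a larger universal constant.

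\emph{Distance.} With $\sigma=0$, the recursion \eqref{nn:distance-recursion} with constant $\gamma=1/(4L)$ gives
\[
 d_T\le(1-9\mu/(32L))^T D_0^2\le D_0^2e^{-9T/(32\kappa)},
\]
so I do not need the two-phase steps. For the serial schedule with $KN\ge4b$ I will use $T_{\rm sd}\ge KN/(16\rho)$, and for the common-query schedule with $N\ge4s_{\rm cq}$ I will use $T_{\rm cq}\ge Nw_{\rm cq}/20$. The smaller constant $1/20$ gives the exponent $9N\SchedDepth/(320\kappa)$ after taking the schedule with more steps, since $T\ge N\SchedDepth/20$ for that schedule.

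In the degenerate regimes where neither schedule has enough budget, one has $N\SchedDepth<20$, so the exponent is below $9/(16\kappa)\le\log 2$. The factor $2$ then covers the trivial bound $D_0^2$, obtained from the initial point or from the nonexpansiveness of $d_t$. The $\min\{1,\cdot\}$ follows because $d_T\le D_0^2$ always holds, the recursion coefficient being at most one.
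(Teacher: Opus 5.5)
Your gap argument follows the paper's route. Instead of quoting the $440$ of Theorem~\ref{nn:all-budget}, you redo the two schedule cases with $C_0=152/7$ from \eqref{nn:trajectory-bound} at $\gamma=1/(4L)$. The bookkeeping you defer does close. Write $z_{\rm sd}=KN/\rho$ and $z_{\rm cq}=Nw_{\rm cq}$. The cases $z_{\rm sd}\ge16$ and $z_{\rm cq}\ge20$ give the constants $17C_0<370$ and $21C_0=456$. Otherwise the initial point gives at most $42LD^2/(1+z)$. So $500$ holds.

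The distance part, however, does not reach the stated exponent. You take the horizon bounds $T_{\rm sd}\ge KN/(16\rho)$ and $T_{\rm cq}\ge Nw_{\rm cq}/20$ from Proposition~\ref{nn:strong-distance}. They rest on $\lfloor x\rfloor\ge x/2$ and give only $T\ge N\SchedDepth/20$. Inserted into $d_T\le D_0^2e^{-9T/(32\kappa)}$, this yields $\exp[-9N\SchedDepth/(640\kappa)]$, because $\frac9{32}\cdot\frac1{20}=\frac9{640}$, not $\frac9{320}$. So as written you prove the bound with half the claimed rate constant.

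The missing idea is to pay for the floor additively rather than multiplicatively. From $b\le4\rho$ and $s_{\rm cq}\le5/w_{\rm cq}$ you get $KN/(2b)\ge z_{\rm sd}/8$ and $N/(2s_{\rm cq})\ge z_{\rm cq}/10$. Then $\lfloor x\rfloor\ge x-1$ yields $\max\{T_{\rm sd},T_{\rm cq}\}\ge N\SchedDepth/10-1$, valid for every budget. Hence $e^{-9T/(32\kappa)}\le e^{9/(32\kappa)}\exp[-9N\SchedDepth/(320\kappa)]$. The bound $e^{9/(32\kappa)}\le e^{9/32}\le2$ is precisely the factor $2$ in the statement. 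This also removes your separate degenerate-budget case, which is where you had spent the factor $2$ instead.
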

\begin{proof}
Let $z_{\rm sd}=KN/\rho$, $z_{\rm cq}=Nw_{\rm cq}$ with $w_{\rm cq}$
from~\eqref{nn:schedule-facts}, so that
$\max\{z_{\rm sd},z_{\rm cq}\}=N\SchedDepth$.

\emph{Gap.} For $\sigma=0$ and $\gamma=1/(4L)$,
\eqref{nn:trajectory-bound} gives
$\E\Gap_C^\diamond(\bar y)\le C_0LD^2/T$
with $C_0=152/7$, and the initial point gives $2LD^2$. Since $b\le4\rho$,
$KN/(2b)\ge z_{\rm sd}/8$. If $z_{\rm sd}\ge16$, then
$T_{\rm sd}\ge z_{\rm sd}/16$ and
\[
 \frac{C_0LD^2}{T_{\rm sd}}\le\frac{16C_0LD^2}{z_{\rm sd}}
 \le\frac{17C_0LD^2}{1+z_{\rm sd}}\le\frac{370LD^2}{1+z_{\rm sd}} ;
\]
if $z_{\rm sd}<16$, then $2LD^2\le34LD^2/(1+z_{\rm sd})$. Similarly,
$s_{\rm cq}\le5/w_{\rm cq}$ gives
$N/(2s_{\rm cq})\ge z_{\rm cq}/10$; if $z_{\rm cq}\ge20$, then
$T_{\rm cq}\ge z_{\rm cq}/20$ and
$C_0LD^2/T_{\rm cq}\le21C_0LD^2/(1+z_{\rm cq})=456LD^2/(1+z_{\rm cq})$,
and if $z_{\rm cq}<20$, then $2LD^2\le42LD^2/(1+z_{\rm cq})$. Taking the
schedule with the larger $z$ proves~\eqref{vi:all-gap}.

\emph{Distance.} For $\sigma=0$ and $\gamma_t=1/(4L)$,
\eqref{nn:distance-recursion} gives
$d_{t+1}\le(1-9/(32\kappa))d_t$. Since $\lfloor x\rfloor\ge x-1$,
$T_{\rm sd}\ge z_{\rm sd}/8-1$ and $T_{\rm cq}\ge z_{\rm cq}/10-1$, so the
larger of the two horizons is
$T\ge N\SchedDepth/10-1$. Hence, using $1-x\le e^{-x}$ and
$e^{9/(32\kappa)}\le e^{9/32}\le2$,
\[
 \E\|\widehat z-z_\star\|^2\le D_0^2e^{-9T/(32\kappa)}
 \le2D_0^2\exp\Bigl[-\frac{9N}{320\kappa}\SchedDepth\Bigr].
\]
The contraction also gives the bound $D_0^2$, including the case
$T=0$.
\end{proof}

\section{Proofs for affine VI}\label{pv:section}

Throughout this appendix Assumption~\ref{ass:affine-main} holds, so
$G(x)=J(x-x_\star)$ with $J+J^\top\succeq0$ and $\|J\|_{\rm op}\le L$.
Every oracle call returns an unbiased estimate of $G$ at the queried
point, computed from a sample that is independent of all earlier
samples, of the instance, and of the algorithm's randomness. We write
\[
 \beta=\beta_{1/\rho}=\E\max_{1\le i\le m}Z_i,
 \qquad Z_1,\ldots,Z_m\stackrel{\rm iid}{\sim}\operatorname{Bin}(K,1/\rho),
\]
for the quantity of Appendix~\ref{app:clock}, and $\BinDepth(m,K,p)$ for its
explicit function~\eqref{eq:clock:explicit-app}. Its specialization is
$\InfoDepth=\BinDepth(m,K,1/\rho)$, the quantity used in
Theorem~\ref{thm:affine-integrated}.

The appendix is organized as follows. Section~\ref{sec:relative-minimax}
proves the lower bound of Theorem~\ref{thm:affine-integrated} by
reducing an SGC oracle to the exact-oracle construction of
Appendix~\ref{app:affine}. Sections~\ref{pv:serial-subsection}
and~\ref{pv:algorithm-subsection} analyze three algorithms: serial
extragradient, extragradient with common queries, and local
extragradient with worker averaging. Section~\ref{pv:comparison-subsection}
chooses the local horizon and proves Theorem~\ref{thm:affine-integrated}.
Section~\ref{an:section} treats mixed noise and proves
Theorem~\ref{thm:affine-additive-main}.

\subsection{A lower bound from random masks}
\label{sec:relative-minimax}

We call an oracle \emph{masked with probability $p$} if at every call an
independent indicator $B_{\rm mask}\sim\operatorname{Bernoulli}(p)$
decides the response: if $B_{\rm mask}=1$, the response is the exact
value of the operator, transformed by a known invertible map; if
$B_{\rm mask}=0$, it is a known constant that does not depend on the
instance. The indicators are independent of the instance and of the
algorithm's randomness. A call with $B_{\rm mask}=1$ is called
\emph{informative}.

\begin{lemma}[Simulation by exact batches]\label{lem:clock:simulation}
\label{lem:masked:simulation}
Consider an algorithm of Definition~\ref{def:ic-main} with a masked
oracle, and assume without loss of generality that every worker makes
exactly $K$ calls in every round. Let $Z_{i,r}$ be the number of
informative calls of worker $i$ in round $r$, so that
$Z_{i,r}\stackrel{\rm iid}{\sim}\operatorname{Bin}(K,p)$, and set
\[
 S_{\rm info}=\sum_{r=1}^N\max_{1\le i\le m}Z_{i,r},\qquad
 T_{\rm mask}=\min\bigl\{KN,\ \max\{1,\lceil2N\beta_p\rceil\}\bigr\}.
\]
\begin{enumerate}[label=(\roman*),leftmargin=*]
\item Conditionally on the indicators, the algorithm can be simulated
by an algorithm that queries the exact operator in $S_{\rm info}$ consecutive
batches of at most $m$ simultaneous queries each.
\item $\Prob\{S_{\rm info}\le T_{\rm mask}\}\ge1/2$.
\item Suppose that a fixed distribution of instances gives expected loss
at least $\ell_{\rm dep}\ge0$ to every randomized algorithm that uses at most
$T_{\rm mask}$
batches of at most $m$ exact queries, and that the loss is nonnegative.
Then the same distribution, combined with independent indicators, gives
expected loss at least $\ell_{\rm dep}/2$ to every algorithm of
Definition~\ref{def:ic-main}.
\end{enumerate}
\end{lemma}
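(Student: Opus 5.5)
For (i) I will build the simulator round by round, conditionally on the realized indicators. Since the indicators are independent of the instance and of the algorithm's randomness, the simulator may draw them in advance together with the algorithm's seed. Non-informative responses are known constants, so every worker can replay its local computation without querying the operator until its next informative call. Within round $r$, worker $i$ makes $Z_{i,r}$ informative calls. Its $k$th informative query depends only on the shared state at the start of the round, the seed, the indicators, and the answers to its earlier $k-1$ informative calls, because the non-informative answers in between are known.

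The simulator therefore uses $\max_i Z_{i,r}$ global batches for round $r$. In batch $k$ it submits, for every worker with $Z_{i,r}\ge k$, that worker's $k$th informative query; workers without one are padded with zero queries. It applies the known invertible transformation to each exact answer and passes the result to the corresponding worker. At the round boundary it forms the messages, which costs no queries. Summing over rounds gives $S_{\rm info}$ batches of width at most $m$. Because sharing extra information across workers is allowed in the simulator, as in Corollary~\ref{cor:dimref:depth}, the transcript has exactly the original law.

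For (ii), note that $\E S_{\rm info}=N\beta_p$ by the definition of $\beta_p$, since the rounds are identically distributed. Markov's inequality gives $\Prob\{S_{\rm info}>2N\beta_p\}\le1/2$, so $\Prob\{S_{\rm info}\le\lceil2N\beta_p\rceil\}\ge1/2$. The remaining caps are trivial. Each round contributes at most $K$, so $S_{\rm info}\le KN$ always. The cap $\max\{1,\cdot\}$ only enlarges the threshold; if $\lceil2N\beta_p\rceil=0$, then $\beta_p=0$, which is impossible since $p>0$, and in any case enlarging is harmless. Hence $\Prob\{S_{\rm info}\le T_{\rm mask}\}\ge1/2$.

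Part (iii) is the delicate step. The depth budget depends on the indicators, but the hypothesis concerns algorithms with a fixed budget $T_{\rm mask}$. I will construct a single randomized exact-query algorithm $\mathcal B$ with at most $T_{\rm mask}$ batches. $\mathcal B$ samples the indicators internally, which is allowed because they are independent of the instance, and runs the simulator from (i). If the simulation needs more than $T_{\rm mask}$ batches, $\mathcal B$ stops and outputs an arbitrary measurable point, say zero, padding the remaining batches.

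Let $\mathcal E=\{S_{\rm info}\le T_{\rm mask}\}$. On $\mathcal E$ the output of $\mathcal B$ equals the original algorithm's output. By the hypothesis and nonnegativity, $\ell_{\rm dep}\le\E[\mathrm{loss}(\mathcal B)]=\E[\mathrm{loss}_{\rm orig}\ind_{\mathcal E}]+\E[\mathrm{loss}(\mathcal B)\ind_{\mathcal E^c}]$. This bound runs in the wrong direction, so I will modify $\mathcal B$ instead. Let $\mathcal B$ succeed only on $\mathcal E$ and, on $\mathcal E^c$, resample fresh indicators until the realized count is at most $T_{\rm mask}$. Conditioning on $\mathcal E$ depends only on the indicators, which are independent of the instance, so $\mathcal B$ is a legitimate randomized algorithm.

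Its output law equals the original output law conditioned on $\mathcal E$. Therefore $\ell_{\rm dep}\le\E[\mathrm{loss}_{\rm orig}\mid\mathcal E]\le\E[\mathrm{loss}_{\rm orig}]/\Prob(\mathcal E)\le2\,\E[\mathrm{loss}_{\rm orig}]$, using nonnegativity of the loss and (ii). The resampling terminates almost surely since $\Prob(\mathcal E)\ge1/2$. Measurability of the resulting rules is routine.
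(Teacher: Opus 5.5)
Your proposal is correct and follows the paper's proof: the same batching of the $j$th informative calls of all workers within a round for (i), and the same Markov argument with the trivial cap $S_{\rm info}\le KN$ for (ii). For (iii) the paper argues pointwise: it fixes each indicator realization in $\{S_{\rm info}\le T_{\rm mask}\}$, applies the hypothesis to the simulator with those indicators hardwired to obtain $\E[\mathrm{loss}\mid\text{indicators}]\ge\ell_{\rm dep}$ there, and integrates. Your single algorithm that samples the indicators from their conditional law on that event is just the mixture of these hardwired simulators, so the two arguments are equivalent, and the paper's version needs no rejection step.
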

\begin{proof}
\emph{(i)} Reveal all indicators to the algorithm before it starts.
They are independent of the instance, so this can only help the
algorithm, and from now on every non-informative response is known in
advance. Fix the indicators and a round $r$. For $j\ge1$, the query of
the $j$th informative call of worker $i$ is a function of
\begin{itemize}[leftmargin=*,nosep]
\item the information shared at the start of round $r$,
\item the algorithm's randomness and the indicators,
\item the first $j-1$ informative responses of worker $i$ in round $r$,
\end{itemize}
because all non-informative responses in between are known constants.
Put the $j$th informative calls of all workers into one batch. All
queries of batch $j$ are then determined before any response of batch
$j$ is revealed, as required. Round $r$ needs
$\max_iZ_{i,r}$ batches, after which the whole transcript of the round
is known and the communication at its end can be reproduced. Over $N$
rounds the simulation uses $S_{\rm info}$ batches.

\emph{(ii)} By definition of $\beta_p$,
$\E S_{\rm info}=\sum_{r=1}^N\E\max_iZ_{i,r}=N\beta_p$, and
$S_{\rm info}\le KN$. If $T_{\rm mask}=KN$, then
$\Prob\{S_{\rm info}\le T_{\rm mask}\}=1$. Otherwise
$T_{\rm mask}\ge2N\beta_p$, and Markov's
inequality gives
\[
 \Prob\{S_{\rm info}>T_{\rm mask}\}
 \le\frac{\E S_{\rm info}}{T_{\rm mask}}
 \le\frac{N\beta_p}{2N\beta_p}=\frac12.
\]

\emph{(iii)} Fix the indicators. On the event
$\{S_{\rm info}\le T_{\rm mask}\}$, pad the simulator of~(i) with empty
batches to exactly $T_{\rm mask}$ batches; this is a randomized algorithm
with at most $T_{\rm mask}$ batches of at most $m$ exact
queries. The instance distribution does not depend on the indicators,
so the assumption applies to it:
\[
 \E[\text{loss}\mid\text{indicators}]\ge \ell_{\rm dep}
 \qquad\text{on }\{S_{\rm info}\le T_{\rm mask}\}.
\]
Since the loss is nonnegative, (ii) gives
$\E[\text{loss}]\ge
\ell_{\rm dep}\Prob\{S_{\rm info}\le T_{\rm mask}\}
\ge \ell_{\rm dep}/2$.
\end{proof}

\begin{lemma}[Lower bound under SGC]\label{lem:relmm:masked}
Let
\begin{equation*}
 T_{\rm mask}=\min\bigl\{KN,\ \max\{1,\lceil2N\beta\rceil\}\bigr\},\qquad
 d_{\rm hard}=d_{\rm ref}(m,T_{\rm mask})\le1024\,m T_{\rm mask}^2,
\end{equation*}
with $d_{\rm ref}$ from Corollary~\ref{cor:dimref:depth}. There is a
fixed distribution of monotone affine $L$-Lipschitz operators on
$\mathbb R^{d_{\rm hard}}$, each with a unique root at distance $R_0$
from the initial point, and an oracle satisfying SGC with constant
$\rho$, such that every algorithm of Definition~\ref{def:ic-main}
satisfies
\begin{equation}\label{pv:minimax-comparison}
 \E\Gap_\star^{\rm M}(\widehat x)
 \ge\frac{LR_0^2}{12T_{\rm mask}}
 \ge\frac{LR_0^2}{96(1+N\InfoDepth)}.
\end{equation}
\end{lemma}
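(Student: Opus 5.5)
We combine the exact-oracle depth lower bound of Corollary~\ref{cor:dimref:depth} with the masking reduction of Lemma~\ref{lem:masked:simulation}. Take the fixed distribution of Corollary~\ref{cor:dimref:depth} with $T_{\rm depth}=T_{\rm mask}$ in dimension $d_{\rm ref}(m,T_{\rm mask})$. These fields $G_A$ are monotone, $L$-Lipschitz, and have a unique root of norm $R_0$; we place the initial point at $x_0=0$. The oracle is masked with $p=1/\rho$: with probability $1/\rho$ it returns $\rho G_A(x)$, which is the exact value transformed by the known invertible map $v\mapsto\rho v$; otherwise it returns the constant $0$. Unbiasedness is immediate, and $\E\|g\|^2=\rho^{-1}\rho^2\|G_A(x)\|^2=\rho\|G_A(x)\|^2$, so SGC holds with equality and independently of the history. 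The masks are independent of the instance and of the algorithm, as Lemma~\ref{lem:masked:simulation} requires.

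By Corollary~\ref{cor:dimref:depth}, every randomized algorithm using at most $T_{\rm mask}$ batches of at most $m$ exact queries has $\E\Gap_\star^{\rm M}\ge LR_0^2/(6T_{\rm mask})$. This gap is a nonnegative loss, since it equals $R_0\|G_A(\widehat x)\|$ by the gap identity. Batches of fewer than $m$ queries are padded with zeros. Lemma~\ref{lem:masked:simulation}(iii) then transfers half of this bound to every admissible algorithm, which gives the first inequality $LR_0^2/(12T_{\rm mask})$. The dimension bound $d_{\rm ref}\le1024mT_{\rm mask}^2$ is taken directly from the corollary.

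For the second inequality we need $T_{\rm mask}\le8(1+N\InfoDepth)$. We have $T_{\rm mask}\le\max\{1,\lceil2N\beta\rceil\}\le1+2N\beta+1$. Theorem~\ref{thm:clock:asymptotic} with $p=1/\rho$ gives $\beta\le4\BinDepth(m,K,1/\rho)$. Here we must check that $\BinDepth(m,K,1/\rho)$ coincides with $\InfoDepth$: the $\log$ argument $\log(2m)/(Kp)=\rho\log(2m)/K$ matches. Hence $T_{\rm mask}\le2+8N\InfoDepth\le8(1+N\InfoDepth)$, and $12\cdot8=96$ yields the stated constant.

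The only delicate point is the masking and SGC compatibility. The transformed informative response must be known-invertible, so that Lemma~\ref{lem:masked:simulation}(i) applies, and the non-informative response must be instance-independent; the choice of the response $0$ and the scaling $\rho$ handles both. Everything else is bookkeeping of constants.
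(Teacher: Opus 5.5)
Your proposal is correct and follows the paper's proof essentially step by step. You take the exact-oracle distribution of Corollary~\ref{cor:dimref:depth} with $T_{\rm depth}=T_{\rm mask}$, use the masked oracle $\rho B_{\rm mask}G$ with $B_{\rm mask}\sim\operatorname{Bernoulli}(1/\rho)$, apply Lemma~\ref{lem:masked:simulation}(iii) to get the factor $1/2$, and use $\beta\le4\InfoDepth$ for the second inequality. The differences are cosmetic: you justify nonnegativity of the gap through the identity $\Gap_\star^{\rm M}=R_0\|G_A(\widehat x)\|$ rather than by noting that the root is an admissible comparator, and your estimate $T_{\rm mask}\le2+2N\beta$ is slightly looser but still gives the constant $96$.
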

\begin{proof}
Translate the initial point to $0$ and take the distribution of
Corollary~\ref{cor:dimref:depth} with $T_{\rm depth}=T_{\rm mask}$.
Against every randomized algorithm with at most $T_{\rm mask}$ batches
of $m$ exact queries it gives
$\E\Gap_\star^{\rm M}(\widehat x)\ge LR_0^2/(6T_{\rm mask})$. At every call, the oracle
returns
\[
 \widehat G(x)=\rho B_{\rm mask}G(x),\qquad
 B_{\rm mask}\sim\operatorname{Bernoulli}(1/\rho)
 \text{ independent of everything else}.
\]
It is unbiased and satisfies SGC with equality:
\[
 \E[\widehat G(x)\mid\mathcal F]=\rho\cdot\frac1\rho\,G(x)=G(x),\qquad
 \E[\|\widehat G(x)\|^2\mid\mathcal F]=\rho^2\cdot\frac1\rho\,\|G(x)\|^2
 =\rho\|G(x)\|^2.
\]
This oracle is masked with probability $p=1/\rho$, with the known
invertible map $v\mapsto\rho v$ and the constant $0$. The gap is
nonnegative, because the root is an admissible comparator and
$G(x_\star)=0$. Lemma~\ref{lem:clock:simulation}(iii) with
$\ell_{\rm dep}=LR_0^2/(6T_{\rm mask})$ and
$\beta_{1/\rho}=\beta$ gives the first inequality
in~\eqref{pv:minimax-comparison}. For the second, $\lceil x\rceil\le x+1$
gives $T_{\rm mask}\le\max\{1,2N\beta+1\}\le2(1+N\beta)$, and
Theorem~\ref{thm:clock:asymptotic} gives $\beta\le4\InfoDepth$. Hence
$12T_{\rm mask}\le24(1+N\beta)\le96(1+N\InfoDepth)$.
\end{proof}

\subsection{Serial and common-query extragradient}
\label{pv:serial-subsection}

\begin{lemma}[Serial and common-query bounds]\label{lem:relmm:upper}
There is a universal constant $C_0\ge6$ such that, under SGC and in
every dimension, an algorithm of Definition~\ref{def:ic-main} satisfies
\begin{equation}\label{eq:relmm:sharp-upper}
 \E\Gap_\star^{\rm M}(\widehat x)
 \le LR_0^2\min\left\{1,\
       \frac{C_0\rho}{NK},\
       \frac{C_0}{N}\Bigl(1+\frac{\rho}{mK}\Bigr)\right\}.
\end{equation}
\end{lemma}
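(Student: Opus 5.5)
This lemma specializes the nonlinear upper bounds of Appendix~\ref{vi:appendix} to the affine class. The plan is to return the best of the initial point, the serial schedule and the common-query schedule of~\eqref{vi:all-budget-schedules}, chosen by their bounds, and then translate the Minty-gap guarantee of Corollary~\ref{vi:all-budget} to the root-centered gap~\eqref{eq:aff:self-contained-gap}.

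The affine operator $G(x)=J(x-x_\star)$ with $J+J^\top\succeq0$ and $\|J\|_{\rm op}\le L$ is monotone and $L$-Lipschitz, so Assumption~\ref{ass:monotone-lipschitz} holds. The oracle satisfies Assumption~\ref{ass:mixed} with $\sigma=0$. The test set of $\Gap_\star^{\rm M}$ is $C=B(x_\star,R_0)$; it contains the root but is centered at $x_\star$, not at $x_0$. Since $\|x_0-x_\star\|\le R_0$, the triangle inequality gives $C\subseteq B(x_0,2R_0)$, so every result of Appendix~\ref{vi:appendix} applies with $D=2R_0$. The algorithm never needs to know $C$, only $D$, and $D=2R_0$ is known.

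\emph{Initial point.} Monotonicity and $G(x_\star)=0$ give $\langle G(u),x_0-u\rangle\le\langle G(x_0),x_0-u\rangle$. For $u\in C$ one has $\|x_0-u\|\le2R_0$ and $\|G(x_0)\|\le LR_0$, so $\Gap_\star^{\rm M}(x_0)\le 2LR_0^2$. A sharper constant comes from writing $u=x_\star+h$ with $\|h\|\le R_0$: then $\langle Jh,x_0-x_\star-h\rangle\le\langle Jh,x_0-x_\star\rangle\le LR_0^2$, using $\langle Jh,h\rangle\ge0$. This yields the entry $LR_0^2\cdot1$ exactly.

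\emph{The two schedules.} Under SGC, Theorem~\ref{nn:trajectory} with $\gamma=1/(4L)$ gives $\E\Gap\le 22LD^2/T$. The case analysis in Theorem~\ref{nn:all-budget} at $\sigma=0$ then yields $\E\Gap_\star^{\rm M}\le 440\cdot4\rho LR_0^2/(KN)$ for the serial schedule. For the common-query schedule it yields $440\cdot4LR_0^2/(N\min\{1,mK/\rho\})$. By the remark after~\eqref{nn:schedule-facts}, $1/\min\{1,mK/\rho\}\le1+\rho/(mK)$. Both bounds are therefore of the stated form with $C_0=1760$.

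In the short-horizon cases, where a schedule has zero steps, Theorem~\ref{nn:all-budget} falls back to the initial point; that fallback is again dominated by the same expressions, since $C_0\ge6$. All three bounds depend only on known quantities ($L$, $R_0$, $\rho$, $m$, $K$, $N$), so the selection among them is admissible and the output attains their minimum.

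The only real care point is the mismatch in centering between $B(x_\star,R_0)$ and $B(x_0,D)$. Absorbing it through $D=2R_0$ costs only a factor of $4$ in the constant.
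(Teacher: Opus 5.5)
Your proposal is correct and follows the paper's proof: you apply Theorem~\ref{nn:all-budget} with $\sigma=0$, $C=B(x_\star,R_0)\subseteq B(x_0,2R_0)$ and $D=2R_0$, use $1/\min\{1,mK/\rho\}=\max\{1,\rho/(mK)\}\le1+\rho/(mK)$ (an elementary identity, not the remark after~\eqref{nn:schedule-facts} that you cite), and obtain the entry $LR_0^2$ from the initial point by the same inequality the paper uses, since your $\langle Jh,h\rangle\ge0$ is exactly $\langle G(u),x_\star-u\rangle\le0$. Your constant $C_0=4\cdot440=1760$ coincides with the paper's $\max\{6,4C'\}$.
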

\begin{proof}
The test ball satisfies $B(x_\star,R_0)\subseteq B(x_0,2R_0)$,
because $\|x_0-x_\star\|\le R_0$. Theorem~\ref{nn:all-budget} with
$\sigma=0$, $C=B(x_\star,R_0)$ and $D=2R_0$ therefore gives
an algorithm, combining a serial and a common-query schedule, with
\[
 \E\Gap_\star^{\rm M}(\widehat x)\le4C'LR_0^2\min\left\{1,\
 \frac{\rho}{KN},\ \frac1{N\min\{1,mK/\rho\}}\right\},
\]
where $C'$ is the constant of that theorem. Since
\[
 \frac1{\min\{1,mK/\rho\}}=\max\Bigl\{1,\frac\rho{mK}\Bigr\}
 \le1+\frac\rho{mK},
\]
the last two entries are covered by~\eqref{eq:relmm:sharp-upper} with
$C_0=\max\{6,4C'\}$. The entry $LR_0^2$ is attained by the initial
point: for $\|u-x_\star\|\le R_0$, monotonicity and $G(x_\star)=0$ give
$\langle G(u),x_\star-u\rangle\le0$, hence
\[
 \langle G(u),x_0-u\rangle
 \le\langle G(u),x_0-x_\star\rangle
 \le\|G(u)\|\,\|x_0-x_\star\|
 \le L\|u-x_\star\|R_0\le LR_0^2.
\]
The algorithm outputs the candidate with the smallest bound; all
bounds depend only on known parameters.
\end{proof}

\subsection{Local extragradient with worker averaging}
\label{pv:algorithm-subsection}

Fix integers $b,T\ge1$ with $2bT\le K$ and a step size $\gamma>0$. Every
stage of the method below averages $b$ responses; we call $b$ the
batch size and $T$ the local horizon.

\begin{definition}[Local extragradient with averaging]\label{pv:definition}
Set $z_0=x_0$. In round $r=0,\ldots,N-1$, every worker $i$ starts from
$x_{r,0}^i=z_r$ and performs, for $t=0,\ldots,T-1$,
\begin{align}\label{pv:algorithm}
 y_{r,t}^i&=x_{r,t}^i-\gamma\bigl(G(x_{r,t}^i)+e_{r,t}^i\bigr),\nonumber\\
 x_{r,t+1}^i&=x_{r,t}^i-\gamma\bigl(G(y_{r,t}^i)+\delta_{r,t}^i\bigr),
\end{align}
where $G(x_{r,t}^i)+e_{r,t}^i$ and $G(y_{r,t}^i)+\delta_{r,t}^i$ are
averages of $b$ independent responses at $x_{r,t}^i$ and at $y_{r,t}^i$; the
second batch is drawn after $y_{r,t}^i$ is computed. At the end of the
round the workers communicate and set
\begin{equation*}
 z_{r+1}=\frac1m\sum_{i=1}^m x_{r,T}^i.
\end{equation*}
The output is
$\widehat y=\frac1{mNT}\sum_{r<N}\sum_{i\le m}\sum_{t<T}y_{r,t}^i$;
each worker keeps the running sum of its points $y_{r,t}^i$ and sends it
at the last communication.
\end{definition}
Each worker makes $2bT\le K$ calls per round, and the method uses the $N$
communications at the round boundaries.

Under SGC, a response at $x$ has variance at most $(\rho-1)\|G(x)\|^2$,
so a batch of $b$ independent responses has variance at most
$\alpha_{\rm rel}\|G(x)\|^2$ with
\[
 \alpha_{\rm rel}=\frac{\rho-1}b.
\]
Exact extragradient on an affine operator multiplies both $x-x_\star$ and
$G(x)$ by the matrix
\[
 M_{\rm eg}=I-\gamma J+\gamma^2J^2,
\]
and its first step multiplies them by $M_{{\rm eg},1}=I-\gamma J$. We use
$q_{\rm eg}=\gamma^2L^2$ and the two constants
\begin{equation*}
 \vartheta_{\rm grow}=q_{\rm eg}\alpha_{\rm rel}
       (1+2q_{\rm eg}+q_{\rm eg}\alpha_{\rm rel}),\qquad
 c_{\rm avg}=\alpha_{\rm rel}
       (2+3q_{\rm eg}+q_{\rm eg}\alpha_{\rm rel}).
\end{equation*}
The first controls the growth of a worker's operator norm caused by
noise; the second controls the noise that remains after averaging over
workers.

\begin{lemma}[Operator energy of one round]\label{pv:field-lemma}
Assume SGC and $0<\gamma\le1/(4L)$. Fix a round, condition on its
starting point $z$, and drop the round index. For a worker $i$, let
\[
 d_t=\|M_{\rm eg}^tG(z)\|^2,\qquad
 s_t=\E\bigl[\|G(x_t^i)\|^2\bigm|z\bigr],
\]
and let $\bar x_t=\frac1m\sum_ix_t^i$. Then $s_t$ does not depend on $i$,
\begin{equation}\label{pv:field-energy}
 \sum_{t<T}s_t\le(1+\vartheta_{\rm grow})^{T-1}\sum_{t<T}d_t,
 \qquad
 \E\bigl[\|G(\bar x_t)\|^2\bigm|z\bigr]\ge d_t,
\end{equation}
and
$\E[\|\delta_t^i\|^2\mid z]
\le \alpha_{\rm rel}(1+q_{\rm eg}+q_{\rm eg}\alpha_{\rm rel})\,s_t$.
\end{lemma}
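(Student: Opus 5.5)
The plan is to exploit that $G$ is affine: each worker's operator value then evolves by an explicit linear recursion driven by centered noise. Fix the round and the starting point $z$, and write $g_t=G(x_t^i)$. Because $G(y)=G(x)-\gamma J(x-y)$, the update~\eqref{pv:algorithm} gives
\[
 G(y_t^i)=M_{{\rm eg},1}g_t-\gamma Je_t^i,\qquad
 g_{t+1}=M_{\rm eg}g_t+\gamma^2J^2e_t^i-\gamma J\delta_t^i .
\]
The error $e_t^i$ is centered given the history. The error $\delta_t^i$ is centered given the history together with the first batch. Taking conditional expectations therefore gives $\E[g_t\mid z]=M_{\rm eg}^tG(z)$. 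Since the workers are exchangeable given $z$, $s_t$ does not depend on $i$. Since $G(\bar x_t)=\frac1m\sum_i g_t^i$, Jensen's inequality yields $\E[\|G(\bar x_t)\|^2\mid z]\ge\|M_{\rm eg}^tG(z)\|^2=d_t$, which is the second claim in~\eqref{pv:field-energy}.

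Next I need two norm facts. First, $\|I-\gamma J\|_{\rm op}^2\le1+q_{\rm eg}$: expanding $\|v-\gamma Jv\|^2$ and using $\langle v,Jv\rangle\ge0$ leaves $\|v\|^2+\gamma^2\|Jv\|^2$. Second, $\|M_{\rm eg}\|_{\rm op}\le1$: apply Lemma~\ref{nn:identity-lemma} with $e=\delta=0$, $F=G$ and $u=x_\star$. Monotonicity makes the left side nonnegative, and~\eqref{nn:pathwise-residual} with $q_{\rm eg}\le1/16$ makes the residual nonpositive, so exact extragradient does not increase $\|x-x_\star\|$. Because $x-x_\star$ ranges over all vectors, $M_{\rm eg}$ is nonexpansive. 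In particular $d_t$ is nonincreasing.

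Now decompose $g_t=M_{\rm eg}^tG(z)+w_t$, where $w_t$ is conditionally centered, so that $s_t=d_t+W_t$ with $W_t=\E[\|w_t\|^2\mid z]$. The recursion reads $w_{t+1}=M_{\rm eg}w_t+\gamma^2J^2e_t-\gamma J\delta_t$. All cross terms vanish in expectation: the noise terms are centered against $w_t$, and $\delta_t$ is centered given $e_t$. With SGC this gives $\E\|e_t\|^2\le\alpha_{\rm rel}s_t$ and
\[
 \E\|\delta_t\|^2\le\alpha_{\rm rel}\E\|G(y_t)\|^2
 \le\alpha_{\rm rel}\bigl((1+q_{\rm eg})s_t+q_{\rm eg}\alpha_{\rm rel}s_t\bigr),
\]
which is the stated corrector bound. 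Combining these with nonexpansiveness,
\[
 W_{t+1}\le W_t+q_{\rm eg}^2\alpha_{\rm rel}s_t+q_{\rm eg}\alpha_{\rm rel}(1+q_{\rm eg}+q_{\rm eg}\alpha_{\rm rel})s_t=W_t+\vartheta_{\rm grow}s_t .
\]
Since $W_0=0$, this gives $s_t\le d_t+\vartheta_{\rm grow}\sum_{j<t}s_j$.

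The main obstacle is converting this discrete Gronwall inequality into the sum bound with exactly the factor $(1+\vartheta_{\rm grow})^{T-1}$. Set $P_t=\sum_{j<t}s_j$. The inequality gives $P_{t+1}\le(1+\vartheta_{\rm grow})P_t+d_t$, and unrolling yields $P_T\le\sum_{t<T}(1+\vartheta_{\rm grow})^{T-1-t}d_t\le(1+\vartheta_{\rm grow})^{T-1}\sum_{t<T}d_t$, which is the first claim in~\eqref{pv:field-energy}. If the unrolling produces an index shift in the $d_t$, the monotonicity of $d_t$ established above absorbs it. Finally, I would check integrability of all second moments by induction, as in Theorem~\ref{nn:trajectory}.
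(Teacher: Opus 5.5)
Your proposal is correct and follows the paper's proof essentially step for step: the same affine recursion $G(x_{t+1})=M_{\rm eg}G(x_t)+\gamma^2J^2e_t-\gamma J\delta_t$ whose martingale noise has second moment at most $\vartheta_{\rm grow}\|G(x_t)\|^2$, the same discrete Gronwall argument, and Jensen's inequality for the averaged iterate. The one difference is how you get $\|M_{\rm eg}\|_{\rm op}\le1$: you take the exact-extragradient case of Lemma~\ref{nn:identity-lemma}, whereas the paper computes directly with $w=M_{{\rm eg},1}v$; both are valid. Your hedge about an index shift is unnecessary, because $P_{t+1}\le(1+\vartheta_{\rm grow})P_t+d_t$ with $P_0=0$ unrolls exactly to $\sum_{t<T}(1+\vartheta_{\rm grow})^{T-1-t}d_t$.
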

\begin{proof}
\emph{Two matrix inequalities.} For every $v$, since
$\langle Jv,v\rangle\ge0$ and $\|Jv\|\le L\|v\|$,
\begin{equation}\label{pv:H}
 \|M_{{\rm eg},1}v\|^2
 =\|v\|^2-2\gamma\langle Jv,v\rangle+\gamma^2\|Jv\|^2
 \le(1+q_{\rm eg})\|v\|^2.
\end{equation}
Moreover $M_{\rm eg}=I-\gamma JM_{{\rm eg},1}$. Put
$w=M_{{\rm eg},1}v$, so that $v=w+\gamma Jv$ and
$M_{\rm eg}v=v-\gamma Jw$. Then
\[
 \|M_{\rm eg}v\|^2
 =\|v\|^2-2\gamma\langle Jw,w\rangle-2\gamma^2\langle Jw,Jv\rangle
  +\gamma^2\|Jw\|^2
 =\|v\|^2-2\gamma\langle Jw,w\rangle-\gamma^2\|Jv\|^2
  +\gamma^2\|Jw-Jv\|^2.
\]
Since $Jw-Jv=-\gamma J(Jv)$, we have $\|Jw-Jv\|\le\gamma L\|Jv\|$, and
$\langle Jw,w\rangle\ge0$ gives
\begin{equation}\label{pv:P}
 \|M_{\rm eg}v\|^2
 \le\|v\|^2-\gamma^2(1-q_{\rm eg})\|Jv\|^2\le\|v\|^2.
\end{equation}

\emph{One step.} Fix a worker, drop the index $i$, write $F_t=G(x_t)$,
and let $\E_t$ denote conditional expectation given the history before
the first batch of step $t$. The first batch gives $\E_te_t=0$ and
$\E_t\|e_t\|^2\le \alpha_{\rm rel}\|F_t\|^2$. Since $G$ is affine,
\[
 G(y_t)=F_t-\gamma J(F_t+e_t)=M_{{\rm eg},1}F_t-\gamma Je_t,
\]
and~\eqref{pv:H} yields
\begin{equation}\label{pv:corrector}
 \E_t\|G(y_t)\|^2
 =\|M_{{\rm eg},1}F_t\|^2+\gamma^2\E_t\|Je_t\|^2
 \le(1+q_{\rm eg}+q_{\rm eg}\alpha_{\rm rel})\|F_t\|^2.
\end{equation}
The second batch is centered given $y_t$, and its variance is at most
$\alpha_{\rm rel}\|G(y_t)\|^2$; hence
$\E_t\|\delta_t\|^2
\le \alpha_{\rm rel}(1+q_{\rm eg}+q_{\rm eg}\alpha_{\rm rel})
\|F_t\|^2$. Taking
the expectation given $z$ proves the last claim of the lemma.

\emph{Recursion for the operator.} By~\eqref{pv:algorithm},
\begin{equation}\label{pv:field-recursion}
 F_{t+1}=F_t-\gamma J\bigl(G(y_t)+\delta_t\bigr)
 =M_{\rm eg}F_t+\zeta_t,\qquad
 \zeta_t=\gamma^2J^2e_t-\gamma J\delta_t.
\end{equation}
We have $\E_t\zeta_t=0$. Because $\delta_t$ is centered given $e_t$, the
two terms of $\zeta_t$ are orthogonal in $L^2$, and
\begin{equation}\label{pv:field-variance}
 \E_t\|\zeta_t\|^2
 \le q_{\rm eg}^2\alpha_{\rm rel}\|F_t\|^2
 +q_{\rm eg}\alpha_{\rm rel}
   (1+q_{\rm eg}+q_{\rm eg}\alpha_{\rm rel})\|F_t\|^2
 =\vartheta_{\rm grow}\|F_t\|^2.
\end{equation}

\emph{Summation.} Unrolling~\eqref{pv:field-recursion} gives
$F_t=M_{\rm eg}^tG(z)+\sum_{j<t}M_{\rm eg}^{t-1-j}\zeta_j$.
The terms $\zeta_j$ are martingale differences, and
$M_{\rm eg}^tG(z)$ is fixed given $z$, so all cross terms vanish in
expectation. Using $\|M_{\rm eg}\|_{\rm op}\le1$
from~\eqref{pv:P} and~\eqref{pv:field-variance},
\[
 s_t=d_t+\sum_{j<t}\E\bigl[\|M_{\rm eg}^{t-1-j}\zeta_j\|^2\bigm|z\bigr]
 \le d_t+\vartheta_{\rm grow}\sum_{j<t}s_j.
\]
(The second moments are finite by induction on $t$, since every step is
affine in the previous iterate and the noise.) We claim that
\[
 s_t\le u_t:=d_t+\vartheta_{\rm grow}
       \sum_{j<t}(1+\vartheta_{\rm grow})^{t-1-j}d_j .
\]
For $t=0$ this is $s_0=d_0$. If it holds for all $j<t$, then
$s_t\le d_t+\vartheta_{\rm grow}\sum_{j<t}u_j$, and
\[
 \sum_{j<t}u_j
 =\sum_{j<t}d_j\Bigl(1+\vartheta_{\rm grow}
       \sum_{l=j+1}^{t-1}(1+\vartheta_{\rm grow})^{l-1-j}\Bigr)
 =\sum_{j<t}(1+\vartheta_{\rm grow})^{t-1-j}d_j,
\]
which gives $s_t\le u_t$. The same identity with $t=T$ yields
\[
 \sum_{t<T}s_t\le\sum_{t<T}u_t
 =\sum_{j<T}(1+\vartheta_{\rm grow})^{T-1-j}d_j
 \le(1+\vartheta_{\rm grow})^{T-1}\sum_{j<T}d_j.
\]

\emph{The averaged iterate.} The noise is centered and the updates are
affine, so $\E[x_t^i-x_\star\mid z]=M_{\rm eg}^t(z-x_\star)$ for every $i$.
Since $J$ commutes with $M_{\rm eg}$,
$\E[G(\bar x_t)\mid z]
=JM_{\rm eg}^t(z-x_\star)=M_{\rm eg}^tG(z)$, and Jensen's
inequality gives $\E[\|G(\bar x_t)\|^2\mid z]\ge d_t$.
\end{proof}

\begin{lemma}[Averaging absorbs the relative noise]\label{pv:block}
Assume SGC, $0<\gamma\le1/(4L)$, and
\begin{equation}\label{pv:condition}
 \frac{c_{\rm avg}(1+\vartheta_{\rm grow})^{T-1}}{m}
 \le\frac{1-2q_{\rm eg}}2.
\end{equation}
Then the output of Definition~\ref{pv:definition} satisfies
\begin{equation}\label{pv:block-bound}
 \E\Gap_\star^{\rm M}(\widehat y)\le\frac{3R_0^2}{\gamma NT}.
\end{equation}
For $\gamma=1/(4L)$ the bound is $12LR_0^2/(NT)$, and
condition~\eqref{pv:condition} reads
\begin{equation}\label{pv:explicit-condition}
 \alpha_{\rm rel}(35+\alpha_{\rm rel})
 \left(1+\frac{\alpha_{\rm rel}(18+\alpha_{\rm rel})}{256}\right)^{T-1}
 \le7m.
\end{equation}
\end{lemma}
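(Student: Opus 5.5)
The plan is to run the single-trajectory regret argument of Theorem~\ref{nn:trajectory} on the averaged iterates, and to use Lemma~\ref{pv:field-lemma} to show that the noise surviving averaging is absorbed by the negative operator-norm term. Since $G$ is affine, $G(\widehat y)$ equals the average of the $G(y_{r,t}^i)$. The gap identity from the proof of Corollary~\ref{cor:dimref:depth} gives $\Gap_\star^{\rm M}(x)\le\sup_{\|h\|\le R_0}\langle G(x_\star+h),x-x_\star-h\rangle$, and monotonicity gives $\langle G(u),y-u\rangle\le\langle G(y),y-u\rangle$. So it suffices to bound $\E\sup_u\frac1{mNT}\sum_{r,i,t}\langle G(y_{r,t}^i),y_{r,t}^i-u\rangle$ over the ball $\|u-x_\star\|\le R_0$.

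\textbf{Per-worker identity.} For each worker, apply the identity~\eqref{nn:identity} of Lemma~\ref{nn:identity-lemma} with comparator $u$ and sum over $t<T$. This gives $\|z_r-u\|^2-\E\|x_{r,T}^i-u\|^2$, plus the residual terms, minus the cross terms $2\gamma\langle\delta_{r,t}^i,\cdot\rangle$.

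\textbf{Averaging over workers.} Average these bounds over $i$. Convexity gives $\frac1m\sum_i\|x_{r,T}^i-u\|^2=\|z_{r+1}-u\|^2+\frac1m\sum_i\|x_{r,T}^i-z_{r+1}\|^2$. The extra spread term is nonnegative and can be dropped, so the distances telescope across rounds to $\|x_0-u\|^2\le4R_0^2$.

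\textbf{The $u$-dependent noise.} The only part depending on $u$ through the noise is $2\gamma\langle\frac1m\sum_i\sum_t\delta_{r,t}^i,u-z_r\rangle$. Write $u-z_r=(u-x_\star)+(x_\star-z_r)$.
\begin{itemize}
\item The second piece has zero mean.
\item The first is at most $2\gamma R_0\|\frac1m\sum_i S_r^i\|$. Its square has expectation at most $\frac1{m}\E\|\delta\|^2$-sums, since the workers are independent given $z_r$.
\end{itemize}
By Lemma~\ref{pv:field-lemma} this variance is at most $\frac{\alpha_{\rm rel}(1+\dots)}m(1+\vartheta_{\rm grow})^{T-1}\sum_t d_t$. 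A Young inequality then turns the term into $R_0^2$ plus a multiple of $\frac{\gamma^2c_{\rm avg}(1+\vartheta_{\rm grow})^{T-1}}{m}\sum_t d_t$. The $e$-part of the residual contributes to $c_{\rm avg}$ in the same way.

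\textbf{Absorbing the noise.} The residual bound~\eqref{nn:pathwise-residual} supplies $-(1-2q_{\rm eg})\gamma^2\|F_x+e\|^2$. Its expectation is at least $(1-2q_{\rm eg})\gamma^2 s_t\ge(1-2q_{\rm eg})\gamma^2 d_t$; here I use $s_t\ge d_t$, which holds by Jensen as in the lemma. Condition~\eqref{pv:condition} is precisely what lets this negative term dominate the noise. The remaining positive residual pieces $2\gamma^2\|e\|^2+\gamma^2\|\delta\|^2$ are per-worker noise, not averaged. I would group them with the negative term using the $s_t$-version: their expectation is at most $\alpha_{\rm rel}(\dots)s_t$.

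\textbf{Main obstacle.} I expect the main obstacle to be the bookkeeping in this last step. The negative term controls $s_t$, while the averaged noise is only bounded via $\sum_t d_t$ times the growth factor, so the two must be matched with the right constants. I would handle the per-worker noise by absorbing it into the $-\tfrac7{128}$-type term as in Lemma~\ref{nn:one-step}, valid since $\alpha_{\rm rel}\le1/4$. I would reserve half of $(1-2q_{\rm eg})\gamma^2d_t$ for the averaged noise, which is where the factor $2$ in~\eqref{pv:condition} comes from.

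\textbf{Conclusion.} Summing over rounds gives $2\gamma mNT\cdot\E\Gap\le m(4R_0^2+2R_0^2)$, hence~\eqref{pv:block-bound}. Substituting $\gamma=1/(4L)$ gives $q_{\rm eg}=1/16$, and expanding $c_{\rm avg}$ and $\vartheta_{\rm grow}$ yields~\eqref{pv:explicit-condition}.
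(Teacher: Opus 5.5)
Your argument breaks where you absorb the per-worker residual noise. Once you drop the spread term $\frac1m\sum_i\|x^i_{r,T}-z_{r+1}\|^2$ and work with per-worker residuals, the positive pieces $2\gamma^2\|e^i_{r,t}\|^2+\gamma^2\|\delta^i_{r,t}\|^2$ of \eqref{nn:pathwise-residual} are \emph{not} averaged over workers. By Lemma~\ref{pv:field-lemma}, the net per-worker residual is bounded only by $\gamma^2\bigl(c_{\rm avg}-(1-2q_{\rm eg})\bigr)s_t$, with no factor $1/m$. This is not slack in that bound. For the masked oracle $\rho B_{\rm mask}G$ with $b=1$, the exact one-worker residual $\gamma^2\mathcal R_{\rm eg}$ of \eqref{nn:identity} has expectation at least $\gamma^2\bigl(\tfrac9{16}\alpha_{\rm rel}-1\bigr)s_t$, because it contains $+\E\|\delta^i_{r,t}\|^2=\alpha_{\rm rel}\E\|G(y^i_{r,t})\|^2$. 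For $\gamma=1/(4L)$ one has $c_{\rm avg}=\alpha_{\rm rel}(35+\alpha_{\rm rel})/16$, which exceeds $1-2q_{\rm eg}=7/8$ as soon as $\alpha_{\rm rel}\ge2/5$. Then nothing on your right-hand side can absorb this term.

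You close the step with ``valid since $\alpha_{\rm rel}\le1/4$'', but that is not a hypothesis of the lemma. Here $b$ is constrained only by $2bT\le K$, and \eqref{pv:explicit-condition} with $T=1$ only requires $\alpha_{\rm rel}(35+\alpha_{\rm rel})\le7m$, i.e.\ $\alpha_{\rm rel}$ of order $\sqrt m$. That regime is exactly where the lemma is used: in Lemma~\ref{pv:comparison} only $\alpha_{\rm rel}\le4T\rho/K$ is available. If $\alpha_{\rm rel}\le1/4$ held, each worker could be analyzed on its own and \eqref{pv:condition} would play no role. A symptom of the problem is that in your bookkeeping the factor $(1+\vartheta_{\rm grow})^{T-1}$ is superfluous, since your negative term already controls $s_t$ rather than $d_t$.

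The missing idea is to apply Lemma~\ref{nn:identity-lemma} to the \emph{averaged} trajectory. Because $G$ is affine, $\bar y_t=\bar x_t-\gamma\bigl(G(\bar x_t)+\bar e_t\bigr)$, $\bar x_{t+1}=\bar x_t-\gamma\bigl(G(\bar y_t)+\bar\delta_t\bigr)$ is one extragradient step with errors $\bar e_t,\bar\delta_t$. Since $\bar x_{r,0}=z_r$, $\bar x_{r,T}=z_{r+1}$, and $\widehat y$ is the average of the $\bar y_{r,t}$, the distances telescope with no spread term. Monotonicity then bounds the gap by the averaged regret $\langle G(\bar y_{r,t}),\bar y_{r,t}-u\rangle$.

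Every noise term in the residual now involves $\bar e_t$ or $\bar\delta_t$, which are averages of $m$ conditionally independent centered errors. Hence $\E[\overline{\mathcal R}_{{\rm eg},t}\mid z]\le\gamma^2\bigl(-(1-2q_{\rm eg})d_t+c_{\rm avg}s_t/m\bigr)$, where the negative term is only $d_t$, by Jensen applied to $\bar x_t$. This is exactly where $\sum_ts_t\le(1+\vartheta_{\rm grow})^{T-1}\sum_td_t$ and \eqref{pv:condition} enter, giving $\E\sum_t\overline{\mathcal R}_{{\rm eg},t}\le-\tfrac12(1-2q_{\rm eg})\gamma^2\sum_td_t$.

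Taking $u=x_\star$ gives $\tfrac12(1-2q_{\rm eg})\gamma^2\sum_{r,t}\E\|M_{\rm eg}^tG(z_r)\|^2\le R_0^2$. Hence $\E\|\sum_{r,t}\bar\delta_{r,t}\|^2\le R_0^2/\gamma^2$, and finally $2\gamma NT\,\E\Gap_\star^{\rm M}(\widehat y)\le4R_0^2+2R_0^2$.

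Working with the averaged regret also matters on its own. For affine $G$, the per-worker regret you bound exceeds it by $\frac1m\sum_i\langle J(y^i_{r,t}-\bar y_{r,t}),y^i_{r,t}-\bar y_{r,t}\rangle\ge0$, which vanishes only when $J$ is skew. So even if you kept the spread term, your route would still have to control this extra term.
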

\begin{proof}
Within a round we may reveal, at each local step $t$, first the
predictor batches of all workers and then their corrector batches.
The responses are mutually independent across workers, so this order
does not change the joint law and respects every dependency.

\emph{The averaged trajectory.} Let bars denote averages over workers.
Since $G$ is affine, $G(\bar x)$ is the average of the $G(x^i)$, and
\[
 \bar y_t=\bar x_t-\gamma\bigl(G(\bar x_t)+\bar e_t\bigr),\qquad
 \bar x_{t+1}=\bar x_t-\gamma\bigl(G(\bar y_t)+\bar\delta_t\bigr).
\]
This is one extragradient step with errors $\bar e_t,\bar\delta_t$, so
identity~\eqref{nn:identity} and bound~\eqref{nn:pathwise-residual} of
Lemma~\ref{nn:identity-lemma}, with $F=G$, give for every
$u\in\mathbb R^d$
\begin{align}\label{pv:gap-identity}
 2\gamma\langle G(\bar y_t),\bar y_t-u\rangle
 \le{}&\|\bar x_t-u\|^2-\|\bar x_{t+1}-u\|^2
 +\overline{\mathcal R}_{{\rm eg},t}
 -2\gamma\bigl\langle\bar\delta_t,\bar x_t-\gamma G(\bar y_t)-u\bigr\rangle,
\end{align}
where
\begin{equation}\label{pv:remainder}
 \overline{\mathcal R}_{{\rm eg},t}
 =-(1-2q_{\rm eg})\gamma^2\|G(\bar x_t)+\bar e_t\|^2
          +2\gamma^2\|\bar e_t\|^2+\gamma^2\|\bar\delta_t\|^2 .
\end{equation}

\emph{The remainder.} Condition on the round-start point $z$. The
errors of different workers are independent and centered, so
\[
 \E[\|\bar e_t\|^2\mid z]\le\frac{\alpha_{\rm rel}s_t}m,\qquad
 \E[\|\bar\delta_t\|^2\mid z]
 \le\frac{\alpha_{\rm rel}(1+q_{\rm eg}
 +q_{\rm eg}\alpha_{\rm rel})s_t}m,
\]
by Lemma~\ref{pv:field-lemma}. Since $\bar e_t$ is centered,
$\E[\|G(\bar x_t)+\bar e_t\|^2\mid z]
=\E[\|G(\bar x_t)\|^2\mid z]+\E[\|\bar e_t\|^2\mid z]$, and
Lemma~\ref{pv:field-lemma} gives
\[
 \frac{\E[\overline{\mathcal R}_{{\rm eg},t}\mid z]}{\gamma^2}
 \le-(1-2q_{\rm eg})d_t
 +(1+2q_{\rm eg})\frac{\alpha_{\rm rel}s_t}m
 +\frac{\alpha_{\rm rel}(1+q_{\rm eg}
 +q_{\rm eg}\alpha_{\rm rel})s_t}m
 =-(1-2q_{\rm eg})d_t+\frac{c_{\rm avg}}{m}s_t.
\]
Summing over $t$ and using~\eqref{pv:field-energy}
and~\eqref{pv:condition},
\begin{equation}\label{pv:block-absorption}
 \E\Bigl[\sum_{t<T}\overline{\mathcal R}_{{\rm eg},t}\Bigm|z\Bigr]
 \le\gamma^2\left(-(1-2q_{\rm eg})
 +\frac{c_{\rm avg}(1+\vartheta_{\rm grow})^{T-1}}m\right)
 \sum_{t<T}d_t
 \le-\frac{(1-2q_{\rm eg})\gamma^2}2\sum_{t<T}d_t.
\end{equation}

\emph{Energy over all rounds.} Restore the round index $r$ and let
$\mathcal D_{\rm eg}=\sum_{r<N}\sum_{t<T}\E\|M_{\rm eg}^tG(z_r)\|^2$.
Take $u=x_\star$
in~\eqref{pv:gap-identity}. The left side is nonnegative by
monotonicity and $G(x_\star)=0$. The vector paired with $\bar\delta_t$ is
determined before the corrector batch, so the last term has zero mean.
Hence
$\E\|\bar x_{t+1}-x_\star\|^2\le\E\|\bar x_t-x_\star\|^2
+\E\overline{\mathcal R}_{{\rm eg},t}$.
Since $\bar x_{r,0}=z_r$ and $\bar x_{r,T}=z_{r+1}$,
summing over $t$ and $r$ and using~\eqref{pv:block-absorption} gives
\begin{equation}\label{pv:global-energy}
 \E\|z_N-x_\star\|^2
 +\frac{(1-2q_{\rm eg})\gamma^2}2\,\mathcal D_{\rm eg}\le R_0^2.
\end{equation}

\emph{The corrector sum.} Ordered by $(r,t)$, the averaged correctors
$\bar\delta_{r,t}$ are martingale differences. By
Lemma~\ref{pv:field-lemma},
$\alpha_{\rm rel}(1+q_{\rm eg}+q_{\rm eg}\alpha_{\rm rel})
\le c_{\rm avg}$, \eqref{pv:condition}
and~\eqref{pv:global-energy},
\begin{equation}\label{pv:martingale}
 \E\Bigl\|\sum_{r,t}\bar\delta_{r,t}\Bigr\|^2
 =\sum_{r,t}\E\|\bar\delta_{r,t}\|^2
 \le\frac{c_{\rm avg}(1+\vartheta_{\rm grow})^{T-1}}m\,
       \mathcal D_{\rm eg}
 \le\frac{1-2q_{\rm eg}}2\,\mathcal D_{\rm eg}
 \le\frac{R_0^2}{\gamma^2}.
\end{equation}

\emph{The gap.} Let $\|u-x_\star\|\le R_0$. Monotonicity gives
$\langle G(u),\bar y_t-u\rangle\le\langle G(\bar y_t),\bar y_t-u\rangle$.
Sum~\eqref{pv:gap-identity} over all $(r,t)$, drop
$-\|z_N-u\|^2$, use $\|x_0-u\|\le2R_0$, and write
$\bar x_{r,t}-\gamma G(\bar y_{r,t})-u
=(\bar x_{r,t}-\gamma G(\bar y_{r,t})-x_\star)+(x_\star-u)$. Since
$\widehat y$ is the average of the $\bar y_{r,t}$,
\begin{align}\label{pv:supremum}
 2\gamma NT\langle G(u),\widehat y-u\rangle
 \le{}&4R_0^2+\sum_{r,t}\overline{\mathcal R}_{{\rm eg},r,t}
       +2\gamma R_0\Bigl\|\sum_{r,t}\bar\delta_{r,t}\Bigr\|\nonumber\\
 &-2\gamma\sum_{r,t}\bigl\langle\bar\delta_{r,t},
              \bar x_{r,t}-\gamma G(\bar y_{r,t})-x_\star\bigr\rangle.
\end{align}
The right side does not depend on $u$, so it also bounds
$2\gamma NT\,\Gap_\star^{\rm M}(\widehat y)$. In expectation, the second term is
nonpositive by~\eqref{pv:block-absorption}, the third is at most
$2\gamma R_0\cdot R_0/\gamma=2R_0^2$ by~\eqref{pv:martingale} and
Jensen's inequality, and the last has zero mean. Therefore
$2\gamma NT\,\E\Gap_\star^{\rm M}(\widehat y)\le6R_0^2$,
which is~\eqref{pv:block-bound}.

For $\gamma=1/(4L)$ we have $q_{\rm eg}=1/16$, $1-2q_{\rm eg}=7/8$,
$c_{\rm avg}=\alpha_{\rm rel}(35+\alpha_{\rm rel})/16$ and
$\vartheta_{\rm grow}=\alpha_{\rm rel}(18+\alpha_{\rm rel})/256$;
substituting these values
into~\eqref{pv:condition} gives~\eqref{pv:explicit-condition}.
\end{proof}

\subsection{The local horizon and the proof of Theorem~\ref{thm:affine-integrated}}
\label{pv:comparison-subsection}

For $K\ge2$, let $T_\star$ be the largest integer
$T\in\{1,\ldots,\lfloor K/2\rfloor\}$ that
satisfies~\eqref{pv:explicit-condition} with $b=\lfloor K/(2T)\rfloor$
and $\alpha_{\rm rel}=(\rho-1)/b$; set $T_\star=0$ if $K=1$ or if no
such $T$ exists.
Computing $T_\star$ is a finite search over known parameters. The next
lemma shows that this horizon captures the quantity $\beta$.

\begin{lemma}[The admissible horizon]\label{pv:comparison}
For all $m,K\ge1$ and $\rho\ge1$,
\begin{equation*}
 \beta\le1024\max\Bigl\{1,\frac K\rho,T_\star\Bigr\}.
\end{equation*}
\end{lemma}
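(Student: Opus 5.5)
We reduce to the regime where $\beta$ exceeds the first two entries and then exhibit an explicit horizon that satisfies~\eqref{pv:explicit-condition}. If $\beta\le1024\max\{1,K/\rho\}$ there is nothing to prove. Otherwise, assume
\[
\beta>1024\max\{1,K/\rho\}.
\]
Lemma~\ref{lem:clock:elementary} gives $\beta\le K$, so $K\ge1024$. Set $T:=\lfloor\beta/1024\rfloor$. Then $1\le T\le K/1024\le\lfloor K/2\rfloor$, and $\beta/2048\le T$. It suffices to check that this $T$ satisfies~\eqref{pv:explicit-condition}, because then $T_\star\ge T\ge\beta/2048$ and the claim follows with room to spare.

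\emph{Size of the noise parameter.} With $b=\lfloor K/(2T)\rfloor\ge K/(4T)$ we get
\[
\alpha_{\rm rel}\le\frac{\rho}{b}\le\frac{4\rho T}{K}\le\frac{\rho\beta}{256K}.
\]
Two consequences follow. First, $\beta\le mK/\rho$ from Lemma~\ref{lem:clock:elementary} gives $\alpha_{\rm rel}\le m/256$, which controls the prefactor $\alpha_{\rm rel}(35+\alpha_{\rm rel})$. Second, $\beta>1024K/\rho$ means we are in the regime where the binomial maximum exceeds its mean by a large factor. There, Theorem~\ref{thm:clock:asymptotic} gives
\[
\beta\le4\Bigl(K/\rho+\frac{\log(2m)}{\log(2+x)}\Bigr),\qquad x:=\frac{\rho\log(2m)}{K},
\]
and since $K/\rho<\beta/1024$ this yields $\beta\le8\log(2m)/\log(2+x)$. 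In particular $T\le\log(2m)/(128\log(2+x))$, and $\alpha_{\rm rel}\le4\rho T/K\le x/32$.

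\emph{Verifying the condition.} Take logarithms in~\eqref{pv:explicit-condition}. We split into two cases.
\begin{itemize}[leftmargin=*,nosep]
\item If $\alpha_{\rm rel}\le1$, the growth factor is at most $(1+19/256)^{T-1}\le e^{T/13}$. Since $T\le\log(2m)/128$, this factor is at most $(2m)^{1/1000}$. Meanwhile the prefactor is at most $36\min\{1,m/256\}$, and the product is well below $7m$ for every $m\ge1$.
\item If $\alpha_{\rm rel}>1$, the factor is at most $(1+\alpha_{\rm rel}^2/13)^{T-1}\le(2\alpha_{\rm rel})^{2T}$ and the prefactor is at most $36\alpha_{\rm rel}^2$. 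We then need
\[
2T\log(2\alpha_{\rm rel})+\log(36\alpha_{\rm rel}^2)\le\log(7m).
\]
With $\alpha_{\rm rel}\le x/32$ and $T\le\log(2m)/(128\log(2+x))$, the first term is at most $\log(2m)/64$. For the prefactor we use $\alpha_{\rm rel}\le m/256$, so $36\alpha_{\rm rel}^2\le36m^2/256^2$. This is not yet $\le7m^{1/2}$ unconditionally, so we also need $\alpha_{\rm rel}\lesssim\sqrt m$. We obtain it by combining the two estimates of $\beta$: by Lemma~\ref{lem:clock:elementary}, $\beta\le\min\{mK/\rho,\,8\log(2m)/\log(2+x)\}$, so $\alpha_{\rm rel}\le\min\{m,\,x/\log(2+x)\}/32$, which we expect to give $36\alpha_{\rm rel}^2\le m$ up to the slack in $1024$.
\end{itemize}

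\emph{Main obstacle.} We expect the hardest step to be this last bookkeeping: bounding $T\log\alpha_{\rm rel}$ and the prefactor simultaneously in the intermediate regime $x\approx m$. The fallback there is to shrink $T$ by a further constant factor, which the $1024$ allows, or to split on whether $x\le m$. Everything else is direct substitution of Lemma~\ref{lem:clock:elementary} and Theorem~\ref{thm:clock:asymptotic}.
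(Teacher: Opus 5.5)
Your route differs from the paper's and is viable. The paper defines $T_{\rm cont}$ by $T\log(1+T/K_{\rm eff})=\frac14\log m$, shows $\lfloor\min\{K/2,T_{\rm cont}\}\rfloor\le T_\star$, and proves a fresh Chernoff bound $\beta\le8(K_{\rm eff}+T_{\rm cont}+1)$. You instead start from $\beta$, import the upper half of Theorem~\ref{thm:clock:asymptotic}, and test $T\approx\beta/1024$ directly. As written, though, the proof has two gaps.

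\textbf{The constant.} $T=\lfloor\beta/1024\rfloor\ge\beta/2048$ only yields $\beta\le2048\,T_\star$, so there is no room to spare. For $\beta=1500$ you get $T=1$ and $1024T<\beta$. You need $T\ge\beta/1024$, so take $T=\lfloor\beta/512\rfloor$ instead. It still satisfies $1\le T\le K/512\le\lfloor K/2\rfloor$, and it costs only a factor two downstream: $T\le\log(2m)/(64\log(2+x))$ and $\alpha_{\rm rel}\le x/(16\log(2+x))$.

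\textbf{The case $\alpha_{\rm rel}>1$.} You leave this case open, and neither fallback closes it. Shrinking $T$ by a constant, or splitting on $x\le m$, still leaves only $\alpha_{\rm rel}\lesssim m$, far above $\sqrt m$. The missing observation is that $x\approx m$ cannot occur in your regime.
\begin{itemize}
\item Your assumption $1024<\beta\le8\log(2m)/\log(2+x)$ forces $\log(2+x)<\log(2m)/128$, i.e.\ $x<(2m)^{1/128}$.
\item $1024\log(2m)/x=1024K/\rho<\beta$ forces $x>128\log(2+x)$, hence $\log(2+x)\ge1$. You used this implicitly when writing $\alpha_{\rm rel}\le x/32$ and $T\le\log(2m)/128$.
\item Therefore $\alpha_{\rm rel}\le x/16<(2m)^{1/128}/16$, which gives $36\alpha_{\rm rel}^2\le\frac9{64}(2m)^{1/64}$.
\item Since $2\alpha_{\rm rel}<2+x$, you get $2T\log(2\alpha_{\rm rel})\le\log(2m)/32$.
\end{itemize}
With your bound on the growth factor, the left side of~\eqref{pv:explicit-condition} is then at most $\frac9{64}(2m)^{3/64}\le7m$. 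The case $\alpha_{\rm rel}\le1$ goes through as you wrote it, up to factors of two.

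With these two repairs the proof is complete. Compared with the paper, it reuses Theorem~\ref{thm:clock:asymptotic} in place of a self-contained tail computation. It also shows that the lemma has content only for astronomically large $m$.
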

\begin{proof}
Write $K_{\rm eff}=K/\rho=\E Z_1$ and
$T_{\rm scale}=\max\{1,K_{\rm eff},T_\star\}$.
If $K=1$, then $\beta\le K=1$. If $m<1024$, then
$\beta\le\E\sum_iZ_i=mK_{\rm eff}\le1024K_{\rm eff}$. Assume from now on that $K\ge2$
and $m\ge1024$.

\emph{A horizon that satisfies the condition.} The function
$t\mapsto t\log(1+t/K_{\rm eff})$ increases from $0$ to $\infty$ on
$(0,\infty)$, so there is a unique $T_{\rm cont}>0$ with
\begin{equation*}
 T_{\rm cont}\log\Bigl(1+\frac {T_{\rm cont}}{K_{\rm eff}}\Bigr)
 =\frac14\log m.
\end{equation*}
Let $T=\lfloor\min\{K/2,T_{\rm cont}\}\rfloor$ and suppose $T\ge1$. Then
$K/(2T)\ge1$, so $b=\lfloor K/(2T)\rfloor\ge K/(4T)$ and
$\alpha_{\rm rel}\le\rho/b\le4T/K_{\rm eff}=4r_{\rm hor}$, where
$r_{\rm hor}=T/K_{\rm eff}$. Therefore
\[
 \frac{\alpha_{\rm rel}(35+\alpha_{\rm rel})}{16}
 \le\frac{35}4r_{\rm hor}+r_{\rm hor}^2\le10(1+r_{\rm hor})^2,
 \qquad
 1+\frac{\alpha_{\rm rel}(18+\alpha_{\rm rel})}{256}
 \le1+\frac9{32}r_{\rm hor}+\frac{r_{\rm hor}^2}{16}
 \le(1+r_{\rm hor})^2.
\]
Since $T\le T_{\rm cont}$ and
$t\mapsto t\log(1+t/K_{\rm eff})$ is increasing,
$(1+r_{\rm hor})^{2T}=\exp\{2T\log(1+T/K_{\rm eff})\}\le\sqrt m$.
Hence
\begin{equation*}
 \frac{\alpha_{\rm rel}(35+\alpha_{\rm rel})}{16}
 \left(1+\frac{\alpha_{\rm rel}(18+\alpha_{\rm rel})}{256}\right)^{T-1}
 \le10(1+r_{\rm hor})^{2T}\le10\sqrt m\le\frac7{16}m,
\end{equation*}
where the last step uses $m\ge1024$. This
is~\eqref{pv:explicit-condition}, so $T\le T_\star$. The inequality
$T\le T_\star$ also holds when $T=0$.

\emph{A tail bound for $\beta$.} The Chernoff bound for the binomial
law gives $\Prob\{Z_1\ge t\}\le(eK_{\rm eff}/t)^t$ for
$t\ge K_{\rm eff}$, and a union bound gives
$\Prob\{\max_iZ_i\ge t\}\le m(eK_{\rm eff}/t)^t$. Let
$t_0=4(K_{\rm eff}+T_{\rm cont}+1)$ and $c_0=\log4-1>0$.
For $t\ge t_0$ we have
$t\ge4(K_{\rm eff}+T_{\rm cont})$ and $t\ge4T_{\rm cont}$, so
\[
 \log\frac t{eK_{\rm eff}}
 \ge\log\frac{4(K_{\rm eff}+T_{\rm cont})}{eK_{\rm eff}}
 =\log\Bigl(1+\frac {T_{\rm cont}}{K_{\rm eff}}\Bigr)+c_0,
 \qquad
 t\log\Bigl(1+\frac {T_{\rm cont}}{K_{\rm eff}}\Bigr)
 \ge4T_{\rm cont}\log\Bigl(1+\frac {T_{\rm cont}}{K_{\rm eff}}\Bigr)
 =\log m .
\]
Hence $\Prob\{\max_iZ_i\ge t\}\le m\,e^{-\log m-c_0t}=e^{-c_0t}$, and
\begin{equation}\label{pv:tail-integration}
 \beta\le t_0+\int_{t_0}^\infty e^{-c_0t}\,dt
 \le t_0+\frac{e^{-4c_0}}{c_0}
 \le8(K_{\rm eff}+T_{\rm cont}+1).
\end{equation}

\emph{Conclusion.} If $T_{\rm cont}\le K/2$, then
$T=\lfloor T_{\rm cont}\rfloor$ and
$T_{\rm cont}<T+1\le T_\star+1\le2T_{\rm scale}$, so
\eqref{pv:tail-integration}
gives $\beta\le8(T_{\rm scale}+2T_{\rm scale}+T_{\rm scale})
=32T_{\rm scale}$. If $T_{\rm cont}>K/2$, then
$T=\lfloor K/2\rfloor\ge1$,
so $\lfloor K/2\rfloor\le T_\star$ and
$\beta\le K\le3\lfloor K/2\rfloor\le3T_{\rm scale}$. In all cases
$\beta\le1024T_{\rm scale}$.
\end{proof}

\begin{proof}[Proof of Theorem~\ref{thm:affine-integrated}]
\emph{Upper bound.} The algorithm outputs the best of four
candidates, chosen by their parameter-based bounds: the initial point,
serial and common-query extragradient (Lemma~\ref{lem:relmm:upper}),
and, if $T_\star\ge1$, local extragradient with $T=T_\star$,
$b=\lfloor K/(2T_\star)\rfloor$ and $\gamma=1/(4L)$
(Lemma~\ref{pv:block}). Let $C_0\ge6$ be the constant of
Lemma~\ref{lem:relmm:upper} and
$T_{\rm scale}=\max\{1,K/\rho,T_\star\}$.

Suppose $mK/\rho>1$. The common-query bound is at most
$2C_0LR_0^2/N$, the serial bound is $C_0LR_0^2/(N\cdot K/\rho)$, and
the local bound is $12LR_0^2/(NT_\star)\le2C_0LR_0^2/(NT_\star)$.
Together with the initial point,
\[
 \E\Gap_\star^{\rm M}(\widehat x)
 \le LR_0^2\min\Bigl\{1,\frac{2C_0}{NT_{\rm scale}}\Bigr\}
 \le LR_0^2\min\Bigl\{1,\frac{2048C_0}{N\beta}\Bigr\},
\]
where the last step is Lemma~\ref{pv:comparison}. This also covers
$K=1$ and $T_\star=0$, in which case
$T_{\rm scale}=\max\{1,K/\rho\}$.

Suppose $mK/\rho\le1$. Lemma~\ref{lem:clock:elementary} gives
$\beta\le mK/\rho$, and the common-query bound is at most
\[
 \frac{C_0LR_0^2}N\Bigl(1+\frac\rho{mK}\Bigr)
 \le\frac{2C_0LR_0^2}{N\cdot mK/\rho}
 \le\frac{2C_0LR_0^2}{N\beta}.
\]

In both cases
$\E\Gap_\star^{\rm M}(\widehat x)\le LR_0^2\min\{1,C_{\rm aff}/(N\beta)\}$,
where $C_{\rm aff}=2048C_0$. For $x>0$,
$\min\{1,C_{\rm aff}/x\}\le(C_{\rm aff}+1)/(1+x)$: if
$x\le C_{\rm aff}$ the left side is at most
$1\le(C_{\rm aff}+1)/(1+x)$, and if $x>C_{\rm aff}$ then
$C_{\rm aff}(1+x)\le(C_{\rm aff}+1)x$. Hence
\[
 \E\Gap_\star^{\rm M}(\widehat x)\le\frac{(C_{\rm aff}+1)LR_0^2}{1+N\beta}
 \le\frac{64(C_{\rm aff}+1)LR_0^2}{1+N\InfoDepth},
\]
by Theorem~\ref{thm:clock:asymptotic}. This holds in every dimension.

\emph{Lower bound.} Lemma~\ref{lem:relmm:masked} gives the matching
lower bound in dimension
$d_{\rm hard}\le1024mT_{\rm mask}^2\le1024m(KN)^2$,
with a single distribution of
instances that does not depend on the algorithm.
\end{proof}

\subsection{Mixed noise}\label{an:section}

Now Assumption~\ref{ass:mixed} holds with $\rho\ge1$ and $\sigma\ge0$:
\begin{equation}\label{an:noise}
 \E[\widehat G(x)\mid\mathcal F]=G(x),\qquad
 \E[\|\widehat G(x)-G(x)\|^2\mid\mathcal F]
 \le(\rho-1)\|G(x)\|^2+\sigma^2.
\end{equation}
A batch of $b$ responses then has variance at most
$\alpha_{\rm rel}\|G(x)\|^2+\sigma_b^2$, with
$\alpha_{\rm rel}=(\rho-1)/b$ and $\sigma_b^2=\sigma^2/b$.

\begin{definition}[Independent trajectories]\label{an:parallel-protocol}
For integers $b,T\ge1$ and a step size $\gamma>0$, every worker starts at
$x_0$ and performs $T$ steps of~\eqref{pv:algorithm} with batch size $b$,
without communication. At the end the workers send their running sums
and the method outputs
$\widehat y=\frac1{mT}\sum_{i=1}^m\sum_{t<T}y_t^i$.
\end{definition}

\begin{lemma}[Independent trajectories under mixed noise]\label{an:parallel}
If $\alpha_{\rm rel}=(\rho-1)/b\le1/8$ and
$0<\gamma\le1/(4L)$, then the output of
Definition~\ref{an:parallel-protocol} satisfies
\begin{equation}\label{an:parallel-certificate}
 \E\Gap_\star^{\rm M}(\widehat y)
 \le\frac{4R_0^2}{\gamma T}
       +\frac{5\gamma\sigma^2}{2mb}
       +\frac{\sqrt3\,\sigma R_0}{\sqrt{mbT}}.
\end{equation}
In particular, for $\gamma=\min\{1/(4L),R_0\sqrt{mb}/(\sigma\sqrt T)\}$,
where the second entry is $+\infty$ if $\sigma=0$,
\begin{equation}\label{an:parallel-optimized}
 \E\Gap_\star^{\rm M}(\widehat y)
 \le\frac{16LR_0^2}{T}+\frac{9\sigma R_0}{\sqrt{mbT}}.
\end{equation}
\end{lemma}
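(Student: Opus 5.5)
The plan is to reduce to the trajectory analysis of Theorem~\ref{nn:trajectory}, applied to the worker-averaged sequence. Since the workers never communicate and start from the same $x_0$, and $G$ is affine, the averages $\bar x_t=\frac1m\sum_ix_t^i$ and $\bar y_t$ follow one extragradient step with errors $\bar e_t,\bar\delta_t$, exactly as in the proof of Lemma~\ref{pv:block}. In particular, identity~\eqref{nn:identity} and bound~\eqref{nn:pathwise-residual} hold for the averaged iterates for every $u$. We then sum over $t<T$ and use monotonicity, $\langle G(u),\bar y_t-u\rangle\le\langle G(\bar y_t),\bar y_t-u\rangle$. We split $\bar x_t-\gamma G(\bar y_t)-u=(\cdot-x_\star)+(x_\star-u)$ and use $\|x_0-u\|\le2R_0$. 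This gives, with $S=\sum_t\bar\delta_t$,
\[
 2\gamma T\,\Gap_\star^{\rm M}(\widehat y)\le4R_0^2+\sum_{t<T}\overline{\mathcal R}_{{\rm eg},t}+2\gamma R_0\|S\|-2\gamma\sum_{t<T}\langle\bar\delta_t,\bar x_t-\gamma G(\bar y_t)-x_\star\rangle .
\]
The last term has zero mean.

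The key difference from Lemma~\ref{pv:block} is that the independent trajectories need not stay close to each other. I will not track $\|G(\bar x_t)\|$ through $M_{\rm eg}$. Instead I will control each worker's own energy via Lemma~\ref{nn:one-step}. Apply~\eqref{nn:energy} to worker $i$ with $u=x_\star$, dropping the nonnegative monotone term. With $\alpha_{\rm rel}\le1/8\le1/4$, this gives
\[
 \E\|x^i_{t+1}-x_\star\|^2\le\E\|x^i_t-x_\star\|^2-\tfrac7{128}\gamma^2\E\|G(x^i_t)\|^2+\tfrac{69}{32}\gamma^2\sigma_b^2 .
\]
Summing over $t$ yields $\sum_t\E\|G(x_t^i)\|^2\le\frac{128}{7}(R_0^2/\gamma^2+\frac{69}{32}T\sigma_b^2)$. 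Write $V$ for the average of these sums over workers.

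Next I bound the averaged remainder. Independence across workers gives $\E\|\bar e_t\|^2\le\frac1m(\alpha_{\rm rel}\bar s_t+\sigma_b^2)$, and by~\eqref{nn:corrector} $\E\|\bar\delta_t\|^2\le\frac1m(\frac{69}{32}\alpha_{\rm rel}\bar s_t+\frac{33}{32}\sigma_b^2)$, where $\bar s_t$ is the average of $\E\|G(x_t^i)\|^2$. Dropping the negative term in~\eqref{nn:pathwise-residual}, the remainder sum is at most $\gamma^2\frac{c\,\alpha_{\rm rel}}{m}V+\frac{C\gamma^2T\sigma_b^2}{m}$. The corrector sum satisfies $\E\|S\|\le\sqrt{\sum_t\E\|\bar\delta_t\|^2}$.

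The main obstacle is the $\alpha_{\rm rel}V/m$ contributions. Plugging the bound on $V$ shows they cost $O(\alpha_{\rm rel}R_0^2/m)$ in the remainder and $O(R_0\cdot R_0\sqrt{\alpha_{\rm rel}/m})$ in $2\gamma R_0\|S\|$. Both are $O(R_0^2)$ when $\alpha_{\rm rel}\le1/8$ and $m\ge1$, and this is where the constant $4R_0^2/(\gamma T)$ (rather than $2$) comes from. The $\sigma$-parts contribute $\gamma^2T\sigma^2/(mb)$ and $\gamma R_0\sqrt{T\sigma^2/(mb)}$. Dividing by $2\gamma T$ gives~\eqref{an:parallel-certificate} after tracking constants; the constants need care but only involve elementary inequalities such as $\sqrt{a+b}\le\sqrt a+\sqrt b$. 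Finally, \eqref{an:parallel-optimized} follows as in Theorem~\ref{nn:trajectory}: we have $1/\gamma\le4L+\sigma\sqrt T/(R_0\sqrt{mb})$ and $\gamma\sigma^2/(mb)\le\sigma R_0/\sqrt{mbT}$, so the coefficient is $4+5/2+\sqrt3<9$.
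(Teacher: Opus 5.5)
Your architecture matches the paper's proof. The averaged iterates form one extragradient step because $G$ is affine, each worker's own energy controls $\bar s_t$, independence across workers gives the factor $1/m$, and the averaged correctors are martingale differences. The gap is quantitative, but so is the lemma: with the constants you invoke, the inequality \eqref{an:parallel-certificate} does not follow, and the step ``after tracking constants'' is the one that fails.

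The coefficient $\tfrac7{128}$ in \eqref{nn:energy} is calibrated for $\alpha_{\rm rel}\le1/4$. It only gives $V\le\tfrac{128}{7}R_0^2/\gamma^2+\tfrac{276}{7}T\sigma_b^2$. Take $m=1$ and $\alpha_{\rm rel}=1/8$, as the hypothesis allows. The $\bar e$- and $\bar\delta$-parts of your remainder bound then contribute $\tfrac{32}{7}R_0^2+\tfrac{69}{14}R_0^2=9.5R_0^2$. The term $2\gamma R_0\E\|S\|$ contributes about $2\sqrt{69/14}\,R_0^2\approx4.4R_0^2$. The $R_0^2$-coefficient of your summed inequality is therefore about $18$, which gives roughly $9R_0^2/(\gamma T)$ rather than $4R_0^2/(\gamma T)$. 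The same bookkeeping gives about $12$ instead of $5/2$ for the $\gamma\sigma^2/(mb)$ term, and about $3.4$ instead of $\sqrt3$ for the cross term. The optimized bound would then read roughly $36LR_0^2/T+24\sigma R_0/\sqrt{mbT}$ instead of \eqref{an:parallel-optimized}. You obtain the right rate, but not the lemma as stated.

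What is missing is a per-worker energy bound with coefficient near $1/2$ instead of $7/128$; this is exactly what the hypothesis $\alpha_{\rm rel}\le1/8$ provides.
\begin{itemize}
\item The paper gets it from the affine structure. Since $G(y_t)=(I-\gamma J)G(x_t)-\gamma Je_t$, \eqref{pv:H} and centering give $\E_t\|G(y_t)\|^2\le(1+q_{\rm eg}+q_{\rm eg}\alpha_{\rm rel})\|G(x_t)\|^2+q_{\rm eg}\sigma_b^2$. This is half the generic bound used inside Lemma~\ref{nn:one-step}.
\item Combine it with $\E_t\|\delta_t\|^2\le\alpha_{\rm rel}\E_t\|G(y_t)\|^2+\sigma_b^2$ and the pathwise remainder. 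This gives $\E_t\mathcal R_{{\rm eg},t}\le\gamma^2\bigl(-\tfrac12\|G(x_t)\|^2+3\sigma_b^2\bigr)$, hence $\sum_t\E\|G(x_t^i)\|^2\le2R_0^2/\gamma^2+6T\sigma_b^2$.
\item Alternatively, redo the computation of Lemma~\ref{nn:one-step} under $\alpha_{\rm rel}\le1/8$ instead of $1/4$. This gives coefficient $1-3\alpha_{\rm rel}-2q_{\rm eg}(1+\alpha_{\rm rel})^2\ge\tfrac{239}{512}\approx0.47$, which also suffices.
\end{itemize}
In the averaged remainder, also do not drop the whole negative term. Use $\E\|G(\bar x_t)+\bar e_t\|^2=\E\|G(\bar x_t)\|^2+\E\|\bar e_t\|^2$ and drop only the first part, so $\E\|\bar e_t\|^2$ enters with coefficient $1+2q_{\rm eg}$ rather than $2$. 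With the rounded per-worker bounds above, keeping coefficient $2$ pushes the $\gamma\sigma^2/(mb)$ constant slightly above $5/2$. With both changes and the paper's constants, the $\alpha_{\rm rel}V/m$ terms cost at most $R_0^2/m+\sqrt2R_0^2/\sqrt m$. The total is then $(5+\sqrt2)R_0^2<8R_0^2$, and your final step-size computation goes through unchanged.
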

\begin{proof}
We have $q_{\rm eg}=\gamma^2L^2\le1/16$ and
$\alpha_{\rm rel}\le1/8$, hence
\begin{equation}\label{an:constants}
 \begin{aligned}
 1-2q_{\rm eg}&\ge\frac78,
 &c_{\rm avg}=\alpha_{\rm rel}(2+3q_{\rm eg}
   +q_{\rm eg}\alpha_{\rm rel})&\le\frac{281}{1024},
 &\alpha_{\rm rel}(1+q_{\rm eg}+q_{\rm eg}\alpha_{\rm rel})
   &\le\frac{137}{1024},\\
 1+q_{\rm eg}\alpha_{\rm rel}&\le\frac{129}{128},
 &2+2q_{\rm eg}+q_{\rm eg}\alpha_{\rm rel}&\le\frac{273}{128}.
 \end{aligned}
\end{equation}
In particular $1-2q_{\rm eg}-c_{\rm avg}\ge1/2$ and
$2+2q_{\rm eg}+q_{\rm eg}\alpha_{\rm rel}\le3$. All second moments are
finite by induction, since every step is affine in the previous iterate
and the noise; this justifies the expectations below.

\emph{One worker.} Let $F_t=G(x_t)$ and let $\E_t$ condition on the
history before step $t$. As in~\eqref{pv:corrector}, now with the
additional variance $\sigma_b^2$,
\begin{equation*}
 \E_t\|G(y_t)\|^2
 \le(1+q_{\rm eg}+q_{\rm eg}\alpha_{\rm rel})\|F_t\|^2
      +q_{\rm eg}\sigma_b^2,\qquad
 \E_t\|\delta_t\|^2
 \le\alpha_{\rm rel}(1+q_{\rm eg}+q_{\rm eg}\alpha_{\rm rel})
      \|F_t\|^2+(1+q_{\rm eg}\alpha_{\rm rel})\sigma_b^2.
\end{equation*}
Let $\mathcal R_{{\rm eg},t}$ be the remainder~\eqref{pv:remainder}
written for this worker.
Since $e_t$ is centered,
\begin{align*}
 \frac{\E_t\mathcal R_{{\rm eg},t}}{\gamma^2}
 & =-(1-2q_{\rm eg})\|F_t\|^2
   +(1+2q_{\rm eg})\E_t\|e_t\|^2+\E_t\|\delta_t\|^2\\
 & \le-(1-2q_{\rm eg}-c_{\rm avg})\|F_t\|^2
   +(2+2q_{\rm eg}+q_{\rm eg}\alpha_{\rm rel})\sigma_b^2\\
 & \le-\frac12\|F_t\|^2+3\sigma_b^2.
\end{align*}
Apply~\eqref{pv:gap-identity} to this worker with $u=x_\star$; the left
side is nonnegative and the last term has zero mean. Summing over $t$,
\begin{equation}\label{an:parallel-local-energy}
 \sum_{t<T}\E\|G(x_t^i)\|^2
 \le\frac{2R_0^2}{\gamma^2}+6T\sigma_b^2
 \qquad(i=1,\ldots,m).
\end{equation}

\emph{The averaged trajectory.} Let bars denote averages over workers
and $V_{F,t}=\frac1m\sum_i\E\|G(x_t^i)\|^2$. Since $G$ is affine, the
averaged iterates follow one extragradient step with errors
$\bar e_t,\bar\delta_t$, so~\eqref{pv:gap-identity}
and~\eqref{pv:remainder} apply to them. The workers' errors are
independent and centered, hence
\[
 \E\|\bar e_t\|^2\le\frac{\alpha_{\rm rel}V_{F,t}+\sigma_b^2}m,
 \qquad
 \E\|\bar\delta_t\|^2
 \le\frac{\alpha_{\rm rel}(1+q_{\rm eg}
       +q_{\rm eg}\alpha_{\rm rel})V_{F,t}
       +(1+q_{\rm eg}\alpha_{\rm rel})\sigma_b^2}m,
\]
and, as in the proof of Lemma~\ref{pv:block},
\[
 \E\overline{\mathcal R}_{{\rm eg},t}
 \le-(1-2q_{\rm eg})\gamma^2\E\|G(\bar x_t)\|^2
 +\frac{\gamma^2}m\left(c_{\rm avg}V_{F,t}
 +(2+2q_{\rm eg}+q_{\rm eg}\alpha_{\rm rel})\sigma_b^2\right).
\]
Drop the negative term and sum over $t$, using~\eqref{an:parallel-local-energy}
and~\eqref{an:constants} in the form $2c_{\rm avg}\le1$,
$6c_{\rm avg}+2+2q_{\rm eg}+q_{\rm eg}\alpha_{\rm rel}\le5$,
$2\alpha_{\rm rel}(1+q_{\rm eg}+q_{\rm eg}\alpha_{\rm rel})\le1/2$
and
$6\alpha_{\rm rel}(1+q_{\rm eg}+q_{\rm eg}\alpha_{\rm rel})
+1+q_{\rm eg}\alpha_{\rm rel}\le3$. The averaged correctors
are martingale differences, so
\begin{align*}
 \sum_{t<T}\E\overline{\mathcal R}_{{\rm eg},t}
 &\le\frac{R_0^2}m+\frac{5\gamma^2T\sigma_b^2}m,
       \\
 \E\Bigl\|\sum_{t<T}\bar\delta_t\Bigr\|^2
 =\sum_{t<T}\E\|\bar\delta_t\|^2
 &\le\frac{R_0^2}{2m\gamma^2}+\frac{3T\sigma_b^2}m.
\end{align*}

\emph{The gap.} The argument leading to~\eqref{pv:supremum}, applied to
the single uninterrupted averaged trajectory, gives
\begin{align*}
 2\gamma T\,\E\Gap_\star^{\rm M}(\widehat y)
 &\le4R_0^2+\sum_{t<T}\E\overline{\mathcal R}_{{\rm eg},t}
       +2\gamma R_0\Bigl(\E\Bigl\|\sum_{t<T}\bar\delta_t\Bigr\|^2\Bigr)^{1/2}\\
 &\le4R_0^2+\frac{R_0^2}m+\frac{5\gamma^2T\sigma_b^2}m
       +2\gamma R_0\Bigl(\frac{R_0}{\gamma\sqrt{2m}}
       +\sigma_b\sqrt{\frac{3T}m}\Bigr)\\
 &\le(5+\sqrt2)R_0^2+\frac{5\gamma^2T\sigma_b^2}m
       +2\sqrt3\,\gamma R_0\sigma_b\sqrt{\frac Tm},
\end{align*}
where we used $\sqrt{x+y}\le\sqrt x+\sqrt y$ and $m\ge1$. Dividing by
$2\gamma T$, using $(5+\sqrt2)/2<4$ and $\sigma_b^2=\sigma^2/b$
proves~\eqref{an:parallel-certificate}.

For the given step size,
\[
 \frac1\gamma\le4L+\frac{\sigma_b\sqrt T}{R_0\sqrt m},\qquad
 \frac{\gamma\sigma_b^2}m\le\frac{\sigma_b R_0}{\sqrt{mT}}.
\]
Substituting into~\eqref{an:parallel-certificate} gives
$16LR_0^2/T$ plus
$(4+5/2+\sqrt3)\,\sigma_b R_0/\sqrt{mT}$, and
$4+5/2+\sqrt3<9$ proves~\eqref{an:parallel-optimized}.
\end{proof}

\begin{corollary}[Upper bound under mixed noise]\label{an:mixed-explicit}
For every $m,K,N\in\mathbb N$ there is an algorithm of
Definition~\ref{def:ic-main} with
\begin{equation}\label{an:mixed-explicit-bound}
 \E\Gap_\star^{\rm M}(\widehat x)
 \le\min\left\{LR_0^2,\
       \frac{512LR_0^2}{N}\min\Bigl\{\frac\rho K,1+\frac\rho{mK}\Bigr\}
       +\frac{18\sigma R_0}{\sqrt{mKN}}\right\}.
\end{equation}
\end{corollary}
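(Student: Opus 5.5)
The bound is the minimum of three candidates, each already controlled by earlier results; the algorithm returns the candidate with the smallest parameter-based bound. The entry $LR_0^2$ comes from the initial point, exactly as in the proof of Lemma~\ref{lem:relmm:upper}: monotonicity and $G(x_\star)=0$ give $\langle G(u),x_0-u\rangle\le LR_0^2$ for $\|u-x_\star\|\le R_0$. It remains to show that the second entry is attained, up to the stated constants, by one of two schedules. The term $(512L/N)\min\{\rho/K,1+\rho/(mK)\}$ splits into a sequential regime, with $\rho/K$, and a common-query regime, with $1+\rho/(mK)$. In both regimes we must keep the statistical term at $\sigma R_0/\sqrt{mKN}$ rather than $\sigma R_0/\sqrt{KN}$.

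\emph{Common-query regime.} We use Theorem~\ref{nn:all-budget} with $C=B(x_\star,R_0)\subseteq B(x_0,2R_0)$ and $D=2R_0$. Its third entry gives $C'\bigl(4LR_0^2/(N\min\{1,mK/\rho\})+2\sigma R_0/\sqrt{mKN}\bigr)$. Since $1/\min\{1,mK/\rho\}\le1+\rho/(mK)$, this has the required form. The constants from the proof of that theorem ($88s_{\rm cq}$ and $16$, before the factor $4$ from $D=2R_0$) need to be tracked and checked against $512$ and $18$. If they exceed them, we redo the common-query estimate directly from \eqref{nn:trajectory-optimized} with $D=2R_0$ and the case split $N\ge4s_{\rm cq}$ versus the initial point.

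\emph{Sequential regime.} This is where the affine structure is needed: the serial schedule alone only gives $\sigma R_0/\sqrt{KN}$. Instead we run Definition~\ref{an:parallel-protocol}, i.e.\ $m$ independent trajectories, spreading each worker's trajectory over the $N$ rounds. No communication is needed until the end, so each worker keeps its state across round boundaries, as in the serial schedule. We take $b=\max\{1,\lceil8(\rho-1)\rceil\}$, so that $\alpha_{\rm rel}\le1/8$, and $T=\lfloor KN/(2b)\rfloor$. When $KN\ge4b$ we have $T\ge KN/(4b)$ and $bT\ge KN/4$. Then \eqref{an:parallel-optimized} gives
\[
16LR_0^2/T+9\sigma R_0/\sqrt{mbT}\le 64bLR_0^2/(KN)+18\sigma R_0/\sqrt{mKN},
\]
and $b\le8\rho$ turns the first term into at most $512\rho LR_0^2/(KN)$. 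When $KN<4b\le32\rho$, the initial point gives $LR_0^2\le32\rho LR_0^2/(KN)$.

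\emph{Combining.} The minimum of the two candidate bounds is at most $(512L R_0^2/N)\min\{\rho/K,1+\rho/(mK)\}+18\sigma R_0/\sqrt{mKN}$, because the statistical term is the same in both. Taking the further minimum with the initial point gives \eqref{an:mixed-explicit-bound}. The main obstacle is the constant bookkeeping, in particular making the common-query constants fit under $512$ and $18$. If Theorem~\ref{nn:all-budget}'s constants are too loose for this, the fallback is to run the common-query schedule through Lemma~\ref{an:parallel} with $m=1$ and a batch of $mKs_{\rm cq}$ responses per step. This reuses the sharper constants $16$ and $9$ of \eqref{an:parallel-optimized}, with $D$ replaced by $R_0$ via the root-centered gap.
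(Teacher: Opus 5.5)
Your sequential regime matches the paper's argument. The route you list \emph{last} for the common-query regime, Lemma~\ref{an:parallel} with a single virtual worker whose batches pool the responses of all workers, is also what the paper does. The first two routes you propose cannot deliver the stated constants, and the last one needs a different pooling length.

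\emph{The first two routes.} Via Theorem~\ref{nn:all-budget}, or directly via \eqref{nn:trajectory-optimized} with $D=2R_0$, the statistical term is $8\sigma D/\sqrt{bT}=16\sigma R_0/\sqrt{bT}$. Under $bT\ge mKN/4$ this is $32\sigma R_0/\sqrt{mKN}$. Since $2bT\le mKN$, it is never better than $16\sqrt2\,\sigma R_0/\sqrt{mKN}$, and both exceed $18$. Likewise $88s_{\rm cq}LD^2/N=352s_{\rm cq}LR_0^2/N$ is about $1408(\rho/(mK))LR_0^2/N$ when $\rho\gg mK$, which exceeds $512(1+\rho/(mK))LR_0^2/N$. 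The loss comes from the centering. Theorem~\ref{nn:trajectory} pairs the corrector martingale with $u-x_0$, $\|u-x_0\|\le D=2R_0$, and pays $\frac{38}7D^2/(\gamma T)$. The root-centered argument behind \eqref{an:parallel-optimized} (see \eqref{pv:supremum}) pairs it with $u-x_\star$, $\|u-x_\star\|\le R_0$, and pays only $4R_0^2/(\gamma T)$. So commit to the last route from the start.

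\emph{The pooling length.} A batch of $mKs_{\rm cq}$ responses with $s_{\rm cq}=\max\{1,\lceil4(\rho-1)/(mK)\rceil\}$ only guarantees $\alpha_{\rm rel}\le1/4$. For example, $\rho-1=mK$ gives $s_{\rm cq}=4$ and $\alpha_{\rm rel}=1/4$. Lemma~\ref{an:parallel} requires $\alpha_{\rm rel}\le1/8$.
\begin{itemize}
\item Do what you already did for $b$ in the sequential regime: pool over $r_{\rm cq}=\max\{1,\lceil8(\rho-1)/(mK)\rceil\}$ rounds per stage and take $T_{\rm cq}=\lfloor N/(2r_{\rm cq})\rfloor$ steps. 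Then $r_{\rm cq}\le1+8\rho/(mK)\le8(1+\rho/(mK))$.
\item If $T_{\rm cq}\ge1$, then $r_{\rm cq}T_{\rm cq}\ge N/4$. Applying \eqref{an:parallel-optimized} with one worker and batch $mKr_{\rm cq}$ gives $64r_{\rm cq}LR_0^2/N+18\sigma R_0/\sqrt{mKN}\le(512LR_0^2/N)(1+\rho/(mK))+18\sigma R_0/\sqrt{mKN}$.
\item If $T_{\rm cq}=0$, then $N<2r_{\rm cq}\le16(1+\rho/(mK))$. The first term then exceeds $LR_0^2$, and the initial point wins.
\end{itemize}
With this change your proof coincides with the paper's, and $512=64\cdot8$ is exactly the constant this pooling length produces.
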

\begin{proof}
We combine two instances of Lemma~\ref{an:parallel} with the initial
point.

\emph{Independent trajectories.} Take
$b=\max\{1,\lceil8(\rho-1)\rceil\}$, so that
$\alpha_{\rm rel}=(\rho-1)/b\le1/8$ and
$b\le8\rho$, and $T=\lfloor KN/(2b)\rfloor$. Each worker runs its
trajectory across rounds, keeping an unfinished batch and its query
from one round to the next; it makes at most $K$ calls per round and
$2bT\le KN$ calls in total. If $T\ge1$, then $T\ge KN/(4b)$ and
$bT\ge KN/4$, and~\eqref{an:parallel-optimized} gives
\[
 \E\Gap_\star^{\rm M}(\widehat y)
 \le\frac{64bLR_0^2}{KN}+\frac{18\sigma R_0}{\sqrt{mKN}}
 \le\frac{512\rho LR_0^2}{KN}+\frac{18\sigma R_0}{\sqrt{mKN}}.
\]
If $T=0$, then $KN<2b\le16\rho$, and the first term exceeds $LR_0^2$.

\emph{Common queries.} Let all workers query the same point and pool
their responses over
$r_{\rm cq}=\max\{1,\lceil8(\rho-1)/(mK)\rceil\}$ rounds per stage,
so that each stage averages $b_{\rm cq}=mKr_{\rm cq}$ responses and
$(\rho-1)/b_{\rm cq}\le1/8$. Perform
$T_{\rm cq}=\lfloor N/(2r_{\rm cq})\rfloor$ steps, and apply
Lemma~\ref{an:parallel} with one worker and batch size $b_{\rm cq}$.
If $T_{\rm cq}\ge1$, then $r_{\rm cq}T_{\rm cq}\ge N/4$ and
$r_{\rm cq}\le1+8\rho/(mK)\le8(1+\rho/(mK))$,
so
\[
 \E\Gap_\star^{\rm M}(\widehat y)
 \le\frac{64r_{\rm cq}LR_0^2}{N}+\frac{18\sigma R_0}{\sqrt{mKN}}
 \le\frac{512LR_0^2}{N}\Bigl(1+\frac\rho{mK}\Bigr)
       +\frac{18\sigma R_0}{\sqrt{mKN}}.
\]
If $T_{\rm cq}=0$, then
$N<2r_{\rm cq}\le16(1+\rho/(mK))$, and the first term exceeds
$LR_0^2$.

The initial point satisfies $\Gap_\star^{\rm M}(x_0)\le LR_0^2$
(Lemma~\ref{lem:relmm:upper}). Outputting the candidate with the
smallest bound proves~\eqref{an:mixed-explicit-bound}.
\end{proof}

\begin{lemma}[Statistical lower bound]\label{an:statistical}
Under~\eqref{an:noise}, in every dimension $d\ge2$,
\begin{equation*}
 \mathcal E_d^{\rm aff,mix}
 \ge\frac3{128}\min\left\{LR_0^2,\
                         \frac{\sigma R_0}{\sqrt{mKN}}\right\}.
\end{equation*}
The hard instances are $L$-Lipschitz, have unique roots, and use a
Gaussian oracle whose variance equals $\sigma^2$ at every point.
\end{lemma}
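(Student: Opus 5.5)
The plan is a two-point testing argument in two dimensions, reduced to Lemma~\ref{lem:gaussian-testing} through Remark~\ref{rem:gaussian-adaptive}. The two instances share the rotation $J=L\begin{pmatrix}0&1\\-1&0\end{pmatrix}$ on $\mathbb R^2$; for $d>2$ we pad with zero coordinates, or use $L I$ on the extra block with root $0$ there. Their roots are $x_\star^{\pm}=x_0\pm a e_1$ for a separation $a\le R_0$ fixed below. Thus
\[
 G_\pm(x)=J(x-x_0)\mp aJe_1 .
\]
The two operators differ by the constant vector $\pm aJe_1=\mp aLe_2$, up to sign conventions. The oracle returns $\widehat G(x)=J(x-x_0)+\sigma\zeta e_2+(\text{shift})$ with $\zeta\sim\mathcal N(0,1)$. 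It is unbiased, has variance exactly $\sigma^2$ everywhere (so it satisfies~\eqref{an:noise} for every $\rho\ge1$), and the only unknown is the sign of a mean shift of size $\mu_{\rm obs}=aL$ in the known coordinate $e_2$. Each response is therefore a known function of the query plus $Y\sim\mathcal N(s_{\rm hyp}aL,\sigma^2)$, in the form required by Remark~\ref{rem:gaussian-adaptive}. With $Q_{\rm tot}=mKN$ calls, every test built from the output errs with average probability at least $3/8$ once $mKN(aL)^2/\sigma^2\le1/16$. Both operators are monotone, $L$-Lipschitz, and have unique roots.

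Next I convert the gap into a separation. The gap identity from the proof of Corollary~\ref{cor:dimref:depth} carries over: with skew $J$ one has $\Gap_\star^{\rm M}(x)=R_0\|G(x)\|=R_0L\|x-x_\star\|$. Since $\|x_\star^+-x_\star^-\|=2a$, any output $\widehat x$ satisfies $\|\widehat x-x_\star^{+}\|+\|\widehat x-x_\star^-\|\ge 2a$. Define the test $\widehat s=+1$ iff $\widehat x$ is closer to $x_\star^+$. When the test errs, the true gap is at least $R_0La$. Averaging over the two hypotheses then gives
\[
 \max_\pm\E_\pm\Gap_\star^{\rm M}(\widehat x)\ge\tfrac12\bigl(\E_++\E_-\bigr)\ge \tfrac{3}{8}R_0La .
\]

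Finally I choose $a=\min\{R_0,\ \sigma/(4L\sqrt{mKN})\}$. Then $\|x_0-x_\star\|=a\le R_0$ and the testing condition $mKN a^2L^2/\sigma^2\le1/16$ holds. This yields the lower bound $\tfrac38\min\{LR_0^2,\sigma R_0/(4\sqrt{mKN})\}\ge\tfrac{3}{32}\min\{LR_0^2,\sigma R_0/\sqrt{mKN}\}$, which is stronger than the claimed $3/128$; the slack absorbs any factor lost if I use the weaker comparator bound $\|x-x_\star\|$ only on one side. For $\sigma=0$ the bound is trivial.

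The main obstacle is step two. I must check that the Minty gap on the root-centered ball is exactly $R_0\|G(x)\|$ for the skew instance, so that it is proportional to the distance to the true root; the ball moves with the hypothesis, but the identity holds separately under each. I must also verify that the transcript depends on the hypothesis only through the Gaussian coordinate. The other coordinate of the response is the deterministic $J(x-x_0)$, which is allowed by Remark~\ref{rem:gaussian-adaptive}. The noise must sit in the same coordinate as the shift; otherwise a noiseless coordinate would reveal the sign.
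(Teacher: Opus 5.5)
Your proof is correct, but it uses a different hard pair than the paper. The paper keeps the roots at $(\pm R_0,0)$ and weakens the informative direction: $G_\theta(x)=(\lambda(x_1-\theta),Lx_2)$ with $\lambda=\min\{L,\sigma/(4R_0\sqrt{mKN})\}$. It puts the Gaussian noise in the first coordinate, tests by $\operatorname{sign}\widehat x_1$, and uses the explicit comparator $u=x_\star+(\pm R_0/2,0)$. On an error this yields a gap of at least $\lambda R_0^2/4$, hence the constant $3/128$.

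You instead keep the full strength $L$, shrink the separation $a$ of the roots, and use a planar rotation. For a rotation the root-centered Minty gap equals $R_0\|G(x)\|=R_0L\|x-x_\star\|$, the identity from the proof of Corollary~\ref{cor:dimref:depth}. Converting a testing error into a gap therefore loses nothing, which is why you get the better constant $3/32$, with an instance structurally aligned with the depth construction. The paper's instance has other advantages: it is a gradient field, its roots are at distance exactly $R_0$, and the same diagonal shape is reused in Proposition~\ref{nn:statistical-lower} and Lemma~\ref{lem:smr-statistical}.

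One detail of your padding for $d>2$ needs repair.
\begin{itemize}
\item Zero padding destroys the uniqueness of the root, which the statement asserts.
\item With the block $LI$ the operator is no longer skew, so $\Gap^{\rm M}_\star(x)=R_0\|G(x)\|$ stops being an identity. For odd $d$ no invertible skew padding exists at all.
\end{itemize}
The fix is to restrict the comparators to $u=x_\star+(h,0)$ with $h$ in the plane. Writing $(\cdot)_{12}$ for the first two coordinates, this gives $\Gap^{\rm M}_\star(x)\ge R_0L\|(x-x_\star)_{12}\|$. The two roots differ only in the plane, so your nearest-root test still forces $\|(\widehat x-x_\star)_{12}\|\ge a$ on an error, and the argument goes through.

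Also give the root the same extra coordinates as $x_0$, or translate $x_0$ to $0$. Then $\|x_0-x_\star\|=a\le R_0$, and the extra coordinates of every response remain known deterministic functions of the query, as Remark~\ref{rem:gaussian-adaptive} requires.
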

\begin{proof}
If $\sigma=0$ there is nothing to prove. Let $\sigma>0$,
$Q_{\rm tot}=mKN$, and
$\lambda=\min\{L,\sigma/(4R_0\sqrt{Q_{\rm tot}})\}$. For
$\theta_{\rm test}\in\{-R_0,R_0\}$ and
$x=(x_1,x_2)\in\mathbb R^2$ define
\[
 G_{\theta_{\rm test}}(x)
 =\bigl(\lambda(x_1-\theta_{\rm test}),\,Lx_2\bigr),\qquad
 \widehat G_{\theta_{\rm test}}(x)
 =G_{\theta_{\rm test}}(x)+(\sigma\zeta,0),
 \quad \zeta\sim\mathcal N(0,1).
\]
With $x_0=0$, the operator $G_{\theta_{\rm test}}$ is monotone and
$L$-Lipschitz, its unique root is
$x_{\star,\theta_{\rm test}}=(\theta_{\rm test},0)$, and
$\|x_0-x_{\star,\theta_{\rm test}}\|=R_0$. The oracle
satisfies~\eqref{an:noise}. For $d>2$, add coordinates on which the
operator is $Lx_j$ without noise.

\emph{Reduction to testing.} Give $\lambda$ to the algorithm, allow it
to communicate after every call, and let it make exactly $Q_{\rm tot}$
calls; this can only help it. Subtracting the known value $\lambda x_1$
from the first coordinate of each response leaves $Q_{\rm tot}$
independent $\mathcal N(-\lambda\theta_{\rm test},\sigma^2)$ observations,
and the other coordinates carry no information about $\theta_{\rm test}$.
Since $Q_{\rm tot}\lambda^2R_0^2/\sigma^2\le1/16$,
Lemma~\ref{lem:gaussian-testing} shows that every test of the sign of
$\theta_{\rm test}$ based on the transcript errs with average probability
at least $3/8$. We use the test that declares the sign of $\widehat x_1$
(with either convention at $\widehat x_1=0$). When it errs,
$\widehat x_1$ and $\theta_{\rm test}$ lie on different sides of $0$, so
$|\widehat x_1-\theta_{\rm test}|\ge R_0$.

\emph{Gap on an error.} On this event put
$\Delta_{\rm test}=(R_0/2)
\operatorname{sign}(\widehat x_1-\theta_{\rm test})$ and
$u=x_{\star,\theta_{\rm test}}+(\Delta_{\rm test},0)$. Then
$\|u-x_{\star,\theta_{\rm test}}\|\le R_0$ and
$G_{\theta_{\rm test}}(u)=(\lambda\Delta_{\rm test},0)$. Hence
\[
 \Gap_{\star}^{\rm M}(\widehat x)
 \ge\langle G_{\theta_{\rm test}}(u),\widehat x-u\rangle
 =\lambda\Delta_{\rm test}
   (\widehat x_1-\theta_{\rm test}-\Delta_{\rm test})
 =\lambda\left(\frac{R_0}2|\widehat x_1-\theta_{\rm test}|
   -\frac{R_0^2}4\right)
 \ge\frac{\lambda R_0^2}4.
\]
The gap is nonnegative on the complementary event. Averaging over
$\theta_{\rm test}$,
\[
 \frac12\sum_{\theta_{\rm test}=\pm R_0}\E\Gap_\star^{\rm M}(\widehat x)
 \ge\frac38\cdot\frac{\lambda R_0^2}4
 =\frac{3\lambda R_0^2}{32}
 \ge\frac3{128}\min\left\{LR_0^2,
       \frac{\sigma R_0}{\sqrt{Q_{\rm tot}}}\right\},
\]
and one of the two signs attains at least this average.
\end{proof}

\begin{proof}[Proof of Theorem~\ref{thm:affine-additive-main}]
\emph{Upper bound.} This is Corollary~\ref{an:mixed-explicit}, which
holds in every dimension.

\emph{Lower bound.} Let $G_{\rm hard}$ and $\widehat G_{\rm hard}$ be
the operator and the SGC oracle of Lemma~\ref{lem:relmm:masked}. Add one
coordinate with known operator $Lz_{\rm aux}$ and Gaussian noise:
\[
 G(z_{\rm hard},z_{\rm aux})
 =(G_{\rm hard}(z_{\rm hard}),Lz_{\rm aux}),\qquad
 \widehat G(z_{\rm hard},z_{\rm aux})
 =(\widehat G_{\rm hard}(z_{\rm hard}),Lz_{\rm aux}+\sigma\zeta),
 \qquad \zeta\sim\mathcal N(0,1),
\]
with an independent $\zeta$ at every call. This operator is monotone and
$L$-Lipschitz and has a unique root at distance $R_0$ from the initial
point. Its conditional variance satisfies
\[
 \operatorname{Var}(\widehat G(z_{\rm hard},z_{\rm aux})\mid\mathcal F)
 \le(\rho-1)\|G_{\rm hard}(z_{\rm hard})\|^2+\sigma^2
 \le(\rho-1)\|G(z_{\rm hard},z_{\rm aux})\|^2+\sigma^2,
\]
so~\eqref{an:noise} holds, and its root variance equals $\sigma^2$. An
algorithm can generate the last coordinate of every response itself, so
the extension gives it no information, and comparators of the form
$(u,0)$ show that its gap is at least the gap of the original instance.
Hence~\eqref{pv:minimax-comparison} still holds.

Both this distribution and the one of Lemma~\ref{an:statistical} live in
dimension
\begin{equation*}
 d_{\rm mix}=\max\{d_{\rm hard}+1,2\}=O\bigl(m(KN)^2\bigr),
\end{equation*}
after padding with noiseless coordinates $Lx_j$. Let
$A_0=LR_0^2$, $A_{\rm info}=A_0/(1+N\InfoDepth)$ and
$R_{\rm stat}=\sigma R_0/\sqrt{mKN}$. The
equal mixture of the two distributions is a fixed distribution that
does not depend on the algorithm, and its expected loss is at least
\[
 \frac12\left(\frac{A_{\rm info}}{96}
       +\frac3{128}\min\{A_0,R_{\rm stat}\}\right)
 \ge\frac1{192}\bigl(A_{\rm info}+\min\{A_0,R_{\rm stat}\}\bigr)
 \ge\frac1{192}\min\{A_0,A_{\rm info}+R_{\rm stat}\}.
\]
Thus
\begin{equation}\label{an:combined-bound}
 \mathcal E_{d_{\rm mix}}^{\rm aff,mix}
 \ge\frac1{192}\min\left\{LR_0^2,\
       \frac{LR_0^2}{1+N\InfoDepth}
       +\frac{\sigma R_0}{\sqrt{mKN}}\right\}.
\end{equation}

\emph{Comparison.} Let $K_{\rm eff}=K/\rho$ and
$K_{\rm scale}=\max\{K_{\rm eff},\min\{1,mK_{\rm eff}\}\}$. Then
\[
 \frac1{K_{\rm scale}}
 =\min\Bigl\{\frac\rho K,\max\Bigl\{1,\frac\rho{mK}\Bigr\}\Bigr\}
 \le\min\Bigl\{\frac\rho K,1+\frac\rho{mK}\Bigr\}
 \le\frac2{K_{\rm scale}} .
\]
Moreover $\InfoDepth\le(1+\log(2m)/\log2)K_{\rm scale}$: if
$mK_{\rm eff}\le1$, then
$K_{\rm scale}=mK_{\rm eff}$ and
$\InfoDepth\le mK_{\rm eff}$ by definition; otherwise
$K_{\rm scale}\ge\max\{K_{\rm eff},1\}$ and
$\InfoDepth\le K_{\rm eff}+\log(2m)/\log2
\le(1+\log(2m)/\log2)K_{\rm scale}$. The
upper bound of Corollary~\ref{an:mixed-explicit}, without its constants,
is, for a universal constant $C_{\rm cmp}>0$, bounded as follows. Define
\[
 R_{\rm alg}:=\frac{A_0}N
       \min\left\{\frac\rho K,1+\frac\rho{mK}\right\}+R_{\rm stat}.
\]
Then
\[
 \min\{A_0,R_{\rm alg}\}
 \le4\min\left\{A_0,\frac{A_0}{1+NK_{\rm scale}}+R_{\rm stat}\right\}
 \le C_{\rm cmp}\log(2m)
       \min\{A_0,A_{\rm info}+R_{\rm stat}\}.
\]
For the first inequality, if $NK_{\rm scale}\ge1$ then
$(A_0/N)\cdot2/K_{\rm scale}\le4A_0/(1+NK_{\rm scale})$, and if
$NK_{\rm scale}<1$ the
right side is at least $4\min\{A_0,A_0/2\}\ge A_0$. For the second,
$1+N\InfoDepth
\le(1+\log(2m)/\log2)(1+NK_{\rm scale})$ and
$1+\log(2m)/\log2\le2\log(2m)/\log2$. Together
with~\eqref{an:combined-bound}, this proves the theorem.

\emph{The case $\rho=1$.} Then $\InfoDepth=K$ and
$\min\{\rho/K,1+\rho/(mK)\}=1/K$. Since $KN\ge1$, we have
$KN\le1+KN\le2KN$, so the upper bound~\eqref{an:mixed-explicit-bound}
and the lower bound~\eqref{an:combined-bound} agree up to universal
constants, which proves~\eqref{eq:affine-uniform-specialization},
including $K=N=1$.
\end{proof}

\section{Lower bounds for nonlinear VI under SGC}
\label{vi:lower-section}

This appendix proves the lower bounds of Theorem~\ref{vi:main-monotone}:
Theorem~\ref{vi:uvi-lower} for the Minty gap and Theorem~\ref{vi:sp-lower}
for saddle problems. It also proves the distance lower bound for strongly
monotone operators (Corollary~\ref{vi:strong-exponent}) used in
Appendix~\ref{app:strong-mixed-refinement}. All three constructions
follow the same scheme.
\begin{enumerate}[label=(\arabic*),leftmargin=*]
\item A smooth monotone \emph{chain} on a finite-dimensional Euclidean space whose operator, at a
point with a small coordinate tail, depends only on the revealed prefix
and one additional coordinate (Section~\ref{vi:robust-core}).
\item A \emph{localization} that keeps the operator monotone and
Lipschitz and makes it a known function far from the origin
(Section~\ref{vi:localization-subsection}).
\item A random orthonormal frame in a high-dimensional space that
\emph{hides} the chain, combined with a random mask so that one new
coordinate is revealed only by an informative call
(Section~\ref{vi:haar-subsection}).
\end{enumerate}
Sections~\ref{vi:monotone-lower-subsection},
\ref{vi:product-cutoff-subsection} and~\ref{vi:strong-chain-subsection}
assemble the three lower bounds. Throughout, $\psi(s)=10s^3-15s^4+6s^5$ on
$[0,1]$, extended by $0$ for $s<0$ and by $1$ for $s>1$. This function is
$C^2$ on $\mathbb R$, and
\begin{equation}\label{vi:psi-bounds}
 0\le\psi'(s)=30s^2(1-s)^2\le\frac{15}8,\qquad
 |\psi''(s)|=60s(1-s)|1-2s|\le6\qquad(0\le s\le1).
\end{equation}

\subsection{An affine chain}
\label{vi:affine-subsection}

For an integer $n\ge1$, let $D_n:\mathbb R^n\to\mathbb R^n$,
$D_nu=(u_1-u_2,\ldots,u_{n-1}-u_n,u_n)$. Write $e_j$ for the $j$th
standard basis vector and $\mathbf1$ for the vector of ones. Then
$\|D_n\|\le2$, and
\[
 (D_n^\top v)_1=v_1,\qquad (D_n^\top v)_j=v_j-v_{j-1}\ (j\ge2),\qquad
 D_n\mathbf1=e_n .
\]

\begin{lemma}[Gap of the affine chain]\label{vi:affine-comparison}
For $r,\gamma>0$, the operator
\begin{equation}\label{vi:affine-chain}
 F_n(u,v)=\gamma\Bigl(D_n^\top v+\frac r{\sqrt n}e_1,\ -D_nu\Bigr)
\end{equation}
on $\mathbb R^n\times\mathbb R^n$ is monotone and $2\gamma$-Lipschitz, and its
root is $(0,-r\mathbf1/\sqrt n)$. On the product of two balls of radius
$r$ centered at $0$, the Minty gap and the primal--dual gap of
$\Phi_n(u,v)=\gamma(v^\top D_nu+ru_1/\sqrt n)$ coincide: for every
$(\widehat u,\widehat v)$ both equal
\begin{equation}\label{vi:affine-gap}
 \gamma\Bigl\{\frac r{\sqrt n}\widehat u_1+r\|D_n\widehat u\|
 +r\Bigl\|D_n^\top\widehat v+\frac r{\sqrt n}e_1\Bigr\|\Bigr\}
 \ \ge\ \frac{\gamma r}{\sqrt n}\Bigl|\widehat v_n+\frac r{\sqrt n}\Bigr|.
\end{equation}
\end{lemma}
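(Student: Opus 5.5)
Monotonicity and the Lipschitz constant come first. The linear part $(u,v)\mapsto\gamma(D_n^\top v,-D_nu)$ has the skew block matrix $\gamma\begin{pmatrix}0&D_n^\top\\-D_n&0\end{pmatrix}$, so $\langle F_n(z)-F_n(z'),z-z'\rangle=0$. Its operator norm equals $\gamma\|D_n\|\le2\gamma$, since $D_n=I-S$ with $S$ the shift and $\|S\|\le1$. For the root, we need $D_nu=0$, which forces $u=0$ because $D_n$ is invertible (upper bidiagonal with unit diagonal). We also need $D_n^\top v=-(r/\sqrt n)e_1$. Solving the recursion $v_1=-r/\sqrt n$, $v_j-v_{j-1}=0$ gives $v=-r\mathbf 1/\sqrt n$.

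The gap formula comes next. Since $F_n$ is affine with skew linear part $J$, I would use the same trick as in the gap identity of Corollary~\ref{cor:dimref:depth}. Write a comparator as $w=z_\star+h$. Then $\langle F_n(w),\widehat z-w\rangle=\langle Jh,\widehat z-z_\star\rangle$, because $\langle Jh,h\rangle=0$. Rewriting via $J^\top=-J$ gives $\langle h,-F_n(\widehat z)\rangle$.

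However, the test set here is the product of balls centered at $0$, not at $z_\star$. So I would compute directly instead. For $w=(a,c)$ with $\|a\|,\|c\|\le r$,
\[
\langle F_n(w),\widehat z-w\rangle=\gamma\bigl(\langle D_n^\top c,\widehat u\rangle+\tfrac r{\sqrt n}(\widehat u_1-a_1)-\langle D_na,\widehat v\rangle\bigr),
\]
using $\langle D_n^\top c,a\rangle=\langle c,D_na\rangle$. This separates into a function of $c$ plus a function of $a$. The supremum over $c$ gives $r\|D_n\widehat u\|$. The supremum over $a$ of $-\langle a,D_n^\top\widehat v+(r/\sqrt n)e_1\rangle$ gives $r\|D_n^\top\widehat v+(r/\sqrt n)e_1\|$. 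Adding the constant $(r/\sqrt n)\widehat u_1$ yields \eqref{vi:affine-gap}.

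For the primal--dual gap, I would compute $\sup_{\|c\|\le r}\Phi_n(\widehat u,c)=\gamma(r\|D_n\widehat u\|+r\widehat u_1/\sqrt n)$. I would also compute $\inf_{\|a\|\le r}\Phi_n(a,\widehat v)=-\gamma r\|D_n^\top\widehat v+(r/\sqrt n)e_1\|$, since $\Phi_n(a,\widehat v)=\gamma\langle a,D_n^\top\widehat v+(r/\sqrt n)e_1\rangle$. Their difference is the same expression, so the two gaps coincide.

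The final inequality is the step needing care, because the term $(r/\sqrt n)\widehat u_1$ can be negative. I would first bound $|\widehat u_1|\le\|\widehat u\|=\|D_n^{-1}D_n\widehat u\|$. Here the entries of $D_n^{-1}$ are upper-triangular ones, so $\widehat u_1=\langle\mathbf 1,D_n\widehat u\rangle$ and hence $|\widehat u_1|\le\sqrt n\|D_n\widehat u\|$. This gives $(r/\sqrt n)\widehat u_1+r\|D_n\widehat u\|\ge0$. For the third term, I would test $D_n^\top\widehat v+(r/\sqrt n)e_1$ against $\mathbf 1/\sqrt n$. The inner product telescopes: $\langle D_n^\top\widehat v,\mathbf 1\rangle=\langle\widehat v,D_n\mathbf 1\rangle=\widehat v_n$, and $\langle e_1,\mathbf 1\rangle=1$. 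Cauchy--Schwarz then gives $\|\cdot\|\ge|\widehat v_n+r/\sqrt n|/\sqrt n$. Multiplying by $\gamma r$ finishes the bound.
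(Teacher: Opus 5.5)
Your proof is correct and follows essentially the same route as the paper. The blockwise expansion, in which $\langle D_n^\top c,a\rangle$ cancels against $\langle D_na,c\rangle$, is the coordinate form of the paper's use of $\langle Aw,w\rangle=0$ for the skew linear part. After that, both arguments take the two block suprema separately and bound the result by Cauchy--Schwarz against $\mathbf 1$, using $\mathbf 1^\top D_n\widehat u=\widehat u_1$ and $D_n\mathbf 1=e_n$; your preliminary bound $|\widehat u_1|\le\|\widehat u\|$ is not needed, since the identity $\widehat u_1=\langle\mathbf 1,D_n\widehat u\rangle$ does all the work.
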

\begin{proof}
Write $F_n(z)=Az+h$ with $A(u,v)=\gamma(D_n^\top v,-D_nu)$ and
$h=\gamma(re_1/\sqrt n,0)$. The matrix $A$ is skew-symmetric, so $F_n$ is
monotone and $\|A\|=\gamma\|D_n\|\le2\gamma$. The root equation is
$D_n^\top v=-re_1/\sqrt n$ and $D_nu=0$, whose solution is $u=0$,
$v=-r\mathbf1/\sqrt n$.

\emph{Minty gap.} Since $\langle Aw,w\rangle=0$ and
$\langle Aw,\widehat z\rangle=-\langle w,A\widehat z\rangle$,
\[
 \langle F_n(w),\widehat z-w\rangle
 =\langle h,\widehat z\rangle+\langle w,-A\widehat z-h\rangle .
\]
Here $-A\widehat z-h=\gamma(-D_n^\top\widehat v-re_1/\sqrt n,\ D_n\widehat u)$,
and the supremum of $\langle w,\cdot\rangle$ over the product of balls is
$r$ times the sum of the norms of the two blocks. This gives the
expression in~\eqref{vi:affine-gap}.

\emph{Primal--dual gap.} Over $\|u\|,\|v\|\le r$,
\[
 \sup_v\Phi_n(\widehat u,v)-\inf_u\Phi_n(u,\widehat v)
 =\gamma\Bigl(r\|D_n\widehat u\|+\frac r{\sqrt n}\widehat u_1\Bigr)
 +\gamma r\Bigl\|D_n^\top\widehat v+\frac r{\sqrt n}e_1\Bigr\|,
\]
which is the same expression.

\emph{Lower bound.} By Cauchy--Schwarz,
$\|D_n\widehat u\|\ge|\mathbf1^\top D_n\widehat u|/\sqrt n
=|\widehat u_1|/\sqrt n$, so the first two terms have a nonnegative sum.
The coordinates of $D_n^\top\widehat v+re_1/\sqrt n$ sum to
$(D_n\mathbf1)^\top\widehat v+r/\sqrt n=\widehat v_n+r/\sqrt n$, so its
norm is at least $|\widehat v_n+r/\sqrt n|/\sqrt n$.
\end{proof}

\subsection{A flat convex--concave edge and a robust chain}
\label{vi:robust-core}

For $\epsilon>0$ define
\[
 \chi_{\rm edge}(t)=1-\psi(|t|-1),\qquad
 b_\epsilon(t)=t\,\chi_{\rm edge}(t/\epsilon),
\]
so that $\chi_{\rm edge}=1$ on $[-1,1]$,
$\chi_{\rm edge}=0$ outside $[-2,2]$, and $b_\epsilon(t)=t$
for $|t|\le\epsilon$. Let $\zeta_\epsilon$ be the even function with
\[
 \zeta_\epsilon(0)=\zeta_\epsilon'(0)=0,\qquad
 \zeta_\epsilon''(t)=2\epsilon|b_\epsilon''(t)|,
\]
and, for $\varsigma\in\{-1,1\}$,
\begin{equation}\label{vi:flat-edge}
 E_\epsilon^\varsigma(u,v)=\varsigma\bigl[uv-b_\epsilon(u)b_\epsilon(v)\bigr]
                 +\zeta_\epsilon(u)-\zeta_\epsilon(v).
\end{equation}
The edge $E_\epsilon^\varsigma$ agrees with the bilinear function
$\varsigma uv$ up to a small gradient error, and it vanishes, together with
its gradient, near the origin.

\begin{lemma}[Flat edge]\label{vi:edge-lemma}
The function $E_\epsilon^\varsigma$ is $C^2$ and convex--concave. Its value
and gradient vanish on $\{|u|,|v|\le\epsilon\}$. Its second derivatives
satisfy $|\partial_{uu}E_\epsilon^\varsigma|,|\partial_{vv}E_\epsilon^\varsigma|\le512$
and $|\partial_{uv}E_\epsilon^\varsigma|\le82$, and every coordinate of
$\nabla E_\epsilon^\varsigma-\nabla(\varsigma uv)$ has absolute value at most
$274\epsilon$.
\end{lemma}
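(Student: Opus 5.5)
The plan is to reduce everything to elementary pointwise bounds on the one-dimensional functions $b_\epsilon$ and $\zeta_\epsilon$, computed from~\eqref{vi:psi-bounds}, and then read off each claim from the explicit first and second derivatives of~\eqref{vi:flat-edge}.

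\emph{Step 1: the profile $b_\epsilon$.} Since $\psi$ is $C^2$ and $\psi(s)=0$ for $s\le0$, the function $\chi_{\rm edge}$ is constant equal to $1$ near $t=0$. Hence the absolute value causes no loss of smoothness, and $\chi_{\rm edge}$ is $C^2$ and even. It satisfies $0\le\chi_{\rm edge}\le1$, $|\chi_{\rm edge}'|\le15/8$ and $|\chi_{\rm edge}''|\le6$, and its derivatives are supported in $1\le|t|\le2$. Consequently $b_\epsilon$ is odd and $C^2$, equals $t$ on $[-\epsilon,\epsilon]$, and vanishes for $|t|\ge2\epsilon$. I then aim for three bounds:
\[
 |b_\epsilon|\le2\epsilon,\qquad
 |b_\epsilon'|\le\chi_{\rm edge}+\tfrac{|t|}\epsilon|\chi_{\rm edge}'|\le1+2\cdot\tfrac{15}8,
\]
\[
 |b_\epsilon''(t)|=\Bigl|\tfrac2\epsilon\chi_{\rm edge}'(t/\epsilon)+\tfrac t{\epsilon^2}\chi_{\rm edge}''(t/\epsilon)\Bigr|\le\tfrac{63}{4\epsilon},
\]
where $b_\epsilon''$ is supported in $\epsilon\le|t|\le2\epsilon$.

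\emph{Step 2: the profile $\zeta_\epsilon$.} The function $\zeta_\epsilon''=2\epsilon|b_\epsilon''|$ is continuous and even, because $b_\epsilon''$ is odd. Hence $\zeta_\epsilon$ is a well-defined even $C^2$ function. Since $\zeta_\epsilon''=0$ on $[-\epsilon,\epsilon]$ and $\zeta_\epsilon(0)=\zeta_\epsilon'(0)=0$, the function $\zeta_\epsilon$ vanishes there together with $\zeta_\epsilon'$. Integrating $\zeta_\epsilon''$ over an interval of length at most $\epsilon$ gives
\[
 |\zeta_\epsilon'|\le2\epsilon\cdot\epsilon\cdot\tfrac{63}{4\epsilon}=\tfrac{63}2\epsilon,
 \qquad
 \zeta_\epsilon''\le\tfrac{63}2 .
\]

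\emph{Step 3: the claims about $E_\epsilon^\varsigma$.} On $\{|u|,|v|\le\epsilon\}$ we have $b_\epsilon(u)b_\epsilon(v)=uv$ and $\zeta_\epsilon=0$, so the value vanishes. The gradient vanishes there as well, since $\partial_u$ of $uv-b_\epsilon(u)b_\epsilon(v)$ equals $v-b_\epsilon'(u)b_\epsilon(v)=v-v=0$, and symmetrically for $\partial_v$. For the gradient error, write
\[
 \partial_uE_\epsilon^\varsigma-\varsigma v=-\varsigma b_\epsilon'(u)b_\epsilon(v)+\zeta_\epsilon'(u),
\]
which is at most $(19/4)(2\epsilon)+(63/2)\epsilon<274\epsilon$ in absolute value; the $v$-coordinate is symmetric. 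For the mixed derivative, $\partial_{uv}E_\epsilon^\varsigma=\varsigma(1-b_\epsilon'(u)b_\epsilon'(v))$, so $|\partial_{uv}E_\epsilon^\varsigma|\le1+(19/4)^2<82$. For the pure second derivatives,
\[
 \partial_{uu}E_\epsilon^\varsigma=-\varsigma b_\epsilon''(u)b_\epsilon(v)+\zeta_\epsilon''(u),
\]
whose absolute value is at most $2\zeta_\epsilon''(u)\le63\le512$, and similarly for $\partial_{vv}$.

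\emph{Step 4: convexity--concavity.} This is the one non-routine point, and the definition of $\zeta_\epsilon$ is designed exactly for it. Using $|b_\epsilon(v)|\le2\epsilon$,
\[
 |\varsigma b_\epsilon''(u)b_\epsilon(v)|\le2\epsilon|b_\epsilon''(u)|=\zeta_\epsilon''(u),
\]
so $\partial_{uu}E_\epsilon^\varsigma\ge0$ for every fixed $v$. Likewise
\[
 \partial_{vv}E_\epsilon^\varsigma=-\varsigma b_\epsilon(u)b_\epsilon''(v)-\zeta_\epsilon''(v)\le0 .
\]
A $C^2$ function of one variable with nonnegative second derivative is convex. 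Hence $E_\epsilon^\varsigma(\cdot,v)$ is convex and $E_\epsilon^\varsigma(u,\cdot)$ is concave, which is the convex--concave property. Finally, $E_\epsilon^\varsigma$ is $C^2$ as a sum of products of $C^2$ functions.

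The step to get right is the sup bound $|b_\epsilon|\le2\epsilon$ used in Step 4. It relies on $0\le\chi_{\rm edge}\le1$ and on $\chi_{\rm edge}$ vanishing for $|t|\ge2$, and it is what lets the correction $\zeta_\epsilon$ dominate the indefinite term. Everything else is bookkeeping of constants from~\eqref{vi:psi-bounds}.
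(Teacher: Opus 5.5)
Your proof is correct and follows the paper's argument step for step. You bound $b_\epsilon$, $b_\epsilon'$, $b_\epsilon''$ and $\zeta_\epsilon'$ on the support $\epsilon\le|t|\le2\epsilon$, read off the explicit derivatives of $E_\epsilon^\varsigma$, and obtain convexity--concavity from $|b_\epsilon''(u)b_\epsilon(v)|\le2\epsilon|b_\epsilon''(u)|=\zeta_\epsilon''(u)$, exactly as the paper does.

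The only difference is in the constants. You use the sharp bounds $|\chi_{\rm edge}'|\le15/8$ and $|\chi_{\rm edge}''|\le6$ from \eqref{vi:psi-bounds}, whereas the paper uses the cruder $4$ and $60$, which give its intermediate values $9$, $128/\epsilon$ and $256$. Your resulting bounds are therefore smaller but still within the stated $512$, $82$ and $274\epsilon$.
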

\begin{proof}
Write $b,\zeta_{\rm edge},E$ for
$b_\epsilon,\zeta_\epsilon,E_\epsilon^\varsigma$.
By~\eqref{vi:psi-bounds}, $|\chi_{\rm edge}'|\le4$ and
$|\chi_{\rm edge}''|\le60$. From
\[
 b'(t)=\chi_{\rm edge}(t/\epsilon)
 +\frac t\epsilon\chi_{\rm edge}'(t/\epsilon),\qquad
 b''(t)=\frac1\epsilon\Bigl[2\chi_{\rm edge}'(t/\epsilon)
 +\frac t\epsilon\chi_{\rm edge}''(t/\epsilon)\Bigr]
\]
and $|t|\le2\epsilon$ on the support of $\chi_{\rm edge}(t/\epsilon)$ we get
\[
 |b|\le2\epsilon,\qquad |b'|\le1+2\cdot4=9,\qquad
 |b''|\le\frac{2\cdot4+2\cdot60}\epsilon=\frac{128}\epsilon .
\]
Since $b(t)=t$ for $|t|\le\epsilon$, the function $\zeta_{\rm edge}''$ is continuous,
even, nonnegative, bounded by $256$, and supported in
$\{\epsilon\le|t|\le2\epsilon\}$. Integrating from $0$ gives
$|\zeta_{\rm edge}'|\le256\epsilon$, and
$\zeta_{\rm edge}=\zeta_{\rm edge}'=0$ on $[-\epsilon,\epsilon]$. Direct
differentiation gives
\[
 \partial_{uu}E=\zeta_{\rm edge}''(u)-\varsigma b''(u)b(v),\qquad
 \partial_{vv}E=-\zeta_{\rm edge}''(v)-\varsigma b(u)b''(v),\qquad
 \partial_{uv}E=\varsigma\bigl[1-b'(u)b'(v)\bigr].
\]
Since $|b''(u)b(v)|\le2\epsilon|b''(u)|=\zeta_{\rm edge}''(u)$, we have
$\partial_{uu}E\ge0$, and similarly $\partial_{vv}E\le0$; hence $E$ is
convex--concave. The bounds give $|\partial_{uu}E|\le256+256=512$,
likewise for $\partial_{vv}E$, and $|\partial_{uv}E|\le1+81=82$. The
gradient error is
$\partial_uE-\varsigma v=-\varsigma b'(u)b(v)+\zeta_{\rm edge}'(u)$, of absolute value at
most $9\cdot2\epsilon+256\epsilon=274\epsilon$; the error
$\partial_vE-\varsigma u=-\varsigma b(u)b'(v)-\zeta_{\rm edge}'(v)$ obeys
the same bound. For $|u|,|v|\le\epsilon$, $b(u)=u$, $b(v)=v$ and
$\zeta_{\rm edge}=\zeta_{\rm edge}'=0$, so $E$ and $\nabla E$ vanish.
\end{proof}

For an integer $n\ge1$ and scales $r,\lambda>0$, define the chain
dimension $d_{\rm ch}=2n$ and chain amplitude $a_{\rm ch}=r/\sqrt n$,
define on $\mathbb R^n\times\mathbb R^n$
\begin{equation}\label{vi:robust-chain}
 \Phi_\epsilon(u,v)=\lambda\Bigl[a_{\rm ch}u_1+
 \sum_{i=1}^nE_\epsilon^{+1}(u_i,v_i)+
 \sum_{i=1}^{n-1}E_\epsilon^{-1}(u_{i+1},v_i)\Bigr],
 \qquad G_\epsilon=(\nabla_u\Phi_\epsilon,-\nabla_v\Phi_\epsilon).
\end{equation}
Replacing every edge by its bilinear part gives the affine reference
$\Phi_0(u,v)=\lambda(v^\top D_nu+a_{\rm ch}u_1)$, whose saddle operator
$G_0=\lambda(D_n^\top v+a_{\rm ch}e_1,-D_nu)$ is~\eqref{vi:affine-chain} with
$\gamma=\lambda$ and root $z_{\rm ref}=(0,-a_{\rm ch}\mathbf1)$. We order the
coordinates as $(u_1,v_1,u_2,v_2,\ldots,u_n,v_n)$; then every edge
couples two consecutive coordinates.

\begin{lemma}[Robust chain]\label{vi:core-root}
The operator $G_\epsilon$ is monotone and $\ell$-Lipschitz with
$\ell=1200\lambda$, and
\begin{equation}\label{vi:uniform-core-error}
 \|G_\epsilon(z)-G_0(z)\|\le
 \Delta_{\rm op}:=548\lambda\epsilon\sqrt{d_{\rm ch}}
 \qquad(z\in\mathbb R^{d_{\rm ch}}).
\end{equation}
It has a root within distance $548n\epsilon\sqrt{d_{\rm ch}}$ of
$z_{\rm ref}$. If $0\le s<d_{\rm ch}$ and all coordinates of $z$ after position $s$ have absolute
value at most $\epsilon$, then $\Phi_\epsilon(z)$ and $G_\epsilon(z)$ depend
only on the first $s+1$ coordinates of $z$.
\end{lemma}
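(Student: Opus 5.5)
Monotonicity is immediate. $\Phi_\epsilon$ is a sum of the linear term $\lambda a_{\rm ch}u_1$ and of edges $E^{\pm1}_\epsilon$, each $C^2$ and convex--concave in its own pair $(u_i,v_j)$ by Lemma~\ref{vi:edge-lemma}. Hence $\Phi_\epsilon$ is convex in $u$ and concave in $v$, and its saddle operator $G_\epsilon$ is monotone.

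For the Lipschitz constant I will bound the Jacobian of $G_\epsilon$. In the interleaved ordering $(u_1,v_1,\ldots,u_n,v_n)$, each coordinate occurs in at most two edges, so the Hessian is banded: tridiagonal in this ordering. A diagonal entry collects at most two second derivatives bounded by $512$ each, so it is at most $1024\lambda$. An off-diagonal entry is a single mixed derivative, at most $82\lambda$. The Jacobian of $G_\epsilon$ is the Hessian with the $v$-rows negated, which does not change absolute row sums. Each row sum is therefore at most $1024\lambda+2\cdot82\lambda=1188\lambda$, and likewise each column sum. Schur's test then gives operator norm at most $1188\lambda\le1200\lambda$.

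For~\eqref{vi:uniform-core-error}, $G_\epsilon-G_0$ is $\lambda$ times the sum over edges of $\nabla E-\nabla(\varsigma uv)$. Each coordinate receives contributions from at most two edges, and each contribution is at most $274\epsilon$ by Lemma~\ref{vi:edge-lemma}. So every coordinate of the difference is at most $548\lambda\epsilon$, and the Euclidean norm is at most $548\lambda\epsilon\sqrt{d_{\rm ch}}$.

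For the root I will use a perturbation argument. Write $G_\epsilon=G_0+P$ with $\|P\|\le\Delta_{\rm op}$, and $G_0(z)=A(z-z_{\rm ref})$ with $A$ skew and invertible. $D_n$ is invertible: it is upper bidiagonal with unit diagonal, and its inverse is the upper triangular matrix of ones, so $\|D_n^{-1}\|\le n$. Hence $\|A^{-1}\|\le n/\lambda$. A root of $G_\epsilon$ is a fixed point of $z\mapsto z_{\rm ref}-A^{-1}P(z)$. This map sends every $z$ into the ball $B(z_{\rm ref},n\Delta_{\rm op}/\lambda)=B(z_{\rm ref},548n\epsilon\sqrt{d_{\rm ch}})$, and it is continuous. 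Brouwer's theorem on that compact convex ball then yields the root.

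This is the step I expect to need the most care: the constant must come out exactly as $548n\epsilon\sqrt{d_{\rm ch}}$, which requires confirming $\|D_n^{-1}\|\le n$ (a Frobenius norm bound gives only $\sqrt{n(n+1)/2}\le n$, which suffices). Contraction fails because $\|A^{-1}\|$ times the Lipschitz constant of $P$ need not be below one, so I will rely on Brouwer rather than Banach.

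For the locality claim, suppose every coordinate after position $s$ has absolute value at most $\epsilon$. Every edge whose two coordinates both lie after position $s$ then has both arguments in $[-\epsilon,\epsilon]$. By Lemma~\ref{vi:edge-lemma} its value and gradient vanish there, and this holds for all such $z$, so these edges contribute nothing to $\Phi_\epsilon$ or $G_\epsilon$. Since edges couple consecutive coordinates, every remaining edge involves only coordinates among the first $s+1$. The linear term depends on $u_1$, which is the first coordinate. Hence $\Phi_\epsilon(z)$ and its gradient depend only on the first $s+1$ coordinates: the gradient components in later coordinates are zero, and the remaining components are functions of the first $s+1$ coordinates.
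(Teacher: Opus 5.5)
Your proposal is correct and follows the paper's proof essentially step by step. You bound the row and column sums of the tridiagonal Jacobian by $1188\lambda$, bound each coordinate of $G_\epsilon-G_0$ by $548\lambda\epsilon$, and obtain the root from Brouwer's theorem for $z\mapsto z_{\rm ref}-A^{-1}(G_\epsilon-G_0)(z)$ using $\|D_n^{-1}\|\le\|D_n^{-1}\|_F\le n$. The locality claim then follows because edges with both coordinates beyond position $s$ vanish. The only cosmetic difference is that you derive monotonicity from convexity--concavity of the sum rather than from positive semidefiniteness of the symmetric part of the Jacobian.
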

\begin{proof}
\emph{Monotonicity and Lipschitz constant.} The symmetric part of the
Jacobian of $G_\epsilon$ is
$\operatorname{diag}(\nabla^2_{uu}\Phi_\epsilon,-\nabla^2_{vv}\Phi_\epsilon)$,
which is positive semidefinite because every edge is convex--concave
(Lemma~\ref{vi:edge-lemma}). Every coordinate belongs to at most two
edges, so every row and every column of the Jacobian has absolute sum
at most $(2\cdot512+2\cdot82)\lambda=1188\lambda$. Hence the spectral norm
is at most $1188\lambda<\ell$.

\emph{Distance to the reference.} Each coordinate of $G_\epsilon-G_0$ is
$\lambda$ times the sum of at most two edge errors, each at most
$274\epsilon$ in absolute value. Hence every coordinate is at most
$548\lambda\epsilon$, which gives~\eqref{vi:uniform-core-error}.

\emph{A root.} The linear part of $G_0$ is
$A_{\rm ref}=\lambda\begin{psmallmatrix}0&D_n^\top\\-D_n&0\end{psmallmatrix}$,
and $D_n^{-1}$ is the upper triangular matrix of ones, so
$\|D_n^{-1}\|\le\|D_n^{-1}\|_F=\sqrt{n(n+1)/2}\le n$ and
$\|A_{\rm ref}^{-1}\|\le n/\lambda$. The continuous map
\[
 T(z)=z_{\rm ref}-A_{\rm ref}^{-1}\bigl(G_\epsilon(z)-G_0(z)\bigr)
\]
satisfies
$\|T(z)-z_{\rm ref}\|\le n\Delta_{\rm op}/\lambda
=548n\epsilon\sqrt{d_{\rm ch}}$, so by
Brouwer's theorem it has a fixed point $z$ in the ball of this radius
around $z_{\rm ref}$. Since $G_0(z)=A_{\rm ref}(z-z_{\rm ref})$, the fixed
point equation reads $G_0(z)=G_0(z)-G_\epsilon(z)$, that is,
$G_\epsilon(z)=0$.

\emph{Flatness.} Let all coordinates after position $s$ be at most
$\epsilon$. Every edge whose two coordinates are both after $s$
vanishes together with its gradient by Lemma~\ref{vi:edge-lemma}. The
remaining edges involve only the first $s+1$ coordinates.
\end{proof}

\subsection{Localization}
\label{vi:localization-subsection}

\begin{lemma}[Localization of an operator]\label{vi:operator-cutoff}
Let $G$ be continuously differentiable, monotone and $\ell$-Lipschitz,
with a root of norm at most $R_{\rm loc}>0$. Let
\[
 \theta(x)=\psi\Bigl(\frac{\|x\|-2R_{\rm loc}}{408R_{\rm loc}}\Bigr),\qquad
 V_{\rm loc}(x)=84\ell(\|x\|-R_{\rm loc})_+\frac x{\|x\|},\qquad
 \overline G=(1-\theta)G+V_{\rm loc},
\]
with $V_{\rm loc}(0)=0$. Then $\overline G$ is monotone and
$90\ell$-Lipschitz, it equals $G$ on $\{\|x\|\le R_{\rm loc}\}$ and the
known map $V_{\rm loc}$ on
$\{\|x\|\ge410R_{\rm loc}\}$, and it has the same root.
\end{lemma}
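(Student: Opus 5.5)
The plan is to verify the four claims (agreement on the inner and outer regions, the root, the Lipschitz constant, monotonicity) directly from the Jacobian of $\overline G$. The two regional identities are immediate. On $\{\|x\|\le R_{\rm loc}\}$ we have $(\|x\|-2R_{\rm loc})/(408R_{\rm loc})<0$, so $\theta=0$, and $V_{\rm loc}=0$; hence $\overline G=G$ there. On $\{\|x\|\ge410R_{\rm loc}\}$ the argument of $\psi$ is at least $1$, so $\theta=1$ and $\overline G=V_{\rm loc}$. The root $x_\star$ of $G$ has $\|x_\star\|\le R_{\rm loc}$, so $\overline G(x_\star)=G(x_\star)=0$.

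For the derivatives I will use three facts. First, $\theta$ is constant on $\{\|x\|\le2R_{\rm loc}\}$, and $\|\cdot\|$ is smooth away from $0$; since $\psi$ is $C^2$, $\theta$ is $C^1$ with $\|\nabla\theta\|\le(15/8)/(408R_{\rm loc})$ by \eqref{vi:psi-bounds}. Second, $\nabla\theta$ vanishes outside the shell $2R_{\rm loc}\le\|x\|\le410R_{\rm loc}$. Third, $V_{\rm loc}=\nabla\bigl[42\ell(\|x\|-R_{\rm loc})_+^2\bigr]$ is the gradient of a convex function, so it is monotone. For $\|x\|>R_{\rm loc}$ its Jacobian is
\[
 84\ell\Bigl[\Bigl(1-\tfrac{R_{\rm loc}}{\|x\|}\Bigr)\Bigl(I-\tfrac{xx^\top}{\|x\|^2}\Bigr)+\tfrac{xx^\top}{\|x\|^2}\Bigr],
\]
which is symmetric with eigenvalues in $[84\ell(1-R_{\rm loc}/\|x\|),84\ell]$, and it is $0$ inside the ball. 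The map $V_{\rm loc}$ is therefore $84\ell$-Lipschitz. Where the derivative exists,
\[
 \mathrm D\overline G=(1-\theta)\mathrm DG-G\,\nabla\theta^\top+\mathrm DV_{\rm loc}.
\]
On the shell, $\|G(x)\|\le\ell\|x-x_\star\|\le411\ell R_{\rm loc}$, so the cross term satisfies
\[
 \|G\nabla\theta^\top\|\le\frac{411\cdot15}{8\cdot408}\,\ell<2\ell .
\]
This gives $\|\mathrm D\overline G\|_{\rm op}\le\ell+2\ell+84\ell<90\ell$ almost everywhere.

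For monotonicity, consider the symmetric part of $\mathrm D\overline G$. The term $(1-\theta)\,\mathrm{sym}(\mathrm DG)$ is positive semidefinite, and $\mathrm{sym}(\mathrm DV_{\rm loc})\succeq0$ everywhere. The indefinite cross term lives only on the shell $\|x\|\ge2R_{\rm loc}$, and there $\mathrm{sym}(\mathrm DV_{\rm loc})\succeq84\ell(1-\tfrac12)I=42\ell I$. This dominates $\|\mathrm{sym}(G\nabla\theta^\top)\|_{\rm op}<2\ell$, so the symmetric part is positive semidefinite wherever $\overline G$ is differentiable.

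The main technical point is passing from these pointwise Jacobian bounds to global statements. The map $V_{\rm loc}$ is only Lipschitz, not $C^1$, across the sphere $\|x\|=R_{\rm loc}$. I will handle this by splitting $\overline G=\bigl[(1-\theta)G+V_{\rm loc}-V_{\rm loc}\ind_{\{\theta>0\}}\bigr]+V_{\rm loc}\ind_{\{\theta>0\}}$ only conceptually. In practice I will use that $\overline G$ is Lipschitz and apply Rademacher's theorem along segments. For almost every pair $(x,y)$ (a Fubini argument over parallel segments), the segment meets the nondifferentiability set in a null set. Then
\[
 \langle\overline G(x)-\overline G(y),x-y\rangle=\int_0^1\langle\mathrm D\overline G(y+s(x-y))(x-y),x-y\rangle\,ds\ge0,
\]
and similarly $\|\overline G(x)-\overline G(y)\|\le90\ell\|x-y\|$. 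Continuity of $\overline G$ then extends both inequalities to all pairs. Alternatively, for Lipschitz continuity one can bound $\|\overline G(x)-\overline G(y)\|$ by the sum of the Lipschitz constants of $(1-\theta)G$, which is $C^1$, and of $V_{\rm loc}$. For monotonicity, one can argue separately on segments inside $\{\|x\|\le2R_{\rm loc}\}$, where $\overline G=G+V_{\rm loc}$ is a sum of monotone maps, and on segments in the shell, where the Jacobian is continuous. I expect this regularity bookkeeping, rather than the constants, to be the only delicate step.
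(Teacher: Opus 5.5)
Your proposal is correct and follows essentially the same route as the paper: the Jacobian decomposition $(1-\theta)\mathrm DG-G\nabla\theta^\top+\mathrm DV_{\rm loc}$, the bound $\|G\|\le411\ell R_{\rm loc}$ on the shell, domination of the cross term by the tangential eigenvalue $42\ell$ of $\mathrm DV_{\rm loc}$ for $\|x\|\ge2R_{\rm loc}$, and integration of the Jacobian along segments. Your bound $\psi'\le15/8$ is sharper than the paper's $4$, so you get $2\ell$ instead of $5\ell$ for the cross term. The Rademacher--Fubini step is more than you need, because $\overline G$ can fail to be differentiable only on the sphere $\|x\|=R_{\rm loc}$. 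Every segment meets that sphere in at most two points, so you can integrate along every segment directly, which is what the paper's ``piecewise continuously differentiable'' remark does.
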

\begin{proof}
Write $t=\|x\|$. The map $V_{\rm loc}$ is the gradient of the convex function
$42\ell(t-R_{\rm loc})_+^2$, so it is monotone. For $t>R_{\rm loc}$ its Jacobian has the
eigenvalue $84\ell$ in the radial direction and $84\ell(1-R_{\rm loc}/t)$ in the
tangential directions.

\emph{The region $t\le2R_{\rm loc}$.} Here $\theta=0$ and
$\overline G=G+V_{\rm loc}$, which
is monotone and $(\ell+84\ell)$-Lipschitz.

\emph{The annulus $2R_{\rm loc}\le t\le410R_{\rm loc}$.}
By~\eqref{vi:psi-bounds},
$\|\nabla\theta\|\le4/(408R_{\rm loc})$, and since the root has norm at
most $R_{\rm loc}$,
$\|G(x)\|\le\ell(t+R_{\rm loc})\le411\ell R_{\rm loc}$. Hence
$\|G(x)\nabla\theta(x)^\top\|\le4\cdot411\ell/408<5\ell$. The Jacobian
\[
 \mathrm D\overline G=(1-\theta)\mathrm DG-G\nabla\theta^\top
 +\mathrm D V_{\rm loc}
\]
has symmetric part at least $(0-5\ell+42\ell)I\succeq0$, because
$\mathrm DG$ has positive semidefinite symmetric part and the tangential
eigenvalues of $\mathrm D V_{\rm loc}$ are at least
$84\ell(1-R_{\rm loc}/(2R_{\rm loc}))=42\ell$. Its
norm is at most $\ell+5\ell+84\ell=90\ell$.

\emph{The region $t\ge410R_{\rm loc}$.} Here $\theta=1$ and
$\overline G=V_{\rm loc}$.

The map $\overline G$ is continuous and piecewise continuously
differentiable, so integrating its Jacobian along segments gives global
monotonicity and the Lipschitz constant $90\ell$. For $t\le R_{\rm loc}$,
$\theta=0$ and $V_{\rm loc}=0$, so $\overline G=G$; in particular the root is
preserved.
\end{proof}

For saddle problems the localization must act on the function. Fix
$R_{\rm loc},\ell>0$ and let
\[
 \chi_{R_{\rm loc}}(w)=1-\psi\Bigl(
 \frac{\|w\|-2R_{\rm loc}}{408R_{\rm loc}}\Bigr),\qquad
 q_\delta(t)=\begin{cases}0,&t\le0,\\t^3/(6\delta),&0<t<\delta,\\
 t^2/2-\delta t/2+\delta^2/6,&t\ge\delta,\end{cases}
\]
\begin{equation}\label{vi:smooth-hinge}
 P_{R_{\rm loc}}(w)=256\ell\,q_{R_{\rm loc}/100}(\|w\|-R_{\rm loc}).
\end{equation}
The mask $\chi_{R_{\rm loc}}$ is $C^2$, equals $1$ for
$\|w\|\le2R_{\rm loc}$ and $0$ for $\|w\|\ge410R_{\rm loc}$, and
\[
 \|\nabla\chi_{R_{\rm loc}}\|\le\frac4{408R_{\rm loc}},\qquad
 \|\nabla^2\chi_{R_{\rm loc}}\|\le\frac{60}{(408R_{\rm loc})^2}.
\]
For the Hessian, the radial eigenvalue is
$|\psi''|/(408R_{\rm loc})^2\le6/(408R_{\rm loc})^2$. With
$s=(\|w\|-2R_{\rm loc})/(408R_{\rm loc})\in(0,1)$, the tangential
eigenvalues are $\psi'(s)/(408R_{\rm loc}\|w\|)$, and
\[
 \frac{\psi'(s)}{408R_{\rm loc}\|w\|}
 =\frac1{(408R_{\rm loc})^2}\cdot\frac{408\psi'(s)}{2+408s}
 \le\frac1{(408R_{\rm loc})^2}\cdot\frac{\psi'(s)}s
 =\frac{30s(1-s)^2}{(408R_{\rm loc})^2}
 \le\frac{40}{9(408R_{\rm loc})^2}.
\]

\begin{lemma}[Localization of a saddle function]\label{vi:product-cutoff}
Let $d_{\rm amb}\ge1$, and let $\Psi$ be $C^2$ and convex--concave on
$\mathbb R^{d_{\rm amb}}\times\mathbb R^{d_{\rm amb}}$,
with $\ell$-Lipschitz gradient, $\Psi(0)=0$ and
$\|\nabla\Psi(0)\|\le\ell R_{\rm loc}$. The function
\begin{equation}\label{vi:localized-game}
 \overline\Phi(u,v)=\chi_{R_{\rm loc}}(u)\chi_{R_{\rm loc}}(v)\Psi(u,v)
 +P_{R_{\rm loc}}(u)-P_{R_{\rm loc}}(v)
\end{equation}
is $C^2$ and convex--concave, and its gradient is $356\ell$-Lipschitz. If
$\Psi$ has a saddle point $z_\star$ with $\|z_\star\|<R_{\rm loc}$, then $z_\star$
is a saddle point of $\overline\Phi$ and
$|\overline\Phi(z_\star)|\le3\ell R_{\rm loc}^2/2$. Moreover, with
$r=R_{\rm loc}/2$,
\begin{equation}\label{vi:one-sided-values}
 \overline\Phi(u,v)\ge\Psi(u,v)\quad(\|v\|\le r),\qquad
 \overline\Phi(u,v)\le\Psi(u,v)\quad(\|u\|\le r).
\end{equation}
\end{lemma}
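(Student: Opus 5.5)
The plan is to verify the four properties of $\overline\Phi$ in order: regularity and convex--concavity, the Lipschitz bound on the gradient, the one-sided comparisons~\eqref{vi:one-sided-values}, and finally the saddle point and its value. Smoothness is immediate. The factors $\chi_{R_{\rm loc}}$ are $C^2$, and $\Psi$ is $C^2$. The hinge $q_\delta$ is $C^2$: its second derivative $t/\delta$ on $(0,\delta)$ matches $0$ at $t=0$ and $1$ at $t=\delta$. It is composed with $\|w\|-R_{\rm loc}$, which is smooth wherever $q_\delta$ is nonzero.

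\textbf{Convexity in $u$.} Fix $v$. On $\|u\|\le2R_{\rm loc}$ we have $\chi(u)=1$, so $\overline\Phi(\cdot,v)=\chi(v)\Psi(\cdot,v)+P(u)$. This is convex because $\chi(v)\ge0$ and $P$ is convex, being a convex nondecreasing $q_\delta$ composed with a convex function.

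On $\|u\|\ge2R_{\rm loc}$ the product term is no longer convex. I would bound its Hessian in $u$ from below by
\[
-\chi(v)\bigl(|\Psi|\,\|\nabla^2\chi\|+2\|\nabla_u\Psi\|\,\|\nabla\chi\|\bigr).
\]
Here $\|\nabla_u^2\Psi\|\ge0$ contributes nothing negative. The two remaining quantities are controlled because $\Psi(0)=0$, $\|\nabla\Psi(0)\|\le\ell R_{\rm loc}$ and $\nabla\Psi$ is $\ell$-Lipschitz. On the support of $\chi(u)\chi(v)$ both blocks have norm at most $410R_{\rm loc}$. This gives $\|\nabla\Psi\|\le\ell(R_{\rm loc}+\|(u,v)\|)$ and $|\Psi|\le\ell R_{\rm loc}\|z\|+\ell\|z\|^2/2$.

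Multiplying these by the gradient and Hessian bounds $4/(408R_{\rm loc})$ and $60/(408R_{\rm loc})^2$ makes the negative part at most a fixed multiple of $\ell$. The scale $408R_{\rm loc}$ is what keeps this multiple small. The task is to show it is below the curvature of $P$ for $\|u\|\ge2R_{\rm loc}$. There $\|u\|-R_{\rm loc}\ge R_{\rm loc}\ge\delta$, so $q''=1$. Then $\nabla^2P\succeq256\ell\min\{1,(\|u\|-R_{\rm loc}-\delta/2)/\|u\|\}I\succeq c\,256\ell I$ with $c$ about $1/2$. Concavity in $v$ is symmetric.

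\textbf{Main obstacle.} I expect this constant bookkeeping to be the hardest step. The worst case is near $\|u\|=2R_{\rm loc}$ with $\|v\|$ as large as $410R_{\rm loc}$. There $|\Psi|$ can be of order $\ell(410R_{\rm loc})^2$, and multiplying by $60/(408R_{\rm loc})^2$ leaves roughly $60\ell\cdot$const. That is the term that must fit under $128\ell$. I would first try the crude bound $\|z\|\le410\sqrt2R_{\rm loc}$. If that is too loose, I would use the sharper observation that the Hessian of $\chi$ is small near the inner radius, so only $\|\nabla\chi\|$ matters there.

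\textbf{Lipschitz constant.} The same product-rule expansion bounds every block of $\nabla^2\overline\Phi$ by $\ell$ plus cross terms like those above, plus $\|\nabla^2P\|\le256\ell\max\{1/\delta\cdot\delta,1\}$. The tangential eigenvalue of $\nabla^2 P$ is $q'/\|w\|\le1$ times $256\ell$, and the radial one is $q''\le1$ times $256\ell$. Summing gives $356\ell$.

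\textbf{One-sided comparisons.} Take $\|v\|\le r$, so $\chi(v)=1$ and $P(v)=0$. Then $\overline\Phi-\Psi=(\chi(u)-1)\Psi(u,v)+P(u)$. If $\|u\|\le2R_{\rm loc}$ this difference is $P(u)\ge0$. Otherwise I use $\Psi(u,v)\le\Psi(u,0)+\langle\nabla_v\Psi(u,0),v\rangle$ from concavity in $v$. Together with the growth bounds this gives $|\Psi(u,v)|\le C\ell\|u\|^2$. The hinge satisfies $P(u)\ge256\ell(\|u\|-R_{\rm loc})^2/2-O(\ell R_{\rm loc}^2)\ge C\ell\|u\|^2$ for $\|u\|\ge2R_{\rm loc}$, since $\|u\|-R_{\rm loc}\ge\|u\|/2$. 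Hence $\overline\Phi-\Psi\ge0$. The case $\|u\|\le r$ is symmetric.

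\textbf{Saddle point and value.} Near $z_\star$, where $\|z_\star\|<R_{\rm loc}$, we have $\overline\Phi=\Psi$ because both $\chi$ factors equal $1$ and $P$ vanishes. So $\nabla\overline\Phi(z_\star)=0$, and convex--concavity makes $z_\star$ a saddle point. For the value, $\Psi(0)=0$ and $\nabla\Psi(z_\star)=0$, so smoothness gives $|\Psi(z_\star)|\le\ell\|z_\star\|^2/2\le\ell R_{\rm loc}^2/2\le3\ell R_{\rm loc}^2/2$.
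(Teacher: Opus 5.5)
Your plan is correct and follows the paper's proof. The shared ingredients are: product-rule bounds on $\chi\Psi$ over the support of the mask, where $\|z\|\le410\sqrt2R_{\rm loc}$; domination of these bounds by the tangential curvature $256\ell\cdot\frac{199}{400}=127.36\,\ell$ of $P_{R_{\rm loc}}$ for $\|u\|\ge2R_{\rm loc}$; the split $356\ell=100\ell+256\ell$; quadratic growth of $P_{R_{\rm loc}}$ against the growth bound on $\Psi$ for the one-sided inequalities; and $\overline\Phi=\Psi$ near $z_\star$. Your expansion at $z_\star$ even gives the sharper value bound $\ell R_{\rm loc}^2/2$.

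The crude bound you propose does suffice for convexity: the negative part of the $uu$-block is about $72\ell$, well below $127.36\,\ell$. For the $356\ell$ Lipschitz constant, however, you should bound the full Hessian of $\chi(u)\chi(v)\Psi$ at once, as the paper does. Use the product-mask bounds $\|\nabla\chi\|\le6/(408R_{\rm loc})$ and $\|\nabla^2\chi\|\le76/(408R_{\rm loc})^2$; this gives about $95\ell<100\ell$. Summing block norms as you describe, with the same bounds, lands just above $100\ell$.
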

\begin{proof}
Let $\chi(u,v)=\chi_{R_{\rm loc}}(u)\chi_{R_{\rm loc}}(v)$; it vanishes
unless $\|u\|,\|v\|\le410R_{\rm loc}$, where
$\|(u,v)\|\le580R_{\rm loc}$.

\emph{Size of $\Psi$ on the support.} Integrating the Lipschitz
gradient from $0$,
\[
 \|\nabla\Psi(z)\|\le\ell R_{\rm loc}+\ell\|z\|
 \le581\ell R_{\rm loc},\qquad
 |\Psi(z)|\le\ell R_{\rm loc}\|z\|+\tfrac\ell2\|z\|^2
 \le169000\ell R_{\rm loc}^2 .
\]
The mask bounds give $\|\nabla\chi\|\le6/(408R_{\rm loc})$ and
$\|\nabla^2\chi\|\le76/(408R_{\rm loc})^2$, where the diagonal blocks contribute
at most $60$ and the mixed blocks at most $16$ in units of
$(408R_{\rm loc})^{-2}$. By the product rule,
\begin{equation}\label{vi:cutoff-hessian}
 \|\nabla^2(\chi\Psi)\|
 \le\ell\Bigl(1+\frac{2\cdot6\cdot581}{408}+\frac{76\cdot169000}{408^2}\Bigr)<100\ell .
\end{equation}

\emph{Convexity in $u$.} If $\|u\|\le2R_{\rm loc}$, then
$\chi_{R_{\rm loc}}(u)=1$ and
$\nabla^2_{uu}(\chi\Psi)=\chi_{R_{\rm loc}}(v)\nabla^2_{uu}\Psi\succeq0$,
while $\nabla^2P_{R_{\rm loc}}(u)\succeq0$ because
$P_{R_{\rm loc}}$ is a nondecreasing convex function of the convex function
$\|u\|-R_{\rm loc}$. If $\|u\|\ge2R_{\rm loc}$, then
$\|u\|-R_{\rm loc}\ge R_{\rm loc}>R_{\rm loc}/100$, and the smallest
eigenvalue of $\nabla^2P_{R_{\rm loc}}(u)$ is its tangential eigenvalue
\[
 256\ell\,\frac{q_{R_{\rm loc}/100}'(\|u\|-R_{\rm loc})}{\|u\|}
 =256\ell\Bigl(1-\frac{R_{\rm loc}+R_{\rm loc}/200}{\|u\|}\Bigr)
 \ge256\ell\Bigl(1-\frac{201}{400}\Bigr)=127.36\,\ell,
\]
which exceeds the bound $100\ell$ of~\eqref{vi:cutoff-hessian}.
Concavity in $v$ follows in the same way from $-P_{R_{\rm loc}}(v)$. The
Hessian of $P_{R_{\rm loc}}$ has norm at most $256\ell$, so the gradient
of $\overline\Phi$ is $(100+256)\ell$-Lipschitz. All pieces are $C^2$:
$P_{R_{\rm loc}}$ vanishes near the center of each ball, and the masks are
constant near $\|w\|=2R_{\rm loc}$ and $\|w\|=410R_{\rm loc}$.

\emph{The saddle point.} If $\|z_\star\|<R_{\rm loc}$, the masks equal one
and $P_{R_{\rm loc}}$, $\nabla P_{R_{\rm loc}}$ vanish in a neighbourhood
of $z_\star$, so
$\nabla\overline\Phi(z_\star)=\nabla\Psi(z_\star)=0$, and
$|\overline\Phi(z_\star)|=|\Psi(z_\star)|
\le\ell R_{\rm loc}^2+\ell R_{\rm loc}^2/2$.

\emph{One-sided comparison.} Let $\|v\|\le r$, so
$\chi_{R_{\rm loc}}(v)=1$ and $P_{R_{\rm loc}}(v)=0$, and
$\overline\Phi-\Psi=P_{R_{\rm loc}}(u)
-(1-\chi_{R_{\rm loc}}(u))\Psi(u,v)$. If
$\|u\|\le2R_{\rm loc}$, this is $P_{R_{\rm loc}}(u)\ge0$. If
$t=\|u\|\ge2R_{\rm loc}$, then
\[
 |\Psi(u,v)|\le\ell\Bigl(\frac{t^2}2
 +\frac{3R_{\rm loc}t}2+\frac{5R_{\rm loc}^2}8\Bigr)
 \le6\ell(t-R_{\rm loc})^2,
 \qquad
 P_{R_{\rm loc}}(u)\ge126.72\,\ell(t-R_{\rm loc})^2,
\]
so $P_{R_{\rm loc}}(u)\ge(1-\chi_{R_{\rm loc}}(u))\Psi(u,v)$. This proves the first inequality;
the second follows by exchanging the roles of $u$ and $v$.
\end{proof}

\subsection{Hiding the chain}
\label{vi:haar-subsection}

Fix $p\in(0,1]$. For every round $r$, worker $i$ and call $j$ let
$B_{{\rm mask},r,i,j}\sim\operatorname{Bernoulli}(p)$ be independent, and put
\[
 S_N=\sum_{r=1}^N\max_{1\le i\le m}\sum_{j=1}^KB_{{\rm mask},r,i,j},
 \qquad
 k=\min\{KN,\lfloor4N\beta_p\rfloor\},
\]
with $\beta_p$ from Appendix~\ref{app:clock}. Then $\E S_N=N\beta_p$ and
$S_N\le KN$. If $k<KN$, then $S_N>k$ implies $S_N\ge k+1>4N\beta_p$, so
Markov's inequality gives
\begin{equation}\label{vi:clock-event}
 \Prob\{S_N\le k\}\ge\frac34 .
\end{equation}

Let $d_{\rm amb}\gg d_{\rm ch}$ be an ambient dimension. The hidden
instance lives in $\mathbb R^{d_{\rm amb}}$. Let
$U=(U_1,\ldots,U_{d_{\rm ch}})$ be a uniformly distributed orthonormal
$d_{\rm ch}$-frame in $\mathbb R^{d_{\rm amb}}$, and let a chain operator
on $\mathbb R^{d_{\rm ch}}$ with the flatness
property of Lemma~\ref{vi:core-root} be lifted as $x\mapsto UG(U^\top x)$.
We call $U_s^\top x$ the $s$th \emph{hidden coordinate} of $x$. The oracle
returns $B_{\rm mask}UG(U^\top x)/p$ together with the bit $B_{\rm mask}$.

\begin{lemma}[Hiding]\label{vi:haar}
Let $R_{\rm cut},\epsilon>0$, and consider an algorithm whose queries and
output have norm at most $R_{\rm cut}$ on every transcript. If
\[
 d_{\rm amb}\ge d_{\rm ch}
 +\frac{2R_{\rm cut}^2}{\epsilon^2}\log(32mKNd_{\rm ch}),
\]
then, jointly over the frame, the bits and the algorithm's randomness,
the following event has probability at least $5/8$: $S_N\le k$, and all
queries and the output have hidden coordinates after position $k$ of
absolute value at most $\epsilon$. For two independent $n$-frames, used
for the two blocks of a saddle problem with $d_{\rm ch}=2n$ and the interleaved
order, the same holds if every query and output block has norm at most
$R_{\rm cut}$ and
$d_{\rm amb}\ge n+2R_{\rm cut}^2\epsilon^{-2}\log(32mKNd_{\rm ch})$.
\end{lemma}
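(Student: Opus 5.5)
We combine the clock event~\eqref{vi:clock-event}, which has probability at least $3/4$, with a Haar-hiding induction whose failure probability is at most $1/8$. The bound $5/8$ then follows from a union bound. We first reveal all mask bits, which are independent of the frame, and work on the event $\{S_N\le k\}$. As in Lemma~\ref{lem:clock:simulation}(i), we simulate the algorithm by at most $k$ consecutive batches of at most $m$ simultaneous informative queries. Non-informative responses are the known constant $0$ (the bit is revealed), so each batch contains only the $j$th informative calls of the workers in the current round. The non-informative queries in between, and the output, are measurable functions of the information available after the preceding batch.

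The induction claim concerns the hidden coordinates at positions $s_j+1,\ldots,d_{\rm ch}$, where $s_j$ is the number of coordinates reached before batch $j$. We claim that every query issued before batch $j$ completes, and that depends only on earlier informative responses, has these hidden coordinates bounded by $\epsilon$ in absolute value. The flatness part of Lemma~\ref{vi:core-root} then shows that the response $UG(U^\top x)$ depends only on $U_1,\ldots,U_{s+1}$ and the first $s+1$ hidden coordinates. Hence, after batch $j$, the revealed information depends on the frame only through $U_1,\ldots,U_j$, and the count of revealed columns grows by at most one per batch. The point of batching is that all queries of a batch are fixed before any of its responses arrive. This is why the depth is $S_N\le k$ rather than the total number of informative calls.

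The key step is the conditional-Haar argument, which we use in the spirit of Lemma~\ref{lem:aff:batched}. Conditional on $U_1,\ldots,U_j$ and the transcript so far, the remaining columns $U_{j+1},\ldots,U_{d_{\rm ch}}$ form a uniform orthonormal frame in the orthogonal complement $W$ of $\operatorname{span}\{U_1,\ldots,U_j\}$. This holds on the good event, where the transcript is a function of the revealed columns only. Fix any query $x$ determined at that stage, with $\|x\|\le R_{\rm cut}$. Each later column $U_s$ is marginally uniform on the unit sphere of $W$, with $\dim W\ge d_{\rm amb}-d_{\rm ch}$. Spherical concentration gives
\[
\Prob\{|\langle U_s,x\rangle|>\epsilon\}\le2\exp\bigl(-(d_{\rm amb}-d_{\rm ch})\epsilon^2/(2R_{\rm cut}^2)\bigr),
\]
for instance via the bound $\Prob\{|\theta_1|>t\}\le2e^{-dt^2/2}$ for a uniform unit vector $\theta$ in $\mathbb R^d$. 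The dimension hypothesis makes this at most $2/(32mKNd_{\rm ch})$.

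We then take a union bound over at most $mKN+1$ query and output vectors and at most $d_{\rm ch}$ coordinates. The total failure probability is at most $2(mKN+1)d_{\rm ch}/(32mKNd_{\rm ch})\le1/8$. Adding the failure probability $1/4$ of the clock event gives success probability at least $5/8$.

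The main obstacle is making the adaptivity rigorous. A query is a function of earlier responses, and those responses are functions of the frame. The bound above requires that the query be measurable with respect to the revealed columns alone. This holds only on the good event, which is itself defined through the frame. To handle this, we define a surrogate process in which each response is computed from the truncated chain using only the first $s+1$ coordinates. Every surrogate query then genuinely depends only on $U_1,\ldots,U_j$, and the bound applies to it. On the intersection of the surrogate's good events, which we bound by the union bound, the surrogate and the true transcripts coincide by induction, so the event transfers to the real algorithm.

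For the saddle version, we apply the same argument blockwise with two independent $n$-frames and the interleaved order. Each block's complement has dimension at least $d_{\rm amb}-n$, and $R_{\rm cut}$ bounds each block separately; this is why the dimension condition has $n$ in place of $d_{\rm ch}$.
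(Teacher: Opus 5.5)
Your proposal is correct and follows the paper's proof: a fictitious oracle that truncates the chain to the first $s+1$ hidden coordinates, conditional uniformity of the unused columns, a spherical tail bound and a union bound over at most $(mKN+1)d_{\rm ch}$ pairs (failure at most $1/8$), agreement of the true and fictitious transcripts on the good event, and intersection with the clock event $\{S_N\le k\}$. The only difference is bookkeeping: you count prefixes through the global batches of Lemma~\ref{lem:clock:simulation}(i), which gives the same prefix length as the paper's explicit per-call formula for $s$, and your surrogate response must keep only the first $s+1$ components (the paper's projection $\pi_{c'}$), since otherwise the edge between positions $s+1$ and $s+2$ brings $U_{s+2}$ into the response.
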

\begin{proof}
Fix the bits and the algorithm's randomness; only the frame is random.

\emph{A fictitious oracle.} For a call of worker $i$ in round $r$ at slot
$j$, let
\[
 s=\min\Bigl\{d_{\rm ch},\
 \sum_{r'<r}\max_{i'}\sum_{j'=1}^KB_{{\rm mask},r',i',j'}
 +\sum_{j'<j}B_{{\rm mask},r,i,j'}\Bigr\}.
\]
Let $c'=\min\{d_{\rm ch},s+1\}$, let $w^{(c')}$ keep the first $c'$ hidden
coordinates of the query and set the others to $0$, and let $\pi_{c'}$ keep
the first $c'$ components of a vector of $\mathbb R^{d_{\rm ch}}$. The
fictitious oracle returns $0$ if $B_{{\rm mask},r,i,j}=0$, and otherwise
\[
 \frac1p\,U_{\le c'}\,\pi_{c'}G\bigl(w^{(c')}\bigr),
 \qquad U_{\le c'}=(U_1,\ldots,U_{c'}).
\]
The projection $\pi_{c'}$ is needed because an edge with one large
coordinate has a nonzero output in its other coordinate, so
$G(w^{(c')})$ itself may have a nonzero component $c'+1$. With it, an
informative response depends on the frame only through
$U_1,\ldots,U_{c'}$. By induction over calls, every query of the
fictitious transcript depends on the frame only through
$U_1,\ldots,U_s$: an informative response adds at most one column, and at
a communication the workers share their prefixes, which are nested, so
the longest one is kept. Hence the fictitious output depends only on the
first $\min\{d_{\rm ch},S_N\}$ columns.

\emph{One hidden coordinate.} Let $x$ be a fictitious query or the
output, with $\|x\|\le R_{\rm cut}$, and let $U_i$ be a column not yet used,
$i>s$. Conditionally on $U_1,\ldots,U_s$, the column $U_i$ is uniform on
the unit sphere of the orthogonal complement of these columns, of
dimension $d_{\rm amb}-s\ge d_{\rm amb}-d_{\rm ch}$. For a uniform unit
vector $\vartheta$ in dimension $n'$ and a fixed $y$ with
$\|y\|\le R_{\rm cut}$,
$\E\langle\vartheta,y\rangle^{2j}\le(2j-1)!!\,R_{\rm cut}^{2j}/n'^j$ and the odd
moments vanish, so
\[
 \E e^{t\langle\vartheta,y\rangle}
 \le\sum_{j\ge0}\frac{t^{2j}(2j-1)!!\,R_{\rm cut}^{2j}}{(2j)!\,n'^j}
 =\sum_{j\ge0}\frac1{j!}\Bigl(\frac{t^2R_{\rm cut}^2}{2n'}\Bigr)^j
 =e^{t^2R_{\rm cut}^2/(2n')} .
\]
The Chernoff bound and the assumption on $d_{\rm amb}$ give
\[
 \Prob\bigl\{|\langle U_i,x\rangle|>\epsilon\bigm|U_1,\ldots,U_s\bigr\}
 \le2e^{-(d_{\rm amb}-d_{\rm ch})\epsilon^2/(2R_{\rm cut}^2)}
 \le\frac1{16mKNd_{\rm ch}}.
\]
For two frames, condition on both prefixes and apply the same bound in
the block of each query, whose remaining dimension is at least
$d_{\rm amb}-n$.

\emph{Union bound.} For every fictitious query, with its prefix length
$s$ at the time of the call, consider all hidden coordinates $i>s$; for the
fictitious output consider all $i>\min\{d_{\rm ch},S_N\}$. Since the bits are fixed,
these index sets are deterministic, and the vector is determined by the
columns of its prefix, so the previous bound applies to each pair. There
are at most $(mKN+1)d_{\rm ch}\le2mKNd_{\rm ch}$ pairs, hence define
$\Omega_{\rm good}$ as the event that all these coordinates are at most
$\epsilon$ in absolute value; it has probability
at least $7/8$. Together with~\eqref{vi:clock-event}, the event
$\Omega_{\rm good}\cap\{S_N\le k\}$ has probability at least
$7/8-1/4=5/8$. On it,
the final prefix has length at most $k$, so the output has all hidden
coordinates after position $k$ bounded by $\epsilon$.

\emph{The true transcript.} On $\Omega_{\rm good}$, every query has hidden
coordinates after its prefix $s$ bounded by $\epsilon$. Then all edges with
both coordinates after $s$ vanish, including the edge between $s+1$ and
$s+2$, so $G(U^\top x)$ is supported on the first $c'$ coordinates and
coincides with $\pi_{c'}G(w^{(c')})$. Hence the true response equals the
fictitious one at every informative call, and both are $0$ otherwise. By
induction the true and fictitious transcripts coincide. Unused calls
can be padded by ignored queries, so the bound covers every algorithm
with at most $K$ calls per worker and round.
\end{proof}

We use the lemma as follows. Suppose that a nonnegative loss is at least
$\Delta_{\rm loss}$ on the event of the lemma. Its probability, averaged over the
frame, is at least $5/8$, so some fixed frame (or pair of frames) gives
this event probability at least $1/2$ over the bits and the algorithm's
randomness. For this fixed instance the expected loss is at least
$\Delta_{\rm loss}/2$.
Several losses that are bounded below on the same event are bounded
below simultaneously for the same instance.

\subsection{The Minty gap}
\label{vi:monotone-lower-subsection}

\begin{theorem}[Minty gap under SGC]\label{vi:uvi-lower}
Let $m,K,N\ge1$, $\rho\ge1$, $L,D>0$, $p=1/\rho$, $k$ as in
Section~\ref{vi:haar-subsection}, $n=k+1$, and
\[
 \begin{gathered}
 d_{\rm ch}=2n,\quad r=\frac D4,\quad
 R_{\rm loc}=2r,\quad R_{\rm cut}=410R_{\rm loc},\\
 \lambda=\frac L{120000},\quad\ell=1200\lambda=\frac L{100},\quad
 \epsilon=\frac r{2000000\,n\sqrt{d_{\rm ch}}}.
 \end{gathered}
\]
If the ambient dimension satisfies
$d_{\rm amb}\ge d_{\rm ch}
+2R_{\rm cut}^2\epsilon^{-2}\log(32mKNd_{\rm ch})$, then every algorithm of
Definition~\ref{def:ic-main} on $\mathbb R^{d_{\rm amb}}$ has an instance with a
monotone $L$-Lipschitz operator, a root in $B(0,D)$ and an
oracle satisfying SGC with equality, such that
\begin{equation*}
 \E\Gap^{\rm M}_{B(0,D)}(\widehat z)
 \ge\frac{LD^2}{122880000\,(1+N\InfoDepth)}.
\end{equation*}
Every realization of the oracle is a monotone, $\rho L$-Lipschitz
operator with the same root, and the bound holds even if the bit
$B_{\rm mask}$ is revealed with every response.
\end{theorem}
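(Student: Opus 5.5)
We assemble the three ingredients of this appendix. First, take the robust chain $G_\epsilon$ of Lemma~\ref{vi:core-root} with the stated $n,r,\lambda,\epsilon$. It is monotone and $\ell$-Lipschitz with $\ell=L/100$, and it has a root $z_\epsilon$ within $548n\epsilon\sqrt{d_{\rm ch}}\le r/3600$ of $z_{\rm ref}=(0,-a_{\rm ch}\mathbf1)$. Since $\|z_{\rm ref}\|=r$, we get $\|z_\epsilon\|<R_{\rm loc}=2r$. Apply Lemma~\ref{vi:operator-cutoff} with this $R_{\rm loc}$ to obtain $\overline G_\epsilon$. It is monotone and $90\ell\le L$-Lipschitz, agrees with $G_\epsilon$ on $B(0,R_{\rm loc})$, keeps the root, and equals the known map $V_{\rm loc}$ outside $B(0,410R_{\rm loc})$. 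Lift it by a uniform frame $U$ and use the masked oracle $B_{\rm mask}U\overline G_\epsilon(U^\top x)/p$. This is unbiased, satisfies SGC with equality, and each realization is either $0$ or $\rho$ times a monotone $L$-Lipschitz map with the same root, as the statement requires. The root $Uz_\epsilon$ has norm below $2r=D/2$, so it lies in $B(0,D)$.

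Second, reduce to algorithms whose queries and output lie in $B(0,R_{\rm cut})$, so that Lemma~\ref{vi:haar} applies. Outside this ball the operator is the known $V_{\rm loc}$, which is also flat in the hidden coordinates. Any query of larger norm can therefore be replaced by its radial projection, or answered by the algorithm itself. The output can be replaced by its projection onto $B(0,R_{\rm cut})$ without increasing the Minty gap on $B(0,D)$. This step needs a short check: projection onto a ball containing the test set is nonexpansive toward the test points, and we must control $\langle F(u),\cdot\rangle$; if this fails, we simply allow $R_{\rm cut}$ to bound only the relevant hidden-coordinate projections. We also check that the flatness used in Lemma~\ref{vi:haar} survives localization. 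On $B(0,R_{\rm loc})$ we have $\overline G_\epsilon=G_\epsilon$. In the annulus, the cutoff multiplies $G_\epsilon$ by a radial factor and adds $V_{\rm loc}$. The lifted radial map $U V(U^\top x)$ depends on $\|U^\top x\|$ and so on all hidden coordinates. Here I expect the main obstacle: to fix this, I would localize in the ambient space, $x\mapsto(1-\theta(x))UG_\epsilon(U^\top x)+V_{\rm loc}(x)$, whose extra terms are known functions of $x$. Lemma~\ref{vi:operator-cutoff} applies verbatim to the lifted map, since the lifted map is monotone and Lipschitz on $\mathbb R^{d_{\rm amb}}$.

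Third, bound the gap on the good event of Lemma~\ref{vi:haar}, which has probability at least $5/8$. On this event, $|\widehat z_{(s)}|\le\epsilon$ for hidden coordinates $s>k$, in particular for the last coordinate $v_n$ with $n=k+1$. Take comparison points $u=Uw$ with $\|w\|\le r$, which lie in $B(0,D)$ and in the region where $\overline G=G_\epsilon$. Then use \eqref{vi:uniform-core-error} to replace $G_\epsilon$ by $G_0$, at a cost of at most $\Delta_{\rm op}(\|\widehat z\|+r)$. Lemma~\ref{vi:affine-comparison} with $\gamma=\lambda$ then gives
\[
 \Gap^{\rm M}(\widehat z)\ge\frac{\lambda r}{\sqrt n}\Bigl|\widehat v_n+\frac r{\sqrt n}\Bigr|-\Delta_{\rm op}(R_{\rm cut}+r)\ge\frac{\lambda r^2}{2n}.
\]
This holds because $\epsilon\ll r/n$ and $\Delta_{\rm op}R_{\rm cut}=548\lambda\epsilon\sqrt{d_{\rm ch}}\cdot820r$ is a small multiple of $\lambda r^2/n$ by the choice of $\epsilon$. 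The component of $\widehat z$ orthogonal to the frame contributes nothing, since $\langle UG_0(w),\widehat z\rangle$ only sees $U^\top\widehat z$. By the remark after Lemma~\ref{vi:haar}, fixing a frame gives expected gap at least $\lambda r^2/(4n)$. Finally, $n=k+1\le1+4N\beta_{1/\rho}\le16(1+N\InfoDepth)$ by Theorem~\ref{thm:clock:asymptotic}. Substituting $\lambda=L/120000$ and $r=D/4$ gives the stated constant, up to the bookkeeping of numerical factors.
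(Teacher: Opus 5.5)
Once you localize in the ambient space, $F_U=(1-\theta)G_U+V_{\rm loc}$ with $G_U=UG_\epsilon(U^\top\cdot)$, your instance, oracle, interior estimate and constant bookkeeping essentially coincide with the paper's. The gap is the treatment of outputs of norm at least $R_{\rm cut}$.

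You replace the output by its projection onto $B(0,R_{\rm cut})$. You assert this does not increase $\Gap^{\rm M}_{B(0,D)}$, because the projection is nonexpansive toward the test points. But the gap is linear in $z$ with slope $F(u)$, not a function of $\|z-u\|$, and the assertion fails for general monotone operators. Take $F(z)=A(z-z_\star)$ with $A=\begin{psmallmatrix}0&1\\-1&0\end{psmallmatrix}$, $C=B(0,R)\subset\mathbb R^2$ and $z_\star=(-R,0)$. Then $\Gap^{\rm M}_C((0,t))=R(\sqrt{R^2+t^2}-t)$, which decreases in $t$.

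Your fallback, letting $R_{\rm cut}$ bound only hidden-coordinate projections, is circular. Those are exactly the quantities Lemma~\ref{vi:haar} is meant to control. Its concentration step bounds $\langle U_i,x\rangle$ for unrevealed columns with sub-Gaussian scale $\|x\|/\sqrt{d_{\rm amb}-d_{\rm ch}}$, which needs $\|x\|$ itself to be bounded. As written, your interior estimate uses both $|\widehat v_n|\le\epsilon$ and $\|U^\top\widehat z\|\le R_{\rm cut}$, so it leaves large outputs uncovered.

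The missing ingredient is a direct lower bound at exterior points, which is what the radial term of the localization is for. For $\|x\|\ge R_{\rm cut}$, test at $w=2R_{\rm loc}x/\|x\|$, which has norm $D$. There $\theta(w)=0$, $V_{\rm loc}(w)=84\ell R_{\rm loc}\,x/\|x\|$ and $\|G_U(w)\|\le3\ell R_{\rm loc}$. Hence $\langle F_U(w),x-w\rangle\ge81\ell R_{\rm loc}(\|x\|-2R_{\rm loc})\ge81\cdot408\,\ell R_{\rm loc}^2$, far above $\lambda r^2/(2n)$.

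Then apply Lemma~\ref{vi:haar} to a simulator that answers exterior queries from the bit alone (querying $0$). It outputs the original output if its norm is below $R_{\rm cut}$, and $0$ otherwise. On the lemma's event the original output is either interior with hidden coordinates after $k$ at most $\epsilon$, or exterior. Both cases give gap at least $\lambda r^2/(2n)$.

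Alternatively, your projection step can be rescued for this particular instance. The gap is convex along rays from $0$ and $\Gap^{\rm M}_{B(0,D)}(0)\le6\ell R_{\rm loc}^2$, so the exterior bound makes the gap nondecreasing along rays beyond $R_{\rm cut}$.

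A smaller slip: Lemma~\ref{vi:affine-comparison} is stated for the product $\{\|u\|,\|v\|\le r\}$, not the ball $\|w\|\le r$. So test on $\{U(u,v):\|u\|,\|v\|\le r\}\subseteq B(0,\sqrt2r)$, where $F_U=G_U$, with error term $\Delta_{\rm op}(\|U^\top\widehat z\|+\sqrt2r)$.
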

\begin{proof}
\emph{The instance.} Take the chain~\eqref{vi:robust-chain} with these
parameters. By Lemma~\ref{vi:core-root} it has a root $z_\epsilon$ with
$\|z_\epsilon-z_{\rm ref}\|\le548n\epsilon\sqrt{d_{\rm ch}}
=548r/2000000<r/4$, and
$\|z_{\rm ref}\|=a_{\rm ch}\sqrt n=r$, so
$\|z_\epsilon\|\le5r/4<R_{\rm loc}$. For a frame
$U$, let $G_U(x)=UG_\epsilon(U^\top x)$, which is monotone and
$\ell$-Lipschitz, and let $F_U=(1-\theta)G_U+V_{\rm loc}$ be its localization from
Lemma~\ref{vi:operator-cutoff}. Then $F_U$ is monotone and
$90\ell\le L$-Lipschitz, and $Uz_\epsilon$ is a root with
$\|Uz_\epsilon\|<R_{\rm loc}\le D$.

\emph{Interior outputs.} Let $\|x\|<R_{\rm cut}$ and let the hidden coordinates
$U^\top x=(\widehat u,\widehat v)$ after position $k=n-1$ be at most
$\epsilon$; in particular $|\widehat v_n|\le\epsilon$. Test on
$Z_r=\{U(u,v):\|u\|,\|v\|\le r\}$, which lies in
$B(0,\sqrt2r)\subseteq B(0,R_{\rm loc})$, where
$F_U=G_U$. For
$w=U(u,v)\in Z_r$,
$\langle F_U(w),x-w\rangle=\langle G_\epsilon(u,v),U^\top x-(u,v)\rangle$.
By Lemma~\ref{vi:affine-comparison} with $\gamma=\lambda$ and
$\epsilon\le r/(4\sqrt n)$, the reference operator $G_0$ gives
\[
 \sup_{(u,v)}\langle G_0(u,v),U^\top x-(u,v)\rangle
 \ge\frac{\lambda r}{\sqrt n}\Bigl(\frac r{\sqrt n}-\epsilon\Bigr)
 \ge\frac{3\lambda r^2}{4n}.
\]
By~\eqref{vi:uniform-core-error}, replacing $G_0$ by $G_\epsilon$ changes
each tested inner product by at most
\[
 \Delta_{\rm op}\bigl(\|U^\top x\|+\sqrt2r\bigr)
 \le548\lambda\epsilon\sqrt{d_{\rm ch}}\,(820r+\sqrt2r)
 \le\frac{548\cdot822}{2000000}\cdot\frac{\lambda r^2}n<\frac{\lambda r^2}{4n}.
\]
Hence $\Gap^{\rm M}_{B(0,D)}(x)\ge\lambda r^2/(2n)$.

\emph{Exterior outputs.} Let $\|x\|\ge R_{\rm cut}$,
$u_{\rm rad}=x/\|x\|$, and test at
$w=2R_{\rm loc}u_{\rm rad}$, with $\|w\|=D$. There $\theta(w)=0$,
$V_{\rm loc}(w)=84\ell R_{\rm loc}u_{\rm rad}$, and
$\|G_U(w)\|\le\ell\|U^\top w-z_\epsilon\|
\le\ell(2R_{\rm loc}+R_{\rm loc})=3\ell R_{\rm loc}$. Since
$x-w=(\|x\|-2R_{\rm loc})u_{\rm rad}$,
\[
 \langle F_U(w),x-w\rangle
 \ge(84-3)\ell R_{\rm loc}(\|x\|-2R_{\rm loc})
 \ge81\cdot408\,\ell R_{\rm loc}^2\ge\frac{\lambda r^2}{2n}.
\]

\emph{Oracle and simulation.} At every call the oracle draws an independent bit
$B_{\rm mask}\sim\operatorname{Bernoulli}(p)$ and returns
$B_{\rm mask}F_U/p$ and $B_{\rm mask}$. It is unbiased,
$\E\|B_{\rm mask}F_U/p\|^2=\rho\|F_U\|^2$,
and every realization is a nonnegative multiple of $F_U$. We simulate
the algorithm by one that sees only the hidden chain: its oracle returns
$B_{\rm mask}G_U/p$ and $B_{\rm mask}$. At a query with
$\|x\|<R_{\rm cut}$ the
simulator makes the same query and returns
$(1-\theta(x))\cdot B_{\rm mask}G_U(x)/p
+B_{\rm mask}V_{\rm loc}(x)/p$; at a query
with $\|x\|\ge R_{\rm cut}$ it queries $0$, uses only the bit and returns
$B_{\rm mask}V_{\rm loc}(x)/p$. Its output is the original output if this has norm
below $R_{\rm cut}$ and $0$ otherwise. All its queries and its output have
norm at most $R_{\rm cut}$, so Lemma~\ref{vi:haar} applies to it; on the event of the
lemma, the output of the original algorithm is either interior with
small hidden coordinates or exterior.

\emph{Conclusion.} The gap is nonnegative, because the test set contains
the root. By the remark after Lemma~\ref{vi:haar}, some fixed frame gives
\[
 \E\Gap^{\rm M}_{B(0,D)}(\widehat z)\ge\frac{\lambda r^2}{4n}
 =\frac{LD^2}{7680000\,n}\ge\frac{LD^2}{30720000\,(1+N\beta_p)},
\]
since $n=k+1\le4N\beta_p+1\le4(1+N\beta_p)$. Finally
$1+N\beta_p\le4(1+N\InfoDepth)$ by Theorem~\ref{thm:clock:asymptotic}.
\end{proof}

\subsection{Saddle problems}
\label{vi:product-cutoff-subsection}

\begin{theorem}[Saddle and Minty gaps under SGC]\label{vi:sp-lower}
Let $k,n,d_{\rm ch}$ be as in Theorem~\ref{vi:uvi-lower}, and
\[
 \begin{aligned}
 R_{\rm loc}&=\frac D{2\sqrt2},&
 r&=\frac{R_{\rm loc}}2,&
 \ell&=\frac L{400},&
 \lambda&=\frac L{480000},\\
 \epsilon&=\frac r{2000000\,n\sqrt{d_{\rm ch}}},&
 d_{\rm amb}&\ge n+\frac{2(410R_{\rm loc})^2}{\epsilon^2}
 \log(32mKNd_{\rm ch}).
 \end{aligned}
\]
Every algorithm of Definition~\ref{def:ic-main} on
$\mathbb R^{d_{\rm amb}}\times\mathbb R^{d_{\rm amb}}$ has a $C^2$
convex--concave instance with
$L$-Lipschitz saddle operator and an oracle satisfying SGC with
equality, such that, for
$\mathcal Z_{\rm test}=B(0,2R_{\rm loc})
\times B(0,2R_{\rm loc})\subseteq B(0,D)$,
\begin{equation*}
 \E\Gap_{\mathcal Z_{\rm test}}^{\rm M}(\widehat z)\ \text{ and }\
 \E\Gap_{\mathcal Z_{\rm test}}^{\rm PD}(\widehat z)
 \ \ge\ \frac{LD^2}{983040000\,(1+N\InfoDepth)}.
\end{equation*}
The same instance works for both gaps. Every sample function is
convex--concave with a $\rho L$-Lipschitz saddle operator and the same
saddle point, and the bound holds even if sample values and the bit
$B_{\rm mask}$ are revealed.
\end{theorem}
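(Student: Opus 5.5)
The plan mirrors the proof of Theorem~\ref{vi:uvi-lower}, replacing the operator localization of Lemma~\ref{vi:operator-cutoff} by the function localization of Lemma~\ref{vi:product-cutoff}, so that the instance is a genuine convex--concave function.

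\textbf{The instance.} We take the robust chain $\Phi_\epsilon$ of~\eqref{vi:robust-chain} with the stated $\lambda,r,\epsilon$ and lift it with two independent $n$-frames, $U$ for the $u$-block and $W$ for the $v$-block:
\[
\Psi(x,y)=\Phi_\epsilon(U^\top x,W^\top y).
\]
By Lemma~\ref{vi:edge-lemma} and Lemma~\ref{vi:core-root}, $\Psi$ is $C^2$, convex--concave, and $1200\lambda$-smooth, which is at most $\ell=L/400$. Its value at the origin is $\Psi(0)=0$, because the edges vanish there, and $\|\nabla\Psi(0)\|=\lambda a_{\rm ch}\le\ell R_{\rm loc}$. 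Its saddle point $z_\epsilon$ lies within $r/4$ of $z_{\rm ref}$, and $\|z_{\rm ref}\|=r$, so $\|z_\epsilon\|<R_{\rm loc}$.

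We then set $\overline\Phi$ as in~\eqref{vi:localized-game}. Lemma~\ref{vi:product-cutoff} gives convexity--concavity, a $356\ell\le L$-Lipschitz saddle operator, and the same saddle point. The oracle returns the sample function $B_{\rm mask}\overline\Phi/p$, together with its gradient and the bit. Each sample is convex--concave, has a $\rho L$-Lipschitz operator and the same saddle point, and the oracle satisfies SGC with equality.

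\textbf{Simulation and hiding.} Far from the origin the instance is known. When either block of a query has norm at least $410R_{\rm loc}$, the corresponding mask vanishes and the response is $\pm P_{R_{\rm loc}}$ on that block. A simulator can therefore answer such queries from the bit alone, querying $0$ in its place, and can clip the output to norm at most $410R_{\rm loc}$ per block. Lemma~\ref{vi:haar} in its two-frame form then applies. With probability at least $5/8$, $S_N\le k$ and every hidden coordinate after position $k=n-1$ of the queries and the output is at most $\epsilon$; the flatness of Lemma~\ref{vi:core-root} makes the fictitious transcript coincide with the true one. In particular, on this event $|\widehat v_n|\le\epsilon$ whenever the output is interior.

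\textbf{Gap lower bounds on the good event.} For an interior output we restrict the test set to $\{U(u,\cdot)\}\times\{W(\cdot,v)\}$ with $\|u\|,\|v\|\le r$. This set lies inside $\mathcal Z_{\rm test}$ and inside the region where $\overline\Phi=\Psi$ for the Minty test points. For the Minty gap, the argument of Theorem~\ref{vi:uvi-lower} gives $\lambda r^2/(2n)$: Lemma~\ref{vi:affine-comparison} supplies $\frac{\lambda r}{\sqrt n}(\frac r{\sqrt n}-\epsilon)$, and~\eqref{vi:uniform-core-error} controls the perturbation.

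For the primal--dual gap we use the one-sided comparison~\eqref{vi:one-sided-values} with $r=R_{\rm loc}/2$. It gives
\[
\sup_{\|v\|\le r}\overline\Phi(\widehat u,v)\ge\sup_{\|v\|\le r}\Psi(\widehat u,v),
\qquad
\inf_{\|u\|\le r}\overline\Phi(u,\widehat v)\le\inf_{\|u\|\le r}\Psi(u,\widehat v),
\]
so the localized primal--dual gap dominates the unlocalized one on the $r$-balls. We then compare $\Psi=\Phi_\epsilon$ with the affine $\Phi_0$. The function-value error is bounded by integrating~\eqref{vi:uniform-core-error} from the common value $0$ at the origin; the resulting perturbation is $O(\lambda\epsilon\sqrt{d_{\rm ch}}\,R_{\rm loc}^2)$, which is below $\lambda r^2/(4n)$ by the choice of $\epsilon$. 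Lemma~\ref{vi:affine-comparison} then yields a primal--dual gap of at least $\lambda r^2/(2n)$.

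For an exterior output, where some block has norm at least $410R_{\rm loc}$, we test along that block's radial direction at radius $2R_{\rm loc}$, as in the exterior case of Theorem~\ref{vi:uvi-lower}. The steep term $P_{R_{\rm loc}}$ dominates and gives a gap of order $\ell R_{\rm loc}^2\gg\lambda r^2/n$. For the primal--dual gap we use instead $\sup_v\overline\Phi(\widehat u,v)\ge\overline\Phi(\widehat u,0)$, whose $P_{R_{\rm loc}}(\widehat u)$ term is large. The infimum term is bounded using $|\overline\Phi(z_\star)|\le3\ell R_{\rm loc}^2/2$ together with the saddle property, which gives $\inf_u\overline\Phi(u,\widehat v)\le\overline\Phi(z_\star)$ when $\widehat v$ is interior. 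If both blocks are exterior, both terms are large.

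\textbf{Conclusion.} Averaging over a fixed good pair of frames, as in the remark after Lemma~\ref{vi:haar}, gives expected gaps of at least $\lambda r^2/(4n)$. Then $n\le4(1+N\beta_p)$, and Theorem~\ref{thm:clock:asymptotic} converts $\beta_p$ into $\InfoDepth$.

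The main obstacle is the primal--dual gap. It is not an operator quantity, so it needs both the one-sided value comparison and the integration of the gradient error into a function-value error. The mixed interior/exterior output cases also require care so that the infimum and supremum terms do not cancel.
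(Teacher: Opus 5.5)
Your proposal is correct and follows the paper's proof essentially step for step: the two-frame lift of $\Phi_\epsilon$ localized by Lemma~\ref{vi:product-cutoff}, a simulator that answers exterior queries from the bit so that the two-frame version of Lemma~\ref{vi:haar} applies, the one-sided comparison~\eqref{vi:one-sided-values} combined with integrating $\Delta_{\rm op}$ from the origin for the primal--dual gap, and radial tests or the saddle point for exterior outputs. Two bookkeeping points need minor repair. First, with the stated $\epsilon$ the two value errors sum to about $2\cdot821\,r\,\Delta_{\rm op}\approx0.45\,\lambda r^2/n$; the error scales like $\lambda\epsilon\sqrt{d_{\rm ch}}\,R_{\rm loc}$, not $R_{\rm loc}^2$, and it is not below $\lambda r^2/(4n)$, but it still leaves the gap above $\lambda r^2/(2n)$. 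Second, in the exterior Minty case you should specify the other block of the test point: the paper tests both blocks radially at radius $2R_{\rm loc}$, so that each block's $P_{R_{\rm loc}}$ term absorbs its own cross term.
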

\begin{proof}
\emph{The instance.} For independent $n$-frames $U,V$ in
$\mathbb R^{d_{\rm amb}}$, let
$\Psi(u,v)=\Phi_\epsilon(U^\top u,V^\top v)$ and
$\Psi_0(u,v)=\Phi_0(U^\top u,V^\top v)$, and let $\overline\Phi$
be~\eqref{vi:localized-game}. Since $\ell=1200\lambda$, Lemma~\ref{vi:core-root} shows that $\Psi$
has $\ell$-Lipschitz gradient; moreover $\Psi(0)=0$ and
$\|\nabla\Psi(0)\|=\lambda a_{\rm ch}\le\ell R_{\rm loc}$, because every edge has zero
gradient at the origin. Its saddle point has norm below
$R_{\rm loc}$ by Lemma~\ref{vi:core-root}. Lemma~\ref{vi:product-cutoff} shows that
$\overline\Phi$ is $C^2$ and convex--concave, that its saddle operator
$\overline F$ is $356\ell\le L$-Lipschitz, and that the saddle point
$z_\star=(u_\star,v_\star)$ is preserved and lies in $\mathcal Z_{\rm test}$.

\emph{Interior outputs.} Let both output blocks have norm below
$410R_{\rm loc}$
and all hidden coordinates
$(\widetilde u,\widetilde v)=(U^\top\widehat u,V^\top\widehat v)$ after
position $k$ be at most $\epsilon$; in particular
$|\widetilde v_n|\le\epsilon$. On the product of hidden balls of radius $r$
(a subset of $\mathcal Z_{\rm test}$ on which $\overline\Phi=\Psi$),
Lemma~\ref{vi:affine-comparison} bounds both gaps of $\Psi_0$ below by
$\lambda r^2/n-\lambda r\epsilon/\sqrt n$. For the Minty gap, replacing
the operator of $\Psi_0$ by that of $\Psi$ changes each tested inner
product by at most
\[
 \Delta_{\rm op}\sqrt2\,(410R_{\rm loc}+r)<1200\,r\Delta_{\rm op}
 =\frac{657600}{2000000}\cdot\frac{\lambda r^2}n .
\]
For the primal--dual gap, apply~\eqref{vi:one-sided-values}: it suffices
to compare $\Psi$ with $\Psi_0$, and since $\Psi-\Psi_0$ vanishes at
the origin and has gradient norm at most $\Delta_{\rm op}$, the two value errors sum
to at most
\[
 2(410R_{\rm loc}+r)\Delta_{\rm op}
 =\frac{899816}{2000000}\cdot\frac{\lambda r^2}n .
\]
The term $\lambda r\epsilon/\sqrt n$ is at most
$\lambda r^2/(2000000\,n)$. In both cases the total loss is below
$\lambda r^2/(2n)$, so both gaps are at least $\lambda r^2/(2n)$.

\emph{Exterior outputs.} Suppose
$\|\widehat u\|\ge410R_{\rm loc}$. Testing at
$u_\star$ and $v_\star$ gives
\[
 \Gap^{\rm PD}_{\mathcal Z_{\rm test}}(\widehat z)
 \ge\overline\Phi(\widehat u,v_\star)-\overline\Phi(u_\star,\widehat v)
 \ge P_{R_{\rm loc}}(\widehat u)-|\overline\Phi(z_\star)|
 \ge256\ell\Bigl(\frac{(409R_{\rm loc})^2}2
 -\frac{409R_{\rm loc}^2}{200}\Bigr)-\frac{3\ell R_{\rm loc}^2}2,
\]
which exceeds $\lambda r^2/(2n)$ by a large margin; the case
$\|\widehat v\|\ge410R_{\rm loc}$ is symmetric. For the Minty gap, test at
$u_{\rm test}=2R_{\rm loc}\widehat u/\|\widehat u\|$ and
$v_{\rm test}=2R_{\rm loc}\widehat v/\|\widehat v\|$ (a zero
block is tested at $0$). There the masks equal one with zero gradients,
every nonzero radial block of $\overline F$ has norm
$256\ell(R_{\rm loc}-R_{\rm loc}/200)=254.72\,\ell R_{\rm loc}$, and the
localized core operator has norm at most
$\ell(\|(u_{\rm test},v_{\rm test})\|+R_{\rm loc})\le4\ell R_{\rm loc}$.
With $R_{\rm sum}=\|\widehat u\|+\|\widehat v\|
\ge410R_{\rm loc}$,
\[
 \begin{aligned}
 &\langle\overline F(u_{\rm test},v_{\rm test}),
 \widehat z-(u_{\rm test},v_{\rm test})\rangle\\
 &\quad\ge\bigl(254.72\,\ell R_{\rm loc}R_{\rm sum}
 -1018.88\,\ell R_{\rm loc}^2\bigr)
 -4\ell R_{\rm loc}(R_{\rm sum}+3R_{\rm loc})\\
 &\quad\ge101469\,\ell R_{\rm loc}^2>\frac{\lambda r^2}{2n}.
 \end{aligned}
\]

\emph{Oracle and simulation.} At every call the oracle draws an independent
$B_{\rm mask}\sim\operatorname{Bernoulli}(p)$ and returns the value and the
saddle operator of $B_{\rm mask}\overline\Phi/p$, together with
$B_{\rm mask}$. A simulator with access to $B_{\rm mask}\Psi/p$ reproduces a
response at an interior query from one response at the same query, by the
product rule. At an exterior query at least one mask vanishes with its
derivatives, so the response is a known function of the query and the
bit, and the simulator queries $0$ to obtain the bit. All queries of the
simulator have blocks of norm at most $410R_{\rm loc}$, and its output is the
original output if interior and $0$ otherwise. The two-frame version of
Lemma~\ref{vi:haar} applies, since $\Psi$ has the flatness property for
values and gradients.

\emph{Conclusion.} Both gaps are nonnegative, because $\mathcal Z_{\rm test}$
contains the saddle point. Some fixed pair of frames gives,
simultaneously for both gaps,
\[
 \min\left\{
  \E\Gap_{\mathcal Z_{\rm test}}^{\rm M}(\widehat z),
  \E\Gap_{\mathcal Z_{\rm test}}^{\rm PD}(\widehat z)
 \right\}
 \ge\frac{\lambda r^2}{4n}
 \ge\frac{LD^2}{245760000\,(1+N\beta_p)},
\]
using $r^2=D^2/32$ and $n\le4(1+N\beta_p)$, and
Theorem~\ref{thm:clock:asymptotic} gives $1+N\beta_p\le4(1+N\InfoDepth)$.
Each sample function $B_{\rm mask}\overline\Phi/p$ is convex--concave, has
the same saddle point, and has a $\rho L$-Lipschitz saddle operator.
\end{proof}

\subsection{Strong monotonicity}
\label{vi:strong-chain-subsection}

Fix $0<\mu<L$, $R_0>0$ and $\rho\ge1$, and let $p=1/\rho$ and
$k,n,d_{\rm ch}$ be as in Theorem~\ref{vi:uvi-lower}. Set
\begin{align}
 \lambda&=\frac{L-\mu}{480400},\qquad\ell=1201\lambda,\qquad
 q_{\rm geo}=\frac{2\lambda}{\sqrt{\mu^2+4\lambda^2}+\mu}
 \in(0,1),\nonumber\\
 c&=\frac{R_0}2\left(
 \frac{1-q_{\rm geo}^2}{1-q_{\rm geo}^{2d_{\rm ch}}}
 \right)^{1/2},\qquad
 \epsilon=\frac{R_0q_{\rm geo}^k}
 {10000\sqrt{d_{\rm ch}}\,(1+\lambda/\mu)},\nonumber\\
 R_{\rm loc}&=R_0+\frac{\lambda c}{q_{\rm geo}\ell}.
 \label{vi:strong-params}
\end{align}
The number $q_{\rm geo}$ is the root in $(0,1)$ of
$\lambda q_{\rm geo}^2+\mu q_{\rm geo}-\lambda=0$.
Let $K_{d_{\rm ch}}\in
\mathbb R^{d_{\rm ch}\times d_{\rm ch}}$ have ones just above the diagonal,
minus ones just below it, and zeros elsewhere, in the interleaved order.
The affine reference operator
\begin{equation}\label{vi:strong-affine}
 F_0(z)=\bigl(\mu I+\lambda K_{d_{\rm ch}}
 +\lambda q_{\rm geo}\,e_{d_{\rm ch}}e_{d_{\rm ch}}^\top\bigr)z
 -\frac{\lambda c}{q_{\rm geo}}\,e_1
\end{equation}
is the saddle operator of
$\Phi_{\mu,0}(u,v)=\frac\mu2(\|u\|^2-\|v\|^2)
+\lambda\bigl[-\frac c{q_{\rm geo}}u_1+\sum_{i=1}^nu_iv_i
-\sum_{i=1}^{n-1}u_{i+1}v_i-\frac{q_{\rm geo}}2v_n^2\bigr]$.

\begin{lemma}[Strongly monotone chain]\label{vi:strong-core}
The vector $w$ with $w_i=cq_{\rm geo}^{i-1}$,
$i=1,\ldots,d_{\rm ch}$, is the root
of~\eqref{vi:strong-affine}, and $\|w\|=R_0/2$. Let
$\phi_\epsilon(t)=q_\epsilon(|t|-\epsilon)$ with $q_\delta$
from~\eqref{vi:smooth-hinge}, and
\begin{equation*}
 \Psi_\epsilon(u,v)=\lambda\Bigl[-\frac c{q_{\rm geo}}u_1
 +\sum_{i=1}^nE_\epsilon^{+1}(u_i,v_i)
 +\sum_{i=1}^{n-1}E_\epsilon^{-1}(u_{i+1},v_i)
 -q_{\rm geo}\phi_\epsilon(v_n)\Bigr].
\end{equation*}
Its saddle operator $G_{\mu,\epsilon}$ is monotone and $\ell$-Lipschitz, has the
flatness property of Lemma~\ref{vi:core-root}, and satisfies
\[
 \|G_{\mu,\epsilon}-(F_0-\mu I)\|
 \le550\lambda\epsilon\sqrt{d_{\rm ch}},\qquad
 \Psi_\epsilon(0)=0,\qquad
 G_{\mu,\epsilon}(0)=-\frac{\lambda c}{q_{\rm geo}}e_1 .
\]
The operator $F_\epsilon=\mu I+G_{\mu,\epsilon}$ has a unique root
$z_{\mu,\epsilon}$,
and
\begin{equation}\label{vi:strong-root-shift}
 \|z_{\mu,\epsilon}-w\|\le\frac{550\lambda\epsilon\sqrt{d_{\rm ch}}}{\mu}
 \le\frac{550}{10000}R_0q_{\rm geo}^k<\frac{R_0}{16},\qquad
 \|z_{\mu,\epsilon}\|\le\frac{9R_0}{16}<R_{\rm loc}.
\end{equation}
\end{lemma}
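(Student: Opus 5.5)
The plan is to verify the claims one at a time, reusing Lemma~\ref{vi:edge-lemma} and the argument of Lemma~\ref{vi:core-root} almost verbatim. The only new ingredients are the diagonal term $\mu I$, the boundary term $-q_{\rm geo}\phi_\epsilon(v_n)$ and the geometric root.

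\emph{The affine root.} I will check $F_0(w)=0$ coordinatewise in the interleaved order, using $(K_{d_{\rm ch}}z)_i=z_{i+1}-z_{i-1}$ with the conventions $z_0=z_{d_{\rm ch}+1}=0$. I will confirm this sign convention against the saddle operator of $\Phi_{\mu,0}$.
\begin{itemize}
\item At $i=1$, the residual is $\mu c+\lambda cq_{\rm geo}-\lambda c/q_{\rm geo}=(c/q_{\rm geo})(\lambda q_{\rm geo}^2+\mu q_{\rm geo}-\lambda)$.
\item At $1<i<d_{\rm ch}$, it is $cq_{\rm geo}^{i-2}(\mu q_{\rm geo}+\lambda q_{\rm geo}^2-\lambda)$.
\item At $i=d_{\rm ch}$, the extra entry $\lambda q_{\rm geo}$ supplies exactly the missing term $\lambda q_{\rm geo}w_{d_{\rm ch}}$, so the residual again has the same factor.
\end{itemize}
The common factor vanishes by the defining equation of $q_{\rm geo}$. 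The norm follows from the geometric sum $c^2(1-q_{\rm geo}^{2d_{\rm ch}})/(1-q_{\rm geo}^2)=R_0^2/4$. Since $\lambda>0$, the quadratic has exactly one root in $(0,1)$.

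\emph{Properties of $G_{\mu,\epsilon}$.} The edge part is exactly the chain of Lemma~\ref{vi:core-root}, with the linear term $-\lambda(c/q_{\rm geo})u_1$ in place of $\lambda a_{\rm ch}u_1$; this term does not affect Jacobians. The new term $-\lambda q_{\rm geo}\phi_\epsilon(v_n)$ is concave in $v_n$, because $\phi_\epsilon$ is convex, being a nondecreasing convex $q_\epsilon$ of the convex function $|t|-\epsilon$. It therefore contributes $+\lambda q_{\rm geo}\phi_\epsilon''(v_n)\ge0$ to the symmetric Jacobian, which preserves monotonicity. Its second derivative is at most $\lambda q_{\rm geo}\le\lambda$, so the row-sum bound becomes $1188\lambda+\lambda\le1201\lambda=\ell$.

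For the deviation from $F_0-\mu I$, I will use $|\phi_\epsilon'(t)-t|\le2\epsilon$; this needs a short case check of $q_\epsilon'$ on $|t|\le\epsilon$, on $\epsilon<|t|<2\epsilon$, and on $|t|\ge2\epsilon$, where it equals $t-\tfrac32\epsilon\,\mathrm{sign}(t)$. The last coordinate then carries at most $548\lambda\epsilon+2\lambda q_{\rm geo}\epsilon\le550\lambda\epsilon$, and every other coordinate at most $548\lambda\epsilon$. This gives the bound $550\lambda\epsilon\sqrt{d_{\rm ch}}$. Flatness holds as before, because $\phi_\epsilon$ and its derivative vanish for $|v_n|\le\epsilon$. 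The values at the origin are immediate, since every edge and $\phi_\epsilon$ vanish there with zero gradient.

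\emph{Root and shift.} $F_\epsilon$ is $\mu$-strongly monotone and Lipschitz, so it has a unique root $z_{\mu,\epsilon}$; existence follows from standard theory for strongly monotone continuous operators. Since $F_0(w)=0$, we have $F_\epsilon(w)=F_\epsilon(w)-F_0(w)=G_{\mu,\epsilon}(w)-(F_0-\mu I)(w)$. Strong monotonicity then gives
\[
\mu\|z_{\mu,\epsilon}-w\|^2\le\langle F_\epsilon(z_{\mu,\epsilon})-F_\epsilon(w),z_{\mu,\epsilon}-w\rangle\le550\lambda\epsilon\sqrt{d_{\rm ch}}\,\|z_{\mu,\epsilon}-w\|.
\]
Substituting the value of $\epsilon$ and using $\lambda/\mu\le1+\lambda/\mu$ gives the middle bound. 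The conditions $q_{\rm geo}^k\le1$ and $550/10000<1/16$ give $R_0/16$, and the triangle inequality gives $\|z_{\mu,\epsilon}\|\le R_0/2+R_0/16=9R_0/16<R_0\le R_{\rm loc}$.

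The main obstacle is bookkeeping rather than any conceptual step. It consists of fixing the sign and orientation conventions of $K_{d_{\rm ch}}$ in the interleaved order, so that the saddle operator of $\Phi_{\mu,0}$ really equals \eqref{vi:strong-affine} including the boundary entry, and of the case analysis for $\phi_\epsilon'$.
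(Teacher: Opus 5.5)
Your proposal is correct and follows the paper's proof essentially step by step. It uses the same coordinatewise check of $F_0(w)=0$ via $\lambda q_{\rm geo}^2+\mu q_{\rm geo}-\lambda=0$, the geometric sum for $\|w\|$, the row-sum and edge-error bounds inherited from Lemma~\ref{vi:core-root} plus the terminal contribution, and the strong-monotonicity estimate $\mu\|z_{\mu,\epsilon}-w\|\le\|F_\epsilon(w)\|$. The differences are cosmetic: you use the looser bound $|\phi_\epsilon'(t)-t|\le2\epsilon$ where the paper gets $3\epsilon/2$ (both give the constant $550$), and you cite standard existence theory where the paper makes the Banach contraction $z\mapsto z-\eta F_\epsilon(z)$ explicit.
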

\begin{proof}
\emph{The root of the reference.} Using
$\lambda q_{\rm geo}^2+\mu q_{\rm geo}-\lambda=0$,
the first row of $F_0(w)$ is
$\mu c+\lambda cq_{\rm geo}-\lambda c/q_{\rm geo}
=(c/q_{\rm geo})(\lambda q_{\rm geo}^2+\mu q_{\rm geo}-\lambda)=0$,
every interior row is
$-\lambda w_{i-1}+\mu w_i+\lambda w_{i+1}
=cq_{\rm geo}^{i-2}(-\lambda+\mu q_{\rm geo}
+\lambda q_{\rm geo}^2)=0$, and the last row is
$-\lambda w_{d_{\rm ch}-1}+(\mu+\lambda q_{\rm geo})w_{d_{\rm ch}}=0$
for the same reason. Moreover
$\|w\|^2=c^2\sum_{i<d_{\rm ch}}q_{\rm geo}^{2i}
=c^2(1-q_{\rm geo}^{2d_{\rm ch}})/(1-q_{\rm geo}^2)=R_0^2/4$.

\emph{The terminal term.} The function $\phi_\epsilon$ is even, convex and
$C^2$, it vanishes on $[-\epsilon,\epsilon]$, $0\le\phi_\epsilon''\le1$, and
$\phi_\epsilon'(t)=\operatorname{sign}(t)(|t|-\epsilon)^2/(2\epsilon)$ for
$\epsilon<|t|<2\epsilon$ and $\operatorname{sign}(t)(|t|-3\epsilon/2)$ for
$|t|\ge2\epsilon$. Hence $|\phi_\epsilon'(t)-t|\le3\epsilon/2$ for all $t$.
The term $-q_{\rm geo}\phi_\epsilon(v_n)$ is concave in $v$ and flat near $0$.

\emph{The operator $G_{\mu,\epsilon}$.} As in Lemma~\ref{vi:core-root}, the
Jacobian has row and column sums at most
$(2\cdot512+2\cdot82+1)\lambda=1189\lambda<\ell$, its symmetric part is
positive semidefinite, and the flatness property holds. The edges
contribute at most $548\lambda\epsilon\sqrt{d_{\rm ch}}$ to
$\|G_{\mu,\epsilon}-(F_0-\mu I)\|$ and the terminal at most
$3\lambda q_{\rm geo}\epsilon/2
\le2\lambda\epsilon\sqrt{d_{\rm ch}}$. The values at $0$ follow
from the definitions.

\emph{The root.} The operator $F_\epsilon$ is $\mu$-strongly monotone and
$(\mu+\ell)$-Lipschitz, so for $\eta=\mu/(\mu+\ell)^2$ the map
$T(z)=z-\eta F_\epsilon(z)$ satisfies
\[
 \|T(z)-T(y)\|^2\le\bigl(1-2\eta\mu+\eta^2(\mu+\ell)^2\bigr)\|z-y\|^2
 =\Bigl(1-\frac{\mu^2}{(\mu+\ell)^2}\Bigr)\|z-y\|^2 ,
\]
and Banach's fixed point theorem gives a unique root $z_{\mu,\epsilon}$. Since
$F_0(w)=0$, $\|F_\epsilon(w)\|
=\|G_{\mu,\epsilon}(w)-(F_0-\mu I)(w)\|\le
550\lambda\epsilon\sqrt{d_{\rm ch}}$, and strong monotonicity gives
$\mu\|z_{\mu,\epsilon}-w\|^2
\le\langle F_\epsilon(w),w-z_{\mu,\epsilon}\rangle\le
\|F_\epsilon(w)\|\,\|z_{\mu,\epsilon}-w\|$. Substituting $\epsilon$,
\[
 \frac{550\lambda\epsilon\sqrt{d_{\rm ch}}}\mu
 =\frac{550}{10000}\cdot\frac{\lambda}{\mu+\lambda}R_0q_{\rm geo}^k
 \le\frac{550}{10000}R_0q_{\rm geo}^k,
\]
and $\|z_{\mu,\epsilon}\|\le R_0/2+R_0/16$.
\end{proof}

\begin{theorem}[Distance lower bound]\label{vi:strong-lower}
For the parameters~\eqref{vi:strong-params} and
$d_{\rm amb}\ge n+2(410R_{\rm loc})^2\epsilon^{-2}
\log(32mKNd_{\rm ch})$, every algorithm of
Definition~\ref{def:ic-main} on
$\mathbb R^{d_{\rm amb}}\times\mathbb R^{d_{\rm amb}}$ has a $C^2$,
$\mu$-strongly convex--concave instance with $L$-Lipschitz saddle
operator, a root $z_\star$ with $\|z_\star\|\le R_0$, and an oracle
satisfying SGC with equality, such that
\begin{equation*}
 \E\|\widehat z-z_\star\|^2\ge\frac{R_0^2q_{\rm geo}^{2k}}{50}.
\end{equation*}
\end{theorem}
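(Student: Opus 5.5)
Follow the three-step scheme of the monotone and saddle lower bounds: build the instance, show that on the event of Lemma~\ref{vi:haar} the output's hidden coordinates after position $k$ are at most $\epsilon$, and then show that such a point is far from the root because the root's tail is geometric with ratio $q_{\rm geo}$.

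\emph{The instance.} For independent $n$-frames $U,V$ in $\mathbb R^{d_{\rm amb}}$, set $\Psi(u,v)=\Psi_\epsilon(U^\top u,V^\top v)$. Localize it with Lemma~\ref{vi:product-cutoff} at radius $R_{\rm loc}$. The choice $R_{\rm loc}=R_0+\lambda c/(q_{\rm geo}\ell)$ gives $\|\nabla\Psi(0)\|=\lambda c/q_{\rm geo}\le\ell R_{\rm loc}$, and Lemma~\ref{vi:strong-core} places the saddle point inside $B(0,R_{\rm loc})$. Add $\frac\mu2(\|u\|^2-\|v\|^2)$ to the localized function. The result is $\mu$-strongly convex--concave, and its saddle operator is $(356\ell+\mu)$-Lipschitz. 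Since $\ell=1201\lambda$ and $\lambda=(L-\mu)/480400$, this constant is at most $L$. Inside the region where the masks equal one, the operator is the lift of $F_\epsilon=\mu I+G_{\mu,\epsilon}$, so the root is $z_\star=(Uu_{\mu,\epsilon},Vv_{\mu,\epsilon})$ with $\|z_\star\|\le 9R_0/16\le R_0$ by~\eqref{vi:strong-root-shift}.

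\emph{The oracle and the simulation.} The oracle returns $B_{\rm mask}/p$ times the sample saddle operator, so SGC holds with equality. One point needs care: the quadratic term $\mu I$ also has to be masked, but it is known to the algorithm. The response at an interior query is still computable from a masked response of $\Psi$ at the same point, together with the bit and the known terms. At an exterior query, at least one mask vanishes, so the response is a known function of the query and the bit. Hence the simulator of Theorem~\ref{vi:sp-lower} works unchanged, and the two-frame version of Lemma~\ref{vi:haar} applies. The flatness property of $G_{\mu,\epsilon}$ is given by Lemma~\ref{vi:strong-core}, and the terminal term is flat near $0$. On the resulting event, whose probability is at least $5/8$, there are two cases.
\begin{itemize}
\item If the output is interior, all its hidden coordinates after position $k$ have absolute value at most $\epsilon$.
\item If the output is exterior, its norm is at least $410R_{\rm loc}$, so its distance to $z_\star$ is at least $409R_{\rm loc}\gg R_0$.
\end{itemize}

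\emph{The distance.} The main step is the interior case. Here $\|\widehat z-z_\star\|$ is at least the norm of the projection onto hidden coordinates $k+1,\ldots,d_{\rm ch}$, since the frames are orthonormal. That projection of $z_\star$ is within $R_0q_{\rm geo}^k/180$ of the tail of $w$, by~\eqref{vi:strong-root-shift}. The tail of $w$ has coordinate $k+1$ equal to $cq_{\rm geo}^{k}$, so its norm is at least $cq_{\rm geo}^k$. The output's tail has norm at most $\epsilon\sqrt{d_{\rm ch}}\le R_0q_{\rm geo}^k/10000$. Moreover $c\ge (R_0/2)\sqrt{1-q_{\rm geo}^2}$, and for $k\ge1$ one may need the bound $c\ge R_0/(2\sqrt{d_{\rm ch}})$ instead. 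This is the point I expect to be delicate. I would use $c^2=R_0^2(1-q^2)/(4(1-q^{2d}))\ge R_0^2/(4d_{\rm ch})$ only if necessary; otherwise, since the tail of $w$ has norm $\|w\|q_{\rm geo}^{k}\sqrt{(1-q^{2(d-k)})/(1-q^{2d})}$ with $d-k=n+1\ge d/2$, it is at least $\frac{R_0}{2}q_{\rm geo}^k\cdot 2^{-1/2}$. The distance is then at least $R_0q_{\rm geo}^k(1/(2\sqrt2)-1/180-1/10000)\ge R_0q_{\rm geo}^k/3.5$. So the squared loss is at least $R_0^2q_{\rm geo}^{2k}/12.5$ on the event. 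Fixing frames as in the remark after Lemma~\ref{vi:haar} halves this to $1/25$, which is above the claimed $1/50$.
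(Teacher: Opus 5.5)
Your proposal is correct and follows the paper's proof essentially step for step: the same quadratic-plus-localized instance, the same simulator (with the known $\mu z$ term restored after masking), and the same tail estimate $\|w_{>k}\|\ge R_0q_{\rm geo}^k/(2\sqrt2)$ from $d_{\rm ch}-k\ge d_{\rm ch}/2$, so your detour through $c\ge R_0/(2\sqrt{d_{\rm ch}})$ is not needed. One slip: \eqref{vi:strong-root-shift} gives $\|z_{\mu,\epsilon}-w\|\le 0.055\,R_0q_{\rm geo}^k\approx R_0q_{\rm geo}^k/18$, not $R_0q_{\rm geo}^k/180$, but $1/(2\sqrt2)-0.055-0.0001>1/3.5$ still holds, so your constant $1/25$ survives (it is sharper than the paper's $1/50$, which comes from the looser bound $1/5$).
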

\begin{proof}
\emph{The instance.} For independent $n$-frames $U,V$ let
$\Psi(u,v)=\Psi_\epsilon(U^\top u,V^\top v)$ and
\begin{equation*}
 \overline\Phi(u,v)=\frac\mu2(\|u\|^2-\|v\|^2)
 +\chi_{R_{\rm loc}}(u)\chi_{R_{\rm loc}}(v)\Psi(u,v)
 +P_{R_{\rm loc}}(u)-P_{R_{\rm loc}}(v).
\end{equation*}
Since $\|\nabla\Psi(0)\|=\lambda c/q_{\rm geo}
\le\ell R_{\rm loc}$, the part of
Lemma~\ref{vi:product-cutoff} that does not use a saddle point applies:
the localized residual is $C^2$, convex--concave, and has
$356\ell$-Lipschitz gradient. Adding the quadratic makes $\overline\Phi$
$\mu$-strongly convex--concave, and its saddle operator $\overline F$ is
$(\mu+356\ell)$-Lipschitz, where
$\mu+356\ell=\mu+\frac{356}{400}(L-\mu)\le L$. The point
$z_\star=(Uu_{\mu,\epsilon},Vv_{\mu,\epsilon})$, where
$z_{\mu,\epsilon}=(u_{\mu,\epsilon},v_{\mu,\epsilon})$,
has norm at most $9R_0/16<R_{\rm loc}$, where the masks equal one and
$P_{R_{\rm loc}}$
vanishes; there $\overline F(z_\star)=0$, so $z_\star$ is the unique
saddle point.

\emph{Interior outputs.} Let both output blocks have norm below
$410R_{\rm loc}$
and the hidden coordinates after position $k$ be at most $\epsilon$.
Projecting onto the frames,
\[
 \|\widehat z-z_\star\|
 \ge\|w_{>k}\|-\|z_{\mu,\epsilon}-w\|-\epsilon\sqrt{d_{\rm ch}},
 \qquad w_{>k}=(w_{k+1},\ldots,w_{d_{\rm ch}}).
\]
Since $d_{\rm ch}-k=k+2\ge d_{\rm ch}/2$, we have
$q_{\rm geo}^{4(d_{\rm ch}-k)}\le q_{\rm geo}^{2d_{\rm ch}}$, hence
$1-q_{\rm geo}^{2d_{\rm ch}}
\le(1-q_{\rm geo}^{2(d_{\rm ch}-k)})(1+q_{\rm geo}^{2(d_{\rm ch}-k)})
\le2(1-q_{\rm geo}^{2(d_{\rm ch}-k)})$ and
\[
 \|w_{>k}\|^2=\frac{R_0^2}4\,q_{\rm geo}^{2k}\,
 \frac{1-q_{\rm geo}^{2(d_{\rm ch}-k)}}{1-q_{\rm geo}^{2d_{\rm ch}}}
 \ge\frac{R_0^2}8\,q_{\rm geo}^{2k}.
\]
By~\eqref{vi:strong-root-shift} and the choice of $\epsilon$,
$\|z_{\mu,\epsilon}-w\|+\epsilon\sqrt{d_{\rm ch}}
\le(551/10000)R_0q_{\rm geo}^k$. Since
$1/(2\sqrt2)>1/3$ and $1/3-551/10000>1/5$,
\begin{equation}\label{vi:strong-good-event}
 \|\widehat z-z_\star\|\ge\frac{R_0q_{\rm geo}^k}5 .
\end{equation}

\emph{Exterior outputs.} If a block has norm at least
$410R_{\rm loc}\ge410R_0$, then
$\|\widehat z-z_\star\|\ge410R_0-9R_0/16>R_0q_{\rm geo}^k/5$.

\emph{Oracle and simulation.} The oracle returns the value and saddle
operator of $B_{\rm mask}\overline\Phi/p$ and the bit $B_{\rm mask}$, with
an independent $B_{\rm mask}\sim\operatorname{Bernoulli}(p)$; it is unbiased and
$\E\|B_{\rm mask}\overline F/p\|^2=\rho\|\overline F\|^2$. As in the proof
of Theorem~\ref{vi:sp-lower}, a simulator with access to
$B_{\rm mask}\Psi/p$
answers every interior query with one call at the same point and every
exterior query with one call at $0$; the term $\mu z$ is known. Its
queries and output have blocks of norm at most $410R_{\rm loc}$, and $\Psi$ has
the flatness property, so the two-frame version of Lemma~\ref{vi:haar}
applies. Some fixed pair of frames gives~\eqref{vi:strong-good-event}
probability at least $1/2$, hence
$\E\|\widehat z-z_\star\|^2
\ge\frac12(R_0q_{\rm geo}^k/5)^2=R_0^2q_{\rm geo}^{2k}/50$.
Every sample function $B_{\rm mask}\overline\Phi/p$ is convex--concave
and has the same saddle point.
\end{proof}

\begin{corollary}[Exponential scale]\label{vi:strong-exponent}
Let $\kappa=L/\mu\ge2$. In the setting of Theorem~\ref{vi:strong-lower},
\[
 \E\|\widehat z-z_\star\|^2
 \ge\frac{R_0^2}{50}\exp\Bigl[-\frac{15372800\,N}{\kappa}\InfoDepth\Bigr].
\]
\end{corollary}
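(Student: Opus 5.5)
Starting from Theorem~\ref{vi:strong-lower}, it suffices to show
\[
 q_{\rm geo}^{2k}\ge\exp\Bigl[-\frac{15372800\,N}{\kappa}\InfoDepth\Bigr].
\]
This splits into two estimates. The first is an upper bound on $k$ in terms of $N\InfoDepth$. The second is a lower bound on $\log q_{\rm geo}$ in terms of $\mu/\lambda$, and hence of $1/\kappa$.

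\emph{The exponent $k$.} By definition $k=\min\{KN,\lfloor4N\beta_p\rfloor\}\le4N\beta_p$ with $p=1/\rho$. Theorem~\ref{thm:clock:asymptotic} gives $\beta_p\le4\InfoDepth$, so $k\le16N\InfoDepth$. When $k=0$ the bound is immediate, since the right side is at most $1$ and $q_{\rm geo}^0=1$.

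\emph{The ratio $q_{\rm geo}$.} We have $q_{\rm geo}=2\lambda/(\sqrt{\mu^2+4\lambda^2}+\mu)$. Then
\[
 \frac1{q_{\rm geo}}=\frac{\mu+\sqrt{\mu^2+4\lambda^2}}{2\lambda}\le\frac{\mu+(\mu+2\lambda)}{2\lambda}=1+\frac\mu\lambda,
\]
hence $-\log q_{\rm geo}\le\log(1+\mu/\lambda)\le\mu/\lambda$. Here $\lambda=(L-\mu)/480400$. For $\kappa\ge2$ we have $L-\mu\ge L/2$, so $\lambda\ge L/960800$ and $\mu/\lambda\le960800/\kappa$. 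Combining the two estimates,
\[
 -2k\log q_{\rm geo}\le2\cdot16N\InfoDepth\cdot\frac{960800}{\kappa}=\frac{30745600\,N\InfoDepth}{\kappa}.
\]

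This is twice the stated constant $15372800$. So I would try to save a factor two somewhere, and I expect this to be the main (purely arithmetic) obstacle. The first thing I would try is a sharper bound on $1/q_{\rm geo}$. Using $\sqrt{\mu^2+4\lambda^2}\le2\lambda+\mu^2/(4\lambda)$ gives $1/q_{\rm geo}\le1+\mu/(2\lambda)+\mu^2/(8\lambda^2)$. Then $\log(1+x/2+x^2/8)\le x/2+x^2/8$ with $x=\mu/\lambda$. The quadratic term is negligible only when $x$ is small, but in general $-\log q_{\rm geo}=\operatorname{arsinh}(\mu/(2\lambda))\le\mu/(2\lambda)$. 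This exact identity follows because $1/q_{\rm geo}=y+\sqrt{1+y^2}$ with $y=\mu/(2\lambda)$.

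With it, $-2k\log q_{\rm geo}\le2\cdot16N\InfoDepth\cdot480400/\kappa=15372800\,N\InfoDepth/\kappa$, which is exactly the stated constant. Substituting into Theorem~\ref{vi:strong-lower} finishes the proof.
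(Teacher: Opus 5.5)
Your final argument is correct and is essentially the paper's proof. The identity $1/q_{\rm geo}=y+\sqrt{1+y^2}$ with $y=\mu/(2\lambda)$ gives $\log(1/q_{\rm geo})=\operatorname{arsinh}(y)\le\mu/(2\lambda)$, and combining this with $k\le4N\beta_p\le16N\InfoDepth$ (Theorem~\ref{thm:clock:asymptotic}) and $1/(\kappa-1)\le2/\kappa$ yields exactly the constant $15372800$; you can drop the cruder bound $1/q_{\rm geo}\le1+\mu/\lambda$ and the quadratic-expansion detour.
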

\begin{proof}
With $x=\mu/(2\lambda)$ we have
$1/q_{\rm geo}=x+\sqrt{x^2+1}$, so
$\log(1/q_{\rm geo})=\operatorname{arsinh}(x)\le x$ and
$q_{\rm geo}^{2k}\ge e^{-k\mu/\lambda}$.
Since $k\le4N\beta_p$ and $\lambda=(L-\mu)/480400$,
\[
 \frac{k\mu}\lambda\le\frac{1921600\,N\beta_p}{\kappa-1}
 \le\frac{3843200\,N\beta_p}{\kappa},
\]
where we used $1/(\kappa-1)\le2/\kappa$ for $\kappa\ge2$. Finally
$\beta_p\le4\InfoDepth$ by Theorem~\ref{thm:clock:asymptotic}, and
Theorem~\ref{vi:strong-lower} gives the claim.
\end{proof}

\section{Strongly monotone VI under mixed noise}
\label{app:strong-mixed-refinement}
This appendix combines serial, common-query, and two-phase schedules,
proves the upper bound of Theorem~\ref{vi:main-mixed-accuracy} and the
statistical part of its lower bound, and converts both into numbers of
rounds.

Throughout this appendix $F$ is $L$-Lipschitz and $\mu$-strongly
monotone with $0<\mu\le L$, $\kappa=L/\mu$, $\|z_0-z_\star\|\le R_0$, and
the oracle satisfies Assumption~\ref{ass:mixed}. We abbreviate
\[
 K_{\rm ser}=\frac K\rho,\qquad
 K_{\rm cq}=\min\Bigl\{1,\frac{mK}\rho\Bigr\}.
\]
The serial schedule makes progress $K_{\rm ser}$ per round and averages over $K$
samples per round; the common-query schedule makes progress $K_{\rm cq}$ per round
and averages over $mK$ samples. Running the serial schedule first and the
common-query schedule second combines the progress of both with the
statistical error of the second.

\begin{lemma}[The two schedules for every budget]\label{lem:smr-schedules}
Run the serial or the common-query schedule
of~\eqref{vi:all-budget-schedules} with the step sizes of
Proposition~\ref{nn:strong-distance} for $N\ge1$ rounds from a point $x$,
and return $x$ if the schedule makes no complete step. The output
$\widehat z$ satisfies, respectively,
\begin{align*}
 \E\|\widehat z-z_\star\|^2
 &\le96\Bigl[\|x-z_\star\|^2e^{-3NK_{\rm ser}/(640\kappa)}
      +\frac{\sigma^2}{\mu^2KN}\Bigr],\\
 \E\|\widehat z-z_\star\|^2
 &\le96\Bigl[\|x-z_\star\|^2e^{-3NK_{\rm cq}/(640\kappa)}
      +\frac{\sigma^2}{\mu^2mKN}\Bigr].
\end{align*}
The bounds also hold conditionally on a random starting point $x$.
\end{lemma}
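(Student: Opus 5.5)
This is a restatement of the schedule analysis inside the proof of Proposition~\ref{nn:strong-distance}, with three changes: an arbitrary (possibly random) starting point $x$ replaces $x_0$, both exponents are written with the common constant $3/640$, and the degenerate case "no complete step" returns $x$. The plan is to reuse the single-trajectory bound~\eqref{nn:strong-trajectory} conditionally on $x$. This is legitimate because the step sizes of Lemma~\ref{lem:two-phase-recursion} depend only on $T$ and the known parameters, so the recursion~\eqref{nn:distance-recursion} holds conditionally on the starting point with $d_0=\|x-z_\star\|^2$. The conditional statement then follows pathwise in $x$, and the unconditional one follows by taking expectations.

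\emph{Serial schedule.} Take $b=\max\{1,\lceil4(\rho-1)\rceil\}\le4\rho$ and $T_{\rm sd}=\lfloor KN/(2b)\rfloor$.
\begin{itemize}
\item If $KN\ge4b$, then $T_{\rm sd}\ge KN/(4b)\ge KN/(16\rho)$ and $bT_{\rm sd}\ge KN/4$. Substituting into~\eqref{nn:strong-trajectory} gives
\[
\|x-z_\star\|^2e^{-3NK_{\rm ser}/(512\kappa)}+\frac{4C_{\rm traj}\sigma^2}{\mu^2KN},
\]
and $3/640\le3/512$ and $4C_{\rm traj}<96$ finish this case.
\item If $KN<4b$, the schedule may still make $T_{\rm sd}\ge1$ steps, but then it is cleaner to use the two-case argument from the proof of Proposition~\ref{nn:strong-distance}. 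In that situation $NK_{\rm ser}<16$, so $e^{-3NK_{\rm ser}/(640\kappa)}\ge e^{-3/40}$. The output, either $x$ itself or a trajectory endpoint, satisfies $\E\|\widehat z-z_\star\|^2\le \|x-z_\star\|^2+C_{\rm traj}\sigma^2/(\mu^2 bT)$ by~\eqref{nn:strong-trajectory}. Since $96e^{-3/40}\ge1$, the claimed bound covers the first term.
\end{itemize}

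\emph{The main obstacle} is the statistical term in the degenerate regime, when only a few steps are taken and $bT$ may be far smaller than $KN$. There are two ways to handle it. One is to define the output as $x$ whenever $KN<4b$; the lemma allows that choice, since "return $x$ if no complete step" is only the minimal requirement. The other is to note that with $T_{\rm sd}\ge1$ one has $bT_{\rm sd}\ge KN/4$ anyway, because $\lfloor y\rfloor\ge y/2$ for $y=KN/(2b)\ge1$, and then apply the first case. I would adopt the second route. It gives $bT\ge KN/4$ whenever at least one step is made, the exponent bound for short horizons comes from $NK_{\rm ser}<16$ as above, and when no step is made the output $x$ is trivially bounded by $96\|x-z_\star\|^2e^{-3NK_{\rm ser}/(640\kappa)}$.

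\emph{Common-query schedule.} The argument is the same with $s_{\rm cq}\le5/K_{\rm cq}$ from~\eqref{nn:schedule-facts}, total batch $mKs_{\rm cq}T_{\rm cq}\ge mKN/4$, and $T_{\rm cq}\ge NK_{\rm cq}/20$ when $N\ge4s_{\rm cq}$. This yields the exponent $3NK_{\rm cq}/(640\kappa)$ exactly and the statistical term $96\sigma^2/(\mu^2mKN)$. The short-horizon case $NK_{\rm cq}<20$ is absorbed by $e^{3/32}\le96$, exactly as before.
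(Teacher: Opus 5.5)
Your proposal is correct and follows essentially the paper's route. It reuses \eqref{nn:strong-trajectory} conditionally on $x$, takes the first case of Proposition~\ref{nn:strong-distance} when $KN\ge4b$ (resp.\ $N\ge4s_{\rm cq}$), and in the short-horizon regime absorbs the bias via $NK_{\rm ser}<16$ (resp.\ $NK_{\rm cq}<20$) together with $e^{3/32}\le96$. The one difference is how you get $bT_{\rm sd}\ge KN/4$ when $KN<4b$ and $T_{\rm sd}\ge1$: the paper uses $bT_{\rm sd}\ge b>KN/4$, while you use $\lfloor y\rfloor\ge y/2$ for $y=KN/(2b)\ge1$, and both work. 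You were also right to discard the option of redefining the output as $x$, since the lemma fixes the algorithm.
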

\begin{proof}
Consider the serial schedule with batch size $b\le4\rho$ and
$T_{\rm sd}=\lfloor KN/(2b)\rfloor$ steps. If $KN\ge4b$, this is the
first case in the proof of Proposition~\ref{nn:strong-distance}. If
$KN<4b$, then $NK_{\rm ser}/\kappa<16\mu/L\le16$, so
$\|x-z_\star\|^2\le e^{3/40}\|x-z_\star\|^2
 e^{-3NK_{\rm ser}/(640\kappa)}$. If moreover
$T_{\rm sd}=0$, the output is $x$. If $T_{\rm sd}\ge1$,
\eqref{nn:strong-trajectory} gives the bias $\|x-z_\star\|^2$ at most and
the variance $\frac{1932}{81}\cdot\frac{\sigma^2}{\mu^2bT_{\rm sd}}$, and
$bT_{\rm sd}\ge b>KN/4$. In all cases the first bound holds. The
common-query schedule is treated in the same way with
$s_{\rm cq}\le5/K_{\rm cq}$ rounds per stage: if $N<4s_{\rm cq}$, then
$NK_{\rm cq}<20$, and if $T_{\rm cq}\ge1$, then
$mKs_{\rm cq}T_{\rm cq}\ge mKs_{\rm cq}>mKN/4$. The step sizes depend only on the
horizon and known parameters, which gives the conditional version.
\end{proof}

\begin{theorem}[Serial phase followed by common queries]
\label{thm:strong-mixed-hybrid}
For all integers $m,K,N\ge1$ there is an algorithm of
Definition~\ref{def:ic-main}. Set $c_{\rm hyb}:=1/640$ and
$C_{\rm hyb}:=3\cdot96^2$. Then
\begin{equation}\label{eq:smr-hybrid}
 \E\|\widehat z-z_\star\|^2\le C_{\rm hyb}\min\Bigl\{R_0^2,\quad
 R_0^2e^{-c_{\rm hyb}N(K_{\rm ser}+K_{\rm cq})/\kappa}
 +\frac{\sigma^2}{\mu^2mKN}
   \bigl[1+m\,e^{-c_{\rm hyb}NK_{\rm cq}/\kappa}\bigr]\Bigr\}.
\end{equation}
\end{theorem}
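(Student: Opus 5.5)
The algorithm outputs the best of three candidates, chosen by their parameter-based bounds. The candidates are the initial point, which gives $R_0^2$; the pure serial or pure common-query schedule of Lemma~\ref{lem:smr-schedules} run for all $N$ rounds; and a hybrid. The hybrid runs the serial schedule from $z_0$ for the first $N_1=\lfloor N/2\rfloor$ rounds on one worker. It broadcasts the endpoint $x_1$ and then runs the common-query schedule from $x_1$ for the remaining $N_2=N-N_1\ge N/2$ rounds. If $N=1$, we fall back on the single-schedule bounds, since then $N_1=0$.

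\emph{Hybrid bound.} By the conditional version of Lemma~\ref{lem:smr-schedules} applied to the second phase, given $x_1$,
\[
 \E\|\widehat z-z_\star\|^2\le96\Bigl[\E\|x_1-z_\star\|^2e^{-3N_2K_{\rm cq}/(640\kappa)}+\frac{\sigma^2}{\mu^2mKN_2}\Bigr].
\]
The first phase gives
\[
 \E\|x_1-z_\star\|^2\le96\bigl[R_0^2e^{-3N_1K_{\rm ser}/(640\kappa)}+\sigma^2/(\mu^2KN_1)\bigr].
\]
Substituting, and using $N_1\ge N/3$ (for $N\ge2$) and $N_2\ge N/2$, the exponents combine: $3N_1K_{\rm ser}/640+3N_2K_{\rm cq}/640\ge N(K_{\rm ser}+K_{\rm cq})/640$. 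This gives the bias term $96^2R_0^2e^{-c_{\rm hyb}N(K_{\rm ser}+K_{\rm cq})/\kappa}$. The first-phase variance is damped by the second phase:
\[
 \frac{96^2\sigma^2}{\mu^2KN_1}\,e^{-3N_2K_{\rm cq}/(640\kappa)}\le\frac{3\cdot96^2\sigma^2}{\mu^2mKN}\,m\,e^{-c_{\rm hyb}NK_{\rm cq}/\kappa}.
\]
The second-phase variance $96\sigma^2/(\mu^2mKN_2)\le192\sigma^2/(\mu^2mKN)$. Summing yields the second entry of~\eqref{eq:smr-hybrid} with constant $3\cdot96^2$.

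\emph{Case $N=1$ and the min with $R_0^2$.} For $N=1$, the common-query schedule alone gives $96[R_0^2e^{-3K_{\rm cq}/(640\kappa)}+\sigma^2/(\mu^2mK)]$. Since $K_{\rm ser}/\kappa\le\ldots$ need not be small, we bound instead via the serial schedule when $K_{\rm ser}\ge K_{\rm cq}$. Its variance $\sigma^2/(\mu^2K)$ is at most $\frac{\sigma^2}{\mu^2mK}\cdot m$, and $m\le m e^{1/640}\cdot e^{-K_{\rm cq}/(640\kappa)}\cdot e^{1/640}$ up to a constant, because $K_{\rm cq}\le1\le\kappa$. The bias exponent $3K_{\rm ser}/640$ dominates $(K_{\rm ser}+K_{\rm cq})/640$ when $K_{\rm ser}\ge K_{\rm cq}$. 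Otherwise the common-query bound's exponent is at least $(K_{\rm ser}+K_{\rm cq})/640\cdot(3/2)$. The $R_0^2$ entry comes from the initial point. The selection is legitimate because all bounds depend only on known parameters.

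\emph{Expected obstacle.} The main work is bookkeeping, so that the constant $c_{\rm hyb}=1/640$ absorbs the floors in $N_1,N_2$ and the small-$N$ cases where a schedule makes no complete step. Lemma~\ref{lem:smr-schedules} already handles the latter by returning the start point. A minor point is that the second phase's conditional bound requires the step sizes to be independent of $x_1$, which that lemma asserts.
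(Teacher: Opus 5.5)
Your proposal is correct and follows the paper's proof. It uses the same two-phase schedule (serial for $\lfloor N/2\rfloor$ rounds, then common queries from the broadcast midpoint), chains the conditional bounds of Lemma~\ref{lem:smr-schedules} the same way, and arrives at the same constant $3\cdot96^2$. The one difference is $N=1$: you split into serial and common-query cases, whereas the paper simply runs the serial schedule and absorbs $e^{c_{\rm hyb}K_{\rm cq}/\kappa}\le e^{c_{\rm hyb}}$ (valid since $K_{\rm cq}\le1\le\kappa$) into the constant in both the bias and the variance terms, so no case split is needed.
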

\begin{proof}
The bound $R_0^2$ is attained by $z_0$; the algorithm returns $z_0$ if
this is the smaller bound.

\emph{One round.} For $N=1$ we run the serial schedule. Since
$K_{\rm cq}\le1\le\kappa$, we have
$e^{-c_{\rm hyb}K_{\rm cq}/\kappa}\ge e^{-c_{\rm hyb}}$, so
Lemma~\ref{lem:smr-schedules} and $3/640\ge c_{\rm hyb}$ give
\[
 \E\|\widehat z-z_\star\|^2
 \le96\Bigl[R_0^2e^{-c_{\rm hyb}K_{\rm ser}/\kappa}
      +\frac{\sigma^2}{\mu^2K}\Bigr]
 \le96e^{c_{\rm hyb}}\Bigl[
 R_0^2e^{-c_{\rm hyb}(K_{\rm ser}+K_{\rm cq})/\kappa}
 +\frac{\sigma^2}{\mu^2mK}\,m\,
   e^{-c_{\rm hyb}K_{\rm cq}/\kappa}\Bigr],
\]
which is~\eqref{eq:smr-hybrid}.

\emph{Two phases.} Let $N\ge2$, $N_s=\lfloor N/2\rfloor$ and
$N_c=N-N_s$; both are at least $N/3$. Run the serial schedule from $z_0$
for $N_s$ rounds and let $z_{\rm mid}$ be its output; it is broadcast at the end
of round $N_s$. Then run the common-query schedule from $z_{\rm mid}$ for $N_c$
rounds. Each phase uses at most $K$ calls per worker and round, and the
two phases use $N$ rounds. By Lemma~\ref{lem:smr-schedules},
\begin{align}
 \E\|z_{\rm mid}-z_\star\|^2
 &\le96\Bigl[R_0^2e^{-3N_sK_{\rm ser}/(640\kappa)}
      +\frac{\sigma^2}{\mu^2KN_s}\Bigr],
 \label{eq:smr-first-phase}\\
 \E\bigl[\|\widehat z-z_\star\|^2\bigm|z_{\rm mid}\bigr]
 &\le96\Bigl[\|z_{\rm mid}-z_\star\|^2e^{-3N_cK_{\rm cq}/(640\kappa)}
 +\frac{\sigma^2}{\mu^2mKN_c}\Bigr].
 \label{eq:smr-second-phase}
\end{align}
Taking the expectation of~\eqref{eq:smr-second-phase} and
inserting~\eqref{eq:smr-first-phase},
\[
 \E\|\widehat z-z_\star\|^2
 \le96^2R_0^2e^{-3(N_sK_{\rm ser}+N_cK_{\rm cq})/(640\kappa)}
 +\frac{96^2\sigma^2}{\mu^2KN_s}
   e^{-3N_cK_{\rm cq}/(640\kappa)}
 +\frac{96\sigma^2}{\mu^2mKN_c}.
\]
With $N_s,N_c\ge N/3$, the exponents are at least
$N(K_{\rm ser}+K_{\rm cq})/(640\kappa)$ and
$NK_{\rm cq}/(640\kappa)$, and $1/(KN_s)\le3m/(mKN)$,
$1/(mKN_c)\le3/(mKN)$. This gives~\eqref{eq:smr-hybrid}.
\end{proof}

\begin{lemma}[Statistical lower bound for the distance]
\label{lem:smr-statistical}
In every dimension $d\ge2$, every algorithm of Definition~\ref{def:ic-main}
has an instance with a $\mu$-strongly monotone $L$-Lipschitz operator,
$\|z_0-z_\star\|\le R_0$, and an oracle with variance $\sigma^2$ at every point,
so that Assumption~\ref{ass:mixed} holds for every $\rho\ge1$, such that
\[
 \E\|\widehat z-z_\star\|^2
 \ge\frac{3}{128}\min\Bigl\{R_0^2,\frac{\sigma^2}{\mu^2mKN}\Bigr\}.
\]
\end{lemma}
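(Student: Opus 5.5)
The plan is a two-point Gaussian testing argument, following the proof of Lemma~\ref{an:statistical} but with a strongly monotone operator and the squared distance as loss. If $\sigma=0$ there is nothing to prove, so assume $\sigma>0$ and set $Q_{\rm tot}=mKN$, $z_0=0$.

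\textbf{Construction.} Define the separation
\[
 \theta_{\rm amp}=\min\Bigl\{R_0,\ \frac{\sigma}{4\mu\sqrt{Q_{\rm tot}}}\Bigr\}.
\]
For $\theta_{\rm test}\in\{-\theta_{\rm amp},\theta_{\rm amp}\}$ and $x=(x_1,x_2)$, let
\[
 F_{\theta_{\rm test}}(x)=\bigl(\mu(x_1-\theta_{\rm test}),\ \mu x_2\bigr),
\]
and pad with coordinates $\mu x_j$ when $d>2$. This operator is $\mu$-strongly monotone and $\mu\le L$-Lipschitz. Its root is $(\theta_{\rm test},0)$, at distance $\theta_{\rm amp}\le R_0$ from $z_0$. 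The oracle adds $(\sigma\zeta,0,\ldots,0)$ with an independent $\zeta\sim\mathcal N(0,1)$ at each call. Its variance is exactly $\sigma^2\le(\rho-1)\|F\|^2+\sigma^2$, so Assumption~\ref{ass:mixed} holds for every $\rho\ge1$.

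\textbf{Reduction to testing.} Allow communication after every call; this only helps the algorithm. Each first coordinate has the form $\mu x_1-\mu\theta_{\rm test}+\sigma\zeta$. Subtracting the known term $\mu x_1$ leaves independent $\mathcal N(-\mu\theta_{\rm test},\sigma^2)$ observations, and the other coordinates are known functions of the query. By Remark~\ref{rem:gaussian-adaptive}, Lemma~\ref{lem:gaussian-testing} applies with $\mu_{\rm obs}=\mu\theta_{\rm amp}$. The choice of $\theta_{\rm amp}$ gives $Q_{\rm tot}\mu^2\theta_{\rm amp}^2/\sigma^2\le1/16$, so every test errs with average probability at least $3/8$.

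\textbf{Loss on an error.} Use the test that reads off the sign of $\widehat z_1$. When it errs, $\widehat z_1$ and $\theta_{\rm test}$ lie on opposite sides of $0$, so $\|\widehat z-z_\star\|^2\ge|\widehat z_1-\theta_{\rm test}|^2\ge\theta_{\rm amp}^2$. Averaging over the two signs, some instance satisfies
\[
 \E\|\widehat z-z_\star\|^2\ge\frac38\,\theta_{\rm amp}^2
 =\frac38\min\Bigl\{R_0^2,\ \frac{\sigma^2}{16\mu^2mKN}\Bigr\}
 \ge\frac{3}{128}\min\Bigl\{R_0^2,\ \frac{\sigma^2}{\mu^2mKN}\Bigr\}.
\]
The final inequality holds because $\tfrac38\ge\tfrac3{128}$ and $\tfrac{3}{8\cdot16}=\tfrac3{128}$.

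\textbf{Main obstacle.} The only delicate point is making sure the transcript carries no information about $\theta_{\rm test}$ beyond the Gaussian samples. This holds because the deterministic parts of every response are known functions of the query, exactly as required in Remark~\ref{rem:gaussian-adaptive}.
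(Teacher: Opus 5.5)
Your proposal is correct and follows the paper's proof essentially step for step. It uses the same two-point construction with separation $\min\{R_0,\sigma/(4\mu\sqrt{mKN})\}$, the same reduction to Lemma~\ref{lem:gaussian-testing}, and the same sign test on $\widehat z_1$. The only cosmetic difference is that you put $\mu x_2$ rather than $Lx_2$ on the auxiliary coordinate, which is harmless because this appendix assumes $\mu\le L$.
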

\begin{proof}
For $\sigma=0$ there is nothing to prove. Otherwise let $Q_{\rm tot}=mKN$,
$r_{\rm test}=\min\{R_0,\sigma/(4\mu\sqrt {Q_{\rm tot}})\}$ and
$\theta_{\rm test}\in\{-r_{\rm test},r_{\rm test}\}$, and
consider on $\mathbb R^2$, with $z_0=0$,
\[
 F_{\theta_{\rm test}}(x)
 =\bigl(\mu(x_1-\theta_{\rm test}),Lx_2\bigr),\qquad
 \widehat F_{\theta_{\rm test}}(x)
 =F_{\theta_{\rm test}}(x)+(\sigma\zeta,0),\qquad
 \zeta\sim\mathcal N(0,1).
\]
This operator is $\mu$-strongly monotone and $L$-Lipschitz, its root
$(\theta_{\rm test},0)$ satisfies $\|z_0-z_\star\|\le R_0$, and the oracle satisfies
Assumption~\ref{ass:mixed} for every $\rho\ge1$. Subtracting the known
value $\mu x_1$ from the first coordinate of every response leaves
$Q_{\rm tot}$ observations from
$\mathcal N(-\mu\theta_{\rm test},\sigma^2)$. Since
$Q_{\rm tot}\mu^2r_{\rm test}^2/\sigma^2\le1/16$,
Lemma~\ref{lem:gaussian-testing} shows that the sign of $\widehat z_1$
differs from the sign of $\theta_{\rm test}$ with average probability
at least $3/8$, and then
$\|\widehat z-z_\star\|^2\ge r_{\rm test}^2$.
Hence one of the two signs gives
\[
 \E\|\widehat z-z_\star\|^2\ge\frac38r_{\rm test}^2
 \ge\frac{3R_0^2}{128}\min\Bigl\{1,\frac{\sigma^2}{\mu^2R_0^2mKN}\Bigr\}.
\]
Padding with coordinates on which
the operator is $Lx_j$ embeds this example in every dimension.
\end{proof}

Let $N^*_{\rm dist}(\varepsilon)$ be the smallest positive integer $N$ for
which some algorithm, which may use the parameters and the dimension,
guarantees $\E\|\widehat z-z_\star\|^2\le\varepsilon$ uniformly over the class
of this appendix, in every finite dimension. Write
$\varepsilon_{\rm rel}=\varepsilon/R_0^2$.

\begin{corollary}[Number of rounds]
\label{cor:strong-mixed-accuracy}
Suppose $\kappa\ge2$ and $0<\varepsilon_{\rm rel}<1/200$. Then
\begin{equation*}
 N^*_{\rm dist}(\varepsilon)=\Omega\!\left(\max\Bigl\{1,\
 \frac{\kappa\log(1/\varepsilon_{\rm rel})}{\InfoDepth},\
 \frac{\sigma^2}{\mu^2mK\varepsilon}\Bigr\}\right),
\end{equation*}
and
\begin{equation}\label{eq:smr-round-upper}
 \begin{aligned}
 \mathcal T_{\rm ser}
   &:=\frac{\kappa}{K_{\rm ser}}\log\frac1{\varepsilon_{\rm rel}}
      +\frac{\sigma^2}{\mu^2K\varepsilon},\\
 \mathcal T_{\rm pool}
   &:=\frac{\kappa}{K_{\rm cq}}\log\frac1{\varepsilon_{\rm rel}}
      +\frac{\sigma^2}{\mu^2mK\varepsilon},\\
 \mathcal T_{\rm hyb}
   &:=\frac{\kappa}{K_{\rm ser}+K_{\rm cq}}
      \log\frac1{\varepsilon_{\rm rel}}
      +\frac{\kappa\log(2m)}{K_{\rm cq}}
      +\frac{\sigma^2}{\mu^2mK\varepsilon},\\
 N^*_{\rm dist}(\varepsilon)
   &=O\!\left(1+\min\{\mathcal T_{\rm ser},\mathcal T_{\rm pool},
                        \mathcal T_{\rm hyb}\}\right).
 \end{aligned}
\end{equation}
\end{corollary}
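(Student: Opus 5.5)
The plan is to prove the lower and upper bounds on $N^*_{\rm dist}(\varepsilon)$ separately, converting the distance bounds already established into round counts. Throughout, $\varepsilon_{\rm rel}<1/200$ keeps every target strictly below the trivial level $R_0^2/50$ that appears in the lower-bound constants.

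\emph{Lower bound.} The three terms are handled independently. The constant term $1$ holds because $N^*_{\rm dist}$ is by definition a positive integer.

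For the $\kappa\log(1/\varepsilon_{\rm rel})/\InfoDepth$ term, apply Corollary~\ref{vi:strong-exponent}. If some algorithm guarantees accuracy $\varepsilon$ with $N$ rounds, then its worst-case error is at most $\varepsilon$. On the hard instance the error is at least $\frac{R_0^2}{50}e^{-CN\InfoDepth/\kappa}$, so this quantity must be $\le\varepsilon$. Rearranging gives
\[
 N\ge\frac{\kappa}{C\InfoDepth}\log\frac1{50\varepsilon_{\rm rel}}.
\]
Since $\varepsilon_{\rm rel}<1/200$, we have $\log(1/(50\varepsilon_{\rm rel}))\ge\frac14\log(1/\varepsilon_{\rm rel})$, and the term follows. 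The hard instance must lie in the class. Theorem~\ref{vi:strong-lower} gives a $\mu$-strongly monotone $L$-Lipschitz operator with $\|z_\star\|\le R_0$ and an SGC oracle, and SGC is the mixed model with $\sigma=0$. The theorem needs $\mu<L$, which holds because $\kappa\ge2$.

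For the statistical term, apply Lemma~\ref{lem:smr-statistical}. The error is at least $\frac{3}{128}\min\{R_0^2,\sigma^2/(\mu^2mKN)\}$. This minimum cannot be $R_0^2$, since $\frac{3}{128}R_0^2>\varepsilon$. Hence $\frac{3}{128}\sigma^2/(\mu^2mKN)\le\varepsilon$, which gives $N\gtrsim\sigma^2/(\mu^2mK\varepsilon)$. The maximum of three lower bounds is at least a third of their sum, so the three terms combine.

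\emph{Upper bound.} For $\mathcal T_{\rm ser}$ and $\mathcal T_{\rm pool}$, use Lemma~\ref{lem:smr-schedules} started from $x=z_0$. The serial bound is $96[R_0^2e^{-3NK_{\rm ser}/(640\kappa)}+\sigma^2/(\mu^2KN)]$. Choosing $N\ge C\,\mathcal T_{\rm ser}$ with a large universal constant $C$ makes each term at most $\varepsilon/2$. The exponential term needs $N\ge\frac{640}{3}\frac{\kappa}{K_{\rm ser}}\log(192/\varepsilon_{\rm rel})$, and $\log(192/\varepsilon_{\rm rel})\le 3\log(1/\varepsilon_{\rm rel})$ because $\varepsilon_{\rm rel}<1/200$. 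The pooled case is identical with $K_{\rm cq}$ and $mK$ in place of $K_{\rm ser}$ and $K$. Rounding up to an integer accounts for the additive $1$.

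\emph{The hybrid term.} This uses Theorem~\ref{thm:strong-mixed-hybrid}. There are three contributions to control.
\begin{itemize}
\item The bias $R_0^2e^{-c_{\rm hyb}N(K_{\rm ser}+K_{\rm cq})/\kappa}$ is handled as above.
\item The term $\sigma^2/(\mu^2mKN)$ is made small by the third summand of $\mathcal T_{\rm hyb}$.
\item The cross term $\frac{\sigma^2}{\mu^2mKN}\,m\,e^{-c_{\rm hyb}NK_{\rm cq}/\kappa}$ is the remaining difficulty.
\end{itemize}
For the cross term, impose $N\ge\kappa\log(2m)/(c_{\rm hyb}K_{\rm cq})$. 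Then $m\,e^{-c_{\rm hyb}NK_{\rm cq}/\kappa}\le1$, so the cross term is at most $\sigma^2/(\mu^2mKN)$, which is already controlled. This is exactly why $\mathcal T_{\rm hyb}$ contains the summand $\kappa\log(2m)/K_{\rm cq}$. With all three conditions scaled by universal constants, the total is at most $\varepsilon$.

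Because $N^*_{\rm dist}$ is a minimum over algorithms, it is bounded by whichever of the three constructions needs the fewest rounds. This yields $O(1+\min\{\mathcal T_{\rm ser},\mathcal T_{\rm pool},\mathcal T_{\rm hyb}\})$.

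The main obstacle is the exponent bookkeeping. In particular, the lower-bound instances must be checked to belong to the class for every $\sigma$ and $\rho$ simultaneously. For Lemma~\ref{lem:smr-statistical} this holds because it satisfies the mixed assumption for all $\rho$. For Theorem~\ref{vi:strong-lower} it holds because its oracle is SGC with equality, which is the mixed model with $\sigma=0$ and is a subclass for every $\sigma\ge0$. The fixed-dimension requirement is not an obstacle, since $N^*_{\rm dist}$ demands uniformity in every finite dimension and the lower bounds are realized in some sufficiently large dimension.
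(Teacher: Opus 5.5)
Your proposal is correct and follows the paper's proof essentially step for step. It derives the optimization lower bound from Corollary~\ref{vi:strong-exponent}, using $\log(1/(50\varepsilon_{\rm rel}))\gtrsim\log(1/\varepsilon_{\rm rel})$, and the statistical lower bound from Lemma~\ref{lem:smr-statistical}, using $\varepsilon<3R_0^2/128$. The three upper bounds come from Lemma~\ref{lem:smr-schedules} and Theorem~\ref{thm:strong-mixed-hybrid}, where the $\kappa\log(2m)/K_{\rm cq}$ condition neutralizes the factor $m\,e^{-c_{\rm hyb}NK_{\rm cq}/\kappa}$; the paper imposes $\log m$ there, and your $\log(2m)$ works equally well.
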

\begin{proof}
\emph{Optimization lower bound.} By Corollary~\ref{vi:strong-exponent},
every algorithm has an instance with
$\E\|\widehat z-z_\star\|^2\ge\frac{R_0^2}{50}\exp[-15372800N\InfoDepth/\kappa]$.
Accuracy $\varepsilon$ therefore requires
\[
 N\ge\frac{\kappa\log(1/(50\varepsilon_{\rm rel}))}
          {15372800\InfoDepth},
 \qquad
 \log\frac1{50\varepsilon_{\rm rel}}
 \ge\Bigl(1-\frac{\log50}{\log200}\Bigr)
      \log\frac1{\varepsilon_{\rm rel}},
\]
where the second inequality uses $\varepsilon_{\rm rel}<1/200$.

\emph{Statistical lower bound.} By Lemma~\ref{lem:smr-statistical} and
$\varepsilon_{\rm rel}<3/128$, accuracy $\varepsilon$ requires
$N\ge3\sigma^2/(128\mu^2mK\varepsilon)$.

\emph{Upper bound.} We give three choices of $N$ for which an algorithm
reaches accuracy $\varepsilon$. By Lemma~\ref{lem:smr-schedules} with
$x=z_0$, the serial schedule does so if
\[
 N\ge\frac{640\kappa}{3K_{\rm ser}}
      \log\frac{192}{\varepsilon_{\rm rel}}
 \quad\text{and}\quad
 N\ge\frac{192\sigma^2}{\mu^2K\varepsilon},
\]
since then each of the two terms is at most $\varepsilon/2$; the
common-query schedule does so under the same conditions with
$K_{\rm ser}$ and $K$ replaced by $K_{\rm cq}$ and $mK$.
By Theorem~\ref{thm:strong-mixed-hybrid}, the two-phase algorithm does so if
\[
 N\ge\frac{640\kappa}{K_{\rm ser}+K_{\rm cq}}
      \log\frac{3C_{\rm hyb}}{\varepsilon_{\rm rel}},\qquad
 N\ge\frac{640\kappa\log m}{K_{\rm cq}},\qquad
 N\ge\frac{6C_{\rm hyb}\sigma^2}{\mu^2mK\varepsilon},
\]
because then
$C_{\rm hyb}R_0^2e^{-N(K_{\rm ser}+K_{\rm cq})/(640\kappa)}
\le\varepsilon/3$,
$m\,e^{-NK_{\rm cq}/(640\kappa)}\le1$, and the statistical term is at most
$2C_{\rm hyb}\sigma^2/(\mu^2mKN)\le\varepsilon/3$.
For $\varepsilon_{\rm rel}<1/200$, there is a universal constant
$C_{\log}>0$ such that
$\log(3C_{\rm hyb}/\varepsilon_{\rm rel})
\le C_{\log}\log(1/\varepsilon_{\rm rel})$, and the
smallest integer satisfying each set of conditions exceeds its largest
right-hand side by at most one. This proves~\eqref{eq:smr-round-upper}.
\end{proof}

\section{Lower bounds under mixed noise}
\label{gm:section}

This appendix proves the lower bounds of Theorem~\ref{vi:main-mixed}
(Corollary~\ref{gm:minimax}) and Theorem~\ref{vi:main-mixed-accuracy}
(Theorem~\ref{sf:rate-lower} and Corollary~\ref{sf:accuracy-match}). Besides the SGC lower bounds of
Appendix~\ref{vi:lower-section}, two new ingredients are needed. The
first is the classical statistical lower bound for $mKN$ Gaussian
observations. The second couples noise with local computation: a
\emph{clipped} chain whose oracle randomizes only the edge at the
boundary of the known coordinates. Its noise has bounded variance, so
it satisfies the mixed condition, and a single worker still discovers
one new coordinate only through a successful call. The clipped
instances are exact at their roots, so they also show that a small
noise at the solution does not help.

Throughout, $m,K,N\ge1$ are integers, $L,D>0$, $\rho\ge1$, $\sigma\ge0$,
and $h=\log(2m)$. The operators are monotone and $L$-Lipschitz; the
sampled maps need not be.

\subsection{The statistical lower bound}

\begin{proposition}[Statistical lower bound]\label{nn:statistical-lower}
Every algorithm of Definition~\ref{def:ic-main} has an affine monotone
instance with $L$-Lipschitz operator, root at distance $D$ from
$z_0=0$, and an oracle with variance $\sigma^2$ at every point (so
Assumption~\ref{ass:mixed} holds for every $\rho\ge1$), such that
\begin{equation*}
 \E\Gap_{B(0,D)}^{\rm M}(\widehat z)
 \ge\frac3{128}\min\left\{LD^2,\
                   \frac{\sigma D}{\sqrt{mKN}}\right\}.
\end{equation*}
\end{proposition}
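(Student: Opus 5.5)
This is a direct adaptation of Lemma~\ref{an:statistical}, with the root-centered gap replaced by the Minty gap on $B(0,D)$ around the initial point. If $\sigma=0$ the bound is trivial. Otherwise set $Q_{\rm tot}=mKN$ and $\lambda=\min\{L,\sigma/(4D\sqrt{Q_{\rm tot}})\}$. For $\theta_{\rm test}\in\{-D,D\}$, consider on $\mathbb R^2$, with $z_0=0$,
\[
 F_{\theta_{\rm test}}(x)=\bigl(\lambda(x_1-\theta_{\rm test}),\,Lx_2\bigr),\qquad
 \widehat F_{\theta_{\rm test}}(x)=F_{\theta_{\rm test}}(x)+(\sigma\zeta,0).
\]
Here $\zeta\sim\mathcal N(0,1)$ is drawn independently at each call, and higher dimensions are reached by padding with noiseless coordinates $Lx_j$. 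The operator is affine, monotone and $L$-Lipschitz. Its unique root $(\theta_{\rm test},0)$ lies at distance exactly $D$ from $z_0$, and the oracle has variance $\sigma^2$ everywhere, so Assumption~\ref{ass:mixed} holds for every $\rho\ge1$.

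The first step is the reduction to testing, which proceeds exactly as before. We let the algorithm communicate after every call and give it $\lambda$; both can only help it. Subtracting the known $\lambda x_1$ turns the transcript into a measurable function of $Q_{\rm tot}$ independent $\mathcal N(-\lambda\theta_{\rm test},\sigma^2)$ observations and the seed (Remark~\ref{rem:gaussian-adaptive}). Since $Q_{\rm tot}\lambda^2D^2/\sigma^2\le1/16$, Lemma~\ref{lem:gaussian-testing} shows that the test ``sign of $\widehat z_1$'' errs with average probability at least $3/8$. On an error, $|\widehat z_1-\theta_{\rm test}|\ge D$.

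The second step, which is the only point that needs care, is choosing the comparator inside the new test set $B(0,D)$. In Lemma~\ref{an:statistical} the comparator $x_\star\pm(R_0/2)e_1$ lay in the root-centered ball. Here the root sits on the boundary of $B(0,D)$, so I would instead take $u=(\theta_{\rm test}/2,0)$. This point lies in $B(0,D)$ and satisfies $F(u)=(-\lambda\theta_{\rm test}/2,0)$. An error means $\operatorname{sign}(\widehat z_1)\ne\operatorname{sign}(\theta_{\rm test})$, so $\widehat z_1\theta_{\rm test}\le0$. Then
\[
 \langle F(u),\widehat z-u\rangle
 =-\tfrac{\lambda\theta_{\rm test}}2\bigl(\widehat z_1-\tfrac{\theta_{\rm test}}2\bigr)
 =-\tfrac{\lambda}{2}\widehat z_1\theta_{\rm test}+\tfrac{\lambda D^2}{4}
 \ge\tfrac{\lambda D^2}{4}.
\]

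The last step is averaging. The gap is nonnegative because $B(0,D)$ contains the root. Averaging over the two signs gives an expected gap of at least $\frac38\cdot\frac{\lambda D^2}4=\frac{3\lambda D^2}{32}$. By the choice of $\lambda$ this is at least $\frac{3}{128}\min\{LD^2,\sigma D/\sqrt{mKN}\}$, and one of the two signs attains the average.
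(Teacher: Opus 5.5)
Your proposal is correct and follows the paper's own proof essentially line for line. It uses the same two-point instance with $\lambda=\min\{L,\sigma/(4D\sqrt{mKN})\}$, the same sign-of-$\widehat z_1$ test via Lemma~\ref{lem:gaussian-testing}, and the same comparator $u=(\theta_{\rm test}/2,0)\in B(0,D)$, which yields a gap of at least $\lambda D^2/4$ on an error and hence the average $3\lambda D^2/32$.
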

\begin{proof}
For $\sigma=0$ there is nothing to prove. Let $Q_{\rm tot}=mKN$,
$\lambda=\min\{L,\sigma/(4D\sqrt{Q_{\rm tot}})\}$,
$\theta_{\rm test}\in\{-D,D\}$, and on
$\mathbb R^2$ (padded by coordinates with operator $Lx_j$)
\[
 F_{\theta_{\rm test}}(x)
 =\bigl(\lambda(x_1-\theta_{\rm test}),Lx_2\bigr),\qquad
 g_{\theta_{\rm test}}(x)=F_{\theta_{\rm test}}(x)+(\sigma\zeta,0),
 \quad \zeta\sim\mathcal N(0,1).
\]
The root is $(\theta_{\rm test},0)$. Subtracting the known value
$\lambda x_1$ from
the first coordinate of every response leaves $Q_{\rm tot}$ observations
from $\mathcal N(-\lambda\theta_{\rm test},\sigma^2)$. Since
$Q_{\rm tot}\lambda^2D^2/\sigma^2\le1/16$,
Lemma~\ref{lem:gaussian-testing} shows that the decision
``$\theta_{\rm test}=D$ if $\widehat z_1\ge0$,
$\theta_{\rm test}=-D$ otherwise'' errs with average probability at least
$3/8$. On an error, test at
$u=(\theta_{\rm test}/2,0)\in B(0,D)$:
\[
 \langle F_{\theta_{\rm test}}(u),\widehat z-u\rangle
 =-\frac{\lambda\theta_{\rm test}}2
   \Bigl(\widehat z_1-\frac{\theta_{\rm test}}2\Bigr)
 =\frac{\lambda D}2\Bigl(\frac D2+|\widehat z_1|\Bigr)
 \ge\frac{\lambda D^2}4,
\]
because $\widehat z_1$ and $\theta_{\rm test}$ have opposite signs. The gap is
nonnegative otherwise, so the average over the two signs is at least
$3\lambda D^2/32$, and
$\lambda D^2=\min\{LD^2,\sigma D/(4\sqrt{Q_{\rm tot}})\}
\ge\frac14\min\{LD^2,\sigma D/\sqrt{Q_{\rm tot}}\}$.
\end{proof}

\subsection{A clipped edge and a clipped chain}

The chain of Appendix~\ref{vi:robust-core} has an unbounded gradient,
so randomizing one of its edges would give unbounded variance. We clip
each edge. Let $b_\epsilon,\zeta_\epsilon$ be as in Appendix~\ref{vi:robust-core},
let $E_\epsilon^\varsigma$ denote the edge in~\eqref{vi:flat-edge},
and let $\psi$ be as in~\eqref{vi:psi-bounds}. For $a_{\rm ch}>0$ let
\[
 \omega_{a_{\rm ch}}(t)=1-\psi\Bigl(\frac{|t|-2a_{\rm ch}}{a_{\rm ch}}\Bigr),\qquad
 s_{a_{\rm ch}}(t)=\int_0^t\omega_{a_{\rm ch}}(v)\,dv,
\]
so that $\omega_{a_{\rm ch}}=1$ on $[-2a_{\rm ch},2a_{\rm ch}]$,
$\omega_{a_{\rm ch}}=0$ outside $[-3a_{\rm ch},3a_{\rm ch}]$, and
$s_{a_{\rm ch}}(t)=t$ on $[-2a_{\rm ch},2a_{\rm ch}]$.
Let $q_{a_{\rm ch}}$ be the even function with
$q_{a_{\rm ch}}(0)=q_{a_{\rm ch}}'(0)=0$ and
$q_{a_{\rm ch}}''=\frac{5a_{\rm ch}}2|s_{a_{\rm ch}}''|$; it is convex and $C^2$,
and $q_{a_{\rm ch}}=q_{a_{\rm ch}}'=0$ on $[-2a_{\rm ch},2a_{\rm ch}]$.

\begin{lemma}[Clipped edge]\label{gm:edge}
For $0<\epsilon\le a_{\rm ch}/274$ and $\varsigma\in\{-1,1\}$, the function
\begin{equation*}
 \widehat E_{\epsilon,a_{\rm ch}}^{\varsigma}(u,v)
 =\varsigma\bigl[s_{a_{\rm ch}}(u)s_{a_{\rm ch}}(v)-b_\epsilon(u)b_\epsilon(v)\bigr]
 +\zeta_\epsilon(u)-\zeta_\epsilon(v)+q_{a_{\rm ch}}(u)-q_{a_{\rm ch}}(v)
\end{equation*}
is $C^2$ and convex--concave, and its value and gradient vanish on
$\{|u|,|v|\le\epsilon\}$. Both partial derivatives are at most $6a_{\rm ch}$ in
absolute value, so the gradient has norm at most $9a_{\rm ch}$. The diagonal
second derivatives are at most $522$ and the mixed one at most $82$ in
absolute value. On $[-2a_{\rm ch},2a_{\rm ch}]^2$,
$\widehat E_{\epsilon,a_{\rm ch}}^\varsigma$ equals $E_\epsilon^\varsigma$.
\end{lemma}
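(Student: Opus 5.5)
We follow the proof of Lemma~\ref{vi:edge-lemma} and treat the three ingredients separately: the bilinear part $\varsigma[s(u)s(v)-b(u)b(v)]$, the inner corrections $\zeta_\epsilon(u)-\zeta_\epsilon(v)$, and the outer corrections $q_{a_{\rm ch}}(u)-q_{a_{\rm ch}}(v)$. We write $s=s_{a_{\rm ch}}$, $q=q_{a_{\rm ch}}$, $b=b_\epsilon$, $\zeta=\zeta_\epsilon$.

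\emph{Bounds on $s$.} From~\eqref{vi:psi-bounds} we get $0\le\omega\le1$. This gives $|s(t)|\le\min\{|t|,3a_{\rm ch}\}$, since $s$ is constant outside $[-3a_{\rm ch},3a_{\rm ch}]$. We also get $|s'|\le1$ and $|s''|=|\omega'|\le(15/8)/a_{\rm ch}$, with $s''$ supported in $2a_{\rm ch}\le|t|\le3a_{\rm ch}$. Hence $q''\le\frac{5a_{\rm ch}}2\cdot\frac{15}{8a_{\rm ch}}<5$, and $q''$ is supported in the same annulus. Integrating, $|q'|\le\frac{5a_{\rm ch}}2\int|s''|\le\frac{5a_{\rm ch}}2\cdot2=5a_{\rm ch}$, because $\int_{2a_{\rm ch}}^{3a_{\rm ch}}|\omega'|=1$ and the function is even.

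\emph{Values on the square and near the origin.} On $[-2a_{\rm ch},2a_{\rm ch}]^2$ we have $s(u)=u$, $s(v)=v$ and $q=q'=0$, so $\widehat E=E_\epsilon^\varsigma$ there. Near the origin, $\epsilon\le a_{\rm ch}/274<2a_{\rm ch}$, so the vanishing of value and gradient on $\{|u|,|v|\le\epsilon\}$ follows from Lemma~\ref{vi:edge-lemma}. The function is $C^2$ because each of its pieces is.

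\emph{Convexity--concavity.} We compute
\[
\partial_{uu}\widehat E=\varsigma[s''(u)s(v)-b''(u)b(v)]+\zeta''(u)+q''(u).
\]
As in Lemma~\ref{vi:edge-lemma}, $\zeta''(u)\ge|b''(u)b(v)|$ because $|b|\le2\epsilon$. Likewise $q''(u)=\frac{5a_{\rm ch}}2|s''(u)|\ge|s''(u)s(v)|$ because $|s|\le3a_{\rm ch}<\frac52\cdot$\,? The constant is the issue: we need $|s(v)|\le\frac52a_{\rm ch}$, but a priori only $|s(v)|\le3a_{\rm ch}$.

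This constant is the main obstacle. I would first try a sharper bound on $|s|$. Since $\omega$ decreases from $1$ to $0$ on $[2a_{\rm ch},3a_{\rm ch}]$ and $\psi(x)+\psi(1-x)=1$, we get $\int_{2a_{\rm ch}}^{3a_{\rm ch}}\omega=a_{\rm ch}/2$. Hence $|s|\le\frac52a_{\rm ch}$ exactly, and $\partial_{uu}\widehat E\ge0$. Concavity in $v$ follows symmetrically. The second-derivative bounds are then $|\partial_{uu}|\le256+256+\frac{15}8\cdot\frac52+5\le522$, and the mixed derivative satisfies $|\partial_{uv}|=|s'(u)s'(v)-b'(u)b'(v)|\le1+81=82$.

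\emph{Gradient bound.} We have
\[
\partial_u\widehat E=\varsigma[s'(u)s(v)-b'(u)b(v)]+\zeta'(u)+q'(u).
\]
The naive bound $\frac52a_{\rm ch}+18\epsilon+256\epsilon+5a_{\rm ch}$ exceeds $6a_{\rm ch}$, so a case split is needed. On $|u|\le2a_{\rm ch}$ we have $q'=0$, and the bound is $\frac52a_{\rm ch}+274\epsilon\le\frac72a_{\rm ch}$, using $\epsilon\le a_{\rm ch}/274$.

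On $|u|>2a_{\rm ch}\ge2\epsilon$ we have $b'(u)=0$ and $\zeta'(u)$ is constant. The terms $s'(u)s(v)$ and $q'(u)$ then need a joint estimate. I would use $|q'(u)|=\frac{5a_{\rm ch}}2(1-\omega(u))$ (by integrating $|s''|=-\omega'$). Then
\[
|s'(u)s(v)|+|q'(u)|\le\tfrac52a_{\rm ch}\bigl(\omega+1-\omega\bigr)=\tfrac52a_{\rm ch},
\]
and adding $|\zeta'|\le256\epsilon\le a_{\rm ch}$ stays below $6a_{\rm ch}$. The same holds for $\partial_v$, and the two partial bounds combine into a gradient norm of at most $6\sqrt2\,a_{\rm ch}<9a_{\rm ch}$.
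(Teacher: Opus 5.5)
Your proposal is correct and follows the paper's proof. The key observation, $|s_{a_{\rm ch}}|\le\frac52a_{\rm ch}$ from $\psi(1-x)=1-\psi(x)$, is exactly what the paper uses for convexity--concavity and for the $522$ and $82$ bounds.

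The only difference is the gradient step, where your case split is unnecessary. Since $q_{a_{\rm ch}}'(t)=\int_0^tq_{a_{\rm ch}}''$ integrates over a single half-line, $|q_{a_{\rm ch}}'|\le\frac52a_{\rm ch}$; the factor $2$ from evenness was spurious, as your own formula $|q_{a_{\rm ch}}'|=\frac52a_{\rm ch}(1-\omega_{a_{\rm ch}})$ shows. The paper therefore gets $5a_{\rm ch}+274\epsilon\le6a_{\rm ch}$ in one line, whereas your split gives the sharper but unneeded bound $\frac72a_{\rm ch}$.
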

\begin{proof}
Since $\psi(1-s)=1-\psi(s)$, $\int_0^1(1-\psi)=1/2$, hence
$|s_{a_{\rm ch}}|\le2a_{\rm ch}+a_{\rm ch}/2=5a_{\rm ch}/2$.
By~\eqref{vi:psi-bounds}, $|s_{a_{\rm ch}}'|=\omega_{a_{\rm ch}}\le1$
and $|s_{a_{\rm ch}}''|\le15/(8a_{\rm ch})$. On each half-line
$s_{a_{\rm ch}}'$ decreases monotonically from $1$ to $0$, so
$\int_0^\infty|s_{a_{\rm ch}}''|=1$ and
$|q_{a_{\rm ch}}'|\le5a_{\rm ch}/2$; also $q_{a_{\rm ch}}''\le75/16$.
Lemma~\ref{vi:edge-lemma} and its
proof give $|b_\epsilon|\le2\epsilon$, $|b_\epsilon'|\le9$,
$|\zeta_\epsilon'|\le256\epsilon$ and
$0\le \zeta_\epsilon''=2\epsilon|b_\epsilon''|\le256$.

Write $\widehat E$ for $\widehat E_{\epsilon,a_{\rm ch}}^\varsigma$. Then
\[
 \partial_{uu}\widehat E=\varsigma\bigl[s_{a_{\rm ch}}''(u)s_{a_{\rm ch}}(v)-b_\epsilon''(u)b_\epsilon(v)\bigr]
 +\zeta_\epsilon''(u)+q_{a_{\rm ch}}''(u)\ge0,
\]
because $|s_{a_{\rm ch}}''(u)s_{a_{\rm ch}}(v)|
\le\frac{5a_{\rm ch}}2|s_{a_{\rm ch}}''(u)|=q_{a_{\rm ch}}''(u)$ and
$|b_\epsilon''(u)b_\epsilon(v)|\le2\epsilon|b_\epsilon''(u)|=\zeta_\epsilon''(u)$;
likewise $\partial_{vv}\widehat E\le0$. These bounds also give
$|\partial_{uu}\widehat E|\le2(256+75/16)<522$. The mixed derivative is
$\varsigma[s_{a_{\rm ch}}'(u)s_{a_{\rm ch}}'(v)-b_\epsilon'(u)b_\epsilon'(v)]$, at most $1+81$ in
absolute value. The first derivative
$\partial_u\widehat E
=\varsigma[s_{a_{\rm ch}}'(u)s_{a_{\rm ch}}(v)-b_\epsilon'(u)b_\epsilon(v)]
+\zeta_\epsilon'(u)+q_{a_{\rm ch}}'(u)$ is at most
$5a_{\rm ch}/2+18\epsilon+256\epsilon+5a_{\rm ch}/2\le6a_{\rm ch}$,
and similarly for $\partial_v\widehat E$; hence the gradient norm is at
most $6\sqrt2a_{\rm ch}<9a_{\rm ch}$. For
$|u|,|v|\le\epsilon\le2a_{\rm ch}$ we have
$s_{a_{\rm ch}}=b_\epsilon=\mathrm{id}$ and $\zeta_\epsilon,q_{a_{\rm ch}}$
vanish with their derivatives, so $\widehat E$ and $\nabla\widehat E$ vanish.
On $[-2a_{\rm ch},2a_{\rm ch}]^2$, $s_{a_{\rm ch}}=\mathrm{id}$ and
$q_{a_{\rm ch}}=0$.
\end{proof}

For an integer $n\ge1$, $d_{\rm ch}=2n$, $r>0$,
$a_{\rm ch}=r/\sqrt n$ and $\lambda>0$, define
in the order $(u_1,v_1,\ldots,u_n,v_n)$
\begin{equation*}
 \widehat\Phi(u,v)=\lambda\Bigl[a_{\rm ch}u_1
 +\sum_{i=1}^n\widehat E_{\epsilon,a_{\rm ch}}^{+1}(u_i,v_i)
 +\sum_{i=1}^{n-1}\widehat E_{\epsilon,a_{\rm ch}}^{-1}(u_{i+1},v_i)\Bigr],\qquad
 \widehat G=(\nabla_u\widehat\Phi,-\nabla_v\widehat\Phi).
\end{equation*}
The $d_{\rm ch}-1$ edges couple consecutive coordinates. Let $E_j(w)$,
$1\le j<d_{\rm ch}$, be $\lambda$ times the saddle operator of the edge on
coordinates $j,j+1$, placed in these two coordinates.

\begin{lemma}[Clipped chain]\label{gm:chain}
Let $\epsilon=a_{\rm ch}/(2000000\,n\sqrt{d_{\rm ch}})$. The operator $\widehat G$ is monotone
and $\ell$-Lipschitz with $\ell=1300\lambda$, and
\begin{equation*}
 \widehat G(w)=\lambda a_{\rm ch}e_1+\sum_{j=1}^{d_{\rm ch}-1}E_j(w),
\end{equation*}
where $E_j$ depends only on coordinates $j,j+1$,
$\|E_j\|\le9\lambda a_{\rm ch}$,
and $E_j$ vanishes when both coordinates are at most $\epsilon$ in absolute
value. Moreover $\widehat G$ has a root within distance
$548n\epsilon\sqrt{d_{\rm ch}}<a_{\rm ch}/4$ of
$z_{\rm ref}=(0,-a_{\rm ch}\mathbf1)$.
\end{lemma}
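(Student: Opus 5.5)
The plan is to mirror the proof of Lemma~\ref{vi:core-root}, replacing the edges $E_\epsilon^\varsigma$ by the clipped edges of Lemma~\ref{gm:edge}. First note that $\epsilon\le a_{\rm ch}/274$, so Lemma~\ref{gm:edge} applies to every edge.

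\emph{Decomposition.} The function $\widehat\Phi$ is the linear term $\lambda a_{\rm ch}u_1$ plus $d_{\rm ch}-1$ edge functions, each of two consecutive coordinates in the interleaved order. Its saddle operator is therefore $\lambda a_{\rm ch}e_1$ plus the sum of the edge operators $E_j$. Each $E_j$ is supported on and depends only on coordinates $j,j+1$. Lemma~\ref{gm:edge} gives $\|E_j\|\le\lambda\cdot9a_{\rm ch}$, and $E_j$ vanishes when both of its coordinates have absolute value at most $\epsilon$.

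\emph{Monotonicity and Lipschitz constant.} The symmetric part of the Jacobian is block diagonal with blocks $\nabla^2_{uu}\widehat\Phi$ and $-\nabla^2_{vv}\widehat\Phi$. Both are positive semidefinite because each edge is convex--concave, so $\widehat G$ is monotone. Each coordinate belongs to at most two edges. Hence every row and every column of the Jacobian has absolute sum at most $(2\cdot522+2\cdot82)\lambda=1208\lambda$. By the Schur (row/column sum) bound, the spectral norm is at most $1208\lambda<1300\lambda$.

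\emph{Root.} This is the only step needing care, because the clipping changes the operator far from the origin. The plan is to reuse the Brouwer argument of Lemma~\ref{vi:core-root} on a small ball around $z_{\rm ref}$ where no clipping is active.
\begin{itemize}
\item Set $\rho_0=548n\epsilon\sqrt{d_{\rm ch}}=548a_{\rm ch}/2000000<a_{\rm ch}/4$.
\item Every point $z$ with $\|z-z_{\rm ref}\|\le\rho_0$ has all coordinates in $[-5a_{\rm ch}/4,5a_{\rm ch}/4]\subseteq[-2a_{\rm ch},2a_{\rm ch}]$, since $z_{\rm ref}$ has coordinates $0$ and $-a_{\rm ch}$.
\item On this box Lemma~\ref{gm:edge} gives $\widehat E=E_\epsilon^\varsigma$, so $\widehat G=G_\epsilon$ on the ball. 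Hence \eqref{vi:uniform-core-error} holds there: $\|\widehat G-G_0\|\le548\lambda\epsilon\sqrt{d_{\rm ch}}$.
\item Define $T(z)=z_{\rm ref}-A_{\rm ref}^{-1}(\widehat G(z)-G_0(z))$. Using $\|A_{\rm ref}^{-1}\|\le n/\lambda$, we get $\|T(z)-z_{\rm ref}\|\le\rho_0$, so $T$ maps the closed ball into itself.
\item Brouwer's theorem gives a fixed point $z$. Since $G_0(z)=A_{\rm ref}(z-z_{\rm ref})$, the fixed-point equation reduces to $\widehat G(z)=0$.
\end{itemize}
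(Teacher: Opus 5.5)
Your proposal is correct and follows the paper's proof essentially line by line. The decomposition, the edge bounds and monotonicity come from Lemma~\ref{gm:edge}, the row/column-sum bound gives $1208\lambda<\ell$, and the root comes from running the Brouwer argument of Lemma~\ref{vi:core-root} only on the ball of radius $548n\epsilon\sqrt{d_{\rm ch}}$ around $z_{\rm ref}$. On that ball every coordinate stays in $[-2a_{\rm ch},2a_{\rm ch}]$, so $\widehat G=G_\epsilon$ there and the local error bound~\eqref{vi:uniform-core-error} suffices for $T$ to map the ball into itself.
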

\begin{proof}
Monotonicity, the decomposition and the bounds on $E_j$ follow from
Lemma~\ref{gm:edge}. Each coordinate belongs to at most two edges, so
the rows and columns of the Jacobian have absolute sums at most
$(2\cdot522+2\cdot82)\lambda=1208\lambda<\ell$. On the ball of radius
$548n\epsilon\sqrt{d_{\rm ch}}<a_{\rm ch}/4$ around
$z_{\rm ref}$, all coordinates have absolute value below $2a_{\rm ch}$,
so $\widehat G$ coincides with $G_\epsilon$ of~\eqref{vi:robust-chain}
there, and $\|\widehat G-G_0\|\le548\lambda\epsilon\sqrt{d_{\rm ch}}$
by Lemma~\ref{vi:core-root}. The Brouwer argument in the proof of
Lemma~\ref{vi:core-root} gives a root in this ball.
\end{proof}

Let $R_{\rm loc}=2r$, $R_{\rm cut}=410R_{\rm loc}$, and let
$d_{\rm amb}\ge d_{\rm ch}$. For an orthonormal $d_{\rm ch}$-frame $U$ in
$\mathbb R^{d_{\rm amb}}$, let
$G_U(x)=U\widehat G(U^\top x)$. Its root has norm at most
$r+a_{\rm ch}/4<R_{\rm loc}$, so Lemma~\ref{vi:operator-cutoff} applies:
\begin{equation}\label{gm:localized-field}
 F_U(x)=\chi(x)G_U(x)+V(x),\qquad
 \chi(x)=1-\psi\Bigl(\frac{\|x\|-2R_{\rm loc}}{408R_{\rm loc}}\Bigr),\qquad
 V(x)=84\ell(\|x\|-R_{\rm loc})_+\frac x{\|x\|}
\end{equation}
is monotone and $90\ell$-Lipschitz, has the same root, and equals the
known map $V$ for $\|x\|\ge R_{\rm cut}$.

\begin{lemma}[Gap on a hidden cube]\label{gm:cube}
Let $r=D/4$ and $k=n-1$. If $\|x\|<R_{\rm cut}$ and all hidden coordinates of
$U^\top x$ after position $k$ are at most $\epsilon$ in absolute value, or
if $\|x\|\ge R_{\rm cut}$, then
\begin{equation*}
 \Gap^{\rm M}_{B(0,D)}(x)\ge\frac{\lambda r^2}{2n}.
\end{equation*}
\end{lemma}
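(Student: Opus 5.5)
The plan follows the two-case proof of the interior/exterior outputs in Theorem~\ref{vi:uvi-lower}, adapted to the clipped chain $\widehat G$ and the localization~\eqref{gm:localized-field}.

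\emph{Exterior case} ($\|x\|\ge R_{\rm cut}$). We test at $w=2R_{\rm loc}x/\|x\|$, which has $\|w\|=4r=D$. There $\chi(w)=1$ and $V(w)=84\ell R_{\rm loc}\,x/\|x\|$. Lemma~\ref{gm:chain} places the root of $G_U$ within norm $R_{\rm loc}$, so $\|G_U(w)\|\le\ell(2R_{\rm loc}+R_{\rm loc})=3\ell R_{\rm loc}$. Since $x-w$ is radial, $\langle F_U(w),x-w\rangle\ge81\ell R_{\rm loc}(\|x\|-2R_{\rm loc})\ge81\cdot408\,\ell R_{\rm loc}^2$. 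This is far larger than $\lambda r^2/(2n)$, because $\ell=1300\lambda$, $R_{\rm loc}=2r$ and $n\ge1$.

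\emph{Interior case.} We restrict the supremum to the hidden product set $Z_r=\{U(u,v):\|u\|,\|v\|\le r\}\subseteq B(0,\sqrt2 r)\subseteq B(0,R_{\rm loc})$, on which $\chi=1$ and $V=0$, so $F_U=G_U$. For $w=U(u,v)$ we get $\langle F_U(w),x-w\rangle=\langle\widehat G(u,v),U^\top x-(u,v)\rangle$. On $Z_r$ every hidden coordinate has absolute value at most $r$. However, $a_{\rm ch}=r/\sqrt n$, so the coordinates need not lie in $[-2a_{\rm ch},2a_{\rm ch}]$, and $\widehat G$ need not agree with $G_\epsilon$ there. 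This is the main obstacle.

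To handle it, I would shrink the test set to the cube $Q=\{(u,v):\|(u,v)\|_\infty\le 2a_{\rm ch}\}\cap\{\|u\|,\|v\|\le r\}$, where Lemma~\ref{gm:edge} gives $\widehat G=G_\epsilon$. The supremum in Lemma~\ref{vi:affine-comparison} is then taken over a smaller set, so the reference gap must be re-derived directly. Write $\widehat u=U^\top x$ in blocks $(\widehat u,\widehat v)$. Take $u=0$ and $v=-a_{\rm ch}\mathbf 1-\tau$, where $\tau$ is chosen with $\|\tau\|_\infty\le a_{\rm ch}$ and $\|v\|\le r$; for example, $\tau$ is a multiple $s\,\mathbf 1$ with small $s$, or a vector supported on the last coordinate.

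With $\langle G_0(w),\widehat z-w\rangle=\langle h,\widehat z\rangle+\langle w,-A\widehat z-h\rangle$, this choice picks up $\gamma r\,|\widehat v_n+a_{\rm ch}|/\sqrt n$-type terms, up to a constant factor. The coordinate sum of $D_n^\top\widehat v+a_{\rm ch}e_1$ is $\widehat v_n+a_{\rm ch}$, and $|\widehat v_n|\le\epsilon$. Taking $v$ proportional to $\mathbf 1$ scaled by at most $2a_{\rm ch}$ per coordinate (which keeps $\|v\|\le2r$) would recover roughly $\lambda a_{\rm ch}(a_{\rm ch}-\epsilon)\ge\tfrac34\lambda r^2/n$. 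If the norm constraint $\|v\|\le r$ forces the comparator $v=-r\mathbf 1/\sqrt n$, that point already has coordinates $a_{\rm ch}\le2a_{\rm ch}$, so $Q$ contains the needed direction, and we recover the bound~\eqref{vi:affine-gap} restricted to the direction $\mathbf 1$.

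Finally, we replace $G_0$ by $G_\epsilon$ at cost $\Delta_{\rm op}(\|U^\top x\|+\sqrt2r)\le548\cdot822\lambda r^2/(2000000\,n)<\lambda r^2/(4n)$, exactly as in Theorem~\ref{vi:uvi-lower}. This gives $\Gap^{\rm M}_{B(0,D)}(x)\ge\lambda r^2/(2n)$. The delicate point is verifying that the optimal affine comparator has sup-norm at most $2a_{\rm ch}$. I expect to check this by choosing the single explicit comparator $u=0$, $v=-a_{\rm ch}\mathbf 1\pm$ (a radius-$r$ adjustment in direction $\mathbf 1/\sqrt n$) and computing the inner product directly, rather than invoking the supremum formula.
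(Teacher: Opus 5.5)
Your exterior case, your decision to shrink the hidden test set to $Q$ (where $F_U=G_U$ and $\widehat G=G_\epsilon$), and your replacement cost are all correct and follow the paper. The paper tests on the hidden cube $[-a_{\rm ch},a_{\rm ch}]^{d_{\rm ch}}$ and evaluates the reference supremum through its support function $a_{\rm ch}\|\cdot\|_1$. The gap is the comparator you propose for the reference operator. For $w=(u,v)$,
\[
 \langle G_0(w),(\widehat u,\widehat v)-w\rangle
 =\lambda\bigl[a_{\rm ch}\widehat u_1
 -\langle u,D_n^\top\widehat v+a_{\rm ch}e_1\rangle
 +\langle v,D_n\widehat u\rangle\bigr].
\]
Here $\widehat v_n+a_{\rm ch}=\mathbf 1^\top(D_n^\top\widehat v+a_{\rm ch}e_1)$ enters only through the $u$-block of the comparator. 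With $u=0$ and $v=-a_{\rm ch}\mathbf 1-\tau$, the identity $\mathbf 1^\top D_n\widehat u=\widehat u_1$ reduces the value to $-\lambda\langle\tau,D_n\widehat u\rangle$. For $\tau=0$ your comparator is exactly the root $z_{\rm ref}$ of $G_0$. At the output $x=0$, which satisfies the hypotheses, the value is $0$ for every $\tau$. So no $|\widehat v_n+a_{\rm ch}|$ term appears, and adjusting $v$ along $\mathbf 1$ acts on the wrong block.

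The missing step is to displace the $u$-block instead: take $w=U(-a_{\rm ch}\mathbf 1,-a_{\rm ch}\mathbf 1)$. Then $D_n^\top v+a_{\rm ch}e_1=0$ and $-D_nu=a_{\rm ch}e_n$, so $G_0(w)=\lambda(0,a_{\rm ch}e_n)$. The tested inner product equals $\lambda a_{\rm ch}(a_{\rm ch}+\widehat v_n)\ge\lambda a_{\rm ch}(a_{\rm ch}-\epsilon)$; the $v$-block $-a_{\rm ch}\mathbf 1$ is what cancels $a_{\rm ch}\widehat u_1$. This point has all coordinates of modulus $a_{\rm ch}$ and norm $\sqrt2r$, so it lies in your $Q$ and in the paper's cube. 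It attains the lower bound the paper extracts from the $\ell_1$ support function, and with it the rest of your argument goes through.

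Also, the ``delicate point'' you intend to check is false. The Euclidean maximizer $u=-r(D_n^\top\widehat v+a_{\rm ch}e_1)/\|D_n^\top\widehat v+a_{\rm ch}e_1\|$ can equal $-re_j$: take $\widehat u=0$, $\widehat v_i=-a_{\rm ch}$ for $i<n/2$ and $\widehat v_i=0$ otherwise, which meets the hypothesis. Its nonzero coordinate has modulus $r=\sqrt n\,a_{\rm ch}>2a_{\rm ch}$ once $n>4$. You must give up the $\ell_2$ bound of Lemma~\ref{vi:affine-comparison} and keep only the coordinate-sum bound, as both the corrected comparator and the paper's cube do.
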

\begin{proof}
\emph{Interior points.} Test on the hidden cube
$\{U(u,v):|u_i|,|v_i|\le a_{\rm ch}\}$, whose points have norm at most
$\sqrt2r<R_{\rm loc}$, so $F_U=G_U$ there. Write
$U^\top x=(\widehat u,\widehat v)$. The support function of the cube of
half-width $a_{\rm ch}$ is $a_{\rm ch}\|\cdot\|_1$, so,
as in the proof of Lemma~\ref{vi:affine-comparison}, the reference
operator $G_0$ gives
\[
 \sup_{(u,v)\in[-a_{\rm ch},a_{\rm ch}]^{d_{\rm ch}}}
 \langle G_0(u,v),U^\top x-(u,v)\rangle
 =\lambda\bigl[a_{\rm ch}\widehat u_1+a_{\rm ch}\|D_n\widehat u\|_1
 +a_{\rm ch}\|D_n^\top\widehat v+a_{\rm ch}e_1\|_1\bigr]
 \ge\lambda a_{\rm ch}(a_{\rm ch}-\epsilon).
\]
Indeed $\|D_n\widehat u\|_1\ge|\mathbf1^\top D_n\widehat u|=|\widehat u_1|$,
and the coordinates of $D_n^\top\widehat v+a_{\rm ch}e_1$ sum to
$a_{\rm ch}+\widehat v_n$,
where $|\widehat v_n|\le\epsilon$ because $\widehat v_n$ is the last
coordinate. On the cube $\widehat G=G_\epsilon$, so
by~\eqref{vi:uniform-core-error} replacing $G_0$ by $\widehat G$ changes
each tested inner product by at most
\[
 548\lambda\epsilon\sqrt{d_{\rm ch}}\,(R_{\rm cut}+\sqrt2r)
 \le\frac{548\cdot822}{2000000}\cdot
 \frac{\lambda r^2}{n\sqrt n}<\frac{\lambda r^2}{4n}.
\]
Since $\lambda a_{\rm ch}(a_{\rm ch}-\epsilon)
=\frac{\lambda r^2}n(1-\epsilon/a_{\rm ch})$ and
$\epsilon/a_{\rm ch}<10^{-6}$, the gap is at least $\lambda r^2/(2n)$.

\emph{Exterior points.} The argument of Theorem~\ref{vi:uvi-lower} with
the test point $2R_{\rm loc}x/\|x\|$ gives a gap of at least
$81\cdot408\,\ell R_{\rm loc}^2\ge\lambda r^2/(2n)$.
\end{proof}

\subsection{An oracle that randomizes the boundary edge}

For $w\in\mathbb R^{d_{\rm ch}}$ let $s(w)$ be the largest index $j$ with
$|w_j|>\epsilon$, or $0$ if there is none, and let
\[
 E_s^{\rm bd}(w)=
 \begin{cases}
 \lambda a_{\rm ch}e_1,&s=0,\\
 E_s(w),&1\le s<d_{\rm ch},\\
 0,&s=d_{\rm ch},
 \end{cases}
 \qquad E_{\rm bd}(w):=E_{s(w)}^{\rm bd}(w).
\]
At every call the oracle draws an independent
$B_{\rm mask}\sim\operatorname{Bernoulli}(p)$,
with a known $0<p\le1$, and returns
\begin{equation}\label{gm:oracle}
 g_U(x,B_{\rm mask})=F_U(x)
 +\Bigl(\frac{B_{\rm mask}}p-1\Bigr)\chi(x)\,UE_{\rm bd}(U^\top x).
\end{equation}
It is unbiased, and since $\E(B_{\rm mask}/p-1)^2=1/p-1$ and
$\|E_{\rm bd}\|\le9\lambda a_{\rm ch}$,
\begin{equation}\label{gm:variance}
 \E\bigl[\|g_U(x,B_{\rm mask})-F_U(x)\|^2\bigr]
 =\Bigl(\frac1p-1\Bigr)\chi(x)^2\|E_{\rm bd}(U^\top x)\|^2
 \le\frac{81\lambda^2a_{\rm ch}^2}p .
\end{equation}
After a failure ($B_{\rm mask}=0$) the response is
$V(x)+\chi(x)U(\widehat G-E_{\rm bd})(U^\top x)$: the edge at the boundary
of the large coordinates is removed. At a root of $\widehat G$ the last
coordinate is close to $-a_{\rm ch}$, so $s=d_{\rm ch}$ and the oracle is
exact there.

\begin{lemma}[Hiding for the boundary-edge oracle]\label{gm:hiding}
Let $S_N$ be as in Section~\ref{vi:haar-subsection}, with the bits
$B_{\rm mask}$ of
the oracle, and let $k\ge0$ be an integer with $\Prob\{S_N\le k\}\ge3/4$;
for instance $k=\min\{KN,\lfloor4N\beta_p\rfloor\}$. If
\begin{equation}\label{gm:dimension}
 d_{\rm amb}\ge d_{\rm ch}
 +\frac{2R_{\rm cut}^2}{\epsilon^2}\log(32mKNd_{\rm ch}),
\end{equation}
then for every algorithm of Definition~\ref{def:ic-main}, with probability
at least $5/8$ over the frame, the bits and the algorithm's randomness,
the output either has norm at least $R_{\rm cut}$ or has all hidden coordinates
after position $k$ at most $\epsilon$ in absolute value.
\end{lemma}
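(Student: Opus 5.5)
The plan is to mirror the proof of Lemma~\ref{vi:haar}: build a fictitious oracle whose responses depend on the frame only through a prefix of columns, and show by a union bound over fresh Haar columns that the true and fictitious transcripts agree. Two reductions come first. A query with $\|x\|\ge R_{\rm cut}$ has $\chi(x)=0$, so its response is the known map $V(x)$; such calls are replaced by a call at $0$ that uses only the bit. Likewise, an output of norm at least $R_{\rm cut}$ is replaced by $0$. The simulator then has all queries and its output in $B(0,R_{\rm cut})$. It suffices to prove the claim for the simulator, because on the good event the original output is either exterior or coincides with the simulator's output.

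Next, fix the bits and the algorithm's randomness, and define the prefix length $s$ at each call exactly as in Lemma~\ref{vi:haar}. The fictitious oracle zeroes the hidden coordinates after position $s$ and evaluates the response using only $U_1,\dots,U_{c'}$ with $c'=\min\{d_{\rm ch},s+1\}$. The key structural check uses the decomposition of Lemma~\ref{gm:chain}. Suppose all hidden coordinates after $s$ have absolute value at most $\epsilon$, and let the true boundary index be $s(w)\le s$.
\begin{itemize}
\item After a failure, the response is $V(x)+\chi(x)U(\widehat G-E_{s(w)})(U^\top x)$. It contains only edges $E_j$ with $j<s(w)$, together with edges that vanish because both of their coordinates are small. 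It therefore depends only on the first $s(w)\le s$ columns and reveals nothing new.
\item After a success, the response is $F_U(x)+(1/p-1)\chi UE_{s(w)}$. It involves $E_{s(w)}$ and hence coordinate $s(w)+1\le s+1$, so the prefix grows by at most one column.
\end{itemize}
In both cases the response depends only on $U_1,\dots,U_{c'}$. Since $\widehat G$ equals $\lambda a_{\rm ch}e_1$ plus edges that vanish when both coordinates are at most $\epsilon$, the true response equals the fictitious one on the event that hidden coordinates past the prefix are small. One point needs care: $s(w)$ is computed from the true coordinates, so the fictitious oracle must compute the boundary index from the truncated vector $w^{(c')}$. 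Both give the same index on the good event. The projection $\pi_{c'}$ is still needed as in Lemma~\ref{vi:haar}, because $E_{s}$ has a component in coordinate $s+1$.

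The probabilistic part is unchanged from Lemma~\ref{vi:haar}. Conditionally on the prefix, each unused column is uniform on a sphere of dimension at least $d_{\rm amb}-d_{\rm ch}$. The sub-Gaussian moment bound combined with \eqref{gm:dimension} gives each pair (query or output, unused index) failure probability at most $1/(16mKNd_{\rm ch})$. A union bound over at most $2mKNd_{\rm ch}$ pairs then makes the good event hold with probability at least $7/8$. Intersecting with $\{S_N\le k\}$, which has probability at least $3/4$, gives probability at least $5/8$. On this intersection the final prefix has length at most $k$, so the output's hidden coordinates after position $k$ are at most $\epsilon$.

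I expect the main obstacle to be the induction showing that failure responses carry no information about column $s+1$. This is where the boundary-edge oracle differs from the multiplicative mask of Lemma~\ref{vi:haar}: the response is not zero after a failure, so one must verify that removing exactly the edge $E_{s(w)}$, and no other, erases all dependence beyond the current prefix, including when $s(w)$ is strictly smaller than the prefix length.
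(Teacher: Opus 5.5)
Your overall route is the paper's: reduce exterior queries and outputs to calls at $0$, build a fictitious oracle that uses only the granted prefix of columns, and reuse the concentration and union-bound steps of Lemma~\ref{vi:haar} unchanged. Your accounting of which edges survive after a failure and after a success is also correct. The gap is in the one new ingredient, the definition of the fictitious oracle. You state it inconsistently, and each of your two descriptions fails at a definite step.

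Write $c$ for the granted prefix (your $s$) and $c'=\min\{d_{\rm ch},c+1\}$.
\begin{itemize}
\item \emph{Zeroing the hidden coordinates after $c$.} On a success with $s(w)=c$, the oracle evaluates the boundary edge $E_c$ at $(w_c,0)$ instead of $(w_c,w_{c+1})$. Since $|w_c|>\epsilon$, this edge is not flat. Its component in coordinate $c$ contains the bilinear term $\lambda w_{c+1}$: it equals $\lambda(w_{c+1}+\zeta_\epsilon'(w_c))$ whenever $2\epsilon\le|w_c|\le2a_{\rm ch}$. So the true and fictitious responses differ even on the good event.
\item \emph{Computing the boundary index from $w^{(c')}$ in both cases.} After a failure, the fictitious response then depends on whether $|U_{c+1}^\top x|>\epsilon$: the index jumps to $c+1$ and a different edge is removed. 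That is information about a column that has not been granted. Later fictitious queries are no longer functions of $U_1,\dots,U_c$ alone, so the conditional uniformity of unused columns given the prefix is lost, and the union bound rests on it. Your proposed fix thus reintroduces exactly the leak you identify as the main obstacle.
\end{itemize}

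The paper avoids both problems by treating the two cases differently.
\begin{itemize}
\item \emph{After a failure}, compute everything, including the index, from $w^{(c)}$. The fictitious response $V(x)+\chi(x)U_{\le c}\pi_c(\widehat G-E_{\rm bd})(w^{(c)})$ uses only $c$ columns. When all coordinates of $w$ after $c$ are at most $\epsilon$, it equals the true response, because $s(w)=s(w^{(c)})\le c$.
\item \emph{After a success}, grant column $c+1$ and use
$V(x)+\chi(x)U_{\le c'}\pi_{c'}\bigl[\widehat G(w^{(c')})+(p^{-1}-1)E^{\rm bd}_{s(w^{(c)})}(w^{(c')})\bigr]$.
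The edge arguments keep coordinate $c+1$. Here the index could equally be read off $w^{(c')}$, since that column is granted.
\end{itemize}
Your reason for $\pi_{c'}$ is also off. A component of $E_s$ in coordinate $s+1$ lies inside the first $c'$ coordinates and needs no projection. The projection is needed because, off the good event, the next edge $E_{c+1}$ evaluated at $w^{(c')}$ has a nonzero component in coordinate $c+2$. With these definitions, the rest of your argument, including the $7/8$ and $5/8$ bookkeeping, goes through unchanged.
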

\begin{proof}
We follow the proof of Lemma~\ref{vi:haar}; only the fictitious oracle
changes. Fix the bits and the algorithm's randomness, and let $c$ be
the number of granted columns before a call, as there. For a query $x$
with $\|x\|<R_{\rm cut}$ write $w=U^\top x$, and let $w^{(j)}$ keep the first $j$
coordinates of $w$ and set the others to zero.

Let $\pi_j$ keep the first $j$ components of a vector of
$\mathbb R^{d_{\rm ch}}$ and
$U_{\le j}=(U_1,\ldots,U_j)$. The known radial term $V(x)$ and the known
factor $\chi(x)$ are kept outside the projections below.

\emph{Failure.} The fictitious response is
$V(x)+\chi(x)U_{\le c}\,\pi_c(\widehat G-E_{\rm bd})(w^{(c)})$, which uses $c$
columns. Suppose all coordinates of $w$ after $c$ are at most $\epsilon$.
Then $s(w)=s(w^{(c)})=:s\le c$, and in
$\widehat G-E_s^{\rm bd}$ the edges $E_j$ with
$j>s$ vanish, the edges with $j<s$ involve and are supported on
coordinates up to $s$, and the edge $E_s$ (or the term
$\lambda a_{\rm ch}e_1$ if $s=0$) is removed. Hence
$(\widehat G-E_{\rm bd})(w)$ depends only on the first
$s\le c$ coordinates and is supported on them, so it equals
$\pi_c(\widehat G-E_{\rm bd})(w^{(c)})$, and the fictitious response equals the
true one.

\emph{Success.} Grant the column $c+1$ (if $c<d_{\rm ch}$), let
$c'=\min\{d_{\rm ch},c+1\}$,
and let the fictitious response be
\[
 V(x)+\chi(x)\,U_{\le c'}\,\pi_{c'}\bigl[\widehat G(w^{(c')})
 +(p^{-1}-1)E_{s(w^{(c)})}^{\rm bd}(w^{(c')})\bigr],
\]
where $E_s^{\rm bd}$ is the edge selected by $s$. It uses $c'$ columns; without
$\pi_{c'}$ it would not, since the edge $E_{c+1}$ evaluated at $w^{(c')}$
has a nonzero component $c+2$ when $|w_{c+1}|>\epsilon$. Under the same
assumption as above, the edges $E_j$ with $j\ge c+1$ have both coordinates
at most $\epsilon$ and vanish for $w$ and for $w^{(c')}$, the other edges
and $E_s^{\rm bd}$ with $s\le c$ involve and are supported on the first $c+1$
coordinates, and $s(w)=s(w^{(c)})$. Hence the true response is supported
on the first $c'$ coordinates and equals the fictitious one.

\emph{Exterior queries.} For $\|x\|\ge R_{\rm cut}$ the response is the known
vector $V(x)$; the simulator queries $0$ to use the bit, and its output
is $0$ whenever the true output has norm at least $R_{\rm cut}$.

All queries and outputs of the fictitious transcript have norm at most
$R_{\rm cut}$, and the number of granted columns at the end is at most
$\min\{d_{\rm ch},S_N\}$. The concentration and union-bound steps of
Lemma~\ref{vi:haar} apply without change: with probability at least $7/8$,
every fictitious query has all hidden coordinates after its own granted
prefix $c$ bounded by $\epsilon$, and the fictitious output has all hidden
coordinates after the final prefix bounded by $\epsilon$. On this event
the true and fictitious transcripts coincide by induction over calls.
Intersecting with $\{S_N\le k\}$, on which the final prefix has length at
most $k$, gives probability at least $5/8$.
\end{proof}

\subsection{Monotone VI}

\begin{theorem}[Lower bound for any success probability]
\label{gm:lower}\label{gm:frontier}
Let $0<p\le1$, $\sigma>0$, and
\begin{equation*}
 k=\min\{KN,\lfloor4N\beta_p\rfloor\},\quad n=k+1,\quad d_{\rm ch}=2n,\quad
 r=\frac D4,\quad a_{\rm ch}=\frac r{\sqrt n},\quad
 \lambda=\min\Bigl\{\frac L{120000},\ \frac{\sigma\sqrt{pn}}{9r}\Bigr\}.
\end{equation*}
In every dimension satisfying~\eqref{gm:dimension}, every algorithm of
Definition~\ref{def:ic-main} has an instance with monotone
$L$-Lipschitz operator, root in $B(0,D)$, and an oracle with
variance at most $\sigma^2$ at every point, such that
\begin{equation}\label{gm:lower-bound}
 \E\Gap^{\rm M}_{B(0,D)}(\widehat z)
 \ge c\min\left\{\frac{LD^2}{1+N\beta_p},\
 \frac{\sigma D\sqrt p}{\sqrt{1+N\beta_p}}\right\}
\end{equation}
with a universal $c>0$. In particular Assumption~\ref{ass:mixed} holds
for every $\rho\ge1$.
\end{theorem}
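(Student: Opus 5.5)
The instance is the localized clipped chain $F_U$ of~\eqref{gm:localized-field}, built from Lemma~\ref{gm:chain} with the stated $\lambda$, and the oracle is the boundary-edge oracle~\eqref{gm:oracle}. The proof has three parts: check the class membership, apply the hiding lemma to get a constant-probability event, and convert the gap on that event into the displayed rate.

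\emph{Class membership.} Lemma~\ref{gm:chain} makes $\widehat G$ monotone and $1300\lambda$-Lipschitz. Lemma~\ref{vi:operator-cutoff} then makes $F_U$ monotone and $90\cdot1300\lambda$-Lipschitz, which is at most $L$ since $\lambda\le L/120000$. The root has norm below $R_{\rm loc}=D/2$, so it lies in $B(0,D)$. By~\eqref{gm:variance} the variance is at most $81\lambda^2a_{\rm ch}^2/p=81\lambda^2r^2/(pn)$, and the choice $\lambda\le\sigma\sqrt{pn}/(9r)$ makes this at most $\sigma^2$. Hence Assumption~\ref{ass:mixed} holds for every $\rho\ge1$, because the variance bound is uniform.

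\emph{Hiding.} Apply Lemma~\ref{gm:hiding} with $k=\min\{KN,\lfloor4N\beta_p\rfloor\}$; inequality~\eqref{vi:clock-event} provides $\Prob\{S_N\le k\}\ge3/4$. With probability at least $5/8$ over the frame, the bits and the algorithm's seed, the output $\widehat z$ falls into one of two cases:
\begin{itemize}
\item it has norm at least $R_{\rm cut}$, or
\item all its hidden coordinates after position $k=n-1$ are at most $\epsilon$.
\end{itemize}
Lemma~\ref{gm:cube} gives $\Gap^{\rm M}_{B(0,D)}(\widehat z)\ge\lambda r^2/(2n)$ in both cases. The gap is nonnegative, since the root lies in the test ball, so by the remark after Lemma~\ref{vi:haar} some fixed frame yields $\E\Gap\ge\lambda r^2/(4n)$.

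\emph{Arithmetic.} Substituting $r=D/4$ gives
\[
 \frac{\lambda r^2}{4n}=\min\Bigl\{\frac{LD^2}{64\cdot120000\,n},\ \frac{\sigma D\sqrt p}{576\sqrt n}\Bigr\}.
\]
Since $n=k+1\le4N\beta_p+1\le4(1+N\beta_p)$, both terms are bounded below by universal multiples of $LD^2/(1+N\beta_p)$ and $\sigma D\sqrt p/\sqrt{1+N\beta_p}$, respectively. This is~\eqref{gm:lower-bound}.

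\emph{Main obstacle.} The delicate step is justifying Lemma~\ref{gm:hiding} for this oracle. The noise is not a global mask: a failed call still returns most of the operator, with only the boundary edge removed. The argument must check that this response reveals no new hidden column, and that a successful call reveals at most one. Lemma~\ref{gm:hiding} already handles this, so what remains is to confirm that its hypotheses match the parameters here:
\begin{itemize}
\item the same $\epsilon=a_{\rm ch}/(2000000\,n\sqrt{d_{\rm ch}})$ is used in Lemmas~\ref{gm:chain} and~\ref{gm:cube};
\item the dimension condition~\eqref{gm:dimension} holds;
\item the constraint $\epsilon\le a_{\rm ch}/274$ of Lemma~\ref{gm:edge} is satisfied.
\end{itemize}
One last point is that the variance bound needs no root condition, since the bound $81\lambda^2a_{\rm ch}^2/p$ holds at every point.
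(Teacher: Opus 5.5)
Your proposal is correct and follows the paper's proof essentially step for step. It checks class membership via Lemma~\ref{gm:chain}, Lemma~\ref{vi:operator-cutoff} and~\eqref{gm:variance}; it uses Lemmas~\ref{gm:hiding} and~\ref{gm:cube} to get an event of probability at least $5/8$, fixes a frame to obtain $\lambda r^2/(4n)$, and concludes with $n\le4(1+N\beta_p)$. There is one arithmetic slip: with $\lambda=\sigma\sqrt{pn}/(9r)$ and $r=D/4$ one gets $\lambda r^2/(4n)=\sigma D\sqrt p/(144\sqrt n)$, not $\sigma D\sqrt p/(576\sqrt n)$, so your displayed equality should be an inequality, but this only weakens the universal constant.
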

\begin{proof}
The operator $F_U$ is $90\cdot1300\lambda\le L$-Lipschitz, its root has
norm below $R_{\rm loc}<D$, and by~\eqref{gm:variance} and the choice of $\lambda$
the variance is at most $81\lambda^2r^2/(np)\le\sigma^2$.
Lemmas~\ref{gm:hiding} and~\ref{gm:cube} give an event of probability at
least $5/8$ on which the gap is at least $\lambda r^2/(2n)$. The gap is
nonnegative, so, as after Lemma~\ref{vi:haar}, a fixed frame gives
\[
 \E\Gap^{\rm M}_{B(0,D)}(\widehat z)\ge\frac{\lambda r^2}{4n}
 =\min\Bigl\{\frac{LD^2}{7680000\,n},\ \frac{\sigma D\sqrt p}{144\sqrt n}\Bigr\}.
\]
Since $n\le1+4N\beta_p\le4(1+N\beta_p)$, this
gives~\eqref{gm:lower-bound}.
\end{proof}

\begin{corollary}[Local calls and noise]\label{gm:local-obstruction}
In the dimension of Theorem~\ref{gm:lower} with $p=1/K$, every algorithm
has an instance with
\begin{equation*}
 \E\Gap^{\rm M}_{B(0,D)}(\widehat z)
 =\Omega\!\left(\min\left\{\frac{LD^2}{Nh},\ \frac{\sigma D}{\sqrt{KNh}}\right\}\right).
\end{equation*}
\end{corollary}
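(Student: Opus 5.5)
The plan is to specialize Theorem~\ref{gm:lower} to $p=1/K$ and then estimate $\beta_{1/K}$ with Theorem~\ref{thm:clock:asymptotic}. With $p=1/K$ we have $Kp=1$ and $mKp=m$, so
\[
 \BinDepth(m,K,1/K)=\min\Bigl\{K,\ m,\ 1+\frac{\log(2m)}{\log(2+\log(2m))}\Bigr\}\le1+\frac{h}{\log 2}\le 3h,
\]
because $h=\log(2m)\ge\log2$. Theorem~\ref{thm:clock:asymptotic} then gives $\beta_{1/K}\le4\BinDepth\le12h$, and hence
\[
 1+N\beta_{1/K}\le1+12Nh\le C\,Nh
\]
for a universal $C$, again using $h\ge\log2$.

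We substitute this into \eqref{gm:lower-bound}. Both entries of the minimum decrease in $1+N\beta_p$. The first entry is at least $LD^2/(CNh)$. The second is
\[
 \frac{\sigma D\sqrt{p}}{\sqrt{1+N\beta_p}}\ge\frac{\sigma D}{\sqrt{K}\sqrt{CNh}}=\frac{\sigma D}{\sqrt{C\,KNh}}.
\]
Together these give
\[
 \E\Gap^{\rm M}_{B(0,D)}(\widehat z)\ge\frac{c}{C}\min\Bigl\{\frac{LD^2}{Nh},\ \frac{\sigma D}{\sqrt{KNh}}\Bigr\},
\]
which is the claim.

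The case $\sigma=0$ is excluded by the hypothesis of Theorem~\ref{gm:lower}. In that case the second term of the corollary vanishes and the statement is trivial, since the gap is nonnegative. The instance has variance at most $\sigma^2$ everywhere, so it fits Assumption~\ref{ass:mixed} for every $\rho$, as that theorem records.

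The only step needing any care is the upper bound on $\beta_{1/K}$. If we prefer to avoid the asymptotic theorem, Lemma~\ref{lem:clock:elementary} gives it directly: $\beta_{1/K}\le(e-1)Kp+\log m=(e-1)+\log m\le 3h$. This keeps the constants explicit and makes the argument a routine corollary.
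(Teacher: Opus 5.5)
Your proposal is correct and follows the paper's proof: the paper bounds $\beta_{1/K}\le(e-1)+\log m=O(h)$ directly with Lemma~\ref{lem:clock:elementary}, which is your alternative route, and then substitutes $1+N\beta_{1/K}=O(Nh)$ into Theorem~\ref{gm:lower} with $p=1/K$. Your detour through Theorem~\ref{thm:clock:asymptotic} and your remark on the trivial case $\sigma=0$ are fine but not needed.
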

\begin{proof}
Lemma~\ref{lem:clock:elementary} gives
$\beta_{1/K}\le(e-1)+\log m$. Hence $\beta_{1/K}=O(h)$ and
$1+N\beta_{1/K}=O(Nh)$, and Theorem~\ref{gm:lower} with $p=1/K$ gives
the claim.
\end{proof}

For an ambient dimension $d_{\rm amb}$, let
$\mathfrak R_{d_{\rm amb}}(\rho,\sigma)$ be the minimax
expected Minty gap on $B(0,D)$ over monotone $L$-Lipschitz
operators with a root in this ball and oracles satisfying
Assumption~\ref{ass:mixed}, with budgets $m,K,N$.

\begin{corollary}[Monotone VIs under mixed noise]
\label{gm:equivalence}\label{gm:minimax}
Let
\[
 \mathcal U=\min\Bigl\{L,\ \frac{\rho L}{KN}+\frac{\sigma}{D\sqrt{KN}},\
 \frac LN\Bigl(1+\frac\rho{mK}\Bigr)+\frac{\sigma}{D\sqrt{mKN}}\Bigr\}.
\]
In every dimension
$\mathfrak R_{d_{\rm amb}}(\rho,\sigma)=O(D^2\,\mathcal U)$, and in
a sufficiently large dimension
\begin{equation}\label{gm:equivalent-lower}
 \mathfrak R_{d_{\rm amb}}(\rho,\sigma)
 =\Omega\!\left(\frac{D^2\,\mathcal U}{\log(2m)}\right).
\end{equation}
\end{corollary}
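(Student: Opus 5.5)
The upper bound is immediate from Theorem~\ref{nn:all-budget}: the better of the initial point, the serial schedule and the common-query schedule gives $O(D^2\min\{L,\rho L/(KN)+\sigma/(D\sqrt{KN}),\,L/(N\min\{1,mK/\rho\})+\sigma/(D\sqrt{mKN})\})$. Since $1/\min\{1,mK/\rho\}\le1+\rho/(mK)$, this is $O(D^2\mathcal U)$ in every dimension. All the work is therefore in the lower bound~\eqref{gm:equivalent-lower}. The plan is to take the maximum of four separate lower bounds, realized in one common large dimension by padding with noiseless coordinates $Lx_j$ as in the earlier proofs, and then compare that maximum with $\mathcal U$ up to a factor $\log(2m)$.

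The four ingredients are as follows.
\begin{enumerate}
\item The SGC lower bound, Theorem~\ref{vi:uvi-lower}: $\Omega(LD^2/(1+N\InfoDepth))$. Its oracle satisfies Assumption~\ref{ass:mixed} with $\sigma=0$.
\item The statistical lower bound, Proposition~\ref{nn:statistical-lower}: $\Omega(\min\{LD^2,\sigma D/\sqrt{mKN}\})$.
\item The local-noise bound, Corollary~\ref{gm:local-obstruction} with $p=1/K$: $\Omega(\min\{LD^2/(Nh),\sigma D/\sqrt{KNh}\})$.
\item The trivial bound $\Omega(LD^2)$ when $N$ is small. This is already contained in item 1 when $1+N\InfoDepth=O(1)$.
\end{enumerate}
Each instance satisfies the mixed assumption for the given $\rho,\sigma$. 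The variance in item 3 is at most $\sigma^2$ and item 1 has variance $(\rho-1)\|F\|^2$, so every instance lies in the class of Corollary~\ref{gm:minimax} and the minimax risk is at least the maximum of the four bounds.

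For the comparison, write $A=LD^2$, $A_{\rm sd}=\rho LD^2/(KN)$, $A_{\rm cq}=(LD^2/N)(1+\rho/(mK))$, $S_1=\sigma D/\sqrt{KN}$ and $S_m=\sigma D/\sqrt{mKN}$, so that $D^2\mathcal U=\min\{A,A_{\rm sd}+S_1,A_{\rm cq}+S_m\}$. From the comparison step in the proof of Theorem~\ref{thm:affine-additive-main}, $1+N\InfoDepth\le O(\log(2m))(1+NK_{\rm scale})$ with $1/K_{\rm scale}\asymp\min\{\rho/K,1+\rho/(mK)\}$. This gives
\[
 \text{item 1}\ \gtrsim\ \frac{1}{\log(2m)}\min\{A,A_{\rm sd},A_{\rm cq}\}.
\]
Item 2 gives $\min\{A,S_m\}$, and $S_m$ is admissible in both the second and third entries of $\mathcal U$. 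The remaining case is the one where $S_1$ is the dominant term and the optimum is forced into the serial entry. I would split according to which of the three entries of $\mathcal U$ is smallest and which summand dominates that entry. If the minimum is $A$ or is attained with its deterministic part dominant, item 1 suffices. If the minimum is $A_{\rm cq}+S_m$ with $S_m$ dominant, item 2 suffices. If the minimum is $A_{\rm sd}+S_1$ with $S_1$ dominant, then $S_1\le A_{\rm cq}+S_m$, so either $S_1\lesssim A_{\rm cq}$ (handled by item 1) or $S_1$ is comparable to $S_m$ (handled by item 2). The only hard sub-case is $\mathcal U\asymp S_1\gg S_m$, which should be exactly where item 3 delivers $\min\{LD^2/(Nh),S_1/\sqrt h\}$.

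I expect the main obstacle to be this last sub-case, since item 3 gives $\min\{LD^2/(Nh),S_1/\sqrt h\}$ rather than $S_1$. I would argue as follows. If $S_1/\sqrt h\le LD^2/(Nh)$, item 3 is within a factor $\sqrt h\le h$ of $S_1$. Otherwise $LD^2/(Nh)<S_1/\sqrt h$, and I would check that then $A_{\rm cq}\ge LD^2/N\gtrsim S_1/\sqrt h$, so $\mathcal U\le A_{\rm cq}+S_m\lesssim A_{\rm cq}$. Item 1 then covers it, because in that regime $\min\{A,A_{\rm cq}\}\le h\cdot\text{item 3}$. The delicate point is to ensure the losses multiply to only a single factor of $\log(2m)$ and not $\log^2$. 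If the case split produces $\log^2$, I would instead optimize over $p$ in Theorem~\ref{gm:lower}, taking $p$ between $1/K$ and $1/\rho$ so that $N\beta_p\asymp N\max\{Kp,1\}$, and balance the two terms of~\eqref{gm:lower-bound} directly.
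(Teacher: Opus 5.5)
Your overall plan is the paper's. The upper bound comes from Theorem~\ref{nn:all-budget}. The lower bound is the largest of Theorem~\ref{vi:uvi-lower}, Proposition~\ref{nn:statistical-lower} and Corollary~\ref{gm:local-obstruction}, placed in a common padded dimension.

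The gap is in the comparison, which is the only real content of this corollary, and several of your case assignments are wrong.

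First, ``minimum attained with its deterministic part dominant, item~1 suffices'' fails for the common-query entry. Take $\rho=1$, $N\ge3$, $m,K$ large, and $\sigma$ with $S_1=2LD^2/N$. Then $D^2\mathcal U\asymp LD^2/N$ is attained by $A_{\rm cq}+S_m$ with $A_{\rm cq}\ge S_m$. But $\InfoDepth=K$, so item~1 is only of order $LD^2/(KN)$, and item~2 is of order $LD^2/(N\sqrt m)$. Only item~3, of order $LD^2/(Nh)$, gives the bound. Likewise, the case $D^2\mathcal U=A$ needs item~2 when $A_{\rm sd},A_{\rm cq}\ll A\le S_m$.

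Second, consider your hard sub-case: serial entry minimal and $S_1\gg\max\{A_{\rm sd},S_m\}$. Here item~1 can never help, since $\InfoDepth\ge K/\rho$ makes it at most of order $A_{\rm sd}$. In the branch $LD^2/(Nh)<S_1/\sqrt h$, the inequality you actually need is $\min\{A,A_{\rm cq}\}\lesssim h\cdot(\text{item 3})=LD^2/N$, which is an \emph{upper} bound on $A_{\rm cq}$. You never establish it. The fact $A_{\rm cq}\ge LD^2/N$ that you cite points the other way, and the branch hypothesis bounds $LD^2/N$ from above, not below.

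The missing ingredient is the decomposition $A_{\rm cq}=LD^2/N+A_{\rm sd}/m$, which amounts to splitting on $mK$ versus $\rho$.
\begin{itemize}
\item If $mK<\rho$, then $A_{\rm cq}\le2A_{\rm sd}$. So item~1 is $\gtrsim\min\{A,A_{\rm cq}\}/h$, and items~1 and~2 together give $\min\{A,A_{\rm cq}+S_m\}/h$.
\item If $mK\ge\rho$, then $A_{\rm cq}\le2LD^2/N$. Either $S_m\ge LD^2/N$ and item~2 suffices, or
\[
 D^2\mathcal U\le\min\{A_{\rm sd}+S_1,3LD^2/N\}
 \le3\bigl[\min\{A_{\rm sd},LD^2/N\}+\min\{S_1,LD^2/N\}\bigr].
\]
Items~1 and~3 control this up to one factor $h$, since $\min\{A,A_{\rm sd},A_{\rm cq}\}\ge\min\{A_{\rm sd},LD^2/N\}$.
\end{itemize}

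The paper makes the same comparison with a split on $K$ versus $2\rho$.
\begin{itemize}
\item For $K\ge2\rho$: $A_{\rm sd}\le LD^2/(2N)$ and $A_{\rm cq}\le\frac32LD^2/N$, and $\InfoDepth\lesssim hK/\rho$ makes item~1 at least of order $A_{\rm sd}/h$.
\item For $K<2\rho$: Lemma~\ref{lem:clock:constructive} gives $\InfoDepth=O(h\min\{1,mK/\rho\})$, so items~1 and~2 alone cover the common-query entry.
\end{itemize}
Either way only a single factor $\log(2m)$ is lost, so your fallback of re-optimizing $p$ in Theorem~\ref{gm:lower} is not needed.
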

\begin{proof}
\emph{Upper bound.} This is~\eqref{nn:ic-upper}, since
$1+\rho/(mK)$ and $1/\min\{1,mK/\rho\}$ differ by at most a factor two.

\emph{Lower bounds available.} Write $\alpha=K/\rho$ and, after division
by $D^2$,
\[
 x_1=\frac{\rho L}{KN},\quad y_1=\frac{\sigma}{D\sqrt{KN}},\quad
 x_0=\frac LN,\quad x_m=\Bigl(1+\frac1{m\alpha}\Bigr)x_0,\quad
 y_m=\frac{y_1}{\sqrt m},
\]
so that $\mathcal U=\min\{L,x_1+y_1,x_m+y_m\}$. Operators satisfying SGC
satisfy Assumption~\ref{ass:mixed}, so Theorem~\ref{vi:uvi-lower} gives
the lower bound $c\,r_{\rm info}$ with
$r_{\rm info}=L/(1+N\InfoDepth)$.
Proposition~\ref{nn:statistical-lower} gives $c\min\{L,y_m\}$ and
Corollary~\ref{gm:local-obstruction} gives
$c\min\{x_0/h,y_1/\sqrt h\}\ge c'\min\{x_0,y_1\}/h$, using $h\ge\log2$.
For each algorithm we use the instance with the largest of these bounds,
after padding all instances to a common dimension with coordinates on
which the operator is $Lx_j$.

\emph{Case $K\ge2\rho$.} Then $\alpha\ge2$, $x_1\le x_0/2\le L$ and
$x_0\le x_m\le\frac32x_0$. Since
$\InfoDepth\le\alpha+h/\log2\le(1+1/\log2)h\alpha$, we get
$r_{\rm info}=\Omega(\min\{L,x_1/h\})$. Since $x_1/h\le L$, this gives
$r_{\rm info}=\Omega(x_1/h)$. If $y_1\le x_0$, then
$\mathcal U\le x_1+\min\{y_1,x_0\}$. If $y_1>x_0$, then
$\mathcal U\le\min\{L,\frac32x_0+y_m\}\le\frac32\min\{y_1,x_0\}+\min\{L,y_m\}$.
In both cases
$\mathcal U\le3\max\{x_1,\min\{L,y_m\},\min\{y_1,x_0\}\}$, and the maximum
of the three lower bounds is at least $c\,\mathcal U/h$.

\emph{Case $K<2\rho$.} Let $w=\min\{1,m\alpha\}$; then
$\max\{\alpha,w\}\le2w$. Lemma~\ref{lem:clock:constructive} and
Theorem~\ref{thm:clock:asymptotic} give $\InfoDepth=O(hw)$, so
$r_{\rm info}=\Omega(\min\{L,L/(Nw)\}/h)$, and
\[
 \max\bigl\{r_{\rm info},\min\{L,y_m\}\bigr\}
 =\Omega\!\left(\frac1h\min\Bigl\{L,\frac L{Nw}+y_m\Bigr\}\right).
\]
Since $x_m\le2L/(Nw)$, $\mathcal U\le\min\{L,x_m+y_m\}\le
2\min\{L,L/(Nw)+y_m\}$, which proves~\eqref{gm:equivalent-lower}.
\end{proof}

\subsection{Strongly monotone VI}
\label{sf:section}

Let $\kappa=L/\mu\ge2$, measure accuracy by the squared distance, and
let $\|z_0-z_\star\|\le R_0$ with $z_0=0$. We add the known operator $\mu I$
to the clipped chain. \revision{Here $n\ge1$ is the chain length,
$a_{\rm ch}>0$ is its amplitude, and $d_{\rm ch}:=2n$.} Set
\begin{equation*}
 \lambda=16n\mu,\qquad
 \epsilon=\frac{a_{\rm ch}}{2000000\,n\sqrt{d_{\rm ch}}},\qquad
 \ell=1300\lambda,\qquad r=a_{\rm ch}\sqrt n .
\end{equation*}
On the box $\{\|w\|_\infty\le2a_{\rm ch}\}$ the clipping is inactive, and
Lemma~\ref{vi:core-root} gives
\begin{equation}\label{sf:local-error}
 \|\widehat G(w)-G_0(w)\|\le548\lambda\epsilon\sqrt{d_{\rm ch}} .
\end{equation}

\begin{lemma}[Root of the strongly monotone chain]\label{sf:root}
The operator $\mu I+\widehat G$ has a unique root $w_\star=(u_\star,v_\star)$,
and
\begin{equation*}
 \|w_\star\|<1.072\,r,\qquad
 (v_\star)_i<-0.928\,a_{\rm ch}\quad(1\le i\le n).
\end{equation*}
\end{lemma}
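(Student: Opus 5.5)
Existence and uniqueness are immediate: $\widehat G$ is monotone and $\ell$-Lipschitz (Lemma~\ref{gm:chain}), so $\mu I+\widehat G$ is $\mu$-strongly monotone and Lipschitz. The Banach contraction argument from the proof of Lemma~\ref{vi:strong-core} gives a unique root $w_\star$. The real work is locating $w_\star$. I would compare it with the root of an affine reference operator and use \eqref{sf:local-error} only on the box $\{\|w\|_\infty\le2a_{\rm ch}\}$, where the clipping is inactive.

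\emph{Reference root.} Take $F_{\rm ref}=\mu I+G_0$, with $G_0(u,v)=\lambda(D_n^\top v+a_{\rm ch}e_1,-D_nu)$ and $\lambda=16n\mu$. Its root $w_0$ solves
\[
 \mu u+\lambda D_n^\top v=-\lambda a_{\rm ch}e_1,\qquad \mu v=\lambda D_nu .
\]
Eliminating $u$ gives $(\mu^2I+\lambda^2D_nD_n^\top)v=-\lambda^2a_{\rm ch}D_ne_1$; equivalently, $w_0$ is a Tikhonov-type perturbation of $z_{\rm ref}=(0,-a_{\rm ch}\mathbf1)$. Since $G_0(z_{\rm ref})=0$, we have $F_{\rm ref}(z_{\rm ref})=\mu z_{\rm ref}$. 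Strong monotonicity then gives $\|w_0-z_{\rm ref}\|\le\|\mu z_{\rm ref}\|/\mu=r$, but this is too crude. I would instead use the resolvent bound
\[
 w_0-z_{\rm ref}=-(\mu I+A_{\rm ref})^{-1}\mu z_{\rm ref},
\]
with the skew matrix $A_{\rm ref}$. Because $A_{\rm ref}$ is skew, this equals $-\mu(\mu^2+A_{\rm ref}^\top A_{\rm ref})^{-1}(\mu-A_{\rm ref})z_{\rm ref}$. The smallest singular value of $A_{\rm ref}$ is at least $\lambda/\|D_n^{-1}\|\ge\lambda/n=16\mu$. Hence
\[
 \|w_0-z_{\rm ref}\|\le\frac{\mu}{\sqrt{\mu^2+(16\mu)^2}}\,\|z_{\rm ref}\|\le\frac{r}{16}\approx0.0625\,r
\]
in norm. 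That suffices for $\|w_0\|\le1.0625r$. For the coordinatewise claim I need an $\ell_\infty$ estimate on $v_0+a_{\rm ch}\mathbf1$, of order $0.07a_{\rm ch}$. I would obtain it by solving the tridiagonal-type system explicitly, or by bounding $\|(\mu+A_{\rm ref})^{-1}\|_{\infty\to\infty}$ through a Neumann series in $\mu A_{\rm ref}^{-1}$. Note $A_{\rm ref}^{-1}$ involves $D_n^{-1}$, the upper-triangular ones matrix, so $\|\mu A_{\rm ref}^{-1}\|_\infty\le\mu n/\lambda=1/16$. The Neumann series for $(I+\mu A_{\rm ref}^{-1})^{-1}$ converges in $\ell_\infty$ with factor $16/15$. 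Together with $\|\mu A_{\rm ref}^{-1}z_{\rm ref}\|_\infty\le a_{\rm ch}/16$, this gives $\|w_0-z_{\rm ref}\|_\infty\le(16/15)(a_{\rm ch}/16)=a_{\rm ch}/15\approx0.0667a_{\rm ch}$.

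\emph{Perturbation to $\widehat G$.} Restrict to the $\ell_\infty$ ball of radius $a_{\rm ch}/10$ around $w_0$. All coordinates there stay below $1.2a_{\rm ch}<2a_{\rm ch}$, so $\widehat G=G_\epsilon$ and \eqref{sf:local-error} applies. I would run the Brouwer fixed-point map $T(z)=w_0-(\mu I+A_{\rm ref})^{-1}(\widehat G-G_0)(z)$ as in Lemma~\ref{vi:core-root}. Its displacement is at most $(n/\lambda)\cdot548\lambda\epsilon\sqrt{d_{\rm ch}}=548n\epsilon\sqrt{d_{\rm ch}}\le3\cdot10^{-4}a_{\rm ch}$ in Euclidean norm, hence also in $\ell_\infty$. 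So $T$ maps the ball into itself, and the fixed point is the unique root $w_\star$. Combining, $\|w_\star\|\le1.0625r+3\cdot10^{-4}a_{\rm ch}<1.072r$, and $(v_\star)_i\le-a_{\rm ch}+a_{\rm ch}/15+3\cdot10^{-4}a_{\rm ch}<-0.928a_{\rm ch}$.

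\emph{Main obstacle.} The $\ell_\infty$ control of $w_0-z_{\rm ref}$ is the delicate step. The constant $0.928$ leaves only about $0.005a_{\rm ch}$ of slack beyond $1/15$. So I must check carefully that $\|\mu A_{\rm ref}^{-1}\|_{\infty\to\infty}\le1/16$, which needs the exact block form of $A_{\rm ref}^{-1}$ (blocks $\pm D_n^{-1}/\lambda$, $\pm D_n^{-\top}/\lambda$, with row sums at most $n$). If this fails, I would fall back on the explicit geometric-type solution of the recurrence, as in Lemma~\ref{vi:strong-core}.
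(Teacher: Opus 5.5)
Your proof is correct, and it largely follows the paper; only the perturbation step is done differently.

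\textbf{Reference root (same as the paper).} The paper also writes the root of $\mu I+G_0$ as $z_\mu=(I+\mu A_{\rm ref}^{-1})^{-1}z_{\rm ref}$. It likewise uses $\|\mu A_{\rm ref}^{-1}\|\le 1/16$ in both the Euclidean and the $\ell_\infty$ norm. This gives $\|z_\mu-z_{\rm ref}\|\le r/15$ and $\|z_\mu-z_{\rm ref}\|_\infty\le a_{\rm ch}/15$. The block form of $A_{\rm ref}^{-1}$ you were unsure about does give row sums at most $n/\lambda$, so your ``main obstacle'' is not an obstacle. Your normal-matrix bound $1/\sqrt{257}$ for the Euclidean part is a slightly sharper substitute for the paper's $1/15$.

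\textbf{Perturbation step (different).} The paper uses no fixed-point theorem here. Since $\mu I+\widehat G$ is $\mu$-strongly monotone and $(\mu I+\widehat G)(z_\mu)=(\widehat G-G_0)(z_\mu)$, it obtains directly
$\|w_\star-z_\mu\|\le\|(\widehat G-G_0)(z_\mu)\|/\mu\le 548\lambda\epsilon\sqrt{d_{\rm ch}}/\mu=0.004384\,a_{\rm ch}$.
This needs \eqref{sf:local-error} only at the single point $z_\mu$, and no invariant neighbourhood.

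Your Brouwer map instead needs the error bound on a whole $\ell_\infty$ ball; you correctly check that this ball stays inside the unclipped box. In exchange, it replaces the factor $1/\mu$ by $\|(\mu I+A_{\rm ref})^{-1}\|\le n/\lambda=1/(16\mu)$. Your displacement $2.74\cdot 10^{-4}a_{\rm ch}$ is therefore sixteen times smaller. It gives $(v_\star)_i\le -0.933\,a_{\rm ch}$, comfortably below $-0.928\,a_{\rm ch}$, whereas the paper's estimate gives only $-0.9289\,a_{\rm ch}$.

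In short, the paper's route is shorter and more local, and yours is tighter at the cost of a neighbourhood argument.
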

\begin{proof}
Let $A=\lambda\begin{psmallmatrix}0&D_n^\top\\-D_n&0\end{psmallmatrix}$, so
that $G_0(w)=A(w-z_{\rm ref})$. Since $D_n^{-1}$ is the upper triangular
matrix of ones, $\|A^{-1}\|\le n/\lambda$ and also
$\|A^{-1}\|_{\infty}\le n/\lambda$ (maximal absolute row sum), so
$\|\mu A^{-1}\|\le1/16$ in both norms. The root of $\mu I+G_0$ is
$z_\mu=(I+\mu A^{-1})^{-1}z_{\rm ref}$, and the Neumann series gives
\begin{equation*}
 \|z_\mu-z_{\rm ref}\|\le\frac{1/16}{1-1/16}\|z_{\rm ref}\|=\frac r{15},\qquad
 \|z_\mu-z_{\rm ref}\|_\infty\le\frac{a_{\rm ch}}{15}.
\end{equation*}
All coordinates of $z_\mu$ are below $2a_{\rm ch}$ in absolute value,
so~\eqref{sf:local-error} holds at $z_\mu$. The operator $\mu I+\widehat G$
is $\mu$-strongly monotone and $(\mu+\ell)$-Lipschitz, so, as in the proof
of Lemma~\ref{vi:strong-core}, it has a unique root $w_\star$ and
\begin{equation*}
 \|w_\star-z_\mu\|\le\frac{\|(\mu I+\widehat G)(z_\mu)\|}\mu
 =\frac{\|(\widehat G-G_0)(z_\mu)\|}\mu
 \le\frac{548\lambda\epsilon\sqrt{d_{\rm ch}}}\mu
 =0.004384\,a_{\rm ch}.
\end{equation*}
Hence $\|w_\star\|\le(1+\frac1{15})r+0.004384a_{\rm ch}<1.072r$ and
$(v_\star)_i\le-a_{\rm ch}+\frac{a_{\rm ch}}{15}
+0.004384a_{\rm ch}<-0.928a_{\rm ch}$.
\end{proof}

With $R_{\rm loc}=2r$, $R_{\rm cut}=410R_{\rm loc}$, $\chi$, $V$ and
$G_U$ as in~\eqref{gm:localized-field},
let
\begin{equation*}
 F_{U,\mu}(x)=\mu x+\chi(x)G_U(x)+V(x).
\end{equation*}
It is $\mu$-strongly monotone and Lipschitz with constant
\begin{equation}\label{sf:lipschitz}
 \mu+90\ell=\mu(1+1872000\,n).
\end{equation}
Its root is $z_\star=Uw_\star$: this point has norm below $R_{\rm loc}$, where
$\chi=1$ and $V=0$, so
$F_{U,\mu}(z_\star)=U(\mu w_\star+\widehat G(w_\star))=0$.
For $\|x\|\ge R_{\rm cut}$, $F_{U,\mu}(x)=\mu x+V(x)$ is known. The oracle
is~\eqref{gm:oracle} with this $F_{U,\mu}$,
\begin{equation*}
 g_{U,\mu}(x,B_{\rm mask})=F_{U,\mu}(x)
 +\Bigl(\frac{B_{\rm mask}}p-1\Bigr)\chi(x)UE_{\rm bd}(U^\top x),
\end{equation*}
and its variance satisfies~\eqref{gm:variance}:
\begin{equation}\label{sf:variance}
 \E\|g_{U,\mu}(x,B_{\rm mask})-F_{U,\mu}(x)\|^2
 \le\frac{81\lambda^2a_{\rm ch}^2}p .
\end{equation}

\begin{lemma}[Hiding with a prescribed chain length]\label{sf:hiding}
Suppose $Nh\le n/64$. Let $p=n/(128KN)$ if $KN\ge n/128$ and $p=1$
otherwise, and let $k=\lfloor n/4\rfloor$. If
\begin{equation}\label{sf:dimension}
 d_{\rm amb}\ge d_{\rm ch}
 +\frac{2R_{\rm cut}^2}{\epsilon^2}\log(32mKNd_{\rm ch})
 =2n+10758400000000000000\,n^4\log(64mKNn),
\end{equation}
then for every algorithm there is a fixed frame $U$ such that, with
probability at least $1/2$ over the bits and the algorithm's randomness,
the output either has norm at least $R_{\rm cut}$ or has all hidden coordinates
after position $k$ at most $\epsilon$.
\end{lemma}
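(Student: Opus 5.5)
The proof reduces to Lemma~\ref{gm:hiding}. That lemma already gives probability at least $5/8$ over the frame, the bits and the algorithm's randomness, for any integer $k$ with $\Prob\{S_N\le k\}\ge3/4$. Two things remain: verify this clock condition for the prescribed $p$ and $k=\lfloor n/4\rfloor$, and then pass from an average over frames to a fixed frame. The dimension condition~\eqref{sf:dimension} is exactly~\eqref{gm:dimension}, with $d_{\rm ch}=2n$, $R_{\rm cut}=820r=820a_{\rm ch}\sqrt n$ and $\epsilon=a_{\rm ch}/(2000000\,n\sqrt{2n})$. So the explicit constant should come from expanding $2R_{\rm cut}^2/\epsilon^2$ and using $\log(32mKN\cdot2n)=\log(64mKNn)$; I will only check that this arithmetic matches.

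The main step is the clock estimate $\Prob\{S_N\le\lfloor n/4\rfloor\}\ge3/4$. I would show $\E S_N=N\beta_p\le n/16$ and then apply Markov's inequality. One subtlety is that Markov gives $\Prob\{S_N>k\}\le N\beta_p/(k+1)$ for the integer $k$. Since $k+1>n/4$, this is at most $(n/16)/(n/4)=1/4$, as needed.

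To bound $\beta_p$ I use Lemma~\ref{lem:clock:elementary}, namely $\beta_p\le(e-1)Kp+\log m$, together with $\log m\le h$. In the case $KN\ge n/128$, the choice $p=n/(128KN)\le1$ gives $N\beta_p\le(e-1)n/128+Nh\le n/74+n/64$. This slightly exceeds $n/16$? No: $n/74+n/64<n/16$, so the case holds. In the case $KN<n/128$ with $p=1$, the trivial bound $S_N\le KN<n/128\le k$ holds whenever $n\ge4$, so the probability is one. The remaining concern is small $n$, where $\lfloor n/4\rfloor$ might be $0$. The hypothesis $Nh\le n/64$ with $N\ge1$ and $h\ge\log2$ forces $n\ge64\log2>44$, so $k\ge11$ and both cases are fine.

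Finally, Lemma~\ref{gm:hiding} gives the event (output norm at least $R_{\rm cut}$, or all hidden coordinates after position $k$ at most $\epsilon$) probability at least $5/8$ jointly. Writing it as an average over $U$ of the conditional probability given $U$, some fixed frame must attain conditional probability at least $5/8\ge1/2$; this is the argument in the remark after Lemma~\ref{vi:haar}. The strong-monotone modification changes nothing here. The added term $\mu x$ and the exterior map $\mu x+V(x)$ are known, so the fictitious-oracle simulation in the proof of Lemma~\ref{gm:hiding} goes through verbatim with $F_{U,\mu}$ and $g_{U,\mu}$ in place of $F_U$ and $g_U$. The only step I expect to need care is this clock estimate, in particular keeping the constants $128$ and $64$ consistent with the $n/16$ target.
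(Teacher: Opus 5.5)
Your proposal is correct and follows the paper's proof. You bound $\E S_N=N\beta_p$ via Lemma~\ref{lem:clock:elementary} and $Nh\le n/64$, handle $p=1$ by $S_N\le KN<n/128\le k$, reduce to the oracle~\eqref{gm:oracle} by subtracting the known term $\mu x$, apply Lemma~\ref{gm:hiding}, and fix a frame by averaging. The only difference is bookkeeping: the paper records $\E S_N<n/32$, gets $\Prob\{S_N\le k\}>7/8$, and reuses the union-bound step of Lemma~\ref{gm:hiding} to reach $3/4$, whereas you target $n/16$ and use Lemma~\ref{gm:hiding} as a black box to get $5/8$, which equally suffices.
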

\begin{proof}
If $p=n/(128KN)$, Lemma~\ref{lem:clock:elementary} gives
\begin{equation*}
 \E S_N=N\beta_p\le N\bigl((e-1)Kp+\log m\bigr)
 \le\frac{(e-1)n}{128}+\frac n{64}<\frac n{32},
\end{equation*}
and since $S_N>k$ implies $S_N>n/4$, Markov's inequality gives
$\Prob\{S_N\le k\}>7/8$. If $p=1$, then $S_N\le KN<n/128\le k$ (this case
requires $n>128$). Subtracting the known term $\mu x$ from every response
reduces the oracle to~\eqref{gm:oracle}, so Lemma~\ref{gm:hiding} applies
with this $k$; together with $\Prob\{S_N\le k\}\ge7/8$, its
event has probability at least $3/4$, and some fixed frame gives
probability at least $1/2$. Finally
$R_{\rm cut}=820a_{\rm ch}\sqrt n$ and
$\epsilon=a_{\rm ch}/(2000000\,n\sqrt{2n})$ give
$2R_{\rm cut}^2/\epsilon^2
=2\cdot820^2\cdot(2\cdot10^6)^2\cdot2\,n^4$, which
is~\eqref{sf:dimension}.
\end{proof}

\begin{theorem}[Few rounds]\label{sf:short-lower}
Let $\sigma>0$ and
\begin{equation}\label{sf:length}
 n=\Bigl\lfloor\frac{\kappa-1}{1872000}\Bigr\rfloor .
\end{equation}
If $n\ge1$ and $Nh\le n/64$, then in the dimension~\eqref{sf:dimension}
every algorithm has an instance with $\mu$-strongly monotone
$L$-Lipschitz operator, $\|z_0-z_\star\|\le R_0$, and an oracle with
variance at most $\sigma^2$, such that
\begin{equation*}
 \E\|\widehat z-z_\star\|^2
 \ge\min\left\{\frac{R_0^2}{80},\
       \frac{\sigma^2}{13271040\,\mu^2KN}\right\}.
\end{equation*}
If $KN<n/128$, the oracle is exact and the bound is $R_0^2/80$, also for
$\sigma=0$.
\end{theorem}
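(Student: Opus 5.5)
The plan is to instantiate the clipped strongly monotone chain of this subsection with the prescribed length $n$, and to pick the amplitude $a_{\rm ch}$ as large as three constraints allow: the root bound, the $L$-Lipschitz bound, and the variance budget. Lemma~\ref{sf:hiding} then supplies an event of probability at least $1/2$ on which a fixed frame forces the output to miss a constant fraction of $\|w_\star\|^2$. The two terms of the minimum correspond to the two binding constraints on $a_{\rm ch}$.

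\emph{Admissibility of the instance.} With $\lambda=16n\mu$, \eqref{sf:lipschitz} gives the Lipschitz constant $\mu(1+1872000n)$. The choice \eqref{sf:length} gives $1872000n\le\kappa-1$, so this constant is at most $L$. The operator $F_{U,\mu}$ is $\mu$-strongly monotone, and its root is $z_\star=Uw_\star$; Lemma~\ref{sf:root} gives $\|z_\star\|<1.072r$. It therefore suffices to take $r\le R_0/4$, which is certainly at most $R_0/1.072$.

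By \eqref{sf:variance}, the variance is at most $81\lambda^2a_{\rm ch}^2/p=81\cdot256\,n^2\mu^2a_{\rm ch}^2/p$. Take $p$ as in Lemma~\ref{sf:hiding}. If $KN\ge n/128$, then $p=n/(128KN)$, and the variance is at most $\sigma^2$ as soon as
\[
 r^2=na_{\rm ch}^2\le\frac{\sigma^2}{144^2\cdot128\,\mu^2KN}=\frac{\sigma^2}{2654208\,\mu^2KN}.
\]
So we set
\[
 r=\min\Bigl\{\frac{R_0}4,\ \frac{\sigma}{\mu\sqrt{2654208\,KN}}\Bigr\},\qquad a_{\rm ch}=\frac{r}{\sqrt n}.
\]
If $KN<n/128$, then $p=1$ and the oracle is exact, since $B_{\rm mask}\equiv1$. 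In that case we take $r=R_0/4$, with no constraint from $\sigma$, and this also covers $\sigma=0$.

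\emph{Distance on the hiding event.} Apply Lemma~\ref{sf:hiding} with $k=\lfloor n/4\rfloor$. On its event there are two cases.

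\emph{Exterior output.} If $\|\widehat z\|\ge R_{\rm cut}=820r$, then $\|\widehat z-z_\star\|\ge 820r-1.072r$, which is far more than needed.

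\emph{Interior output.} Otherwise all hidden coordinates of $U^\top\widehat z$ after position $k$ are at most $\epsilon$ in absolute value. In the interleaved order, position $2i$ is $v_i$. Hence every $v_i$ with $2i>k$ lies after position $k$, and this includes all $i>n/8$, i.e.\ at least $7n/8-1$ indices. Lemma~\ref{sf:root} gives $|(v_\star)_i|>0.928a_{\rm ch}$ for each of them. Projecting onto the frame,
\[
 \|\widehat z-z_\star\|^2\ge\sum_{i>n/8}\bigl(0.928a_{\rm ch}-\epsilon\bigr)^2\gtrsim\Bigl(\frac78n-1\Bigr)(0.928)^2a_{\rm ch}^2.
\]
The factor $7n/8-1$ needs care when $n$ is small; there I would count the index $i=n$, which is always included, and obtain at least $0.86a_{\rm ch}^2$, i.e.\ at least $0.86\,r^2/n$. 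For moderate $n$ the count gives at least $0.4r^2$.

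\emph{Conclusion.} The event has probability at least $1/2$, so $\E\|\widehat z-z_\star\|^2\ge r^2/5$, granted the constant check above. Substituting $r$ gives $\min\{R_0^2/80,\ \sigma^2/(5\cdot2654208\,\mu^2KN)\}$, and $5\cdot2654208=13271040$, which is the stated constant. The required dimension is \eqref{sf:dimension}, and the hypothesis $Nh\le n/64$ is exactly what Lemma~\ref{sf:hiding} needs.

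\emph{Main obstacle.} The delicate part is the counting of hidden coordinates that carry a $v_\star$ component of size about $a_{\rm ch}$, and making it uniform down to $n=1$. For very small $n$, $k=\lfloor n/4\rfloor$ may be $0$, in which case every coordinate is hidden and the bound is immediate. I would first try splitting into the cases $n<8$ and $n\ge8$ so that one universal constant $1/5$ survives. If it does not, I would shrink $r$ by a constant factor, which the margin $R_0/4$ versus $R_0/1.072$ allows.
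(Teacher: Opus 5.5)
Your proposal is correct and follows the paper's proof essentially line by line. It uses the same amplitude $a_{\rm ch}=\min\{R_0/(4\sqrt n),\sigma\sqrt p/(9\lambda)\}$ (your $r=\sqrt n\,a_{\rm ch}$), the same appeal to Lemmas~\ref{sf:root} and~\ref{sf:hiding}, and arrives at the same constant $5\cdot2654208=13271040$. Your small-$n$ worry is unnecessary: the number of $v$-coordinates after position $k=\lfloor n/4\rfloor$ is $n-\lfloor k/2\rfloor\ge 7n/8$ for every $n\ge1$, so the ``$-1$'' can be dropped and no case split is needed; the paper even uses the cruder count $n/2$ with gap $0.9a_{\rm ch}$ per coordinate, which gives $\tfrac12\cdot0.405\,na_{\rm ch}^2\ge r^2/5$.
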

\begin{proof}
By~\eqref{sf:lipschitz} and~\eqref{sf:length}, the Lipschitz constant is at
most $\mu\kappa=L$. If $KN\ge n/128$, let
\begin{equation*}
 p=\frac n{128KN},\qquad
 a_{\rm ch}=\min\Bigl\{\frac{R_0}{4\sqrt n},\
 \frac{\sigma\sqrt p}{9\lambda}\Bigr\},
\end{equation*}
so that the variance~\eqref{sf:variance} is at most $\sigma^2$; if
$KN<n/128$, let $p=1$ and $a_{\rm ch}=R_0/(4\sqrt n)$. In both cases
$\|z_\star\|<1.072\,a_{\rm ch}\sqrt n<R_0$.

Consider the event of Lemma~\ref{sf:hiding}. If the output has norm
below $R_{\rm cut}$, its hidden coordinates after $k=\lfloor n/4\rfloor$
are at most $\epsilon<0.001a_{\rm ch}$. The coordinate $v_i$ has position $2i$, so at least
$n-k/2\ge n/2$ of the $v$-coordinates lie after $k$, and each differs from
$(v_\star)_i<-0.928a_{\rm ch}$ by more than $0.9a_{\rm ch}$. Hence
$\|\widehat z-z_\star\|^2\ge\frac n2(0.9a_{\rm ch})^2
=0.405\,na_{\rm ch}^2$. If the output has norm at least $R_{\rm cut}$, then
$\|\widehat z-z_\star\|\ge R_{\rm cut}-1.072r>818r$, which is larger. Therefore
\[
 \E\|\widehat z-z_\star\|^2
 \ge\frac12\cdot0.405\,na_{\rm ch}^2\ge\frac{na_{\rm ch}^2}5 .
\]
For $a_{\rm ch}=R_0/(4\sqrt n)$ this is $R_0^2/80$. For
$a_{\rm ch}=\sigma\sqrt p/(9\lambda)=\sigma\sqrt p/(144n\mu)$ it is
$\sigma^2p/(103680\,\mu^2n)=\sigma^2/(13271040\,\mu^2KN)$.
\end{proof}

\begin{theorem}[Lower bound after $N$ rounds]\label{sf:rate-lower}
Let $\kappa\ge2$ and $T_{\rm ch}=1872000$. In a sufficiently large
dimension, every algorithm of Definition~\ref{def:ic-main} has, for each
of the three quantities
\begin{equation*}
 \begin{gathered}
 \frac{R_0^2}{50}e^{-15372800N\InfoDepth/\kappa},\qquad
 \frac3{128}\min\Bigl\{R_0^2,\frac{\sigma^2}{\mu^2mKN}\Bigr\},\\
 \mathbf 1\Bigl\{N\log(2m)\le\frac{\kappa}{256T_{\rm ch}}\Bigr\}
 \min\Bigl\{\frac{R_0^2}{80},\frac{\sigma^2}{13271040\,\mu^2KN}\Bigr\},
 \end{gathered}
\end{equation*}
an instance with a $\mu$-strongly monotone $L$-Lipschitz operator,
$\|z_0-z_\star\|\le R_0$, and an oracle satisfying
Assumption~\ref{ass:mixed}, on which $\E\|\widehat z-z_\star\|^2$ is at
least this quantity.
\end{theorem}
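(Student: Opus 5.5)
The statement bundles three separate lower bounds, so I will prove each quantity with its own instance. Since the statement only asks for an instance per quantity, I will not attempt a common mixture. Every instance is padded to a common dimension by adding coordinates on which the operator is $Lx_j$ (or $\mu x_j$ where strong monotonicity must be preserved). Such coordinates are noiseless, known and decoupled, so they change neither the information available to the algorithm nor the distance lower bound.

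\emph{First quantity (optimization term).} I will invoke Corollary~\ref{vi:strong-exponent} directly. That corollary requires $\kappa\ge2$, which we assume, and the setting of Theorem~\ref{vi:strong-lower}. The latter gives a $C^2$ $\mu$-strongly convex--concave instance whose saddle operator is $L$-Lipschitz, with $\|z_\star\|\le R_0$ and $z_0=0$. Its oracle satisfies SGC with equality, which is Assumption~\ref{ass:mixed} with $\sigma=0$ and therefore holds for every $\sigma\ge0$. The corollary yields exactly $\frac{R_0^2}{50}e^{-15372800N\InfoDepth/\kappa}$. The ambient dimension must satisfy the condition in Theorem~\ref{vi:strong-lower}, and that condition is finite for the given $(m,K,N)$.

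\emph{Second quantity (statistical term).} Lemma~\ref{lem:smr-statistical} gives exactly this bound in every dimension $d\ge2$. Its oracle has variance $\sigma^2$ at every point, and the lemma itself notes that this satisfies Assumption~\ref{ass:mixed} for every $\rho\ge1$. Its operator $(\mu(x_1-\theta),Lx_2)$ is $\mu$-strongly monotone and $L$-Lipschitz.

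\emph{Third quantity (few-rounds term).} If the indicator vanishes, the bound is trivial because the squared distance is nonnegative. Otherwise $N\log(2m)\le\kappa/(256T_{\rm ch})$, and I will apply Theorem~\ref{sf:short-lower} with $n=\lfloor(\kappa-1)/T_{\rm ch}\rfloor$. Two hypotheses have to be checked, and this is the only genuinely computational step.
\begin{itemize}
\item[(i)] $n\ge1$. Since $N\ge1$ and $\log(2m)\ge\log2$, the indicator condition gives $\kappa\ge256T_{\rm ch}\log2>T_{\rm ch}+1$.
\item[(ii)] $Nh\le n/64$ with $h=\log(2m)$. From $\kappa\ge 2$ we get $\kappa-1\ge\kappa/2$. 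Because $(\kappa-1)/T_{\rm ch}\ge1$, flooring loses at most a factor two, so $n\ge(\kappa-1)/(2T_{\rm ch})\ge\kappa/(4T_{\rm ch})$. Hence $n/64\ge\kappa/(256T_{\rm ch})\ge Nh$.
\end{itemize}
Theorem~\ref{sf:short-lower} then produces, in dimension~\eqref{sf:dimension}, an instance with an $L$-Lipschitz $\mu$-strongly monotone operator, $\|z_0-z_\star\|\le R_0$, and an oracle of variance at most $\sigma^2$. This variance bound gives Assumption~\ref{ass:mixed} for every $\rho\ge1$. The instance satisfies $\E\|\widehat z-z_\star\|^2\ge\min\{R_0^2/80,\sigma^2/(13271040\mu^2KN)\}$. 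The theorem as stated takes $\sigma>0$. When $\sigma=0$ the right side is $0$ unless $KN<n/128$, and in that case the theorem's last sentence supplies the exact-oracle bound $R_0^2/80$, which dominates the minimum. So every case is covered.

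Finally, I take as the common dimension the maximum of the three dimensions just used and pad all instances to it as described at the start. The hardest part I expect is keeping the constants consistent in step (ii), since a factor-two slip in the flooring would break $Nh\le n/64$; the $256T_{\rm ch}$ in the indicator seems chosen precisely so that this margin works.
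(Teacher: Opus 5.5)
Your proposal is correct and follows the paper's proof essentially line by line. The first two quantities come from Corollary~\ref{vi:strong-exponent} and Lemma~\ref{lem:smr-statistical}, and the third from Theorem~\ref{sf:short-lower} after checking $n\ge1$ and $N\log(2m)\le n/64$. There you use $\lfloor x\rfloor\ge x/2$ for $x\ge1$, whereas the paper uses $\lfloor x\rfloor\ge x-1$ together with $\kappa>2T_{\rm ch}+1$; both work.

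One harmless slip: for $\sigma=0$ the third quantity is $\min\{R_0^2/80,0\}=0$ whether or not $KN<n/128$, so that case is simply trivial, as the paper notes.
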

\begin{proof}
The first quantity is Corollary~\ref{vi:strong-exponent}; its oracle
satisfies SGC and therefore Assumption~\ref{ass:mixed} for every
$\sigma\ge0$. The second is Lemma~\ref{lem:smr-statistical}. The third is
zero for $\sigma=0$. For $\sigma>0$ let $h=\log(2m)$ and suppose
$Nh\le\kappa/(256T_{\rm ch})$. Since $N\ge1$ and $h\ge\log2$, this gives
$\kappa\ge256T_{\rm ch}\log2>2T_{\rm ch}+1$, so
$n=\lfloor(\kappa-1)/T_{\rm ch}\rfloor\ge(\kappa-1)/T_{\rm ch}-1
\ge\kappa/(4T_{\rm ch})\ge1$ and $Nh\le n/64$; Theorem~\ref{sf:short-lower}
gives the bound. Coordinates on which the operator is $\mu x_j$ embed all
instances in a common dimension.
\end{proof}

We use $N^*_{\rm dist}(\varepsilon)$ from
Appendix~\ref{app:strong-mixed-refinement}, and write
$\varepsilon_{\rm rel}=\varepsilon/R_0^2$,
$s_1=\sigma^2/(\mu^2K\varepsilon)$ and
$s_m=s_1/m$.

\begin{corollary}[Lower bound on the number of rounds]
\label{sf:accuracy-lower}
For $\kappa\ge2$ and $0<\varepsilon_{\rm rel}<1/200$,
\begin{equation}\label{sf:accuracy-bridge}
 N^*_{\rm dist}(\varepsilon)=\Omega\!\left(\min\Bigl\{\frac\kappa h,\ s_1\Bigr\}\right),
\end{equation}
and, together with Corollary~\ref{cor:strong-mixed-accuracy},
\begin{equation}\label{sf:accuracy-full-lower}
 N^*_{\rm dist}(\varepsilon)=\Omega\!\Biggl(1
 +\frac{\kappa\log(1/\varepsilon_{\rm rel})}{\InfoDepth}
 +\min\Bigl\{s_1,\ \frac\kappa h+s_m\Bigr\}\Biggr).
\end{equation}
\end{corollary}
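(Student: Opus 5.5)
The first bound \eqref{sf:accuracy-bridge} comes from Theorem~\ref{sf:rate-lower}, specifically its third quantity (the few-rounds bound of Theorem~\ref{sf:short-lower}). Fix an algorithm that reaches accuracy $\varepsilon$ after $N$ rounds. There are two cases.

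If $N h>\kappa/(256T_{\rm ch})$, then $N=\Omega(\kappa/h)$, which is at least the minimum in \eqref{sf:accuracy-bridge}.

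Otherwise the indicator in the third quantity equals one, and some instance has
\[
\E\|\widehat z-z_\star\|^2\ge\min\{R_0^2/80,\ \sigma^2/(13271040\,\mu^2KN)\}.
\]
Since $\varepsilon_{\rm rel}<1/200<1/80$, the first entry exceeds $\varepsilon$. So accuracy $\varepsilon$ forces $\sigma^2/(13271040\,\mu^2KN)\le\varepsilon$, that is, $N\ge s_1/13271040$. In both cases $N=\Omega(\min\{\kappa/h,s_1\})$. The case $\sigma=0$ is consistent, since then $s_1=0$.

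For \eqref{sf:accuracy-full-lower}, Corollary~\ref{cor:strong-mixed-accuracy} already gives $N^*_{\rm dist}=\Omega(\max\{1,\kappa\log(1/\varepsilon_{\rm rel})/\InfoDepth,s_m\})$, and \eqref{sf:accuracy-bridge} gives $\Omega(\min\{\kappa/h,s_1\})$. A maximum of nonnegative terms is within a factor $4$ of their sum, so it remains to show
\[
\min\Bigl\{s_1,\ \frac{\kappa}{h}+s_m\Bigr\}\le 2\max\Bigl\{\min\Bigl\{\frac{\kappa}{h},s_1\Bigr\},\ s_m\Bigr\}.
\]
Using $s_m\le s_1$:
\begin{itemize}
\item If $s_1\le\kappa/h$, the left side is at most $s_1=\min\{\kappa/h,s_1\}$.
\item If $s_1>\kappa/h$, the left side is at most $\kappa/h+s_m\le 2\max\{\kappa/h,s_m\}$, and here $\kappa/h=\min\{\kappa/h,s_1\}$.
\end{itemize}
Adding the remaining terms $1$ and $\kappa\log(1/\varepsilon_{\rm rel})/\InfoDepth$ then gives \eqref{sf:accuracy-full-lower}.

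The main obstacle is only bookkeeping. All three instances of Theorem~\ref{sf:rate-lower} and Corollary~\ref{cor:strong-mixed-accuracy} must be padded into a common, sufficiently large dimension (Theorem~\ref{sf:rate-lower} allows this). The lower bounds are per algorithm with instance-dependent choices, which is exactly what the definition of $N^*_{\rm dist}$ as a uniform guarantee permits.
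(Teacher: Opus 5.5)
Your proposal is correct and follows the paper's proof. The few-rounds instance gives the bridge bound \eqref{sf:accuracy-bridge}, and the same elementary inequality $\min\{s_1,\kappa/h+s_m\}\le 2\max\{s_m,\min\{s_1,\kappa/h\}\}$ combines it with Corollary~\ref{cor:strong-mixed-accuracy}. The only difference is that you use the indicator form in Theorem~\ref{sf:rate-lower} instead of Theorem~\ref{sf:short-lower} directly, which absorbs the paper's separate treatment of small $\kappa$.
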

\begin{proof}
Let $T=1872000$ and $n=\lfloor(\kappa-1)/T\rfloor$. Suppose
$N<\min\{n/(64h),\,s_1/13271040\}$. Then $n\ge1$ and $Nh\le n/64$, and
Theorem~\ref{sf:short-lower} gives an instance with
$\E\|\widehat z-z_\star\|^2>\varepsilon$, because $R_0^2/80>\varepsilon$ and
$\sigma^2/(13271040\mu^2KN)>\varepsilon$. Hence
$N^*_{\rm dist}(\varepsilon)\ge\min\{n/(64h),s_1/13271040\}$. If
$\kappa\ge2T+1$, then $n\ge(\kappa-1)/T-1\ge\kappa/(4T)$, which
gives~\eqref{sf:accuracy-bridge}. If $2\le\kappa<2T+1$, then
$\kappa/h\le(2T+1)/\log2$, and~\eqref{sf:accuracy-bridge} follows from
$N^*_{\rm dist}(\varepsilon)\ge1$.

Corollary~\ref{cor:strong-mixed-accuracy} gives the lower bounds $1$,
$\kappa\log(1/\varepsilon_{\rm rel})/\InfoDepth$ and $s_m$.
Since $s_m\le s_1$,
\[
 \min\Bigl\{s_1,\frac\kappa h+s_m\Bigr\}
 \le s_m+\min\Bigl\{s_1,\frac\kappa h\Bigr\}
 \le2\max\Bigl\{s_m,\min\Bigl\{s_1,\frac\kappa h\Bigr\}\Bigr\},
\]
so the sum in~\eqref{sf:accuracy-full-lower} is at most four times the
largest of the four lower bounds.
\end{proof}

\begin{corollary}[Number of rounds up to worker logarithms]
\label{sf:accuracy-match}
For $\kappa\ge2$ and $0<\varepsilon_{\rm rel}<1/200$, the lower
bound~\eqref{sf:accuracy-full-lower} and the upper bound
\begin{equation}\label{sf:accuracy-full-upper}
 N^*_{\rm dist}(\varepsilon)=O\!\Biggl(1
 +\frac{\kappa\log(1/\varepsilon_{\rm rel})}
 {\max\{K/\rho,\min\{1,mK/\rho\}\}}
 +\min\bigl\{s_1,\ \kappa h+s_m\bigr\}\Biggr)
\end{equation}
differ by at most a factor $C\log^2(2m)$ with a universal $C$.
\end{corollary}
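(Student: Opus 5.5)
The claim compares two explicit expressions, so the plan is to bound the upper bound \eqref{sf:accuracy-full-upper} term by term by $C\log^2(2m)$ times the lower bound \eqref{sf:accuracy-full-lower}. The constant term $1$ appears in both and needs no work. For the optimization term, set $K_{\rm scale}=\max\{K/\rho,\min\{1,mK/\rho\}\}=\SchedDepth$. Lemma~\ref{lem:clock:constructive} together with Theorem~\ref{thm:clock:asymptotic} gives $\InfoDepth\le64\beta_{1/\rho}\le256(1+\log(2m)/\log(2+\log(2m)))\SchedDepth=O(h)\,\SchedDepth$, where $h=\log(2m)$. Therefore
\[
\frac{\kappa\log(1/\varepsilon_{\rm rel})}{\SchedDepth}\le O(h)\,\frac{\kappa\log(1/\varepsilon_{\rm rel})}{\InfoDepth},
\]
which loses a single factor $h$.

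For the statistical parts I compare $\min\{s_1,\kappa h+s_m\}$ with $\min\{s_1,\kappa/h+s_m\}$. Since $\kappa h\le h^2(\kappa/h)$, we get $\kappa h+s_m\le h^2(\kappa/h+s_m)$ (using $h\ge\log 2$, so $h^2\ge$ a constant; adjust $C$ accordingly), and $s_1\le h^2 s_1$ trivially. Hence $\min\{s_1,\kappa h+s_m\}\le C h^2\min\{s_1,\kappa/h+s_m\}$. This step is where the factor $\log^2(2m)$ is spent.

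Summing these comparisons, the upper expression is at most $C\log^2(2m)$ times the lower one. In the other direction, the lower bound is trivially at most the upper bound up to constants, because the lower statement is a valid lower bound and the upper is a valid upper bound on the same $N^*_{\rm dist}$; I can also check it directly termwise, since $\InfoDepth\ge\Omega(\SchedDepth)$ by Lemma~\ref{lem:clock:elementary} and $\kappa/h\le\kappa h$. Both bounds themselves are already proved, in Corollary~\ref{sf:accuracy-lower} and Corollary~\ref{cor:strong-mixed-accuracy} respectively. For the latter I must note that $\mathcal T_{\rm hyb}$ contains $\kappa\log(2m)/K_{\rm cq}$ rather than $\kappa h$. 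So I take the minimum over $\mathcal T_{\rm ser}$ (which gives $s_1$ together with the $\SchedDepth$ term when $K/\rho\ge\min\{1,mK/\rho\}$) and $\mathcal T_{\rm hyb}$.

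The main obstacle is exactly this matching of the upper bound \eqref{sf:accuracy-full-upper} to the proved round bounds. When $K_{\rm cq}<1$, i.e.\ $mK<\rho$, the hybrid term $\kappa\log(2m)/K_{\rm cq}$ exceeds $\kappa h$. I would handle this by cases:
\begin{itemize}
\item If $K_{\rm cq}=1$, the term $\kappa\log(2m)/K_{\rm cq}$ is at most $\kappa h$ and fits directly.
\item If $K_{\rm cq}=mK/\rho<1$, then $\SchedDepth=mK/\rho$ and $\kappa\log(2m)/K_{\rm cq}\le h\cdot\kappa/\SchedDepth$. This is absorbed by the optimization term times $h$ whenever $\log(1/\varepsilon_{\rm rel})\ge\log 200$, which the hypothesis $\varepsilon_{\rm rel}<1/200$ guarantees.
\end{itemize}
In the second case the overall loss is still $O(h^2)$ after combining with the first comparison.
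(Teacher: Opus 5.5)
Your comparison of the two right-hand sides is correct and is essentially the paper's. The factor $O(h)$ between $\kappa\log(1/\varepsilon_{\rm rel})/\SchedDepth$ and $\kappa\log(1/\varepsilon_{\rm rel})/\InfoDepth$ comes from Lemma~\ref{lem:clock:constructive} and Theorem~\ref{thm:clock:asymptotic}, and the third terms differ by $\max\{1,h^2\}\le h^2/\log^2 2$.

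The gap is in the other half of the statement: that \eqref{sf:accuracy-full-upper} is itself a valid upper bound with a universal constant. You derive it from $\mathcal T_{\rm ser}$ and $\mathcal T_{\rm hyb}$ alone, and that does not work.
\begin{itemize}
\item When $mK<\rho$, you bound $\kappa\log(2m)/K_{\rm cq}$ by $(h/\log 200)\,\kappa\log(1/\varepsilon_{\rm rel})/\SchedDepth$. This costs a factor $h$, which is not a universal constant. You therefore only obtain \eqref{sf:accuracy-full-upper} with an extra $\log(2m)$ on the optimization term.
\item The case you say ``fits directly'' also fails when $K<\rho\le mK$. There $\SchedDepth=1$, but $\mathcal T_{\rm ser}$ has $\kappa\rho\log(1/\varepsilon_{\rm rel})/K$ in place of $\kappa\log(1/\varepsilon_{\rm rel})$. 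Meanwhile $\mathcal T_{\rm hyb}$ carries $\kappa h+s_m$ in place of $\min\{s_1,\kappa h+s_m\}$.
\end{itemize}
For instance, take $\sigma=0$, $K=1$, $\rho=m$. The right side of \eqref{sf:accuracy-full-upper} is $O(1+\kappa\log(1/\varepsilon_{\rm rel}))$, yet $\min\{\mathcal T_{\rm ser},\mathcal T_{\rm hyb}\}\ge\kappa\log(2m)$. This is not within a universal constant as $m\to\infty$. Your closing remark that the loss is ``still $O(h^2)$'' overall shows only that the best of these schedules is within $O(h^2)$ of the lower bound. That is weaker than the claim, which names \eqref{sf:accuracy-full-upper} explicitly.

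The missing ingredient is $\mathcal T_{\rm pool}$, with the case split on $K/\rho$ versus $\min\{1,mK/\rho\}$ rather than on whether $K_{\rm cq}=1$.
\begin{itemize}
\item If $K/\rho\le\min\{1,mK/\rho\}$, then $\SchedDepth=K_{\rm cq}$ and $\mathcal T_{\rm pool}=\kappa\log(1/\varepsilon_{\rm rel})/\SchedDepth+s_m$. This is at most the right side of \eqref{sf:accuracy-full-upper}, because $s_m\le s_1$ and $s_m\le\kappa h+s_m$.
\item Otherwise, $m\ge1$ forces $\min\{1,mK/\rho\}=1<K/\rho=\SchedDepth$. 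Then the extra term of $\mathcal T_{\rm hyb}$ is exactly $\kappa h$, and its optimization term is at most $\kappa\log(1/\varepsilon_{\rm rel})/\SchedDepth$. $\mathcal T_{\rm ser}$ has the same optimization term together with $s_1$, so the minimum of the two is bounded by the right side.
\end{itemize}
This is the paper's argument, and it needs no appeal to $\varepsilon_{\rm rel}<1/200$ at this step.
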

\begin{proof}
Let $\alpha=K/\rho$, $w=\min\{1,m\alpha\}$ and $\Delta=\max\{\alpha,w\}$.

\emph{Upper bound.} If $\alpha\le w$, then $\Delta=w$, and the
common-query entry of~\eqref{eq:smr-round-upper} gives
$1+\kappa\log(1/\varepsilon_{\rm rel})/w+s_m$, which is at most the right side
of~\eqref{sf:accuracy-full-upper} because $s_m\le s_1$ and
$s_m\le\kappa h+s_m$. If $\alpha>w$, then $w=1<\alpha=\Delta$, and the
serial and two-phase entries give
$1+\kappa\log(1/\varepsilon_{\rm rel})/\alpha+s_1$ and
$1+\kappa\log(1/\varepsilon_{\rm rel})/(\alpha+1)+\kappa h+s_m$;
their minimum is at most
the right side of~\eqref{sf:accuracy-full-upper}.

\emph{Comparison.} If $\alpha\ge1$, then $\Delta\ge\alpha$ and
$\InfoDepth\le\alpha+h/\log2\le C_1h\Delta$. If $\alpha<1$, then
$\Delta=w$, and Lemma~\ref{lem:clock:constructive} with
Theorem~\ref{thm:clock:asymptotic} gives $\InfoDepth\le C_1h\Delta$. Hence
the second terms of the two bounds differ by at most a factor $C_1h$.
For the third terms, $\kappa h+s_m\le\max\{1,h^2\}(\kappa/h+s_m)$ and
$\max\{1,h^2\}\le h^2/\log^22$, so
$\min\{s_1,\kappa h+s_m\}\le(h^2/\log^22)\min\{s_1,\kappa/h+s_m\}$.
\end{proof}

\section{Generalized smoothness: adaptive steps and restarts}
\label{gs:appendix}\label{grv:section}

This appendix proves Theorem~\ref{ghs:upper} and, from it,
Corollary~\ref{gs:main-distance-complexity} (Corollary~\ref{gs:rate-sgc}). Throughout, Assumptions~\ref{ass:generalized-smoothness},
\ref{ass:strong-monotonicity} and~\ref{ass:mixed} hold with
$L_0\ge\mu>0$; we write $\kappa_0=L_0/\mu$, $a_0=L_1R_0$, and fix the
target accuracy $0<\varepsilon<R_0^2$. The method has three layers.
\begin{enumerate}[label=(\arabic*),leftmargin=*]
\item \emph{One stage} (Section~\ref{grv:adaptive-subsection}). Given a
center $c$ with $\|c-z_\star\|\le r$, projected extragradient on the ball
$B(c,2r)$ with a step adapted to a robust estimate of
$\|F\|$ reaches distance $r/8$ with probability at least $15/16$.
\item \emph{Restarts} (Section~\ref{grv:restart-subsection}). Repeating
stages with halved radii and taking geometric medians of independent
copies reaches accuracy $\varepsilon$.
\item \emph{Communication} (Section~\ref{ghs:section}). The first stages
run on one worker and the later stages pool responses over all workers.
\end{enumerate}
Two tools are proved first.

\subsection{Two tools}

\begin{lemma}[Lipschitz bound along a segment]\label{gs:gronwall}
For all $x,y\in\mathbb R^d$,
\[
 \|F(y)-F(x)\|\le(L_0+L_1\|F(x)\|)\,e^{L_1\|y-x\|}\,\|y-x\| .
\]
\end{lemma}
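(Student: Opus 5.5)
The plan is to integrate the Jacobian along the segment $z(t)=x+t(y-x)$, $t\in[0,1]$, and control the growth of $\|F(z(t))\|$ by a Grönwall argument. Write $\Delta=\|y-x\|$ and $\phi(t)=\|F(z(t))-F(x)\|$. Since $F$ is $C^1$ (Assumption~\ref{ass:generalized-smoothness}),
\[
 F(z(t))-F(x)=\int_0^t \mathrm DF(z(s))(y-x)\,ds,
\]
so $\phi(t)\le\Delta\int_0^t\bigl(L_0+L_1\|F(z(s))\|\bigr)ds$ by the Jacobian bound.

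In the second step I bound $\|F(z(s))\|\le\|F(x)\|+\phi(s)$. Putting $a=L_0+L_1\|F(x)\|$, this gives the linear integral inequality
\[
 \phi(t)\le a\Delta t+L_1\Delta\int_0^t\phi(s)\,ds .
\]
Grönwall's inequality in integral form then yields
\[
 \phi(t)\le a\Delta\int_0^t e^{L_1\Delta(t-s)}\,ds=\frac{a}{L_1}\bigl(e^{L_1\Delta t}-1\bigr)
\]
for $L_1>0$. If $L_1=0$, the bound is simply $\phi(t)\le a\Delta t$.

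For the last step I use the elementary inequality $(e^{u}-1)/u\le e^{u}$ for $u\ge0$, which follows from $e^{u}-1=\int_0^u e^s\,ds\le ue^u$. At $t=1$ this gives $\phi(1)\le a\Delta e^{L_1\Delta}$, which is the claim. The $L_1=0$ case is covered as well, since $e^{0}=1$.

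The only delicate point is justifying Grönwall: $\phi$ is continuous and nonnegative, so the standard integral form applies. To avoid citing it, I could instead set $\Psi(t)=\int_0^t\phi$ and differentiate $e^{-L_1\Delta t}\Psi(t)$; this gives the same bound directly. I do not expect a real obstacle.
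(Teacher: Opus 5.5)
Your proof is correct and follows essentially the paper's argument: a Gr\"onwall estimate along the segment, integration of the Jacobian bound, and then $e^{u}-1\le ue^{u}$ to reach $a\|y-x\|e^{L_1\|y-x\|}$. The only difference is that you apply the integral form of Gr\"onwall to the increment $\|F(z(t))-F(x)\|$, whereas the paper applies the differential form to $t\mapsto\|F(x+tu)\|$; your version therefore never has to differentiate a norm.
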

\begin{proof}
Let $u=(y-x)/\|y-x\|$ and $\varphi_{\rm seg}(t)=\|F(x+tu)\|$. Then
$\varphi_{\rm seg}$ is absolutely continuous and
$\varphi_{\rm seg}'(t)\le\|\mathrm DF(x+tu)\|_{\rm op}\le L_0+L_1\varphi_{\rm seg}(t)$
almost everywhere, so $L_0+L_1\varphi_{\rm seg}(t)\le(L_0+L_1\varphi_{\rm seg}(0))e^{L_1t}$ (for $L_1=0$
directly, for $L_1>0$ by Gr\"onwall's inequality). Integrating
$\|\mathrm DF(x+tu)u\|\le(L_0+L_1\varphi_{\rm seg}(0))e^{L_1t}$ over $t\in[0,\|y-x\|]$
gives the claim.
\end{proof}

\begin{lemma}[Projected extragradient step]\label{gs:seg-residual}
Let $C$ be closed and convex, $x,u\in C$, $\eta>0$, and
\[
 y=\Pi_C\bigl(x-\eta(F(x)+e)\bigr),\qquad
 x^+=\Pi_C\bigl(x-\eta(F(y)+h)\bigr).
\]
If $\|F(y)-F(x)\|\le L_{\rm seg}\|y-x\|$, then
\begin{equation}\label{gs:projected-pathwise}
 \|x^+-u\|^2\le\|x-u\|^2-2\eta\langle F(y),y-u\rangle
 -(1-3\eta^2L_{\rm seg}^2)\|x-y\|^2
 +3\eta^2\bigl(\|e\|^2+\|h\|^2\bigr)-2\eta\langle h,y-u\rangle .
\end{equation}
If moreover $C=B(c,2r)$, $\|c-z_\star\|\le r$,
$\eta\|F(x)\|\le3r$, and $y^0=\Pi_C(x-\eta F(x))$, then
\begin{equation}\label{gs:projection-margin}
 \|x-y^0\|\ge\frac{\eta\|F(x)\|}8 .
\end{equation}
\end{lemma}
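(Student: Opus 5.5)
For \eqref{gs:projected-pathwise} I would follow the unconstrained identity of Lemma~\ref{nn:identity-lemma} and replace the exact expansions by the variational characterization of the projection. Write $g_x=F(x)+e$ and $g_y=F(y)+h$. Since $x^+=\Pi_C(x-\eta g_y)$ and $u\in C$, the projection inequality $\langle x-\eta g_y-x^+,u-x^+\rangle\le0$ gives
\[
 \|x^+-u\|^2\le\|x-u\|^2-\|x-x^+\|^2+2\eta\langle g_y,u-x^+\rangle .
\]
Since $y=\Pi_C(x-\eta g_x)$ and $x^+\in C$, the same inequality with comparator $x^+$ gives
\[
 2\eta\langle g_x,y-x^+\rangle\le\|x-x^+\|^2-\|x-y\|^2-\|y-x^+\|^2 .
\]
Now split $\langle g_y,u-x^+\rangle=\langle g_y,u-y\rangle+\langle g_y,y-x^+\rangle$ and combine the two displays. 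This leaves
\[
 -2\eta\langle F(y),y-u\rangle-2\eta\langle h,y-u\rangle-\|x-y\|^2-\|y-x^+\|^2+2\eta\langle F(y)-F(x)+h-e,\,y-x^+\rangle .
\]
Bound the last inner product by Young's inequality in the form $2\eta\langle a,b\rangle\le\|b\|^2+\eta^2\|a\|^2$, which cancels $\|y-x^+\|^2$. Then use $\|a\|^2\le3\|F(y)-F(x)\|^2+3\|h\|^2+3\|e\|^2$ and the assumed $\|F(y)-F(x)\|\le L_{\rm seg}\|y-x\|$. This gives exactly $-(1-3\eta^2L_{\rm seg}^2)\|x-y\|^2+3\eta^2(\|e\|^2+\|h\|^2)$.

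For \eqref{gs:projection-margin}, set $v=x-\eta F(x)$ and split into two cases.

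\emph{Case $v\in C$.} Then $y^0=v$ and $\|x-y^0\|=\eta\|F(x)\|$, so the bound holds trivially.

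\emph{Case $v\notin C$.} Then $y^0=c+2r(v-c)/\|v-c\|$. The key input is monotonicity with $F(z_\star)=0$, which gives $\langle F(x),x-z_\star\rangle\ge0$. Hence
\[
 \langle F(x),x-c\rangle\ge-\|F(x)\|\,\|c-z_\star\|\ge-r\|F(x)\|,
\]
so $-F(x)$ points not much outward from the center. Write $a=x-c$ with $\|a\|\le2r$, $g=\eta F(x)$ with $\|g\|\le3r$, and $v-c=a-g$, so that $\langle a,g\rangle\ge-r\|g\|$. I would lower bound the distance from $x$ to the radial projection of $a-g$ onto the sphere of radius $2r$.

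Concretely, decompose $g$ along $a$ and orthogonally to it. The radial scaling factor is $t=2r/\|a-g\|<1$, and
\[
 x-y^0=a-t(a-g)=(1-t)a+tg .
\]
Lower bound $\|(1-t)a+tg\|^2=(1-t)^2\|a\|^2+2t(1-t)\langle a,g\rangle+t^2\|g\|^2$. Use $\langle a,g\rangle\ge-r\|g\|$ together with $t\ge2r/(2r+3r)=2/5$, and minimize over the admissible ranges of $\|a\|$ and $\|g\|$.

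This elementary but fiddly geometric optimization is where I expect the main difficulty. I would first try reducing it to a one-variable inequality in $s=\|g\|/r\in(0,3]$, showing $\|(1-t)a+tg\|\ge s r/8$. If the direct minimization is messy, a fallback is to use the distance from $v$ to $C$: since $\|v-c\|>2r$, one can compare $\|x-y^0\|$ with $\|g\|-\operatorname{dist}(v,C)$ and bound $\operatorname{dist}(v,C)$ using the inner-product constraint. The constant $1/8$ leaves ample slack for either route.
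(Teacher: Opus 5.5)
Your proof of \eqref{gs:projected-pathwise} is correct and is the paper's argument: the two projection inequalities (tested at $u$ and at $x^+$), the split at $y$, Young's inequality to cancel $\|y-x^+\|^2$, and the three-term bound on $\|F(y)-F(x)+h-e\|^2$.

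For \eqref{gs:projection-margin} your proposal stops short of a proof, and the primary route as sketched does not close. Suppose you insert $\langle a,g\rangle\ge-r\|g\|$ and use only $t\ge 2/5$, treating $t$ as free. The resulting lower bound $(1-t)^2\|a\|^2-2t(1-t)r\|g\|+t^2\|g\|^2$ equals $-r^2/5$ at $a=0$, $\|g\|=5r/2$, $t=2/5$, a configuration with $\|a-g\|>2r$. To rescue it you would have to keep $t=2r/\|a-g\|$ tied to $a$ and $g$, and use $\langle a,g\rangle\ge-\min\{r,\|a\|\}\|g\|$. That is an optimization over $\|a\|$, $\|g\|$ \emph{and} the angle between them, not a one-variable inequality.

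Your fallback, by contrast, works in one line and covers the case $v\in C$ as well. Since $\|v-y^0\|=\operatorname{dist}(v,C)=(\|a-g\|-2r)_+$ and
\[
 \|a-g\|^2=\|a\|^2-2\langle a,g\rangle+\|g\|^2\le4r^2+2r\|g\|+\|g\|^2\le\bigl(2r+\tfrac78\|g\|\bigr)^2 ,
\]
you get $\|v-y^0\|\le\tfrac78\|g\|$. The last inequality is equivalent to $\|g\|\le\tfrac{32}{5}r$, which holds because $\|g\|\le3r$. Hence $\|x-y^0\|\ge\|x-v\|-\|v-y^0\|\ge\|g\|-\tfrac78\|g\|=\tfrac18\eta\|F(x)\|$. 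Write this out and drop the radial-projection computation.

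With that fix your argument is correct but differs from the paper's. The paper never uses the formula for projection onto a ball. Instead it tests the variational inequality of $y^0$ at $u=z_\star-rF(x)/\|F(x)\|$, which lies in $C$ because $\|c-z_\star\|\le r$. Monotonicity then gives $\langle F(x),y^0-u\rangle\ge\|F(x)\|(r-\|x-y^0\|)$, and the diameter gives $\langle x-y^0,y^0-u\rangle\le4r\|x-y^0\|$. The case $\|x-y^0\|\ge r/2$ is disposed of separately using $\eta\|F(x)\|\le3r$.

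The paper's argument needs only that $C$ is convex, has diameter $4r$, and contains $B(z_\star,r)$. Yours relies on the exact distance to a ball centred at $c$ with $x$ inside it. In exchange, yours is shorter and gives a better constant (roughly $0.21$ instead of $1/8$). Both rest on the same fact, $\langle F(x),x-z_\star\rangle\ge0$.
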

\begin{proof}
The optimality conditions of the two projections,
$\langle x-\eta(F(y)+h)-x^+,u-x^+\rangle\le0$ and
$\langle x-\eta(F(x)+e)-y,x^+-y\rangle\le0$, give the standard bound
\[
 \|x^+-u\|^2\le\|x-u\|^2-2\eta\langle F(y)+h,y-u\rangle
 -\|x-y\|^2-\|y-x^+\|^2
 +2\eta\langle F(y)-F(x)+h-e,\,y-x^+\rangle .
\]
By $2\langle a,b\rangle\le\|a\|^2+\|b\|^2$ the last term is at most
$\eta^2\|F(y)-F(x)+h-e\|^2+\|y-x^+\|^2$, and
$\|F(y)-F(x)+h-e\|^2\le3(L_{\rm seg}^2\|y-x\|^2+\|h\|^2+\|e\|^2)$. This
proves~\eqref{gs:projected-pathwise}.

For~\eqref{gs:projection-margin} let $M_x=\|F(x)\|$ and
$\Delta_0=\|x-y^0\|$.
If $M_x=0$ there is nothing to prove, and if $\Delta_0\ge r/2$ the claim
follows from $\eta M_x\le3r$. Otherwise let
$u=z_\star-rF(x)/M_x$, which lies in
$C$. Monotonicity and $F(z_\star)=0$ give
$\langle F(x),x-z_\star\rangle\ge0$, hence
\[
 \langle F(x),y^0-u\rangle
 =\langle F(x),y^0-x\rangle+\langle F(x),x-z_\star\rangle+rM_x
 \ge M_x(r-\Delta_0)\ge\frac{M_xr}2 .
\]
The optimality condition of $y^0$ gives
$\eta\langle F(x),y^0-u\rangle\le\langle x-y^0,y^0-u\rangle\le4r\Delta_0$,
since $C$ has diameter $4r$. Hence $\eta M_xr/2\le4r\Delta_0$.
\end{proof}

\subsection{A robust estimate of the operator}

\begin{lemma}[Median of block means]\label{grv:robust}
Fix a query $x$ determined by the history $\mathcal F$, a reference
magnitude $F_{\rm ref}>0$, a tolerance $\delta_{\rm rob}>0$, and a
failure probability $0<\tau_{\rm fail}<1$. Let
\begin{equation}\label{grv:robust-budget}
 n=\max\Bigl\{1,\Bigl\lceil\frac{256}{\delta_{\rm rob}^2}
       \Bigl(\rho-1+\frac{\sigma^2}{F_{\rm ref}^2}\Bigr)\Bigr\rceil\Bigr\},
 \qquad k=\max\{1,\lceil8\log(1/\tau_{\rm fail})\rceil\},\qquad b=nk .
\end{equation}
The geometric median $\widehat F_x$ of $k$ block means, each of $n$ independent
responses at $x$, satisfies
\begin{equation*}
 \Prob\bigl\{\|\widehat F_x-F(x)\|>
 \delta_{\rm rob}\max\{\|F(x)\|,F_{\rm ref}\}\bigm|\mathcal F\bigr\}
 \le\tau_{\rm fail} .
\end{equation*}
If $\rho=1$ and $\sigma=0$, one exact response is used instead ($b=1$). The
median is the minimum-norm minimizer of the sum of distances, which is
measurable and lies in the convex hull of the block means.
\end{lemma}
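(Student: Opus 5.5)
The plan is the standard median-of-means argument: a second-moment bound for each block mean, Chebyshev to make each block good with probability at least $3/4$, and a Chernoff bound on the number of bad blocks. The geometric-median robustness property then converts ``more than half the blocks good'' into a deterministic error bound for $\widehat F_x$. Everything is conditional on $\mathcal F$. Since $x$ is $\mathcal F$-measurable and the $b$ draws are independent of $\mathcal F$ and of each other, Assumption~\ref{ass:mixed} applies to each response at the fixed point $x$.

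\emph{Step 1 (one block).} Let $\bar g_j$ be the $j$th block mean and write $M=\max\{\|F(x)\|,F_{\rm ref}\}$. The conditional variance of a single response is at most $(\rho-1)\|F(x)\|^2+\sigma^2\le(\rho-1+\sigma^2/F_{\rm ref}^2)M^2$. Hence
\[
 \E\bigl[\|\bar g_j-F(x)\|^2\bigm|\mathcal F\bigr]\le\frac{(\rho-1+\sigma^2/F_{\rm ref}^2)M^2}{n}\le\frac{\delta_{\rm rob}^2M^2}{256}
\]
by the choice~\eqref{grv:robust-budget} of $n$. We call block $j$ good if $\|\bar g_j-F(x)\|\le\delta_{\rm rob}M/4$. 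By Chebyshev, a block is bad with probability at most $1/16$. The blocks use disjoint independent samples, so the good indicators are independent.

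\emph{Step 2 (geometric median).} Use the standard fact: if more than $k/2$ of the points lie in $B(v,s)$, then the geometric median $\widehat F_x$ satisfies $\|\widehat F_x-v\|\le C_\alpha s$. With fraction $1-\alpha$ of points in the ball, one may take $C_\alpha=(1-\alpha)/(1-2\alpha)$. The proof of this fact goes through the subgradient of $\sum_j\|z-\bar g_j\|$: if $\widehat F_x$ were far from $v$, moving it toward $v$ would decrease the sum, because the good points contribute nearly unit vectors pointing toward $v$. It holds for every minimizer, in particular the minimum-norm one. Taking $\alpha=1/4$ gives $C_\alpha=3/2$, so the error is at most $\tfrac38\delta_{\rm rob}M\le\delta_{\rm rob}M$ whenever at least $3k/4$ blocks are good. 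There is slack, so a cruder constant also suffices.

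\emph{Step 3 (counting).} The number of bad blocks is $\mathrm{Bin}(k,q)$ with $q\le1/16$. By the Chernoff bound with relative entropy, $\Prob\{\text{bad}>k/4\}\le\exp(-k\,\mathrm{KL}(1/4\,\|\,1/16))$, and $\mathrm{KL}(1/4\|1/16)\approx0.16>1/8$. With $k\ge8\log(1/\tau_{\rm fail})$ this is at most $\tau_{\rm fail}$. When $k=1$, that is $\tau_{\rm fail}\ge e^{-1/8}$, Chebyshev alone gives failure probability $1/16\le\tau_{\rm fail}$.

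The exact case $\rho=1$, $\sigma=0$ is immediate, since a single response equals $F(x)$. Measurability follows because the minimum-norm minimizer of a convex coercive function is a Borel function of the block means. It lies in their convex hull because projecting any point onto that hull does not increase its distance to any block mean.

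The main obstacle is Step 3: verifying the constant $8$ against the Kullback--Leibler exponent, and if it turns out too tight, taking a looser good-block threshold (for example $\delta_{\rm rob}M/2$, which makes bad blocks have probability $\le1/64$) to gain margin.
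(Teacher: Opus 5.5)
Your proposal is correct and follows the paper's proof step for step. Both use Chebyshev with threshold $\delta_{\rm rob}M/4$, so that a block is bad with probability at most $1/16$; then a geometric-median robustness bound when fewer than a quarter of the blocks are bad; then an exponential bound of $e^{-k/8}$ on the bad-block count. The only differences are constants: the paper proves the median bound by a triangle-inequality summation (constant $2(1-\vartheta)/(1-2\vartheta)$, final error $3\delta_{\rm rob}M/4$) and bounds the count through $\E 4^{\sum_j I_j}\le(19/16)^k$, whereas your subgradient argument and KL--Chernoff bound (exponent about $0.18$ rather than $0.16$, still above $1/8$) give slightly sharper constants.
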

\begin{proof}
Let $M_x=\|F(x)\|$ and $M_{\rm rob}=\max\{M_x,F_{\rm ref}\}$. A block
mean $h_j$ has conditional variance at most
$((\rho-1)M_x^2+\sigma^2)/n$, so Chebyshev's inequality and
the choice of $n$ give
\[
 \Prob\Bigl\{\|h_j-F(x)\|>
       \frac{\delta_{\rm rob}M_{\rm rob}}4\Bigm|\text{earlier blocks}\Bigr\}
 \le\frac{16\bigl((\rho-1)M_x^2+\sigma^2\bigr)}
          {n\delta_{\rm rob}^2M_{\rm rob}^2}
 \le\frac{16}{n\delta_{\rm rob}^2}
       \Bigl(\rho-1+\frac{\sigma^2}{F_{\rm ref}^2}\Bigr)\le\frac1{16}.
\]

\emph{Medians.} Suppose that at least a fraction $1-\vartheta$ of points
$x_1,\ldots,x_k$ lie within $r_{\rm med}$ of a point $v$, and let
$z_{\rm med}$ be a geometric
median. For such a good point,
$\|x_i-z_{\rm med}\|-\|x_i-v\|\ge\|z_{\rm med}-v\|-2r_{\rm med}$,
and for every other point
$\|x_i-z_{\rm med}\|-\|x_i-v\|\ge-\|z_{\rm med}-v\|$.
Since $z_{\rm med}$ minimizes the sum of
distances, summing gives
\begin{equation}\label{grv:median-distance}
 (1-2\vartheta)\|z_{\rm med}-v\|\le2(1-\vartheta)r_{\rm med} .
\end{equation}

\emph{Counting bad blocks.} Let $I_j$ indicate
$\|h_j-F(x)\|>\delta_{\rm rob}M_{\rm rob}/4$. Then
$\E[4^{I_j}\mid\text{earlier blocks}]\le1+3/16$,
so $\E4^{\sum_jI_j}\le(19/16)^k$ and, by Markov's inequality,
\[
 \Prob\Bigl\{\sum_jI_j\ge\frac k4\Bigr\}\le\Bigl(\frac{19}{16}\cdot4^{-1/4}\Bigr)^k
 \le e^{-k/8}\le\tau_{\rm fail} .
\]
Otherwise $\vartheta<1/4$, and~\eqref{grv:median-distance} with
$r_{\rm med}=\delta_{\rm rob}M_{\rm rob}/4$ gives
\[
 \|\widehat F_x-F(x)\|
 \le\frac{2\cdot3/4}{1/2}\cdot\frac{\delta_{\rm rob}M_{\rm rob}}4
 =\frac{3\delta_{\rm rob}M_{\rm rob}}4.
\]
\end{proof}

\subsection{One stage}
\label{grv:adaptive-subsection}

Fix a center $c$ and a radius $r>0$, let
$\mathcal B(c,r)=B(c,2r)$, and
\begin{equation*}
 S_r=\max\{(1+L_1r)^2,\kappa_0\},\quad M_r=\mu S_r,\quad
 F_r=\mu r\sqrt{S_r},\quad \ell_r=L_1+\frac1r,\quad
 \theta_r=\frac{2^{-32}}{S_r},\quad \delta_{\rm rob}=2^{-20}.
\end{equation*}
Then $M_r\ge\mu\kappa_0=L_0$ and
$\ell_rF_r=\mu\sqrt{S_r}(1+L_1r)\le M_r$. Let
$T_r=\lceil\theta_r^{-1}\log4096\rceil$,
$\tau_{{\rm fail},r}=\theta_r/(9\cdot4096)$, and
let $n_r$, $k_r$, and $b_r=n_rk_r$ be the quantities $n$, $k$, and $b$
from Lemma~\ref{grv:robust} with
$F_{\rm ref}=F_r$, $\delta_{\rm rob}$, and
$\tau_{\rm fail}=\tau_{{\rm fail},r}$. Starting from $x_0=c$, a stage performs
$T_r$ iterations
\begin{equation*}
 \eta=\frac1{64(M_r+\ell_r\|\widehat F_x\|)},\qquad
 y=\Pi_{\mathcal B(c,r)}(x-\eta\widehat F_x),\qquad
 \widehat h=\frac1{b_r}\sum_{l=1}^{b_r}g(y,\xi_l),\qquad
 x^+=\Pi_{\mathcal B(c,r)}(x-\eta\widehat h),
\end{equation*}
where $\widehat F_x$ is the robust estimate of $F(x)$ from
Lemma~\ref{grv:robust}, and the responses $g(y,\xi_l)$ are independent. Every
iteration uses exactly $2b_r$ calls. The step is small when the estimated
operator is large; this replaces a global Lipschitz constant.

\begin{lemma}[One stage]\label{grv:adaptive-stage}
Conditionally on any history at the start of the stage with
$\|c-z_\star\|\le r$, the output $z_{\rm out}(c,r)=x_{T_r}$ satisfies
\begin{equation*}
 \E\|z_{\rm out}(c,r)-z_\star\|^2\le\frac{r^2}{1024},
 \qquad
 \Prob\Bigl\{\|z_{\rm out}(c,r)-z_\star\|>\frac r8\Bigr\}\le\frac1{16}.
\end{equation*}
All queries and the output lie in $\mathcal B(c,r)$ on every history.
For a universal constant $C_{\rm stage}>0$,
\begin{equation}\label{grv:adaptive-stage-work}
 T_r\le C_{\rm stage}S_r,\qquad
 b_r\le C_{\rm stage}\Bigl(\rho+\frac{\sigma^2}{\mu^2r^2S_r}\Bigr)\log(64S_r),\qquad
 b_rT_r\le C_{\rm stage}\Bigl(\rho S_r+\frac{\sigma^2}{\mu^2r^2}\Bigr)\log(64S_r).
\end{equation}
\end{lemma}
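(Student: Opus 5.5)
The plan is to run a potential argument over the $T_r$ iterations, using the pathwise inequality \eqref{gs:projected-pathwise} of Lemma~\ref{gs:seg-residual} with $u=z_\star$, the ball $C=\mathcal B(c,r)$, and the data-dependent step $\eta$. Everything is organized around a good event on which all robust estimates are accurate. Lemma~\ref{grv:robust} with $\tau_{{\rm fail},r}$, combined with a union bound over $T_r$ iterations, shows that this event fails with probability at most $T_r\tau_{{\rm fail},r}\le(\theta_r^{-1}\log4096+1)\theta_r/(9\cdot4096)$. This is well below $1/64$.

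The first step is to control the data-dependent step on the good event. There $\|\widehat F_x-F(x)\|\le\delta_{\rm rob}\max\{\|F(x)\|,F_r\}$, so $\|\widehat F_x\|$ and $\|F(x)\|$ agree up to a factor $1\pm2^{-19}$ plus an additive error of size $2^{-20}F_r$. I would then apply Lemma~\ref{gs:gronwall} with $\|y-x\|\le\eta\|\widehat F_x\|\le1/(64\ell_r)$. This gives $e^{L_1\|y-x\|}\le e^{1/64}$ and an effective constant $L_{\rm seg}\le 2(L_0+L_1\|F(x)\|)$. Since $L_0\le M_r$, the step $\eta$ is at most $1/(64(M_r+\ell_r\|\widehat F_x\|))$, and therefore $3\eta^2L_{\rm seg}^2\le1/2$.

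The second step handles the terms involving $e$ and $h$. The predictor error $e=\widehat F_x-F(x)$ is bounded pathwise on the good event by $2^{-20}\max\{\|F(x)\|,F_r\}$. The corrector error $h$ is centered given $y$, and its variance is at most $((\rho-1)\|F(y)\|^2+\sigma^2)/b_r$; the choice of $n_r\le b_r$ makes this at most $2^{-40}\max\{\|F(y)\|^2,F_r^2\}/256$. I would also need $\eta\|F(x)\|\le3r$, which should follow from $\eta\ell_r\|\widehat F_x\|\le1/64$ and $\ell_r\ge1/r$. With this, \eqref{gs:projection-margin} gives $\|x-y\|\gtrsim\eta\|F(x)\|$, up to the perturbation by $e$. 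The remaining $-(1/2)\|x-y\|^2$ term then absorbs $3\eta^2\|e\|^2$ whenever $\|F(x)\|\ge F_r$. When $\|F(x)\|<F_r$, the error terms are of size $\eta^2F_r^2\,2^{-40}$, which is small compared with $\eta\mu r^2\theta_r$-type contraction terms.

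Strong monotonicity gives $2\eta\langle F(y),y-z_\star\rangle\ge2\eta\mu\|y-z_\star\|^2$. The step is bounded below by $\eta\ge 1/(64(M_r+2\ell_rF'))$, where $F'\lesssim M_r\cdot 4r+\ldots$ bounds $\|F\|$ on the ball; a Gr\"onwall argument gives $\|F\|\lesssim\mu r S_r$ there. Hence $\eta\mu\gtrsim1/S_r$, that is, $\eta\mu\ge 2^{10}\theta_r$ up to the constants chosen. This yields the recursion
\[
 \E\|x^+-z_\star\|^2\le(1-\theta_r)\,\E\|x-z_\star\|^2+\text{tiny}\cdot\theta_r r^2 ,
\]
where the expectation is restricted to the good event, and $\|x-y\|$ is used to pass from $\|y-z_\star\|$ back to $\|x-z_\star\|$. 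Unrolling over $T_r\ge\theta_r^{-1}\log4096$ steps gives $r^2/4096$ plus the tiny term. On the bad event the distance is at most $3r$, because all iterates stay in the ball. The bad event therefore contributes at most $9r^2T_r\tau_{{\rm fail},r}\le r^2/4096\cdot(\ldots)$, and the total is at most $r^2/1024$. Markov's inequality then gives the probability $1/16$. The work bounds \eqref{grv:adaptive-stage-work} follow directly from the definitions, using $F_r^2=\mu^2r^2S_r$.

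The main obstacle is the lower bound on $\eta$. It requires a uniform bound on $\|F\|$ over $B(c,2r)$ of order $M_r$ times $r$, and this must come from the generalized smoothness, because there is no global Lipschitz constant. Lemma~\ref{gs:gronwall} started from $z_\star$ gives $\|F(z)\|\le L_0\cdot3r\,e^{3L_1r}$. The exponential factor is not polynomial in $S_r$. I would instead integrate the differential inequality $\varphi'\le L_0+L_1\varphi$ more carefully, or restrict attention to iterates where the step control applies. If exponential factors survive, I would check whether the constants in $\theta_r$ can absorb them, which they cannot; so the sharper bound $\|F\|\lesssim(L_0+\mu\ldots)(1+L_1r)r$ via monotonicity is the first thing to try.
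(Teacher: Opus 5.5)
There is a genuine gap at exactly the step you flag, and none of your proposed fixes closes it. The contraction cannot come from a uniform lower bound $\eta\mu\gtrsim1/S_r$, because no such bound holds. The exponential in Lemma~\ref{gs:gronwall} is genuine and not an artifact of loose integration. In one dimension, let $F(z)=L_0z$ for $z<0$ and $F(z)=(L_0/L_1)(e^{L_1z}-1)$ for $z\ge0$. This map is $C^1$ and $L_0$-strongly monotone, and it satisfies $|F'|\le L_0+L_1|F|$, with equality for $z\ge0$. Take $\mu=L_0$, $z_\star=0$ and $c=r$. Already at the first iterate $\ell_rF(c)=L_0(1+1/(L_1r))(e^{L_1r}-1)$, whereas $M_r=L_0(1+L_1r)^2$. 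So on the good event $\eta\mu$ is exponentially small in $L_1r$. Monotonicity does not help, since the example is monotone, and integrating $\varphi'\le L_0+L_1\varphi$ more carefully reproduces the same exponential. Even the polynomial bound $\sup_{\mathcal B(c,r)}\|F\|\lesssim\mu rS_r$ that you aim for would only give $\eta\mu\gtrsim1/(S_r(1+L_1r))$. That is incompatible with the fixed $\theta_r=2^{-32}/S_r$ and with $T_r=O(S_r)$.

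The missing idea is that, when the operator is large, the contraction comes from the displacement term rather than from strong monotonicity. Only the magnitude $\|F(x)\|$ at the current iterate then matters. Split each iteration according to whether $\ell_r\|F(x)\|>M_r$.
\begin{itemize}
\item If $\ell_r\|F(x)\|>M_r$, then $\|F(x)\|>F_r$, because $\ell_rF_r\le M_r$. On the good event this gives $\|\widehat F_x\|\le(1+\delta_{\rm rob})\|F(x)\|$ and $\eta\|F(x)\|\ge1/(192\ell_r)$. The margin~\eqref{gs:projection-margin} then turns $-(1-3\eta^2L_{\rm seg}^2)\|x-y\|^2$ into a decrease of order $\eta^2\|F(x)\|^2\gtrsim\ell_r^{-2}\ge r^2/S_r\ge\|x-z_\star\|^2/(9S_r)$, using $S_r\ge(1+L_1r)^2=\ell_r^2r^2$.
\item Otherwise $\ell_r\|\widehat F_x\|\le(1+\delta_{\rm rob})M_r$, so $\eta\mu\ge1/(192S_r)$. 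Strong monotonicity together with $\|x-z_\star\|^2\le2\|y-z_\star\|^2+2\|x-y\|^2$ gives the contraction.
\end{itemize}
You use~\eqref{gs:projection-margin} only to absorb $3\eta^2\|e\|^2$; in the first case it must carry the contraction.

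A smaller point: your global union bound over the failure events does not reach the stated constant. Indeed $9r^2T_r\tau_{{\rm fail},r}=r^2T_r\theta_r/4096\ge r^2\log(4096)/4096\approx8.3\,r^2/4096$, which alone exceeds $r^2/1024$. Fold the failure into each step instead:
\[
\E[\|x^+-z_\star\|^2\mid\mathcal F]\le(1-\theta_r)\|x-z_\star\|^2+\theta_rr^2/4096+9r^2\tau_{{\rm fail},r}.
\]
Geometric damping then bounds the total failure contribution by $9r^2\tau_{{\rm fail},r}/\theta_r=r^2/4096$. The three contributions together give $3r^2/4096\le r^2/1024$, and the probability bound then follows by Markov's inequality.
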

\begin{proof}
Fix an iteration and condition on the history before it. Drop the index
$r$ by writing
\[
 S=S_r,\quad M=M_r,\quad \ell=\ell_r,\quad F_{\rm ref}=F_r,
 \quad \tau_{\rm fail}=\tau_{{\rm fail},r},\quad
 n=n_r,\quad b=b_r.
\]
Also write $M_x=\|F(x)\|$,
$\widehat M_x=\|\widehat F_x\|$,
$e_x=\widehat F_x-F(x)$,
$E_x=\|x-z_\star\|^2\le9r^2$,
$R_y=\|y-z_\star\|$, and $\Delta_{xy}=\|x-y\|$.

\emph{A good estimate controls the step.} Consider the event
$\{\|e_x\|\le\delta_{\rm rob}\max\{M_x,F_{\rm ref}\}\}$, which fails
with probability at most $\tau_{\rm fail}$ (Lemma~\ref{grv:robust}). On it,
$(1-\delta_{\rm rob})M_x\le\widehat M_x+\delta_{\rm rob}F_{\rm ref}$,
and since $\ell F_{\rm ref}\le M$, we have
$(1-\delta_{\rm rob})(M+\ell M_x)\le M+\ell\widehat M_x$. Hence
\begin{equation*}
 \eta(L_0+L_1M_x)\le\eta(M+\ell M_x)
      \le\frac1{64(1-\delta_{\rm rob})},\qquad
 \Delta_{xy}\le\eta\widehat M_x\le\frac1{64\ell},\qquad
 \eta M_x<\frac1{32\ell}\le\frac r{32}.
\end{equation*}
By Lemma~\ref{gs:gronwall}, $F$ is $L_{xy}$-Lipschitz on the segment
$[x,y]$ with $L_{xy}=(L_0+L_1M_x)e^{L_1\Delta_{xy}}$, and
$L_1\Delta_{xy}\le\ell\Delta_{xy}\le1/64$ gives
$\eta L_{xy}\le e^{1/64}/(64(1-\delta_{\rm rob}))<1/16$. Therefore
$\|F(y)\|\le M_x+\widehat M_x/16
\le\frac{17}{16}M_x+\frac{\|e_x\|}{16}$ and
$\|F(y)\|^2\le3M_x^2+\delta_{\rm rob}^2F_{\rm ref}^2/128$.

\emph{The step is not too short.} Since $\eta M_x<r/32$,
Lemma~\ref{gs:seg-residual} gives $\|x-y^0\|\ge\eta M_x/8$ for
$y^0=\Pi_{\mathcal B(c,r)}(x-\eta F(x))$. Since
$\|y-y^0\|\le\eta\|e_x\|$ and
$(s-t)^2\ge s^2/2-t^2$,
\begin{equation}\label{grv:adaptive-displacement}
 \Delta_{xy}^2\ge\frac{\eta^2M_x^2}{128}-\eta^2\|e_x\|^2 .
\end{equation}

\emph{The corrector.} Let $e_y=\widehat h-F(y)$. The batch size $b\ge n$ gives
$(\rho-1)/b\le\delta_{\rm rob}^2/256$ and
$\sigma^2/b\le\delta_{\rm rob}^2F_{\rm ref}^2/256$, so
\begin{equation}\label{grv:adaptive-corrector}
 \E[e_y\mid\widehat F_x,\mathcal F]=0,\qquad
 \E[\|e_y\|^2\mid\widehat F_x,\mathcal F]
 \le\frac{\rho-1}b\|F(y)\|^2+\frac{\sigma^2}b
 \le\frac{\delta_{\rm rob}^2}{64}(M_x^2+F_{\rm ref}^2).
\end{equation}

\emph{One iteration on the good event.} Apply~\eqref{gs:projected-pathwise}
with $u=z_\star$ and $L_{\rm seg}=L_{xy}$; the last term has zero conditional mean,
$1-3\eta^2L_{xy}^2\ge3/4$, and
$\|e_x\|^2\le\delta_{\rm rob}^2(M_x^2+F_{\rm ref}^2)$.
By~\eqref{grv:adaptive-displacement} and~\eqref{grv:adaptive-corrector}, the
terms other than $-2\eta\langle F(y),y-z_\star\rangle$ contribute at most
\[
 \eta^2\Bigl[-\frac{3M_x^2}{512}
 +\Bigl(\frac34+3\Bigr)\delta_{\rm rob}^2(M_x^2+F_{\rm ref}^2)
 +\frac3{64}\delta_{\rm rob}^2(M_x^2+F_{\rm ref}^2)\Bigr]
 \le-\frac{\eta^2M_x^2}{256}
      +4\delta_{\rm rob}^2\eta^2F_{\rm ref}^2,
\]
because $\delta_{\rm rob}=2^{-20}$. With
$\langle F(y),y-z_\star\rangle\ge\mu R_y^2$,
\begin{equation}\label{grv:adaptive-good-drift}
 \E[\|x^+-z_\star\|^2\mid\widehat F_x,\mathcal F]
 \le E_x-2\eta\mu R_y^2-\frac{\eta^2M_x^2}{256}
      +4\delta_{\rm rob}^2\eta^2F_{\rm ref}^2 .
\end{equation}

\emph{Contraction.} Let $\theta_{\rm ctr}=\theta_r$. If
$\ell M_x>M$, then $M_x>F_{\rm ref}$,
$\widehat M_x\le(1+\delta_{\rm rob})M_x$ and
$\eta M_x\ge1/(192\ell)$; since
$S\ge(1+L_1r)^2=\ell^2r^2$ and $E_x\le9r^2$,
\[
 \frac{\eta^2M_x^2}{256}\ge\frac1{256\cdot192^2\ell^2}
 \ge\frac {E_x}{9\cdot256\cdot192^2S}\ge\theta_{\rm ctr}E_x,
\]
because $9\cdot256\cdot192^2<2^{32}$. If $\ell M_x\le M$, then
$\ell\widehat M_x\le(1+\delta_{\rm rob})M$, so
$\eta\mu\ge\mu/(192M)=1/(192S)\ge\theta_{\rm ctr}$;
moreover
$E_x\le2R_y^2+2\Delta_{xy}^2
\le2R_y^2+8\eta^2M_x^2+4\delta_{\rm rob}^2\eta^2F_{\rm ref}^2$, and
since $8\theta_{\rm ctr}\le1/256$, the two negative terms
of~\eqref{grv:adaptive-good-drift} are at least
$\theta_{\rm ctr}E_x
-4\theta_{\rm ctr}\delta_{\rm rob}^2\eta^2F_{\rm ref}^2$. In both cases
\[
 \E[\|x^+-z_\star\|^2\mid\widehat F_x,\mathcal F]
 \le(1-\theta_{\rm ctr})E_x
      +5\delta_{\rm rob}^2\eta^2F_{\rm ref}^2
 \le(1-\theta_{\rm ctr})E_x+\frac{\theta_{\rm ctr} r^2}{4096},
\]
using
$\eta^2F_{\rm ref}^2\le F_{\rm ref}^2/(4096M^2)=r^2/(4096S)$ and
$5\delta_{\rm rob}^2\le2^{-32}$.

\emph{Failures and iteration.} On the event that the estimate fails,
$\|x^+-z_\star\|^2\le9r^2$. Hence
\begin{equation*}
 \E[\|x^+-z_\star\|^2\mid\mathcal F]
 \le(1-\theta_{\rm ctr})E_x
      +\frac{\theta_{\rm ctr}r^2}{4096}+9r^2\tau_{\rm fail} .
\end{equation*}
Iterating from $\|x_0-z_\star\|^2\le r^2$ and using
$(1-\theta_{\rm ctr})^{T_r}\le e^{-\theta_{\rm ctr}T_r}\le1/4096$
and $9\tau_{\rm fail}/\theta_{\rm ctr}=1/4096$,
\[
 \E\|x_{T_r}-z_\star\|^2\le\frac{r^2}{4096}+\frac{r^2}{4096}+\frac{r^2}{4096}
 \le\frac{r^2}{1024},
\]
and Markov's inequality gives the probability bound.

\emph{Work.} $T_r\le2^{32}S\log4096+1$ and
$\log(1/\tau_{{\rm fail},r})
=\log(9\cdot4096\cdot2^{32}S)\le C_{\rm stage}\log(64S)$; substituting
$F_r^2=\mu^2r^2S$ into~\eqref{grv:robust-budget}
gives~\eqref{grv:adaptive-stage-work}.
\end{proof}

\subsection{Restarts}
\label{grv:restart-subsection}

Let
\begin{equation*}
 J=\Bigl\lceil\frac{\log(2R_0^2/\varepsilon)}{2\log2}\Bigr\rceil,\qquad
 r_j=2^{-j}R_0,\qquad
 k_{\rm rep}=\Bigl\lceil8\log\frac{50JR_0^2}{\varepsilon}\Bigr\rceil .
\end{equation*}
Starting from $c_0=z_0$, stage $j=0,\ldots,J-1$ runs $k_{\rm rep}$ independent
copies of the stage of Lemma~\ref{grv:adaptive-stage} from $c_j$ with radius
$r_j$, and lets $c_{j+1}$ be the geometric median of their outputs. The
method returns $c_J$. Write $T_j=T_{r_j}$, $b_j=b_{r_j}$, $S_j=S_{r_j}$, and
\begin{equation*}
 A_{\rm rst}=a_0^2+\kappa_0J,\qquad
 S_0=\max\{(1+a_0)^2,\kappa_0\},\qquad \ell_0=\log(64S_0).
\end{equation*}

\begin{theorem}[Restarts]\label{grv:restarted}
In every dimension, $\E\|c_J-z_\star\|^2\le\varepsilon$. The method uses
$2k_{\rm rep}\sum_{j<J}b_jT_j$ calls. For a universal constant
$C_{\rm rst}>0$,
\begin{equation}\label{grv:restart-sums}
 \sum_{j<J}T_j\le C_{\rm rst}A_{\rm rst},\qquad
 \sum_{j<J}b_jT_j\le C_{\rm rst}\ell_0
 \Bigl(\rho A_{\rm rst}+\frac{\sigma^2}{\mu^2\varepsilon}\Bigr).
\end{equation}
\end{theorem}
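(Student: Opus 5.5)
The proof splits into two parts. The accuracy bound follows from a per-stage probabilistic contraction plus a geometric-median boosting argument. The work bounds are direct summations of \eqref{grv:adaptive-stage-work} over the geometric radii $r_j=2^{-j}R_0$.

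\emph{Accuracy.} Call stage $j$ \emph{good} if $\|c_j-z_\star\|\le r_j$, and note that stage $0$ is good by Assumption~\ref{ass:generalized-smoothness}. Conditionally on a good stage $j$, the $k_{\rm rep}$ copies are independent. By Lemma~\ref{grv:adaptive-stage}, each copy lands within $r_j/8$ of $z_\star$ with probability at least $15/16$. A Chernoff or exponential-moment bound, as in the proof of Lemma~\ref{grv:robust}, shows that fewer than a quarter of the copies fail with probability at least $1-e^{-k_{\rm rep}/8}\ge1-\varepsilon/(50JR_0^2)$. On that event the median inequality \eqref{grv:median-distance} with $\vartheta<1/4$ gives $\|c_{j+1}-z_\star\|\le3r_j/8<r_{j+1}$, so stage $j+1$ is again good. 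A union bound over the $J$ stages makes all stages good except on an event of probability at most $\varepsilon/(50R_0^2)$.

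On the all-good event, $\|c_J-z_\star\|^2\le r_J^2=4^{-J}R_0^2\le\varepsilon/2$ by the choice of $J$. On the bad event I still need a deterministic bound. All outputs lie in $\mathcal B(c_j,r_j)$, and the median lies in their convex hull, so $\|c_{j+1}-c_j\|\le2r_j$. Summing the geometric series gives $\|c_J-z_0\|\le4R_0$, hence $\|c_J-z_\star\|\le5R_0$. The bad event therefore contributes at most $25R_0^2\cdot\varepsilon/(50R_0^2)=\varepsilon/2$, and the two contributions sum to $\varepsilon$.

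\emph{Calls.} Each copy of stage $j$ uses $2b_jT_j$ calls, so the total is $2k_{\rm rep}\sum_{j<J}b_jT_j$.

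\emph{Sums.} Use $S_j\le(1+L_1r_j)^2+\kappa_0\le2+2a_0^24^{-j}+\kappa_0$. By \eqref{grv:adaptive-stage-work}, $\sum_jT_j\lesssim\sum_j S_j\lesssim J+a_0^2+\kappa_0J\lesssim A_{\rm rst}$, since $\kappa_0\ge1$. For the second sum, $S_j\le S_0$ because $r_j\le R_0$, so every logarithm is at most $\ell_0$. The $\rho$ part is then bounded by $\rho\ell_0\sum_jS_j\lesssim\rho\ell_0A_{\rm rst}$. The noise part is $\sum_j\sigma^2/(\mu^2r_j^2)=\sigma^2\mu^{-2}R_0^{-2}\sum_{j<J}4^j\lesssim\sigma^2\mu^{-2}R_0^{-2}4^{J}$, which is $\lesssim\sigma^2/(\mu^2\varepsilon)$ by the definition of $J$.

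\emph{Main obstacle.} The delicate step is the conditioning in the boosting argument. Lemma~\ref{grv:adaptive-stage} must be applied conditionally on the random center $c_j$, with independent copies, to obtain the median-of-copies concentration, and the constant in $k_{\rm rep}$ must close the union bound. The paper's statement that the lemma holds conditionally on any starting history supplies exactly this.
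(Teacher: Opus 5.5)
Your proposal is correct and follows essentially the same route as the paper. Both arguments boost the $15/16$ stage guarantee conditionally on a good center via the exponential-moment count and~\eqref{grv:median-distance}, take a union bound over the $J$ stages, bound the failure event deterministically by $5R_0$, and sum~\eqref{grv:adaptive-stage-work} using $S_j\le S_0$; the one step you leave implicit, the ceiling bound $4^J<8R_0^2/\varepsilon$ that gives $\sum_{j<J}r_j^{-2}<8/(3\varepsilon)$ in the noise sum, is exactly the estimate the paper uses.
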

\begin{proof}
\emph{Accuracy.} Suppose $\|c_j-z_\star\|\le r_j$. Each copy ends within
$r_j/8$ of $z_\star$ with probability at least $15/16$, independently given
$c_j$. As in the proof of Lemma~\ref{grv:robust}, fewer than a quarter of the
copies fail with probability at least $1-e^{-k_{\rm rep}/8}$, and then
\eqref{grv:median-distance} gives $\|c_{j+1}-z_\star\|\le3r_j/8\le r_{j+1}$.
By the choice of $k_{\rm rep}$, each stage fails with probability at most
$\varepsilon/(50JR_0^2)$, so all stages succeed except with probability at most
$\varepsilon/(50R_0^2)$. On every history the copies stay in
$B(c_j,2r_j)$, hence so does their median, and
$\|c_J-z_\star\|\le R_0+2\sum_jr_j<5R_0$. The choice of $J$ gives
$\varepsilon/8<r_J^2\le\varepsilon/2$. Therefore
\[
 \E\|c_J-z_\star\|^2\le r_J^2+25R_0^2\cdot\frac\varepsilon{50R_0^2}\le\varepsilon .
\]

\emph{Work.} Since $\kappa_0\ge1$, $S_j\le(1+L_1r_j)^2+\kappa_0\le3\kappa_0+2L_1^2r_j^2$,
so $\sum_{j<J}S_j\le3\kappa_0J+\frac83a_0^2\le3A_{\rm rst}$, and
$\sum_{j<J}r_j^{-2}=(r_J^{-2}-R_0^{-2})/3<8/(3\varepsilon)$. Since
$S_j\le S_0$, \eqref{grv:adaptive-stage-work} gives~\eqref{grv:restart-sums}.
\end{proof}

\subsection{Serial stages followed by pooled stages}
\label{ghs:section}

A stage can run on one worker, which keeps its state across round
boundaries, or with common queries, where each batch of $b_j$ responses is
collected over $\lceil b_j/(mK)\rceil$ rounds by all workers.

\begin{theorem}[One switch]\label{ghs:upper}
For $j_{\rm sw}\in\{0,\ldots,J\}$, run stages $j<j_{\rm sw}$ on one
worker and stages $j\ge j_{\rm sw}$
with common queries. Then $\E\|\widehat z-z_\star\|^2\le\varepsilon$, with
\begin{equation}\label{ghs:exact-budget}
 \Bigl\lceil\frac{2k_{\rm rep}}K\sum_{j<j_{\rm sw}}b_jT_j\Bigr\rceil
 +2k_{\rm rep}\sum_{j_{\rm sw}\le j<J}T_j
      \Bigl\lceil\frac{b_j}{mK}\Bigr\rceil
\end{equation}
rounds. Choosing $j_{\rm sw}$ to minimize~\eqref{ghs:exact-budget}, the number of
rounds is at most
\begin{equation}\label{ghs:rate}
 \begin{aligned}
 \widetilde O\Bigl(&1
 +\min\Bigl\{\frac\rho K,1+\frac\rho{mK}\Bigr\}
       \Bigl[a_0^2+\kappa_0\log\frac{2R_0^2}{\varepsilon}\Bigr]\\
 &+\min\Bigl\{\frac{\sigma^2}{\mu^2K\varepsilon},\
 \kappa_0+\frac{\sigma L_1}{\mu\sqrt K}
 +\frac{\sigma^2}{\mu^2mK\varepsilon}\Bigr\}\Bigr)
 \end{aligned}
\end{equation}
where the tilde hides only logarithmic factors in $2+a_0$, $2+\kappa_0$ and
$2R_0^2/\varepsilon$.
\end{theorem}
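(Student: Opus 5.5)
The plan has three parts: accuracy, the round count \eqref{ghs:exact-budget}, and the rate \eqref{ghs:rate}.

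\emph{Accuracy.} The accuracy claim is Theorem~\ref{grv:restarted} once I check that the schedule does not change the law of the stage outputs. A stage run on one worker executes exactly the iterations of Lemma~\ref{grv:adaptive-stage}; it keeps its state across round boundaries, so an interrupted batch is completed at the same query. In a common-query stage, each batch of $b_j$ independent responses at a fixed query is collected over $\lceil b_j/(mK)\rceil$ rounds by all workers, and the median or predictor is broadcast afterwards. Both implementations make the stage's queries measurable with respect to its own history and draw independent responses. Hence Lemmas~\ref{grv:robust} and~\ref{grv:adaptive-stage} apply verbatim, and $\E\|\widehat z-z_\star\|^2\le\varepsilon$.

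\emph{Round count.} I count rounds stage by stage.
\begin{itemize}[leftmargin=*,nosep]
\item Serial stages make $2k_{\rm rep}\sum_{j<j_{\rm sw}}b_jT_j$ sequential calls with at most $K$ per round. These fit in the ceiling of that number divided by $K$, provided the last serial round is also the round in which $c_{j_{\rm sw}}$ is broadcast.
\item Each pooled iteration needs two batches and copies run sequentially. This gives the second sum. Running the $k_{\rm rep}$ copies in parallel across workers would only help.
\end{itemize}

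\emph{Rate.} I compare three choices of the switch index: $j_{\rm sw}=J$ (all serial), $j_{\rm sw}=0$ (all pooled), and an intermediate index.
\begin{itemize}[leftmargin=*,nosep]
\item \emph{All serial.} Theorem~\ref{grv:restarted} gives $(k_{\rm rep}\ell_0/K)\bigl(\rho A_{\rm rst}+\sigma^2/(\mu^2\varepsilon)\bigr)$. Since $A_{\rm rst}=a_0^2+\kappa_0J$ and $J=O(\log(2R_0^2/\varepsilon))$, this is $\widetilde O\bigl(\tfrac\rho K[a_0^2+\kappa_0\log]+\sigma^2/(\mu^2K\varepsilon)\bigr)$.
\item \emph{All pooled.} Use $\lceil x\rceil\le 1+x$. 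Then $\sum_jT_j\lceil b_j/(mK)\rceil\le\sum T_j+\sum b_jT_j/(mK)$, which is $\widetilde O\bigl((1+\rho/(mK))A_{\rm rst}+\sigma^2/(\mu^2mK\varepsilon)\bigr)$.
\end{itemize}
Taking the better of these two gives the first bracket of \eqref{ghs:rate} in front of the minimum over $\rho/K$ and $1+\rho/(mK)$, plus either statistical term. Two pieces are missing: the additive $\kappa_0+\sigma L_1/(\mu\sqrt K)$, and decoupling the choice of deterministic factor from the choice of statistical term.

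\emph{Intermediate switch (main obstacle).} The point is that the $1$ in $1+\rho/(mK)$ comes from $\sum_jT_j\le C(a_0^2+\kappa_0J)$. The $a_0^2$ part is concentrated in the early stages, where $L_1r_j$ is large. The late stages have $S_j\approx\kappa_0$ while their variance terms $\sigma^2/(\mu^2r_j^2)$ dominate.

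I would choose $j_{\rm sw}$ as the first index with $L_1r_j\le1$ or $\sigma^2/(\mu^2r_j^2K)\ge S_j$, whichever gives the better count. Before the switch the per-stage serial cost is $\rho S_j/K+\sigma^2/(\mu^2r_j^2K)$. Summed geometrically over the stages with $L_1r_j>1$, the variance part is bounded by its last term. That term is $\sigma^2/(\mu^2K r_{j_{\rm sw}}^2)$, and with $r_{j_{\rm sw}}\approx\min\{1/L_1,\sigma/(\mu\sqrt{K\,S})\}$ this produces $\sigma L_1/(\mu\sqrt K)$ by AM--GM between $S_j\approx L_1^2r_j^2$ and $\sigma^2/(\mu^2r_j^2K)$. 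After the switch, pooled stages cost $\sum_jT_j\le C\kappa_0J$ plus $\sigma^2/(\mu^2mK\varepsilon)$.

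I expect this balancing to be the delicate step. I would verify it by splitting the stages into three groups: $L_1r_j>1$ with small noise, $L_1r_j>1$ with large noise, and $L_1r_j\le1$. In each group I would bound the cost by the matching term of \eqref{ghs:rate}, absorbing $k_{\rm rep}$ and $\ell_0$ into the tilde.
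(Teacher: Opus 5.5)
Your accuracy argument and the round count are the paper's. So is the skeleton of the rate argument. Put $\vartheta_1=\rho/K$, $\vartheta_m=1+\rho/(mK)$, $U_j=L_1^2r_j^2+\kappa_0$ and $V_j=\sigma^2/(\mu^2r_j^2)$.

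If $\vartheta_m\le\vartheta_1$, the all-pooled schedule suffices, because $\sigma^2/(\mu^2mK\varepsilon)$ is below both entries of the minimum. Otherwise $\vartheta_m\le\vartheta_1+1$. A switch at $j_{\rm sw}$ then costs, up to the factor $k_{\rm rep}\ell_0$, at most
\[
 1+\vartheta_1\sum_{j<J}U_j+\sum_{j\ge j_{\rm sw}}U_j+\frac1K\sum_{j<j_{\rm sw}}V_j+\frac1{mK}\sum_{j<J}V_j ,
\]
and $j_{\rm sw}=J$ gives the first entry. You should state this case split explicitly. Taking the better of the two pure schedules does not decouple the two factors on its own.

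The intermediate switch does not work as you wrote it. You charge the pooled stages only $C\kappa_0J$ and place the switch at $r_{j_{\rm sw}}\approx\min\{1/L_1,\cdot\}$, so $\sigma L_1/(\mu\sqrt K)$ would have to come from the serial variance alone. Write $x=\sigma L_1/(\mu\sqrt K)$.

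The bound $\sum_{j\ge j_{\rm sw}}U_j\lesssim\kappa_0J$ forces $L_1^2r_{j_{\rm sw}}^2\lesssim\kappa_0J$. When $a_0^2\gg\kappa_0J$, the serial phase then pays
\[
 \frac1K\sum_{j<j_{\rm sw}}V_j=\frac{\sigma^2}{3\mu^2K}\bigl(r_{j_{\rm sw}}^{-2}-R_0^{-2}\bigr)\gtrsim\frac{x^2}{\kappa_0J}.
\]
This is far larger than $\kappa_0J+x$ once $x\gg\kappa_0J$.

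The term you dropped is the pooled overhead $\sum_{j\ge j_{\rm sw}}L_1^2r_j^2\le\frac43L_1^2r_{j_{\rm sw}}^2$. It is exactly what your AM--GM balances against $\sigma^2/(3\mu^2Kr_{j_{\rm sw}}^2)$. The balance point $r_{\rm bal}^2=\sigma/(\mu L_1\sqrt K)$ can lie far above $1/L_1$, and there the pooled stages pay about $\frac43x$ on top of $\kappa_0J$.

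The paper handles this as follows:
\begin{itemize}
\item In the generic case it takes $j_{\rm sw}$ with $r_{j_{\rm sw}}^2\le r_{\rm bal}^2<4r_{j_{\rm sw}}^2$, which makes both competing terms at most $\frac43x$.
\item It uses $j_{\rm sw}=0$ when $r_{\rm bal}\ge R_0$ or $L_1=0$.
\item It uses $j_{\rm sw}=J$ when $r_{\rm bal}\le r_J$ or $\sigma=0$.
\end{itemize}

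Your second criterion, the first $j$ with $V_j/K\ge S_j$, can also be made to work, provided you keep the overhead and bound it correctly:
\begin{itemize}
\item Since $L_1^2r_{j_{\rm sw}}^2\le S_{j_{\rm sw}}\le V_{j_{\rm sw}}/K$, you get $L_1^2r_{j_{\rm sw}}^2\le x$.
\item By minimality of $j_{\rm sw}$, the serial variance is at most $\frac43S_{j_{\rm sw}-1}=O(1+\kappa_0+x)$.
\item If no such $j$ exists, use $V_{J-1}/K<S_{J-1}$ together with the identity $(V_{J-1}/K)\,L_1^2r_{J-1}^2=x^2$; this gives the same bound.
\end{itemize}
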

\begin{proof}
\emph{Accuracy and budget.} The way responses are distributed over workers
changes neither the stage algorithm nor the conditional moments, so
Theorem~\ref{grv:restarted} applies. The serial part needs
$\lceil(2k_{\rm rep}/K)\sum_{j<j_{\rm sw}}b_jT_j\rceil$ rounds, its last round
communicates $c_{j_{\rm sw}}$, and each pooled iteration needs
$2\lceil b_j/(mK)\rceil$ rounds; this is~\eqref{ghs:exact-budget}.

\emph{Cost per stage.} Let $\vartheta_1=\rho/K$,
$\vartheta_m=1+\rho/(mK)$, $U_j=L_1^2r_j^2+\kappa_0$ and
$V_j=\sigma^2/(\mu^2r_j^2)$, so that $S_j\le3U_j$. By
\eqref{grv:adaptive-stage-work} and $\lceil t\rceil\le1+t$,
for a universal constant $C_{\rm comm}>0$,
\eqref{ghs:exact-budget} is at most $C_{\rm comm}k_{\rm rep}\ell_0$ times
\begin{equation}\label{ghs:stage-cost}
 1+\vartheta_1\sum_{j<j_{\rm sw}}U_j
 +\vartheta_m\sum_{j_{\rm sw}\le j<J}U_j
 +\frac1K\sum_{j<j_{\rm sw}}V_j
 +\frac1{mK}\sum_{j_{\rm sw}\le j<J}V_j .
\end{equation}
We use $\sum_{j<J}U_j\le\frac43a_0^2+\kappa_0J$,
$\sum_{j_{\rm sw}\le j<J}U_j
\le\frac43L_1^2r_{j_{\rm sw}}^2+\kappa_0J$,
$\sum_{j<j_{\rm sw}}V_j
\le\sigma^2/(3\mu^2r_{j_{\rm sw}}^2)$ and
$\sum_{j<J}V_j\le8\sigma^2/(3\mu^2\varepsilon)$.

\emph{The case $\vartheta_1\ge\vartheta_m$.} Take $j_{\rm sw}=0$:
\eqref{ghs:stage-cost} is at most
$1+\vartheta_m\sum_jU_j+8\sigma^2/(3\mu^2mK\varepsilon)$, which is bounded
by~\eqref{ghs:rate}.

\emph{The case $\vartheta_1<\vartheta_m$.} Then $0<\vartheta_m-\vartheta_1\le1$
and~\eqref{ghs:stage-cost} is at most
\[
 1+\vartheta_1\sum_{j<J}U_j+\sum_{j_{\rm sw}\le j<J}U_j
 +\frac1K\sum_{j<j_{\rm sw}}V_j+\frac1{mK}\sum_{j<J}V_j .
\]
With $j_{\rm sw}=J$ the third term vanishes, and this gives the first entry of the
minimum in~\eqref{ghs:rate}. For the second entry, note that
$\sum_{j_{\rm sw}\le j<J}U_j\le\kappa_0J+\Lambda_{j_{\rm sw}}$ with
$\Lambda_{j_{\rm sw}}=\frac43L_1^2r_{j_{\rm sw}}^2$ for
$j_{\rm sw}<J$ and $\Lambda_J=0$, so it suffices to choose $j_{\rm sw}$ with
\[
 \Lambda_{j_{\rm sw}}+\frac1K\sum_{j<j_{\rm sw}}V_j
 \le\frac{8\sigma L_1}{3\mu\sqrt K}.
\]
If $L_1=0$, take $j_{\rm sw}=0$; if $\sigma=0$, take $j_{\rm sw}=J$.
Otherwise let $r_{\rm bal}^2=\sigma/(\mu L_1\sqrt K)$.
If $r_J^2<r_{\rm bal}^2<R_0^2$, take the unique
$j_{\rm sw}\in\{1,\ldots,J\}$ with
$r_{j_{\rm sw}}^2\le r_{\rm bal}^2<4r_{j_{\rm sw}}^2$; then the left side is at most
$\frac43L_1^2r_{\rm bal}^2
 +\frac{\sigma^2}{3\mu^2Kr_{j_{\rm sw}}^2}
\le\frac43L_1^2r_{\rm bal}^2
 +\frac{4\sigma^2}{3\mu^2Kr_{\rm bal}^2}
=\frac{8\sigma L_1}{3\mu\sqrt K}$.
If $r_{\rm bal}^2\ge R_0^2$, take $j_{\rm sw}=0$: the left side is
$\frac43L_1^2R_0^2\le\frac43L_1^2r_{\rm bal}^2
=\frac{4\sigma L_1}{3\mu\sqrt K}$. If
$r_{\rm bal}^2\le r_J^2$, take $j_{\rm sw}=J$: the left side is at most
$\frac{\sigma^2}{3\mu^2Kr_J^2}
\le\frac{\sigma^2}{3\mu^2Kr_{\rm bal}^2}
=\frac{\sigma L_1}{3\mu\sqrt K}$.
In all cases we obtain the second entry, with $\kappa_0J$ in place of
$\kappa_0$.

Finally $J$, $k_{\rm rep}$ and $\ell_0$ are logarithmic in $2R_0^2/\varepsilon$,
$2+a_0$ and $2+\kappa_0$, which gives~\eqref{ghs:rate}.
\end{proof}

\begin{corollary}[Rate under SGC]\label{gs:rate-sgc}
Let $\sigma=0$, $\vartheta=\min\{\rho/K,1+\rho/(mK)\}$, let $C_{\rm comm}$ be
the constant in the proof of Theorem~\ref{ghs:upper}, and
$C_{\rm rate}=51C_{\rm comm}$. For every $N\ge1$ there is a number of stages
$J$ for which the method of Theorem~\ref{ghs:upper} uses at most $N$
rounds and
\begin{equation}\label{gs:rate-sgc-bound}
 \E\|\widehat z-z_\star\|^2\le8R_0^2\cdot4^{-x},\qquad
 x=\min\Bigl\{\frac{N}{2C_{\rm rate}\ell_0(1+\vartheta a_0^2)},\
 \sqrt{\frac{N}{2C_{\rm rate}\ell_0\vartheta\kappa_0}}\Bigr\}.
\end{equation}
\end{corollary}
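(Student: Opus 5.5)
The plan is to specialize Theorem~\ref{ghs:upper} to $\sigma=0$ and invert the round count: choose the number of stages $J$ as large as the budget $N$ allows, then read off the accuracy from $r_J^2$.

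With $\sigma=0$, Lemma~\ref{grv:robust} gives $n_r$ depending only on $\rho$, so all the $V_j$ terms in \eqref{ghs:stage-cost} vanish. Running that proof with an arbitrary integer $J\ge1$ in place of the one tied to $\varepsilon$, the round count \eqref{ghs:exact-budget} is at most $C_{\rm comm}k_{\rm rep}\ell_0\bigl(1+\vartheta(\tfrac43a_0^2+\kappa_0J)\bigr)$. Here $j_{\rm sw}=0$ is used when $\vartheta_m\le\vartheta_1$ and $j_{\rm sw}=J$ otherwise; in both cases the $U_j$ sum is weighted by $\vartheta=\min\{\vartheta_1,\vartheta_m\}$.

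The accuracy argument of Theorem~\ref{grv:restarted} also works for a general $J$. Define the target by $\varepsilon:=2r_J^2=2\cdot4^{-J}R_0^2$, so that $J$ is exactly the stage count prescribed for this $\varepsilon$. Then $k_{\rm rep}=\lceil8\log(50JR_0^2/\varepsilon)\rceil=\lceil8\log(25J4^{J})\rceil$, which is $O(J)$ uniformly. The output therefore satisfies $\E\|\widehat z-z_\star\|^2\le2R_0^2 4^{-J}$, and the number of rounds is at most $C'\ell_0 J\bigl(1+\vartheta a_0^2+\vartheta\kappa_0J\bigr)$, using $\vartheta\ge\rho/(K)\wedge1\cdot$ only through the displayed form. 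I will set the constant as $C_{\rm rate}=51C_{\rm comm}$, absorbing $k_{\rm rep}\le 51J/2$-type bounds for $J\ge1$; this constant bookkeeping is the only place needing care.

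It then suffices to pick $J=\lfloor x\rfloor$ with $x$ as in \eqref{gs:rate-sgc-bound}.
\begin{itemize}
\item If $x\ge1$, then $J\le x$ gives $J(1+\vartheta a_0^2)\le N/(2C_{\rm rate}\ell_0)$ and $\vartheta\kappa_0J^2\le N/(2C_{\rm rate}\ell_0)$. Summing, the rounds are at most $N$, and $4^{-J}\le4\cdot4^{-x}$ yields $8R_0^2 4^{-x}$.
\item If $x<1$, take $J=0$ and output $z_0$. Then $\|z_0-z_\star\|^2\le R_0^2\le 8R_0^2 4^{-x}$, because $4^{-x}>1/4$.
\end{itemize}
Unused rounds are padded.

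The main obstacle is confirming that the logarithmic factor $k_{\rm rep}$, which grows with $J$, is exactly what produces the quadratic $\vartheta\kappa_0J^2$ term, and hence the square root in the exponent. The constants must also be checked so that one universal $C_{\rm rate}$ covers both the $J$ and $J^2$ contributions.
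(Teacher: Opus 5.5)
Your argument is the paper's. You take the target $\varepsilon_J=2R_0^2\,4^{-J}$, so that the method has exactly $J$ stages. You switch at $j_{\rm sw}\in\{0,J\}$, so that the $U_j$ sum is weighted by $\vartheta$. You note that $k_{\rm rep}=O(J)$ is what turns $\vartheta\kappa_0J$ into $\vartheta\kappa_0J^2$. Finally you take $J=\lfloor x\rfloor$, with $J=0$ returning $z_0$.

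The one step that is wrong as written is the constant. The bound $k_{\rm rep}\le 51J/2$ fails for small $J$: at $J=1$, $k_{\rm rep}=\lceil8\log100\rceil=37>25.5$.

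The bound you actually need, and the one the paper uses, follows from $\log J\le J-1$:
\[
 k_{\rm rep}\le 8\log25+8(J-1)+8J\log4+1\le 38J .
\]
Since $38\le51$ and $38\cdot\tfrac43<51$, this gives
\[
 C_{\rm comm}k_{\rm rep}\ell_0\Bigl[1+\vartheta\Bigl(\tfrac43a_0^2+\kappa_0J\Bigr)\Bigr]
 \le 51C_{\rm comm}\ell_0\bigl[J(1+\vartheta a_0^2)+\vartheta\kappa_0J^2\bigr].
\]
This is exactly what your $J=\lfloor x\rfloor$ step requires with $C_{\rm rate}=51C_{\rm comm}$.

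The fragment ``using $\vartheta\ge\ldots$ only through the displayed form'' plays no role and can be deleted.
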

\begin{proof}
For an integer $J\ge1$ take the target $\varepsilon_J=2R_0^2\cdot4^{-J}$; then
the method has $J$ stages and
$k_{\rm rep}=\lceil8\log(25J\cdot4^J)\rceil
\le8\log25+8(J-1)+8J\log4+1\le38J$.
With $\sigma=0$ the terms with $V_j$ in~\eqref{ghs:stage-cost} vanish, and the
choice $j_{\rm sw}=0$ if $\vartheta_1\ge\vartheta_m$ and $j_{\rm sw}=J$
otherwise, as in the proof of Theorem~\ref{ghs:upper}, bounds the number of
rounds by
\[
 C_{\rm comm}k_{\rm rep}\ell_0\Bigl[1+\vartheta\Bigl(\tfrac43a_0^2+\kappa_0J\Bigr)\Bigr]
 \le C_{\rm rate}\ell_0\bigl[J(1+\vartheta a_0^2)+\vartheta\kappa_0J^2\bigr].
\]
Let $J=\lfloor x\rfloor$. If $J\ge1$, each of the two parts of the right
side is at most $N/2$, so the method uses at most $N$ rounds, and
Theorem~\ref{grv:restarted} gives
$\E\|\widehat z-z_\star\|^2\le\varepsilon_J=2R_0^2\cdot4^{-J}\le8R_0^2\cdot4^{-x}$.
If $J=0$, the method returns $z_0$, and
$\|z_0-z_\star\|^2\le R_0^2\le8R_0^2\cdot4^{-x}$ because $x<1$.
\end{proof}

\section{Composite VI: accelerated mirror-prox}
\label{interp:app-proofs}
\label{interp:app-communication}

This appendix proves the single-worker entry of
Theorem~\ref{thm:composite-local-main} and
Proposition~\ref{interp:separation}, which shows that interpolation does
not imply SGC. Under Assumption~\ref{ass:interp-main} we write
\[
 D_f(x,y)=f(x)-f(y)-\langle\nabla f(y),x-y\rangle,\qquad
 \phi(x)=D_f(x,x_\star),\qquad
 \mathcal M(x,u)=f(x)-f(u)+\langle H(u),x-u\rangle,
\]
so that $\CompGap(x)=\sup_{\|u-x_\star\|\le R_0}\mathcal M(x,u)$ and
$\mathcal M(x,x_\star)=\phi(x)\ge0$, since $\nabla f(x_\star)=-H(x_\star)$.
The function $\phi$ is convex, nonnegative, $L_f$-smooth and vanishes at
$x_\star$.

\begin{lemma}[Variance of smooth convex samples]
\label{lem:composite:variance}\label{lem:samples:variance}
\label{prop:composite-potential-main}
\label{interp:samples-subsection}
For every query $x$, conditionally on the history before the call,
\begin{equation*}
 \operatorname{Var}(\nabla f_\xi(x))
 \le4L_{\max}\min\{D_f(x,x_\star),D_f(x_\star,x)\}+2\sigma_\star^2.
\end{equation*}
Under interpolation, $4L_{\max}$ can be replaced by $2L_{\max}$ and the
last term vanishes.
\end{lemma}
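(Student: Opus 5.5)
The plan is to split the sample gradient at the root and bound each piece by the standard co-coercivity inequality for convex smooth functions. Conditionally on the history, the query $x$ is fixed and $\xi$ is a fresh sample, so it suffices to work with a deterministic $x$. I write
\[
 \nabla f_\xi(x)-\nabla f(x)=\bigl(\nabla f_\xi(x)-\nabla f_\xi(x_\star)\bigr)-\bigl(\nabla f(x)-\nabla f(x_\star)\bigr)+\bigl(\nabla f_\xi(x_\star)-\nabla f(x_\star)\bigr).
\]
The second bracket is the mean of the first, so the first two brackets together form a centered term. Since variance is at most the second moment, $\|a+b\|^2\le2\|a\|^2+2\|b\|^2$ gives
\[
 \operatorname{Var}(\nabla f_\xi(x))\le2\,\E\|\nabla f_\xi(x)-\nabla f_\xi(x_\star)\|^2+2\sigma_\star^2,
\]
using the identity $\sigma_\star^2=\E\|\nabla f_\xi(x_\star)-\nabla f(x_\star)\|^2$ from~\eqref{eq:composite-root-variance-main}.

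Next, for a convex function $f_\xi$ with $L_{\max}$-Lipschitz gradient, co-coercivity gives both
\[
 \|\nabla f_\xi(x)-\nabla f_\xi(x_\star)\|^2\le2L_{\max}D_{f_\xi}(x,x_\star)\quad\text{and}\quad\|\nabla f_\xi(x)-\nabla f_\xi(x_\star)\|^2\le2L_{\max}D_{f_\xi}(x_\star,x).
\]
I take the expectation over $\xi$. Unbiasedness and exchanging expectation with gradients make the Bregman divergences average: $\E D_{f_\xi}(x,x_\star)=D_f(x,x_\star)$, because $\E f_\xi=f$ up to a constant and $\E\nabla f_\xi(x_\star)=\nabla f(x_\star)$; the same holds for $D_f(x_\star,x)$. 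So $\E\|\nabla f_\xi(x)-\nabla f_\xi(x_\star)\|^2\le2L_{\max}\min\{D_f(x,x_\star),D_f(x_\star,x)\}$, and the factor $2$ from the splitting yields $4L_{\max}$.

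Under interpolation, $\nabla f_\xi(x_\star)=-H(x_\star)=\nabla f(x_\star)$ almost surely. The third bracket then vanishes, the first two are exactly $\nabla f_\xi(x)-\nabla f(x)$, and the variance is at most $\E\|\nabla f_\xi(x)-\nabla f_\xi(x_\star)\|^2$ with no factor $2$. This gives $2L_{\max}$ and no $\sigma_\star^2$ term.

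The only delicate step is making the averaging of Bregman divergences rigorous. The assumption only gives unbiased gradients, not values, so the cleanest route is to avoid function values altogether. I would do this with the integral form $D_{f_\xi}(x,x_\star)=\int_0^1\langle\nabla f_\xi(x_\star+t(x-x_\star))-\nabla f_\xi(x_\star),x-x_\star\rangle\,dt$. Its expectation equals the corresponding integral for $f$ by Fubini, since the integrand is bounded by $L_{\max}\|x-x_\star\|^2$.
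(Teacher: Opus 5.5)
Your proof is correct and follows the paper's argument essentially step for step. You use co-coercivity of each $f_\xi$ in both orders, average the Bregman divergences through the integral form and Fubini (exactly as the paper does), and split once with a factor two at the root; the only cosmetic difference is that you write $\nabla f_\xi(x)-\nabla f(x)$ as a centered part plus the root noise, whereas the paper first bounds the variance by the second moment about $\nabla f(x_\star)$ and then splits, which yields the same inequality and the same interpolation refinement.
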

\begin{proof}
For a convex function with $L_{\max}$-Lipschitz gradient,
$\|\nabla f_\xi(x)-\nabla f_\xi(y)\|^2\le2L_{\max}D_{f_\xi}(x,y)$ for
both orders of $x$ and $y$. Moreover $\E D_{f_\xi}(x,y)=D_f(x,y)$: with
$e=x-y$,
$D_{f_\xi}(x,y)=\int_0^1\langle\nabla f_\xi(y+te)-\nabla f_\xi(y),e\rangle\,dt$,
the integrand is at most $L_{\max}t\|e\|^2$ in absolute value, and
Fubini's theorem and unbiasedness of $\nabla f_\xi$ apply. The variance
is at most the second moment about $\nabla f(x_\star)$, so
\[
 \operatorname{Var}(\nabla f_\xi(x))
 \le\E\|\nabla f_\xi(x)-\nabla f(x_\star)\|^2
 \le2\E\|\nabla f_\xi(x)-\nabla f_\xi(x_\star)\|^2
 +2\E\|\nabla f_\xi(x_\star)-\nabla f(x_\star)\|^2,
\]
and the last expectation is $\sigma_\star^2$
by~\eqref{eq:composite-root-variance-main}. Under interpolation
$\nabla f_\xi(x_\star)=\nabla f(x_\star)$ almost surely, and the factor two
is not needed. Each call uses an independent sample, which gives the
conditional statement.
\end{proof}

\subsection{The method and its guarantee}
\label{interp:smooth-algorithm}
We first state the accelerated mirror-prox scheme and then isolate the
one-step estimate used by both schedules.

Fix integers $b,T\ge1$ and let
\begin{equation}
 \begin{aligned}
 a_\phi&=\frac{4L_{\max}}b,
 &\sigma_{\star,b}&=\frac{\sqrt2\,\sigma_\star}{\sqrt b},\\
 \Gamma_{\rm amp}&=64\Bigl(L_f+L_HT+a_\phi T+\frac{\sigma_{\star,b}T^{3/2}}{R_0}\Bigr),
 &\alpha_t&=\frac2{t+1},\qquad \eta_t=\frac t{\Gamma_{\rm amp}}.
 \end{aligned}
 \label{interp:parameters}
\end{equation}
Under interpolation one may take $a_\phi=2L_{\max}/b$. If $\Gamma_{\rm amp}=0$, return
$x_0$. Otherwise set $r_1=z_0=x_0$ and, for $t=1,\ldots,T$,
\begin{align}
 y_t&=(1-\alpha_t)z_{t-1}+\alpha_t r_t,
 &g_t&=\frac1b\sum_{j=1}^b\nabla f_{\xi_{t,j}}(y_t),\notag\\
 w_t&=r_t-\eta_t\bigl(H(r_t)+g_t\bigr),
 &r_{t+1}&=r_t-\eta_t\bigl(H(w_t)+g_t\bigr),\label{interp:algorithm}\\
 z_t&=(1-\alpha_t)z_{t-1}+\alpha_t w_t,\notag
\end{align}
with independent samples $\xi_{t,j}$. These are the Euclidean updates of
accelerated mirror-prox~\citep{chen2017amp} without projection; the
output is $z_T$, and $r_{T+1}$ is not computed.

\begin{theorem}[Accelerated mirror-prox]
\label{thm:interp-main}\label{interp:accelerated}
Under Assumption~\ref{ass:interp-main}, in every dimension,
\begin{align}
 \E\CompGap(z_T)
 &\le\frac{10\Gamma_{\rm amp}R_0^2}{T(T+1)}\label{interp:main}\\
 &\le640\Bigl[R_0^2\Bigl(\frac{L_f}{T^2}+\frac{L_H}T
       +\frac{4L_{\max}}{bT}\Bigr)
       +\frac{\sqrt2\,\sigma_\star R_0}{\sqrt{bT}}\Bigr].\notag
\end{align}
The method uses $bT$ stochastic gradients and $2T-1$ evaluations of $H$.
Under interpolation, $4L_{\max}$ may be replaced by $2L_{\max}$.
\end{theorem}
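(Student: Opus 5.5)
The plan follows the standard accelerated mirror-prox analysis of \citet{chen2017amp}, with the stochastic error handled through Lemma~\ref{lem:samples:variance} rather than a bounded-variance assumption.

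\textbf{One-step inequality.} Fix a comparator $u$ with $\|u-x_\star\|\le R_0$. Smoothness of $f$ and convexity at $y_t$ give the standard three-point bound, using $z_t-y_t=\alpha_t(w_t-r_t)$:
\[
 f(z_t)\le(1-\alpha_t)f(z_{t-1})+\alpha_t\bigl[f(y_t)+\langle\nabla f(y_t),w_t-y_t\rangle\bigr]+\tfrac{L_f\alpha_t^2}{2}\|w_t-r_t\|^2.
\]
Then I apply the mirror-prox identity (as in Lemma~\ref{nn:identity-lemma}) to the pair of updates $w_t,r_{t+1}$ with operator $H+g_t$. This bounds
\[
 \eta_t\langle H(w_t)+g_t,w_t-u\rangle\le\tfrac12\|r_t-u\|^2-\tfrac12\|r_{t+1}-u\|^2-\tfrac12(1-\eta_t^2L_H^2)\|w_t-r_t\|^2 .
\]
Since $g_t$ is common to both half-steps, only $H$ enters the extragradient error. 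Monotonicity of $H$ replaces $\langle H(w_t),w_t-u\rangle$ by $\langle H(u),w_t-u\rangle$. Jensen's inequality for the convex combination defining $z_t$ gives $\langle H(u),z_t-u\rangle$.

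\textbf{Weighting and the noise.} Write $\Delta_t=g_t-\nabla f(y_t)$. Multiplying by weights $\propto t$, the condition $\alpha_t/\eta_t\approx\Gamma_{\rm amp}/t^2$ makes the $L_f\alpha_t^2$ and $L_H\eta_t$ terms absorbable into the negative $\|w_t-r_t\|^2$ term, because $\Gamma_{\rm amp}\ge64(L_f+L_HT)$. The noise leaves two contributions:
\begin{itemize}
\item a martingale term $\langle\Delta_t,r_t-u\rangle$, where $r_t$ is measurable but $u$ is a supremum variable;
\item a term $\langle\Delta_t,w_t-r_t\rangle$, bounded by Young's inequality by $\eta_t\|\Delta_t\|^2$ plus absorbed squares.
\end{itemize}
For the $u$-dependent part I split $r_t-u=(r_t-x_\star)+(x_\star-u)$. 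This leaves $\langle\sum_t\eta_t\Delta_t,x_\star-u\rangle\le R_0\|\sum_t\eta_t\Delta_t\|$, controlled by the martingale $L^2$ norm.

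By Lemma~\ref{lem:samples:variance}, $\E\|\Delta_t\|^2\le\bigl(a_\phi\,\phi(y_t)+\sigma_{\star,b}^2\bigr)$ up to constants, since $D_f(y_t,x_\star)=\phi(y_t)$.

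\textbf{Main obstacle: the relative term $a_\phi\phi(y_t)$.} It is not directly controlled and must be absorbed into the progress. My plan is to compare $\phi(y_t)$ with $\phi(z_{t-1})$ and $\|r_t-x_\star\|^2$ using convexity and $L_f$-smoothness of $\phi$. Then I use that the gap at $u=x_\star$ equals $\phi(z_t)$, so the potential
\[
 \Phi_t=\tfrac{t(t+1)}{\Gamma_{\rm amp}}\,\E\phi(z_t)+\tfrac12\E\|r_{t+1}-x_\star\|^2
\]
first obeys an a priori bound, showing $\E\phi(z_t)\lesssim\Gamma_{\rm amp}R_0^2/t^2$ and $\E\|r_t-x_\star\|^2\lesssim R_0^2$. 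The term $a_\phi T$ in $\Gamma_{\rm amp}$ is exactly what allows $\sum_t\eta_t^2a_\phi\phi(y_t)$ to be dominated by a fraction of $R_0^2$. I would run this potential argument first, with $u=x_\star$, and then feed the resulting bounds into the supremum bound.

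Similarly, $\sigma_{\star,b}T^{3/2}/R_0$ in $\Gamma_{\rm amp}$ balances the terms
\[
 \textstyle\sum_t\eta_t^2\sigma_{\star,b}^2\quad\text{and}\quad R_0\bigl(\sum_t\eta_t^2\sigma_{\star,b}^2\bigr)^{1/2}
\]
against $R_0^2$.

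\textbf{Conclusion.} Dividing by $t(t+1)/\Gamma_{\rm amp}$ at $t=T$ gives~\eqref{interp:main}; expanding $\Gamma_{\rm amp}$ gives the second line. The call count is read off directly: $bT$ gradients, and $2T-1$ evaluations of $H$, since $H(r_1)$ through $H(w_T)$ are needed and $r_{T+1}$ is not computed. Under interpolation I use the sharper constant $2L_{\max}$ from Lemma~\ref{lem:samples:variance}.
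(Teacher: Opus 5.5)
The overall architecture matches the paper: the one-step accelerated mirror-prox inequality with $g_t$ shared by both half-steps, Young's inequality on $\langle\Delta_t,r_t-w_t\rangle$, the split $r_t-u=(r_t-x_\star)+(x_\star-u)$, and a pass at $u=x_\star$ before the supremum. The gap is in the step you flag as the main obstacle. Your comparison is convexity along $y_t=(1-\alpha_t)z_{t-1}+\alpha_tr_t$, then $\phi(r_t)\le\frac{L_f}{2}\|r_t-x_\star\|^2$. This leaves the error $\alpha_t\frac{L_f}{2}\|r_t-x_\star\|^2$ with only one factor $\alpha_t=2/(t+1)$. After the accelerated weighting the noise enters as $\frac{2}{\Gamma_{\rm amp}}\sum_t t^2\E\|\Delta_t\|^2$. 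With only the a priori bound $\E\|r_t-x_\star\|^2\lesssim R_0^2$, this piece contributes about
\[
 \frac{a_\phi L_f}{\Gamma_{\rm amp}}\sum_{t\le T}t\,\E\|r_t-x_\star\|^2\asymp\frac{a_\phi L_fT^2}{\Gamma_{\rm amp}}\,R_0^2 .
\]
Absorbing this into $\Gamma_{\rm amp}R_0^2$ requires $a_\phi L_fT^2\lesssim\Gamma_{\rm amp}^2$. But $\Gamma_{\rm amp}\ge64(L_f+a_\phi T)$ only gives $a_\phi L_fT\le\Gamma_{\rm amp}^2/4096$; for $L_H=\sigma_\star=0$ and $L_f=a_\phi T$ the ratio is of order $T$. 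So the $a_\phi T$ term in $\Gamma_{\rm amp}$ controls only the $\phi(z_{t-1})$ part. The recursion for $\E\|r_{t+1}-x_\star\|^2$ then gives only a bound growing like $e^{cT}$. Enlarging $\Gamma_{\rm amp}$ by $T\sqrt{a_\phi L_f}$ would fix it but loses up to a factor $\sqrt T$ in the rate.

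The missing idea is to compare $y_t$ with a point at distance $O(\alpha_t)$ through a quadratic error. Then $\alpha_t^2$ appears, and $t^2\alpha_t^2\le4$ cancels the weight. Since $\phi\ge0$ is $L_f$-smooth, $\|\nabla\phi(z)\|^2\le2L_f\phi(z)$, hence $\phi(y)\le2\phi(z)+L_f\|y-z\|^2$. The paper applies this with $z=z_t$ and $y_t-z_t=\alpha_t(r_t-w_t)$:
\[
 \phi(y_t)\le2\phi(z_t)+L_f\alpha_t^2\|r_t-w_t\|^2 .
\]
The first part is absorbed by $\frac{t(t+1)}{2}\E\phi(z_t)$ on the left: take the maximum over $t$ and use $8a_\phi T/\Gamma_{\rm amp}\le1/8$. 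The second is absorbed only if your potential keeps the step energy $\frac{\Gamma_{\rm amp}}{4}\sum_{j\le t}\E\|r_j-w_j\|^2$, which your $\Phi_t$ drops. Since $8a_\phi L_f\le\Gamma_{\rm amp}^2/8$, this yields $\sum_t\E\|r_t-w_t\|^2\le8R_0^2$ and $\sum_t t^2\E\|\Delta_t\|^2\le\Gamma_{\rm amp}^2R_0^2$, which is what the supremum step needs.

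An alternative route works too. Keep the Bregman term $-\alpha_tD_f(u,y_t)$ from the convexity step, and bound the variance through $D_f(x_\star,y_t)$, which the minimum in Lemma~\ref{lem:samples:variance} permits. This is the route of Theorem~\ref{comp-local:near}.

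A minor slip in the call count: $H(w_T)$ is never evaluated, because it only serves $r_{T+1}$. The $2T-1$ evaluations are $H(r_1),H(w_1),\dots,H(w_{T-1}),H(r_T)$.
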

\begin{proof}
If $\Gamma_{\rm amp}=0$, then $L_f=L_H=0$, so $\nabla f$ and $H$ are constant with zero
sum, and $\CompGap\equiv0$. Let $\Gamma_{\rm amp}>0$ and put
\[
 \delta_t=g_t-\nabla f(y_t),\qquad d_t=r_t-w_t,\qquad
 \omega_t=\frac2{t(t+1)} .
\]
By Lemma~\ref{lem:composite:variance} and independence within a batch,
\begin{equation}
 \E[\delta_t\mid\mathcal F_{t-1}]=0,\qquad
 \E[\|\delta_t\|^2\mid\mathcal F_{t-1}]\le a_\phi\phi(y_t)+\sigma_{\star,b}^2,
 \label{interp:variance}
\end{equation}
where $\mathcal F_{t-1}$ is the history before the batch of step $t$. All
iterates have finite second moments by induction.

\emph{Step 1: the function part.} Since $z_t-y_t=-\alpha_td_t$ and
$z_t=(1-\alpha_t)z_{t-1}+\alpha_tw_t$, smoothness and convexity of $f$
give, for every $u$,
\begin{align*}
 f(z_t)&\le f(y_t)+\langle\nabla f(y_t),z_t-y_t\rangle
 +\frac{L_f\alpha_t^2}2\|d_t\|^2\notag\\
 &=(1-\alpha_t)\bigl[f(y_t)+\langle\nabla f(y_t),z_{t-1}-y_t\rangle\bigr]
 +\alpha_t\bigl[f(y_t)+\langle\nabla f(y_t),w_t-y_t\rangle\bigr]
 +\frac{L_f\alpha_t^2}2\|d_t\|^2\notag\\
 &\le(1-\alpha_t)f(z_{t-1})+\alpha_t\bigl[f(u)+\langle\nabla f(y_t),w_t-u\rangle\bigr]
 +\frac{L_f\alpha_t^2}2\|d_t\|^2 .
\end{align*}
Moreover
$\langle H(u),z_t-u\rangle-(1-\alpha_t)\langle H(u),z_{t-1}-u\rangle
=\alpha_t\langle H(u),w_t-u\rangle\le\alpha_t\langle H(w_t),w_t-u\rangle$
by monotonicity. Adding,
\[
 \mathcal M(z_t,u)-(1-\alpha_t)\mathcal M(z_{t-1},u)
 \le\alpha_t\langle H(w_t)+g_t,w_t-u\rangle
 -\alpha_t\langle\delta_t,w_t-u\rangle+\frac{L_f\alpha_t^2}2\|d_t\|^2 .
\]

\emph{Step 2: the extragradient part.} Expanding the squares in the
updates of $w_t$ and $r_{t+1}$,
\begin{align*}
 \eta_t\langle H(w_t)+g_t,w_t-u\rangle
 &=\frac{\|r_t-u\|^2-\|r_{t+1}-u\|^2}2
  -\frac{\|d_t\|^2}2-\frac{\|w_t-r_{t+1}\|^2}2\notag\\
 &\quad+\eta_t\langle H(w_t)-H(r_t),w_t-r_{t+1}\rangle,
\end{align*}
and Young's inequality bounds the last term by
$\frac12\eta_t^2L_H^2\|d_t\|^2+\frac12\|w_t-r_{t+1}\|^2$. For the noise,
\[
 -\langle\delta_t,w_t-u\rangle
 =\langle\delta_t,u-r_t\rangle+\langle\delta_t,d_t\rangle,\qquad
 \langle\delta_t,d_t\rangle\le2\eta_t\|\delta_t\|^2+\frac{\|d_t\|^2}{8\eta_t}.
\]

\emph{Step 3: one step.} Combining the last three displays, the
coefficient of $\|d_t\|^2$ is
$\frac{\alpha_t}{\eta_t}\bigl(-\frac38+\frac12(\eta_t^2L_H^2+L_f\alpha_t\eta_t)\bigr)$.
By~\eqref{interp:parameters}, $\eta_t^2L_H^2\le(TL_H/\Gamma_{\rm amp})^2\le1/4096$ and
$L_f\alpha_t\eta_t\le2L_f/\Gamma_{\rm amp}\le1/32$, so this coefficient is at most
$-\alpha_t/(4\eta_t)$, and for every $u$
\begin{align}\label{interp:onestep}
 \mathcal M(z_t,u)&\le(1-\alpha_t)\mathcal M(z_{t-1},u)
 +\frac{\alpha_t}{2\eta_t}\bigl(\|r_t-u\|^2-\|r_{t+1}-u\|^2\bigr)\notag\\
 &\quad-\frac{\alpha_t}{4\eta_t}\|d_t\|^2
       +2\alpha_t\eta_t\|\delta_t\|^2
       +\alpha_t\langle\delta_t,u-r_t\rangle.
\end{align}

\emph{Step 4: summation.} We have $\alpha_t/\omega_t=t$, $t/\eta_t=\Gamma_{\rm amp}$,
$(1-\alpha_t)/\omega_t=1/\omega_{t-1}$ for $t\ge2$, and $\alpha_1=1$.
Dividing~\eqref{interp:onestep} by $\omega_t$ and summing,
\begin{align}\label{interp:weighted}
 \frac{\mathcal M(z_t,u)}{\omega_t}
 +\frac{\Gamma_{\rm amp}}4\sum_{j\le t}\|d_j\|^2
 &\le\frac{\Gamma_{\rm amp}}2\bigl(\|x_0-u\|^2-\|r_{t+1}-u\|^2\bigr)
 +\frac2{\Gamma_{\rm amp}}\sum_{j\le t}j^2\|\delta_j\|^2
 +\sum_{j\le t}j\langle\delta_j,u-r_j\rangle .
\end{align}

\emph{Step 5: the noise is controlled by the gap at the root.} Since
$\phi$ is nonnegative and $L_f$-smooth, $\|\nabla\phi(z)\|^2\le2L_f\phi(z)$,
and therefore
$\phi(y)\le\phi(z)+\|\nabla\phi(z)\|\|y-z\|+\frac{L_f}2\|y-z\|^2
\le2\phi(z)+L_f\|y-z\|^2$. With $y_t-z_t=\alpha_td_t$,
\begin{equation}\label{interp:newaggregate}
 \phi(y_t)\le2\phi(z_t)+L_f\alpha_t^2\|d_t\|^2 .
\end{equation}
Let $V_t=\E\|\delta_t\|^2$, $p_t=\E\phi(z_t)/\omega_t$ and let the
accumulated step energy be
$\mathsf E_t^{\rm step}=\sum_{j\le t}\E\|d_j\|^2$. By~\eqref{interp:variance}
and~\eqref{interp:newaggregate},
\begin{equation}\label{interp:varianceused}
 V_t\le2a_\phi\omega_tp_t+a_\phi L_f\alpha_t^2\E\|d_t\|^2+\sigma_{\star,b}^2 .
\end{equation}
Take $u=x_\star$ in~\eqref{interp:weighted}: then
$\mathcal M(z_t,x_\star)=\phi(z_t)$ and the last sum has zero mean. Using
$j^2\omega_j\le2$, $j^2\alpha_j^2\le4$ and $\sum_{j\le t}j^2\le T^3$,
\begin{equation*}
 p_t+\frac{\Gamma_{\rm amp}}4\mathsf E_t^{\rm step}
 \le\frac{\Gamma_{\rm amp}R_0^2}2+\frac{8a_\phi}{\Gamma_{\rm amp}}\sum_{j\le t}p_j
 +\frac{8a_\phi L_f}{\Gamma_{\rm amp}}\mathsf E_t^{\rm step}
 +\frac{2\sigma_{\star,b}^2T^3}{\Gamma_{\rm amp}} .
\end{equation*}
By~\eqref{interp:parameters}, $a_\phi T\le \Gamma_{\rm amp}/64$,
$L_f\le \Gamma_{\rm amp}/64$ and
$\sigma_{\star,b}^2T^3\le \Gamma_{\rm amp}^2R_0^2/4096$; in particular
$8a_\phi L_f\le8\Gamma_{\rm amp}^2/64^2\le \Gamma_{\rm amp}^2/8$. Hence
\begin{equation}\label{interp:closure}
 p_t+\frac{\Gamma_{\rm amp}}8\mathsf E_t^{\rm step}
 \le\Bigl(\frac12+\frac1{2048}\Bigr)\Gamma_{\rm amp}R_0^2
 +\frac{8a_\phi}{\Gamma_{\rm amp}}\sum_{j\le t}p_j .
\end{equation}
Let $p_{\max}=\max_{t\le T}p_t$. Since $8a_\phi T/\Gamma_{\rm amp}\le1/8$,
\eqref{interp:closure} at a maximizing index gives
$p_{\max}\le(\frac12+\frac1{2048})\Gamma_{\rm amp}R_0^2+\frac18p_{\max}$, so
$p_{\max}\le \Gamma_{\rm amp}R_0^2$, and then~\eqref{interp:closure} with $t=T$ gives
$\mathsf E_T^{\rm step}\le8R_0^2$. Consequently, by~\eqref{interp:varianceused},
\begin{align*}
 \mathsf V_{\rm wt}:=\sum_{t\le T}t^2V_t
 &\le4a_\phi T\Gamma_{\rm amp}R_0^2
 +4a_\phi L_f\mathsf E_T^{\rm step}+\sigma_{\star,b}^2T^3\notag\\
 &\le \Gamma_{\rm amp}^2R_0^2\Bigl(\frac4{64}+\frac{32}{4096}+\frac1{4096}\Bigr)
 \le \Gamma_{\rm amp}^2R_0^2 .
\end{align*}

\emph{Step 6: the gap.} Let $\|u-x_\star\|\le R_0$, so that
$\|x_0-u\|\le2R_0$. Write
$\sum_tt\langle\delta_t,u-r_t\rangle
=\langle\sum_tt\delta_t,u-x_\star\rangle+\sum_tt\langle\delta_t,x_\star-r_t\rangle$.
The second sum has zero mean, and the supremum of the first over $u$ is
$R_0\|\sum_tt\delta_t\|$, whose expectation is at most
$R_0\sqrt{\mathsf V_{\rm wt}}$ by
orthogonality of martingale differences. Taking the supremum
in~\eqref{interp:weighted} at $t=T$ and then the expectation,
\[
 \frac{\E\CompGap(z_T)}{\omega_T}
 \le2\Gamma_{\rm amp}R_0^2
 +\frac{2\mathsf V_{\rm wt}}{\Gamma_{\rm amp}}
 +R_0\sqrt{\mathsf V_{\rm wt}}\le5\Gamma_{\rm amp}R_0^2,
\]
which is~\eqref{interp:main}. The second line follows from
$T(T+1)\ge T^2$ and the definition of $\Gamma_{\rm amp}$.
\end{proof}

\subsection{Serial and synchronized schedules}
\label{interp:schedules-subsection}

\begin{proposition}[Two schedules]
\label{prop:composite-schedules-main}
Under Assumption~\ref{ass:interp-main}, an algorithm of
Definition~\ref{def:ic-main} satisfies
\begin{equation}
 \mathbb E\CompGap(\widehat x)=O\!\left(R_0^2
 \min\left\{\begin{aligned}
 &L_f+L_H,\\[-2pt]
 &\frac{L_f}{(KN)^2}+\frac{L_H+L_{\max}}{KN}
       +\frac{\sigma_\star}{R_0\sqrt{KN}}
 \end{aligned}\right\}\right)
 \label{eq:comm-upper-main}
\end{equation}
and, if $K\ge3$, also
$\mathbb E\CompGap(\widehat x)=O\!\left(R_0^2\bigl(\frac{L_f}{N^2}
+\frac{L_H}N+\frac{L_{\max}}{mKN}+\frac{\sigma_\star}{R_0\sqrt{mKN}}\bigr)\right)$.
\end{proposition}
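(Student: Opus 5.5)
The plan is to obtain each entry of Proposition~\ref{prop:composite-schedules-main} by running the accelerated mirror-prox method~\eqref{interp:algorithm} of Theorem~\ref{thm:interp-main} under a different schedule, and then output the candidate with the smallest parameter-based bound. The first entry $R_0^2(L_f+L_H)$ is attained by the initial point: the text after the definition of the composite gap already gives $\CompGap(x_0)\le(L_f/2+L_H)R_0^2$.

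\emph{Serial schedule.} One worker runs~\eqref{interp:algorithm} with batch size $b=1$, keeping its state across round boundaries as in Section~\ref{vi:schedules-subsection}. Each step costs one stochastic gradient and at most two evaluations of $H$, so at most three calls. I would therefore take $T=\lfloor KN/3\rfloor$. If $KN\ge3$, then $T\ge KN/5$, and~\eqref{interp:main} with $b=1$ gives
\[
R_0^2\bigl(L_f/(KN)^2+(L_H+L_{\max})/(KN)\bigr)+\sigma_\star R_0/\sqrt{KN}
\]
up to constants. If $KN<3$, the initial-point bound $(L_f+L_H)R_0^2$ is already within a constant factor of the claimed serial term, because $1/(KN)^2$ and $1/(KN)$ are then bounded below by a constant. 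The one thing to check is that the $H$-evaluations and the gradient call of a step can be split across a round boundary. They can: queries depend only on the worker's own earlier responses, which the worker keeps.

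\emph{Synchronized schedule ($K\ge3$).} In each round the server broadcasts $(r_t,z_{t-1})$. Every worker computes $y_t$ locally, spends $K-2$ calls on stochastic gradients at $y_t$, and spends two calls on $H(r_t)$ and $H(w_t)$. Since $H$ is deterministic and every worker receives $g_t$ only after communication, the $H$-evaluations need care. I would arrange it so that $H(r_t)$ and $H(w_t)$ are evaluated by the server-side logic within the next round. Concretely, one step is spread over rounds as follows:
\begin{enumerate}[leftmargin=*,nosep]
\item gradients are pooled at the end of round $t$, giving $g_t$ with $b=m(K-2)\ge mK/3$;
\item at the start of round $t+1$, each worker evaluates $H(r_t)$, forms $w_t$, evaluates $H(w_t)$, and forms $r_{t+1}$, $z_t$ and $y_{t+1}$;
\item the remaining $K-2$ calls of that round go to gradients at $y_{t+1}$.
\end{enumerate}
This gives $T=N-1\ge N/2$ steps when $N\ge2$; the case $N=1$ is covered by the initial point up to constants. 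Plugging into~\eqref{interp:main} yields
\[
L_f/N^2+L_H/N+L_{\max}/(mKN)+\sigma_\star/(R_0\sqrt{mKN}).
\]

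The main obstacle is precisely this pipelining. Every worker must compute identical $H$-values deterministically, so that all workers hold the same $r_{t+1}$ and $y_{t+1}$ without an extra communication. Each step must also charge all of its calls to the per-round budget $K$. The case $K\ge3$ is exactly what leaves room for two $H$-calls plus at least one gradient call per worker per round. All bounds depend only on known parameters, so selecting the minimum is admissible.
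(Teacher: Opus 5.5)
Your proposal is correct and is the paper's argument. It uses the same three candidates, chosen by their parameter-based bounds: the initial point, one worker running~\eqref{interp:algorithm} with $b=1$ and $T=\lfloor KN/3\rfloor$, and a common-query schedule with batch $m(K-2)$ in which every worker evaluates $H$ itself. For the initial point, the paper checks $\CompGap(x_0)\le(L_f/2+L_H)R_0^2$ directly from $\mathcal M(x_0,u)=\phi(x_0)-\phi(u)+\langle H(u)-H(x_\star),x_0-u\rangle$ and monotonicity, rather than citing it.

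The only difference is the pipelining of the common-query schedule. The paper evaluates $H(w_{t-1})$ and $H(r_t)$ in round $t$ alongside the gradients at $y_t$, since $H(r_t)$ does not need $g_t$. This gives $T=N$ steps instead of your $T=N-1$, which only changes the constants.
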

\begin{proof}
\emph{Initial point.} Let $\widetilde H(u)=H(u)-H(x_\star)$. Since
$\nabla f(x_\star)+H(x_\star)=0$,
$\mathcal M(x_0,u)=\phi(x_0)-\phi(u)+\langle\widetilde H(u),x_0-u\rangle$.
For $\|u-x_\star\|\le R_0$, monotonicity gives
$\langle\widetilde H(u),x_\star-u\rangle\le0$, hence
$\langle\widetilde H(u),x_0-u\rangle\le\langle\widetilde H(u),x_0-x_\star\rangle
\le L_HR_0^2$, and $\phi(x_0)\le L_fR_0^2/2$, $\phi(u)\ge0$. Thus
$\CompGap(x_0)\le(L_f/2+L_H)R_0^2$.

\emph{Serial schedule.} One worker runs~\eqref{interp:algorithm} with $b=1$
and $T=\lfloor KN/3\rfloor$. Each step uses one gradient and at most two
evaluations of $H$, so the $3T\le KN$ calls can be split into rounds of
$K$ calls, the worker keeping its state across round boundaries. If
$KN\ge3$, then $T\ge KN/6$ and Theorem~\ref{thm:interp-main} gives the
second entry of~\eqref{eq:comm-upper-main}. If $KN<3$, the initial point
gives $(L_f/2+L_H)R_0^2$, which is at most twice the second entry.

\emph{Synchronized schedule.} Let $K\ge3$, $T=N$ and $b=m(K-2)\ge mK/3$.
At the start of round $t\ge2$ every worker knows
$r_{t-1},w_{t-1},z_{t-1},g_{t-1}$. It evaluates $H(w_{t-1})$ and computes
$r_t=r_{t-1}-\eta_{t-1}(H(w_{t-1})+g_{t-1})$, then evaluates $H(r_t)$,
computes $y_t$, and spends its remaining $K-2$ calls on independent
gradients at $y_t$. Averaging these gradients over the workers at the
end of the round gives $g_t$ with an independent batch of size $m(K-2)$, after
which all workers compute $w_t$ and $z_t$. In round $1$ only $H(r_1)$ is
needed. This reproduces~\eqref{interp:algorithm} with at most $K$ calls
per worker and round, and Theorem~\ref{thm:interp-main} gives the third
bound, since $4L_{\max}/(bN)\le12L_{\max}/(mKN)$ and
$\sigma_\star/\sqrt{bN}\le\sqrt3\,\sigma_\star/\sqrt{mKN}$.

The algorithm returns the candidate with the smallest bound.
\end{proof}

\subsection{Interpolation does not imply SGC}
\label{interp:separation-subsection}

\begin{proposition}[Interpolation without strong growth]
\label{interp:separation}
There is a distribution of nonnegative convex $1$-smooth functions
$f_\xi$ on $\mathbb R$, all minimized at $0$, whose mean $f=\E f_\xi$ has
the unique minimizer $0$, such that no finite $\rho$ satisfies
$\E|f_\xi'(x)|^2\le\rho|f'(x)|^2$ for all $x$.
\end{proposition}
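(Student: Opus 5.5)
We will build an explicit two-point (or countable) mixture of scaled Huber-type functions. The idea is that SGC fails when, near the common minimizer, the mean gradient is much smaller than the root-mean-square of the sample gradients. That happens when most mass sits on functions that are flat near $0$, while a rare sample with tiny probability carries large curvature near $0$. Since the sample curvature is capped by $1$, the rare samples must instead have their curvature concentrated at scales that shrink. The cleanest choice is a sequence of samples indexed by $k\ge1$ with probabilities $p_k$. Sample $k$ is
\[
 f_k(x)=\tfrac12\bigl(|x|-\epsilon_k\bigr)_+^2 .
\]
This function is nonnegative, convex, $1$-smooth, and minimized on $[-\epsilon_k,\epsilon_k]\ni0$. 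We also include one sample $f_0(x)=\tfrac12 x^2$ with positive probability, which makes $0$ the unique minimizer of the mean.

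The computation runs as follows. For $x>0$ we have $f_k'(x)=(x-\epsilon_k)_+$. Then
\[
 f'(x)=p_0x+\sum_k p_k(x-\epsilon_k)_+,\qquad
 \E|f_\xi'(x)|^2=p_0x^2+\sum_kp_k(x-\epsilon_k)_+^2 .
\]
This choice yields $\E f_\xi'^2\le|f'|\cdot x$ by Jensen-type bounds, so it will not break SGC; the ratio stays bounded. We therefore modify the construction so that, near the point $x$, the mean gradient is small while some sample gradient is large. A natural fix uses samples with opposite-sign slopes that cancel in the mean. Interpolation requires every sample to be minimized at $0$, however, so every $f_\xi'(x)$ has the sign of $x$ and no cancellation is possible. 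The only remaining mechanism is $\E g^2\gg(\E g)^2$, meaning that the gradient is concentrated on rare samples. For this we let $g_\xi(x)$ be large on an event of small probability $q$: $\E g^2/(\E g)^2\approx 1/q$ when $\E g=q\cdot G$.

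The concrete plan is to take $f_0(x)=\tfrac12x^2$ with weight $p_0$ and rare samples $f_k(x)=\tfrac12(|x|-\epsilon_k)_+^2$ with weights $p_k\to0$. At $x_k=2\epsilon_k$, sample $k$ contributes gradient $\epsilon_k$, while the mean is about $p_0x_k+\sum_{j:\epsilon_j<x_k}p_j(x_k-\epsilon_j)$. Making $p_0=0$ is not allowed if $0$ must be the unique minimizer. We can instead drop $f_0$ and use infinitely many $\epsilon_k\downarrow0$. The mean is then strictly positive for $x\ne0$ (since $\epsilon_k\to0$), which gives the unique minimizer $0$. Now choose $\epsilon_k=4^{-k}$ and $p_k\propto 2^{-k^2}$, decaying super-geometrically. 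At $x=x_k=2\epsilon_k$ the only contributing samples are $j\le k$. Their contribution to $f'$ is about $\sum_{j\le k}p_j x_k\approx p_1x_k$, dominated by early terms, so the ratio is $O(1)$ and this fails as well. Reversing the roles fixes it: make the rare, large-gradient samples the ones with small $\epsilon$, and choose $x_k$ just above $\epsilon_k$ where the mean is dominated by $p_k$, with $p_k$ increasing in $k$. That is, let the weights grow as $\epsilon$ shrinks, with $\sum p_k=1$ via $p_k$ tending to an accumulating mass on $\epsilon\to0$ samples. Then at $x$ slightly above $\epsilon_k$, the samples $j>k$ (with $\epsilon_j<x$) contribute $\sum_{j>k}p_j(x-\epsilon_j)$, while sample $k$ contributes $p_k(x-\epsilon_k)$.

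This is where the main obstacle lies. We must compare $\E g^2=\sum p_j(x-\epsilon_j)^2$ with $(\sum p_j(x-\epsilon_j))^2$. By Cauchy–Schwarz their ratio is at most $1/P(x)$, where $P(x)=\Pr\{\epsilon_\xi<x\}$. SGC therefore fails exactly when $P(x)\to0$ along some sequence while the gradient mass is concentrated. Accordingly, we will take a continuous law for $\epsilon_\xi$ on $(0,1]$ whose distribution function $P(x)$ decays very fast as $x\downarrow0$, for instance $P(x)=e^{-1/x}$. The mean then still has unique minimizer $0$, because $P(x)>0$ for all $x>0$. We will verify that $\E(x-\epsilon)_+^2/(\E(x-\epsilon)_+)^2\gtrsim c/P(x/2)$-type lower bounds hold. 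This uses the fact that, conditionally on $\epsilon<x$, the law concentrates near $x$, so $(x-\epsilon)$ is typically of order $x^2$ and rarely of order $x$. Making this tail computation precise, so that the ratio provably tends to infinity as $x\downarrow0$, is the step we expect to be the main obstacle.
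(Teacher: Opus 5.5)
Your final construction, $f_\epsilon(x)=\tfrac12(|x|-\epsilon)_+^2$ with a random width $\epsilon>0$ whose law charges every interval $(0,x)$, is correct and essentially the paper's. The paper takes $U$ uniform on $[0,1]$ and keeps the flat piece only on the positive side. On $(0,1)$ it computes $f'(x)=x^2/2$ and $\E f_U'(x)^2=x^3/3$, so the ratio is $4/(3x)$. However, your proposal never proves the one thing that must be proved: that $\E(x-\epsilon)_+^2/(\E(x-\epsilon)_+)^2$ is unbounded as $x\downarrow0$. The reason you give for this being the ``main obstacle'' rests on a reversed inequality. Put $g=(x-\epsilon)_+$, which vanishes off $A=\{\epsilon<x\}$. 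Cauchy--Schwarz then gives
\[
 (\E g)^2=\bigl(\E[g\mathbf 1_A]\bigr)^2\le\E[g^2]\,\Prob(A).
\]
So the ratio is at \emph{least} $1/P(x)$, not at most. Since $P(x)\downarrow\Prob\{\epsilon\le0\}=0$ as $x\downarrow0$, this closes the argument in one line for any such law, the uniform law included. No fast decay like $e^{-1/x}$ and no tail computation is needed. Note also that $e^{-1/x}$ is not a distribution function on $(0,1]$ unless you renormalize it.

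The same slip cost you the discrete attempt. With $\epsilon_k=4^{-k}$ decreasing, the samples active at $x_k=2\epsilon_k$ are those with $j\ge k$, not $j\le k$. By the bound above the ratio at $x_k$ is at least $1/\sum_{j\ge k}p_j\to\infty$, so that construction already worked. The ``reversed roles'' fix you then propose is not viable. Positive weights increasing in $k$ cannot sum to one. Putting mass at $\epsilon=0$ restores SGC: $\E g^2\le x\,\E g$ and $\E g\ge\Prob\{\epsilon=0\}\,x$ bound the ratio by $1/\Prob\{\epsilon=0\}$. To be complete, also record that $f'(x)=\operatorname{sign}(x)\int_0^{|x|}\Prob\{\epsilon<u\}\,du$. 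Differentiating under $\E$ is justified by $|f_\epsilon'(y)|\le|y|$ and dominated convergence. This derivative is nonzero for $x\ne0$, which gives the unique minimizer of the mean.
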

\begin{proof}
Let $U$ be uniform on $[0,1]$ and
\[
 f_U(x)=\tfrac12\min\{x,0\}^2+\tfrac12\max\{x-U,0\}^2,\qquad
 f_U'(x)=\min\{x,0\}+\max\{x-U,0\}.
\]
The derivative is continuous, nondecreasing and piecewise affine with
slopes $0$ and $1$, so every $f_U$ is convex, $1$-smooth, nonnegative,
and zero at $0$. For $0<x<1$,
\begin{equation*}
 f'(x)=\E f_U'(x)=\int_0^x(x-u)\,du=\frac{x^2}2,\qquad
 \E f_U'(x)^2=\int_0^x(x-u)^2\,du=\frac{x^3}3,
\end{equation*}
while $f'(x)=x$ for $x<0$ and $f'(x)=x-1/2$ for $x\ge1$. Hence $0$ is the
unique zero of $f'$. On $(0,1)$,
$\E f_U'(x)^2/f'(x)^2=4/(3x)\to\infty$ as $x\downarrow0$.
\end{proof}
In the composite model with $H=0$, these samples satisfy
Assumption~\ref{ass:interp-main} with $\sigma_\star=0$.

\section{Composite VI: pooled gradients and local solves}
\label{comp-local:section}
\label{app:composite-local-refinement}

This appendix proves the pooled entry of
Theorem~\ref{thm:composite-local-main}. The idea is to evaluate stochastic
gradients in one round with all workers and then, during the next rounds,
let one worker solve the resulting proximal subproblem for the exact
component $H$. Assumption~\ref{ass:interp-main} holds throughout, and we use
$D_f$, $\phi$ and $\mathcal M$ from Appendix~\ref{interp:app-proofs}. Let
\[
 \mathcal X_{\rm loc}=B(x_0,2R_0).
\]
Then $B(x_\star,R_0)\subseteq\mathcal X_{\rm loc}$,
$\mathcal X_{\rm loc}$ has diameter $4R_0$, and
$\|v-x_\star\|\le3R_0$ for $v\in\mathcal X_{\rm loc}$. Let
$g_\star=\nabla f(x_\star)=-H(x_\star)$.

We use two facts about a VI on $\mathcal X_{\rm loc}$ with a $\mu$-strongly monotone,
$L_A$-Lipschitz operator $A$ and solution $w^\star$. First, projected
extragradient with step $\gamma=1/(4L_A)$,
$v=\Pi_{\mathcal X_{\rm loc}}(x-\gamma A(x))$,
$x^+=\Pi_{\mathcal X_{\rm loc}}(x-\gamma A(v))$,
satisfies
\begin{equation}\label{comp-local:peg}
 \|x^+-w^\star\|^2\le(1-\gamma\mu)\|x-w^\star\|^2 .
\end{equation}
Indeed, the standard analysis of projected extragradient gives
$\|x^+-w^\star\|^2\le\|x-w^\star\|^2-(1-\gamma^2L_A^2)\|x-v\|^2
-2\gamma\langle A(v),v-w^\star\rangle$, and
$\langle A(v),v-w^\star\rangle\ge\mu\|v-w^\star\|^2$ by strong monotonicity
and the VI condition at $w^\star$. Since
$1-\gamma^2L_A^2=15/16\ge2\gamma\mu$
and $\|x-w^\star\|^2\le2\|x-v\|^2+2\|v-w^\star\|^2$, the decrease is at least
$\gamma\mu\|x-w^\star\|^2$. Second, for every $w\in\mathcal X_{\rm loc}$,
\begin{equation}\label{comp-local:residual-distance}
 \max_{v\in\mathcal X_{\rm loc}}\langle A(w),w-v\rangle
 \le\Bigl(4L_AR_0+\sup_{\mathcal X_{\rm loc}}\|A\|\Bigr)\|w-w^\star\|,
\end{equation}
because
$\langle A(w),w-v\rangle=\langle A(w)-A(w^\star),w-v\rangle
+\langle A(w^\star),w-w^\star\rangle+\langle A(w^\star),w^\star-v\rangle$
and the last term is nonpositive.

\subsection{Accelerated steps with local solves}

Let
\begin{equation*}
 J_{\rm loc}=\Bigl\lceil\log\bigl(64N^2(32+6mK+14K)\bigr)\Bigr\rceil,
 \qquad D_{\rm loc}=1+\Bigl\lceil\frac{64J_{\rm loc}}K\Bigr\rceil .
\end{equation*}
Each outer step uses a window of $D_{\rm loc}$ rounds: one round for a
gradient batch of size $b=mK$ and $D_{\rm loc}-1$ rounds for the local
solve.

\begin{theorem}[Pooled gradients and local solves]
\label{comp-local:near}\label{thm:composite-local-resolvent}
If $N\ge D_{\rm loc}$, there is an algorithm of
Definition~\ref{def:ic-main} with output $\widehat x\in\mathcal X_{\rm loc}$ and
\begin{equation}\label{comp-local:near-bound}
 \begin{aligned}
 \E\CompGap(\widehat x)\le16R_0^2\Bigl[{}
 &\frac{32D_{\rm loc}^2L_f}{N^2}
 +\frac{16D_{\rm loc}L_{\max}}{mKN}\\
 &+\frac{64J_{\rm loc}L_H}{KN}
 +\frac{2\sqrt{2D_{\rm loc}}\,\sigma_\star}{R_0\sqrt{mKN}}\Bigr].
 \end{aligned}
\end{equation}
\end{theorem}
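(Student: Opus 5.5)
The plan is to run the accelerated mirror-prox recursion~\eqref{interp:algorithm} with $T=\lfloor N/D_{\rm loc}\rfloor\ge1$ outer steps and pooled gradient batches of size $b=mK$. The exact extragradient on $H$ is replaced by a proximal (resolvent) step that one worker solves inexactly during the remaining $D_{\rm loc}-1$ rounds of each window. Concretely, in window $t$ all workers query $\nabla f_\xi(y_t)$ in the first round and average to $g_t$. The two prox-type updates
\[
 w_t\approx\operatorname{argsol}_{w\in\mathcal X_{\rm loc}}\bigl\{w-r_t+\eta_t(H(w)+g_t)\bigr\},\qquad
 r_{t+1}\approx\Pi_{\mathcal X_{\rm loc}}\bigl(r_t-\eta_t(H(w_t)+g_t)\bigr)
\]
are then computed locally. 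Each is a VI on $\mathcal X_{\rm loc}$ with the $1$-strongly monotone, $(1+\eta_tL_H)$-Lipschitz operator $A(w)=w-r_t+\eta_t(H(w)+g_t)$. By~\eqref{comp-local:peg}, projected extragradient converges linearly with rate $1-1/(4(1+\eta_tL_H))$. With $\eta_t$ chosen so that $\eta_tL_H\lesssim K/J_{\rm loc}$, the $(D_{\rm loc}-1)K\ge64J_{\rm loc}$ local calls give $\gtrsim J_{\rm loc}/(1+\eta_t L_H)$ contractions. This makes the solve error polynomially small, of order $e^{-J_{\rm loc}}$ relative to the squared diameter $R_0^2$. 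The result is then broadcast.

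The key analytic step is to redo Steps 1--6 of the proof of Theorem~\ref{thm:interp-main} with two changes. First, the step size is now $\eta_t=t/\Gamma'$ with
\[
 \Gamma'\asymp L_f+\frac{L_HT}{K/J_{\rm loc}}+a_\phi T+\frac{\sigma_{\star,b}T^{3/2}}{R_0},
\]
because the resolvent removes the constraint $\eta_tL_H\le1/64$ that the explicit extragradient needed. Only the $\|d_t\|^2$ coefficient from $L_f\alpha_t\eta_t$ and the noise closure~\eqref{interp:closure} remain. For an exact resolvent, $\langle \eta_t(H(w_t)+g_t),w_t-u\rangle$ telescopes with no $L_H$ term. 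Second, the resolvent residual enters additively. By~\eqref{comp-local:residual-distance}, the inexactness $\|w_t-w_t^\star\|$ contributes at most $(4L_AR_0+\sup\|A\|)\|w_t-w_t^\star\|$ per step. This is bounded by $O((1+\eta_tL_H)R_0^2+\eta_t\|g_t\|R_0)$ times $e^{-J_{\rm loc}/2}$. The choice of $J_{\rm loc}$, with its $\log(64N^2(32+6mK+14K))$, is what makes the sum over $t\le T$ of these errors, weighted by $t/\Gamma'$, negligible against $\Gamma'R_0^2$.

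The variance control of Step 5 via Lemma~\ref{lem:samples:variance} carries over unchanged with $a_\phi=4L_{\max}/(mK)$. Substituting $T\ge N/(2D_{\rm loc})$ then gives the four terms in~\eqref{comp-local:near-bound}:
\begin{itemize}
\item $L_f D_{\rm loc}^2/N^2$ from $\Gamma'/T^2$;
\item $L_{\max}D_{\rm loc}/(mKN)$ from $a_\phi/T$;
\item $J_{\rm loc}L_H/(KN)$ from the $L_H$ part of $\Gamma'$, after $T D_{\rm loc}\ge N/2$ and $D_{\rm loc}\lesssim J_{\rm loc}/K$ cancel;
\item $\sigma_\star\sqrt{D_{\rm loc}/(mKN)}$ from the root-noise term.
\end{itemize}
Since all iterates are projections onto $\mathcal X_{\rm loc}$, the output lies there and the gap supremum over $B(x_\star,R_0)$ uses $\|x_0-u\|\le2R_0$.

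The main obstacle is the inexact-resolvent bookkeeping. $\|g_t\|$ is random and unbounded, so the error bound from~\eqref{comp-local:residual-distance} involves $\E\|g_t\|$. I would bound $\E\|g_t\|^2\le 2L_f^2\cdot 9R_0^2+2V_t$, using $\|\nabla f(y)-g_\star\|\le 3L_fR_0$ on $\mathcal X_{\rm loc}$, plus a bound on $\|H(x_\star)\|$. That last term is the delicate part. The explicit extragradient $w=\Pi(r-\eta(H(r)+g))$ is itself one step, so the local solver's error should instead be measured relative to $\|A(r_t)\|$, i.e.\ contraction started at $r_t$, with $\|w-w^\star\|\le\|r_t-w^\star\|\le 4R_0$. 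Linear convergence then gives error $\le4R_0e^{-cJ_{\rm loc}}$ independently of $g_t$. I would try this diameter-based bound first, keeping $\sup\|A\|$ in~\eqref{comp-local:residual-distance} by noting that the gap only needs the residual paired with the fixed comparison direction.
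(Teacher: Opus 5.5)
Your architecture matches the paper's: $T=\lfloor N/D_{\rm loc}\rfloor$ windows, one pooled batch of size $b=mK$ per window, and an inexact resolvent of $H$ computed by projected extragradient started at $r_t$. The solve has the pathwise, diameter-based error $\|w_t-w_t^\star\|\le4R_0e^{-J_{\rm loc}}$, and its residual enters the accelerated telescoping additively through \eqref{comp-local:residual-distance}.

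The genuine gap is the step you flag yourself: bounding $\sup_{\mathcal X_{\rm loc}}\|A_t\|$, which multiplies $\|w_t-w_t^\star\|$ in \eqref{comp-local:residual-distance}. Bounding $\|g_t\|$ and $\|H(w)\|$ separately cannot work, because $\|H(x_\star)\|=\|\nabla f(x_\star)\|$ is not controlled by any parameter of Assumption~\ref{ass:interp-main}. Your estimate $O((1+\eta_tL_H)R_0^2+\eta_t\|g_t\|R_0)$ tacitly assumes $\|H(w)\|\lesssim L_HR_0$. Your diameter-based fallback only controls the other factor, $\|w_t-w_t^\star\|$.

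The missing idea is the root identity $g_\star+H(x_\star)=0$, where $g_\star=\nabla f(x_\star)$. In the paper's normalization it gives
\[
 A_t(w)=\frac{w-r_t}{\eta_t}+(c_t-g_\star)+\bigl(H(w)-H(x_\star)\bigr),
\]
so the uncontrolled constant cancels between the pooled gradient and $H$. Each piece is then bounded by the problem parameters: $\|w-r_t\|\le4R_0$, $\|H(w)-H(x_\star)\|\le3L_HR_0$, and pathwise $\|c_t-g_\star\|\le3L_{\max}R_0+\bigl\|\frac1b\sum_j\nabla f_{\xi_j}(x_\star)-g_\star\bigr\|$. The last term has expectation at most $\sigma_\star/\sqrt b$.

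The paper uses this pathwise $L_{\max}$ bound rather than $\sqrt{V_t}$. That way $\E R_t\le\Gamma_{\rm loc}R_0^2/(16N^2)$ is available before the energy estimate, in which $\sum_tt\,\E R_t$ itself appears. This is also why $b=mK$ enters $J_{\rm loc}$ through the factor $32+6mK+14K$.

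There are two smaller points.

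First, with your constraint $\eta_tL_H\lesssim K/J_{\rm loc}$, the $L_H$ part of $\Gamma'$ is $L_HTJ_{\rm loc}/K$. Then $\Gamma'/T^2$ with $T\ge N/(2D_{\rm loc})$ gives $L_HJ_{\rm loc}D_{\rm loc}/(KN)$, a factor $D_{\rm loc}$ above \eqref{comp-local:near-bound}; nothing cancels. The inner solver has $B_{\rm loc}=K(D_{\rm loc}-1)\ge64J_{\rm loc}$ calls, not $K$. So the right constraint is $\eta_tL_H\lesssim B_{\rm loc}/J_{\rm loc}$, i.e.\ the paper's term $\vartheta_{\rm loc}L_HT$ with $\vartheta_{\rm loc}=32J_{\rm loc}/B_{\rm loc}$. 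After that, $\vartheta_{\rm loc}D_{\rm loc}\le64J_{\rm loc}/K$ gives the stated $L_H$ term.

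Second, drop the explicit correction update of $r_{t+1}$. The residual argument you invoke telescopes through $-\langle w_t-r_t,w_t-u\rangle=\frac12(\|r_t-u\|^2-\|w_t-u\|^2-\|w_t-r_t\|^2)$, which needs $r_{t+1}=w_t$, as in the paper. For an exact resolvent your update returns $w_t$ anyway.
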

\begin{proof}
\emph{Parameters.} Let $T=\lfloor N/D_{\rm loc}\rfloor\ge1$,
$B_{\rm loc}=K(D_{\rm loc}-1)$, so that
$64J_{\rm loc}\le B_{\rm loc}\le64J_{\rm loc}+K$, and
\begin{equation*}
 \vartheta_{\rm loc}=\frac{32J_{\rm loc}}{B_{\rm loc}}\le\frac12,\qquad
 \Gamma_{\rm loc}=8L_f+\frac{16L_{\max}T}b
 +\frac{2\sigma_\star T^{3/2}}{R_0\sqrt b}+\vartheta_{\rm loc}L_HT .
\end{equation*}
If $L_f=L_{\max}=L_H=0$, the operator $G$ is constant and has a root, so
$G\equiv0$ and we return $x_0$. Otherwise $\Gamma_{\rm loc}>0$.

\emph{The method.} Set $z_0=r_1=x_0$ and, for $t=1,\ldots,T$,
\begin{equation*}
 \alpha_t=\frac2{t+1},\quad\eta_t=\frac t{\Gamma_{\rm loc}},\quad
 y_t=(1-\alpha_t)z_{t-1}+\alpha_tr_t,\quad
 z_t=(1-\alpha_t)z_{t-1}+\alpha_tw_t,\quad r_{t+1}=w_t .
\end{equation*}
In the first round of window $t$, every worker makes $K$ independent gradient
calls at $y_t$, and one worker receives their average $c_t$. In the
remaining $D_{\rm loc}-1$ rounds this worker approximately solves the VI on
$\mathcal X_{\rm loc}$ with the operator
\begin{equation}\label{comp-local:inner-operator}
 A_t(w)=\frac{w-r_t}{\eta_t}+c_t+H(w),
\end{equation}
which is $(1/\eta_t)$-strongly monotone and $(1/\eta_t+L_H)$-Lipschitz, by
\[
 q_t=\Bigl\lceil8J_{\rm loc}
       \Bigl(1+\frac{L_Ht}{\Gamma_{\rm loc}}\Bigr)\Bigr\rceil
\]
steps of projected extragradient from $r_t$ with step
$1/(4(1/\eta_t+L_H))$. Its last point is $w_t$, broadcast at the end of the
window. All iterates lie in $\mathcal X_{\rm loc}$.

\emph{Budget.} Since
$L_Ht/\Gamma_{\rm loc}\le L_HT/\Gamma_{\rm loc}\le1/\vartheta_{\rm loc}$
and $16J_{\rm loc}/\vartheta_{\rm loc}=B_{\rm loc}/2$, the local solve uses
$2q_t\le16J_{\rm loc}+16J_{\rm loc}/\vartheta_{\rm loc}+2
\le B_{\rm loc}$ evaluations of $H$,
which fit into the $D_{\rm loc}-1$ rounds. Since $TD_{\rm loc}\le N$, the
whole method uses at most $N$ rounds and at most $K$ calls per worker and
round.

\emph{Gradient noise.} Let $\delta_t=c_t-\nabla f(y_t)$ and let
$\mathcal F_{t-1}$ be the history before the batch at $y_t$. By
Lemma~\ref{lem:composite:variance} with $D_f(x_\star,y_t)$,
\begin{equation}\label{comp-local:variance}
 \E[\delta_t\mid\mathcal F_{t-1}]=0,\qquad
 \E[\|\delta_t\|^2\mid\mathcal F_{t-1}]\le\frac{4L_{\max}}bD_f(x_\star,y_t)
 +\frac{2\sigma_\star^2}b .
\end{equation}

\emph{Accuracy of the local solve.} Let $w_t^\star$ be the solution of the
VI with $A_t$ on $\mathcal X_{\rm loc}$; it depends continuously on $c_t$.
By~\eqref{comp-local:peg}, with $\gamma\mu=1/(4(1+\eta_tL_H))$ and
$\eta_tL_H=L_Ht/\Gamma_{\rm loc}$,
\begin{equation*}
 \|w_t-w_t^\star\|\le4R_0\exp\Bigl(-\frac{q_t}{8(1+L_Ht/\Gamma_{\rm loc})}\Bigr)
 \le4R_0e^{-J_{\rm loc}}.
\end{equation*}
Let $R_t=\max_{v\in\mathcal X_{\rm loc}}\langle A_t(w_t),w_t-v\rangle\ge0$. Since
$g_\star+H(x_\star)=0$,
\[
 A_t(w)=\frac{w-r_t}{\eta_t}+(c_t-g_\star)
 +\bigl(H(w)-H(x_\star)\bigr),
\]
and pathwise
$\|c_t-g_\star\|
\le\|\frac1b\sum_j(\nabla f_{\xi_j}(y_t)-\nabla f_{\xi_j}(x_\star))\|
+\|\frac1b\sum_j\nabla f_{\xi_j}(x_\star)-g_\star\|$, where the first
term is
at most $3L_{\max}R_0$ and the second has expectation at most
$\sigma_\star/\sqrt b$. Hence
\[
 \E\sup_{w\in\mathcal X_{\rm loc}}\|A_t(w)\|
 \le\Bigl(\frac4{\eta_t}+3L_{\max}+3L_H\Bigr)R_0+\frac{\sigma_\star}{\sqrt b},
\]
and~\eqref{comp-local:residual-distance} with $L_A=1/\eta_t+L_H$ gives
\[
 \E R_t\le4R_0e^{-J_{\rm loc}}
 \Bigl[\Bigl(\frac8{\eta_t}+7L_H+3L_{\max}\Bigr)R_0
 +\frac{\sigma_\star}{\sqrt b}\Bigr]
 \le4\Gamma_{\rm loc}R_0^2e^{-J_{\rm loc}}
 \Bigl[8+\frac7{\vartheta_{\rm loc}T}
 +\frac{3b}{16T}+\frac1{2T^{3/2}}\Bigr],
\]
using $1/\eta_t=\Gamma_{\rm loc}/t\le \Gamma_{\rm loc}$,
$L_H\le \Gamma_{\rm loc}/(\vartheta_{\rm loc}T)$,
$L_{\max}\le \Gamma_{\rm loc}b/(16T)$ and
$\sigma_\star\le \Gamma_{\rm loc}R_0\sqrt b/(2T^{3/2})$. Since
$1/\vartheta_{\rm loc}=B_{\rm loc}/(32J_{\rm loc})\le2+K/32$,
the bracket is at most
$32+6b+14K$, and the
choice of $J_{\rm loc}$ gives
\begin{equation*}
 \E R_t\le\frac{\Gamma_{\rm loc}R_0^2}{16N^2}
 \le\frac{\Gamma_{\rm loc}R_0^2}{T^2}.
\end{equation*}

\emph{One outer step.} Let $h_t=w_t-r_t$ and $u\in\mathcal X_{\rm loc}$. As in
Step~1 of the proof of Theorem~\ref{thm:interp-main}, but keeping the
Bregman term from convexity,
\[
 \mathcal M(z_t,u)-(1-\alpha_t)\mathcal M(z_{t-1},u)
 \le\alpha_t\langle\nabla f(y_t)+H(w_t),w_t-u\rangle
 +\frac{L_f\alpha_t^2}2\|h_t\|^2-\alpha_tD_f(u,y_t).
\]
By~\eqref{comp-local:inner-operator},
$\nabla f(y_t)+H(w_t)=A_t(w_t)-h_t/\eta_t-\delta_t$. Here
$\langle A_t(w_t),w_t-u\rangle\le R_t$;
$-\langle h_t,w_t-u\rangle=\frac12(\|r_t-u\|^2-\|w_t-u\|^2-\|h_t\|^2)$ with
$w_t=r_{t+1}$; and
$-\langle\delta_t,w_t-u\rangle=\langle\delta_t,u-r_t\rangle-\langle\delta_t,h_t\rangle$
with $-\langle\delta_t,h_t\rangle\le2\eta_t\|\delta_t\|^2+\|h_t\|^2/(8\eta_t)$.
The coefficient of $\|h_t\|^2$ is
$\frac{\alpha_t}{\eta_t}(-\frac38+\frac{L_f\alpha_t\eta_t}2)\le-\frac{\alpha_t}{4\eta_t}$,
since $\alpha_t\eta_t\le2/\Gamma_{\rm loc}$ and
$L_f\le \Gamma_{\rm loc}/8$. Hence
\begin{align*}
 \mathcal M(z_t,u)\le{}&(1-\alpha_t)\mathcal M(z_{t-1},u)
 +\frac{\alpha_t}{2\eta_t}\bigl(\|r_t-u\|^2-\|r_{t+1}-u\|^2\bigr)
 -\frac{\alpha_t}{4\eta_t}\|h_t\|^2\notag\\
 &+2\alpha_t\eta_t\|\delta_t\|^2
 +\alpha_t\langle\delta_t,u-r_t\rangle
 -\alpha_tD_f(u,y_t)+\alpha_tR_t .
\end{align*}
With $\omega_t=2/(t(t+1))$, dividing by $\omega_t$ and summing as in Step~4
of the proof of Theorem~\ref{thm:interp-main},
\begin{align}\label{comp-local:telescope}
 \frac{T(T+1)}2\mathcal M(z_T,u)\le{}&\frac{\Gamma_{\rm loc}}2\|x_0-u\|^2
 +\sum_{t=1}^T\Bigl[-\frac{\Gamma_{\rm loc}}4\|h_t\|^2
 +\frac{2t^2}{\Gamma_{\rm loc}}\|\delta_t\|^2
 +t\langle\delta_t,u-r_t\rangle-tD_f(u,y_t)+tR_t\Bigr].
\end{align}

\emph{Noise energy.} Let
$\mathsf R_{\rm wt}=\sum_tt\,\E R_t\le \Gamma_{\rm loc}R_0^2$,
$\mathsf B_t=\E D_f(x_\star,y_t)$,
$V_t=\E\|\delta_t\|^2$ and
$\mathsf V_{\rm wt}=\sum_tt^2V_t$. By~\eqref{comp-local:variance},
$V_t\le A_\phi\mathsf B_t+\sigma_{\rm bat}^2$ with
$A_\phi=4L_{\max}/b$ and $\sigma_{\rm bat}^2=2\sigma_\star^2/b$, and the
parameters satisfy $\Gamma_{\rm loc}\ge4A_\phi T$ and
$\Gamma_{\rm loc}\ge\sqrt2\,\sigma_{\rm bat}T^{3/2}/R_0$.
Take $u=x_\star$ in~\eqref{comp-local:telescope}:
the left side is nonnegative and the inner products have zero mean, so
\[
 \sum_tt\mathsf B_t\le\frac{\Gamma_{\rm loc}}2R_0^2
 +\frac2{\Gamma_{\rm loc}}\sum_tt^2V_t+\mathsf R_{\rm wt}
 \le\frac{\Gamma_{\rm loc}}2R_0^2
 +\frac{2A_\phi T}{\Gamma_{\rm loc}}\sum_tt\mathsf B_t
 +\frac{2\sigma_{\rm bat}^2T^3}{\Gamma_{\rm loc}}+\mathsf R_{\rm wt},
\]
and $2A_\phi T/\Gamma_{\rm loc}\le1/2$ gives
$\sum_tt\mathsf B_t\le \Gamma_{\rm loc}R_0^2+2\mathsf R_{\rm wt}
+4\sigma_{\rm bat}^2T^3/\Gamma_{\rm loc}$.
Therefore
\[
 \mathsf V_{\rm wt}\le A_\phi T\sum_tt\mathsf B_t+\sigma_{\rm bat}^2T^3
 \le\frac{\Gamma_{\rm loc}^2R_0^2}4
 +\frac{\Gamma_{\rm loc}\mathsf R_{\rm wt}}2+2\sigma_{\rm bat}^2T^3
 \le\frac74\Gamma_{\rm loc}^2R_0^2 .
\]

\emph{The gap.} For $\|u-x_\star\|\le R_0$ we have $\|x_0-u\|\le2R_0$. Split
$\langle\delta_t,u-r_t\rangle=\langle\delta_t,u-x_\star\rangle
+\langle\delta_t,x_\star-r_t\rangle$, drop the nonpositive terms, take the
supremum over $u$ and then the expectation, and use
$\E\|\sum_tt\delta_t\|\le\sqrt{\mathsf V_{\rm wt}}$:
\[
 \frac{T(T+1)}2\E\CompGap(z_T)
 \le2\Gamma_{\rm loc}R_0^2
 +\frac{2\mathsf V_{\rm wt}}{\Gamma_{\rm loc}}
 +R_0\sqrt{\mathsf V_{\rm wt}}+\mathsf R_{\rm wt}
 \le8\Gamma_{\rm loc}R_0^2 .
\]
Thus $\E\CompGap(z_T)\le16\Gamma_{\rm loc}R_0^2/(T(T+1))$.
Finally $T\ge N/(2D_{\rm loc})$, $T+1\ge N/D_{\rm loc}$ and
$\vartheta_{\rm loc}D_{\rm loc}\le64J_{\rm loc}/K$; substituting $\Gamma_{\rm loc}$
gives~\eqref{comp-local:near-bound}.
\end{proof}

\begin{lemma}[A variant with a uniform variance bound]
\label{comp-local:uniform}
If $N\ge D_{\rm loc}$, there is an algorithm of
Definition~\ref{def:ic-main} with
\begin{equation*}
 \E\CompGap(\widehat x)=O\!\Biggl(R_0^2\Bigl[
 \frac{D_{\rm loc}^2L_f}{N^2}+\frac{J_{\rm loc}L_H}{KN}
 +\sqrt{\frac{D_{\rm loc}L_{\max}L_f}{mKN}}\Bigr]
 +\sigma_\star R_0\sqrt{\frac{D_{\rm loc}}{mKN}}\Biggr).
\end{equation*}
\end{lemma}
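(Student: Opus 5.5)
The plan is to reuse the pooled-gradient/local-solve scheme of Theorem~\ref{comp-local:near} verbatim, changing only the step-size parameter $\Gamma_{\rm loc}$ and the way the gradient noise is bounded. The windows, local projected-extragradient solves and budget accounting stay the same: one round with batch $b=mK$, then $D_{\rm loc}-1$ rounds solving $A_t$ to accuracy $\E R_t\le\Gamma_{\rm loc}R_0^2/T^2$. Instead of the relative bound~\eqref{comp-local:variance}, whose constant $4L_{\max}/b$ forces the term $L_{\max}T/b$ into $\Gamma_{\rm loc}$, I will use a \emph{uniform} variance bound on $\mathcal X_{\rm loc}$. By Lemma~\ref{lem:composite:variance} and $D_f(x_\star,y)\le\frac{L_f}{2}\|y-x_\star\|^2\le\frac92L_fR_0^2$ for $y\in\mathcal X_{\rm loc}$ (all $y_t$ lie there as convex combinations of points in $\mathcal X_{\rm loc}$), we get
\[
 \E[\|\delta_t\|^2\mid\mathcal F_{t-1}]\le\sigma_u^2:=\frac{18L_{\max}L_fR_0^2+2\sigma_\star^2}{b}.
\]

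With this, I set
\[
 \Gamma_{\rm loc}=8L_f+\frac{2\sigma_uT^{3/2}}{R_0}+\vartheta_{\rm loc}L_HT,
\]
so the noise-energy closure of the previous proof becomes trivial: $\mathsf V_{\rm wt}\le\sigma_u^2T^3\le\Gamma_{\rm loc}^2R_0^2/4$, with no fixed-point argument over $\sum_t t\mathsf B_t$. The inequality~\eqref{comp-local:telescope} still holds, since its derivation used only $L_f\le\Gamma_{\rm loc}/8$, $\alpha_t\eta_t\le2/\Gamma_{\rm loc}$ and the local accuracy. Taking the supremum over $\|u-x_\star\|\le R_0$ and expectations, as in the final step there, gives $\E\CompGap(z_T)=O(\Gamma_{\rm loc}R_0^2/T^2)$ with $T=\lfloor N/D_{\rm loc}\rfloor\ge N/(2D_{\rm loc})$.

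The remaining step is substitution. Three terms arise:
\begin{itemize}
\item $L_f/T^2$ gives $D_{\rm loc}^2L_f/N^2$;
\item $\vartheta_{\rm loc}L_H/T\le 64J_{\rm loc}L_H/(KN)$ up to constants, using $\vartheta_{\rm loc}D_{\rm loc}\le64J_{\rm loc}/K$;
\item $\sigma_u/(R_0\sqrt T)$ gives $\sqrt{D_{\rm loc}}\bigl(\sqrt{L_{\max}L_f/(mKN)}+\sigma_\star/(R_0\sqrt{mKN})\bigr)$, using $b=mK$ and $\sqrt{x+y}\le\sqrt x+\sqrt y$.
\end{itemize}
Since $D_{\rm loc}\ge1$, $\sqrt{D_{\rm loc}}\le D_{\rm loc}$ is not needed; the stated $\sqrt{D_{\rm loc}}$ factor appears directly.

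The main obstacle is re-checking the local-solve accuracy bound for $\E R_t$ with the new $\Gamma_{\rm loc}$. There, $L_{\max}$ and $\sigma_\star$ were absorbed into $\Gamma_{\rm loc}$ through its $L_{\max}T/b$ and $\sigma_\star$ terms, and the first of these is now absent. I would bound $\sup_{\mathcal X_{\rm loc}}\|A_t\|$ crudely by $(4/\eta_t+3L_H+3L_{\max})R_0+\sigma_\star/\sqrt b$ and let the exponential factor $e^{-J_{\rm loc}}$ absorb it. If $L_{\max}/\Gamma_{\rm loc}$ is too large for the current $J_{\rm loc}$, I would enlarge $J_{\rm loc}$ by an additive $\log(2+L_{\max}/L_f)$-type term, or alternatively add a harmless $L_{\max}T/b$-independent safeguard to $\Gamma_{\rm loc}$. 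Either change keeps the stated rate up to the logarithmic factors hidden in $J_{\rm loc}$ and $D_{\rm loc}$.
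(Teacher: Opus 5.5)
\textbf{Gap: the local-solve accuracy bound is left open, and neither proposed repair proves the lemma as stated.} The rest of your architecture matches the paper: the same windows, the uniform bound $s_0^2=(18L_{\max}L_fR_0^2+2\sigma_\star^2)/b$, the new $\Gamma_{\rm loc}=8L_f+2s_0T^{3/2}/R_0+\vartheta_{\rm loc}L_HT$, the direct estimate $\mathsf V_{\rm wt}\le s_0^2T^3\le\Gamma_{\rm loc}^2R_0^2/4$, and the final substitution. The problem is the step you flagged as the main obstacle.

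If you bound $\|c_t-g_\star\|$ pathwise through the sample gradients, a term $3L_{\max}R_0$ enters $\sup_{\mathcal X_{\rm loc}}\|A_t\|$. With the new $\Gamma_{\rm loc}$, the ratio $L_{\max}/\Gamma_{\rm loc}$ is not bounded by any function of $m,K,N$. For instance, with $L_H=\sigma_\star=0$ it grows without bound as $L_{\max}/L_f\to\infty$ with $m,K,N$ fixed. So $e^{-J_{\rm loc}}$, with $J_{\rm loc}$ logarithmic only in $N,m,K$, cannot deliver $\E R_t\le\Gamma_{\rm loc}R_0^2/T^2$.

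Your two repairs both fail:
\begin{itemize}
\item Enlarging $J_{\rm loc}$ by $\log(2+L_{\max}/L_f)$ changes $J_{\rm loc}$, $D_{\rm loc}$ and the hypothesis $N\ge D_{\rm loc}$. It adds a dependence on $L_{\max}/L_f$ that the statement does not contain, which is undefined when $L_f=0<L_{\max}$. It would also break the later use in Theorem~\ref{thm:composite-local-main}, which hides only powers of $\log(2mKN)$ and imposes no condition on $L_{\max}/L_f$.
\item Putting an $L_{\max}T/b$-type term back into $\Gamma_{\rm loc}$ reintroduces $D_{\rm loc}L_{\max}/(mKN)$. Avoiding that term is the whole point of this lemma, which is the better bound exactly when $L_{\max}/(mKN)>L_f$.
\end{itemize}

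\textbf{The missing idea is to route the bound through the mean gradient rather than the samples.} Write $c_t-g_\star=\delta_t+(\nabla f(y_t)-\nabla f(x_\star))$. Then $\|c_t-g_\star\|\le\|\delta_t\|+3L_fR_0$ on $\mathcal X_{\rm loc}$, and $\E\|\delta_t\|\le s_0$ by Jensen and the uniform variance bound. This gives $\E\sup_{\mathcal X_{\rm loc}}\|A_t\|\le(4/\eta_t+3L_f+3L_H)R_0+s_0$.

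Now every term in the bracket multiplying $4\Gamma_{\rm loc}R_0^2e^{-J_{\rm loc}}$ is controlled by $\Gamma_{\rm loc}$ itself: $L_f\le\Gamma_{\rm loc}/8$, $L_H\le\Gamma_{\rm loc}/(\vartheta_{\rm loc}T)$, and $s_0\le\Gamma_{\rm loc}R_0/(2T^{3/2})$. The bracket is therefore at most $32+6mK+14K$, and the original $J_{\rm loc}$ already gives $\E R_t\le\Gamma_{\rm loc}R_0^2/(16N^2)\le\Gamma_{\rm loc}R_0^2/T^2$. Hence $\mathsf R_{\rm wt}\le\Gamma_{\rm loc}R_0^2$, and the rest of your argument goes through unchanged.

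Separately, you should treat the degenerate case $L_f=L_H=0$, where $\Gamma_{\rm loc}$ can vanish. There $G\equiv0$, and one simply returns $x_0$.
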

\begin{proof}
On $\mathcal X_{\rm loc}$,
$D_f(x_\star,y)\le\frac{L_f}2\|y-x_\star\|^2\le\frac92L_fR_0^2$,
so by~\eqref{comp-local:variance} the batch variance is at most
$s_0^2=(18L_{\max}L_fR_0^2+2\sigma_\star^2)/b$. Use the method of
Theorem~\ref{comp-local:near} with
$\Gamma_{\rm loc}=8L_f+2s_0T^{3/2}/R_0+\vartheta_{\rm loc}L_HT$.
If $L_f=L_H=0$, the operator $G$ is
constant with a root, hence zero, and we return $x_0$. Otherwise the
budget calculation is unchanged. Now
$\|c_t-g_\star\|\le\|c_t-\nabla f(y_t)\|+3L_fR_0$, so
\[
 \E R_t\le4\Gamma_{\rm loc}R_0^2e^{-J_{\rm loc}}
 \Bigl[8+\frac{3L_f}{\Gamma_{\rm loc}}
 +\frac{7L_H}{\Gamma_{\rm loc}}+\frac{s_0}{\Gamma_{\rm loc}R_0}\Bigr]
 \le4\Gamma_{\rm loc}R_0^2e^{-J_{\rm loc}}(32+6mK+14K),
\]
because $L_f/\Gamma_{\rm loc}\le1/8$,
$L_H/\Gamma_{\rm loc}\le1/(\vartheta_{\rm loc}T)$ and
$s_0/(\Gamma_{\rm loc}R_0)\le1/(2T^{3/2})$; hence again
$\mathsf R_{\rm wt}\le \Gamma_{\rm loc}R_0^2$. The uniform variance bound
gives directly
$\mathsf V_{\rm wt}\le s_0^2T^3\le \Gamma_{\rm loc}^2R_0^2/4$, and the last
step of the proof of Theorem~\ref{comp-local:near} gives
$\frac{T(T+1)}2\E\CompGap(z_T)
\le4\Gamma_{\rm loc}R_0^2$. Substituting $\Gamma_{\rm loc}$ and the
bounds on $T$, $D_{\rm loc}$ and $\vartheta_{\rm loc}$ proves the claim.
\end{proof}

\subsection{One pooled gradient and one local solve}
\label{comp-pool:section}

For budgets with $N<D_{\rm loc}$ we use a simpler method.

\begin{lemma}[One pooled gradient]\label{comp-pool:theorem}
For every $m,K\ge1$ and $N\ge2$ there is an algorithm of
Definition~\ref{def:ic-main} with
\begin{equation*}
 \E\CompGap(\widehat x)\le CR_0^2
 \min\Bigl\{L_f+L_H,\ L_f+\frac{L_H}{KN}
 +\sqrt{\frac{L_{\max}L_f}{mKN}}+\frac{\sigma_\star}{R_0\sqrt{mKN}}\Bigr\},
\end{equation*}
where $C$ is a universal constant.
\end{lemma}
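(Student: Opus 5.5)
The algorithm returns $x_0$ when the first entry is smaller; by the initial-point computation in Proposition~\ref{prop:composite-schedules-main}, $\CompGap(x_0)\le(L_f/2+L_H)R_0^2$. It therefore suffices to build a method attaining the second entry. The plan is a single iteration of the scheme in Theorem~\ref{comp-local:near}, with $T=1$ and the window stretched to all $N$ rounds. In round $1$ every worker makes $K$ independent gradient calls at $y_1=x_0$, giving a pooled average $c$ of $b=mK$ samples. In the remaining $N-1\ge1$ rounds a single worker approximately solves the VI on $\mathcal X_{\rm loc}=B(x_0,2R_0)$ with operator $A(w)=(w-x_0)/\eta+c+H(w)$, using projected extragradient with step $1/(4(1/\eta+L_H))$ and $q=\lfloor K(N-1)/2\rfloor\ge KN/4$ steps. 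It outputs $w=z_1$.

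The one-step estimate from the proof of Theorem~\ref{comp-local:near} (with $\alpha_1=1$ and $r_1=x_0$) gives, for every $u\in B(x_\star,R_0)$,
\[
 \mathcal M(w,u)\le\frac1{2\eta}\|x_0-u\|^2-\frac1{4\eta}\|w-x_0\|^2+2\eta\|\delta\|^2+\langle\delta,u-x_0\rangle+R,
\]
provided $L_f\eta\le1/4$. Here $\delta=c-\nabla f(x_0)$ and $R=\max_{v}\langle A(w),w-v\rangle$. I will use the uniform variance bound of Lemma~\ref{comp-local:uniform}, not the relative one: on $\mathcal X_{\rm loc}$, $\E\|\delta\|^2\le s_0^2=(18L_{\max}L_fR_0^2+2\sigma_\star^2)/(mK)$. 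The supremum of $\langle\delta,u-x_0\rangle$ over the test ball is at most $2R_0\|\delta\|$, which contributes $O(R_0s_0)$. The step $1/\eta=\Gamma:=8L_f+s_0/R_0+\Gamma_H$ balances $R_0^2/\eta$ against $\eta s_0^2$. This yields $O\bigl(L_fR_0^2+R_0^2\sqrt{L_{\max}L_f/(mKN)}\cdot\sqrt N+\sigma_\star R_0/\sqrt{mK}\bigr)$ together with an $H$-dependent term.

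This bound exposes the main obstacle. The sampling terms carry $mK$ rather than $mKN$, because only one round is spent on gradients. To fix this, the plan changes the design: gradient calls are spread over $\lceil N/2\rceil$ rounds at the fixed point $x_0$, so $b\ge mKN/2$, and the local solve gets the other $\lfloor N/2\rfloor\ge1$ rounds. Since $y_1=x_0$ is known from the start, all gradient rounds can be pooled before the solve, which gives $s_0^2=O((L_{\max}L_fR_0^2+\sigma_\star^2)/(mKN))$.

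The second issue is the $H$ term, and I expect it to be the delicate step. The inexactness $R$ must be $O(L_HR_0^2/(KN))$ rather than merely decaying. Here I would \emph{not} pick $1/\eta$ proportional to $L_H$. Instead I would take $1/\eta=\Gamma=8L_f+s_0/R_0+L_H/(KN)$. Then $\eta L_H\le KN$, and the contraction factor $\gamma\mu=1/(4(1+\eta L_H))\gtrsim1/(KN)$ per step gives only constant-factor decay over $q\asymp KN$ steps. That is not enough, so the fallback is to let the solve itself do the work. Projected extragradient on $A$ is the regularized (proximal) problem, and its Minty-gap guarantee after $q$ steps is $O((1/\eta+L_H)R_0^2/q)$ by the standard ergodic bound (Theorem~\ref{nn:trajectory} with $\sigma=0$). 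With $q\asymp KN$ this gives $R=O(\Gamma R_0^2/(KN)+L_HR_0^2/(KN))$. I would therefore use the averaged extragradient iterate rather than the last point, and bound $\langle A(w),w-v\rangle$ through monotonicity of $A$ plus the strong-monotonicity term.

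Summing the three contributions gives
\[
 \Gamma R_0^2+\frac{L_HR_0^2}{KN}=O\Bigl(R_0^2\Bigl[L_f+\frac{L_H}{KN}+\sqrt{\frac{L_{\max}L_f}{mKN}}\Bigr]+\frac{\sigma_\star R_0}{\sqrt{mKN}}\Bigr).
\]
The output lies in $\mathcal X_{\rm loc}$ and the expectation bounds are routine, so comparing with $x_0$ finishes the argument.
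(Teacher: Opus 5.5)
The algorithmic design is right: pool about $mKN/2$ gradients at the known point $x_0$, then let one worker solve a VI involving $H$ locally and output an averaged iterate. The final step, however, does not go through as written.

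\textbf{The gap.} The one-step inequality you import from the proof of Theorem~\ref{comp-local:near} contains $R=\max_v\langle A(w),w-v\rangle$. This is the \emph{strong} residual of the subproblem, with $A$ evaluated at the output. It arises because Step~1 of that proof replaces $\langle H(u),w-u\rangle$ by $\langle H(w),w-u\rangle$ via monotonicity.

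The ergodic extragradient guarantee of Theorem~\ref{nn:trajectory} controls only the Minty gap $\max_v\langle A(v),\bar w-v\rangle$. Monotonicity gives Minty gap $\le$ strong residual, not the converse, and neither tool you name reverses it:
\begin{itemize}
\item In $\langle A(\bar w),\bar w-v\rangle=\langle A(v),\bar w-v\rangle+\langle A(\bar w)-A(v),\bar w-v\rangle$, the second term is only bounded by $(1/\eta+L_H)\|\bar w-v\|^2$, which is of order $L_HR_0^2$.
\item Strong monotonicity converts the Minty gap into $\|\bar w-w^\star\|^2\le4\eta\cdot(\text{Minty gap})=O\bigl((1+\eta L_H)R_0^2/q\bigr)$. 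This is $O(R_0^2)$ when $\eta L_H\asymp KN\asymp q$, and then \eqref{comp-local:residual-distance} again gives only $R=O(L_HR_0^2)$.
\end{itemize}
So the $L_H/(KN)$ term is not established. Forcing the contraction route to work would need $1/\eta\gtrsim L_H\log(mKN)/(KN)$. That is why Theorem~\ref{comp-local:near} carries the factor $J_{\rm loc}$, and it is incompatible with a universal constant $C$.

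\textbf{The repair.} Bypass $R$ entirely. This is possible because you take only one outer step and the target bound tolerates an additive $L_fR_0^2$. For $w,u\in\mathcal X_{\rm loc}$, smoothness at $x_0$ and convexity give $f(w)-f(u)\le\langle\nabla f(x_0),w-u\rangle+2L_fR_0^2$. With $c$ the pooled gradient, and keeping $H$ at the comparator,
\[
 \mathcal M(w,u)\le\langle c+H(u),w-u\rangle+4R_0\|c-\nabla f(x_0)\|+2L_fR_0^2 .
\]
The first term is now a genuine Minty gap. Run projected mirror-prox for the monotone $L_H$-Lipschitz operator $H+c$ on $\mathcal X_{\rm loc}$, with step $1/(2L_H)$ and about $KN/8$ steps. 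Its averaged point satisfies
\[
 \langle H(u)+c,w-u\rangle\le L_H\|x_0-u\|^2/T=O\bigl(L_HR_0^2/(KN)\bigr).
\]
For $L_H=0$, simply minimize the resulting linear function over $\mathcal X_{\rm loc}$.

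The proximal term, strong monotonicity, and the tuning of $\eta$ then become unnecessary. If you keep them, the extra term $\eta^{-1}\langle u-x_0,u-w\rangle\le8\Gamma R_0^2$ is still admissible. Combined with your pooled variance bound at $x_0$, this is exactly the paper's proof.

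Finally, you also need to handle $KN<8$, where the local solve may have zero steps. There the initial point suffices, since $(L_f/2+L_H)R_0^2\le8R_0^2(L_f+L_H/(KN))$.
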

\begin{proof}
\emph{Linearization at $x_0$.} Let $g_{\rm init}=\nabla f(x_0)$. For
$z,u\in\mathcal X_{\rm loc}$, smoothness and convexity give
$f(z)\le f(x_0)+\langle g_{\rm init},z-x_0\rangle+2L_fR_0^2$ and
$f(u)\ge f(x_0)+\langle g_{\rm init},u-x_0\rangle$, hence
\begin{equation}\label{comp-pool:transfer}
 f(z)-f(u)\le\langle g_{\rm init},z-u\rangle+2L_fR_0^2 .
\end{equation}

\emph{The initial point and small budgets.} The initial point gives
$(L_f/2+L_H)R_0^2$ (Proposition~\ref{prop:composite-schedules-main}). If
$KN<8$, this is at most $8R_0^2(L_f+L_H/(KN))$.

\emph{Pooled gradient.} Let $KN\ge8$. During the first $n_g=\lfloor N/2\rfloor\ge N/3$
rounds all workers sample gradients at $x_0$; denote the average formed
at the end by $g_{\rm pool}$ and set
$Q_{\rm pool}=mKn_g\ge mKN/3$. By
Lemma~\ref{lem:composite:variance} and $D_f(x_\star,x_0)\le L_fR_0^2/2$,
\begin{equation}\label{comp-pool:mean-error}
 \E\|g_{\rm pool}-g_{\rm init}\|
 \le\sqrt{\frac{2L_{\max}L_fR_0^2+2\sigma_\star^2}{Q_{\rm pool}}}.
\end{equation}

\emph{Local solve.} If $L_H>0$, during the remaining $n_h=\lceil N/2\rceil$
rounds one worker runs projected mirror-prox~\citep{nemirovski2004} for
the operator $H+g_{\rm pool}$ on $\mathcal X_{\rm loc}$ from $x_0$,
with step $1/(2L_H)$ and
$T=\lfloor Kn_h/2\rfloor\ge KN/8$ steps of two evaluations of $H$ each.
Its averaged point $z$ satisfies, for all $u\in\mathcal X_{\rm loc}$,
\[
 \langle H(u)+g_{\rm pool},z-u\rangle
 \le\frac{L_H\|x_0-u\|^2}T\le\frac{32L_HR_0^2}{KN}.
\]
If $L_H=0$, one evaluation gives the constant $H\equiv h$, and $z$ is chosen
to minimize $\langle h+g_{\rm pool},z\rangle$ over $\mathcal X_{\rm loc}$,
so that the same
inequality holds with right side $0$.

\emph{The gap.} For $\|u-x_\star\|\le R_0$, both $u$ and $z$ lie in
$\mathcal X_{\rm loc}$ and $\|z-u\|\le4R_0$. By~\eqref{comp-pool:transfer},
\[
 \mathcal M(z,u)\le\langle H(u)+g_{\rm pool},z-u\rangle
 +\langle g_{\rm init}-g_{\rm pool},z-u\rangle+2L_fR_0^2
 \le\frac{32L_HR_0^2}{KN}
 +4R_0\|g_{\rm init}-g_{\rm pool}\|+2L_fR_0^2 .
\]
The right side does not depend on $u$. Taking the expectation and
using~\eqref{comp-pool:mean-error} with $Q_{\rm pool}\ge mKN/3$,
\[
 \E\CompGap(z)\le2L_fR_0^2+\frac{32L_HR_0^2}{KN}
 +4\sqrt6\,R_0^2\sqrt{\frac{L_{\max}L_f}{mKN}}
 +\frac{4\sqrt6\,\sigma_\star R_0}{\sqrt{mKN}} .
\]
The algorithm returns the better of $z$ and $x_0$.
\end{proof}

\subsection{Proof of the pooled entry}
\label{comp-pool:all-budgets}

\begin{proof}[Proof of Theorem~\ref{thm:composite-local-main}]
The single-worker entry and the cap $L_f+L_H$ are given by
Proposition~\ref{prop:composite-schedules-main}. For the pooled entry let
$Q_{\rm tot}=mKN$. Then $J_{\rm loc}=O(\log(2Q_{\rm tot}))$ and
$D_{\rm loc}=O(1+\log(2Q_{\rm tot})/K)$; in particular,
$D_{\rm loc}=O(\log(2Q_{\rm tot}))$.

If $N\ge D_{\rm loc}$, the better of Theorem~\ref{comp-local:near} and
Lemma~\ref{comp-local:uniform} gives
\[
 R_0^2\Bigl[\frac{L_f}{N^2}+\frac{L_H}{KN}
 +\min\Bigl\{\frac{L_{\max}}{Q_{\rm tot}},
       \sqrt{\frac{L_{\max}L_f}{Q_{\rm tot}}}\Bigr\}\Bigr]
 +\frac{\sigma_\star R_0}{\sqrt{Q_{\rm tot}}}
\]
up to powers of $\log(2Q_{\rm tot})$.

If $N<D_{\rm loc}$, then $N^2=O(\log^2(2Q_{\rm tot}))$; consequently,
the $L_f$ fallback is
$O(\log^2(2Q_{\rm tot})\,L_f/N^2)$. Moreover
$\sqrt{L_{\max}L_f/Q_{\rm tot}}
\le L_f+\min\{L_{\max}/Q_{\rm tot},
\sqrt{L_{\max}L_f/Q_{\rm tot}}\}$: if
$L_{\max}/Q_{\rm tot}\le L_f$, the square root is at most $L_f$,
and otherwise it is
the minimum. Hence Lemma~\ref{comp-pool:theorem} gives the same expression
up to powers of $\log(2Q_{\rm tot})$. This holds for every $N\ge2$ and
without any
condition on $L_{\max}/L_f$.
\end{proof}

\section{Composite VI: lower bounds}
\label{app:composite-lower}

This appendix proves the general lower bound for the composite risk
$\mathfrak R_{\rm comp}(\nu)$ of Section~\ref{sec:acceleration-main}
(Theorem~\ref{thm:composite-lower-main}) and the matching statement of
Corollary~\ref{cor:composite-matching-main}. The bound has five terms, each
given by a separate family of instances:
\begin{itemize}[leftmargin=*,nosep]
\item exact quadratics and exact skew operators, which need many
sequential calls (Section~\ref{interp:affine-depth-input});
\item interpolating quadratics with random masks, the composite analogue
of the SGC lower bound (Section~\ref{interp:communication-lower-subsection});
\item one-dimensional Gaussian instances for the root noise
(Section~\ref{app:composite-root-noise});
\item an instance in which the samples have large smoothness and the
exact component $H$ prevents averaging out the sampling error
(Section~\ref{comp-proj:section}).
\end{itemize}
In all of them $x_0=0$, the instance distribution does not depend on the
algorithm, and queries and outputs are arbitrary.

\subsection{Exact quadratics and skew operators}
\label{interp:affine-depth-input}

Let $m,B_0\ge1$ be integers and $\lambda_0,R_0>0$.
Corollary~\ref{cor:dimref:depth}, applied with $L=\lambda_0$, gives a fixed
distribution of symmetric invertible matrices $M_{\rm quad}$ with
$\|M_{\rm quad}\|_{\rm op}=\lambda_0$ and vectors $b_{\rm quad}$ with
$\|M_{\rm quad}^{-1}b_{\rm quad}\|=R_0$ such that
every algorithm with $B_0$ batches of at most $m$ simultaneous queries to
the skew operator
$G_{\rm quad}(u,v)=(M_{\rm quad}v+b_{\rm quad},-M_{\rm quad}u)$ satisfies
\begin{equation}
 R_0\,\E\sqrt{\|M_{\rm quad}\widehat v+b_{\rm quad}\|^2
       +\|M_{\rm quad}\widehat u\|^2}
 \ge\frac{\lambda_0R_0^2}{6B_0}.
 \label{interp:comm-affine-input}
\end{equation}
Indeed, for a skew-symmetric affine operator the root-centered Minty gap
equals $R_0\|G_{\rm quad}(\widehat u,\widehat v)\|$. The dimension is
$d_{\rm ref}(m,B_0)\le1024mB_0^2$.

\begin{lemma}[Exact quadratics]\label{interp:quadratic-depth}
For integers $m,n_{\rm dep}\ge1$ and $\lambda,R_0>0$ there is a fixed distribution
of convex $\lambda$-smooth quadratics $f\ge0$, each with a unique
minimizer of norm $R_0$, such that every algorithm with $n_{\rm dep}$ batches of
at most $m$ simultaneous exact gradient (and value) queries satisfies
\begin{equation*}
 \E\bigl[f(\widehat v)-\min f\bigr]\ge
 \frac{\lambda R_0^2}{288\,n_{\rm dep}^2}.
\end{equation*}
The dimension is
$d_{\rm ref}(m,2n_{\rm dep})/2\le2048m n_{\rm dep}^2$.
\end{lemma}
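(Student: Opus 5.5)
Take the spectral construction of Appendix~\ref{app:affine} with $T_{\rm depth}=2n_{\rm dep}$ and $L=\sqrt\lambda$: draw $U\sim\operatorname{Haar}(O(d_{\rm sym}))$, set $A=U\Lambda U^\top$ and $b_{\rm rhs}=Ub_{\rm spec}$, where $b_{\rm spec}=\Lambda z_{\rm spec}$ and $\|z_{\rm spec}\|=R_0$. Then define
\[
 f(v)=\tfrac12\|Av+b_{\rm rhs}\|^2 .
\]
This $f$ is a convex quadratic with $f\ge0$ and Hessian $A^2$, so $\|A^2\|_{\rm op}=L^2=\lambda$. Since $A$ is invertible, the unique minimizer is $v_\star=-A^{-1}b_{\rm rhs}=-Uz_{\rm spec}$, which has norm $R_0$, and $\min f=0$. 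The dimension is $d_{\rm sym}=d_{\rm ref}(m,2n_{\rm dep})/2$.

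Next we account for the queries. Give $b_{\rm rhs}$ to the algorithm for free. The gradient $\nabla f(v)=A(Av+b_{\rm rhs})$ and the value $f(v)$ are then functions of $Av$ and $A^2v+Ab_{\rm rhs}$. Both can be obtained from two sequential matrix queries: first $Av$, then $A(Av+b_{\rm rhs})$. So one batch of $m$ gradient queries is simulated by two batches of $m$ matrix queries. The second batch depends on the first answer, but the queries within each batch are fixed in advance, so they are legal batched queries. Hence $n_{\rm dep}$ gradient batches become $2n_{\rm dep}$ batches of width $m$ in Definition~\ref{def:aff:batch-model}, with one output $\widehat v$. Lemma~\ref{lem:aff:batched} then gives
\[
 U^\top\widehat v=p(\Lambda)b_{\rm spec}+\sum_i p_i(\Lambda)v_i^{\rm G},\qquad \deg p\le 2n_{\rm dep}.
\]
The number of Gaussian vectors is at most the $2\cdot2m n_{\rm dep}+2$ allowed by the choice $s_{\rm G}=2mT_{\rm depth}+2$ in~\eqref{eq:dimref:explicit-dimension}.

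Now apply Proposition~\ref{prop:dimref:residual} with $d_0=2n_{\rm dep}$ and $n$ the smallest odd integer with $n\ge T_{\rm depth}+2=2n_{\rm dep}+2$. The dimension condition holds because $h_{\rm ref}$ was chosen exactly for this $s_{\rm G}$. The proposition is pointwise in the polynomial coefficients, so it gives
\[
 \E\|A\widehat v+b_{\rm rhs}\|=\E\|\Lambda U^\top\widehat v+b_{\rm spec}\|\ge\frac{LR_0}{2n}.
\]
By Jensen, $\E f(\widehat v)\ge\frac12(\E\|A\widehat v+b_{\rm rhs}\|)^2\ge L^2R_0^2/(8n^2)$. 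As in Corollary~\ref{cor:dimref:depth}, $n\le3T_{\rm depth}=6n_{\rm dep}$, so the bound is at least $\lambda R_0^2/(8\cdot36\,n_{\rm dep}^2)=\lambda R_0^2/(288\,n_{\rm dep}^2)$, which is the claimed constant. The dimension bound follows from $d_{\rm ref}\le1024mT_{\rm depth}^2$.

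The main obstacle is the simulation step. We must check that one exact gradient-and-value query costs only two sequential matrix-query batches, with the second query built from the first answer. We must also confirm that Lemma~\ref{lem:aff:batched} tolerates the degree of $p$ growing by one per batch, giving $\deg p\le2n_{\rm dep}$ overall. Everything else transfers unchanged from Corollary~\ref{cor:dimref:depth}.
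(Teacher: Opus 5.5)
Your proof is correct and follows the paper's argument. The ingredients match: the quadratic $f(v)=\tfrac12\|Av+b_{\rm rhs}\|^2$ from the Haar spectral construction with $L=\sqrt\lambda$, two batches per gradient batch, depth $2n_{\rm dep}$, and Jensen's inequality, giving the same constant $288$. The only difference is bookkeeping: the paper applies Corollary~\ref{cor:dimref:depth} as a black box, answering each gradient query with the field queries $G_{\rm quad}(0,v_i)$ and $G_{\rm quad}(r_i,0)$ and outputting $(0,\widehat v)$, whereas you go directly to Lemma~\ref{lem:aff:batched} and Proposition~\ref{prop:dimref:residual} with width-$m$ matrix batches; this uses fewer Gaussian vectors than $s_{\rm G}$, which is harmless because the dimension conditions are monotone in $s_{\rm G}$.
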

\begin{proof}
Take the distribution of~\eqref{interp:comm-affine-input} with
$B_0=2n_{\rm dep}$
and $\lambda_0=\sqrt\lambda$, and let
$f(v)=\frac12\|M_{\rm quad}v+b_{\rm quad}\|^2$. Its Hessian
$M_{\rm quad}^2$ has norm $\lambda$, and its minimizer
$-M_{\rm quad}^{-1}b_{\rm quad}$ has norm $R_0$. A
batch of gradient queries at points $v_i$ is answered by two batches of
queries to $G_{\rm quad}$: the query $G_{\rm quad}(0,v_i)$ returns
$r_i=M_{\rm quad}v_i+b_{\rm quad}$, hence
$f(v_i)=\|r_i\|^2/2$, and the query $G_{\rm quad}(r_i,0)$ returns
$-M_{\rm quad}r_i=-\nabla f(v_i)$.
After $2n_{\rm dep}$ batches, output $(0,\widehat v)$;
\eqref{interp:comm-affine-input} gives
$\E\|M_{\rm quad}\widehat v+b_{\rm quad}\|
\ge\sqrt\lambda R_0/(12n_{\rm dep})$, and Jensen's
inequality gives
$\E[f(\widehat v)-\min f]
=\frac12\E\|M_{\rm quad}\widehat v+b_{\rm quad}\|^2
\ge\lambda R_0^2/(288n_{\rm dep}^2)$.
\end{proof}

\subsection{Interpolating quadratics with random masks}
\label{interp:communication-lower-subsection}

\begin{theorem}[Masked quadratics]\label{thm:comm-main}\label{interp:communication-lower}
Let $0<\lambda\le L_f\le L_{\max}$, $p=\lambda/L_{\max}$, and
\begin{equation*}
 n_{\rm dep}=\min\bigl\{KN,\ \max\{1,\lceil2N\beta_p\rceil\}\bigr\}.
\end{equation*}
There is a fixed distribution of instances of
Assumption~\ref{ass:interp-main} with $H=0$ and $\sigma_\star=0$, whose
mean $f$ is a convex $\lambda$-smooth quadratic and whose samples are
nonnegative, convex and $L_{\max}$-smooth, such that every algorithm
satisfies
\begin{equation}
 \E\CompGap(\widehat x)
 \ge\frac{\lambda R_0^2}{576\,n_{\rm dep}^2}
 \ge\frac{\lambda R_0^2}{2304\,(1+N\beta_p)^2}
 \label{interp:comm-clock-lower}
 \ge\frac{\lambda R_0^2}{36864\,[1+N\BinDepth(m,K,p)]^2},
\end{equation}
where $\BinDepth(m,K,p)$ is defined in~\eqref{eq:clock:explicit-app}. The dimension is at
most $2048m(KN)^2$, and the bound holds even if the masks and the sample
values are revealed.
\end{theorem}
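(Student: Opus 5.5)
We combine the exact-quadratic construction of Lemma~\ref{interp:quadratic-depth} with a random mask on each sample, as in the SGC lower bound of Lemma~\ref{lem:relmm:masked}, and then invoke the simulation Lemma~\ref{lem:clock:simulation}.

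\emph{Instance.} Take the distribution of Lemma~\ref{interp:quadratic-depth} with $n_{\rm dep}$ batches of width $m$ and smoothness $\lambda$. It gives $f(v)=\frac12\|M_{\rm quad}v+b_{\rm quad}\|^2$ with $\|M_{\rm quad}^2\|_{\rm op}=\lambda$, a minimizer $x_\star$ of norm $R_0$, and $f(x_\star)=0$. Define the sample functions $f_\xi=(B_{\rm mask}/p)f$ with $B_{\rm mask}\sim\operatorname{Bernoulli}(p)$ and $p=\lambda/L_{\max}$. Each sample is nonnegative and convex. Each is $(\lambda/p)=L_{\max}$-smooth or identically zero. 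The mean is $f$. All samples vanish with zero gradient at $x_\star$, so $\sigma_\star=0$. With $H=0$ this is an instance of Assumption~\ref{ass:interp-main}.

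\emph{Masking.} The oracle is masked with probability $p$ in the sense of Section~\ref{sec:relative-minimax}. An informative response is the exact value and gradient scaled by the known map $1/p$; a non-informative response is the constant $0$. Revealing the masks and values only helps the algorithm. Lemma~\ref{lem:clock:simulation}(iii) with $T_{\rm mask}=n_{\rm dep}$ therefore reduces the problem to algorithms with at most $n_{\rm dep}$ batches of $m$ exact queries, and costs a factor $1/2$. Padding to exactly $n_{\rm dep}$ batches is harmless, since the Lemma~\ref{interp:quadratic-depth} bound is monotone in $n_{\rm dep}$.

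\emph{Gap.} Since $H=0$, $\CompGap(x)=\sup_{\|u-x_\star\|\le R_0}\{f(x)-f(u)\}\ge f(x)-f(x_\star)$, because $u=x_\star$ is admissible. Lemma~\ref{interp:quadratic-depth} gives $\lambda R_0^2/(288n_{\rm dep}^2)$ for the reduced class, and halving yields the first inequality in \eqref{interp:comm-clock-lower}. The gap is nonnegative, as Lemma~\ref{lem:clock:simulation}(iii) requires.

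\emph{Remaining inequalities.} For the second, $n_{\rm dep}\le\max\{1,2N\beta_p+1\}\le2(1+N\beta_p)$, so $576n_{\rm dep}^2\le2304(1+N\beta_p)^2$. For the third, Theorem~\ref{thm:clock:asymptotic} gives $1+N\beta_p\le4(1+N\BinDepth)$, and squaring gives the factor $16$. The dimension bound follows from $n_{\rm dep}\le KN$ and $2048m n_{\rm dep}^2\le 2048m(KN)^2$.

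\emph{Main obstacle.} The delicate point is that the reduction in Lemma~\ref{lem:clock:simulation} needs every non-informative response to be a known, instance-independent constant, including the function value. This holds here because a zero sample returns value $0$ and gradient $0$. The exact-quadratic lemma must also accept value queries; Lemma~\ref{interp:quadratic-depth} explicitly allows this, since $f(v_i)=\|r_i\|^2/2$ is computed from the skew-operator responses. Finally, the assumption $\lambda\le L_f$ is used only to place the mean smoothness within the class.
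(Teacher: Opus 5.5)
Your proposal is correct and follows the paper's proof essentially step by step: the samples are the masked quadratics $(B_{\rm mask}/p)f$ built over the exact-quadratic distribution, Lemma~\ref{lem:clock:simulation}(iii) is applied with $\ell_{\rm dep}=\lambda R_0^2/(288n_{\rm dep}^2)$, and the bounds $n_{\rm dep}\le2(1+N\beta_p)$ and $\beta_p\le4\BinDepth(m,K,p)$ complete the chain exactly as in the paper. The one point the paper states and you leave implicit is that calls to $H=0$ return a known constant, so they can be simulated without using the budget; this keeps the reduction to masked stochastic calls valid.
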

\begin{proof}
Draw $f$ from Lemma~\ref{interp:quadratic-depth} with depth $n_{\rm dep}$,
and let
every call use the sample
\begin{equation*}
 f_{\rm samp}(x)=\frac{B_{\rm mask}}p f(x),\qquad
 B_{\rm mask}\sim\operatorname{Bernoulli}(p)\ \text{independent}.
\end{equation*}
Each sample is nonnegative, convex and $\lambda/p=L_{\max}$-smooth, its
mean is $f$, and all samples vanish at the minimizer $x_\star$ of $f$. This
oracle is masked with probability $p$ in the sense of
Section~\ref{sec:relative-minimax}, and calls to $H=0$ give no information.
By Lemma~\ref{lem:masked:simulation}(iii) with
$\ell_{\rm dep}=\lambda R_0^2/(288n_{\rm dep}^2)$, every algorithm has expected
loss at least $\lambda R_0^2/(576n_{\rm dep}^2)$ for the loss
$f(\widehat x)-\min f$, which equals
$\CompGap(\widehat x)$ because $H=0$ and $x_\star$ is an admissible
comparator. Finally $n_{\rm dep}\le2(1+N\beta_p)$ and
$\beta_p\le4\BinDepth(m,K,p)$
by Theorem~\ref{thm:clock:asymptotic}.
\end{proof}

\subsection{Root noise}
\label{app:composite-root-noise}
\label{comp-local:noise-subsection}

\begin{lemma}[Root noise]
\label{prop:composite-full-noise-lower}
\label{prop:composite-root-noise}\label{comp-local:noise-lower}
For $\nu\ge0$,
\begin{equation*}
 \mathfrak R_{\rm comp}(\nu)\ge\frac7{2048}
 \min\Bigl\{(L_f+L_H)R_0^2,\ \frac{\nu R_0}{\sqrt{mKN}}\Bigr\}.
\end{equation*}
The instances are one-dimensional and have root noise exactly $\nu^2$.
\end{lemma}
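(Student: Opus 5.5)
We reduce to two-point Gaussian testing, as in Lemma~\ref{an:statistical} and Proposition~\ref{nn:statistical-lower}, but now inside the composite class. The point $u$ in the composite gap must be compared through $f(x)-f(u)+\langle H(u),x-u\rangle$. The instance will put all the curvature in $f$, which is allowed since $L_f\le L_{\max}$. If $\nu=0$ there is nothing to prove.

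\textbf{Construction.} Let $Q_{\rm tot}=mKN$. Assume first $L_f>0$ and set
\[
 \lambda=\min\{L_f,\ \nu/(4R_0\sqrt{Q_{\rm tot}})\},\qquad \theta\in\{-R_0,R_0\},\qquad H=0 .
\]
On $\mathbb R$ take $f_\theta(x)=\frac\lambda2(x-\theta)^2$ and samples $f_\xi(x)=f_\theta(x)-\nu\zeta x$ with $\zeta\sim\mathcal N(0,1)$. Each sample is convex and $\lambda\le L_{\max}$-smooth, and the gradients are unbiased. The root is $x_\star=\theta$, with $|x_0-x_\star|=R_0$, and the root noise is exactly $\nu^2$.

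If $L_f=0$ but $L_H>0$, use instead $f_\xi(x)=-\nu\zeta x$, so that $f\equiv0$, and take $H(x)=\lambda(x-\theta)$ with $\lambda=\min\{L_H,\nu/(4R_0\sqrt{Q_{\rm tot}})\}$. Then $H$ is exact and the noise again has variance $\nu^2$ at the root. This case is Lemma~\ref{an:statistical} verbatim, since here the composite gap is the Minty gap.

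To cover $\min\{L_f+L_H,\cdot\}$ up to a factor $2$, use whichever of $L_f,L_H$ is larger.

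\textbf{Testing.} Calls to $H$ are exact and known in the first case, or carry no noise in the second. Gradient calls return $\lambda x-\lambda\theta-\nu\zeta$. Subtracting the known part $\lambda x$ leaves i.i.d.\ observations $\mathcal N(-\lambda\theta,\nu^2)$. There are at most $Q_{\rm tot}$ such observations; in the second case, the $H$-calls reveal $\theta$ exactly, so there we should instead put the noise on $f$ and the drift in $H$, and treat $f_\xi$ as an uninformative pure noise term. That case needs care, which is why the Minty-style argument is used there.

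By Remark~\ref{rem:gaussian-adaptive} and Lemma~\ref{lem:gaussian-testing}, with $Q_{\rm tot}\lambda^2R_0^2/\nu^2\le1/16$, the sign test based on $\widehat x$ errs with average probability at least $3/8$. On an error, $|\widehat x-\theta|\ge R_0$. In the $f$-case take $u=\theta$:
\[
 \CompGap(\widehat x)\ge f_\theta(\widehat x)-f_\theta(\theta)\ge\lambda R_0^2/2 .
\]
In the $H$-case take $u=\theta+\frac{R_0}2\operatorname{sign}(\widehat x-\theta)$, which gives at least $\lambda R_0^2/4$. Averaging over $\theta$ gives at least $\frac38\cdot\frac{\lambda R_0^2}4$. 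Then
\[
 \lambda R_0^2\ge\tfrac14\min\{\max(L_f,L_H)R_0^2,\ \nu R_0/\sqrt{Q_{\rm tot}}\}\ge\tfrac18\min\{(L_f+L_H)R_0^2,\ \nu R_0/\sqrt{Q_{\rm tot}}\},
\]
so the bound is $\frac3{256}\min\{\cdots\}$, which is stronger than $7/2048$. Finally, pad to any dimension with noiseless coordinates carrying $\frac{L_f}2x_j^2$.

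\textbf{Main obstacle.} The delicate point is the $H$-dominated case. There, noise must enter only through the sampled $f$ while the informative drift lives in the exact $H$, and an exact $H$ reveals $\theta$ in a single call. I would resolve this by making $H$ independent of $\theta$: take $H(x)=\lambda x$ and put the $\theta$-dependence into the sampled linear term, $f_\xi(x)=-(\lambda\theta+\nu\zeta)x$. Then $f$ is linear with $L_f=0$, $G=\lambda(x-\theta)$ is unchanged, and the root noise is still $\nu^2$. The gap bound comes from $f(\widehat x)-f(u)+\langle H(u),\widehat x-u\rangle=\langle G(u),\widehat x-u\rangle$, since $f$ is linear. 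This uniform construction also handles the $f$ case with $L_H=0$, and I would use it throughout.
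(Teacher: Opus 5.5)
After the correction in your last paragraph, this is essentially the paper's proof. The paper also uses two one-dimensional families, each followed by two-point Gaussian testing with at most $mKN$ observations and a gap bound on the error event:
\begin{itemize}
\item a family with the curvature in the sampled quadratic $f$ and $H=0$;
\item a family with $H(x)=\lambda x$ exact and the unknown sign carried only by the sampled linear term $f_\xi(x)=-(\lambda\theta+\nu\zeta)x$.
\end{itemize}
Your choice $\theta=\pm R_0$, instead of the paper's $\pm R_0/2$, only changes the constant, giving $3/256$.

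Two corrections. First, the linear-$f$ construction does \emph{not} cover the case $L_H=0$, or more generally $L_H<L_f$, because it needs $\lambda\le L_H$. So you must keep the quadratic family when $L_f\ge L_H$ and use whichever family has the larger active constant, as the paper does. Second, discard your first draft of the $H$-case with exact $H(x)=\lambda(x-\theta)$. A single $H$-call reveals $\theta$ there, so it is not Lemma~\ref{an:statistical} verbatim: in that lemma the whole operator is noisy.
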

\begin{proof}
Let $\nu>0$, $Q_{\rm tot}=mKN$, and
$\theta_{\rm test}\in\{-R_0/2,R_0/2\}$. We use two
families, each only when its budget is positive.
\begin{itemize}[leftmargin=*,nosep]
\item[(a)] If $L_f>0$:
$\lambda=\min\{L_f,\nu/(4R_0\sqrt{Q_{\rm tot}})\}$,
draw $\zeta\sim\mathcal N(\lambda\theta_{\rm test},\nu^2)$, and set
$f_\zeta(x)=\frac\lambda2(x-\zeta/\lambda)^2$ and $H=0$. The samples are
$\lambda$-smooth with $\lambda\le L_f\le L_{\max}$, and
$f(x)=\frac\lambda2(x-\theta_{\rm test})^2+{\rm const}$.
\item[(b)] If $L_H>0$:
$\lambda=\min\{L_H,\nu/(4R_0\sqrt{Q_{\rm tot}})\}$,
draw $\zeta\sim\mathcal N(\lambda\theta_{\rm test},\nu^2)$, and set
$f_\zeta(x)=-\zeta x$ and $H(x)=\lambda x$. The samples are affine, so $L_f$ and
$L_{\max}$ may be zero, $H$ is $\lambda$-Lipschitz with $\lambda\le L_H$,
and $f(x)=-\lambda\theta_{\rm test}x$.
\end{itemize}
In both families $G(x)=\lambda(x-\theta_{\rm test})$, the root is $\theta_{\rm test}$,
and
$\nabla f_\zeta(\theta_{\rm test})+H(\theta_{\rm test})
=\lambda\theta_{\rm test}-\zeta$ has variance $\nu^2$.
Write $L_{\rm active}$ for $L_f$ in family (a) and for $L_H$ in family (b).

Every response is a known function of the query and of $\zeta$, so the
algorithm learns at most $Q_{\rm tot}$ independent observations of
$\zeta\sim\mathcal N(\lambda\theta_{\rm test},\nu^2)$. Define the mean
separation $\mu_{\rm obs}=\lambda R_0/2$. Then
$\sqrt{Q_{\rm tot}}\,\mu_{\rm obs}/\nu\le1/8$, so by
Lemma~\ref{lem:gaussian-testing} the sign of $\widehat x$ differs from
the sign of $\theta_{\rm test}$ with average probability at least $7/16$;
then $|\widehat x-\theta_{\rm test}|\ge R_0/2$. In (a),
$\CompGap(\widehat x)=\frac\lambda2(\widehat x-\theta_{\rm test})^2
\ge\lambda R_0^2/8$.
In (b), reparametrize the comparator as
$w=u_{\rm cmp}-\theta_{\rm test}$:
\[
 \CompGap(\widehat x)
 =\sup_{|w|\le R_0}\lambda
   \bigl[w(\widehat x-\theta_{\rm test})-w^2\bigr]
 \ge\lambda\Bigl[\frac{R_0}4|\widehat x-\theta_{\rm test}|
   -\frac{R_0^2}{16}\Bigr]
 \ge\frac{\lambda R_0^2}{16},
\]
taking
$w=\frac{R_0}4\operatorname{sign}(\widehat x-\theta_{\rm test})$.
Hence one sign of $\theta_{\rm test}$ gives
$\E\CompGap(\widehat x)\ge\frac7{16}\cdot\frac{\lambda R_0^2}{16}
\ge\frac7{1024}\min\{L_{\rm active}R_0^2,
\nu R_0/\sqrt{Q_{\rm tot}}\}$ for each family that is available; a family
with $L_{\rm active}=0$ would only give the bound $0$. Finally, with
$R_{\rm noise}=\nu R_0/\sqrt{Q_{\rm tot}}$,
\[
 \max_{L_{\rm active}\in\{L_f,L_H\}}
 \min\{L_{\rm active}R_0^2,R_{\rm noise}\}
 \ge\frac12\min\{(L_f+L_H)R_0^2,R_{\rm noise}\}.
\]
\end{proof}

\subsection{Large sample smoothness and an exact component}
\label{comp-proj:section}

\begin{theorem}[Sampling lower bound with an exact component]
\label{comp-proj:lower}
Let $Q_{\rm tot}\ge1$ be an integer, $L_f>0$, $L_H\ge0$ and
$L_{\max}>Q_{\rm tot}L_f$. For every algorithm that makes at most
$Q_{\rm tot}$ stochastic gradient calls, with
arbitrary queries, sequential access and free access to $H$, there is
an instance of Assumption~\ref{ass:interp-main} with $\sigma_\star=0$ in
dimension $d=\lceil8192L_{\max}/L_f\rceil$ (and in every larger dimension)
such that
\begin{equation*}
 \E\CompGap(\widehat x)
 \ge\frac{3}{512\sqrt{8193}}R_0^2
 \min\Bigl\{L_H,\sqrt{\frac{L_{\max}L_f}{Q_{\rm tot}}}\Bigr\}.
\end{equation*}
\end{theorem}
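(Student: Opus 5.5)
The plan is a coupon-collector/hidden-sign construction. Very large sample smoothness means each sample carries information about only a few coordinates. With $Q_{\rm tot}$ calls, a constant fraction of the hidden directions is then never observed. The exact component $H$ converts the missing coordinates into a gap that is linear, not quadratic, in the error; this is what produces $\sqrt{L_{\max}L_f/Q_{\rm tot}}$ instead of an $L_f$-type term.

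\emph{The instance.} Work in dimension $d=\lceil8192L_{\max}/L_f\rceil$ and draw a uniform sign vector $s\in\{\pm1\}^{d'}$ supported on a block of $d'\le d$ coordinates, with $d'$ fixed below. Set $x_\star=a\,s$ with amplitude $a$ so that $\|x_\star\|\le R_0$. Each call draws a coordinate $i$ uniformly from the block and returns the sample $f_i(x)=\frac{L_{\max}}2(x_i-a s_i)^2$ (padded with zeros so the mean is well defined on all of $\mathbb R^d$). Every $f_i$ is convex, $L_{\max}$-smooth and minimized at $x_\star$, so $\sigma_\star=0$. The mean is $f=\frac{L_{\max}}{2d'}\|x-x_\star\|^2$, so we need $L_{\max}/d'\le L_f$, that is, $d'\ge L_{\max}/L_f$. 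Take $H(x)=\lambda J(x-x_\star)$ with a skew $J$ of norm one and $\lambda\le L_H$; it is exact but depends on $s$. Choose $J$ to be a fixed pairing of block coordinates with auxiliary coordinates, so that querying $H$ at points with zero block part returns only $\lambda Jx_\star$ on auxiliary directions. Either $J$ is arranged so that this reveals nothing (e.g.\ $H(x)=\lambda J x$ with the shift absorbed into $f$), or, if that fails, the information it leaks is accounted for explicitly. A sample $f_i$ reveals $s_i$ exactly, so after $Q_{\rm tot}$ calls at most $Q_{\rm tot}$ coordinates are known. Since $Q_{\rm tot}<L_{\max}/L_f\le d'$, at least half of the coordinates remain uniformly random given the transcript once $d'\ge2Q_{\rm tot}$. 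The factor $8192$ in $d$ leaves room for this, and it is where $L_{\max}>Q_{\rm tot}L_f$ enters.

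\emph{The gap on unseen coordinates.} As in the proof of Corollary~\ref{cor:dimref:depth}, for a skew affine part the root-centered gap contains the term $\sup_{\|u-x_\star\|\le R_0}\langle H(u),\widehat x-u\rangle= \lambda R_0\|J(\widehat x-x_\star)\|$. On each unseen coordinate the posterior of $s_i$ is uniform, so any output errs by at least $a$ in sign with probability $1/2$. Hence $\E\|\widehat x-x_\star\|\gtrsim a\sqrt{d'}$, and the gap is at least of order $\lambda R_0\,a\sqrt{d'}\asymp\lambda R_0^2$ when $a\sqrt{d'}\asymp R_0$. The $f$-part of the gap is nonnegative and can be dropped. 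With $\lambda=L_H$ this gives the $L_H$ branch of the minimum.

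\emph{The $\sqrt{L_{\max}L_f/Q_{\rm tot}}$ branch.} When $L_H$ is larger, the coupling must be weakened. I would rescale the instance: use only $d'\asymp Q_{\rm tot}$ hidden coordinates carrying the signal, and set $\lambda\asymp\sqrt{L_{\max}L_f/Q_{\rm tot}}$. Interpreted through Lemma~\ref{lem:samples:variance}, this is the effective statistical noise level $\sigma\asymp\sqrt{L_{\max}L_f}R_0$ divided by $\sqrt{Q_{\rm tot}}$. The mean smoothness constraint is then met by spreading the sample distribution over $L_{\max}/L_f$ coordinates, of which only a $Q_{\rm tot}L_f/L_{\max}$ fraction are informative.

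\emph{Main obstacle.} The step I expect to be hardest is this last balancing. The single parameter family must simultaneously keep $\|J\|\lambda\le L_H$, keep the mean $L_f$-smooth and the samples $L_{\max}$-smooth, and make $H$ itself uninformative about $s$ (since $H$ is queried for free). Achieving the latter likely requires placing the $s$-dependence only in $f$ and letting $H$ be a fixed skew map, so that $x_\star$ is defined by $\nabla f(x_\star)+H(x_\star)=0$ with the samples' gradients at $x_\star$ all equal to $-H(x_\star)$. I would first try the explicit $2\times2$ block $f_i(x)=\frac{L_{\max}}2(x_i-\cdot)^2$ paired with $H$ rotating $(x_i,y_i)$, and verify interpolation coordinatewise before optimizing constants to reach $3/(512\sqrt{8193})$.
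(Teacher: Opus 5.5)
Your construction does not hide $x_\star$, and it fails at exactly the point you flag as the main obstacle.

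Because $H$ is evaluated exactly and for free, any dependence of $H$ on $s$ is read off at once. With $H(x)=\lambda J(x-x_\star)$ and your pairing $J$, the single evaluation $H(0)=-\lambda Jx_\star$ reveals every $s_i$ on the auxiliary coordinates. So $H$ has to be a known map.

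But then interpolation forces $\nabla f_\xi(x_\star)=-H(x_\star)$ for \emph{every} sample. Hence every stochastic response equals $[\nabla f_\xi(x)-\nabla f_\xi(x_\star)]-H(x_\star)$. For coordinate samples, and equally for your $2\times2$ rotation blocks, the bracket is supported on the sampled coordinate or block. All other entries of a single response are therefore exactly those of $-H(x_\star)$, and a couple of calls give $H(x_\star)$ exactly.

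For linear $H=\lambda J$ this hands the algorithm $Jx_\star$. Any output with $J\widehat x=Jx_\star$ then satisfies $\langle H(u),\widehat x-u\rangle=\langle H(u),x_\star-u\rangle$ by skewness. Using $\CompGap(x_\star)=0$, the gap collapses to the Bregman term $D_f(\widehat x,x_\star)=O(L_fR_0^2)$. If $J$ is injective on the block, $x_\star$ itself is revealed. So the count ``one call reveals one $s_i$'' is false, and neither of your branches survives.

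Some leak is in fact unavoidable. With free $H$, the argument of Lemma~\ref{comp-pool:theorem} gives $\E\CompGap(\widehat x)=O\bigl(R_0^2(L_f+\sqrt{L_{\max}L_f/Q_{\rm tot}})\bigr)$: average all $Q_{\rm tot}$ gradients at $x_0$, then solve the VI for $H+g_{\rm pool}$ on $B(x_0,2R_0)$ to arbitrary accuracy. A hiding argument that, like yours, does not weaken as the coupling grows would prove $\Omega(\lambda R_0^2)$ for every $\lambda\le L_H$, which is false for large $L_H$. The cap $\sqrt{L_{\max}L_f/Q_{\rm tot}}$ must come out of the information analysis. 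Your appeal to Lemma~\ref{lem:samples:variance} is an upper-bound heuristic, not a lower-bound mechanism.

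The missing idea is to mask this unavoidable leak with rotationally random sample curvature. The paper takes $x_\star$ uniform on the sphere of radius $R_0$ and the known map $H(x)=\mu_{\rm ex}x$. The samples are $f_{\Pi}(x)=\frac{L_{\max}}2\|\Pi(x-x_\star)\|^2-\mu_{\rm ex}\langle x_\star,x\rangle$, with a fresh uniformly random rank-$4$ projector $\Pi$ at each call. Then $\sigma_\star=0$, and the mean is $\frac\lambda2\|x-x_\star\|^2-\mu_{\rm ex}\langle x_\star,x\rangle$ with $\lambda=4L_{\max}/d\le L_f$. Each response $L_{\max}\Pi(x-x_\star)-\mu_{\rm ex}x_\star$ buries the weak signal $\mu_{\rm ex}x_\star$ under a term of order $L_{\max}\|x-x_\star\|/\sqrt d$ pointing in an essentially uniform direction.

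Lemma~\ref{comp-proj:posterior} says the unrevealed component of $x_\star$ stays uniform on a sphere given the augmented transcript. Combined with Beta-moment bounds, it shows each call reveals in expectation at most $R_0^2(578/d+32d\mu_{\rm ex}^2/L_{\max}^2)$ of $\|x_\star\|^2$. Keeping the total over $Q_{\rm tot}$ calls below $R_0^2/4$ uses two facts:
\begin{itemize}
\item $d\ge8192Q_{\rm tot}$, which is where $L_{\max}>Q_{\rm tot}L_f$ enters;
\item $\mu_{\rm ex}\le L_{\max}/(32\sqrt{dQ_{\rm tot}})\asymp\sqrt{L_{\max}L_f/Q_{\rm tot}}$.
\end{itemize}
So the single choice $\mu_{\rm ex}=\min\{L_H,L_{\max}/(32\sqrt{dQ_{\rm tot}})\}$ yields both branches of the minimum from one instance. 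Finally, after projecting the output onto $B(0,R_0)$, one has $\CompGap(\widehat x)\ge\mu_{\rm ex}\|\widehat x-x_\star\|^2/4$ and $\E\|\widehat x-x_\star\|^2\ge3R_0^2/4$.
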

\begin{proof}
If $L_H=0$ there is nothing to prove. Write
$L_{\rm samp}=L_{\max}$ and let
\[
 d=\Bigl\lceil\frac{8192L_{\rm samp}}{L_f}\Bigr\rceil,\qquad
 \lambda=\frac{4L_{\rm samp}}d\le L_f,\qquad
 \mu_{\rm ex}=\min\Bigl\{L_H,
 \frac{L_{\rm samp}}{32\sqrt{dQ_{\rm tot}}}\Bigr\}>0 .
\]

\emph{The instance.} Let $x_\star$ be uniform on the sphere of radius $R_0$
in $\mathbb R^d$. Every stochastic call draws, independently, the orthogonal
projector $\Pi_{\rm rnd}$ onto a uniformly distributed $4$-dimensional
subspace, and
returns the gradient of
\begin{equation*}
 f_{\Pi_{\rm rnd}}(x)
 =\frac{L_{\rm samp}}2\|\Pi_{\rm rnd}(x-x_\star)\|^2
 -\mu_{\rm ex}\langle x_\star,x\rangle,
 \qquad\text{while}\qquad H(x)=\mu_{\rm ex}x .
\end{equation*}
The samples are convex and $L_{\rm samp}$-smooth. Since
$\E \Pi_{\rm rnd}=\frac4dI$, the mean is
$f(x)=\frac\lambda2\|x-x_\star\|^2
-\mu_{\rm ex}\langle x_\star,x\rangle$,
which is $\lambda$-smooth, and
$G(x)=(\lambda+\mu_{\rm ex})(x-x_\star)$. Every sample satisfies
$\nabla f_{\Pi_{\rm rnd}}(x_\star)+H(x_\star)=0$, so $\sigma_\star=0$.
The map $H$ is known,
so calls to it carry no information.

\begin{revisionblock}
\begin{lemma}[Spherical posterior under equivariant observations]
\label{comp-proj:posterior}
Let $x_\star$ be uniform on the sphere of radius $R_0$ in $\mathbb R^d$.
Fix $q\in\{1,\ldots,d\}$ and deterministic scalars $a,b\in\mathbb R$.
Starting with $S=\{0\}$, repeat the following adaptive experiment. Given
the past, choose $x$, replace $S$ by $\operatorname{span}(S,x)$, and reveal
$\Pi_Sx_\star$. Then draw, independently of the past, a uniformly random
rank-$q$ orthogonal projector $P$, observe
$y=aP(x-x_\star)-b x_\star$, replace $S$ by
$\operatorname{span}(S,y)$, and again reveal $\Pi_Sx_\star$.
After either reveal, conditionally on the complete augmented transcript,
$(I-\Pi_S)x_\star$ is uniform on the sphere in $S^\perp$ of radius
$(R_0^2-\|\Pi_Sx_\star\|^2)^{1/2}$. The conclusion is unchanged if the
adaptive choices use independent internal randomness.
\end{lemma}
\begin{proof}
Condition on the internal random seed and argue by induction. The claim
is true initially. At a query step, $x$ is fixed by the past. Writing
$v=(I-\Pi_S)x$, revealing the new projection either changes nothing
when $v=0$ or conditions a uniform point of the sphere in $S^\perp$ on
its coordinate along $v$. The remaining component is therefore uniform
on the corresponding sphere in $\operatorname{span}(S,x)^\perp$.

After this enlargement, $x\in S$. Before the response is observed, the
conditional joint law of $(x_\star,P)$ is invariant under
$(x_\star,P)\mapsto(Qx_\star,QPQ^\top)$ for every orthogonal $Q$ fixing
$S$ pointwise, and the response map is equivariant:
$y(Qx_\star,QPQ^\top)=Qy(x_\star,P)$. Put
$S'=\operatorname{span}(S,y)$. After conditioning on $y$ and on the
revealed vector $\Pi_{S'}x_\star$, every orthogonal map fixing $S'$ pointwise
leaves all conditioned data unchanged. Hence the conditional law of
$(I-\Pi_{S'})x_\star$ is invariant under the full orthogonal group of
$S'^\perp$. Its norm is fixed by $\|x_\star\|=R_0$, so uniqueness of the
orthogonally invariant probability measure on a sphere makes this law
uniform. This proves the induction; averaging over the seed gives the
last assertion.
\end{proof}
\end{revisionblock}

\emph{A more informative experiment.} Let $\mathcal S_{\rm obs}$ be the
span of all stochastic queries and responses observed so far, and reveal
$\Pi_{\mathcal S_{\rm obs}}x_\star$ after every change of
$\mathcal S_{\rm obs}$: once when a query $x$ is chosen, and once after
its response
$y=L_{\rm samp}\Pi_{\rm rnd}(x-x_\star)-\mu_{\rm ex}x_\star$.
This can only help the algorithm,
and $\dim\mathcal S_{\rm obs}$ grows by at most two per call, so
$\dim\mathcal S_{\rm obs}\le2Q_{\rm tot}\le d/4096$ and
$d_\perp:=\dim\mathcal S_{\rm obs}^\perp\ge d/2$ throughout.
\revision{By Lemma~\ref{comp-proj:posterior}, conditionally on the
augmented history the unrevealed component}
$\Pi_{\mathcal S_{\rm obs}^\perp}x_\star$ is uniform
on the sphere of radius
$r=(R_0^2-\|\Pi_{\mathcal S_{\rm obs}}x_\star\|^2)^{1/2}$ in
$\mathcal S_{\rm obs}^\perp$.

\emph{Information revealed by a query.} The query direction is fixed
given the past, and a uniform point on a sphere of radius $r$ in
dimension $d_\perp$ has second moment $r^2/d_\perp$ in every direction. Hence adding
$x$ to $\mathcal S_{\rm obs}$ reveals in expectation at most
$r^2/d_\perp\le2r^2/d$.

\emph{Information revealed by a response.} After adding $x$, let
$\mathcal V_{\rm hid}=\mathcal S_{\rm obs}^\perp$,
$\Pi_{\mathcal V_{\rm hid}}x_\star=ru_{\rm hid}$ with
$\|u_{\rm hid}\|=1$, and $w=x-x_\star$, so that
$\Pi_{\mathcal V_{\rm hid}}w=-ru_{\rm hid}$ and $\|w\|\ge r$.
A uniform rank-four projector satisfies
\begin{equation*}
 \Pi_{\rm rnd}w=t\,w+\|w\|\sqrt{t(1-t)}\,e_{\rm tan},
\end{equation*}
with independent $t\sim\operatorname{Beta}(2,(d-4)/2)$ and
$e_{\rm tan}$ uniform on
the unit sphere of $w^\perp$. (Rotating $w/\|w\|$ to a coordinate axis,
$t=\|Pw\|^2/\|w\|^2$ is the squared norm of the projection of a uniform
unit vector onto a fixed $4$-dimensional subspace, a ratio of
$\chi^2_4$ and $\chi^2_4+\chi^2_{d-4}$; the identities
$\langle Pw,w\rangle=\|Pw\|^2=t\|w\|^2$ determine the component along $w$,
and rotations fixing $w$ make $e_{\rm tan}$ uniform and independent of
$t$.) Let
$\mathcal W_{\rm tan}=\mathcal V_{\rm hid}\cap u_{\rm hid}^\perp
\subseteq w^\perp$, of dimension $d_\perp-1$,
$\zeta_{\rm tan}=\|\Pi_{\mathcal W_{\rm tan}}e_{\rm tan}\|^2$ and
$a_{\rm resp}=\Pi_{\mathcal V_{\rm hid}}\Pi_{\rm rnd}w$. Then
\begin{equation}\label{comp-proj:geometry}
 \|a_{\rm resp}\|^2\ge t(1-t)\|w\|^2\zeta_{\rm tan},\qquad
 \langle u_{\rm hid},a_{\rm resp}\rangle
 =-tr+\|w\|\sqrt{t(1-t)}\,\langle u_{\rm hid},e_{\rm tan}\rangle .
\end{equation}
For the beta law with parameters $(\alpha,\beta)=(2,(d-4)/2)$,
$\E\frac t{1-t}=\frac\alpha{\beta-1}$, $\E\frac1t=\frac{\alpha+\beta-1}{\alpha-1}$ and
$\E\frac1{1-t}=\frac{\alpha+\beta-1}{\beta-1}$; writing
$e_{\rm tan}=g/\|g\|$ with a standard Gaussian $g$ in $w^\perp$ and using
that $\|\Pi_{\mathcal W_{\rm tan}}g\|^2\sim\chi^2_{d_\perp-1}$
is independent of the remaining coordinates, one obtains
\begin{equation}\label{comp-proj:moments}
 \begin{aligned}
 \E\frac t{1-t}&=\frac4{d-6},
 &\E\frac1{t(1-t)}&=(d-2)\Bigl(\frac12+\frac1{d-6}\Bigr),\\
 \E\frac1{\zeta_{\rm tan}}&=\frac{d-3}{d_\perp-3},
 &\E\frac{\langle u_{\rm hid},e_{\rm tan}\rangle^2}{\zeta_{\rm tan}}
 &\le\frac1{d_\perp-3}.
 \end{aligned}
\end{equation}
By~\eqref{comp-proj:geometry}, \eqref{comp-proj:moments},
$(x+y)^2\le2x^2+2y^2$, $r\le\|w\|$, $d_\perp\ge d/2$ and $d\ge12$,
\[
 \E\frac{\langle u_{\rm hid},a_{\rm resp}\rangle^2}{\|a_{\rm resp}\|^2}
 \le\frac8{d-6}\cdot\frac{d-3}{d_\perp-3}+\frac2{d_\perp-3}\le\frac{72}d,\qquad
 \E\frac1{\|a_{\rm resp}\|^2}
 \le\frac{d-2}{\|w\|^2}\Bigl(\frac12+\frac1{d-6}\Bigr)
 \frac{d-3}{d_\perp-3}
 \le\frac{4d}{\|w\|^2}.
\]
The new direction of the response in $\mathcal V_{\rm hid}$ is
$L_{\rm samp}a_{\rm resp}-\mu_{\rm ex}ru_{\rm hid}$. For every nonzero
$a_{\rm resp}$, unit $u_{\rm hid}$ and $\beta\ge0$,
\begin{equation*}
 \frac{\langle u_{\rm hid},a_{\rm resp}-\beta u_{\rm hid}\rangle^2}
 {\|a_{\rm resp}-\beta u_{\rm hid}\|^2}
 \le8\Bigl(
 \frac{\langle u_{\rm hid},a_{\rm resp}\rangle^2}{\|a_{\rm resp}\|^2}
 +\frac{\beta^2}{\|a_{\rm resp}\|^2}\Bigr)
\end{equation*}
(if $\beta\le\|a_{\rm resp}\|/2$, then
$\|a_{\rm resp}-\beta u_{\rm hid}\|\ge\|a_{\rm resp}\|/2$; otherwise the right side
exceeds $2$ and the left side is at most $1$). With
$\beta=\mu_{\rm ex}r/L_{\rm samp}$, the
response reveals in expectation at most
$r^2\bigl(\frac{576}d+\frac{32d\mu_{\rm ex}^2}{L_{\rm samp}^2}\bigr)$.

\emph{Remaining uncertainty.} Each call therefore reveals in expectation
at most
$R_0^2(578/d+32d\mu_{\rm ex}^2/L_{\rm samp}^2)$, and since
$d\ge8192Q_{\rm tot}$ and
$32d\mu_{\rm ex}^2Q_{\rm tot}\le L_{\rm samp}^2/32$,
\[
 Q_{\rm tot}\Bigl(\frac{578}d
 +\frac{32d\mu_{\rm ex}^2}{L_{\rm samp}^2}\Bigr)
 \le\frac{578}{8192}+\frac1{32}<\frac14 .
\]
Hence the final unrevealed radius $r_{\rm rem}$ satisfies
$\E r_{\rm rem}^2\ge3R_0^2/4$. Given the revealed information, the
\revision{conditional mean of the unrevealed component is zero by
Lemma~\ref{comp-proj:posterior}, so the posterior mean of $x_\star$ is
$\Pi_{\mathcal S_{\rm obs}}x_\star$. Hence}
every output satisfies
$\E\|\widehat x-x_\star\|^2\ge\E r_{\rm rem}^2\ge3R_0^2/4$.

\emph{The gap.} With $e_{\rm out}=x-x_\star$ and
$u-x_\star$ in place of $u$,
\[
 \CompGap(x)=\frac\lambda2\|e_{\rm out}\|^2
 +\sup_{\|u'\|\le R_0}\bigl\{\mu_{\rm ex}\langle u',e_{\rm out}\rangle
 -(\lambda/2+\mu_{\rm ex})\|u'\|^2\bigr\},
\]
which is a nondecreasing function of $\|e_{\rm out}\|$. Projecting the output onto
$B(0,R_0)$ does not increase $\|x-x_\star\|$, so we may assume
$\|x\|\le R_0$ and $\|e_{\rm out}\|\le2R_0$. Then
$u'=e_{\rm out}/2$ gives
$\CompGap(x)\ge(3\lambda/8+\mu_{\rm ex}/4)\|e_{\rm out}\|^2
\ge \mu_{\rm ex}\|e_{\rm out}\|^2/4$, and
$\E\CompGap(\widehat x)\ge3\mu_{\rm ex}R_0^2/16$; some fixed
$x_\star$ attains this. Finally $d\le8193L_{\rm samp}/L_f$ gives
$\mu_{\rm ex}\ge\min\{L_H,
\sqrt{L_{\rm samp}L_f/Q_{\rm tot}}/(32\sqrt{8193})\}$.
\end{proof}

\subsection{The general lower bound}

\begin{proof}[Proof of Theorem~\ref{thm:composite-lower-main}]
Each term is a lower bound for a family of instances in the class
$\mathfrak I_d(\nu)$; padding all families with inactive coordinates to a
common dimension and taking the largest bound gives the sum up to a
universal factor.

A term whose smoothness budget is zero vanishes and needs no instance;
below each construction is used only when its budgets are positive.

\emph{First two terms.} If $L_f>0$, exact samples $f_\xi=f$ from
Lemma~\ref{interp:quadratic-depth} with $n_{\rm dep}=KN$ and $\lambda=L_f$ give
$L_fR_0^2/(288(KN)^2)$; any algorithm of Definition~\ref{def:ic-main} is
covered, since it makes at most $KN$ batches of $m$ calls. If $L_H>0$, the
skew operator of~\eqref{interp:comm-affine-input} with $f_\xi=0$,
$H=G_{\rm quad}$,
$\lambda_0=L_H$ and $B_0=KN$ gives $L_HR_0^2/(6KN)$.

\emph{Third and fourth terms.} If $L_f=0$, both terms are zero. Let
$L_f>0$; then $L_{\max}\ge L_f>0$, so $p=\lambda/L_{\max}$ below is well
defined. For the third term take
$\lambda=\min\{L_f,L_{\max}/(mKN)\}$ in Theorem~\ref{thm:comm-main}.
Lemma~\ref{lem:clock:elementary} gives $N\beta_p\le NmKp\le1$, so
\begin{equation*}
 \mathfrak R_{\rm comp}(0)\ge\frac{R_0^2}{9216}
 \min\Bigl\{L_f,\frac{L_{\max}}{mKN}\Bigr\}.
\end{equation*}

For the fourth term let $\ell_m=1+N\log m$ and
$\lambda=\min\{L_f,L_{\max}\ell_m/((e-1)KN)\}$.
Lemma~\ref{lem:clock:elementary}
gives $\beta_p\le(e-1)K\lambda/L_{\max}+\log m
\le \ell_m/N+\log m$, hence
$1+N\beta_p\le2\ell_m$, and~\eqref{interp:comm-clock-lower} gives
\begin{equation*}
 \mathfrak R_{\rm comp}(0)\ge\frac{R_0^2}{9216}
 \min\Bigl\{\frac{L_f}{\ell_m^2},
       \frac{L_{\max}}{(e-1)KN\ell_m}\Bigr\}.
\end{equation*}

\emph{Last term.} This is Lemma~\ref{prop:composite-full-noise-lower}.
\end{proof}

\subsection{Proof of the matching regime}
\label{comp-match:proof}
\begin{proof}[Proof of Corollary~\ref{cor:composite-matching-main}]
The upper bound follows from Theorem~\ref{thm:composite-local-main} with
$\sigma_\star\le \nu$. Let $Q_{\rm tot}=mKN$,
$\rho_{\rm noise}=\nu/(R_0\sqrt{Q_{\rm tot}})$, and divide all
bounds by $R_0^2$. We use
$\min\{r_{\rm cap},r_1+r_2\}\le r_1+\min\{r_{\rm cap},r_2\}$ for
$r_{\rm cap},r_1,r_2\ge0$.

\emph{Case $L_{\max}\le Q_{\rm tot}L_f$.} The sampling term in
Corollary~\ref{cor:composite-matching-main} is
$L_{\max}/Q_{\rm tot}$. Let
$\ell_m=1+N\log m\le N(1+\log m)$. Since $L_{\max}\ge KL_f$,
\[
 \min\Bigl\{\frac{L_f}{\ell_m^2},
       \frac{L_{\max}}{KN\ell_m}\Bigr\}
 \ge\frac{L_f}{N^2(1+\log m)^2},
\]
so Theorem~\ref{thm:composite-lower-main} bounds the risk below by a
constant times the maximum of $L_H/(KN)$,
$L_{\max}/Q_{\rm tot}$,
$L_f/(N^2(1+\log m)^2)$ and
$\min\{L_f+L_H,\rho_{\rm noise}\}$. The rate in
Corollary~\ref{cor:composite-matching-main} is at most
$L_f/N^2+L_H/(KN)+L_{\max}/Q_{\rm tot}
+\min\{L_f+L_H,\rho_{\rm noise}\}$, which is at most
$4(1+\log m)^2$ times this maximum.

\emph{Case $L_{\max}>Q_{\rm tot}L_f$.} Let
$\rho_{\rm samp}=\sqrt{L_{\max}L_f/Q_{\rm tot}}$; then
$\rho_{\rm samp}>L_f$ and
the sampling term is $\rho_{\rm samp}$.
Theorems~\ref{thm:composite-lower-main}
and~\ref{comp-proj:lower} bound the risk below by a constant times
$\rho_{\rm lb}=\max\{L_f,L_H/(KN),\min\{L_H,\rho_{\rm samp}\},
\min\{L_f+L_H,\rho_{\rm noise}\}\}$. If $L_H\le \rho_{\rm samp}$,
the rate is at most $L_f+L_H\le2\rho_{\rm lb}$. If
$L_H>\rho_{\rm samp}$, then $\rho_{\rm samp}\le \rho_{\rm lb}$ and the
rate is at most
$L_f/N^2+L_H/(KN)+\rho_{\rm samp}
+\min\{L_f+L_H,\rho_{\rm noise}\}\le4\rho_{\rm lb}$.

All families are admissible for the root-noise bound $\nu$, and padding
puts them into the common dimension stated in the corollary.
\end{proof}

\section{Prior bounds in Table~\ref{tab:results}}
\label{app:comparisons}\label{app:resource-comparison}
Every algorithm may return $z_0$, which gives the cap omitted in each row of
Table~\ref{tab:results}.

\emph{Single-machine bounds.} \citet{juditsky2011} and \citet{chen2017amp}
analyze one machine that makes $T$ iterations with an unbiased oracle of
variance at most $\sigma^2$, and Table~\ref{tab:results} states their bounds in
this form. For stochastic mirror-prox on a compact set of diameter of order $D$,
\citet{juditsky2011} bound the expected gap by a constant multiple of
$LD^2/T+\sigma D/\sqrt T$. For stochastic accelerated mirror-prox with exact
evaluations of $H$, \citet{chen2017amp} bound the composite gap by a constant
multiple of $L_fR_0^2/T^2+L_HR_0^2/T+\sigma R_0/\sqrt T$.

\emph{Running a single-machine method in the intermittent-communication
model.} The comparisons after Theorem~\ref{vi:main-monotone} and in
Section~\ref{sec:acceleration-main} run these methods in our model in the two
standard ways. One worker can run the method with batch $b=1$, keeping its
state across round boundaries, so that $T$ is of order $KN$. Alternatively,
all workers can make their $K$ calls of a round at a common query point, and
the responses are averaged at communication, so that each oracle call of the
method is replaced by the average of $b$ of order $mK$ fresh responses. A step
of an extragradient-type method then needs two rounds, because its second
query depends on the averaged first response, so $T=\lfloor N/2\rfloor$.
Averaging $b$ fresh responses at the same point gives an unbiased response
whose conditional variance is that of one response divided by $b$, because
the cross terms vanish by independence and unbiasedness. Hence a guarantee
that holds for every unbiased oracle with variance at most $\sigma^2$ holds for
the averaged oracle with $\sigma^2/b$. For accelerated mirror-prox with exact
$H$ and $K\ge3$, the schedules $T=\lfloor KN/3\rfloor$, $b=1$ and $T=N$,
$b=m(K-2)$ of Proposition~\ref{prop:composite-schedules-main} also count the
two evaluations of $H$ per step.

\emph{Relative and root noise.} Under Assumption~\ref{ass:mixed}, run
stochastic mirror-prox with projection onto $B(z_0,2D)$. On this ball
$\|F(x)\|\le L\|x-z_\star\|\le3LD$, so
\[
 \E\bigl[\|g(x,\xi)-F(x)\|^2\mid\mathcal F\bigr]
 \le9(\rho-1)L^2D^2+\sigma^2 ,
\]
and the gap over $B(z_0,2D)\supseteq C$ bounds the Minty gap over $C$. Under
Assumption~\ref{ass:interp-main}, Lemma~\ref{lem:samples:variance} and
$\|x-x_\star\|\le3R_0$ on $B(x_0,2R_0)$ give
$\operatorname{Var}(\nabla f_\xi(x))\le18L_{\max}L_fR_0^2+2\sigma_\star^2$
there. These are the variance bounds used in the two comparisons.

\emph{Distributed bounds.} \citet{wang2026faster} state their Theorems~1
and~2 in the same intermittent-communication model, with $m$ workers, $K$
local oracle calls per round, and $N$ rounds, and Table~\ref{tab:results}
reproduces them. The distributed minibatch extragradient of
\citet[Theorem~4.1]{beznosikov2024lower} averages the responses of the
machines at every communication, which is the common-query schedule, and the
table reproduces their bound. Their bound is stated for
strongly-convex--strongly-concave saddle problems, a subclass of the strongly
monotone VIs to which Theorem~\ref{vi:main-mixed-accuracy} applies.

\end{document}